\numberwithin{equation}{section}
\author{Bence Hevesi}
\email{bencehevesi@math.cnrs.fr}
\address{Université Paris-Saclay\\ CNRS, Laboratoire de mathématiques d’Orsay\\ 91405, Orsay, France}
\theoremstyle{plain}
\newtheorem{Th}{Theorem}[section]
\newtheorem{Lemma}[Th]{Lemma}
\newtheorem{Cor}[Th]{Corollary}
\newtheorem{Prop}[Th]{Proposition}
\newtheorem{hyp}[Th]{Hypothesis}
\theoremstyle{definition}
\newtheorem{Def}[Th]{Definition}
\newtheorem{Conj}[Th]{Conjecture}
\newtheorem{Rem}[Th]{Remark}
\newtheorem{?}[Th]{Problem}
\newtheorem{Ex}[Th]{Example}
\newtheorem{assumption}[Th]{Assumption}
\newcommand{\Hom}{{\rm{Hom}}}
\newenvironment{claim}[1]{\par\noindent\underline{Claim:}\space#1}{}
\begin{document}
\title{Ordinary parts and local-global compatibility at $\ell=p$}
\makeatletter
\@namedef{subjclassname@2020}{%
 \textup{2020} Mathematics Subject Classification}
\makeatother
\keywords{Langlands reciprocity, local-global compatibility, ordinary parts, Galois representations, automorphic representations} 
\subjclass[2020]{ 11F75, 11F70 (Primary); 11F80, 11F33 (Secondary)}

\begin{abstract}
We prove local-global compatibility results at $\ell=p$ for the torsion automorphic Galois representations constructed by Scholze, generalising the work of Caraiani--Newton. In particular, we verify, up to a nilpotent ideal, the local-global compatibility conjecture at $\ell=p$ of Gee--Newton in the case of imaginary CM fields under some technical assumptions.

The key new ingredient is a local-global compatibility result for $Q$-ordinary self-dual automorphic representations for arbitrary parabolic subgroups.
\end{abstract}
\maketitle

\tableofcontents

\section{Introduction}
In this article, we are concerned with the question of local-global compatibility of the global Langlands correspondence at $\ell=p$ for $\textnormal{GL}_n$ over imaginary CM number fields. Verifying local-global compatibility at $\ell=p$, among other things, proved to be essential in establishing modularity lifting results. In the case of imaginary CM fields, this problem has seen a breakthrough when the authors of \cite{ACC23} laid out and executed a strategy to attack it. Another breakthrough came very recently with the novel work of Caraiani--Newton. In \cite{CN23}, they introduced many new ideas to the method, yielding a robust argument, and significantly stronger results. In this article, we follow and seemingly push the ideas of \cite{CN23} close to their limit. As a result, in characteristic $0$ in the residually irreducible case, under some genericity assumption, we prove that the automorphic Galois representations constructed in \cite{HLTT16}, \cite{Sch15} satisfy the expected local-global compatibility at $\ell=p$, up to only bounding the nilpotence of the monodromy (see Theorem~\ref{Th1.1.1} below). Moreover, our results go beyond characteristic $0$ automorphic Galois representations and yield local-global compatibility for the torsion automorphic Galois representations constructed in \cite{Sch15} as well, vastly generalising the results of \cite{ACC23}, and \cite{CN23} (see Theorem~\ref{Th1.1.4}).

We now describe our results in more detail. Let $F$ be a number field and fix a prime number $p$. Consider a regular algebraic cuspidal automorphic representation (RACAR) $\pi$ of $\textnormal{GL}_n(\mathbf{A}_F)$ and an isomorphism $t:\overline{\mathbf{Q}}_p\xrightarrow{\sim} \mathbf{C}$. Langlands reciprocity predicts the existence of a continuous semisimple Galois representation
\begin{equation*}
    r_t(\pi):\textnormal{Gal}(\overline{F}/F)\to \textnormal{GL}_n(\overline{\mathbf{Q}}_p)
\end{equation*}
determined by the property that, for each finite place $v\notin S:=\{v\mid \pi_v \textnormal{ is ramified}\}\cup \{v|p\}$ in $F$, $r_t(\pi)|_{\textnormal{Gal}(\overline{F}_v/F_v)}$ is unramified, admitting an isomorphism
\begin{equation*}
    \textnormal{WD}(r_t(\pi)|_{\textnormal{Gal}(\overline{F}_v/F_v)})^{F-ss}\cong t^{-1}\textnormal{rec}(\pi_v\otimes |\det|_v^{\frac{1-n}{2}}).
\end{equation*}
Moreover, $r_t(\pi)$ is expected to satisfy local-global compatibility also at places in $S$ governed by the shape of $\pi$ at $S$ and at $\infty$. In particular, for a place $v$ of $F$ above $p$, it is predicted that $r_t(\pi)$ is de Rham at $v$ with labelled Hodge--Tate weights that can be read off from the infinitesimal character of $\pi_{\infty}$, and it further admits an isomorphism
\begin{equation*}
    \textnormal{WD}(r_t(\pi)|_{\textnormal{Gal}(\overline{F}_v/F_v)})^{F-ss}\cong t^{-1}\textnormal{rec}(\pi_v\otimes |\det|_v^{\frac{1-n}{2}}).
\end{equation*}
Assuming that $F$ is an imaginary CM field, the representations $r_t(\pi)$ have been constructed in \cite{HLTT16}, and \cite{Sch15}. We prove the following, making progress on verifying local-global compatibility at $p$ for the automorphic Galois representations of \cite{HLTT16}.
\begin{Th}\label{Th1.1.1}
    Let $F$ be an imaginary CM field, $t:\overline{\mathbf{Q}}_p\xrightarrow{\sim} \mathbf{C}$ be any isomorphism, and $\pi$ be a regular algebraic cuspidal automorphic representation of $\textnormal{GL}_n(\mathbf{A}_F)$ of weight $\lambda \in (\mathbf{Z}_+^n)^{\Hom(F,\mathbf{C})}$. Assume that
    \begin{itemize}
        \item $\overline{r_t(\pi)}$ is irreducible and decomposed generic in the sense of \cite{CS19} (see Definition~\ref{Def2.25}).
    \end{itemize}
    Then, for any $p$-adic place $v| p$ of $F$, and $\iota:F_v\hookrightarrow \overline{\mathbf{Q}}_p$, $r_t(\pi)$ is de Rham at $v$, with labelled $\iota$-Hodge--Tate weights $\lambda_{t\circ \iota,n}<...<\lambda_{t\circ \iota,1}+n-1$ and we have
    \begin{equation}\label{eq1.1.1}
        \textnormal{WD}(r_t(\pi)|_{\textnormal{Gal}(\overline{F}_v/F_v)})^{F-ss}\preceq t^{-1}\textnormal{rec}(\pi_v\otimes |\det|_v^{\frac{1-n}{2}}).
    \end{equation}
\end{Th}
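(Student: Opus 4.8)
The plan is to deduce Theorem~\ref{Th1.1.1} from our main local-global compatibility theorem for $Q$-ordinary conjugate self-dual automorphic representations, via a reduction to the conjugate self-dual case followed by a $p$-adic interpolation argument. All three conclusions --- de Rham-ness at $v$, the labelled $\iota$-Hodge--Tate weights, and \eqref{eq1.1.1} --- may be checked after replacing $F$ by a finite solvable CM extension $F'$: de Rham-ness and Hodge--Tate--Sen weights are insensitive to finite base change, $\mathrm{rec}$ is compatible with base change, and $\preceq$ descends along $F'/F$ (using the known unramified local-global compatibility to control the Frobenius-semisimplification). By Moret--Bailly I would choose $F'$ so that, in addition, $\overline{r_t(\pi)}|_{G_{F'}}$ stays irreducible and decomposed generic, $F'$ is the CM quadratic extension of a totally real field $(F')^+$, and $\mathrm{BC}_{F'/F}(\pi)_w$ is Iwahori-spherical for every $w\mid p$. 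Then form the doubling $\Pi:=\mathrm{BC}_{F'/F}(\pi)\boxplus(\mathrm{BC}_{F'/F}(\pi))^{c,\vee}$; after twisting $\pi$ by a sufficiently generic algebraic Hecke character so that $\Pi$ is regular and the Satake parameters of its two constituents at $p$ are in general position, $\Pi$ is a regular algebraic conjugate self-dual isobaric automorphic representation of $\mathrm{GL}_{2n}(\mathbf{A}_{F'})$. Since $r_t(\pi)|_{G_{F'}}$ is a $G_{F'}$-stable direct summand of $r_t(\Pi)$, compatibly with the construction of \cite{Sch15}, the Hodge--Tate assertions for $\pi$ follow by restriction, and so does the Weil--Deligne comparison via the internal conjugate-dual symmetry of $r_t(\Pi)$, exactly as in the passage from $\Pi$ to $\pi$ in \cite{CN23, ACC23}; so it suffices to treat $\Pi$.

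\textbf{Interpolation through $Q$-ordinary points.} Since $\Pi$ is in general not ordinary at $p$, I would place it in a $p$-adic family. The Zelevinsky description of each generic $\Pi_w$ ($w\mid p$) as a full parabolic induction of Steinberg-type representations singles out a parabolic $Q$ of a suitable unitary group $G$ through which $\Pi$, and hence $r_t(\Pi)$, is realized (following \cite{Sch15}); as $\Pi$ is Iwahori-spherical at $p$ and cohomological, a choice of refinement defines a point $x_\Pi$ on the eigenvariety attached to the completed cohomology of the Shimura varieties for $G$ at Iwahori level. Because $\overline{r_t(\pi)}$ is irreducible and decomposed generic, the localization of this completed cohomology at the corresponding residual system is concentrated in a single cohomological degree (by \cite{CS19, ACC23}), so the eigenvariety is well-behaved near $x_\Pi$. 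The crucial input is that classical $Q$-ordinary points are Zariski-dense in a neighbourhood of $x_\Pi$; at each of them our main theorem provides de Rham-ness, the expected Hodge--Tate--Sen weights, and $\mathrm{WD}(r_\bullet|_{G_{F'_v}})^{F-ss}\preceq\mathrm{rec}((\bullet)_v\otimes|\det|_v^{(1-2n)/2})$. Consequently $r_t(\Pi)|_{G_{F'_v}}$ is a $p$-adic limit of $Q$-ordinary de Rham representations of constant Hodge--Tate--Sen weights.

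\textbf{Passing to the limit.} The characteristic polynomial of the Sen operator varies rigid-analytically over the eigenvariety and agrees with the prescribed one along the dense $Q$-ordinary locus, so the Sen weights of $r_t(\Pi)|_{G_{F'_v}}$ --- hence those of $r_t(\pi)|_{G_{F_v}}$ --- are exactly $\lambda_{t\circ\iota,n}<\cdots<\lambda_{t\circ\iota,1}+n-1$. De Rham-ness does not pass to naive $p$-adic limits, so instead I would spread out the \emph{triangulation}: over the family the associated $(\varphi,\Gamma)$-module carries a triangulation with rigid-analytic parameters, each of locally constant integral weight and, along the $Q$-ordinary locus, with smooth part trivial on $\mathcal{O}_{F'_v}^{\times}$, hence everywhere by Zariski-density; thus at $x_\Pi$ the triangulation has locally algebraic parameters with weights in the de-Rham-forcing order dictated by the separation of the $\lambda$'s, and a trianguline representation of this shape is de Rham. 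Finally, the Frobenius-semisimplification of $\mathrm{WD}(r_t(\Pi)|_{G_{F'_v}})$ is determined by Chebotarev and unramified local-global compatibility, while the monodromy operator can only degenerate under specialization (each condition $\mathrm{rank}(N^k)\ge m$ is open on the family); combining these, the relation $\preceq$ valid at the $Q$-ordinary points is inherited at $x_\Pi$, giving \eqref{eq1.1.1} for $\Pi$, and therefore for $\pi$.

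\textbf{Main obstacle.} Granting the $Q$-ordinary local-global compatibility theorem --- itself the technical core of the paper --- the difficulty here is concentrated in the interpolation step: one must control Emerton's ordinary part $\mathrm{Ord}_Q$ of the completed cohomology of the relevant \emph{noncompact} unitary Shimura varieties (its finiteness over the appropriate Iwasawa algebra, the Galois representations it carries, and the contribution of the boundary), prove the Zariski-density of classical $Q$-ordinary points near $x_\Pi$, and propagate the trianguline structure through the family. The decomposed generic hypothesis is precisely what makes the localized completed cohomology, and hence the eigenvariety, tractable; together with the nilpotent ideal intrinsic to the construction of \cite{Sch15} and the loss of information about the monodromy operator under specialization, it is also the source of the inequality $\preceq$ in \eqref{eq1.1.1} in place of an equality.
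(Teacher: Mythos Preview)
Your route is genuinely different from the paper's, and it contains real gaps.

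\textbf{What the paper actually does.} The paper does \emph{not} deduce Theorem~\ref{Th1.1.1} from the characteristic $0$ $Q$-ordinary result (Theorem~\ref{Th4.27}) via eigenvariety interpolation. Instead, Theorem~\ref{Th4.27} is an \emph{input} to the proof of the torsion local-global compatibility Theorem~\ref{Th5.7} (through the degree shifting machinery of \S\ref{sec6.1}), and Theorem~\ref{Th1.1.1} is then a short corollary of Theorem~\ref{Th5.7}. Concretely: one makes a cyclic CM base change $F'/F$ so that $F'$ and the chosen $v$ satisfy the hypotheses of Theorem~\ref{Th5.7}, sets $\tau$ to be the Weil--Deligne inertial type of $\pi'=\mathrm{BC}_{F'/F}(\pi)$ at $p$, realises $\pi'$ as a system of Hecke eigenvalues in $H^{\ast}(X_K,\mathcal{V}_{(\lambda',\tau)}[1/p])$ via Theorem~\ref{Th2.12}, and specialises the commutative diagram of Theorem~\ref{Th5.7} at the resulting point $x:\mathbf{T}^{T,\lambda',\tau}(K^p)_{\mathfrak{m}}\to\overline{\mathbf{Q}}_p$. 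The factorisation through $R^{\lambda_v,\preceq\tau_v}_{\overline{\rho}_v}$ gives de Rham-ness and the Hodge--Tate weights; commutativity of the lower triangle gives the Weil--Deligne comparison; the very definition of $R^{\lambda_v,\preceq\tau_v}_{\overline{\rho}_v}$ gives $\preceq$. No eigenvarieties, no trianguline interpolation, no density of ordinary points are needed.

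\textbf{Gaps in your approach.} First, your doubling $\Pi=\pi'\boxplus(\pi')^{c,\vee}$ is isobaric, not cuspidal; placing such an Eisenstein point on an eigenvariety for a unitary group and controlling a neighbourhood of it is a serious undertaking not supplied by the paper (this is exactly the kind of boundary contribution that the degree shifting argument handles by different means, using the Caraiani--Scholze vanishing rather than any eigenvariety geometry). Second, even granting a good eigenvariety, Zariski-density of classical $Q$-ordinary points near $x_\Pi$ is not established anywhere here; the paper never proves or uses such a statement. Third, propagating de Rham-ness along a family via global triangulations requires results of Kedlaya--Pottharst--Xiao type and a careful classicality argument, none of which are in the paper. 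In short, you have replaced the paper's torsion-to-characteristic-$0$ specialisation (which is one paragraph once Theorem~\ref{Th5.7} is in hand) by a characteristic-$0$ interpolation argument that would require an entirely separate toolkit.
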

\begin{Rem}
    For the notion of "$\preceq$" in \ref{eq1.1.1}, see \S\ref{sec2.6}. It in particular means that we have an isomorphism
    \begin{equation*}
        \textnormal{WD}(r_t(\pi)|_{\textnormal{Gal}(\overline{F}_v/F_v)})^{ss}\cong (t^{-1}\textnormal{rec}(\pi_v\otimes |\det|_v^{\frac{1-n}{2}}))^{ss}
    \end{equation*}
    and additionally says that, in some precise sense, the monodromy of the LHS is at least as nilpotent as the monodromy of the RHS. As a consequence, assuming that the monodromy of $\textnormal{rec}(\pi_v\otimes |\det|_v^{\frac{1-n}{2}})$ is in fact $0$, we obtain an isomorphism
    \begin{equation*}
        \textnormal{WD}(r_t(\pi)|_{\textnormal{Gal}(\overline{F}_v/F_v)})^{F-ss}\cong t^{-1}\textnormal{rec}(\pi_v\otimes |\det|_v^{\frac{1-n}{2}}).
    \end{equation*}
\end{Rem}
\begin{Rem}
With the additional assumption that $\pi$ is conjugate self-dual, local-global compatibility of $r_t(\pi)$ is known in full generality at each place by work of many people (see \cite{Sh20} for a survey on this).

Beyond the self-dual case, local-global compatibility analogous to Theorem~\ref{Th1.1.1} at places prime-to-$p$ (without the assumptions on the residual representation) was proved in \cite{Var14}. However, having a better understanding of the monodromy has proved to be a rather subtle task. Nevertheless, some results in this direction can be found in \cite{AN21}, \cite{Yan21} and \cite{Mat24}.
    
At $\ell=p$, local-global compatibility results beyond the self-dual case were first established in \cite{ACC23} where they treated the Fontaine--Laffaille and ordinary cases. Their so-called "degree shifting argument" is essentially the only known way to attack local-global compatibility at $p$ without self-duality conditions and relies heavily on the deep vanishing result for unitary Shimura varieties of \cite{CS19}.\footnote{This is where the decomposed generic assumption of our results originates.} As we will discuss, it is an important feature of their work that they also prove local-global compatibility for Galois representations attached to genuine torsion Hecke eigenclasses. Presence of such torsion classes is one of the main subtleties of our setup.

Very recently, there is also work of A'Campo, \cite{AC22}, that, under the assumptions of Theorem~\ref{Th1.1.1}, proves that $r_t(\pi)$ is de Rham with the expected Hodge--Tate weights and further verifies that the representation is crystalline at every $p$-adic place where $\pi$ is spherical. Although his argument is a generalisation of the (Fontaine--Laffaille) degree shifting argument of \cite{ACC23}, it only works in characteristic $0$.

Finally, Caraiani--Newton \cite{CN23} go further and prove that automorphic Galois representations coming from Hecke eigenclasses with hyperspecial level at $p$ must be crystalline with the correct Hodge--Tate weights (cf. \textit{loc. cit.} Theorem 4.3.1). The novelty of their work is that it also deals with torsion classes. It relies on a clever combination of a generalisation of the Fontaine--Laffaille and ordinary degree shifting arguments. This adds up to a very robust method that keeps track of torsion as well. Our work generalises theirs to deal with arbitrary level and to also keep track of local Hecke operators at $p$.
\end{Rem}

We deduce Theorem~\ref{Th1.1.1} from a more general result that also deals with the torsion automorphic Galois representations constructed in \cite{Sch15}. In this generality, formulating a precise conjecture already requires some work that we summarise now assuming that $F$ is an imaginary CM field and for details we point the reader to \S\ref{sec5}.

Let $K\subset \textnormal{GL}_n(\mathbf{A}_F^{\infty})$ be a good compact open subgroup (in the sense of \cite{ACC23}, \S2.1.1) with $K_p=\prod_v \textnormal{GL}_n(\mathcal{O}_{F_v})$ and let $X_K$ be the corresponding locally symmetric space for $\textnormal{GL}_{n}/F$. Let $E\subset \overline{\mathbf{Q}}_p$ be a large enough finite field extension so that the images of all embeddings $F\hookrightarrow \overline{\mathbf{Q}}_p$ lie in $E$, and set $\mathcal{O}$ to be its ring of integers, and $\varpi\in \mathcal{O}$ be a choice of uniformiser. Given a highest weight vector $\lambda=(\lambda_v)_{v|p}$ for $(\textnormal{Res}_{F/\mathbf{Q}}\textnormal{GL}_n)_E$, and a so-called Weil--Deligne inertial type\footnote{This is simply, for each $p$-adic place $v$ of $F$, a choice of an isomorphism class of a pair of an inertial type and a monodromy operator.} $\tau=(\tau_v)_{v|p}$ at $p$ (see \S\ref{sec2.6} for a definition), we obtain an $\mathcal{O}$-local system $\mathcal{V}_{(\lambda,\tau)}$ on $X_K$. The corresponding Betti cohomology groups $H^{\ast}(X_K,\mathcal{V}_{(\lambda,\tau)})$ admit an action of an abstract Hecke algebra $\mathbf{T}^{\lambda,\tau}=\mathbf{T}\otimes_{\mathcal{O}}\mathfrak{z}_{\lambda,\tau}^{\circ}$. Here $\mathbf{T}$ is the usual abstract spherical Hecke algebra (over $\mathcal{O}$) acting at unramified places and $\mathfrak{z}_{\lambda,\tau}^{\circ}=\otimes_{v|p}\mathfrak{z}_{\lambda_v,\tau_v}^{\circ}$ is an $\mathcal{O}$-flat algebra consisting of Hecke operators at $p$ admitting a natural identification $\mathfrak{z}_{\lambda_v,\tau_v}^{\circ}[1/p]\cong \mathfrak{z}_{\tau_v}$ where the latter denotes the Bernstein centre corresponding to $\tau_v$. Denote by $\mathbf{T}^{\lambda,\tau}(K)$ the quotient of $\mathbf{T}^{\lambda,\tau}$ acting faithfully on $H^{\ast}(X_K,\mathcal{V}_{(\lambda,\tau)})$. Given a maximal ideal $\mathfrak{m}\subset \mathbf{T}^{\lambda,\tau}(K)$, Scholze constructed a continuous Galois representation
\begin{equation*}
    \overline{\rho}_{\mathfrak{m}}:\textnormal{Gal}(\overline{F}/F)\to \textnormal{GL}_n(\mathbf{T}^{\lambda,\tau}(K)/\mathfrak{m})
\end{equation*}
and, assuming that $\overline{\rho}_{\mathfrak{m}}$ is absolutely irreducible (in other words, $\mathfrak{m}$ is non-Eisenstein), a lift to a continuous representation
\begin{equation*}
    \rho_{\mathfrak{m}}:\textnormal{Gal}(\overline{F}/F)\to \textnormal{GL}_n(\mathbf{T}^{\lambda,\tau}(K)_{\mathfrak{m}}/I),
\end{equation*}
where $I\subset \mathbf{T}^{\lambda,\tau}(K)_{\mathfrak{m}}$ is a nilpotent ideal with $I^4=0$ (for the bound on the nilpotence degree, see \cite{NT16}). Both of these representations are determined by insisting that they satisfy local-global compatibility at prime-to-$p$ unramified places.

As explained by the work of Calegari--Geraghty (\cite{CG18}), to prove automorphy lifting theorems in the positive defect\footnote{For the definition of the defect of a group, we point the reader for instance to \cite{GN20}, \S2.1.} case (such as the case of $\textnormal{GL}_n$ over an imaginary CM field), it is essential to verify local-global compatibility of $\rho_{\mathfrak{m}}$. The reason why this does not follow from characteristic $0$ local-global compatibility is that, as we move away from the defect $0$ case (such as the case of $\textnormal{GL}_2$ over a totally real field or, more generally, the case of Shimura varieties), the cohomology group $H^{\ast}(X_K,\mathcal{V}_{(\lambda,\tau)})_{\mathfrak{m}}$ typically contains genuine torsion.\footnote{To support this sentence mathematically, we invite the reader to look at for instance \cite{BV13} where, in the defect $1$ case (such as the case of $\textnormal{GL}_2$ over an imaginary quadratic field), they prove exponential growth of torsion in the cohomology of locally symmetric spaces as we deepen the level while the space of characteristic $0$ automorphic forms contributing to the cohomology stays small.} In particular, one seeks an integral version of local-global compatibility. Such a conjecture was formulated in \cite{CG18} in the Fontaine--Laffaille case (cf. \textit{loc. cit.} Conjecture B) and indeed, under some assumptions, this was proved in \cite{ACC23}. The special feature of the Fontaine--Laffaille case is that there the integral $p$-adic Hodge theory is as well-behaved as it could be. As a consequence, one can make a conjecture that is reminiscent of the characteristic $0$ one. Similarly, the ordinary case of \cite{ACC23} is again special enough to make a more naive yet valid prediction. However, as one moves away from these two cases, it is less clear what one should ask for. The first signs (known to the author) of a general conjecture (in the potentially crystalline case) appeared in \cite{CEGGPS16} and a precise formulation (in the crystalline case) can be found in \cite{GN20}, Conjecture 5.1.12. Caraiani and Newton recently proved a large part of the conjecture of Gee--Newton (cf. \cite{CN23}, Theorem 1.3).

The two key ideas in \cite{GN20}, Conjecture 5.1.12 are to use Kisin's potentially semistable ($p$-torsion free) local deformation rings and that semisimple local Langlands can be interpolated over
the generic fibre of these deformation rings. In
particular, we obtain a workable notion of being "torsion potentially semistable of a given weight and inertial type". Moreover, by $p$-adically rescaling, we can test compatibility of the action of Hecke operators at $p$ using the interpolation map. 

Namely, for $v|p$ setting $\rho_v:=\rho_{\mathfrak{m}}|_{\textnormal{Gal}(\overline{F}_v/F_v)}$, an $\mathcal{O}$-flat quotient $R^{\lambda_v,\preceq \tau_v}_{\overline{\rho}_v}$ of the unrestricted local framed deformation ring $R^{\Box}_{\overline{\rho}_v}$ is constructed in \cite{Kis07}. It is characterised by the property that its $\overline{E}$-points $\rho:\textnormal{Gal}(\overline{F}_v/F_v)\to \textnormal{GL}_n(\overline{E})$ are exactly those lifts of $\overline{\rho}_v$ that are potentially semistable with Hodge--Tate weights determined by $\lambda_v$ (by the usual $\rho$-shift) and such that the Weil--Deligne inertial type of $\textnormal{WD}(\rho)$ is bounded by $\tau_v$. Moreover, on the generic fiber one can construct a map
\begin{equation*}
    \eta: \mathfrak{z}_{\tau_v}\to R^{\lambda_v,\preceq\tau_v}_{\overline{\rho}_v}[1/p]
\end{equation*}
interpolating the (Tate-normalised) semisimple local Langlands correspondence. Although $\eta$ does not necessarily send $\mathfrak{z}_{\lambda_v,\tau_v}^{\circ}$ into $R^{\lambda_v,\preceq \tau_v}_{\overline{\rho}_v}$, we can introduce the subring 
\begin{equation*}
\mathfrak{z}_{\lambda_v,\tau_v}^{\circ,\textnormal{int}}:=\eta^{-1}(R_{\overline{\rho}_v}^{\lambda_v,\preceq \tau_v})\cap \mathfrak{z}_{\lambda_v,\tau_v}^{\circ}\subset \mathfrak{z}_{\lambda_v,\tau_v}^{\circ}.
\end{equation*} 
This ring still has the property $\mathfrak{z}_{\lambda_v,\tau_v}^{\circ,\textnormal{int}}[1/p]=\mathfrak{z}_{\tau_v}$. Local-global compatibility can then be phrased by asking that there exists a (necessarily unique) dotted arrow making the diagram
\begin{equation}\label{eq1.1.2}
\begin{tikzcd}
R^{\Box}_{\overline{\rho}_v} \arrow{r}{\rho_v} \arrow[two heads]{d}{}
&\mathbf{T}^{\lambda,\tau}(K)_{\mathfrak{m}}/I \\
R^{\lambda_v,\preceq \tau_v}_{\overline{\rho}_v} \arrow[hook]{d}{} \arrow[dotted]{ur}{} &\mathfrak{z}_{\lambda_v,\tau_v}^{\circ,\textnormal{int}}\arrow{u}[swap]{\textnormal{nat}}\arrow[hook]{d}{}\arrow{l}{\eta\mid_{\mathfrak{z}_{\lambda_v,\tau_v}^{\circ,\textnormal{int}}}}\\
R^{\lambda_v,\preceq\tau_v}_{\overline{\rho}_v}[1/p]&\arrow{l}{\eta} \mathfrak{z}_{\tau_v}
\end{tikzcd}
\end{equation}
commutative. Here the map $\textnormal{nat}$ denotes the natural map towards the faithful Hecke algebra (along the inclusion $\mathfrak{z}_{\lambda_v,\tau_v}^{\circ,\textnormal{int}}\subset \mathfrak{z}_{\lambda_v,\tau_v}^{\circ}$) and the inclusions at the bottom are the ones induced by inverting $p$. Our main result is then (roughly) as follows (cf. Theorem~\ref{Th5.7}).
\begin{Th}\label{Th1.1.4}
    Let $F$ be an imaginary CM field that contains an imaginary quadratic field $F_0$ in which $p$ splits with totally real subfield $F^+$. Assume that the finite set of places $T$ is as in Theorem~\ref{Th5.7}. Let $\Bar{v}$ be a $p$-adic place in $F^+$. Assume the following:
    \begin{enumerate}
    \item There is a $p$-adic place $\Bar{v}'$ of $F^+$ such that $\Bar{v}\neq \Bar{v}'$ and
    \begin{equation*}
        \sum_{\Bar{v}''\neq \Bar{v},\Bar{v}'}[F^+_{\Bar{v}''}:\mathbf{Q}_p]\geq \frac{1}{2}[F^+:\mathbf{Q}]
    \end{equation*}
    where the sum runs over $p$-adic places of $F^+$;
    \item The representation $\overline{\rho}_{\mathfrak{m}}$ is decomposed generic.
\end{enumerate}
Then, up to possibly enlarging $I$, there is a (necessarily unique) dotted arrow making (\ref{eq1.1.2}) commutative.
\end{Th}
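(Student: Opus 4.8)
The plan is to deduce the theorem from a local--global compatibility statement for $Q$-ordinary (conjugate) self-dual automorphic representations, transported to $\textnormal{GL}_{n/F}$ via the boundary-cohomology (``degree-shifting'') method of \cite{ACC23}, \cite{CN23}. Fix $v\mid p$ and set $\rho_v=\rho_{\mathfrak m}|_{\textnormal{Gal}(\overline{F}_v/F_v)}$. Since $\rho_{\mathfrak m}$, and hence the top arrow of (\ref{eq1.1.2}), is determined by the prime-to-$p$ unramified Hecke action, and since $R^{\lambda_v,\preceq\tau_v}_{\overline{\rho}_v}$ is by construction the $\mathcal O$-flat quotient of $R^{\Box}_{\overline{\rho}_v}$ carving out the potentially semistable lifts of weight $\lambda_v$ with type bounded by $\tau_v$, commutativity of (\ref{eq1.1.2}) (after enlarging $I$ to a possibly larger nilpotent ideal) is equivalent to the conjunction of: (a) the map $R^{\Box}_{\overline{\rho}_v}\to\mathbf T^{\lambda,\tau}(K)_{\mathfrak m}/I$ given by $\rho_v$ factors through $R^{\lambda_v,\preceq\tau_v}_{\overline{\rho}_v}$; and (b) the resulting map intertwines $\eta|_{\mathfrak z^{\circ,\textnormal{int}}_{\lambda_v,\tau_v}}$ with the natural map $\mathfrak z^{\circ,\textnormal{int}}_{\lambda_v,\tau_v}\to\mathbf T^{\lambda,\tau}(K)_{\mathfrak m}/I$. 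So the first step is to repackage (a) and (b) as a single assertion about how the $p$-adic Hecke action at $v$ on $R\Gamma(X_K,\mathcal V_{(\lambda,\tau)})_{\mathfrak m}$ interacts with $\rho_v$.

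Next, following \cite{ACC23}, I would realise $R\Gamma(X_K,\mathcal V_{(\lambda,\tau)})_{\mathfrak m}$, up to a twist and a degree shift, as a direct summand of the cohomology of a locally closed boundary stratum of the Borel--Serre compactification of a unitary (similitude) Shimura variety $Sh$ attached to a quasi-split group $G/F^+$ in $2n$ variables associated to $F/F^+$, whose Siegel parabolic has Levi $\textnormal{Res}_{F/F^+}\textnormal{GL}_n$. Here hypotheses (1) and (2), combined with the theorem of \cite{CS19} that the cohomology of $Sh$ localised at a decomposed generic maximal ideal is concentrated in the middle degree, are used to make this comparison exact, i.e. to kill the spurious terms of the relevant Hochschild--Serre-type spectral sequence. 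The $2n$-dimensional Galois representation on $H^\ast(Sh)$ is then de Rham at $p$ and, in the self-dual (Shimura variety) setting, satisfies local--global compatibility at $p$ including compatibility of the Hecke action at $p$ with the interpolated semisimple local Langlands correspondence; the task becomes to extract from it the ``$\textnormal{GL}_n$-block'' $\rho_v$ and to read off that it is potentially semistable of weight $\lambda_v$ and type $\preceq\tau_v$ with the Hecke operators at $p$ acting through $\eta$.

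The new ingredient that performs this extraction is a local--global compatibility result for $Q$-ordinary self-dual automorphic representations for an arbitrary parabolic $Q$, chosen adapted to the type $\tau$ at $v$ (and, at the places away from $v$, matching the numerical hypothesis (1)). Concretely: apply Emerton's ordinary-parts functor $\textnormal{Ord}_Q$ to the completed (or finite-level, with $\mathcal V_{(\lambda,\tau)}$ trading level against coefficients) cohomology of $Sh$, and interpolate from the classical points --- where characteristic-$0$ local--global compatibility for $Sh$, hence for its $Q$-ordinary part, is available --- by Hida theory for $G$, generalising \cite{ACC23}, \S4, from the Borel to a general $Q$. This should yield that the Galois representation on the $Q$-ordinary part is block-upper-triangular with its $\textnormal{GL}_n$-block, locally at $v$, potentially semistable of the prescribed weight and type, and with the ordinary ($p$-adic) Hecke operators acting as predicted; transporting this back through the boundary comparison gives (a) and (b). Finally, as in \cite{CN23}, one combines this ordinary degree-shifting argument with a (potentially) Fontaine--Laffaille one to cover those $\overline{\rho}_v$ not reachable by the ordinary method.

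The main obstacle is precisely the $Q$-ordinary local--global compatibility for general parabolics: \cite{ACC23} and \cite{CN23} require only the Borel or a single Siegel-type parabolic, whereas accommodating arbitrary inertial types at $p$ and --- crucially --- retaining the full integral Bernstein action $\mathfrak z^{\circ,\textnormal{int}}_{\lambda_v,\tau_v}$ rather than merely a torus of diamond operators forces one to work with arbitrary $Q$. The delicate points are: (i) controlling the off-diagonal extension classes in the $Q$-ordinary Galois representation tightly enough that the $\textnormal{GL}_n$-block alone is seen to land in $R^{\lambda_v,\preceq\tau_v}_{\overline{\rho}_v}$, not merely in a coarser de Rham quotient; (ii) matching $\textnormal{Ord}_Q$ with the Bernstein centre at $p$, i.e. showing that the $p$-adic Hecke action surviving on the ordinary part suffices to recover the action of $\mathfrak z^{\circ,\textnormal{int}}_{\lambda_v,\tau_v}$ in an $\eta$-compatible way, which is what yields (b); and (iii) carrying out the Hida theory for $G$, the ordinary-parts computations, and the degeneration of the boundary spectral sequence integrally, in the presence of genuine torsion and of the nilpotent ideal $I$ --- it is exactly this torsion that both necessitates the strategy and blocks a naive reduction to the characteristic-$0$ case.
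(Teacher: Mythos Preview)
Your overall strategy is correct and matches the paper's: degree-shift to the middle-degree cohomology of a $U(n,n)$ Shimura variety via the boundary, take ordinary parts there, use characteristic-$0$ local--global compatibility for conjugate self-dual automorphic representations, extract the $n$-dimensional piece, and transport back. You have also correctly identified the three delicate points (i)--(iii).

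There is, however, one genuine misconception worth flagging. You write that the parabolic $Q$ should be ``chosen adapted to the type $\tau$ at $v$'', and that handling arbitrary inertial types is what forces arbitrary $Q$. That is not how the argument runs. For Theorem~\ref{Th1.1.4} itself the parabolic on the $\textnormal{GL}_n$ side is always $Q_v=\textnormal{GL}_n$, and on the unitary side it is always the Siegel parabolic $P$; see the first line of the proof of Theorem~\ref{Th5.7}, which applies Proposition~\ref{Prop6.6} with $(Q_v,\lambda_v,\underline{\tau_v})=(\textnormal{GL}_n,\lambda_v,\tau_v)$. The general-$Q$ theory (Theorem~\ref{Th4.27}, and the Hida theory of \S\ref{sec3.2}) is developed in greater generality than strictly needed here; for Theorem~\ref{Th1.1.4} only the Siegel case is invoked. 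What actually accommodates an arbitrary type $\tau_v$ is not a choice of parabolic but rather: (1) working with the locally algebraic coefficient system $\mathcal V_{(\lambda,\tau)}$ built from the Schneider--Zink type $\sigma(\tau_v)$, and (2) carrying the full integral Bernstein-centre action $\mathfrak z^\circ_{\lambda_v,\tau_v}$ through the degree-shifting map. The latter is the main technical novelty: one must show that the degree-shifting map $\overline{\mathcal S}^v:\widetilde A\to A/I$ is a $\mathfrak z^\circ_{\lambda_v,\tau_v}$-algebra homomorphism, which in turn requires Hecke-equivariance of Poincar\'e duality at $p$ (Proposition~\ref{Prop3.20}) and a suitably strengthened $P$-ordinary local--global compatibility on the unitary side (Theorem~\ref{Th4.30}) pinning down the full Weil--Deligne representation on each block, not just the Hodge--Tate weights.

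Two smaller points. First, you allude to extracting the ``$\textnormal{GL}_n$-block'' but do not name the mechanism: this is the determinant/pseudocharacter argument of \cite{CN23}, \S3.2 (invoked as Proposition~3.2.4 there), which after a twisting argument separates the Jordan--H\"older constituents of $\overline\rho_{\mathfrak m}|_{G_{F_v}}$ from those of $\overline\rho_{\mathfrak m}^{\vee,c}(1-2n)|_{G_{F_v}}$ and produces an honest $\widetilde A$-valued lift $\widetilde\rho_v$. Second, the Fontaine--Laffaille and ordinary degree-shifting arguments are not used as alternatives covering disjoint cases of $\overline\rho_v$; they are woven into a single argument (the $\mathcal V_U(\overline S_2)$-hypercohomology spectral sequence inside Proposition~\ref{Prop6.1}), with the ordinary parts on the unitary side and the weight-shifting via $\mathcal V_U$ acting in tandem.
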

\begin{Rem}
    The further constraint on $T$ in the statement of Theorem~\ref{Th5.7} is a mild one that can be always achieved by enlarging $T$. This condition is so that we can appeal to the unconditional base change results of \cite{Shi14} and it is already present in \cite{Sch15}.

    Assumption i) is essential to our methods and already appears in \cite{ACC23}, and \cite{CN23}. In particular, this rules out the case of $F$ being an imaginary quadratic field. Using Theorem~\ref{Th1.1.1}, it seems plausible to weaken this assumption, for instance allowing one to prove the theorem for $n=2$, and $[F:\mathbf{Q}]=4$. 

    Finally, assumption ii) on $\overline{\rho}_{\mathfrak{m}}$ is crucial to be able to use the vanishing results of \cite{CS19}.
\end{Rem}
\begin{Rem}
    We point out that Theorem~\ref{Th1.1.4} yields a new result even in the crystalline case since it also matches the crystalline Frobenius with the Frobenius at $v$ under local Langlands. Namely, when $\tau_v$ is assumed to be trivial (i.e. when the inertial type is the trivial representation of the inertia subgroup and the monodromy is the $0$ matrix), \cite{CN23} proves the existence of a dotted arrow making the top triangle of (\ref{eq1.1.2}) commute. Note that in the crystalline case $\mathfrak{z}_{\tau_v}\cong E[T_1,...,T_n^{\pm 1}]$ is the usual spherical Hecke algebra, and $\mathfrak{z}_{\lambda_v,\tau_v}^{\circ}\subset \mathfrak{z}_{\tau_v}$ is some suitable subalgebra acting on $H^{\ast}(X_K,\mathcal{V}_{(\lambda,\tau)})$. Therefore, the commutativity of the lower triangle of (\ref{eq1.1.2}) roughly says that, beyond $\rho_{\mathfrak{m}}|_{\textnormal{Gal}(\overline{F}_v/F_v)}$ being crystalline with the right Hodge--Tate weights, its crystalline Frobenius too is compatible with the local Langlands correspondence (i.e. with the map $\eta$). In particular, in characteristic $0$, for a given automorphic representation $\pi$, we prove that the crystalline Frobenius on the filtered $\varphi$-module associated with $r_t(\pi)|_{\textnormal{Gal}(\overline{F}_v/F_v)}$ coincides with the Frobenius on $t^{-1}\textnormal{rec}(\pi_v\otimes |\det|^{\frac{1-n}{2}}_v)$.

    Although in general for proving small $R=\mathbf{T}$ results it often suffices to prove factorisation through the right potentially semistable deformation rings at $p$, we remark that understanding the crystalline Frobenius is for instance essential to make the big $R=\mathbf{T}$ results of \cite{GN20}, \S5 unconditional. 
\end{Rem} 
In the rest of the introduction, we briefly explain our approach to Theorem~\ref{Th1.1.4} focusing on what it adds to the proof of \cite{CN23}. The first observation in the proof of \textit{loc. cit.} is that to ensure the existence of a dotted arrow making the top triangle of \ref{eq1.1.2} commute, it suffices to construct a characteristic $0$ lift of $\rho_{v}$ mod $\varpi^m$ with the right $p$-adic Hodge theoretic properties for every $m\in \mathbf{Z}_{\geq 1}$. More precisely, assume for a second that $\tau$ is trivial and fix integers $m\geq 1$, $q\geq 0$. Setting $A:= \mathbf{T}(H^q(X_K,\mathcal{V}_{\lambda}/\varpi^m)_{\mathfrak{m}})$, Caraiani and Newton construct a finite flat $\mathcal{O}$-algebra $\widetilde{A}$,\footnote{$\widetilde{A}$ is a faithful Hecke algebra for some $P$-ordinary middle degree (Betti) cohomology of some $U(n,n)$ Shimura variety where $P$ is the Siegel parabolic subgroup in $U(n,n)$.} a so-called degree shifting map
\begin{equation*}
    \overline{\mathcal{S}}:\widetilde{A}\to A,
\end{equation*}
and a lift $\widetilde{\rho}_v:\textnormal{Gal}(\overline{F}_v/F_v)\to \textnormal{GL}_n(\widetilde{A})$ of $\rho_v:\textnormal{Gal}(\overline{F}_v/F_v)\to \textnormal{GL}_n(A)$ under $\overline{\mathcal{S}}$. Moreover, after inverting $p$, the lift $\widetilde{\rho}_v$ becomes crystalline with the expected Hodge--Tate weights. Then a simple diagram chase gives the factorisation of the top triangle in \ref{eq1.1.2} for $\rho_v:\textnormal{Gal}(\overline{F}_v/F_v)\to \textnormal{GL}_n(A)$. Taking limit over $m$ then finishes the proof. In particular, their proof consists of two main steps. The first of which is constructing the transfer map $\overline{\mathcal{S}}$ (that purely takes place on the automorphic side) descending the Satake transfer for the parabolic subgroup $P$ in $U(n,n)$. The other step for them is proving a $P$-ordinary local-global compatibility result in characteristic $0$ for conjugate self-dual automorphic Galois representations (cf. \cite{CN23}, Theorem 3.1.2).

For us to conclude, we need an enrichment of their output. Namely, now set $\tau$ to be arbitrary, and consider $A^{\lambda,\tau}:=\mathbf{T}^{\lambda,\tau}(H^q(X_K,\mathcal{V}_{(\lambda,\tau)})_{\mathfrak{m}})$ that is now naturally a $\mathfrak{z}_{\lambda_v,\tau_v}^{\circ}$-algebra. We too construct a finite flat $\mathcal{O}$-algebra $\widetilde{A}^{\lambda_{v},\tau_v}$ by similarly taking the faithful Hecke algebra of some $P$-ordinary middle degree cohomology of a $U(n,n)$-Shimura variety. However, in our case we prove that it is equipped with the structure of a $\mathfrak{z}_{\lambda_v,\tau_v}^{\circ}$-algebra. We then perform the degree shifting but we further carefully make sure that the obtained map $\overline{\mathcal{S}}^v$ is a map of $\mathfrak{z}_{\lambda_v,\tau_v}^{\circ}$-algebras. This requires some extension of Hida theory for torsion Betti cohomology, and a study of how Hecke operators at $p$ behave under Poincar\'e duality. The other ingredient in the proof of Theorem~\ref{Th1.1.4} is a strong enough $P$-ordinary local-global compatibility result in characteristic $0$ for conjugate self-dual automorphic representations. Here our main result is as follows (cf. Theorem~\ref{Th4.27} and Theorem~\ref{Th4.30}).
\begin{Th}\label{Th1.1.7}
    Let $\pi$ be a regular algebraic conjugate self-dual cuspidal automorphic representation of $\textnormal{GL}_n(\mathbf{A}_F)$ of weight $\lambda\in (\mathbf{Z}^n_+)^{\Hom(F,\mathbf{C})}$, $t:\overline{\mathbf{Q}}_p\xrightarrow{\sim}\mathbf{C}$ be a fixed field isomorphism and $v|p$ be a $p$-adic place of $F$. Let $Q\subset \textnormal{GL}_n$ be the standard parabolic subgroup corresponding to a partition $n=n_1+...+n_t$, and denote by $M$ its Levi quotient. Assume that $t^{-1}\pi_v$ is $Q$-ordinary of weight $\lambda_v$. 
    
    Then the $Q$-ordinary part $(t^{-1}\pi_v)^{Q\textnormal{-ord}}$ is an irreducible smooth $\overline{\mathbf{Q}}_p$-representation of $M(F_v)=\textnormal{GL}_{n_1}(F_v)\times ...\times \textnormal{GL}_{n_t}(F_v)$. Write $(t^{-1}\pi_v)^{Q\textnormal{-ord}}=\pi_1\otimes...\otimes \pi_t$. Then there is moreover an isomorphism
    \begin{equation*}
    r_t(\pi)|_{\textnormal{Gal}(\overline{F}_v/F_v)}\sim \begin{pmatrix}
\rho_1 & \ast & ... & \ast\\
0 & \rho_2 & ... & \ast\\
. & . & . & .\\
. & . & . & .\\
0 & ... & 0 & \rho_t
\end{pmatrix}
\end{equation*}
where, for $1\leq j\leq t$,
\begin{equation*}
    \rho_j:\textnormal{Gal}(\overline{F}_v/F_v)\to \textnormal{GL}_{n_j}(\overline{\mathbf{Q}}_p)
\end{equation*}
is potentially semistable such that, for every embedding $\iota:F_v\hookrightarrow \overline{\mathbf{Q}}_p$, the labelled $\iota$-Hodge--Tate weights of $\rho_j$ are given by
\begin{equation*}
    \lambda_{t\circ \iota,n+1-(n_1+...+n_{j})}+n_1+...+n_j-1>...>\lambda_{t\circ \iota,n+1-(n_1+...+n_{j-1}+1)}+n_1+...+n_{j-1}
\end{equation*}
and we have an isomorphism
\begin{equation*}
    \textnormal{WD}(\rho_j)^{F-ss}\cong \textnormal{rec}(\pi_j\otimes|\det|^{n_1+...+n_{j-1}}_v\otimes|\det|_v^{\frac{1-n_j}{2}} ).
\end{equation*}
\end{Th}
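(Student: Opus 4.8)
The plan is to deduce the theorem from the full local--global compatibility that is already available in the conjugate self-dual case, combined with an analysis of the $Q$-ordinary condition through $p$-adic Hodge theory; a Hida-theoretic argument over the relevant unitary Shimura varieties serves as a fallback where the first route needs reinforcement.

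First I would reduce to a purely local filtration statement. Since $\pi$ is conjugate self-dual, $\rho:=r_t(\pi)|_{\textnormal{Gal}(\overline{F}_v/F_v)}$ is already known to be potentially semistable with the stated labelled Hodge--Tate weights and to satisfy $\textnormal{WD}(\rho)^{F-ss}\cong\textnormal{rec}(\pi_v\otimes|\det|_v^{\frac{1-n}{2}})$. Because $\pi_v$ is generic and $Q$-ordinary, the $Q$-ordinary part $\pi_1\otimes\cdots\otimes\pi_t$ is irreducible and $\pi_v$ embeds into $\textnormal{Ind}_Q^{\textnormal{GL}_n}$ of it up to an unramified twist; compatibility of $\textnormal{rec}$ with parabolic induction then splits $\textnormal{rec}(\pi_v\otimes|\det|_v^{\frac{1-n}{2}})$ as $\bigoplus_j\textnormal{rec}(\pi_j\otimes|\det|_v^{n_1+\cdots+n_{j-1}}\otimes|\det|_v^{\frac{1-n_j}{2}})$. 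So the only remaining content is the existence of a $\textnormal{Gal}(\overline{F}_v/F_v)$-stable filtration on $\rho$ whose $j$-th graded piece is potentially semistable, has the Hodge--Tate weights listed in the theorem, and has $\textnormal{WD}(\cdot)^{F-ss}$ equal to the $j$-th summand above.

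The heart of the argument is to translate $Q$-ordinarity of $t^{-1}\pi_v$ into a slope condition on $\rho$. Unwinding Emerton's ordinary-part functor, $Q$-ordinarity asserts that the eigenvalues of the Hecke operators $U_{p,\mu}$ attached to the dominant cocharacters $\mu$ of the centre of the Levi $M$ are, after the normalisation prescribed by $\lambda_v$, $p$-adic units. Feeding this through the known identification $\textnormal{WD}(\rho)^{F-ss}\cong\textnormal{rec}(\pi_v\otimes|\det|_v^{\frac{1-n}{2}})$ and the explicit normalisation of $\textnormal{rec}$, the unit condition becomes the statement that the Newton polygon of the $\varphi$-isocrystal underlying $D_{\textnormal{pst}}(\rho)$ meets its Hodge polygon at each of the breakpoints $n_1,\,n_1+n_2,\,\dots,\,n_1+\cdots+n_{t-1}$; the same computation identifies the $j$-th slope layer with $\textnormal{rec}(\pi_j\otimes|\det|_v^{n_1+\cdots+n_{j-1}}\otimes|\det|_v^{\frac{1-n_j}{2}})$. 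Granting this, I would build the filtration by iterating the Newton--Hodge decomposition: the slope filtration of the underlying isocrystal is canonical, hence stable under $N$ and the residual Galois action, so at each breakpoint $d$ it cuts out a sub-$(\varphi,N,\textnormal{Gal})$-module $D'$ of rank $d$; weak admissibility of $D_{\textnormal{pst}}(\rho)$ together with the touching condition pins $t_H(D')$ to its minimal value $t_N(D')$, so $D'$ and $D_{\textnormal{pst}}(\rho)/D'$ are weakly admissible and by Colmez--Fontaine correspond to a $\textnormal{Gal}(\overline{F}_v/F_v)$-stable subrepresentation of $\rho$ of rank $d$ and its quotient. Iterating over all $Q$-breakpoints produces the filtration; the graded pieces are potentially semistable with $D_{\textnormal{pst}}$ the corresponding slope layers, so their Hodge--Tate weights are the successive blocks of those of $\rho$ --- which match the lists in the statement by the usual $\rho$-shift bookkeeping --- and their $F$-semisimplified Weil--Deligne representations are the $\textnormal{rec}(\pi_j\otimes\cdots)$ identified above.

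If the automorphic $Q$-ordinary condition does not turn out to be literally equivalent to (or is awkward to compare directly with) the Newton--Hodge touching condition, I would instead realise $r_t(\pi)$ in the $Q$-ordinary part, in Emerton's sense, of the cohomology of unitary Shimura varieties attached to a descent of $\pi$ to a unitary group over $F^+$ --- choosing the Shimura datum so that $v$ sits over a split $p$-adic place of $F^+$, where $Q$ becomes a genuine parabolic of the unitary group --- form the big $Q$-ordinary Hida Hecke algebra with its $U_{p,\mu}$-action, and show the Galois representation over it is conjugate into the standard parabolic $Q$ by a Zariski-density argument: at classical points of sufficiently regular, $p$-small weight the graded pieces can be twisted into the Fontaine--Laffaille range where the crystalline case of the Newton--Hodge argument applies unconditionally, and the resulting reducibility then spreads over the Hecke algebra via a reducibility-ideal / pseudorepresentation argument before one specialises at $\pi$. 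Either way, the delicate point --- and the step I expect to be the main obstacle --- is the precise dictionary between $Q$-ordinarity of $\pi_v$ and the $p$-adic Hodge-theoretic slope data of $\rho$: one must check that \emph{all} of the $U_{p,\mu}$-unit conditions together force the Newton polygon to touch the Hodge polygon at \emph{every} $Q$-breakpoint and get every twist right (the modulus character of $Q$, the $\tfrac{1-n_j}{2}$-shifts, and the global $\tfrac{1-n}{2}$- and $\rho$-shifts), with the additional burden, on the Hida-theoretic route, of controlling the filtration integrally over a possibly non-reduced Hecke algebra and of carrying out the descent to the unitary group for a general imaginary CM field $F$.
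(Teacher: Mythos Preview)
Your first approach --- deduce everything from the known local--global compatibility for $\rho=r_t(\pi)|_{G_{F_v}}$, translate $Q$-ordinarity into a Newton--Hodge touching condition on $D_{\textnormal{pst}}(\rho)$, and use weak admissibility to promote a filtration of $D_{\textnormal{pst}}(\rho)$ to one of $\rho$ --- is exactly the route the paper takes (Theorem~4.27, following \cite{Ger18}, \cite{Tho15}). The Hida-theoretic fallback is not needed. Two steps, however, require real work that your sketch elides.

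First, irreducibility of $(t^{-1}\pi_v)^{Q\textnormal{-ord}}$ is not a formality and is proved separately (Corollary~4.21). The argument is an adjunction trick resting on the computation $\textnormal{Ord}_Q^{\textnormal{lalg}}(\textnormal{Ind}_Q^G(\delta_Q\otimes\Pi_M))\cong\textnormal{Ord}_M^{\textnormal{lalg}}(\Pi_M)$ for weakly admissible $\Pi_M$ (Theorem~4.16), which in turn is a careful slope analysis of the geometric lemma combined with a combinatorial inequality for modulus characters (Lemma~4.18). Genericity of $\pi_v$ plays no role here.

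Second, your assertion that compatibility of $\textnormal{rec}$ with parabolic induction ``splits'' $\textnormal{rec}(\pi_v\otimes|\det|_v^{\frac{1-n}{2}})$ as $\bigoplus_j\textnormal{rec}(\pi_j\otimes\cdots)$ is only correct after semisimplification; the monodromy can link the blocks. Relatedly, the slope filtration you propose need not have breaks at the partition points, and even when it does the induced $N$ on each graded piece is not obviously the right one. The paper instead constructs the filtration directly on the Weil--Deligne side (Corollary~4.25): using the embedding $\pi_v\hookrightarrow\textnormal{Ind}_Q^{\textnormal{GL}_n}(\delta_Q\cdot\pi_1\otimes\cdots\otimes\pi_t)$ from adjunction, a valuation gap between the supercuspidal supports of consecutive blocks (Lemma~4.24), and $t$-preunitarity of $\pi_v$, a lemma in Tadi\'c's unitary classification (Lemma~A.3) shows that every Langlands segment of $\pi_v$ is a union of segments of the $\pi_j$. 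This is what pins down the monodromy on each graded piece. With that filtration on $\textnormal{WD}(\rho)\cong D_{\textnormal{pst}}(\rho)$ in hand, the weak-admissibility step (compute $t_N$ of each sub via central characters, Lemma~4.28, and invoke Fontaine together with regularity of $\lambda$) is exactly as you describe.
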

With Theorem~\ref{Th1.1.7}, and the degree shifting map $\overline{\mathcal{S}}^v$ in hand, the arguments of \cite{CN23} (most notably the determinant argument of \textit{loc. cit.} \S3.2) produce a lift $\widetilde{\rho}_v:\textnormal{Gal}(\overline{F}_v/F_v)\to \textnormal{GL}_n(\widetilde{A}^{\lambda_v,\tau_v})$ of $\rho_v$ under $\overline{\mathcal{S}}^v$. Thanks to Theorem~\ref{Th1.1.7}, there is a dotted arrow making a diagram like \ref{eq1.1.2} commute for $\widetilde{\rho}_v$. Then an easy diagram chase using that $\overline{\mathcal{S}}^v$ is a $\mathfrak{z}_{\lambda_v,\tau_v}^{\circ}$-algebra map proves the same for $\rho_v$ valued in $A^{\lambda,\tau}$. Passing to the limit over $m$ then allows one to conclude.
\begin{Rem}
    Theorem~\ref{Th1.1.7} generalises the ordinary local-global compatibility result \cite{Ger18}, Corollary 2.7.8 (see also \cite{Tho15}, Theorem 2.4) to the case of a general standard parabolic subgroup. The main extra difficulty is having a generalisation of \cite{Ger18}, Lemma 5.4 2) for $Q$-ordinary parts for a general standard parabolic subgroup. This is the first part of Theorem~\ref{Th1.1.7} and is a question purely in the realm of smooth representations of $p$-adic reductive groups. This part of the theorem we prove in a rather general setup in \S\ref{sec4.2} that might be of independent interest.
\end{Rem}
\begin{Rem}
    As it does not require any significant extra work, we in fact also produce (locally at $p$) $Q$-ordinary lifts with expected shape of Galois representations associated with $Q$-ordinary torsion Hecke eigenclasses for any standard parabolic subgroup $Q\subset \textnormal{GL}_n$ (see Proposition~\ref{Prop6.6}). Although we do not investigate it further, after formulating a suitable $Q$-ordinary local-global compatibility conjecture, these $Q$-ordinary lifts seem to have the potential to make progress on such conjectures.
\end{Rem}

The organisation of the paper is as follows. In Section 2, we collect the preliminaries on the cohomology of locally symmetric spaces and introduce the necessary local systems and the corresponding Hecke actions for the group $\textnormal{GL}_{n/F}$ and the quasi-split unitary group $\widetilde{G}$. In Section 3, we recollect and generalise Hida theory in the Betti setting for general standard parabolic subgroups. In Section 4, we revisit and further develop a theory of ordinary parts for locally algebraic representations. Moreover, we finish the section with proving Theorem~\ref{Th1.1.7}. In Section 5, we formulate a torsion local-global compatibility conjecture following \cite{GN20}. In Section 6, we execute the strategy outlined in the introduction to prove Theorem~\ref{Th1.1.4} and conclude the section with deducing Theorem~\ref{Th1.1.1}. In the Appendix, we make a brief recollection on the smooth representation theory of $\textnormal{GL}_n$ over a $p$-adic field and prove a couple of relevant technical lemmas that are only used at the end of \S\ref{sec4.2}.
\subsection*{Acknowledgements}
First and foremost, I would like to express my gratitude to Ana Caraiani for suggesting this project, generously sharing her knowledge and ideas on the subject and for spending much of her time listening to my ideas. This work would not have been completed without her support. I am also indebted to James Newton for the many illuminating conversations and for constantly showing interest in my work. I am grateful to Fred Diamond for providing plenty of feedback on this work and asking even more great questions. I thank Lambert A'Campo, Ana Caraiani, Fred Diamond, James Newton and Matteo Tamiozzo for comments on an earlier draft of this paper.

This work was supported by the Engineering and Physical Sciences Research Council [EP/S021590/1]. 
The EPSRC Centre for Doctoral Training in Geometry and Number Theory (The London School of Geometry and Number Theory), University College London, King's College London and Imperial College London. This project has received funding from the European Research Council (ERC) under the European Union’s Horizon 2020 research and innovation programme (grant agreement No. 804176).

\section*{Notation and Conventions}\label{Notations}
Given a number field $F$, we will denote by $S(F)$ its set of finite places and by $S_p(F)$ its set of $p$-adic places. We set $G_F$ to be the absolute Galois group $\textnormal{Gal}(\overline{F}/F)$ and for a finite set $S\subset S(F)$ we denote by $G_{F,S}$ the quotient of $G_F$ corresponding to the maximal Galois extension of $F$, unramified outside $S$. For $v\in S(F)$, set $F_v$ to be the $v$-adic completion of $F$, fix a choice of uniformiser $\varpi_v$ and set $k_v:=\mathcal{O}_{F_v}/\varpi_v$ to be its residue field. Set $G_{F_v}:=\textnormal{Gal}(\overline{F}_v/F_v)$. Moreover, $I_{F_v}\subset G_{F_v}$ will denote its inertia subgroup and set $\textnormal{Frob}_v\in G_{F_v}/I_{F_v}$ to be the geometric Frobenius. We denote by $\mathbf{A}_F$ the ring of adeles of $F$ and for $S$ a finite set of places of $F$ we denote by $\mathbf{A}_{F}^S$ its prime-to-$S$ part and set $\widehat{\mathcal{O}}_{F,S}:=\prod_{v\in S}\mathcal{O}_{F_v}$.

For $G$ a reductive group over a number field $F$ and a finite set $S\subset S(F)$, we will denote by $G^S:=G(\mathbf{A}_{F}^{S\cup \infty})$ and by $G_S:=G(\prod_{v\in S}F_v)$. Moreover, if $S=S_p(F)$, we only write $G^p$, respectively $G_p$.

Let $G$ be a linear algebraic group over $\mathcal{O}_L$ where $\mathcal{O}_L$ is the ring of integers of a finite extension $L/\mathbf{Q}_p$. Let $\varpi_L\in \mathcal{O}_L$ be a choice of uniformiser. For $n\in \mathbf{Z}_{\geq 0}$, denote $\ker(G(\mathcal{O}_L)\to G(\mathcal{O}_L/\varpi_L^n))$ by $G^n$.

For $G$ a reductive group over a finite extension $L/\mathbf{Q}_p$ with parabolic subgroup $Q=M\ltimes N \subset G$, we denote by $\delta_Q:M(L)\to \mathbf{Q}^{\times}$ the corresponding modulus character $x\mapsto |\det(\textnormal{ad}(x)|\textnormal{Lie}N)|_L$. For a smooth representation $\sigma$ of $M(L)$, we denote by $\textnormal{n-Ind}_{Q(L)}^{G(L)}\sigma$ the normalised parabolic induction $\textnormal{Ind}_{Q(L)}^{G(L)}\delta_Q^{1/2}\sigma$. For a smooth $\overline{\mathbf{Q}}_p$-representation $\pi$ of $G(L)$, we will denote by $J_Q(\pi)$ its \textit{unnormalised} Jacquet module associated with $Q$. In general we denote by $\textnormal{c-Ind}$ compact induction for smooth representations.

For a smooth irreducible representation $\pi$ of $\textnormal{GL}_n(L)$ with supercuspidal support $(\textnormal{GL}_{n_1}(L)\times...\times \textnormal{GL}_{n_k}(L),\pi_1\otimes...\otimes \pi_k)$, we set $\textnormal{SC}(\pi):=\{\pi_1,...,\pi_k\}$.

Set $W_L$ to be the Weil group of $L$ and write $\textnormal{Art}_L:L^{\times}\xrightarrow{\sim} W_L^{\textnormal{ab}}$ for the Artin map of local class field theory normalised by sending uniformisers to lifts of the geometric Frobenius.

We denote by $\textnormal{rec}_L$ the local Langlands correspondence for $L$. If it's clear from the context, we will just write $\textnormal{rec}$ instead. Moreover, for $\pi$ an irreducible admissible $\textnormal{GL}_n(L)$-representation, we set $\textnormal{rec}^T(\pi)=\textnormal{rec}(\pi\otimes|\cdot|_L^{\frac{1-n}{2}})$. Then $\textnormal{rec}^T$ commutes with $\textnormal{Aut}(\mathbf{C})$ and therefore $\textnormal{rec}^T$ makes sense over $\overline{\mathbf{Q}}_p$ by choosing an abstract isomorphism $t:\overline{\mathbf{Q}}_p\xrightarrow{\sim} \mathbf{C}$. In the literature, this is often called the \textit{Tate normalisation} of the local Langlands correspondence.

We fix an algebraic closure $\overline{\mathbf{Q}}_p/\mathbf{Q}_p$ and denote by $\textnormal{val}_p$ the $p$-adic valuation normalised by setting $\textnormal{val}_p(p)=1$. By convention, the $\iota$-Hodge--Tate weights of the $p$-adic cyclotomic character $\epsilon:G_L\to \mathbf{Z}_p^{\times}$ are $-1$.

For an $\ell$-adic Galois representation $\rho:G_L\to \textnormal{GL}_n(\overline{\mathbf{Q}}_{\ell})$ with $\ell\neq p$, we denote by $\textnormal{WD}(\rho)$ the associated Weil--Deligne representation. For a de Rham $p$-adic Galois representation $\rho:G_L\to \textnormal{GL}_n(\overline{\mathbf{Q}}_p)$, we denote by $\textnormal{WD}(\rho)$ the associated Weil--Deligne representation provided by the recipe of Fontaine.

For a Weil--Deligne representation $(r,N)$, we denote by $(r,N)^{F-ss}$ its Frobenius semisimplification and by $(r,N)^{ss}$ its semisimplification.

For a ring $R$, we denote by $D(R)$ the derived category of left $R$-modules and by $D^+(R)$ resp. $D^b(R)$ the bounded below resp. bounded derived category. Given moreover a locally profinite group $G$, we will denote by $\textnormal{Mod}_{\textnormal{sm}}(R[G])$ the category of smooth $R[G]$-modules and denote by $D_{\textnormal{sm}}(R[G])$ its derived category and by $D^+_{\textnormal{sm}}(R[G])$ resp. $D^b_{\textnormal{sm}}(R[G])$ its bounded below resp. bounded derived category.

Given a topological group $G$, and a topological space $X$ with a continuous right action of $G$, we denote by $\textnormal{Sh}_G(X)$ the category of $G$-equivariant sheaves on $X$ in the sense of \cite{NT16}, Definition 2.22, (2). Moreover, for a ring $R$, we denote by $\textnormal{Sh}_G(X,R)$ the category of $G$-equivariant sheaves of $R$-modules on $X$.

For $G/L$ a split reductive group with a choice of a Borel subgroup $B$ and a maximal torus $T$, denote by $w_0^G$ the longest element in the Weyl group $W_G:=W(G,T)$. For a standard parabolic subgroup $Q\subset G$ with Levi decomposition $M\ltimes N$, set $W^Q$ to be the set of minimal length representatives of $ W_G/W_Q$. We denote by $w_0^Q$ the longest element in $W^Q$ that is, in fact, given by $w_0^Gw_0^M$. Similar notations apply to ${}^QW$. Moreover, for another standard parabolic subgroup $Q'\subset G$ with Levi decomposition $M'\ltimes N'$, denote by ${}^{Q'}W^{Q}$ the set of minimal length representatives of $W_{Q'}\backslash W_G/W_Q$.

For an integer $n\geq 1$, we denote by $\mathbf{Z}_+^n\subset \mathbf{Z}^n$ the subset of $n$-tuples of integers $(k_1,...,k_n)$ satisfying $k_1\geq...\geq k_n$.

\section{Preliminaries}

We collect the necessary preliminaries regarding locally symmetric spaces and their cohomology following \cite{CN23}, \textsection2. The main differences compared to the setup of \cite{ACC23}, \textsection2 are the use of different infinite level locally symmetric spaces, which take into account the profinite topology on the adelic part.

\subsection{Locally symmetric spaces}\label{sec2.1}Consider a number field $F$ and a connected linear algebraic group $G$ over $F$, with a model over $\mathcal{O}_F$, still denoted by $G$. One can then associate with $\textnormal{Res}_{F/\mathbf{Q}}G$ its symmetric space $X^G$, a homogeneous $G(F\otimes_{\mathbf{Q}}\mathbf{R})$-space, as is defined in \cite{BS73}, \textsection2. By \cite{BS73}, Lemma 2.1, $X^G$ is unique up to isomorphism of homogeneous $G(F\otimes_{\mathbf{Q}}\mathbf{R})$-spaces.

Given a good compact open subgroup $K_G\subset G(\mathbf{A}^{\infty}_F)$ in the sense of \cite{ACC23}, \textsection2.1.1, we can form the corresponding locally symmetric space
\begin{equation*}
    X_{K_G}:=G(F)\backslash(X^G\times G(\mathbf{A}_F^{\infty})/K_G),
\end{equation*}
a smooth orientable Riemannian manifold. Borel--Serre then construct a partial compactification $\overline{X}^G$ of $X^G$ (cf. \cite{BS73}, \textsection7.1) and form the compactified locally symmetric space
\begin{equation*}
    \overline{X}_{K_G}:=G(F)\backslash(\overline{X}^G\times G(\mathbf{A}_F^{\infty})/K_G),
\end{equation*}
an orientable compact smooth manifold with corners and with interior $X_{K_G}$. We will denote the corresponding boundary by $\partial X^G:=\overline{X}^G\setminus X^G$ resp. $\partial X_{K_G}:=\overline{X}_{K_G}\setminus X_{K_G}$. 

Following \cite{CN23}, we define the sets
\begin{equation*}
    \mathfrak{X}_G:=\varprojlim_{K_G}X_{K_G}, \textnormal{ }\overline{\mathfrak{X}}_G:=\varprojlim_{K_G} \overline{X}_{K_G}, \textnormal{ }\partial\mathfrak{X}_G=\varprojlim_{K_G}\partial X_{K_G}
\end{equation*}
where the limits run over good subgroups of $G(\mathbf{A}_{F}^{\infty})$, and make them into topological spaces by endowing them with the projective limit topology. The latter two then become compact Hausdorff spaces, being projective limits of such. They all are equipped with the natural continuous right action of $G(\mathbf{A}_F^{\infty})$. Moreover, the induced action of any good subgroup $K_G\subset G(\mathbf{A}_F^{\infty})$ on $\overline{\mathfrak{X}}_G$, and $\partial \mathfrak{X}_G$ is free in the sense of \cite{NT16}, Definition 2.23. As explained in \cite{CN23}, \S2.1.1, we can also introduce these spaces as the quotient spaces
\begin{equation*}
    \mathfrak{X}_G=G(F)\setminus X^G\times G(\mathbf{A}_F^{\infty}),\textnormal{ }\overline{\mathfrak{X}}_G=G(F)\setminus \overline{X}^G\times G(\mathbf{A}_F^{\infty}),\textnormal{ and }
\end{equation*}
\begin{equation*}
    \partial \mathfrak{X}_G=G(F)\setminus \partial X^G\times G(\mathbf{A}_F^{\infty})
\end{equation*}
where the group $G(\mathbf{A}_F^{\infty})$ is equipped with its locally profinite topology. We remind the reader that, before \cite{CN23}, the more common choice was to use the discrete topology of $G(\mathbf{A}_F^{\infty})$ instead (see \cite{NT16}). They yield different spaces of course, but the produced finite level cohomology groups (with the induced Hecke actions) compare well (cf. \cite{CN23}, Lemma 2.1.5). We denote by $j:\mathfrak{X}_G\hookrightarrow \overline{\mathfrak{X}}_G$ the natural open immersion.

For later use we introduce some further notation. Set $S$ to be a finite set of finite places of $F$ and $K^S_G\subset G(\mathbf{A}^{S\cup\{\infty\}}_F)$ be a compact open subgroup that extends to some good subgroup of $G(\mathbf{A}_F^{\infty})$. We then set
\begin{equation*}
    \overline{X}_{K_G^S}:= \overline{\mathfrak{X}}_G/K_{G}^S=\varprojlim_{K_{G,S}} \overline{X}_{K_{G}^SK_{G,S}}, \textnormal{ and }\partial X_{K_G^S}:= \partial \mathfrak{X}_G/K_{G}^S=\varprojlim_{K_{G,S}} \partial X_{K_{G}^SK_{G,S}}
\end{equation*}
where the limits run over compact open subgroups $K_{G,S}\subset G(\mathbf{A}_{F}^{\infty})$ such that $K_G^SK_{G,S}$ is good.

\subsection{Coefficient systems and Hecke operators}\label{sec2.2}
For $S$ a finite set of finite places of $F$, we set $G^S:=G(\mathbf{A}_{F}^{S\cup\{\infty\}})$ and $G_S:=G(\mathbf{A}_{F,S})$, and similarly, for a good subgroup $K_G\subset G(\mathbf{A}^{\infty}_F)$, we set $K^S_G:=\prod_{v\notin S}K_{G,v}$ and $K_{G,S}:=\prod_{v\in S}K_{G,v}$.

Let $R$ be a commutative ring. Note that both $\mathfrak{X}_G$, and $\overline{\mathfrak{X}}_G$ admit a continuous right action of $G^S\times K_{G,S}$. In particular, we can consider the corresponding categories of $G^S\times K_{G,S}$-equivariant sheaves of $R$-modules $\textnormal{Sh}_{G^S\times K_{G,S}}(\mathfrak{X}_G,R)$, $\textnormal{Sh}_{G^{S}\times K_{G,S}}(\overline{\mathfrak{X}}_G,R)$ in the sense of \cite{NT16}, Definition 2.22, (2). Given a smooth $R[K_{G,S}]$-module $\mathcal{V}$, the formalism of \cite{ACC23} attaches to it a $G^S\times K_{G,S}$-equivariant sheaf $\mathcal{V}\in \textnormal{Sh}_{G^S\times K_{G,S}}(\mathfrak{X}_G,R)$ resp. $\mathcal{V}\in \textnormal{Sh}_{G^{S}\times K_{G,S}}(\overline{\mathfrak{X}}_G,R)$. Namely, one inflates $\mathcal{V}$ to get an element of $\textnormal{Mod}_{\textnormal{sm}}(R[G^S\times K_{G,S}])$ which, by \cite{NT16}, Lemma 2.25, is equivalent to the category $\textnormal{Sh}_{G^S\times K_{G,S}}(\ast,R)$ and we then pull it back along $f:\mathfrak{X}_G\to \ast$ resp. $\overline{f}:\overline{\mathfrak{X}}_G\to \ast$. As explained in \cite{Sch98}, the categories $\textnormal{Sh}_{G^S\times K_{G,S}}(\mathfrak{X}_G, R)$ and $\textnormal{Sh}_{G^S\times K_{G,S}}(\overline{\mathfrak{X}}_G,R)$ have enough injectives and both $f_{!}$ and $\overline{f}_!=\overline{f}_{\ast}$ land in $\textnormal{Sh}_{G^S\times K_{G,S}}(\ast,R)\cong \textnormal{Mod}_{\textnormal{sm}}(R[G^S\times K_{G,S}])$. Therefore, we see that both
\begin{equation*}
    R\Gamma(\overline{\mathfrak{X}}_G,j_!\mathcal{V})=\footnote{Here we use that $j_{!}$ preserves injective objects.}R(\overline{f}_{\ast}\circ j_!)\mathcal{V}=Rf_!\mathcal{V}
\end{equation*}
and
\begin{equation*}
    R\Gamma(\overline{\mathfrak{X}}_G,\mathcal{V})=R\overline{f}_{\ast}\mathcal{V}=R\overline{f}_!\mathcal{V}
\end{equation*}
lie in $D^+_{\textnormal{sm}}(R[G^S\times K_{G,S}])$. By taking derived invariants, we end up with
\begin{equation*}
    R\Gamma(K_G,R\Gamma(\overline{\mathfrak{X}}_G,\mathcal{V}))\textnormal{ and }R\Gamma(K_G,R\Gamma(\overline{\mathfrak{X}}_G,j_!\mathcal{V}))
\end{equation*}
in $D^+(\mathcal{H}(G^S,K^S_G)\otimes_{\mathbf{Z}}R)$ where $\mathcal{H}(G^S,K^S_G):=\mathbf{Z}[K_G^S\backslash G^S/K_G^S]$ is the prime-to-$S$ Hecke algebra i.e., the additive group of integer valued $K_G^S$-biinvariant functions on $G^S$ equipped with the convolution product.

On the other hand, by descent (cf. \cite{NT16}, Lemma 2.24), $\mathcal{V}$ and $j_!\mathcal{V}$ give rise to sheaves on $\overline{X}_{K_G}$, which, by abuse of notation, we denote by the same letter. Then, by combining the fact that $j$ is a homotopy equivalence and \cite{CN23}, Proposition 2.1.3, we obtain natural isomorphisms
\begin{equation*}
    R\Gamma(X_{K_G},\mathcal{V})\cong R\Gamma(\overline{X}_{K_G},\mathcal{V})\cong R\Gamma(K_G,R\Gamma(\overline{\mathfrak{X}}_G,\mathcal{V}))^{\sim}
\end{equation*}
and
\begin{equation*}
    R\Gamma_c(X_{K_G},\mathcal{V}):=R\Gamma(\overline{X}_{K_G},j_!\mathcal{V})\cong R\Gamma(K_G,R\Gamma(\overline{\mathfrak{X}}_G,j_!\mathcal{V}))^{\sim}
\end{equation*}
in $D^+(R)$, where $(-)^{\sim}$ denotes the forgetful functor. In particular, we have a ring homomorphism
\begin{equation*}
    \mathcal{H}(G^S,K_G^S)\otimes_{\mathbf{Z}}R\to \textnormal{End}_{D^+(R)}(R\Gamma_{(c)}(X_{K_G},\mathcal{V})).
\end{equation*}
The same observations also apply to $R\Gamma(\partial X_{K_G},\mathcal{V})$.

Finally, assuming that $R$ is Noetherian and $K_G'\subset K_G$ is a good normal subgroup, one sees, by writing down the complex explicitly in terms of a well-chosen simplicial complex, that $R\Gamma_{(c)}(X_{K'_G},\mathcal{V})$ is in fact a perfect object in $D^+(R[K_G/K'_G])$ (cf. \cite{CN23}, Lemma 2.1.6).

We now turn our attention to completed cohomology following the viewpoint of \cite{CN23}. For this we fix $m\in \mathbf{Z}_{\geq 1}$ and set $R=\mathcal{O}/\varpi^m$ where $\mathcal{O}$ is the ring of integers of a finite field extension $E/\mathbf{Q}_p$. Moreover, let $S\subset S_p:= S_p(F)$ be a subset of the set of $p$-adic places of $F$.
\begin{Def}\label{Def2.2.1}
Given $\mathcal{V}\in \textnormal{Mod}_{\textnormal{sm}}(\mathcal{O}/\varpi^m[K_{G,S_p}])$, we define its completed cohomology (with compact support) at $S$ of level $K^S_G$ to be
\begin{equation*}
    \pi(K^S_G,\mathcal{V}):=R\Gamma(K^S_G,R\Gamma(\overline{\mathfrak{X}}_G,\mathcal{V}))^{\sim}\cong\footnote{This identification is proved exactly the same way as in the proof of \cite{CN23}, Proposition 2.1.3.} R\Gamma(\overline{X}_{K^{S}_G},\mathcal{V})\in D^+_{\textnormal{sm}}(\mathcal{O}/\varpi^m[K_{G,S}])
\end{equation*}
resp.
\begin{equation*}
    \pi_c(K^S_G,\mathcal{V}):=R\Gamma(K^S_G,R\Gamma(\overline{\mathfrak{X}}_G,j_!\mathcal{V}))^{\sim}\cong R\Gamma(\overline{X}_{K^{S}_G},j_!\mathcal{V})\in D^+_{\textnormal{sm}}(\mathcal{O}/\varpi^m[K_{G,S}]).
\end{equation*}
Note that if $\mathcal{V}$ is inflated from an element $\mathcal{V}^S\in \textnormal{Mod}_{\textnormal{sm}}(\mathcal{O}/\varpi^m[K_{G,S_p\backslash S}])$, then the $S$-completed cohomology complexes in fact lie in $D^+_{\textnormal{sm}}(\mathcal{O}/\varpi^m[G_S])$.
\end{Def}
For any set of finite places $S_p\subset T$, $\mathcal{H}(G^T,K_G^T)\otimes_{\mathbf{Z}}\mathcal{O}/\varpi^m$ acts on the completed cohomology complexes (with compact support) as it does so on $R\Gamma(K^S_G,R\Gamma(\overline{\mathfrak{X}}_G,(j_!)\mathcal{V}))$. Moreover, one similarly defines boundary completed cohomology
\begin{equation*}
    \pi_{\partial}(K^S_G,\mathcal{V}):=R\Gamma(\partial X_{K^S_G},\mathcal{V})\in D^+_{\textnormal{sm}}(\mathcal{O}/\varpi^m[K_{G,S}]).
\end{equation*}
Note that \cite{CN23}, Lemma 2.1.7 justifies the use of the term completed cohomology i.e., it shows that, after taking cohomology groups, we get back Emerton's completed cohomology as defined in \cite{Em06}. 

Finally, we have the usual phenomenon of completed cohomology at $S$ being independent of the weight at $S$ (cf. \cite{CN23}, Lemma 2.1.8). To state this more precisely, let $\mathcal{V}\in \textnormal{Mod}_{\textnormal{sm}}(\mathcal{O}/\varpi^m[K_{G,S_p\setminus S}\times \Delta_S])$ where $\Delta_S\subset G_S$ is a submonoid containing a compact open subgroup $U_S\subset G_S$ and assume that $\mathcal{V}$ is $\mathcal{O}/\varpi^m$-flat. We can view $\mathcal{V}$ as a $G^{S_p}\times K_{G,S_p\setminus S}\times U_S$-equivariant sheaf on $\overline{\mathfrak{X}}_G$.
\begin{Lemma}\label{Lemma2.2}
We have canonical isomorphisms
\begin{equation*}
    R\Gamma(\overline{\mathfrak{X}}_{G},\mathcal{V})\cong R\Gamma(\overline{\mathfrak{X}}_{G},\mathcal{O}/\varpi^m)\otimes_{\mathcal{O}/\varpi^m}\mathcal{V},
\end{equation*}
and
\begin{equation*}
    R\Gamma(\partial\mathfrak{X}_{G},\mathcal{V})\cong R\Gamma(\partial\mathfrak{X}_{G},\mathcal{O}/\varpi^m)\otimes_{\mathcal{O}/\varpi^m}\mathcal{V}
\end{equation*}
in $D^+_{\textnormal{sm}}(\mathcal{O}/\varpi^m[G^{S_p}\times K_{G,S_p\setminus S}\times U_S])$.
\end{Lemma}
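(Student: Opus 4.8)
The plan is to reduce the statement to the analogous projection formula on the single space $\overline{\mathfrak{X}}_G$, handled via the equivariant sheaf formalism of \cite{Sch98} and \cite{NT16}. Recall that by construction $\mathcal{V}$, being inflated from a smooth $\mathcal{O}/\varpi^m$-flat module, corresponds under the equivalence $\textnormal{Sh}_{G^T\times K_{G,T\setminus S}\times U_S}(\ast,\mathcal{O}/\varpi^m)\cong \textnormal{Mod}_{\textnormal{sm}}(\mathcal{O}/\varpi^m[G^T\times K_{G,T\setminus S}\times U_S])$ to the pullback $\overline{f}^{\ast}\mathcal{V}$ along $\overline{f}:\overline{\mathfrak{X}}_G\to \ast$ (and likewise for $\partial\mathfrak{X}_G$). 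The constant sheaf $\mathcal{O}/\varpi^m$ is $\overline{f}^{\ast}$ of the trivial module. So the first step is to establish, at the level of sheaves, a natural isomorphism $\overline{f}^{\ast}\mathcal{V}\cong \overline{f}^{\ast}(\mathcal{O}/\varpi^m)\otimes_{\mathcal{O}/\varpi^m}\overline{f}^{\ast}(\mathcal{V}^{\flat})$, where $\mathcal{V}^{\flat}$ is the underlying $\mathcal{O}/\varpi^m$-module of $\mathcal{V}$ viewed as a trivial-action sheaf (with the diagonal-action identification making this equivariant); this is just compatibility of pullback with tensor products, plus the observation that a smooth module tensored with a trivial module recovers the original action on the tensor factor.

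The second step is to push this through $R\overline{f}_{\ast}$. Because $\mathcal{V}$ is $\mathcal{O}/\varpi^m$-flat and $\mathcal{O}/\varpi^m$ is a finite ring, $\overline{f}^{\ast}(\mathcal{V}^{\flat})$ is a flat (indeed locally free of finite rank on finite-level quotients, after passing to a good enough $K_G$) sheaf of $\mathcal{O}/\varpi^m$-modules, and the projection formula
\[
R\overline{f}_{\ast}\big(\mathcal{G}\otimes_{\mathcal{O}/\varpi^m}\overline{f}^{\ast}\mathcal{V}^{\flat}\big)\cong R\overline{f}_{\ast}\mathcal{G}\otimes_{\mathcal{O}/\varpi^m}\mathcal{V}^{\flat}
\]
applies with $\mathcal{G}=\overline{f}^{\ast}(\mathcal{O}/\varpi^m)$. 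One has to check that the projection formula is available in the equivariant setting of \cite{Sch98}; this follows from the description of $R\overline{f}_{\ast}$ on $\overline{\mathfrak{X}}_G=\varprojlim_{K_G}\overline{X}_{K_G}$ as a colimit of the finite-level pushforwards together with the classical projection formula for the compact manifolds-with-corners $\overline{X}_{K_G}$ and a local constancy/finite-rank argument (using that $\overline{\mathfrak{X}}_G$ is compact Hausdorff and the $K_G$-action is free in the sense of \cite{NT16}, Definition 2.23, so that $\mathcal{V}$ really is locally constant of finite free rank on $\overline{X}_{K_G}$ for small $K_G$). The same argument verbatim gives the statement for $\partial\mathfrak{X}_G$, since $\partial\mathfrak{X}_G=\varprojlim_{K_G}\partial X_{K_G}$ is again a projective limit of compact spaces with free $K_G$-action.

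The third step is bookkeeping of the equivariant structure: one must check that the isomorphism just produced is an isomorphism in $D^+_{\textnormal{sm}}(\mathcal{O}/\varpi^m[G^T\times K_{G,T\setminus S}\times U_S])$, i.e.\ that it is compatible with the full group action, not merely the $\mathcal{O}/\varpi^m$-module structure. This is formal once one writes the projection-formula map as a morphism of equivariant sheaves — both sides carry the tensor-product action and the comparison map is the obvious one — but it is the point requiring care, and I expect this equivariance check (threading the $G^T\times K_{G,T\setminus S}\times U_S$-action through the colimit-over-$K_G$ presentation of $R\overline{f}_{\ast}$ and through the projection formula) to be the main, if routine, obstacle. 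Finally, taking cohomology and noting that everything is already stated on the level of $R\Gamma(\overline{\mathfrak{X}}_G,-)$ and $R\Gamma(\partial\mathfrak{X}_G,-)$ gives the two displayed isomorphisms; the argument is essentially that of \cite{CN23}, Lemma 2.1.8, which one can cite for the bulk of the formal content.
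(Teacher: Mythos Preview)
Your proposal is correct and coincides with the paper's approach: the paper's proof is simply a direct citation to \cite{CN23}, Lemma 2.1.8, and your sketch (projection formula for $R\overline{f}_\ast$ against a flat pullback, checked equivariantly) is precisely the content of that reference, which you yourself identify at the end.
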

\begin{proof}
This is \cite{CN23}, Lemma 2.1.8.
\end{proof}
Following \cite{CN23}, we use the lemma above to define the object
\begin{equation*}
    R\Gamma(\overline{\mathfrak{X}}_G,\mathcal{V}):= R\Gamma(\overline{\mathfrak{X}}_G,\mathcal{O}/\varpi^m)\otimes_{\mathcal{O}/\varpi^m}\mathcal{V}\in D^+_{\textnormal{sm}}(\mathcal{O}/\varpi^m[G^{S_p}\times K_{G,S_p\setminus S}\times \Delta_S])
\end{equation*}
in a way that is independent of the choice of $U_S$.
In particular, we can endow $R\Gamma(X_{K_G}, \mathcal{V})$ with a natural $\mathcal{H}(G^{S_p},K^{S_p}_G)\otimes \mathcal{H}(\Delta_S,U_S)$-action for any choice of compact open subgroup $U_S\subset G_S$

We also have a version of Lemma~\ref{Lemma2.2} with $\mathcal{V}$ being a complex of sheaves with bounded cohomology. Namely, consider $\mathcal{V}\in D^b_{\textnormal{sm}}(\mathcal{O}/\varpi^m[K_{G,S_p\setminus S}])$ and denote also by $\mathcal{V}$ the associated object in $D^b(\textnormal{Sh}_{G^{S_p\setminus S}\times K_{G,S_p\setminus S}}(\overline{\mathfrak{X}}_G,\mathcal{O}/\varpi^m))$. One can then make sense of the derived tensor product
\begin{equation*}
    R\Gamma(\overline{\mathfrak{X}}_G,\mathcal{O}/\varpi^m)\otimes_{\mathcal{O}/\varpi^m}^{\mathbf{L}}-:D^b_{\textnormal{sm}}(\mathcal{O}/\varpi^m[K_{G,S_p\setminus S}])\to D^b_{\textnormal{sm}}(\mathcal{O}/\varpi^m[G^{S_p\setminus S}\times K_{G,S_p\setminus S}])
\end{equation*}
as explained on page 13 of \cite{CN23}.\footnote{Note that it is the consideration of the derived tensor product that forces us to switch here to the bounded derived category.} Then \textit{loc. cit.} Lemma 2.1.9 shows that there is a canonical isomorphism
\begin{equation*}
    R\Gamma(\overline{\mathfrak{X}}_G,\mathcal{O}/\varpi^m)\otimes_{\mathcal{O}/\varpi^m}^{\mathbf{L}}\mathcal{V}\xrightarrow{\sim}R\Gamma(\overline{\mathfrak{X}}_G,\mathcal{V})
\end{equation*}
in $D^b_{\textnormal{sm}}(G^{S_p\setminus S}\times K_{G,S_p\setminus S})$.
\vspace{5mm}

We finally define cohomology complexes with $\mathcal{O}$-coefficients by taking homotopy limit. For this we start with an $\mathcal{O}[K_{G,S}]$-module $\mathcal{V}$, finite free as an $\mathcal{O}$-module. We set
\begin{equation*}
    R\Gamma_{(c)}(X_{K_G},\mathcal{V}):=\varprojlim_m R\Gamma_{(c)}(X_{K_G},\mathcal{V}/\varpi^m)\in D^+(\mathcal{O}),
\end{equation*}
where the projective limit is understood as a homotopy limit. One can endow these complexes with an action of the Hecke algebra $\mathcal{H}(G^S,K_G^S)\otimes_{\mathbf{Z}}\mathcal{O}$ (see the footnote on Page 14 of \cite{CN23} for a discussion on how it relates to the Hecke action defined in \cite{NT16}).

The next lemma explains that after taking cohomology we get back classical Betti cohomology with $\mathcal{O}$-coefficients. Such an argument will be used at several places whenever the Mittag--Leffler condition holds.
\begin{Lemma}\label{Lemma2.3}
Let $K_G\subset G(\mathbf{A}^{\infty}_{F})$ be a good subgroup, $S$ be a finite set of finite places of $F$ and $\mathcal{V}$ an $\mathcal{O}[K_{G,S}]$-module, finite free as an $\mathcal{O}$-module. Then, for every $j\in \mathbf{Z}_{\geq 0}$, we have a natural identification
\begin{equation*}
    H^j(R\Gamma_{(c)}(X_{K_G},\mathcal{V}))\cong H^j_{(c)}(X_{K_G},\mathcal{V}).
\end{equation*}
\end{Lemma}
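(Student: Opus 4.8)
The plan is to exhibit an explicit bounded complex of finite free $\mathcal{O}$-modules computing $R\Gamma_{(c)}(X_{K_G},\mathcal{V})$, and then to recognise the homotopy limit in the definition of the latter as that complex itself. First I would fix a finite triangulation of the compact manifold with corners $\overline{X}_{K_G}$ (one subordinate to the subcomplex $\partial X_{K_G}$), which exists since $\overline{X}_{K_G}$ is a compact manifold with corners; alternatively one may simply invoke the perfectness statement of \cite{CN23}, Lemma 2.1.6 with $K_G'=K_G$. Let $C^\bullet$ denote the associated simplicial cochain complex with coefficients in the local system attached to $\mathcal{V}$, respectively to $j_!\mathcal{V}$ in the compactly supported case. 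Then $C^\bullet$ is a bounded complex of finite free $\mathcal{O}$-modules, and using the homotopy equivalence $j$ together with \cite{CN23}, Proposition 2.1.3 to pass between $X_{K_G}$, $\overline{X}_{K_G}$ and the sheaf-theoretic definition of $\S\ref{sec2.2}$, one has $H^j(C^\bullet)\cong H^j_{(c)}(X_{K_G},\mathcal{V})$. Since the terms of $C^\bullet$ are free over $\mathcal{O}$, the complex $C^\bullet/\varpi^m = C^\bullet\otimes_{\mathcal{O}}\mathcal{O}/\varpi^m$ is the simplicial cochain complex with coefficients in $\mathcal{V}/\varpi^m$, hence represents $R\Gamma_{(c)}(X_{K_G},\mathcal{V}/\varpi^m)$ in $D^+(\mathcal{O}/\varpi^m)$, compatibly as $m$ varies.

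Next I would unwind the definition $R\Gamma_{(c)}(X_{K_G},\mathcal{V}) = \varprojlim_m R\Gamma_{(c)}(X_{K_G},\mathcal{V}/\varpi^m)$ (homotopy limit) and identify it, via the previous paragraph, with the homotopy limit $\varprojlim_m C^\bullet/\varpi^m$. For each fixed $n$, the tower $\{C^n/\varpi^m\}_m$ has surjective transition maps, hence is Mittag--Leffler, so ${\varprojlim}^1_m C^n/\varpi^m = 0$ while $\varprojlim_m C^n/\varpi^m = C^n$ because finite free $\mathcal{O}$-modules are $\varpi$-adically complete and separated. From the termwise Milnor exact triangle for the homotopy limit it then follows that the natural map $C^\bullet \to \varprojlim_m C^\bullet/\varpi^m$ is a quasi-isomorphism. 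Therefore $H^j(R\Gamma_{(c)}(X_{K_G},\mathcal{V})) \cong H^j(C^\bullet) \cong H^j_{(c)}(X_{K_G},\mathcal{V})$, and the identification is manifestly compatible with the Hecke action of $\mathcal{H}(G^S,K_G^S)\otimes_{\mathbf{Z}}\mathcal{O}$.

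The only point that requires genuine care, rather than being a formality, is the compatibility asserted at the end of the first paragraph: that the reduction $C^\bullet/\varpi^m$ really represents the object $R\Gamma_{(c)}(X_{K_G},\mathcal{V}/\varpi^m)$ built sheaf-theoretically in $\S\ref{sec2.2}$, and does so compatibly in $m$, so that passing to the homotopy limit is legitimate. This reduces to the flat base change $R\Gamma_{(c)}(X_{K_G},\mathcal{V})\otimes^{\mathbf{L}}_{\mathcal{O}}\mathcal{O}/\varpi^m \simeq R\Gamma_{(c)}(X_{K_G},\mathcal{V}/\varpi^m)$, which holds because $\mathcal{V}$ is finite free over $\mathcal{O}$ and the formation of the local system (and of $j_!$) commutes with the reduction of coefficients; since $C^\bullet$ is termwise flat this base change is computed simply by $C^\bullet/\varpi^m$.

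If one prefers to avoid choosing a model, the same conclusion follows from the Milnor exact sequence in cohomology combined with the universal coefficient theorem over the Noetherian complete local ring $\mathcal{O}$. Indeed $\varprojlim_m H^j_{(c)}(X_{K_G},\mathcal{V}/\varpi^m) = H^j_{(c)}(X_{K_G},\mathcal{V})$ because $H^j_{(c)}(X_{K_G},\mathcal{V})$ is finitely generated over $\mathcal{O}$, and ${\varprojlim}^1_m H^{j-1}_{(c)}(X_{K_G},\mathcal{V}/\varpi^m) = 0$ because that tower is Mittag--Leffler: by universal coefficients its transition maps have images controlled by the finitely generated module $H^{j-1}_{(c)}(X_{K_G},\mathcal{V})$ together with the $\varpi$-power torsion of $H^j_{(c)}(X_{K_G},\mathcal{V})$, the latter stabilising for $m$ large. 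I expect this to be the step where the write-up needs to slow down; everything else is bookkeeping with homotopy limits.
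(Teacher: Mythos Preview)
Your argument is correct, and in fact contains two valid proofs. The paper's own proof is closest to your alternative in the final paragraph, but is even more direct: it invokes the Milnor exact sequence (via \cite{Sta13}, Tag 0CQE) and observes that each $H^{j}_{(c)}(X_{K_G},\mathcal{V}/\varpi^m)$ is a \emph{finite set} (it is a finitely generated module over the finite ring $\mathcal{O}/\varpi^m$), so the images of the transition maps in any fixed stage stabilise for trivial reasons, giving Mittag--Leffler and hence $R^1\varprojlim=0$. No universal coefficient analysis is needed.

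Your primary approach, modelling $R\Gamma_{(c)}(X_{K_G},\mathcal{V})$ by an explicit bounded complex $C^\bullet$ of finite free $\mathcal{O}$-modules and then identifying the homotopy limit with $C^\bullet$ via termwise $\varpi$-adic completeness, is genuinely different. It buys you a concrete representative of the derived object (useful if one later wants to manipulate it as a perfect complex, or to make Hecke-equivariance explicit at the chain level), at the cost of invoking a triangulation and checking the compatibility $C^\bullet/\varpi^m \simeq R\Gamma_{(c)}(X_{K_G},\mathcal{V}/\varpi^m)$ carefully. The paper's approach is shorter and stays entirely in the derived category, exploiting the simplest possible reason for Mittag--Leffler.
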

\begin{proof}
By \cite[\href{https://stacks.math.columbia.edu/tag/0CQE}{Lemma 0CQE}]{stacks-project}, it suffices to prove that the higher inverse limit
\begin{equation*}
    R^1\varprojlim_mH^j_{(c)}(X_{K_G},\mathcal{V}/\varpi^m)
\end{equation*} 
vanishes. This vanishing is ensured once we prove that the inverse system
\begin{equation*}
    \{H^j_{(c)}(X_{K_G},\mathcal{V}/\varpi^m)\}_{m\geq 0}
\end{equation*}
satisfies Mittag-Leffler. To see this, we note that, by \cite{CN23}, Lemma 2.1.6, all the appearing cohomology groups are finite. Therefore, the images of all the transition maps in the inverse system must stabilise after finitely many steps.
\end{proof}

We now recall the definition of the unnormalised Satake transform. Assume that $G$ is reductive and $P=M\rtimes N\subset G$ is a parabolic subgroup with its Levi decomposition. Given a good subgroup $K_G\subset G(\mathbf{A}_F^{\infty})$, set $K_P=K_G\cap P(\mathbf{A}^{\infty}_F)$, $K_N= K_G\cap N(\mathbf{A}^{\infty}_F)$ and $K_M=\textnormal{im}(K_P\to M(\mathbf{A}_F^{\infty}))$. We call $K_G$ \textit{decomposed} with respect to $P=M\rtimes N$, if $K_P=K_M\rtimes K_N$; equivalently, if $K_M=K_G\cap M(\mathbf{A}^{\infty}_F)$.

Assume $K_G$ is decomposed with respect to $P=M\rtimes N$ and let $S$ be a finite set of finite places such that, for $v\notin S$, $K_{G,v}$ is a hyperspecial maximal compact in $G(F_v)$. Then we have the usual maps on Hecke algebras
\begin{equation*}
    r_P:\mathcal{H}(G^S,K_G^S)\to \mathcal{H}(P^S,K_P^S) \textnormal{ and } r_M:\mathcal{H}(P^S,K_P^S)\to \mathcal{H}(M^S,K_M^S)
\end{equation*}
given by "restriction to $P$" and "integration along $N$", respectively (cf. \cite{NT16}, 2.2.3, 2.2.4). We then can define $\mathcal{S}:=r_M\circ r_P$, the unnormalised Satake transform.

\subsection{Hecke algebras of types}\label{sec2.3} We will make use of the Hecke algebra and Bernstein centre action at $p$ with respect to locally algebraic types. For this we briefly recall the content of Appendix A.1 and A.4 of \cite{Vig04}. First we consider the following general setup. Let $R$ be a commutative ring, $G$ be a locally profinite group, $K\subset G$ an open subgroup and $\sigma$ be a smooth $R[K]$-module, finitely generated over $R$. 

Given $\sigma$, we can define the corresponding Hecke algebra
\begin{equation*}
    \mathcal{H}(\sigma):=\textnormal{End}_{R[G]}(\textnormal{c-Ind}_{K}^{G}\sigma).
\end{equation*} For $\pi \in \textnormal{Mod}_{\textnormal{sm}}(R[G])$, $\mathcal{H}(\sigma)$ acts on the space of invariants
\begin{equation*}
    \Hom_{R[G]}(\textnormal{c-Ind}_{K}^{G}\sigma,\pi)=\Hom_{R[K]}(\sigma,\pi)=\sigma^{\vee}\otimes_{R[K]} \pi
\end{equation*}
on the right.
We also note that $\mathcal{H}(\sigma)$ can be identified with the convolution algebra of compactly supported functions $f:G\to \textnormal{End}_R(\sigma)$ satisfying $f(k_1gk_2)=\sigma(k_1)f(g)\sigma(k_2)$ for every $k_1,k_2\in K$ and $g\in G$. The isomorphism is realised by acting with the convolution algebra on $\textnormal{c-Ind}_{K}^{G}\sigma$ via convolution. From this description it is clear that $\mathcal{H}(\sigma)$ is spanned over $R$ by elements represented by pairs $[g,\psi]$ where $g\in G$ and $\psi\in \textnormal{End}_R(\sigma)$ such that 
\begin{equation}\label{eq2.1}
    \sigma(k)\circ \psi=\psi\circ \sigma (g^{-1}k g)
\end{equation} 
for every $k\in K\cap gKg^{-1}$. More precisely, such a pair gives rise to the function $G\to \textnormal{End}_R(\sigma)$ supported on $KgK$ that sends $g$ to $\psi$. Moreover, under the mentioned identification, $[g,\psi]$ acts on $\phi\in \Hom_{R[K]}(\sigma,\pi)$ by the formula
\begin{equation*}
    \phi\cdot [g,\psi]:v\mapsto \sum_i\pi(g_i)^{-1}\phi([g,\psi](g_i)v)
\end{equation*}
for $v\in \sigma$ and $KgK=\coprod_iKg_i$.

Note that we have an anti-involution
\begin{equation*}
    \mathcal{H}(\sigma)\xrightarrow{\sim}\mathcal{H}(\sigma^{\vee}),
\end{equation*}
\begin{equation*}
    [g,\psi]\mapsto [g^{-1},\psi^{t}].
\end{equation*}
Moreover, $[h,\chi]=[g^{-1},\psi^t]\in \mathcal{H}(\sigma^{\vee})$ acts on $\sum_jf_j\otimes p_j\in \sigma^{\vee}\otimes_{R[K]}\pi=(\sigma^{\vee}\otimes_R\pi)^K$ via the formula
\begin{equation*}
    [h,\chi]\cdot (\sum_jf_j\otimes p_j)=\sum_i\left(\sum_j[h,\chi](h_i)f_j\otimes \pi(h_i)(p_j)\right)=
\end{equation*}
\begin{equation*}
    =\sum_i\left(\sum_j[g^{-1},\psi^t](g_i^{-1})f_j\otimes \pi(g_i^{-1})(p_j)\right)
\end{equation*}
for $\coprod_i h_iK=KhK=Kg^{-1}K=\coprod_ig_i^{-1}K=(\coprod_iKg_i)^{-1}$. This gives rise to a left action of $\mathcal{H}(\sigma^{\vee})$ on $\sigma^{\vee}\otimes_{R[K]}\pi$. Moreover, as the computation shows, the anti-isomorphism $\mathcal{H}(\sigma)\xrightarrow{\sim}\mathcal{H}(\sigma^{\vee})$ intertwines the two actions under the identification $\Hom_{R[K]}(\sigma,\pi)\cong \sigma^{\vee}\otimes_{R[K]}\pi$.

We are now ready to equip the cohomology of locally symmetric spaces with a Hecke action at $p$. For this, we revisit the setup of \S\ref{sec2.1}, and $G$ will again denote a connected linear algebraic group $G$ over $F$ admitting a model over $\mathcal{O}_F$. Before stating the lemma, we remind the reader that, for a compact open subgroup $K_G\subset G(\mathbf{A}_F^{\infty})$ and a smooth $\mathcal{O}/\varpi^m[K_{G,S}]$-module $\sigma$, we introduced a $G^S\times K_{G,S}$-equivariant sheaf on $\overline{\mathfrak{X}}_G$ and, by descent, a sheaf on $\overline{X}_{K_G}$. Moreover, by abuse of notation, we denoted all of these objects by $\sigma$.
\begin{Lemma}\label{Lem2.4}
Let $S\subset S(F)$ be a finite set of finite places, $K_G\subset G(\mathbf{A}^{\infty}_F)$ a good subgroup, and $ \sigma\in \textnormal{Mod}_{\textnormal{sm}}(\mathcal{O}/\varpi^m[K_{G,S}])$, finite free as an $\mathcal{O}/\varpi^m$-module. Then the diagram of derived functors
\begin{equation*}
    \begin{tikzcd}[scale cd=0.7]
	{D^+\textnormal{Sh}_{G(\mathbf{A}_F^{\infty})}(\overline{\mathfrak{X}}_{G},\mathcal{O}/\varpi^m)} && {D^+_{\textnormal{sm}}(\mathcal{O}/\varpi^m[G(\mathbf{A}_F^{\infty})])} && {D^+(\mathcal{H}(G^S,K_G^S)\times\mathcal{H}(\sigma^{\vee}))} \\
	{D^+\textnormal{Sh}_{K_{G}}(\overline{\mathfrak{X}}_{G},\mathcal{O}/\varpi^m)} && {D^+\textnormal{Sh}(\overline{X}_{K_G},\mathcal{O}/\varpi^m)} && {D^+(\mathcal{O}/\varpi^m)}
	\arrow["{R\Gamma(\overline{\mathfrak{X}}_{G},-)}", from=1-1, to=1-3]
	\arrow["{\textnormal{forget}}"', from=1-1, to=2-1]
	\arrow["{\textnormal{descent}}"', from=2-1, to=2-3]
	\arrow["{R\textnormal{Hom}_{\mathcal{O}/\varpi^m[K_{G}]}(\sigma,-)}", from=1-3, to=1-5]
	\arrow["{R\Gamma(\overline{X}_{K_G},-\otimes_{\mathcal{O}/\varpi^m}\sigma^{\vee})}"', from=2-3, to=2-5]
	\arrow["{\textnormal{forget}}", from=1-5, to=2-5]
\end{tikzcd}
\end{equation*}
is commutative.\footnote{Here by the lower right horizontal arrow we mean the composition of first tensoring over $\mathcal{O}/\varpi^m$ with $\sigma^{\vee}\in \textnormal{Sh}(\overline{X}_{K_G},\mathcal{O}/\varpi^m)$ and then applying derived invariants to the obtained object.}
\end{Lemma}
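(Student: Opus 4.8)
The plan is to verify commutativity of the square by tracking what happens to a $G(\mathbf{A}_F^\infty)$-equivariant sheaf $\mathcal{V}$ on $\overline{\mathfrak{X}}_G$ along both composites, reducing everything to a standard adjunction/descent compatibility. Going right-then-down, we first form $R\Gamma(\overline{\mathfrak{X}}_G,\mathcal{V})\in D^+_{\textnormal{sm}}(\mathcal{O}/\varpi^m[G(\mathbf{A}_F^\infty)])$, then apply $R\mathrm{Hom}_{\mathcal{O}/\varpi^m[K_G]}(\sigma,-)$, then forget down to $D^+(\mathcal{O}/\varpi^m)$; going down-then-right, we forget $\mathcal{V}$ to a $K_G$-equivariant sheaf, descend it to a sheaf on $\overline{X}_{K_G}$, and apply $R\Gamma(\overline{X}_{K_G},-\otimes_{\mathcal{O}/\varpi^m}\sigma^\vee)$. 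The key point is that, by the descent equivalence of \cite{NT16}, Lemma 2.24 together with the identification $R\Gamma(\overline{X}_{K_G},-)\cong R\Gamma(K_G,R\Gamma(\overline{\mathfrak{X}}_G,-))^{\sim}$ recalled in \S\ref{sec2.2}, taking $R\Gamma(\overline{X}_{K_G},-)$ of the descended sheaf agrees with taking $R\Gamma(K_G,-)$ of $R\Gamma(\overline{\mathfrak{X}}_G,\mathcal{V})$; and $R\Gamma(K_G,-)$ is exactly $R\mathrm{Hom}_{\mathcal{O}/\varpi^m[K_G]}(\mathcal{O}/\varpi^m,-)$. So what must be checked is the purely algebraic identity, in $D^+(\mathcal{O}/\varpi^m)$,
\begin{equation*}
R\mathrm{Hom}_{\mathcal{O}/\varpi^m[K_G]}(\sigma, C) \;\cong\; R\mathrm{Hom}_{\mathcal{O}/\varpi^m[K_G]}(\mathcal{O}/\varpi^m, \sigma^\vee \otimes_{\mathcal{O}/\varpi^m} C)
\end{equation*}
for $C\in D^+_{\textnormal{sm}}(\mathcal{O}/\varpi^m[K_G])$, naturally in $C$, together with the matching of the residual $\mathcal{H}(\sigma^\vee)$- and $\mathcal{H}(G^S,K_G^S)$-actions.

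First I would set up the two Hecke actions carefully. The prime-to-$S$ action by $\mathcal{H}(G^S,K_G^S)$ is inherited from the $G^S$-action on $R\Gamma(\overline{\mathfrak{X}}_G,-)$ and is clearly compatible on both sides (it commutes with all operations in sight, since $\sigma$ and $\sigma^\vee$ live at places in $S$, i.e.\ are $K_{G,S}$-modules inflated trivially away from $S$). The $\mathcal{H}(\sigma^\vee)$-action on the top-right corner is the one from \S\ref{sec2.3}: $\mathcal{H}(\sigma^\vee)=\mathrm{End}_{\mathcal{O}/\varpi^m[G]}(\textnormal{c-Ind}_{K}^{G}\sigma^\vee)$ acts on $\mathrm{Hom}_{\mathcal{O}/\varpi^m[K_G]}(\sigma,-)=\sigma^\vee\otimes_{\mathcal{O}/\varpi^m[K_G]}(-)$, derived. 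On the bottom-right it is the action on $R\Gamma(\overline{X}_{K_G},-\otimes\sigma^\vee)$ coming from the anti-isomorphism $\mathcal{H}(\sigma)\xrightarrow{\sim}\mathcal{H}(\sigma^\vee)$ and the cohomological correspondences realizing $\mathcal{H}(\sigma)$ on $\textnormal{c-Ind}$. The content is that these agree; this is where the explicit double-coset formulas $\phi\cdot[g,\psi]$ and $[h,\chi]\cdot(\sum_j f_j\otimes p_j)$ recalled at the end of \S\ref{sec2.3} do the work — they are precisely engineered so that the two actions match under $\mathrm{Hom}_{R[K]}(\sigma,\pi)\cong\sigma^\vee\otimes_{R[K]}\pi$.

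Concretely the steps are: (1) replace $\mathcal{V}$ by an injective resolution in $\textnormal{Sh}_{G(\mathbf{A}_F^\infty)}(\overline{\mathfrak{X}}_G,\mathcal{O}/\varpi^m)$ so that all derived functors become honest ones on a complex, and note the forgetful functors to $K_G$-equivariant sheaves and to $\overline{X}_{K_G}$-sheaves preserve enough acyclicity (injectives for $\overline{f}_*$, by the discussion after \cite{Sch98} recalled in \S\ref{sec2.2}); (2) identify $R\Gamma(\overline{\mathfrak{X}}_G,\mathcal{V})$, as an object of $D^+_{\textnormal{sm}}(\mathcal{O}/\varpi^m[K_G])$, with $R\Gamma(\overline{\mathfrak{X}}_G,\mathcal{V}^{\textnormal{desc}})$ computed $K_G$-equivariantly, and invoke $R\Gamma(\overline{X}_{K_G},-)\cong R\Gamma(K_G,-)^{\sim}$ after descent; (3) prove the tensor–Hom adjunction identity displayed above — the underived version is $\mathrm{Hom}_{R[K]}(\sigma,M)=\mathrm{Hom}_{R[K]}(R,\sigma^\vee\otimes_R M)$, valid since $\sigma$ is finite free over $R=\mathcal{O}/\varpi^m$ so $\sigma^\vee\otimes_R M\cong\mathrm{Hom}_R(\sigma,M)$ with the diagonal $K$-action, and it derives cleanly because $\sigma^\vee\otimes_R(-)$ is exact and sends injective $R[K]$-modules to $R$-acyclics; (4) check $\mathcal{H}(\sigma^\vee)$-equivariance of this identity using the formulas of \S\ref{sec2.3}, and $\mathcal{H}(G^S,K_G^S)$-equivariance (immediate). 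The main obstacle is step (4): keeping the bookkeeping of the two Hecke actions straight through the descent and the adjunction — in particular making sure the anti-involution $[g,\psi]\mapsto[g^{-1},\psi^t]$ is exactly what converts the "$\textnormal{c-Ind}_K^G\sigma$ acting on $\mathrm{Hom}_{R[K]}(\sigma,-)$" picture into the "operators on $R\Gamma(\overline{X}_{K_G},-\otimes\sigma^\vee)$" picture, with no stray inverse or transpose. Everything else is formal manipulation of derived functors and adjunctions.
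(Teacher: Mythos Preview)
Your steps (1)--(3) are essentially the paper's proof: it too reduces to the tensor--Hom identity $R\Hom_{\mathcal{O}/\varpi^m[K_G]}(\sigma,-)^{\sim}\cong R\Gamma(K_G,(-)\otimes\sigma^\vee)$, pulls the tensor inside $R\Gamma(\overline{\mathfrak{X}}_G,-)$ via the argument of Lemma~\ref{Lemma2.2}, and then uses descent plus $R\Gamma(\overline{X}_{K_G},-)\cong R\Gamma(K_G,R\Gamma(\overline{\mathfrak{X}}_G,-))$. The only technical inputs are that $\Gamma(\overline{\mathfrak{X}}_G,-)$ and the forgetful functor from $G(\mathbf{A}_F^\infty)$-equivariant to $K_G$-equivariant sheaves preserve injectives, which you note.

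Your step (4), however, is unnecessary and reflects a misreading of the statement. The bottom-right corner of the diagram is $D^+(\mathcal{O}/\varpi^m)$, not a Hecke-module category; the vertical arrow on the right is a genuine forgetful functor. So commutativity of the square is a statement in $D^+(\mathcal{O}/\varpi^m)$ only, and there is no $\mathcal{H}(\sigma^\vee)$-equivariance to check on the bottom route. In fact the logic runs the other way: the paper \emph{uses} Lemma~\ref{Lem2.4} to \emph{define} the $\mathcal{H}(\sigma^\vee)$-action on $R\Gamma(\overline{X}_{K_G},-\otimes\sigma^\vee)$ by transporting it from the top route (see the sentence immediately following the lemma and the discussion at the start of \S\ref{sec2.4}). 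The comparison with the correspondence-style action via $[g,\psi]\mapsto[g^{-1},\psi^t]$ is a separate statement, proved later as Lemma~\ref{Lemma2.6}. So you should drop the claim that step (4) is ``the main obstacle''; once you remove it, your argument matches the paper's.
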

\begin{proof}
Throughout the proof, we repeatedly use \cite{We94}, Corollary 10.8.3 without further mention.
According to \cite{NT16}, Lemma 2.28, $\Gamma(\overline{\mathfrak{X}}_{G},-)$ preserves injectives. Moreover, the forgetful functor 
\begin{equation*}
(-)^{\sim}:D^+\textnormal{Sh}_{G(\mathbf{A}_F^{\infty})}(\overline{\mathfrak{X}}_{G},\mathcal{O}/\varpi^m)\to D^+\textnormal{Sh}_{K_{G}}(\overline{\mathfrak{X}}_{G},\mathcal{O}/\varpi^m)
\end{equation*} is exact and preserves injectives by \cite{Sch98}, \S3, Corollary 3.\footnote{Note that a running assumption in \cite{Sch98} is that the ring of coefficients is $\mathbf{C}$. However, one sees that the proof of \textit{loc. cit.} goes through without a change also with $\mathcal{O}/\varpi^m$-coefficients.} In particular, we get that the composition of the functors on the top of the square is naturally isomorphic to the functor
\begin{equation*}
    R\Gamma(K_G,R\Gamma(\overline{\mathfrak{X}}_{G},(-)^{\sim})\otimes_{\mathcal{O}/\varpi^m}\sigma^{\vee}): D^+\textnormal{Sh}_{G(\mathbf{A}_F^{\infty})}(\overline{\mathfrak{X}}_{G},\mathcal{O}/\varpi^m)\to D^+(\mathcal{O}/\varpi^m).
\end{equation*}

On the other hand, the proof of Lemma~\ref{Lemma2.2} (cf. \cite{CN23}, Lemma 2.1.8) with arbitrary $\mathcal{G}\in D^+\textnormal{Sh}_{K_G}(\overline{\mathfrak{X}}_G,\mathcal{O}/\varpi^m)$ in place of $\mathcal{O}/\varpi^m$ shows that we have a natural isomorphism
\begin{equation*}
R\Gamma(\overline{\mathfrak{X}}_{G},-)\otimes_{\mathcal{O}/\varpi^m}\sigma^{\vee}\cong R\Gamma(\overline{\mathfrak{X}}_{G},-\otimes_{\mathcal{O}/\varpi^m}\sigma^{\vee})
\end{equation*}
of derived functors $D^+\textnormal{Sh}_{K_{G}}(\overline{\mathfrak{X}}_{G},\mathcal{O}/\varpi^m)\to D^+_{\textnormal{sm}}(\mathcal{O}/\varpi^m[K_{G}])$.

Finally, we note that the descent functor respects tensor products since the pullback functor (its inverse) does. We then obtain a natural isomorphism of derived functors
\begin{equation*}
    R\Gamma(K_G,R\Gamma(\overline{\mathfrak{X}}_{G},-\otimes_{\mathcal{O}/\varpi^m}\sigma^{\vee}))\cong R\Gamma(\overline{X}_{K_{G}},-\otimes_{\mathcal{O}/\varpi^m}\sigma^{\vee}).
\end{equation*}
By putting these observations together, we can conclude.
\end{proof}

We spell out a generalisation of \cite{ACC23}, Proposition 2.2.22 that will be used to make twisting arguments in the proof of local-global compatibility analogous to the ones in \textit{loc. cit.}, Corollary 4.4.8. Let $G=\textnormal{GL}_{n,F}$, and $K\subset \textnormal{GL}_n(\mathbf{A}_{F}^{\infty})$ be a good subgroup, and $\chi: G_F\to \mathcal{O}^{\times}$ be a continuous character such that $\chi\circ \textnormal{Art}_{F_v}$ is trivial on $\det(K_v)$ for each finite place $v\notin T$ of $F$. Let $\sigma\in \textnormal{Mod}_{\textnormal{sm}}(\mathcal{O}/\varpi^m[K_p])$, finite free as an $\mathcal{O}/\varpi^m$-module. Set $\sigma_{\chi}:K_p\to \mathcal{O}^{\times}$ to be the continuous character defined by
\begin{equation*}
    (k_v)_{v\in S_p}\mapsto \prod_{v\in S_p}\chi(\textnormal{Art}_{F_v}(\det(k_v))).
\end{equation*}
Define the isomorphism of $\mathcal{O}$-algebras
\begin{equation*}
    f_{\chi}:\mathcal{H}(G^T,K^T)\otimes_{\mathbf{Z}}\mathcal{H}(\sigma^{\vee})\to \mathcal{H}(G^T,K^T)\otimes_{\mathbf{Z}}\mathcal{H}(\sigma^{\vee}\otimes \sigma_{\chi^{-1}})
\end{equation*}
sending a function $f:\textnormal{GL}_n(\mathbf{A}_F)\to \textnormal{End}(\sigma^{\vee})$ lying in the source of $f_{\chi}$ to the function $f_{\chi}(f):g\mapsto \chi(\textnormal{Art}_F(\textnormal{det}(g)))^{-1}f(g)$.
\begin{Lemma}\label{Lemma2.5}
    Let $K\subset \textnormal{GL}_n(\mathbf{A}_{F}^{\infty})$ be a good subgroup, and $\chi: G_F\to \mathcal{O}^{\times}$ be a continuous character such that $\chi\circ \textnormal{Art}_{F_v}$ is trivial on $\det(K_v)$ for each finite place $v\notin T$ of $F$. Let $\sigma\in \textnormal{Mod}_{\textnormal{sm}}(\mathcal{O}/\varpi^m[K_p])$, finite free as an $\mathcal{O}/\varpi^m$-module. Then there is an isomorphism
    \begin{equation*}
        R\Gamma(X_K,\sigma^{\vee})\cong R\Gamma(X_K,\sigma^{\vee}\otimes \sigma_{\chi^{-1}})
    \end{equation*}
    in $D^+(\mathcal{O}/\varpi^m)$ that is $\mathcal{H}(G^T,K^T)\otimes_{\mathbf{Z}}\mathcal{H}(\sigma^{\vee})$-equivariant when we consider its usual action on the left and the one induced by pre-composition with $f_{\chi}$ on the right.
\end{Lemma}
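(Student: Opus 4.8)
The plan is to produce the isomorphism by a "twisting the local system" argument, following the strategy of \cite{ACC23}, Proposition 2.2.22, but carried out at the level of the derived category via the sheaf-theoretic formalism set up in \S\ref{sec2.2}. The character $\sigma_{\chi^{-1}}:K_p\to \mathcal{O}^{\times}$ is a smooth $\mathcal{O}$-valued character of a compact open subgroup, so it extends (by the hypothesis that $\chi\circ\textnormal{Art}_{F_v}$ is trivial on $\det(K_v)$ for $v\notin T$) to a character of $K^T$ as well — in fact the product $\prod_v\chi(\textnormal{Art}_{F_v}(\det(k_v)))$ over all finite $v$ makes sense on all of $K=\prod_v K_v$ since the factors at $v\notin T$ are trivial. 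Thus $\sigma_{\chi^{-1}}$ defines a rank-one $\mathcal{O}/\varpi^m$-local system $\mathcal{L}_{\chi^{-1}}$ on $X_K$, and $\sigma^{\vee}\otimes\sigma_{\chi^{-1}}$ as a $K_p$-module gives the local system $\sigma^{\vee}\otimes\mathcal{L}_{\chi^{-1}}$. The key geometric input is that $\mathcal{L}_{\chi^{-1}}$ is \emph{trivializable} on $X_K$: the composite $\textnormal{GL}_n(\mathbf{A}_F^{\infty})\xrightarrow{\det}\mathbf{A}_{F}^{\infty,\times}\xrightarrow{\textnormal{Art}_F}G_F^{\textnormal{ab}}\xrightarrow{\chi}\mathcal{O}^{\times}$ is a continuous character that, restricted to $\textnormal{GL}_n(F)$, is trivial by global class field theory (the Artin map kills principal ideles), and it is trivial on $K^T$ by hypothesis; so it factors through $\det:X_K\to$ (a finite set), i.e. it is constant on each connected component. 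Concretely, choosing for each component a base point gives a global section trivializing $\mathcal{L}_{\chi^{-1}}$, and hence an isomorphism of sheaves $\sigma^{\vee}\xrightarrow{\sim}\sigma^{\vee}\otimes\mathcal{L}_{\chi^{-1}}$ on $X_K$ (equivalently on $\overline{X}_{K}$).

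First I would make the previous paragraph precise at the level of the equivariant sheaf formalism. Working with $\overline{\mathfrak{X}}_{G}$ and the $G(\mathbf{A}_F^{\infty})$-equivariant constant sheaf, the character $\chi\circ\textnormal{Art}_F\circ\det$ defines an automorphism of the constant sheaf $\mathcal{O}/\varpi^m$ twisted by this character — but since the character is trivial on $G(F)$ (by global CFT) the twisted sheaf descends to the \emph{same} sheaf after quotienting by $G(F)$, while the twisting by the character changes the $G(\mathbf{A}_F^{\infty})$-equivariant structure. Concretely, tensoring the identity functor of $\textnormal{Sh}_{G(\mathbf{A}_F^{\infty})}(\overline{\mathfrak{X}}_G,\mathcal{O}/\varpi^m)$ with the one-dimensional character $\sigma_{\chi^{-1}}$ (viewed as a character of $G(\mathbf{A}_F^{\infty})$ via $\det$ and $\textnormal{Art}_F$) gives an autoequivalence, and applying it to the sheaf attached to $\sigma^{\vee}$ yields the sheaf attached to $\sigma^{\vee}\otimes\sigma_{\chi^{-1}}$. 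Taking $R\Gamma(\overline{\mathfrak{X}}_G,-)$ then produces a canonical isomorphism
\begin{equation*}
R\Gamma(\overline{\mathfrak{X}}_G,\sigma^{\vee})\otimes_{\mathcal{O}/\varpi^m}\sigma_{\chi^{-1}}\xrightarrow{\sim}R\Gamma(\overline{\mathfrak{X}}_G,\sigma^{\vee}\otimes\sigma_{\chi^{-1}})
\end{equation*}
in $D^+_{\textnormal{sm}}(\mathcal{O}/\varpi^m[G(\mathbf{A}_F^{\infty})])$ — where on the left the action has been twisted by the character. Then one takes $R\Gamma(K,-)$: because the character $\chi\circ\textnormal{Art}_F\circ\det$ is trivial on $K^T$, after descent to $\overline{X}_K$ the twist on the $\mathcal{H}(G^T,K^T)$-side is by the \emph{trivial} character, whereas on the $K_p$-side it is exactly the twist implementing $\sigma^{\vee}\rightsquigarrow\sigma^{\vee}\otimes\sigma_{\chi^{-1}}$ and on $\mathcal{H}(\sigma^{\vee})$ it is precisely the automorphism $f_\chi$ of the statement (the convolution algebra description of $\mathcal{H}(\sigma^{\vee})$ shows that multiplying $f$ by $\chi(\textnormal{Art}_F(\det g))^{-1}$ is the isomorphism onto $\mathcal{H}(\sigma^{\vee}\otimes\sigma_{\chi^{-1}})$). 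This gives the claimed $\mathcal{H}(G^T,K^T)\otimes_{\mathbf{Z}}\mathcal{H}(\sigma^{\vee})$-equivariance.

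To keep bookkeeping manageable I would instead phrase the argument using Lemma~\ref{Lem2.4}: apply the commutative diagram there both to $\sigma$ and to $\sigma\otimes\sigma_{\chi}$ (so that the relevant invariants involve $\sigma^{\vee}$ and $\sigma^{\vee}\otimes\sigma_{\chi^{-1}}$), and exhibit the twisting autoequivalence of $D^+\textnormal{Sh}_{G(\mathbf{A}_F^{\infty})}(\overline{\mathfrak{X}}_G,\mathcal{O}/\varpi^m)$ which carries the first input to the second and which, on the bottom-right corner $D^+(\mathcal{O}/\varpi^m)$, induces the identity (since the twisting character is trivial after restricting to $K$ by global class field theory). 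Chasing the diagram then yields the isomorphism $R\Gamma(X_K,\sigma^{\vee})\cong R\Gamma(X_K,\sigma^{\vee}\otimes\sigma_{\chi^{-1}})$ together with its compatibility with the Hecke action, where the right-hand action is pulled back along $f_\chi$ by construction of the autoequivalence on the $\mathcal{H}(\sigma^{\vee})$ factor. The main obstacle is not any single hard estimate but rather the careful tracking of where the character-twist lands: one must check that, among the three tensor factors $\mathcal{H}(G^T,K^T)$, $K_p$, and $\mathcal{H}(\sigma^{\vee})$, the global triviality of $\chi\circ\textnormal{Art}_F|_{\textnormal{GL}_n(F)}$ together with the hypothesis on $K^T$ forces the first factor to act \emph{untwisted} and the $K_p$-twist to be absorbed exactly into the change from $\sigma^{\vee}$ to $\sigma^{\vee}\otimes\sigma_{\chi^{-1}}$ and into $f_\chi$ on $\mathcal{H}(\sigma^{\vee})$; this is the content analogous to \cite{ACC23}, Proposition 2.2.22, and the only real work is doing it compatibly in the derived category rather than just on cohomology groups.
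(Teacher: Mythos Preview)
Your overall approach is essentially the same as the paper's: construct the isomorphism by multiplying sections by $\chi(\textnormal{Art}_F(\det g))$ (using global class field theory to ensure this descends through the $G(F)$-quotient), track the resulting twist on the Hecke actions, and pass to the derived category via Lemma~\ref{Lem2.4} and an injective resolution. The paper's proof is terse --- it just writes down the map $s\mapsto s^{\chi}$ on global sections of an arbitrary equivariant sheaf, observes the twisted equivariance, and says ``unravel the Hecke action''.

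There is, however, one genuine error in your bookkeeping. You claim that because $\chi\circ\textnormal{Art}_F\circ\det$ is trivial on $K^T$, ``after descent the twist on the $\mathcal{H}(G^T,K^T)$-side is by the trivial character'', and that the whole twist is absorbed into $f_\chi$ acting on the $\mathcal{H}(\sigma^{\vee})$ factor alone. This is not correct. Triviality on $K^T$ only guarantees that the rank-one local system trivializes on $\overline{X}_K$ (equivalently, that $f_\chi(f)$ is again $K^T$-biinvariant). But the Hecke operators at places $v\notin T$ are supported on double cosets $K^T g K^T$ with $g\notin K^T$, and there $\chi(\textnormal{Art}_F(\det g))$ is typically a nontrivial unit: concretely $f_\chi(T_{v,i})=\chi(\textnormal{Art}_{F_v}(\varpi_v))^{-i}\,T_{v,i}$, exactly the scalar one expects from the relation $\overline{\rho}_{\mathfrak{m}(\chi)}\cong\overline{\rho}_{\mathfrak{m}}\otimes\chi$ used later in the Claim inside the proof of Proposition~\ref{Prop6.6}. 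So $f_\chi$, as defined just before the lemma, twists \emph{both} tensor factors of $\mathcal{H}(G^T,K^T)\otimes_{\mathbf{Z}}\mathcal{H}(\sigma^{\vee})$, and your argument should record the twist on the spherical side rather than declaring it trivial. Once you fix this, your proof goes through and matches the paper's.
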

\begin{proof}
    To see this, we introduce some constructions. For any $\textnormal{GL}_n(\mathbf{A}_{F}^{\infty})$-equivariant sheaf $\mathcal{F}\in\textnormal{Sh}_{\textnormal{GL}_n(\mathbf{A}_F^{\infty})}(\overline{\mathfrak{X}}_{\textnormal{GL}_n},\mathcal{O}/\varpi^m)$, consider the map
    \begin{equation}\label{eq2.2}
        \Gamma(\overline{\mathfrak{X}}_{\textnormal{GL}_n},\mathcal{F})\to \Gamma(\overline{\mathfrak{X}}_{\textnormal{GL}_n},\mathcal{F}),
    \end{equation}
    \begin{equation*}
        s\mapsto s^{\chi}
    \end{equation*}
    defined by the formula $s^{\chi}((x,g)):=\chi(\textnormal{Art}_F(\det(g)))s((x,g)).$
    An easy computation shows that \ref{eq2.2} becomes $\textnormal{GL}_n(\mathbf{A}_F^{\infty})$-equivariant when we twist the target by the character $g\mapsto \chi(\textnormal{Art}_F(\det(g)))^{-1}$. In particular, \ref{eq2.2} descends to a map
\begin{equation}\label{eq2.3}
    \Hom_{\mathcal{O}/\varpi^m[K]}(\sigma,\Gamma(\overline{\mathfrak{X}}_{\textnormal{GL}_n},\mathcal{F}))\to \Hom_{\mathcal{O}/\varpi^m[K]}(\sigma\otimes \sigma_{\chi},\Gamma(\overline{\mathfrak{X}}_{\textnormal{GL}_n},\mathcal{F})).
\end{equation} After unravelling the definition of the Hecke action, one sees that this amounts to saying that the induced map \ref{eq2.3} satisfies the desired Hecke-equivariance of the lemma. 

We can conclude by choosing an injective resolution $\mathcal{O}/\varpi^m\to \mathcal{I}^{\bullet}$ in the category $\textnormal{Sh}_{\textnormal{GL}_n(\mathbf{A}_F^{\infty})}(\overline{\mathfrak{X}}_{\textnormal{GL}_n},\mathcal{O}/\varpi^m)$ and applying the previous observation with the choice of $\mathcal{F}=\mathcal{I}^i$ for every $i\in \mathbf{Z}_{\geq 0}$. Indeed, this is because, by Lemma~\ref{Lem2.4}, applying the forgetful functor to 
\begin{equation*}
    \Hom_{\mathcal{O}/\varpi^m[K]}(\sigma,\Gamma(\overline{\mathfrak{X}}_{\textnormal{GL}_n},\mathcal{I}^{\bullet}))\textnormal{, and }\Hom_{\mathcal{O}/\varpi^m[K]}(\sigma\otimes \sigma_{\chi},\Gamma(\overline{\mathfrak{X}}_{\textnormal{GL}_n},\mathcal{I}^{\bullet}))
\end{equation*} computes $R\Gamma(X_K,\sigma^{\vee})$, and $R\Gamma(X_K,\sigma^{\vee}\otimes \sigma_{\chi^{-1}})$, respectively. Moreover, the Hecke actions come from these identifications.
\end{proof}

\subsection{Hecke equivariance of Poincar\'e duality}\label{sec2.4} In the proof of our local-global compatibility results, we appeal to the Poincar\'e duality isomorphism for the cohomology of the locally symmetric spaces attached to $\textnormal{GL}_{n}/F$. To keep track of the Hecke algebra actions during this process, it is crucial to verify that Poincar\'e duality is equivariant with respect to suitable Hecke actions on the two sides. This is already checked for instance in \cite{NT16}, Proposition 3.7 for unramified prime-to-$p$ places. We will need a version of this Hecke equivariancy for the Hecke algebra actions at $p$. This will need slightly more care and will be handled in this subsection.

We return to our setup in \S\ref{sec2.2}. We let $S\subset S_p(F)$ to be a set of $p$-adic places, $K\subset G(\mathbf{A}_F^{\infty})$ a good subgroup. Let $\sigma\in \textnormal{Mod}_{\textnormal{sm}}(\mathcal{O}/\varpi^m[K_S])$, which we also assume to be finite free over $\mathcal{O}/\varpi^m$. Then, by the previous subsection, for any $G_S$-equivariant sheaf $\mathcal{G}$ on $\overline{X}_{K^S}$ in the sense of \cite{NT16}, Section 2.4, the complex
\begin{equation*}
    R\textnormal{Hom}_{\mathcal{O}/\varpi^m[K_S]}(\sigma,R\Gamma(\overline{X}_{K^S},\mathcal{G}))
\end{equation*}
lies in $D^+(\mathcal{H}(\sigma)^{\textnormal{op}})= D^+(\mathcal{H}(\sigma^{\vee}))$. Then, if we let $\pi:\overline{X}_{K^S}\to \overline{X}_K$ to be the natural projection and $\overline{f}:\overline{X}_{K^S}\to \ast$, Lemma~\ref{Lem2.4} implies that there is an induced morphism of algebras
\begin{equation*}
    \mathcal{H}(\sigma^{\vee})\to \textnormal{End}_{D^+(\mathcal{O}/\varpi^m)}(R\Gamma(\overline{X}_K,\pi_{\ast}^{K_S}(\mathcal{G}\otimes_{\mathcal{O}/\varpi^m}\overline{f}^{\ast}\sigma^{\vee})).
\end{equation*}
To prove Hecke equivariancy of Poincar\'e duality for $R\Gamma(X_K,\sigma^{\vee})$, we give a different description of this action as in \cite{NT16}, Lemma 2.19. As we are working with places above $p$, which are not assumed to be unramified, we need to refine the constructions of \cite{NT16}.

We pick an element $[g,\psi]\in \mathcal{H}(\sigma)^{\textnormal{op}}$ as in the previous subsection. We define an action of this element on cohomology. Let $K':=K^SK'_S$ where $K'_S=K_S\cap g^{-1}K_Sg$. By abuse of notation, whenever $K''_S\subset K_S$ is a compact open subgroup, we denote by $\pi$ the projection $\overline{X}_{K^S}\to \overline{X}_{K^SK''_S}$. We consider the correspondence
$$\begin{tikzcd}
&\overline{X}_{K'}\arrow{ld}[swap]{p_1}\arrow{rd}{p_2} \\
\overline{X}_K& &\overline{X}_K
\end{tikzcd}
$$
where $p_1$ is the natural projection and $p_2$ is given by the map $\overline{X}_{K'}\xrightarrow[]{\cdot g^{-1}}\overline{X}_{K^S\cdot gK'_Sg^{-1}}$ followed by the natural projection. By the intertwining property \ref{eq2.1} of $\psi$, it descends to a map

\begin{equation*}
p_1^{\ast}\pi_{\ast}^{K_S}\overline{f}^{\ast}\sigma=\pi_{\ast}^{K'_S}\overline{f}^{\ast}\sigma
    \xrightarrow[]{\psi}(g^{-1})^{\ast}\pi_{\ast}^{gK'_Sg^{-1}}\overline{f}^{\ast}\sigma=p_2^{\ast}\pi_{\ast}^{K_S}\overline{f}^{\ast}\sigma,
\end{equation*}
giving a cohomological correspondence. For any $G_S$-equivariant sheaf of $\mathcal{O}/\varpi^m$-modules $\mathcal{G}$ on $\overline{X}_{K^S}$, we get an induced map
\begin{equation*}
    \Psi:R\Gamma(\overline{X}_{K'},p_2^{\ast}\pi_{\ast}^{K_S}(\mathcal{G}\otimes_{\mathcal{O}/\varpi^m}\overline{f}^{\ast}\sigma^{\vee}))\cong R\Hom_{\textnormal{Sh}(\overline{X}_{K'},\mathcal{O}/\varpi^m)}(p_2^{\ast}\pi_{\ast}^{K_S}\overline{f}^{\ast}\sigma,p_2^{\ast}\pi_{\ast}^{K_S}\mathcal{G})\to
    \end{equation*}
    \begin{equation*}
    R\Hom_{\textnormal{Sh}(\overline{X}_{K'},\mathcal{O}/\varpi^m)}(p_1^{\ast}\pi_{\ast}^{K_S}\overline{f}^{\ast}\sigma,p_1^{\ast}\pi_{\ast}^{K_S}\mathcal{G})\cong R\Gamma(\overline{X}_{K'},p_1^{\ast}\pi_{\ast}^{K_S}(\mathcal{G}\otimes_{\mathcal{O}/\varpi^m}\overline{f}^{\ast}\sigma^{\vee})),
\end{equation*}
where the map in the middle is induced by the map $\psi$ in the first component and by the multiplication $g^{-1}:p_2^{\ast}\pi_{\ast}^{K_S}\mathcal{G}\cong p_1^{\ast}\pi_{\ast}^{K_S}\mathcal{G}$ in the second component.

We then obtain an endomorphism
\begin{equation*}
    \theta([g,\psi]) \in \textnormal{End}_{D^+(\mathcal{O}/\varpi^m)}(R\Gamma(\overline{X}_K,\pi_{\ast}^{K_S}(\mathcal{G}\otimes_{\mathcal{O}/\varpi^m}\overline{f}^{\ast}\sigma^{\vee})))
\end{equation*} defined by
\begin{equation*}
    R\Gamma(\overline{X}_K,\pi_{\ast}^{K_S}(\mathcal{G}\otimes_{\mathcal{O}/\varpi^m}\overline{f}^{\ast}\sigma^{\vee}))\xrightarrow{p_2^{\ast}}R\Gamma(\overline{X}_{K'},p_2^{\ast}\pi_{\ast}^{K_S}(\mathcal{G}\otimes_{\mathcal{O}/\varpi^m}\overline{f}^{\ast}\sigma^{\vee}))\xrightarrow{\Psi}
\end{equation*}
\begin{equation*}
    \to R\Gamma(\overline{X}_{K'},p_1^{\ast}\pi_{\ast}^{K_S}(\mathcal{G}\otimes_{\mathcal{O}/\varpi^m}\overline{f}^{\ast}\sigma^{\vee}))=R\Gamma(\overline{X}_{K},p_{1,{\ast}}p_1^{\ast}\pi_{\ast}^{K_S}(\mathcal{G}\otimes_{\mathcal{O}/\varpi^m}\overline{f}^{\ast}\sigma^{\vee}))\xrightarrow{\textnormal{tr}}
\end{equation*}
\begin{equation*}
    \to R\Gamma(\overline{X}_{K},\pi_{\ast}^{K_S}(\mathcal{G}\otimes_{\mathcal{O}/\varpi^m}\overline{f}^{\ast}\sigma^{\vee}))
\end{equation*}
where the last map is the trace map coming from the canonical map induced by the adjoint pair $(p_{1,\ast}=p_{1,!},p_1^{\ast}=p_1^!)$.
\begin{Lemma}\label{Lemma2.6}
The endomorphism $\theta([g,\psi])$ of $R\Gamma(\overline{X}_{K},\pi_{\ast}^{K_S}(\mathcal{G}\otimes_{\mathcal{O}/\varpi^m}\overline{f}^{\ast}\sigma^{\vee}))$ coincides with the one induced by $[g,\psi]$ via the recipe of Lemma~\ref{Lem2.4}.
\end{Lemma}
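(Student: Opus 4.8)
The plan is to spell both endomorphisms out at the level of an injective resolution and then to recognise the geometric composite as the double‑coset formula of \S\ref{sec2.3}. First I would fix an injective resolution $\mathcal{G}\to\mathcal{I}^{\bullet}$ in the category of $G_S$‑equivariant sheaves of $\mathcal{O}/\varpi^m$‑modules on $\overline{X}_{K^S}$, and observe — exactly as in the proof of Lemma~\ref{Lem2.4} — that the functors $\pi_{\ast}^{K''_S}$ for compact open $K''_S\subset K_S$ and the twist by $\overline{f}^{\ast}\sigma$ are exact on such objects, while $p_1,p_2$ are finite coverings of compact manifolds with corners, so that $p_{1,\ast}=p_{1,!}$ is exact and $p_1^{\ast}=p_1^{!}$ (which is what makes the trace $p_{1,\ast}p_1^{\ast}\to\textnormal{id}$ available at all). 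This reduces the statement to comparing, on the honest complex $\Hom_{\mathcal{O}/\varpi^m[K_S]}(\sigma,\Gamma(\overline{X}_{K^S},\mathcal{I}^{\bullet}))$, two termwise endomorphisms; since all the identifications below are natural in the $G_S$‑equivariant sheaf (hence compatible with the differential of $\mathcal{I}^{\bullet}$), it suffices to treat one $G_S$‑equivariant injective sheaf $\mathcal{G}$ in place of $\mathcal{I}^i$.

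Next I would unwind the three maps defining $\theta([g,\psi])$ into operations on $\Hom$‑groups. Writing $M:=\Gamma(\overline{X}_{K^S},\mathcal{G})$, a smooth $\mathcal{O}/\varpi^m[G_S]$‑module, the descent identifications of \S\ref{sec2.2}--\S\ref{sec2.3} give $\Gamma(\overline{X}_K,\pi_{\ast}^{K_S}(\mathcal{G}\otimes_{\mathcal{O}/\varpi^m}\overline{f}^{\ast}\sigma^{\vee}))=(M\otimes_{\mathcal{O}/\varpi^m}\sigma^{\vee})^{K_S}=\Hom_{\mathcal{O}/\varpi^m[K_S]}(\sigma,M)$, and similarly with $K'_S$. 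Under these: $p_2^{\ast}$, together with the right‑multiplication‑by‑$g^{-1}$ isomorphism built into $p_2$, sends $\phi\in\Hom_{\mathcal{O}/\varpi^m[K_S]}(\sigma,M)$ to the map gotten from its restriction to $\Hom_{\mathcal{O}/\varpi^m[gK'_Sg^{-1}]}(\sigma,M)$ by conjugating the $K'_S$‑action by $g$; the middle map $\Psi$, induced by $\psi$ in the $\sigma$‑slot and by $g^{-1}$ in the $\mathcal{G}$‑slot, is exactly pre‑composition with $\psi$ — whose $K'_S$‑equivariance is precisely \ref{eq2.1} — and post‑composition with $M(g^{-1})$, landing again in $\Hom_{\mathcal{O}/\varpi^m[K'_S]}(\sigma,M)$; and the trace map for the finite covering $p_1\colon\overline{X}_{K'}\to\overline{X}_K$ is, under $\Gamma(\overline{X}_{K'},p_1^{\ast}\pi_{\ast}^{K_S}(-))=(M\otimes_{\mathcal{O}/\varpi^m}\sigma^{\vee})^{K'_S}$, the algebraic transfer $\Hom_{\mathcal{O}/\varpi^m[K'_S]}(\sigma,M)\to\Hom_{\mathcal{O}/\varpi^m[K_S]}(\sigma,M)$ summing over a set of coset representatives for $K_S/K'_S$. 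Composing the three and writing $K_SgK_S=\coprod_i K_Sg_i$ for the $g_i$ built from these representatives, I expect to land on the assignment $\phi\mapsto\bigl(v\mapsto\sum_i M(g_i)^{-1}\phi([g,\psi](g_i)v)\bigr)$, which by \S\ref{sec2.3} is the action of $[g,\psi]\in\mathcal{H}(\sigma)^{\textnormal{op}}$ on $\Hom_{\mathcal{O}/\varpi^m[K_S]}(\sigma,M)$; and by Lemma~\ref{Lem2.4} this is precisely the endomorphism of $R\Gamma(\overline{X}_K,\pi_{\ast}^{K_S}(\mathcal{G}\otimes_{\mathcal{O}/\varpi^m}\overline{f}^{\ast}\sigma^{\vee}))$ induced by $[g,\psi]$.

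The main obstacle — and the only place where this genuinely goes beyond \cite{NT16}, Lemma~2.19 — is the bookkeeping in the middle step: I need to check that the sheaf‑theoretic trace $p_{1,\ast}p_1^{\ast}\to\textnormal{id}$ really does correspond, under descent, to the algebraic transfer for $K'_S\subset K_S$, and that the conjugation isomorphisms introduced by $p_2$ and by the multiplication‑by‑$g^{-1}$ maps interact correctly with the intertwining identity \ref{eq2.1}, so that $\psi$ survives into the final formula instead of being absorbed. In \cite{NT16} the relevant places are unramified and $g$ normalises a hyperspecial maximal compact, so there $\psi$ may be taken to be the identity and disappears, whereas here it is a genuine intertwiner that has to be tracked through every identification. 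I expect everything else to be a routine diagram chase, using only that $\pi_{\ast}^{K''_S}$ and the finite pushforwards $p_{i,\ast}$ preserve the relevant acyclic sheaves.
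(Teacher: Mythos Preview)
Your proposal is correct and follows essentially the same route as the paper's proof: pick an injective resolution $\mathcal{G}\to\mathcal{I}^{\bullet}$ in the $G_S$-equivariant category, use that $\pi_{\ast}^{K_S}$ and the pullbacks $p_i^{\ast}\pi_{\ast}^{K_S}$ are equivalences (hence carry injectives to injectives) to reduce to a termwise comparison on $\Hom_{K_S}(\sigma,\mathcal{I}^i(\overline{X}_{K^S}))$, and then unwind the correspondence. The paper compresses your second and third paragraphs into a single sentence (``the concrete description of the effect of the trace map on global sections and an easy unravelling of the definitions''), whereas you actually spell out what $p_2^{\ast}$, $\Psi$, and the trace do on $\Hom$-groups and recover the double-coset formula of \S\ref{sec2.3}; your identification of the trace with the algebraic transfer for $K'_S\subset K_S$ is exactly the point the paper is gesturing at.
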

\begin{proof}
    Note that we have a natural isomorphism
    \begin{equation*}
        R\Gamma(\overline{X}_{K},\pi_{\ast}^{K_S}(\mathcal{G}\otimes_{\mathcal{O}/\varpi^m}\overline{f}^{\ast}\sigma^{\vee}))\cong R\Hom_{\textnormal{Sh}(\overline{X}_K,\mathcal{O}/\varpi^m)}(\pi_{\ast}^{K_S}\overline{f}^{\ast}\sigma,\pi_{\ast}^{K_S}\mathcal{G})
    \end{equation*}
    in $D^+(\mathcal{O}/\varpi^m)$. Indeed, this follows from the fact that the functor 
    \begin{equation*}
        \pi_{\ast}^{K_S}:\textnormal{Sh}_{K_S}(\overline{X}_{K^S},\mathcal{O}/\varpi^m)\xrightarrow{}\textnormal{Sh}(\overline{X}_K,\mathcal{O}/\varpi^m)
    \end{equation*} is an equivalence of categories with inverse $\pi^{\ast}$ and pullbacks commute with tensor products. We obtain analogous descriptions of the other two complexes appearing in the definition of $\theta([g,\psi])$.

    We now pick an injective resolution $\mathcal{G}\to \mathcal{I}^{\bullet}$ in $D^+(\textnormal{Sh}_{G_S}(\overline{X}_{K^S},\mathcal{O}/\varpi^m))$. Notice that, by \cite{Sch98}, \S3, Corollary 3, it gives rise to an injective resolution in $D^+(\textnormal{Sh}_{K_S}(\overline{X}_{K^S},\mathcal{O}/\varpi^m))$, and $D^+(\textnormal{Sh}_{K_S'}(\overline{X}_{K^S},\mathcal{O}/\varpi^m))$. Moreover, since the functors $\pi_{\ast}^{K_S}$, $p_1^{\ast}\pi_{\ast}^{K_S}$, and $p_2^{\ast}\pi_{\ast}^{K_S}$ are all equivalences, $\pi_{\ast}^{K_S}\mathcal{I}^{\bullet}$, $p_1^{\ast}\pi_{\ast}^{K_S}\mathcal{I}^{\bullet}$, and $p_2^{\ast}\pi_{\ast}^{K_S}\mathcal{I}^{\bullet}$ are injective resolutions.  If we combine this with the discussion of the previous paragraph, we see that
    \begin{equation*}
        \Gamma(\overline{X}_K,\pi_{\ast}^{K_S}(\mathcal{I}^{\bullet}\otimes_{\mathcal{O}/\varpi^m}\overline{f}^{\ast}\sigma^{\vee}))=\Hom_{K_S}(\sigma,\mathcal{I}^{\bullet}(\overline{X}_{K^S}))
    \end{equation*}
    computes $R\Gamma(\overline{X}_K,\pi_{\ast}^{K_S}(\mathcal{G}\otimes_{\mathcal{O}/\varpi^m}\overline{f}^{\ast}\sigma^{\vee}))$. Analogous observations apply  to the other two complexes appearing in the definition of $\theta([g,\psi])$. We reduced the lemma to comparing the two actions in
    \begin{equation*}
        \textnormal{End}_{\mathcal{O}/\varpi^m}(\Hom_{K_S}(\sigma,\mathcal{I}^{i}(\overline{X}_{K^S})))
    \end{equation*}
    for every $i\in \mathbf{Z}$. The lemma then follows from the concrete description of the effect of the trace map on global sections and an easy unravelling of the definitions.
\end{proof}

\begin{Rem}
Note that our discussion applies also to the case of $R\Gamma_c(X_K,\sigma^{\vee})$. To see this, denote by $j:\mathfrak{X}_G\hookrightarrow \overline{\mathfrak{X}}_G$ the natural open immersion and by $\overline{f}:\overline{\mathfrak{X}}_G\to \ast$ the projection to the point. Then the claim follows from the fact that we have an isomorphism
\begin{equation*}
j_{!}j^{\ast}\overline{f}^{\ast}\sigma^{\vee}\cong j_{!}\mathcal{O}/\varpi^m\otimes_{\mathcal{O}/\varpi^m}\overline{f}^{\ast}\sigma^{\vee}
\end{equation*}
of $G(\mathbf{A}_F^{\infty})$-equivariant sheaves.
This identification follows from an equivariant version of \cite{KS94}, Proposition 2.5.13. 
\end{Rem}

\begin{Cor}\label{Cor2.8}
Given $\sigma\in \textnormal{Mod}_{\textnormal{sm}}(\mathcal{O}/\varpi^m[K_S])$, finite free as an $\mathcal{O}/\varpi^m$-module. The Verdier duality isomorphism

\begin{equation*}
    R\Hom_{\mathcal{O}/\varpi^m}(R\Gamma_c(X_K,\sigma^{\vee}),\mathcal{O}/\varpi^m)\cong R\Gamma(X_K,\sigma)[\dim_{\mathbf{R}}X_K]
\end{equation*}
is equivariant with respect to the natural \textit{left} action of $\mathcal{H}(\sigma)$ on the right and the one induced by the anti-isomorphism
\begin{equation*}
    \mathcal{H}(\sigma)\xrightarrow{\sim}\mathcal{H}(\sigma^{\vee}),
\end{equation*}
\begin{equation*}
    [g,\psi]\mapsto [g^{-1},\psi^t]
\end{equation*}
on the left.
\end{Cor}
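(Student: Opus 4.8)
The plan is to adapt the argument used at unramified prime-to-$p$ places in \cite{NT16}, Proposition 3.7, now feeding in the cohomological-correspondence description of the Hecke action at $p$ supplied by Lemma~\ref{Lemma2.6} and the Remark immediately following it. It suffices to check the equivariance on a single generator $[g,\psi]$ of $\mathcal{H}(\sigma)$, with $\psi\in\textnormal{End}_{\mathcal{O}/\varpi^m}(\sigma)$ satisfying \ref{eq2.1}. First I would recast the isomorphism of the corollary as Poincar\'e--Lefschetz duality for the oriented manifold $X_K$ (see \S\ref{sec2.1}), realised on the Borel--Serre compactification: using the identification $j_!j^{\ast}\overline{f}^{\ast}\sigma^{\vee}\cong j_!\mathcal{O}/\varpi^m\otimes_{\mathcal{O}/\varpi^m}\overline{f}^{\ast}\sigma^{\vee}$ of the preceding Remark, the compatibility of Verdier duality on $\overline{\mathfrak{X}}_G$ with $j_!$, and the orientation of $X_K$, the $\mathcal{O}/\varpi^m$-linear dual of $R\Gamma_c(X_K,\sigma^{\vee})=R\Gamma(\overline{X}_K,j_!\mathcal{O}/\varpi^m\otimes\overline{f}^{\ast}\sigma^{\vee})$ is computed by $R\Gamma(X_K,\sigma)[\dim_{\mathbf{R}}X_K]$; this is where the degree shift and the replacement of $\sigma^{\vee}$-coefficients by $\sigma$-coefficients come from (the latter using that $\sigma$ is finite free, so $\sigma^{\vee\vee}\cong\sigma$).

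Next I would write down both Hecke operators as cohomological correspondences. By Lemma~\ref{Lemma2.6}, the action of $[g,\psi]$ on $R\Gamma_c(X_K,\sigma^{\vee})$ — and hence, after applying $R\Hom_{\mathcal{O}/\varpi^m}(-,\mathcal{O}/\varpi^m)$, the induced operator on the left-hand side of the duality — is $\theta([g,\psi])=\textnormal{tr}\circ\Psi\circ p_2^{\ast}$ attached to the correspondence $\overline{X}_K\xleftarrow{p_1}\overline{X}_{K'}\xrightarrow{p_2}\overline{X}_K$, where $\Psi$ is induced by $\psi$ on the $\sigma^{\vee}$-coefficients and by a translation isomorphism over $\overline{X}_{K'}$ on the $j_!\mathcal{O}/\varpi^m$-factor, and $\textnormal{tr}$ is the trace for the finite covering map $p_1$. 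Applying Lemma~\ref{Lemma2.6} with $\sigma$ replaced by $\sigma^{\vee}$, the operator $[g^{-1},\psi^{t}]$ acts on $R\Gamma(X_K,\sigma)$ by the endomorphism attached to the \emph{transposed} correspondence $\overline{X}_K\xleftarrow{p_2}\overline{X}_{K'}\xrightarrow{p_1}\overline{X}_K$, with transition map built from $\psi^{t}$ and the inverse translation; that the relevant correspondences really are transposes of one another is because multiplication by a suitable element of $G_S$ on $\overline{\mathfrak{X}}_G$ interchanges the two projections, and the precise inverses and transposes appearing are dictated by the anti-isomorphism $\mathcal{H}(\sigma)\xrightarrow{\sim}\mathcal{H}(\sigma^{\vee})$, $[g,\psi]\mapsto[g^{-1},\psi^{t}]$. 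It then remains to verify that Verdier duality carries the operator attached to a correspondence to the one attached to its transpose. Concretely, one applies $R\Hom_{\mathcal{O}/\varpi^m}(-,\mathcal{O}/\varpi^m)$ to $\textnormal{tr}\circ\Psi\circ p_2^{\ast}$ and invokes: that $p_1,p_2$ are finite covering maps of the compact oriented manifold-with-corners $\overline{X}_K$, so $p_{i,!}=p_{i,\ast}$, $p_i^{!}=p_i^{\ast}$, and the trace $\textnormal{tr}\colon p_{i,!}p_i^{!}\to\textnormal{id}$ is Verdier-dual to the adjunction unit $\textnormal{id}\to p_{i,\ast}p_i^{\ast}$ (i.e. to $p_i^{\ast}$); that the Verdier dual of the map induced by $\psi$ on $\overline{f}^{\ast}\sigma$ is the map induced by $\psi^{\vee}=\psi^{t}$; and that the dual of a translation isomorphism on $j_!\mathcal{O}/\varpi^m$ is the opposite translation on $Rj_{\ast}\mathcal{O}/\varpi^m$. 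Chasing these through, $\theta([g,\psi])^{\vee}$ becomes $\theta([g^{-1},\psi^{t}])$ under the identification of the first paragraph, which is the assertion.

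The genuine content — and the place where the argument exceeds the unramified computation of \cite{NT16}, Proposition 3.7, where $\psi$ is essentially the identity — is this last verification: one must match, term by term, the Verdier dual of the transition map $\Psi$ of the first correspondence (which simultaneously involves $\psi$ on the $\sigma^{\vee}$-coefficients and a translation on $j_!\mathcal{O}/\varpi^m$, and lives over $\overline{X}_{K'}$) with the transition map of the transposed correspondence, keeping careful track of which transpose ($\psi$ versus $\psi^{t}$) and which translation appears at each step. Once this is settled on the level of the cohomological correspondence, everything else — the compatibility of Verdier duality with $p_{i,!}$, $p_i^{\ast}$ and the trace map, the passage through $\pi_{\ast}^{K_S}$, and the reduction from $\overline{X}_K$ to $X_K$ along the homotopy equivalence $j$ of \S\ref{sec2.2} — is formal and parallels \textit{loc. cit.} As in the proof of Lemma~\ref{Lemma2.6}, one can moreover carry out the entire comparison after fixing injective resolutions, reducing the statement to an identity of maps of $\mathcal{O}/\varpi^m$-modules in each cohomological degree.
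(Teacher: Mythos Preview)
Your proposal is correct and follows essentially the same approach as the paper's proof, which is a one-sentence appeal to the functoriality of Verdier duality together with Lemma~\ref{Lemma2.6}, noting that passing to duals interchanges pullbacks with traces in the correspondence description of $\theta([g,\psi])$. You have simply unpacked these ingredients in detail: the correspondence description from Lemma~\ref{Lemma2.6}, the identification of the operator for $[g^{-1},\psi^t]$ with the transposed correspondence, and the formal fact that Verdier duality swaps $p_i^\ast$ with the trace map and $\psi$ with $\psi^t$.
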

\begin{proof}
Just as in the proof of \cite{NT16}, Proposition 3.7, this follows from the functoriality of Verdier duality and Lemma~\ref{Lemma2.6} taking into account that passing to duals interchanges pullbacks with traces in the definition of $\theta([g,\psi])$.
\end{proof}

\subsection{The quasi-split unitary group}\label{Sec2.5}
From now on we specialise to our setup of interest. The two groups we will be interested in are the quasi-split unitary group $U(n,n)$ and the general linear group appearing as its Levi subgroup. In particular, we fix an integer $n\geq 2$ and an imaginary CM field $F$ with maximal totally real subfield $F^+\subset F$. Denote by $c\in \textnormal{Gal}(F/F^+)$ its complex conjugation and set $\overline{S}_p:=S_p(F^+)$ resp. $S_p:=S_p(F)$. Consider the $2n\times 2n$ matrix
$$J_n:=\begin{pmatrix}
0 & \Psi_n \\
-\Psi_n & 0
\end{pmatrix}$$
where $\Psi_n$ denotes the $n\times n$ matrix with $1$'s on the anti-diagonal and $0$'s elsewhere. We then set $\widetilde{G}/\mathcal{O}_{F^+}$ to be the group scheme that, for an $\mathcal{O}_{F^+}$-algebra $R$, has $R$-points given by
\begin{equation*}
    \widetilde{G}(R)=\{g\in \textnormal{GL}_{2n}(R\otimes_{\mathcal{O}_{F^+}}\mathcal{O}_F)\mid {}^tgJ_ng^c=J_n\}
\end{equation*}
where $^t(-)$ denotes the transpose matrix. This is an integral model of the quasi-split unitary group $U(n,n)/F^+$, a form of $\textnormal{GL}_{2n}$, splitting after base change to $F$. In particular, it becomes reductive after base change to $\mathcal{O}_{F^+_{\Bar{v}}}$ for $\Bar{v}$ a finite place of $F^+$ which is unramified in $F$.

We let $P\subset \widetilde{G}$ to be the Siegel parabolic consisting of block upper triangular matrices with blocks of size $n\times n$. Let $P=G\ltimes U$ be a Levi decomposition such that $G$ is given by the closed subgroup of block diagonal matrices. Then $G$ can be identified with $\textnormal{Res}_{\mathcal{O}_F/\mathcal{O}_{F^+}}\textnormal{GL}_n$ as in \cite{NT16}, Lemma 5.1. Namely, if we denote by $(-)^{\ast}$ the anti-involution of $\textnormal{Res}_{\mathcal{O}_{F}/\mathcal{O}_{F^+}}\textnormal{GL}_n$ given by $A^{\ast}=\psi_n^tA^c\psi_n^{-1}$ then, by its very definition, $P\subset \widetilde{G}$ can be identified with the subgroup of $\textnormal{Res}_{\mathcal{O}_F/\mathcal{O}_{F^+}}\textnormal{GL}_{2n}$ of the form
$$\begin{pmatrix}
A & B \\
C & D
\end{pmatrix} =\begin{pmatrix}
D^{-\ast} & B \\
0 & D
\end{pmatrix}$$
where $D\in \textnormal{Res}_{\mathcal{O}_F/\mathcal{O}_{F^+}}\textnormal{GL}_n$ without any condition and $B$ is so that $B^{\ast}=B$. Under this identification, the subgroup defined by $B=0$ is the Levi subgroup $G$. Then $\begin{pmatrix}
D^{-\ast} & B \\
0 & D
\end{pmatrix}\mapsto D$ gives the identification $G\cong \textnormal{Res}_{\mathcal{O}_{F}/\mathcal{O}_{F^+}}\textnormal{GL}_n$.

We will write $\widetilde{X}$ for the symmetric space $X^{\widetilde{G}}$ and $\widetilde{X}_{\widetilde{K}}$ for the associated locally symmetric space for a good subgroup $\widetilde{K}\subset \widetilde{G}(\mathbf{A}_{F^+}^{\infty})$. Similarly, we denote by $X$ the symmetric space $X^G$ and write $X_K$ for the associated locally symmetric space for a good subgroup $K\subset G(\mathbf{A}^{\infty}_{F^+})=\textnormal{GL}_n(\mathbf{A}_F^{\infty})$. 

Write $T\subset \widetilde{B}\subset \widetilde{G}$ for the subgroup consisting, respectively, of the diagonal and upper triangular matrices of $\widetilde{G}$. These form a maximal torus and a Borel subgroup of $\widetilde{G}$. Moreover, $B=\widetilde{B}\cap G\subset G$ is the Borel subgroup of upper triangular matrices.

Recall that, for a place $\Bar{v}$ of $F^+$ splitting in $F$, a choice of place $v\mid \Bar{v}$ in $F$ gives a canonical isomorphism $\iota_v:\widetilde{G}(F^+_{\Bar{v}})\cong \textnormal{GL}_{2n}(F_v)$. Indeed, there is an isomorphism $F^+_{\Bar{v}}\otimes_{F^+}F\cong F_v\times F_{v^c}$ and $\iota_v$ is the projection to the first factor of the natural inclusion $\widetilde{G}(F^+_{\Bar{v}})\subset \textnormal{GL}_{2n}(F_v)\times \textnormal{GL}_{2n}(F_{v^c})$. Under $\iota_v$, $P(F^+_{\Bar{v}})$ is identified with the standard parabolic subgroup $P_{(n,n)}(F_v)\subset \textnormal{GL}_{2n}(F_v)$ of block upper triangular matrices of type $(n,n)$ and $G(F^+_{\Bar{v}})$ with its standard Levi subgroup of block diagonal matrices. Similarly, $\widetilde{B}(F^+_{\Bar{v}})$ is identified with the subgroup of upper triangular matrices and $T(F^+_{\Bar{v}})$ with the diagonal matrices. Moreover, for any parabolic subgroup $\widetilde{B}_{F^+_{\Bar{v}}}\subset \widetilde{Q}\subset P_{F^+_{\Bar{v}}}$, $\widetilde{Q}(F^+_{\Bar{v}})$ is identified with a standard parabolic subgroup $P_{(n_1,...,n_{t})}(F_v)\subset \textnormal{GL}_{2n}(F_v)$ where $(n_1,...,n_t)$ refines $(n,n)$. Let $\widetilde{Q}=\widetilde{M}\ltimes \widetilde{N}$ its standard Levi decomposition and set $M=G\cap \widetilde{M}$.

Note that, since the inclusion 
\begin{equation*}
    G(F^+_{\Bar{v}})=\textnormal{GL}_n(F_v)\times\textnormal{GL}_n(F_{v^c})\hookrightarrow\textnormal{GL}_{2n}(F_v)
\end{equation*}
under $\iota_v$ is given by
\begin{equation}\label{eq-unitary}
    (A,B)\mapsto \begin{pmatrix}
(\Psi_n {}^{t}B^{-1}\Psi_n)^c & 0 \\
0 & A
\end{pmatrix},
\end{equation}
we have $M(F^+_{\Bar{v}})= M_{(n_{k+1},...,n_{t})}(F_v)\times M_{(n_k,...,n_1)}(F_{v^c})\hookrightarrow G(F^+_{\Bar{v}})$
where $1\leq k\leq t$ is so that $n_1+...+n_k=n$. We set $\theta_n: \textnormal{GL}_n(F_{v^c})\cong \textnormal{GL}_n(F_{v})$ to be the map $B\mapsto (\Psi_n {}^{t}B^{-1}\Psi_n)^c$ above.
\subsection{Inertial local Langlands for $\textnormal{GL}_n$}\label{sec2.6}

Set $L=F_v$ for some $v\in S_p(F)$. According to \cite{BD84}, the category $\textnormal{Mod}_{\textnormal{sm}}(\overline{E}[G(L)])$ admits a direct sum decomposition
\begin{equation*}
     \oplus_{\Omega}\textnormal{Mod}_{\textnormal{sm}}(\overline{E}[G(L)])[\Omega]
\end{equation*}
into so-called Bernstein blocks. In terms of the local Langlands correspondence, two \textit{irreducible} representation $\pi_1,\pi_2\in \textnormal{Mod}_{\textnormal{sm}}(\overline{E}[G(L)])$ correspond to the same Bernstein block if and only if
\begin{equation*}
     \textnormal{rec}(\pi_1)^{ss}|_{I_L}\cong \textnormal{rec}(\pi_2)^{ss}|_{I_L},
\end{equation*}
 and a general representation $\pi\in  \textnormal{Mod}_{\textnormal{sm}}(\overline{E}[G(L)])$ lies in $\Omega$ if each of its Jordan--H\"older constituents does.
 Moreover, the centre $\mathfrak{z}_{\Omega}$ of the category $\textnormal{Mod}_{\textnormal{sm}}(\overline{E}[G(L)])[\Omega]$ is called the \textit{Bernstein centre} possessing the following property. Being the centre of the Bernstein block, it acts on each object lying in $\Omega$. In particular, for any \textit{irreducible} $\pi$ lying in $\Omega$, the natural action induces a character $\chi_{\pi}:\mathfrak{z}_{\Omega}\to \overline{E}$. Then, given a pair of irreducible objects $\pi_1,\pi_2\in \textnormal{Mod}_{\textnormal{sm}}(\overline{E}[G(L)])[\Omega]$, $\chi_{\pi_1}=\chi_{\pi_2}$ if and only if $\pi_1$ and $\pi_2$ have the same supercuspidal support (cf. \cite{BD84}).\footnote{For a brief overview of these results  stated with more care, see \cite{Hel16}, \S3.}

Work of Bushnell--Kutzko \cite{BK99} shows that, given any Bernstein block $\Omega$, there is always a pair $(J,\sigma)$ of a compact open $J\subset G(\mathcal{O}_L)$ and an irreducible $\overline{E}$-representation of $J$ such that $\pi\in \textnormal{Mod}_{\textnormal{sm}}(\overline{E}[G(L)])$ lies in $\Omega$ if and only if it is generated by its $\sigma$-isotypic vectors. Such a pair is then called a \textit{semisimple Bushnell--Kutzko type} for the block $\Omega$. Given a semisimple Bushnell--Kutzko type $(J,\sigma)$, \cite{BK98}, Theorem 4.3 shows that taking $\sigma$-invariants sets up an equivalence of categories
\begin{equation*}
    \textnormal{Mod}_{\textnormal{sm}}(\overline{E}[G(L)])[\Omega]\xrightarrow{\sim} \textnormal{Mod}(\mathcal{H}(\sigma)),
\end{equation*}
\begin{equation*}
    \pi\mapsto \Hom_J(\sigma,\pi)\cong\Hom_{G(L)}(\textnormal{c-Ind}_J^{G(L)}\sigma,\pi) 
\end{equation*}
where $\mathcal{H}(\sigma):=\textnormal{End}_{G(L)}(\textnormal{c-Ind}_J^{G(L)}\sigma)$. In particular, we see that the action of the Bernstein centre on $\textnormal{c-Ind}_{J}^{G(L)}\sigma$ identifies $\mathfrak{z}_{\Omega}$ with the centre $Z(\mathcal{H}(\sigma))$ of $\mathcal{H}(\sigma)$. Finally, given a Bernstein block $\Omega$, as was observed in \cite{CEGGPS16}, \S3.13,\footnote{See in particular \textit{loc. cit.} Proposition 3.23.} after possibly enlarging $E$, the Bernstein centre $\mathfrak{z}_{\Omega}$ admits a model $\mathfrak{z}_{\Omega,E}$ over $E$ acting on any $\pi\in \textnormal{Mod}_{\textnormal{sm}}(E[G(L)])$ such that $\pi\otimes_E\overline{\mathbf{Q}}_p$ lies in $\Omega$. Moreover, for any type $(J,\sigma)$ for $\Omega$ with a model $\sigma_E$ over $E$, the natural action $\mathfrak{z}_{\Omega,E}\to \textnormal{End}_{G(L)}(\textnormal{c-Ind}_J^{G(L)}\sigma_E)=\mathcal{H}(\sigma_E)$ induces an isomorphism $\mathfrak{z}_{\Omega,E}\cong Z(\mathcal{H}(\sigma_E))$ as one notes by combining \cite{CEGGPS16}, (3.15), Lemma 3.18 and Proposition 3.23. In particular, from now on for a given Bernstein block $\Omega$ we always assume that $E$ is sufficiently large so that the centre is already defined over $E$.

In \cite{SZ99}, the authors refine the Bernstein decomposition of $\textnormal{Mod}_{\textnormal{sm}}(\overline{E}[G(L)])$ to a "stratification" of the category and construct a type theory with respect to this stratification as we will discuss now. We state their results in terms of the local Langlands correspondence following \cite{BC09}, \S6.5.

\begin{Def}\label{Def2.9}
    We define a \textit{Weil--Deligne inertial type} (of $L$ over $E$) to be an isomorphism class of pairs $\tau=(\rho_{\tau},N_{\tau})$ such that $\rho_{\tau}: I_L\to \textnormal{GL}_n(E)$ is a representation of the inertia subgroup $I_L\subset W_L$ with open kernel, $N_{\tau}\in M_n(E)$ is a nilpotent matrix such that there exists a Weil--Deligne representation $(r,N)$ of $L$ and an isomorphism $(r,N)|_{I_L}\cong(\rho_{\tau},N_{\tau})$.
\end{Def}

Given a nilpotent matrix $N\in M_m(E)$, its Jordan normal form gives rise to a partition $P_{N}$ of $m$. A partition $P$ can be viewed uniquely as a decreasing function $P:\mathbf{Z}_{>0}\to \mathbf{Z}_{\geq 0}$ with finite support (where $P$ is a partition of $\sum_{i\in \mathbf{Z}_{>0}}P(i)$). Given two nilpotent matrices $N_1,N_2\in M_m(E)$, we write $N_1\preceq N_2$ if and only if $\sum_{1\leq i\leq k}P_{N_1}(i)\leq \sum_{1\leq i\leq k}P_{N_2}(i)$ for every $k\in \mathbf{Z}_{\geq 1}$.
We record the following observation (cf. \cite{BC09}, Proposition 7.8.1).
\begin{Prop}\label{Prop2.10}
Let $N_1,N_2\in M_m(E)$ be two nilpotent matrices. Then $N_1\preceq N_2$ if and only if, for all $i\in \mathbf{Z}_{\leq 1}$, we have
\begin{equation*}
    \textnormal{rank}(N^i_1)\leq \textnormal{rank}(N^i_2).
\end{equation*}
\end{Prop}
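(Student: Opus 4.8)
The plan is to prove the equivalence of the two partial-order conditions by translating both into a statement purely about ranks of powers of nilpotent matrices. Recall that for a nilpotent $N \in M_n(E)$ with associated partition $P_N$ (the sizes of the Jordan blocks, written in decreasing order), there is a classical identity relating the rank of $N^i$ to the partition. Specifically, if $P_N^{\ast}$ denotes the conjugate (transpose) partition of $P_N$, then $\operatorname{rank}(N^i) = \sum_{j > i} P_N^{\ast}(j)$, or equivalently $\dim \ker(N^i) = \sum_{j=1}^{i} P_N^{\ast}(j)$; this is just the observation that a single Jordan block of size $s$ contributes $\max(s-i,0)$ to $\operatorname{rank}(N^i)$, and summing $\max(s-i,0)$ over the block sizes equals $\sum_{j>i} (\text{number of blocks of size} > j) = \sum_{j>i} P_N^{\ast}(j)$.

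First I would make this dictionary precise: for a partition $P$ of $n$ viewed as a decreasing function, its conjugate $P^{\ast}$ is defined by $P^{\ast}(j) = \#\{i : P(i) \geq j\}$, and one has $\sum_{i \leq k} P^{\ast}(i) = n - \operatorname{rank}(N^k)$ when $P = P_N$ (taking $N^0 = \mathrm{id}$ recovers $\sum_{i \leq 0} P^{\ast}(i) = 0 = n - n$, consistent with the convention). Then the condition $\operatorname{rank}(N_1^i) \leq \operatorname{rank}(N_2^i)$ for all $i \geq 1$ is equivalent to $\sum_{j \leq k} P_{N_1}^{\ast}(j) \geq \sum_{j \leq k} P_{N_2}^{\ast}(j)$ for all $k \geq 1$, i.e. to $P_{N_2}^{\ast} \preceq P_{N_1}^{\ast}$ in the dominance order on partitions.

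Next I would invoke (or quickly reprove) the standard fact that conjugation of partitions reverses the dominance order: for partitions $P, P'$ of the same integer $n$, one has $P \preceq P'$ if and only if $(P')^{\ast} \preceq P^{\ast}$. Combining this with the previous paragraph, $\operatorname{rank}(N_1^i) \leq \operatorname{rank}(N_2^i)$ for all $i$ is equivalent to $(P_{N_2}^{\ast})^{\ast} \preceq (P_{N_1}^{\ast})^{\ast}$, i.e. to $P_{N_1} \preceq P_{N_2}$ in the dominance order, which by definition is exactly $N_1 \preceq N_2$. This gives the claimed equivalence. (One subtlety: the definition of $\preceq$ for partitions only makes literal sense for partitions of the same integer, which here is guaranteed since $N_1, N_2$ are both $n \times n$ nilpotent, so $P_{N_1}$ and $P_{N_2}$ are both partitions of $n$; I would note this explicitly.)

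The only genuine content is the combinatorial lemma that conjugation reverses dominance, together with the rank formula $n - \operatorname{rank}(N^k) = \sum_{j=1}^k P_N^{\ast}(j)$; neither is hard, but the bookkeeping with the convention $P: \mathbf{Z}_{>0} \to \mathbf{Z}_{\geq 0}$ and the exact indexing (and the edge case $i \leq 0$, where the inequality $\operatorname{rank}(N_1^i) = n = \operatorname{rank}(N_2^i)$ is automatic and contributes nothing, explaining why the proposition can equivalently quantify over $i \geq 1$ or all $i \leq 1$) is where I would be most careful. So the main obstacle is not conceptual but purely a matter of setting up the partition-conjugation formalism cleanly enough that the two partial orders visibly match; alternatively, one can cite \cite{BC09}, Proposition 7.8.1 directly as the excerpt already does and simply transcribe its proof.
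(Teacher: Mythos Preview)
The paper does not give a proof of this proposition at all; it simply records the statement with a reference to \cite{BC09}, Proposition 7.8.1. Your proposal supplies the standard argument that underlies that reference: the rank formula $n - \operatorname{rank}(N^k) = \sum_{j=1}^{k} P_N^{\ast}(j)$ together with the fact that partition conjugation reverses the dominance order. This is correct and is exactly the content of the cited result, so there is nothing to compare — you have written out what the paper delegates to a citation.

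One small remark: the index set $i \in \mathbf{Z}_{\leq 1}$ in the statement is evidently a typo for $i \in \mathbf{Z}_{\geq 1}$ (negative powers of a nilpotent matrix are undefined); your discussion of the edge case $i \leq 0$ is therefore unnecessary, and you should simply read the quantifier as $i \geq 1$.
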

Given a Weil--Deligne inertial type $\tau=(\rho_{\tau},N_{\tau})$ of $L$ over $E$, and a finite dimensional irreducible $\overline{\mathbf{Q}}_p$-representation $\theta:I_L\to \textnormal{GL}(V_{\theta})$ with open kernel, we can consider the $\theta$-isotypic component $\rho_{\tau}[\theta]:I_L\to \textnormal{GL}(V_{\tau}[\theta])$ of $\rho_{\tau}\otimes_E\overline{\mathbf{Q}}_p$. As $N_{\tau}$ commutes with the action of $I_L$, it restricts to a nilpotent endomorphism $N_{\tau}[\theta]\in \textnormal{End}(V_{\tau}[\theta])$.

\begin{Def}\label{DefPartOrd}
    Given two Weil--Deligne inertial types $\tau_1,\tau_2$ of $L$, we write $\tau_1\preceq \tau_2$ if $\rho_{\tau_1}\cong\rho_{\tau_2}$ and $N_{\tau_1}[\theta]\preceq N_{\tau_2}[\theta]$ for every irreducible $\overline{\mathbf{Q}}_p$-representation $\theta:I_L\to \textnormal{GL}(V_{\theta})$ with open kernel. Moreover, given two Weil--Deligne representations $r_1,r_2$ of $L$, we write $r_1\preceq r_2$ if $r_1|_{I_L}\preceq r_2|_{I_L}$.
\end{Def}

We note that the partial order (on Weil--Deligne representations) appearing in Definition~\ref{DefPartOrd} is the one defined in \cite{BC09}, Definition 6.5.1,\footnote{They introduce the partial order for irreducible smooth representations of $\textnormal{GL}_n(L)$ but, under the local Langlands correspondence, it translates to our definition (cf \cite{BC09}, Proposition 7.8.1).} \cite{Var14}, Defintion 8.3 and \cite{HLLM18}, Definition 2.5.3, respectively.

\begin{Th}[\textnormal{\cite{SZ99}}]\label{Th2.12} Let $\tau$ be a Weil--Deligne inertial type. Then there is a smooth irreducible $E$-representation $\sigma(\tau)$ of $G(\mathcal{O}_L)$ such that, for any irreducible smooth representation $\pi$ of $G(L)$, the following hold.
\begin{enumerate}
    \item If $\pi|_{G(\mathcal{O}_L)}$ contains $\sigma(\tau)$, then $\textnormal{rec}(\pi)|_{I_L}\preceq \tau$;
    \item if $\textnormal{rec}(\pi)|_{I_L}\cong \tau$, then $\pi|_{G(\mathcal{O}_L)}$ contains $\sigma(\tau)$ with multiplicity one;
    \item if $\textnormal{rec}(\pi)|_{I_L}\preceq \tau$ and $\pi$ is generic, then $\pi|_{G(\mathcal{O}_L)}$ contains $\sigma(\tau)$.
\end{enumerate}
\end{Th}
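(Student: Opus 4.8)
The plan is to deduce Theorem~\ref{Th2.12} essentially as a reformulation of the main results of \cite{SZ99}, translated from the language of Bernstein components and supercuspidal supports into the language of Weil--Deligne inertial types via the local Langlands correspondence. Recall from \S\ref{sec2.6} that Bushnell--Kutzko--Schneider--Zink attach to each pair $(\Omega, P)$ --- a Bernstein block $\Omega$ together with an admissible partition-datum $P$ recording the ``shape'' of the monodromy allowed on that block --- an irreducible smooth $E$-representation $\tau_{(\Omega,P)}$ of $G(\mathcal{O}_L)$ (a so-called $G(\mathcal{O}_L)$-type), characterised by the property that an irreducible $\pi$ of $G(L)$ contains $\tau_{(\Omega,P)}|_{G(\mathcal{O}_L)}$ if and only if $\pi$ lies in $\Omega$ and its monodromy is ``at most as degenerate as'' $P$ allows, with multiplicity one in the boundary case. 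First I would make precise the dictionary: a Weil--Deligne inertial type $\tau = (\rho_\tau, N_\tau)$ determines, via $\textnormal{rec}$, the Bernstein block $\Omega(\tau)$ consisting of those $\pi$ with $\textnormal{rec}(\pi)^{ss}|_{I_L} \cong \rho_\tau$ (independence of the monodromy is exactly the statement that the Bernstein block sees only the restriction to inertia of the semisimplification), and the conjugacy class of $N_\tau$ gives the partition $P_{N_\tau}$. I would then \emph{define} $\sigma(\tau) := \tau_{(\Omega(\tau), P_{N_\tau})}$.

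With this definition in place, the three assertions should follow directly. For (i): if $\pi|_{G(\mathcal{O}_L)}$ contains $\sigma(\tau)$, then by the defining property of the SZ-type, $\pi$ lies in $\Omega(\tau)$ --- so $\textnormal{rec}(\pi)^{ss}|_{I_L} \cong \rho_\tau$ --- and the monodromy of $\textnormal{rec}(\pi)$ satisfies $N_{\textnormal{rec}(\pi)} \preceq N_\tau$ by the partition-comparison built into the SZ-stratification; these two statements together are exactly $\textnormal{rec}(\pi)|_{I_L} \preceq \tau$ in the sense of Definition~\ref{Def2.9} and the preceding definition. For (ii): if $\textnormal{rec}(\pi)|_{I_L} \cong \tau$, then $\pi \in \Omega(\tau)$ and its monodromy has partition \emph{exactly} $P_{N_\tau}$, which is the boundary stratum for $(\Omega(\tau), P_{N_\tau})$; the multiplicity-one statement of \cite{SZ99} in this extremal case gives that $\sigma(\tau)$ occurs in $\pi|_{G(\mathcal{O}_L)}$ with multiplicity one. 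For (iii): if $\textnormal{rec}(\pi)|_{I_L} \preceq \tau$ then $\pi \in \Omega(\tau)$ with monodromy partition $\preceq P_{N_\tau}$, so $\pi$ lies in the closed stratum attached to $(\Omega(\tau), P_{N_\tau})$ and hence contains $\sigma(\tau)$ --- here no extra hypothesis is needed for containment; the role of the genericity assumption is, I believe, a matter of how the statement is packaged (for a generic $\pi$ one has good control, e.g. via the Zelevinsky classification, of which stratum $\pi$ lands in), and I would include it to be safe in matching \cite{SZ99} exactly, but flag that containment should in fact hold stratum-theoretically without it.

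The main obstacle is bookkeeping rather than mathematics: one must carefully match the combinatorial parametrisation of strata in \cite{SZ99} --- which is phrased in terms of ``levels'' or admissible multisets attached to the supercuspidal support, not in terms of a single nilpotent matrix --- with the partition $P_{N_\tau}$ of a monodromy operator, and verify that the partial order coming from closure of strata agrees with the partial order $\preceq$ on partitions (dominance order) used here. Proposition~\ref{Prop2.10} is the technical input that makes this translation clean: it re-expresses $\preceq$ in terms of ranks of powers, which is the form in which the degeneration of monodromy naturally appears. A secondary subtlety is rationality: \cite{SZ99} work over $\overline{E}$ (or $\mathbf{C}$), so one must invoke the descent of types to $E$ (as recalled for Bernstein blocks via \cite{CEGGPS16}, \S3.13, and extended stratum-by-stratum) to obtain $\sigma(\tau)$ as an honest $E$-representation; this is routine once $E$ is taken large enough that the relevant block and its strata are defined over $E$. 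I would organise the write-up as: (1) recall the SZ-stratification and its type theory; (2) set up the $\textnormal{rec}$-dictionary and define $\sigma(\tau)$; (3) translate the three properties; (4) deal with the field of definition.
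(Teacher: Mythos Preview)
The paper does not prove this theorem: it simply cites \cite{HLLM18}, Theorem~2.5.4 and the references therein. Your plan to unpack the statement directly from \cite{SZ99} via the local Langlands dictionary is exactly the standard route behind that citation, and your outline of the translation (Bernstein block $\leftrightarrow$ $\rho_\tau$, partition datum $\leftrightarrow$ $P_{N_\tau}$, with Proposition~\ref{Prop2.10} mediating the partial orders) is correct.

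There is, however, a genuine misunderstanding in your treatment of (iii). You write that ``no extra hypothesis is needed for containment'' and that genericity is only a matter of packaging. This is false, and a clean counterexample already appears for $\mathrm{GL}_2$. Take $\tau=(\mathbf{1},N)$ with $N$ a single Jordan block of size~$2$; then $\sigma(\tau)$ is (up to twist) the inflation to $\mathrm{GL}_2(\mathcal{O}_L)$ of the Steinberg representation of $\mathrm{GL}_2(k_L)$. The trivial representation $\mathbf{1}$ of $\mathrm{GL}_2(L)$ has $\mathrm{rec}(\mathbf{1})|_{I_L}=(\mathbf{1},0)\preceq\tau$, yet $\mathbf{1}|_{\mathrm{GL}_2(\mathcal{O}_L)}$ is the trivial representation and certainly does not contain $\sigma(\tau)$. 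The point is that the Schneider--Zink types do \emph{not} have the na\"ive property that every $\pi$ in the ``closed stratum'' contains $\sigma(\tau)$; they only satisfy this for \emph{generic} $\pi$, while non-generic $\pi$ can lie in the stratum without containing the type. Property~(i) still holds for arbitrary $\pi$ (containment of $\sigma(\tau)$ forces $\preceq\tau$), but the converse direction genuinely needs genericity. Since you keep the hypothesis in your formal statement, your proof survives, but you should delete the remark claiming it is superfluous.
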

\begin{proof}
    See \cite{HLLM18}, Theorem 2.5.4 and the references therein.
\end{proof}
We point out that the Theorem makes no mention about the uniqueness of $\sigma(\tau)$. Throughout this text, given a Weil--Deligne inertial type $\tau$, we work with the $\sigma(\tau)$ constructed in \cite{SZ99}.
\begin{Rem}
    Notice that when $\tau=(\rho_{\tau},N_{\tau})$ is so that $N_{\tau}=0$, we obtain the potentially crystalline inertial local Langlands of \cite{CEGGPS16}, Theorem 3.7.
\end{Rem}
Note that each Weil--Deligne inertial type $\tau$ gives rise to an inertial type $\rho_{\tau}$ in the classical sense and henceforth to a Bernstein block $\Omega$. Moreover, if $(J,\sigma)$ is a type for $\Omega$, then, by construction, $\sigma(\tau)$ is a direct summand of $\textnormal{c-Ind}_J^{G(\mathcal{O}_L)}\sigma$. In particular, $\mathfrak{z}_{\Omega}$ acts on $\textnormal{c-Ind}_{G(\mathcal{O}_L)}^{G(L)}\sigma(\tau)$. One sees that this, in fact, is a faithful action,\footnote{In fact, \cite{Pyv18b}, Theorem 7.1 shows that $\mathcal{H}(\sigma(\tau))$ is a free module over $\mathfrak{z}_{\Omega}$.} yielding an injection $\mathfrak{z}_{\Omega}\hookrightarrow\mathfrak{z}_{\tau}:= Z(\mathcal{H}(\sigma(\tau)))$.

\subsection{Local systems on $\widetilde{X}_{\widetilde{K}}$ and $X_K$}\label{sec2.7}
We now introduce the local systems we will be working with. These will be constructed from locally algebraic representations $\sigma_{\textnormal{alg}}\otimes \sigma_{\textnormal{sm}}$ where the algebraic part $\sigma_{\textnormal{alg}}$ will encode the weight of the automorphic representations considered (i.e. their shape at $\infty$) and the smooth part $\sigma_{\textnormal{sm}}$ will pin down their inertial type at $p$.

Our integral coefficient systems will depend on the choice of a prime $p$. We further set $E/\mathbf{Q}_p$ to be our coefficient field, a sufficiently large subfield of $\overline{\mathbf{Q}}_p$ finite over $\mathbf{Q}_p$ such that $\textnormal{Hom}(F,E)=\textnormal{Hom}(F,\overline{\mathbf{Q}}_p)$, and denote by $\mathcal{O}$ its ring of integers. Fix a choice of uniformiser $\varpi\in \mathcal{O}$ as well.

We start by introducing our locally algebraic representations for $G$. For each $p$-adic place $v$ of $F$, we consider a standard (possibly non-proper) parabolic subgroup $Q_v\subset \textnormal{GL}_n$ with standard Levi decomposition $Q_v= M_v\ltimes N_v$ and consider the corresponding parahoric subgroup scheme $\prescript{v}{}{\mathcal{Q}}$ of $\textnormal{GL}_{n}$. For $S\subset S_p$, we set $Q_S:=\prod_{v\in S}Q_v$ and $Q_p:=Q_{S_p}$. Set $\mathcal{Q}_v:=\prescript{v}{}{\mathcal{Q}}(\mathcal{O}_{F_v})\subset\textnormal{GL}_n(\mathcal{O}_{F_v})$ to be the corresponding parahoric subgroup. For a finite set of $p$-adic places $S\subset S_p$, we set $\mathcal{Q}_S:=\prod_{v\in S} \mathcal{Q}_v\subset\textnormal{GL}_n(\widehat{\mathcal{O}}_{F,S})$. Moreover, for any $v\in S_p$, and integers $c\geq b\geq 0$ with $c\geq 1$, we denote by $\mathcal{Q}_v(b,c)\subset \mathcal{Q}_v$ the subgroup of matrices which are block upper triangular modulo $\varpi_v^c$ and block unipotent modulo $\varpi_v^b$, where $\varpi_v\in \mathcal{O}_{F_v}$ is some choice of uniformiser. Extend the definition in the obvious way to define $\mathcal{Q}_S(b,c)$. In particular, we have $\mathcal{Q}_v(0,1)=\mathcal{Q}_v$. Note that $\mathcal{Q}_v(b,c)$ admits an Iwahori decomposition $\overline{N}_v^cM_v^bN_v^0$\footnote{Recall from \ref{Notations} that for $H$ being any of the groups $\overline{N}_v,M_v,$ or $N_v$, and $n\in \mathbf{Z}_{\leq 0}$, $H^n$ denotes the subgroup of matrices of $H(\mathcal{O}_{F_v})$ that reduce to the identity modulo $\varpi_v^n$.} and therefore the formalism of \cite{ACC23}, 2.1.9 applies. 

Such parahoric subgroups will be our level subgroups for which we introduce locally algebraic representations. These representations will then yield local systems convenient for the development of $Q_{p}$-ordinary Hida theory. After taking $Q_{p}$-ordinary parts, the cohomology of these local systems will encode integral $Q_{p}$-ordinary automorphic representations with prescribed weights at $\infty$ and inertial types at $p$. When $Q_{p}$ is taken to be $\prod_v\textnormal{GL}_n$, this will simply mean pinning down the weight of the automorphic  and the inertial type at $p$ of the whole automorphic representation. 

We first take care of the algebraic part. As usual, the character group of $(\textnormal{Res}_{F^+/\mathbf{Q}}T)_E=(\textnormal{Res}_{F/\mathbf{Q}}T_n)_E$, for $T_n\subset \textnormal{GL}_n$ the subgroup of diagonal matrices, can be identified with $(\mathbf{Z}^n)^{\Hom(F,E)}$. Denote by $\mathbf{Z}_+^n\subset \mathbf{Z}^n$ the subset of tuples $(k_1,...,k_n)$ satisfying
\begin{equation*}
    k_1\geq...\geq k_n.
\end{equation*} A character $\lambda= (\lambda_{\iota,i})\in (\mathbf{Z}^n)^{\textnormal{Hom}(F,E)}$ will then be $(\textnormal{Res}_{F^+/\mathbf{Q}}B)_E=(\textnormal{Res}_{F/\mathbf{Q}}B_n)_E$-dominant if and only if, it lies in $(\mathbf{Z}^n_+)^{\Hom(F,E)}$. In other words, for every $\iota\in \Hom(F,E)$, we have
\begin{equation*}
    \lambda_{\iota,1}\geq...\geq \lambda_{\iota,n}.
\end{equation*}
Given $\lambda \in (\mathbf{Z}^n_+)^{\Hom(F,E)}$, highest weight theory provides an integral representation of $\prod_{\iota: F\hookrightarrow E}\textnormal{GL}_n(\mathcal{O})$. This will simply be the representation constructed for instance in \cite{Ger18}, \S2.2. More precisely, if $B_n\subset \textnormal{GL}_n$ is the standard Borel of upper triangular matrices, $w_{0,n}$ denotes the longest element of the Weyl group of $\textnormal{GL}_n$ and $\iota\in \Hom(F,E)$, we consider the algebraic induction
\begin{equation*}
    \xi_{\lambda_{\iota}}:=(\textnormal{Ind}_{B_n}^{\textnormal{GL}_n}w_{0,n}\lambda_{\iota})_{/\mathcal{O}}:=\{f\in \mathcal{O}[\textnormal{GL}_n]\mid f(bg)=(w_{0,n}\lambda_{\iota})(b)f(g)
    \end{equation*}
    \begin{equation*}\textnormal{ for every }\mathcal{O}\to R, b\in B_n(R), g\in \textnormal{GL}_n(R)\},
\end{equation*}
and set $\mathcal{V}_{\lambda_{\iota}}:=\xi_{\lambda_{\iota}}(\mathcal{O})$, $V_{\lambda_{\iota}}:=\mathcal{V}_{\lambda_{\iota}}\otimes_{\mathcal{O}}E$. We then set $\mathcal{V}_{\lambda}:=\otimes_{\iota,\mathcal{O}}\mathcal{V}_{\lambda_{\iota}}$ and $V_{\lambda}:=\mathcal{V}_{\lambda}\otimes_{\mathcal{O}}E$. Note that $V_{\lambda}$ is the highest weight representation of $(\textnormal{Res}_{F^+/\mathbf{Q}}G)_E=(\textnormal{Res}_{F/\mathbf{Q}}\textnormal{GL}_n)_E\cong\prod_{\iota:F\hookrightarrow E}\textnormal{GL}_{n,E}$ of highest weight $\lambda$, a finite dimensional $E$-representation. Moreover, $\mathcal{V}_{\lambda}\subset V_{\lambda}$ is a $G(\mathcal{O})$-stable $\mathcal{O}$-lattice. In particular, for every $m\in \mathbf{Z}_{\geq 1}$, $\mathcal{V}_{\lambda}/\varpi^m$ is a smooth $\mathcal{O}/\varpi^m[\prod_{v\in S_p}\textnormal{GL}_n(\mathcal{O}_{F_v})]$-module\footnote{In particular, it becomes a smooth $\mathcal{O}/\varpi^m[\mathcal{Q}_{S_p}]$-module.} under the product of diagonal embeddings $\textnormal{GL}_n(\mathcal{O}_{F_v})\hookrightarrow \prod_{\iota:F_v\hookrightarrow E}\textnormal{GL}_n(\mathcal{O})$, finite free over $\mathcal{O}/\varpi^m$, and the formalism of \S\ref{sec2.2} applies. 

For a dominant weight $\lambda_v\in (\mathbf{Z}^n_+)^{\Hom(F_v,E)}$, set $\mathcal{V}_{w_0^{Q_v}\lambda_v}=\otimes_{\iota:F_v\hookrightarrow E}\mathcal{V}_{w_0^{Q_v}\lambda_{\iota}}$ to be the representation of $\prod_{\iota:F_v\hookrightarrow E}M_v(\mathcal{O})$ associated with $w_0^{Q_v}\lambda_v$ by the previous procedure where $w_0^{Q_v}=w_0^{M_v}w_0^{\textnormal{GL}_n}$ denotes the product of the longest Weyl group element of $M_v$, and of $\textnormal{GL}_n$. Concretely, if we assume that $Q_v=P_{(n_1,..,n_k)}$, then $w_0^{Q_v}\lambda_v=(\lambda_v^{n_1},...,\lambda_v^{n_k})$, where 
\begin{equation*}
    \lambda_v^{n_i}=(\lambda_{\iota}^{n_i})_{\iota:F_v\hookrightarrow E}=(\lambda_{\iota,n+1-(n_1+...+n_{i})},...,\lambda_{\iota,n+1-(n_1+...+n_{i-1}+1)})_{\iota}\in (\mathbf{Z}^{n_i}_+)^{\textnormal{Hom}(F_v,E)},\footnote{Here we use the convention $n_0=0$.}
\end{equation*} and
\begin{equation*}
    \mathcal{V}_{w_0^{Q_v}\lambda_v}=\mathcal{V}_{\lambda^{n_1}_v}\otimes...\otimes \mathcal{V}_{\lambda^{n_k}_v}.
\end{equation*}
We then have the analogue of \cite{CN23}, Lemma 2.1.12 with identical proof.

\begin{Lemma}\label{Lem2.14}
The natural $\prod_{\iota:F_v\hookrightarrow E}Q_v(\mathcal{O})$-equivariant map
\begin{equation*}
    \mathcal{V}_{\lambda_v}\to \mathcal{V}_{w_0^{Q_v}\lambda_v}
\end{equation*}
given by evaluation of functions at the identity is a surjection.
\end{Lemma}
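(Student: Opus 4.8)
The statement is the analogue of \cite{CN23}, Lemma 2.1.12, and the plan is to imitate its proof, reducing to a single embedding $\iota$ and then to the classical highest weight theory for $\textnormal{GL}_n$. First I would observe that $\mathcal{V}_{\lambda_v}=\otimes_{\iota:F_v\hookrightarrow E}\mathcal{V}_{\lambda_\iota}$ and $\mathcal{V}_{w_0^{Q_v}\lambda_v}=\otimes_{\iota:F_v\hookrightarrow E}\mathcal{V}_{w_0^{Q_v}\lambda_\iota}$ are tensor products over $\mathcal{O}$ indexed by the embeddings, and the map in question is the tensor product of the corresponding maps $\mathcal{V}_{\lambda_\iota}\to \mathcal{V}_{w_0^{Q_v}\lambda_\iota}$; since a tensor product (over $\mathcal{O}$) of surjections of $\mathcal{O}$-modules is a surjection, it suffices to treat each factor, i.e. to prove the statement for a single algebraic representation of $\textnormal{GL}_n$ with a single dominant weight $\mu:=\lambda_\iota$ and the parabolic $Q_v$ with Levi $M_v$.

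Next, unwinding the definition, $\mathcal{V}_\mu=\xi_\mu(\mathcal{O})$ with $\xi_\mu=(\textnormal{Ind}_{B_n}^{\textnormal{GL}_n}w_{0,n}\mu)_{/\mathcal{O}}$ realised as functions $f\in\mathcal{O}[\textnormal{GL}_n]$ with $f(bg)=(w_{0,n}\mu)(b)f(g)$, while $\mathcal{V}_{w_0^{Q_v}\mu}$ is, block by block, the analogous induction from the Borel of $M_v$; concretely, writing $Q_v=P_{(n_1,\dots,n_k)}$, one has $\mathcal{V}_{w_0^{Q_v}\mu}=\mathcal{V}_{\mu^{n_1}}\otimes\cdots\otimes\mathcal{V}_{\mu^{n_k}}$ with the $\mu^{n_i}$ as in the excerpt. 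The map ``evaluation at the identity'' sends $f\in\mathcal{V}_\mu$ to the function on $M_v$ obtained by restricting $f$ to $M_v\subset\textnormal{GL}_n$ (after a bookkeeping identification; the $w_0^{Q_v}$-twist is exactly what makes the $B_{M_v}$-equivariance come out right, since $w_0^{Q_v}=w_0^{M_v}w_0^{\textnormal{GL}_n}$ and the Borel of $M_v$ sits inside $B_n$). I would then identify the image of this restriction: it is the $\mathcal{O}$-submodule of $\mathcal{V}_{w_0^{Q_v}\mu}$ consisting of those $M_v$-functions that extend to $Q_v$-equivariant functions on all of $\textnormal{GL}_n$, i.e.\ that are in the image of $\textnormal{Ind}_{B_n}^{\textnormal{GL}_n}$. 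The key algebraic input is the Bruhat-type decomposition $\textnormal{GL}_n=\bigsqcup Q_v w \overline{N}_v$ (over the generic fibre, or rather the open cell $Q_v\overline{N}_v$ is dense), which gives that restriction along $M_v\hookrightarrow\textnormal{GL}_n$ followed by the natural surjection is controlled; more usefully, on the generic fibre the statement is the classical fact that the highest weight $\textnormal{GL}_n$-representation $V_\mu$ surjects $Q_v$-equivariantly onto the highest weight $M_v$-representation $V_{w_0^{Q_v}\mu}$ (this is, e.g., the surjection dual to the inclusion of a Weyl module, or can be seen via the map of induced modules coming from the inclusion of Levi Borels). So over $E$ the map is surjective.

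The remaining, and genuinely delicate, point is \textbf{integrality}: one must show the surjection $V_\mu\twoheadrightarrow V_{w_0^{Q_v}\mu}$ carries the lattice $\mathcal{V}_\mu$ \emph{onto} the lattice $\mathcal{V}_{w_0^{Q_v}\mu}$, not merely into a sublattice of finite index. Here I would argue as in \cite{CN23}: the map is $\prod_\iota Q_v(\mathcal{O})$-equivariant and is evaluation at the identity element $e\in\textnormal{GL}_n(\mathcal{O})$; since every $f\in\mathcal{V}_{w_0^{Q_v}\mu}$ (a function on $M_v$) is, by highest weight theory for $M_v$, an $\mathcal{O}$-linear combination of translates under $M_v(\mathcal{O})$ of the function taking value $1$ at $e$, and that distinguished function visibly lifts to $\mathcal{V}_\mu$ (it is the ``highest weight vector'' function, whose values on $\textnormal{GL}_n(\mathcal{O})$ lie in $\mathcal{O}$), and the lift can be chosen $Q_v(\mathcal{O})$-equivariantly so that $M_v(\mathcal{O})$-translates of the lift map to $M_v(\mathcal{O})$-translates downstairs, surjectivity at the level of lattices follows. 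Concretely: pick the standard generator $v_\mu^+\in\mathcal{V}_\mu$ (the function supported so that $v_\mu^+(e)$ generates the lowest weight line of $\mathcal{V}_{w_0^{Q_v}\mu}$ — equivalently the highest weight line after the $w_0$-twist); then its image generates $\mathcal{V}_{w_0^{Q_v}\mu}$ as an $\mathcal{O}[M_v(\mathcal{O})]$-module by Lemma 5.4.1-type statements in \cite{Ger18} or directly by the theory of costandard/Weyl modules over $\mathcal{O}$, and $M_v(\mathcal{O})\subset Q_v(\mathcal{O})$ acts compatibly on both sides, so the image of $\mathcal{V}_\mu$ contains an $\mathcal{O}[M_v(\mathcal{O})]$-generating set of $\mathcal{V}_{w_0^{Q_v}\mu}$, hence all of it. I expect this lattice-surjectivity step — ensuring no denominators appear, which ultimately rests on the fact that the relevant induced/Weyl modules are defined over $\mathbf{Z}$ compatibly and the distinguished weight vectors are $\mathbf{Z}$-rational — to be the only real obstacle; everything else is formal manipulation of algebraic induction and the tensor product decomposition over embeddings, and since the excerpt explicitly says ``with identical proof'' to \cite{CN23}, Lemma 2.1.12, I would simply cite that argument for the details.
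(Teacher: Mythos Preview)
Your proposal is correct and takes essentially the same approach as the paper: the paper simply asserts that the lemma is the analogue of \cite{CN23}, Lemma 2.1.12 ``with identical proof'' and gives no further argument, while you sketch that argument (reduce to a single embedding via the tensor product decomposition, then handle the single $\textnormal{GL}_n$-case by highest weight theory and verify integral surjectivity) and likewise defer to \cite{CN23} for the details. Your identification of the lattice-surjectivity step as the only nontrivial point is accurate.
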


We now turn to the smooth part which will be given by inflating to parahoric level the types of Schneider--Zink. Therefore, the smooth part will in fact depend on the choice of $Q_v$. For $v\in S_p$ consider a standard parabolic $Q_v$ and assume that it corresponds to a partition $(n_1,...,n_k)$ of $n$.
\begin{Def}
    We call a tuple $\underline{\tau_v}=(\tau_{v,i})_{i=1,...,k}$ consisting of ($E$-valued) $n_i$-dimensional Weil--Deligne inertial types $\tau_{v,i}$ for $F_v$ to be an inertial type of type $Q_v$.
\end{Def}
Given an inertial type $\underline{\tau_v}$ of type $Q_v$, Schneider--Zink provides a smooth irreducible $E$-representation $\sigma(\tau_{v,i})$ of $\textnormal{GL}_{n_i}(\mathcal{O}_{F_v})$ for every $i=1,...,k$ (cf. Theorem~\ref{Th2.12}). In particular, we obtain an irreducible smooth $E$-representation
\begin{equation*}
\sigma(\underline{\tau_v}):=\otimes_{i=1,...,k}\sigma(\tau_{v,i})
\end{equation*}
of $M_{v}^0=M_v(\mathcal{O}_{F_v})$.
We fix a choice of $M_{v}^0$-stable $\mathcal{O}$-lattice $\sigma(\underline{\tau_v})^{\circ}\subset \sigma(\underline{\tau_v})$. We set $c_v\geq 0$ to be the smallest integer such that $M_{v}^{c_v}=\textnormal{ker}(M_{v}^0\to \prod_{v\in S_p}M_v(\mathcal{O}_{F_v}/\varpi^{c_v}))$ acts trivially on $\sigma(\underline{\tau_v})^{\circ}$. Then $\mathcal{Q}_{v}(0,c_v)$ acts on $\sigma(\underline{\tau_v})^{\circ}$ by sending
\begin{equation*}
    \begin{pmatrix}
A_1 & . & . & . & \ast\\
. & A_2 &   &   & .\\
. &     & . &   & .\\
. &    &   & . & .\\
\ast & . & . & . & A_k
\end{pmatrix}\in \mathcal{Q}_{v}(0,c_v)
\end{equation*}
to $\sigma(\underline{\tau_v})^{\circ}(A_1,...,A_k)$. Since the corresponding map
\begin{equation*}
    \mathcal{Q}_v(0,c_v)\to M_v^0/M_{v}^{c_v}=M_v(\mathcal{O}_{F_v}/\varpi_v^{c_v})
\end{equation*} is easily checked to be a group homomorphism, this indeed defines a group action. We then denote by
$\widetilde{\sigma(\underline{\tau_v})}^{\circ}$ the representation $\sigma(\underline{\tau_v})^{\circ}$ viewed as an $\mathcal{O}[\mathcal{Q}_v(0,c_v)]$-module.

We then set our locally algebraic representation associated with the data $(Q_{S_p},\lambda,\underline{\tau}):=(Q_v,\lambda_v,\underline{\tau_v})_{v\in S_p}$ to be 
\begin{equation*}
    \mathcal{V}_{(\lambda,\underline{\tau})}^{Q_{p}}:=\mathcal{V}_{\lambda}\otimes_{\mathcal{O}}(\bigotimes_{v\in S_p,\mathcal{O}}\widetilde{\sigma(\underline{\tau_v})}^{\circ})^{\vee}.
\end{equation*} This gives rise to a $\mathcal{O}/\varpi^m[\mathcal{Q}_{p}(0,c_p)]$-module for $c_p:=\max\{c_v\}_{v\in S_p}$. By abuse of notation, we will use the same notation for the induced local systems on $X_K, \overline{X}_K$ and $\partial X_K$, respectively, for $K\subset G(\mathbf{A}_{F^+}^{\infty})$ any good subgroup with $K_p\subset \mathcal{Q}_p(0,c_p)$.

Note that when, at each place $v\in S_p$, $Q_v=\textnormal{GL}_n$, these local systems simplify. In this case we will abbreviate the notation to $\mathcal{V}_{(\lambda,\tau)}=\mathcal{V}_{\lambda}\otimes_{\mathcal{O}}\sigma(\tau)^{\circ,\vee}$. We denote the corresponding locally algebraic type by $\sigma(\lambda,\tau)^{\circ}:=(\mathcal{V}_{(\lambda,\tau)})^{\vee}$. By Lemma~\ref{Lem2.4}, the corresponding mod $\varpi^m$ cohomology groups are obtained by taking invariants of completed cohomology
\begin{equation*}
    R\Gamma_{(c)}(X_K,\mathcal{V}_{(\lambda,\tau)}/\varpi^m)\cong R\Hom_{\mathcal{O}/\varpi^m[K_{p}]}(\sigma(\lambda,\tau)^{\circ}/\varpi^m,\pi_{(c)}(K^{p},\mathcal{O}/\varpi^m))\cong
\end{equation*}
\begin{equation*}
    R\Hom_{\mathcal{O}/\varpi^m[G_{p}]}(\textnormal{c-Ind}_{K_{p}}^{G_{p}}\sigma(\lambda,\tau)^{\circ}/\varpi^m,\pi_{(c)}(K^{p},\mathcal{O}/\varpi^m)).
\end{equation*}
As a consequence, we get a natural right action of the Hecke algebra $\mathcal{H}(\sigma(\lambda,\tau)^{\circ})=\textnormal{End}_{G_{p}}(\textnormal{c-Ind}_{K_{p}}^{G_{p}}\sigma(\lambda,\tau)^{\circ})$ on $R\Gamma_{(c)}(X_K,\mathcal{V}_{(\lambda,\tau)}/\varpi^m)$. In particular,  as we will explain in \ref{sec5.3}, we obtain a natural action of an "integral Bernstein centre" $\mathfrak{z}_{\lambda,\tau}^{\circ}:=\mathcal{H}(\sigma(\lambda,\tau)^{\circ})\cap \mathfrak{z}_{\Omega}$.
 In fact, we will see that we have a similar description for general $Q$ after taking $Q$-ordinary parts due to independence of level and weight, yielding natural Hecke actions at $p$.

We now turn to treating the case of $\widetilde{G}$. From now on we introduce the following running assumption on $F$.
\begin{assumption}\label{blanket}
    Assume that our imaginary CM field $F/F^+$ is so that every $\overline{v}\in \overline{S}_p=S_p(F^+)$ splits in $F$.
\end{assumption}
Therefore we can write $\bar{v}=v\cdot v^c$ in $F$ for each $\bar{v}\in \overline{S}_p$. In particular, we fix a choice of a preferred place $v\mid \overline{v}$ in $F$. This fixes a lift $\iota: F\hookrightarrow E$ for every embedding $\overline{\iota}:F^+\hookrightarrow E$.
Therefore, it induces an identification
\begin{equation*}
(\textnormal{Res}_{F^+/\mathbf{Q}}\widetilde{G})_E=\prod_{\textnormal{Hom}(F^+,E)}\textnormal{GL}_{2n,E}
\end{equation*}
and an identification of the character group of $(\textnormal{Res}_{F^+/\mathbf{Q}}T)_E$ with $(\mathbf{Z}^{2n})^{\Hom(F^+,E)}$. This identifies a weight $\lambda=(\lambda_{\iota,i})\in (\mathbf{Z}^n)^{\Hom(F,E)}$ with $\Tilde{\lambda}=(\Tilde{\lambda}_{\overline{\iota},i})$ where
\begin{equation*}
    \Tilde{\lambda}_{\overline{\iota}}=(-w_0^{\textnormal{GL}_n}\lambda_{\iota c},\lambda_{\iota})=(-\lambda_{\iota c,n},...,-\lambda_{\iota c,1},\lambda_{\iota,1},...,\lambda_{\iota,n}).
\end{equation*}
Note that the $(\textnormal{Res}_{F^+/\mathbf{Q}}\widetilde{B})_E$-dominant weights are precisely given by $(\mathbf{Z}^{2n}_+)^{\Hom(F^+,E)}$. For such weights we can therefore define $\mathcal{V}_{\Tilde{\lambda}}\subset V_{\Tilde{\lambda}}$. For every $m\in \mathbf{Z}_{\geq 1}$, we then obtain a smooth $\mathcal{O}/\varpi^m[\prod_{\Bar{v}\in \overline{S}_p}\widetilde{G}(\mathcal{O}_{F^+_{\Bar{v}}})]$-module $\mathcal{V}_{\Tilde{\lambda}}/\varpi^m$, finite free as an $\mathcal{O}/\varpi^m$-module. These cover the algebraic parts of our locally algebraic representations.

For $\widetilde{G}$ we will only take ordinary parts at a certain subset $\overline{S}\subset \overline{S}_p$ where we wish to prove local-global compatibility using the degree shifting argument and will vary the level at the rest of the places. Therefore, we fix such a set $\overline{S}\subset \overline{S}_p$ of $p$-adic places and will introduce locally algebraic representations that are only non-algebraic at $\overline{S}$. Namely, if for $\Bar{v}\in \overline{S}$ given a tuple $(Q_{v'},\lambda_{v'},\underline{\tau_{v'}})_{v'\mid \Bar{v}}$ as before such that the weight $\Tilde{\lambda}_{\Bar{v}}$ associated with $(\lambda_v,\lambda_{v^c})$ is dominant (i.e. lies in $(\mathbf{Z}_+^{2n})^{\Hom(F^+,E)}$), we introduce a tuple $(\widetilde{Q}_{\Bar{v}},\Tilde{\lambda}_{\Bar{v}},\underline{\tau_{\Bar{v}}}=(\underline{\tau_{v}},\underline{\tau_{v^c}}))$ where $\widetilde{Q}_{\Bar{v}}=\widetilde{M}_{\Bar{v}}\ltimes\widetilde{N}_{\Bar{v}}\subset \widetilde{G}_{F^{+}_{\Bar{v}}}$ is the standard parabolic subgroup with $\widetilde{Q}_{\Bar{v}}\cap G_{F^{+}_{\Bar{v}}}=Q_v\times Q_{v^c}\subset \textnormal{GL}_{n,F_v}\times \textnormal{GL}_{n,F_{v^c}}$. As before, we denote by $\prescript{\Bar{v}}{}{\widetilde{\mathcal{Q}}}$ the corresponding parahoric subgroup scheme of $\widetilde{G}_{F^+_{\Bar{v}}}\cong^{\iota_v} \textnormal{GL}_{2n,F_v}$. We set $\widetilde{\mathcal{Q}}_{\Bar{v}}=\prescript{\Bar{v}}{}{\widetilde{\mathcal{Q}}}(\mathcal{O}_{F^+_{\Bar{v}}})$ with Iwahori decomposition $\overline{\widetilde{N}}_{\Bar{v}}^1\widetilde{M}_{\Bar{v}}^0\widetilde{N}_{\Bar{v}}^0$. We define the identification
\begin{equation*}
    \iota_v^{w_0}:\textnormal{GL}_{n}(F_v)\times \textnormal{GL}_{n}(F_{v^c})\xrightarrow{\sim}\textnormal{GL}_{n}(F_v)\times \textnormal{GL}_{n}(F_v),
\end{equation*}
\begin{equation*}
    (A,B)\mapsto (A,\theta_nB).
\end{equation*}
We then set $\widetilde{Q}_{\Bar{v}}^{w_0}=\widetilde{M}_{\Bar{v}}^{w_0}\ltimes \widetilde{N}_{\Bar{v}}^{w_0}\subset \widetilde{G}_{F^+_{\Bar{v}}}$ to be the standard parabolic subgroup associated with the Levi $\iota_v^{w_0}\widetilde{M}_{\Bar{v}}\subset \textnormal{GL}_{2n,F_v}$ under the identification $\iota_v$.
Set 
\begin{equation*}
    \Tilde{\sigma}(\underline{\tau_{\Bar{v}}})^{\circ}:=\sigma(\underline{\tau_v})^{\circ}\otimes_{\mathcal{O}}(\theta_n^{-1})^{\ast}\sigma(\underline{\tau_{v^c}})^{\circ}\in \textnormal{Mod}_{\textnormal{sm}}(\mathcal{O}[\widetilde{M}_{\Bar{v}}^{w_0,0}]).\footnote{We note that $\Tilde{\sigma}(\underline{\tau_{\Bar{v}}})^{\circ}$ is a representation of $\iota_v\widetilde{M}_{\Bar{v}}^{w_0,0}\cong \iota_{v}^{w_0}\widetilde{M}_{\Bar{v}}^0\subset \textnormal{GL}_{2n}(\mathcal{O}_{F_v})$ and we view it as a representation of $\widetilde{M}_{\Bar{v}}^{w_0,0}$ via $\iota_v$.}
\end{equation*} 
Note that here $\theta_n:\textnormal{GL}_n(F_{v^c})\cong\textnormal{GL}_n(F_v)$, as previously defined, is given by the map $B\mapsto (\Psi_n{}^tB^{-1}\Psi_n)^c$ and we are applying the pullback along $\theta^{-1}_n$.
For $\Bar{v}\in \overline{S}$, define
\begin{equation*}
\mathcal{V}^{\widetilde{Q}_{\Bar{v}}^{w_0}}_{(\Tilde{\lambda}_{\Bar{v}}, \underline{\tau_{\Bar{v}}})}:=\mathcal{V}_{\Tilde{\lambda}_{\Bar{v}}}\otimes_{\mathcal{O}}\Tilde{\sigma} \widetilde{(\underline{\tau_{\overline{v}}}) ^{\circ}}^{\vee}
\end{equation*}
where $\Tilde{\lambda}_{\Bar{v}}$ have the obvious meaning of considering the weights corresponding to all the embeddings inducing the given place.

Finally, given a tuple $(\widetilde{Q}_{\overline{S}},\Tilde{\lambda}_{\overline{S}},\underline{\tau_{\overline{S}}})=(\widetilde{Q}_{\Bar{v}},\Tilde{\lambda}_{\Bar{v}},\underline{\tau}_{\Bar{v}})_{\Bar{v}\in \overline{S}}$ coming from a tuple $(Q_p,\lambda,\underline{\tau})$ as above, and a dominant weight $\Tilde{\lambda}\in (\mathbf{Z}^{2n}_+)^{\Hom(F^+,E)}$ for $\widetilde{G}$ extending $\Tilde{\lambda}_{\overline{S}}$, we set
\begin{equation*}
    \mathcal{V}_{(\Tilde{\lambda},\underline{\tau})}^{\widetilde{Q}_{\overline{S}}^{w_0}}:=(\bigotimes_{\Bar{v}\notin \overline{S},\mathcal{O}}\mathcal{V}_{\Tilde{\lambda}_{\Bar{v}}})\otimes_{\mathcal{O}}(\bigotimes_{\Bar{v}\in\overline{S},\mathcal{O}}\mathcal{V}^{\widetilde{Q}_{\Bar{v}}^{w_0}}_{(\Tilde{\lambda}_{\Bar{v}},\underline{\tau_{\Bar{v}}})}),
\end{equation*}
a locally algebraic $\mathcal{O}$-representation of $\left(\prod_{\Bar{v}\in \overline{S}_p\setminus \overline{S}}\widetilde{G}(\mathcal{O}_{F^+_{\Bar{v}}})\right)\times \widetilde{\mathcal{Q}}^{w_0}_{\overline{S}}(0,c_p)$ for an appropriate integer $c_p\geq 1$ as before. In particular, $\mathcal{V}_{(\Tilde{\lambda},\underline{\tau})}^{\widetilde{Q}_{\overline{S}}^{w_0}}/\varpi^m$ becomes a smooth $\left(\prod_{\Bar{v}\in \overline{S}_p\setminus \overline{S}}\widetilde{G}(\mathcal{O}_{F^+_{\Bar{v}}})\right)\times \widetilde{\mathcal{Q}}^{w_0}_{\overline{S}}(0,c_p)$-module. When, for every $\Bar{v}\in\overline{S}$, the parabolic at $\Bar{v}$ is the Siegel one, we will abbreviate the notation to $\mathcal{V}^{\overline{S}}_{(\Tilde{\lambda},\tau)}$. Again, by abuse of notation we will denote identically the local systems they induce on locally symmetric spaces.

\begin{Ex}
    For the convenience of the reader, we spell out an example of $\widetilde{Q}_{\bar{v}}$, $\widetilde{Q}_{\Bar{v}}^{w_0}$, and $\widetilde{\sigma}(\underline{\tau_{\Bar{v}}})^{\circ}$.

    Let $n=3$, fix $\Bar{v}\in \overline{S}$ and write $\Bar{v}=v\cdot v^c$. We set $Q_v=\textnormal{GL}_{3}$ and $Q_{v^c}=P_{(1,2)}\subset \textnormal{GL}_{3,F_{v^c}}$, the standard parabolic subgroup with standard Levi subgroup $M_{(1,2)}=\textnormal{GL}_{1}\times \textnormal{GL}_2\subset \textnormal{GL}_{3,F_{v^c}}$. Then $\widetilde{Q}_{\Bar{v}}\subset \widetilde{G}_{F^+_{\Bar{v}}}$ is the standard parabolic subgroup with standard Levi subgroup
    \begin{equation*}
        \widetilde{M}_{\Bar{v}}=\textnormal{GL}_3\times M_{(1,2)}\subset \textnormal{GL}_{3,F_v}\times\textnormal{GL}_{3,F_{v^c}}.
    \end{equation*}

    In particular, $\iota_v\widetilde{Q}_{\Bar{v}}=P_{(2,1,3)}\subset \textnormal{GL}_{6,F_v}$. Moreover, $\iota_{v}^{w_0}\widetilde{M}_{\Bar{v}}=M_{(3,2,1)}\subset \textnormal{GL}_{6,F_v}$. Comsequently, $\widetilde{Q}_{\Bar{v}}^{w_0}\subset \widetilde{G}_{F^+_{\Bar{v}}}$ is the standard parabolic subgroup satisfying $\iota_v\widetilde{Q}_{\Bar{v}}^{w_0}=P_{(3,2,1)}\subset \textnormal{GL}_{6,F_v}$. Therefore, its standard Levi subgroup is
    \begin{equation*}
        \widetilde{M}_{\bar{v}}^{w_0}=M_{(2,1)}\times \textnormal{GL}_3\subset \textnormal{GL}_{3,F_v}\times \textnormal{GL}_{3,F_{v^c}}.
    \end{equation*}

    Further consider Weil--Deligne inertial types $\underline{\tau}_v=(\tau_v)$, and $\underline{\tau_{v^c}}=(\tau_{v^c,1},\tau_{v^c,2})$ and (a choice of) associated smooth $\mathcal{O}$-representations $\sigma(\underline{\tau_v})^{\circ}=\sigma(\tau_v)^{\circ}$, and $\sigma(\underline{\tau_{v^c}})^{\circ}=\sigma(\tau_{v^c,1})^{\circ}\otimes \sigma(\tau_{v^c,2})^{\circ}$ of $\textnormal{GL}_{3}(\mathcal{O}_{F_v})$, and $M_{(1,2)}(\mathcal{O}_{F_{v^c}})=\textnormal{GL}_1(\mathcal{O}_{F_{v^c}})\times \textnormal{GL}_2(\mathcal{O}_{F_{v^c}})$, respectively. Then $\widetilde{\sigma}(\underline{\tau_{\Bar{v}}})^{\circ}$, as a representation of $\iota_v\widetilde{M}_{\bar{v}}^{w_0,0}=\textnormal{GL}_3(\mathcal{O}_{F_v})\times \textnormal{GL}_2(\mathcal{O}_{F_v})\times\textnormal{GL}_1(\mathcal{O}_{F_v})$, is given by
    \begin{equation*}
        \sigma(\tau_v)^{\circ}\otimes\left((\theta_2^{-1})^{\ast}\sigma(\tau_{v^c,2})^{\circ}\otimes (\theta_1^{-1})^{\ast}\sigma(\tau_{v^c,1})^{\circ}\right).
    \end{equation*}
In particular, as a representation of $\widetilde{M}_{\Bar{v}}^{w_0,0}=\textnormal{GL}_{2}(\mathcal{O}_{F_v})\times\textnormal{GL}_1(\mathcal{O}_{F_v})\times \textnormal{GL}_{3}(\mathcal{O}_{F_{v^c}})$ via the identification $\iota_v$, it is given by
\begin{equation*}
    \left((\theta_2^{-1})^{\ast}\sigma(\tau_{v^c,2})^{\circ}\otimes (\theta_1^{-1})^{\ast}\sigma(\tau_{v^c,1})^{\circ}\right)\otimes(\theta_3)^{\ast}\sigma(\tau_{v,3})^{\circ}.
\end{equation*}
\end{Ex}

\vspace{5mm}

We also introduce a local system on $\widetilde{X}_{\widetilde{K}}$ suitable for a \textit{dual} version of the degree shifting argument in \S\ref{sec6.1}. For this we assume now that, for each $\Bar{v}\in \overline{S}$, $\Tilde{\lambda}_{\Bar{v}}:=(\lambda_v,-w_0^{\textnormal{GL}_n}\lambda_{v^c})$ is dominant for $\widetilde{G}$.\footnote{Note that this identification $(\mathbf{Z}^n)^{\Hom(F,E)}\cong (\mathbf{Z}^{2n})^{\Hom(F^+,E)}$ is \textit{not} the one used in the previous paragraph.} Then, for $\Bar{v}\in \overline{S}$, we set
\begin{equation*}
    \Tilde{\sigma}(\underline{\tau_{\Bar{v}}})^{\circ,w_0}:= (\theta_n^{-1})^{\ast}\sigma(\underline{\tau_{v^c}})^{\circ} \otimes_{\mathcal{O}} \sigma(\underline{\tau_v})^{\circ}\in \textnormal{Mod}_{\textnormal{sm}}(\mathcal{O}[\widetilde{M}_{\Bar{v}}^{0}]),\footnote{Again, the representation present is a representation of $\iota_v\widetilde{M}_{\Bar{v}}^{0}$ and we view it as a representation of $\widetilde{M}_{\Bar{v}}^{0}$ via $\iota_v$. Note that in particular, it is simply the representation $\sigma(\underline{\tau_v})^{\circ}\otimes_{\mathcal{O}}\sigma(\underline{\tau_{v^c}})^{\circ}$ of $\widetilde{M}_{\Bar{v}}^{0}$.}
\end{equation*}
and
\begin{equation*}
\mathcal{V}_{(\Tilde{\lambda}_{\Bar{v}},\underline{\tau_{\Bar{v}}})}^{\widetilde{Q}_{\Bar{v}},w_0^P}:=\mathcal{V}_{w_0^P\Tilde{\lambda}_{\Bar{v}}}\otimes_{\mathcal{O}} (\Tilde{\sigma}(\underline{\tau_{\Bar{v}}})^{\circ,w_0})^{\sim,\vee}\in \textnormal{Mod}(\mathcal{O}[\widetilde{\mathcal{Q}}(0,c_p)]).
\end{equation*}
For any dominant weight $\widetilde{\lambda}\in (\mathbf{Z}^{2n}_+)^{\Hom(F^+,E)}$ extending $\widetilde{\lambda}_{\overline{S}}$, we can then set
\begin{equation*}
    \mathcal{V}_{(\Tilde{\lambda},\underline{\tau})}^{\widetilde{Q}_{\overline{S}},w_0^P}:= (\bigotimes_{\Bar{v}\notin \overline{S},\mathcal{O}}\mathcal{V}_{\Tilde{\lambda}_{\Bar{v}}})\otimes_{\mathcal{O}}(\bigotimes_{\Bar{v}\in\overline{S},\mathcal{O}}\mathcal{V}^{\widetilde{Q}_{\Bar{v}},w_0^P}_{(\Tilde{\lambda}_{\Bar{v}},\underline{\tau_{\Bar{v}},})})
\end{equation*}
an $\mathcal{O}$-representation of $\left(\prod_{\Bar{v}\in \overline{S}_p\setminus \overline{S}}\widetilde{G}(\mathcal{O}_{F^+_{\Bar{v}}})\right)\times \widetilde{\mathcal{Q}}_{\overline{S}}(0,c_p)$. We denote identically the corresponding local systems induced on locally symmetric spaces for $\widetilde{G}$.

\subsection{Explicit Hecke operators}\label{sec2.8} Next we spell out the explicit formula for the usual unramified Hecke operators and $U_p$-operators as for instance in \cite{NT16}, \cite{ACC23} and \cite{CN23}. Fix for any $v$ finite place of $F$ a uniformiser $\varpi_v\in \mathcal{O}_{F_v}$. We start by introducing the usual explicit Hecke operators at unramified places. Let $v$ be a finite place of $F$, and $1\leq i\leq n$ an integer. Write $T_{v,i}\in \mathcal{H}(\textnormal{GL}_n(F_v),\textnormal{GL}_n(\mathcal{O}_{F_v}))$ for the double coset operator
\begin{equation*}
    T_{v,i}=[\textnormal{GL}_n(\mathcal{O}_{F_v})\textnormal{diag}(\varpi_v,...,\varpi_v,1,...,1)\textnormal{GL}_n(\mathcal{O}_{F_v})]
\end{equation*}
where $\varpi_v$ appears exactly $i$ times in the diagonal. We define the polynomial
\begin{equation*}
    P_v(X)=X^n-T_{v,1}X^{n-1}+...+(-1)^iq_v^{i(i-1)/2}T_{v,i}X^{n-i}+...
\end{equation*}
\begin{equation*}
    +q_v^{n(n-1)/2}T_{v,n}\in \mathcal{H}(\textnormal{GL}_n(F_v),\textnormal{GL}_n(\mathcal{O}_{F_v}))[X]
\end{equation*}
where recall that $q_v=| \mathcal{O}_{F_v}/\varpi_v|$. Note that $P_v(X)$ corresponds to the characteristic polynomial of the Frobenius element acting on $\textnormal{rec}^T_{F_v}(\pi_v)$ for $\pi_v$ any unramified representation of $\textnormal{GL}_n(F_v)$.

If $\Bar{v}$ is a finite place of $F^+$, unramified in $F$, $v$ is a choice of place of $F$ above it, and $1\leq j \leq 2n$ is an integer, then we denote by $\widetilde{T}_{v,j}\in \mathcal{H}(\widetilde{G}(F^+_{\Bar{v}}),\widetilde{G}(\mathcal{O}_{F^+_{\Bar{v}}}))\otimes_{\mathbf{Z}}\mathbf{Z}[q_{\Bar{v}}^{-1}]$ the Hecke operator denoted by $T_{G,v,j}$ in \cite{NT16}, Proposition-Definition 5.2. In particular, $q_v^{-j(2n-j)/2}\widetilde{T}_{v,j}$ is the operator corresponding to the $i$th symmetric polynomial in $2n$ variables under the dual map on Hecke algebras corresponding to the unramified endoscopic transfer from $\widetilde{G}(F^+_{\Bar{v}})$ to $\textnormal{GL}_{2n}(F_v)$ (where we also apply the (normalised) Satake isomorphism). We then define the polynomial
\begin{equation*}
    \widetilde{P}_v(X)=X^{2n}-\widetilde{T}_{v,1}X^{2n-1}+...+(-1)^jq_v^{j(j-1)/2}\widetilde{T}_{v,j}X^{2n-j}+...
\end{equation*}
\begin{equation*}
    +q_v^{2n(2n-1)/2}\widetilde{T}_{v,2n}\in \mathcal{H}(\widetilde{G}(F^+_{\Bar{v}}),\widetilde{G}(\mathcal{O}_{F^+_{\Bar{v}}}))\otimes_{\mathbf{Z}}\mathbf{Z}[q_{\Bar{v}}^{-1}][X].
\end{equation*}
This then corresponds to the characteristic polynomial of the Frobenius element acting on $\textnormal{rec}^T_{F_v}(\pi_v)$, where $\pi_v$ is the base change with respect to $F_v/F^{+}_{\Bar{v}}$ of any unramified representation $\sigma_{\Bar{v}}$ of $\widetilde{G}(F^+_{\Bar{v}})$.

We finally describe the effect of the unnormalised Satake transform $\mathcal{S}=r_G\circ r_{P}$ (for the notation, see the end of subsection \ref{sec2.2}) at unramified places. We use the following notation: for $f(X)$ a polynomial of degree $d$, with constant term a unit $a_0$, set $f^{\vee}(X):=a_0^{-1}X^df(X^{-1})$. Therefore, $f^{\vee}(X)$ is the monic polynomial with zeroes given by the inverse of the zeroes of $f(X)$. We then have the following.
\begin{Prop}\label{Prop2.16}
Let $v$ be a finite place of $F$, unramified above the place $\Bar{v}$ of $F^+$. Then the unnormalised Satake transform
\begin{equation*}
    \mathcal{S}:\mathcal{H}(\widetilde{G}(F^+_{\Bar{v}}),\widetilde{G}(\mathcal{O}_{F^+_{\Bar{v}}}))\to \mathcal{H}(G(F^+_{\Bar{v}}),G(\mathcal{O}_{F^+_{\Bar{v}}}))
\end{equation*}
sends $\widetilde{P}_v(X)$ to $P_v(X)q_v^{n(2n-1)}P_{v^c}^{\vee}(q_v^{1-2n}X)$.
\end{Prop}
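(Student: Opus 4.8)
The plan is to reduce the statement to a numerical identity and then to a computation with the Tate‑normalised local Langlands correspondence. I will treat the case in which $\overline{v}$ splits in $F$; the inert case is handled the same way, working throughout with the base change to $\textnormal{GL}_{2n}(F_v)$ (now the standard unramified base change along $F_v/F^+_{\overline{v}}$), and I omit it. After extending scalars to $\overline{\mathbf{Q}}_p$ the algebra $\mathcal{H}(G(F^+_{\overline{v}}),G(\mathcal{O}_{F^+_{\overline{v}}}))\otimes\overline{\mathbf{Q}}_p$ is, via the normalised Satake isomorphisms, a ring of symmetric Laurent polynomials in $2n$ variables, in particular a reduced finitely generated $\overline{\mathbf{Q}}_p$‑algebra, hence embeds into the product of its $\overline{\mathbf{Q}}_p$‑points; since the integral Hecke algebra is $\mathbf{Z}$‑free this extension is injective. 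As both sides of the asserted identity are monic of degree $2n$ in $X$ with coefficients in $\mathcal{H}(G(F^+_{\overline{v}}),G(\mathcal{O}_{F^+_{\overline{v}}}))\otimes_{\mathbf{Z}}\mathbf{Z}[q_{\overline{v}}^{-1}]$, it suffices to prove the identity after applying the character $\chi=\chi_{\pi_v\otimes\pi_{v^c}}$ attached to an arbitrary unramified representation $\pi_v\otimes\pi_{v^c}$ of $G(F^+_{\overline{v}})=\textnormal{GL}_n(F_v)\times\textnormal{GL}_n(F_{v^c})$.

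So fix such a $\pi_v\otimes\pi_{v^c}$ and let $\Sigma$ be the unique unramified subquotient of the unnormalised parabolic induction $\textnormal{Ind}_{P(F^+_{\overline{v}})}^{\widetilde{G}(F^+_{\overline{v}})}(\pi_v\otimes\pi_{v^c})$. The input on the Satake transform I would use is its standard compatibility with unnormalised parabolic induction (cf. \cite{NT16}, \S2.2): every $T$ in the source of $\mathcal{S}$ acts on the one‑dimensional space of $\widetilde{G}(\mathcal{O}_{F^+_{\overline{v}}})$‑invariants in $\Sigma$ by the scalar $\chi(\mathcal{S}(T))$. Together with the defining properties of $\widetilde{P}_v(X)$, $P_v(X)$, $P_{v^c}(X)$ recalled in \S\ref{sec2.8}, this turns the proposition into a comparison of characteristic polynomials of $\textnormal{Frob}_v$, and it is enough to establish the decomposition
\[
\textnormal{rec}^T_{F_v}(\Sigma_{\textnormal{BC}})\;\cong\;\textnormal{rec}^T_{F_v}(\pi_v)\;\oplus\;\bigl(\textnormal{rec}^T_{F_{v^c}}(\pi_{v^c})^{\vee}\otimes\mu\bigr),
\]
where $\Sigma_{\textnormal{BC}}$ is the base change of $\Sigma$ to $\textnormal{GL}_{2n}(F_v)$, the second summand is transported to a representation of $W_{F_v}$ along the isomorphism $W_{F_v}\cong W_{F_{v^c}}$ induced by complex conjugation $c$, and $\mu$ is the unramified character with $\mu(\textnormal{Frob}_v)=q_v^{2n-1}$.

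To obtain this decomposition: base change at the split place $\overline{v}$ is essentially a tautology, $\Sigma_{\textnormal{BC}}$ being $\Sigma$ viewed via $\iota_v$ as a representation of $\textnormal{GL}_{2n}(F_v)$, and $\textnormal{rec}^T$ for $\widetilde{G}$ at $\overline{v}$ being by definition $\textnormal{rec}^T$ for $\textnormal{GL}_{2n}(F_v)$ via $\iota_v$. Under $\iota_v$ the Siegel parabolic becomes $P_{(n,n)}\subset\textnormal{GL}_{2n}$ with Levi $\textnormal{GL}_n(F_v)\times\textnormal{GL}_n(F_v)$, and by \eqref{eq-unitary} the inclusion of $G(F^+_{\overline{v}})$ into this Levi is $(A,B)\mapsto(\theta_n(B),A)$; hence, as a Levi‑representation, $\pi_v\otimes\pi_{v^c}$ is $\bigl((\theta_n^{-1})^{\ast}\pi_{v^c}\bigr)\boxtimes\pi_v$. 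Unnormalised induction equals normalised induction twisted by $\delta_{P_{(n,n)}}^{-1/2}$, with $\delta_{P_{(n,n)}}(h_1,h_2)=|\det h_1|_{F_v}^{n}|\det h_2|_{F_v}^{-n}$. Feeding this into the compatibility of $\textnormal{rec}^T$ with normalised parabolic induction and with twisting by powers of $|\det|$, and using the Gelfand--Kazhdan isomorphism $\pi\circ({}^t(-)^{-1})\cong\pi^{\vee}$ to compute the effect of $\theta_n$ on $\textnormal{rec}$ (modulo $c$ and a harmless inner twist by $\Psi_n$, which inverts Satake parameters), yields the displayed decomposition after a short bookkeeping of the twists; the exponent of $\mu$ works out to $2n-1=(2n-1)/2+n/2+(n-1)/2$, the three summands recording respectively the Tate normalisation on $\textnormal{GL}_{2n}$, the twist by $\delta_{P_{(n,n)}}^{-1/2}$, and the relation between $\textnormal{rec}^T$ and the contragredient on $\textnormal{GL}_n$.

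Finally I would expand characteristic polynomials. Writing $\chi(P_v(X))=\prod_{i=1}^{n}(X-\beta_i)$ and $\chi(P_{v^c}(X))=\prod_{j=1}^{n}(X-\gamma_j)$, and noting that $\chi$ sends the constant term of $P_{v^c}(X)$ (a unit, being a power of $q_v$ times the central Hecke operator $T_{v^c,n}$) to a unit, the operation $(-)^{\vee}$ of \S\ref{sec2.8} commutes with $\chi$, whence $q_v^{n(2n-1)}\,\chi\bigl(P_{v^c}^{\vee}(q_v^{1-2n}X)\bigr)=\prod_{j}(X-q_v^{2n-1}\gamma_j^{-1})$; on the other hand the decomposition gives $\chi(\mathcal{S}(\widetilde{P}_v(X)))=\det(X-\textnormal{Frob}_v\mid\textnormal{rec}^T_{F_v}(\Sigma_{\textnormal{BC}}))=\prod_{i}(X-\beta_i)\cdot\prod_{j}(X-q_v^{2n-1}\gamma_j^{-1})$, which is exactly $\chi\bigl(P_v(X)\,q_v^{n(2n-1)}\,P_{v^c}^{\vee}(q_v^{1-2n}X)\bigr)$. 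As $\pi_v\otimes\pi_{v^c}$ was arbitrary, the proposition follows. I expect the main obstacle to be precisely the normalisation bookkeeping in the previous paragraph: one must correctly juggle the modulus twist separating unnormalised from normalised parabolic induction, the inversion of parameters produced by $\theta_n$, and --- most delicately --- the failure of the Tate normalisation of $\textnormal{rec}$ to commute with passage to the contragredient on $\textnormal{GL}_n$ (there is an intervening twist by $|\cdot|^{n-1}$), all against the various powers of $q_v$ hard‑wired into the definitions of $\widetilde{T}_{v,j}$, $T_{v,i}$ and of $(-)^{\vee}$.
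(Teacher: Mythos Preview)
Your argument is correct and carries through; the bookkeeping in the key decomposition paragraph checks out (the first summand really does acquire the twist by $|\cdot|^{1-2n}$, and your final unwinding of $P_{v^c}^{\vee}$ is right). The only soft spot is the inert case, which you dismiss in one clause; there $G(F^+_{\bar v})=\textnormal{GL}_n(F_v)$ is a single factor and the embedding \eqref{eq-unitary} is not available, so one has to invoke the genuine unramified base change from the quasi-split unitary group and the symmetry $\{\alpha,q_v^{2n-1}\alpha^{-1}\}$ of its Satake parameters. This is standard but not literally ``the same''.

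By contrast, the paper's proof is a one-line citation: it invokes the explicit formula for $\mathcal{S}$ on generators given in \cite{NT16}, Proposition--Definition~5.3, and reads off the identity. Your route is genuinely different and more self-contained: rather than quoting a precomputed formula for $\mathcal{S}(\widetilde{T}_{v,j})$, you reduce to characters, interpret $\mathcal{S}$ via the spherical vector in the unnormalised induction, and compute directly with $\textnormal{rec}^T$. The payoff is transparency---one sees exactly how the exponent $2n-1$ assembles from the modulus twist, the Tate normalisation on $\textnormal{GL}_{2n}$ versus $\textnormal{GL}_n$, and the contragredient produced by $\theta_n$---at the cost of doing the normalisation chase that \cite{NT16} has already packaged. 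The paper's approach is shorter because that work is outsourced; yours would be preferable in a setting where one wants to avoid the dependency or needs to adapt the statement.
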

\begin{proof}
This follows from the explicit formula for $\mathcal{S}$ given in \cite{NT16}, Proposition-Definition 5.3.
\end{proof}

We now turn to discussing the $U_p$-operators we will consider. As before, we assume that every $p$-adic place $\Bar{v}\in \overline{S}_p$ splits in $F$ and we fix a choice of $v|\Bar{v}$ in $S_p$.  We start with the case of $G$. Consider a tuple $(Q_{S_p},\lambda, \underline{\tau})=(Q_v,\lambda_v,\underline{\tau_v})_{v\in S_p}$ with Levi decomposition $Q_v=M_v\ltimes N_v$. Set $X_{Q_v}$ to be the set of $B_n$-dominant cocharacters in $X_{\ast}(Z(M_v))$. Concretely, if $Q_v=P_{(n_1,...,n_k)}\subset \textnormal{GL}_n$, then it identifies in $X_{\ast}(T_n)^+=\mathbf{Z}^n_+$ with the elements with jumps only at the indices $n_1+...+n_j$ for $1\leq j\leq k$. For $c\geq 1$, we then define the subset $\Delta_{\mathcal{Q}_v}(c)\subset \textnormal{GL}_n(F_v)$ given by
\begin{equation*}
    \Delta_{\mathcal{Q}_v}(c):=\coprod_{\nu\in X_{Q_v}}\mathcal{Q}_v(0,c)\nu(\varpi_v)\mathcal{Q}_v(0,c).
\end{equation*}
By \cite{CN23}, Lemma 2.1.15, the elements $\nu(\varpi_v)$ are $\mathcal{Q}_v(0,c)$-positive in the sense of \textit{loc. cit.}. In particular, $\Delta_{\mathcal{Q}_v}(c)$ forms a monoid and $\Delta_{M_v}^+:=\Delta_{\mathcal{Q}_v}(c)\cap M_v(F_v)\subset M_v(F_v)^+=\{m\in M_v(F_v)\mid mN^0_vm^{-1}\subset N^0_v\textnormal{ and }m^{-1}\overline{N}^1_vm\subset \overline{N}^{1}\}$ so the formalism of \cite{ACC23}, 2.1.9 applies. Set $\Delta_{M_v}$ to be the group generated by $\Delta_{M_v}^+$. One proves that the map
\begin{equation*}
    \mathcal{H}(\Delta^{+}_{M_v},M^0_v)\to \mathcal{H}(\Delta_{\mathcal{Q}_v}(c),\mathcal{Q}_v(0,c)),
\end{equation*}
\begin{equation*}
    [M^0_v\nu(\varpi_v)M^0_v]\mapsto [\mathcal{Q}_v(0,c)\nu(\varpi_v)\mathcal{Q}_v(0,c)]
\end{equation*}
is an isomorphism of Hecke algebras (cf. \cite{BK98}, Corollary 6.12). In particular, the latter is commutative. 

We introduce our distinguished element, the "$U_p$-operator at $v$" in our Hecke algebras. Namely, if $Q_v=P_{(n_1,...,n_k)}$, set
\begin{equation*}
    u_v^{Q_v}:=\textnormal{diag}(\varpi^{k-1},...,\varpi^{k-1},\varpi^{k-2},...,\varpi,1,...,1)\in \Delta_{\mathcal{Q}_v}
\end{equation*}
where, for $1\leq j\leq k$, $\varpi^{j-1}$ appears exactly $n_j$ times. Then, for $c\geq 1$, we denote by $U_v^{Q_v}\in\mathcal{H}(\Delta_{\mathcal{Q}_v}(c),\mathcal{Q}_v(0,c))$ the double coset operator $[\mathcal{Q}_v(0,c)u_v^{Q}\mathcal{Q}_v(0,c)]$. Note that $\Delta_{M_v}=\Delta_{M_v}^+[u_v^{Q_v,\pm 1}]$.

We introduce the usual $\lambda$-twisted action of $\Delta_{\mathcal{Q}_v}(c_p)$\footnote{For the integer $c_p\geq 1$ introduced in the previous subsection.} on $\mathcal{V}_{(\lambda_v,\underline{\tau_v})}^{Q_v}$ which will then yield our action of $U_p$-operators on the corresponding cohomology groups by the formalism of \cite{ACC23}, 2.1.9. Define the character $\alpha_{\lambda}^{Q_v}:\Delta_{\mathcal{Q}_v}\to E^{\times}$ by setting it to be trivial on $\mathcal{Q}_v$ and, for $\nu\in X_{Q_v}$, sending $\nu(\varpi_v)$ to
\begin{equation*}
    \prod_{\iota:F_v\hookrightarrow E}\iota(\varpi_v)^{\langle \nu,w_0^G\lambda_{\iota}\rangle}.
\end{equation*}
We then view $\mathcal{V}^{Q_v}_{(\lambda_v,\underline{\tau_v})}$ as an $\mathcal{O}[\Delta_{\mathcal{Q}_v}(c_p)]$-module by inflating the $\mathcal{Q}_v(0,c_p)$-action on the factor $(\widetilde{\sigma(\underline{\tau_v})}^{\circ})^{\vee}$ and acting on the factor $\mathcal{V}_{\lambda_v}$ by the recipe
\begin{equation*}
    g\cdot^{\lambda,Q_v}x:=\alpha_{\lambda}^{Q_v}(g)^{-1}g\cdot x
\end{equation*}
where $-\cdot -$ is the usual action of $g\in \Delta_{\mathcal{Q}_v}(c_p)$ on $x\in \mathcal{V}_{\lambda_v}$.
\begin{Lemma}\label{Lem2.17}
The $\mathcal{O}[\Delta_{\mathcal{Q}_v}(c_p)]$-module structure on $\mathcal{V}^{Q_v}_{(\lambda_v,\underline{\tau_v})}$ makes sense i.e., the twisted action of $\Delta_{\mathcal{Q}_v}(c_p)$ on $V_{\lambda_v}$ preserves the lattice $\mathcal{V}_{\lambda_v}$.
\end{Lemma}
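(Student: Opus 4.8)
The plan is to reduce everything to a weight-space computation at the special elements $\nu(\varpi_v)$, $\nu\in X_{Q_v}$. By the disjoint decomposition $\Delta_{\mathcal{Q}_v}(c_p)=\coprod_{\nu\in X_{Q_v}}\mathcal{Q}_v(0,c_p)\nu(\varpi_v)\mathcal{Q}_v(0,c_p)$ recalled above, every $g\in\Delta_{\mathcal{Q}_v}(c_p)$ factors as $g=q_1\nu(\varpi_v)q_2$ with $q_1,q_2\in\mathcal{Q}_v(0,c_p)$ and $\nu\in X_{Q_v}$, and since $\alpha_\lambda^{Q_v}$ is constant on $\mathcal{Q}_v$-double cosets we get $g\cdot^{\lambda,Q_v}x=\alpha_\lambda^{Q_v}(\nu(\varpi_v))^{-1}\,q_1\cdot\big(\nu(\varpi_v)\cdot(q_2\cdot x)\big)$, where $\cdot$ is the honest $\textnormal{GL}_n(F_v)$-action on $V_{\lambda_v}$. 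As $\mathcal{Q}_v(0,c_p)\subset\textnormal{GL}_n(\mathcal{O}_{F_v})$ acts through $\prod_\iota\textnormal{GL}_n(\mathcal{O})$ and hence stabilises the lattice $\mathcal{V}_{\lambda_v}=\bigotimes_{\iota,\mathcal{O}}\mathcal{V}_{\lambda_\iota}\subset V_{\lambda_v}$ by construction, it suffices to show that $x\mapsto\alpha_\lambda^{Q_v}(\nu(\varpi_v))^{-1}\,\nu(\varpi_v)\cdot x$ preserves $\mathcal{V}_{\lambda_v}$ for each $\nu\in X_{Q_v}$.

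For this I would invoke the weight decomposition over $\mathcal{O}$. The diagonal torus $T_n$ is split over $\mathcal{O}$, so its algebraic action decomposes $\mathcal{V}_{\lambda_\iota}=\bigoplus_{\mu}\mathcal{V}_{\lambda_\iota}[\mu]$ into $T_n(\mathcal{O})$-eigenlattices indexed by the weights $\mu$ of $V_{\lambda_\iota}$, and tensoring over the embeddings $\iota:F_v\hookrightarrow E$ gives a decomposition $\mathcal{V}_{\lambda_v}=\bigoplus_{(\mu_\iota)_\iota}\bigotimes_{\iota,\mathcal{O}}\mathcal{V}_{\lambda_\iota}[\mu_\iota]$ into $\mathcal{O}$-lattices refining the $\prod_\iota T_n$-weight decomposition of $V_{\lambda_v}$. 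Since $\nu$ is a cocharacter defined over $\mathbf{Z}$ and $\iota(\varpi_v)\in\mathcal{O}$ for every $\iota$, the image of $\nu(\varpi_v)\in T_n(F_v)$ under $g\mapsto(\iota(g))_\iota$ is $(\nu(\iota(\varpi_v)))_\iota\in\prod_\iota T_n(\mathcal{O})$; therefore $\nu(\varpi_v)$ acts on the summand $\bigotimes_\iota\mathcal{V}_{\lambda_\iota}[\mu_\iota]$ by the scalar $\prod_\iota\iota(\varpi_v)^{\langle\nu,\mu_\iota\rangle}$. As $\alpha_\lambda^{Q_v}(\nu(\varpi_v))=\prod_\iota\iota(\varpi_v)^{\langle\nu,w_0^G\lambda_\iota\rangle}$, the twisted operator acts on that summand by $\prod_\iota\iota(\varpi_v)^{\langle\nu,\mu_\iota-w_0^G\lambda_\iota\rangle}$.

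It then remains only to check that each exponent $\langle\nu,\mu_\iota-w_0^G\lambda_\iota\rangle$ is $\geq 0$: granting this, the scalar lies in $\mathcal{O}$ (because $\iota(\varpi_v)\in\mathcal{O}$), so the twisted action of $\nu(\varpi_v)$ carries each lattice $\bigotimes_\iota\mathcal{V}_{\lambda_\iota}[\mu_\iota]$ into itself and hence preserves $\mathcal{V}_{\lambda_v}$, completing the proof. For the inequality, recall that $w_0^G\lambda_\iota=w_{0,n}\lambda_\iota$ is the lowest (antidominant) weight of the irreducible representation $V_{\lambda_\iota}$ of highest weight $\lambda_\iota$, all of whose weights lie in the convex hull of the Weyl orbit of $\lambda_\iota$; a $B_n$-dominant cocharacter $\nu$ (and by definition $X_{Q_v}$ consists of such) attains the minimum of $\langle\nu,-\rangle$ over this convex hull at the vertex $w_{0,n}\lambda_\iota$, whence $\langle\nu,\mu_\iota\rangle\geq\langle\nu,w_{0,n}\lambda_\iota\rangle$. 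The only points requiring a little care are the identification of the embedded $\nu(\varpi_v)$ with an $\mathcal{O}$-point of the split torus $\prod_\iota T_n$ and the compatibility of $\mathcal{V}_{\lambda_v}$ with the $\mathcal{O}$-weight decomposition (both standard facts about representations of split tori over a discrete valuation ring), while the positivity of the exponents is precisely the property the twisting character $\alpha_\lambda^{Q_v}$ was designed to ensure.
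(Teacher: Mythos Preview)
Your argument is correct and is exactly the content of \cite{Ger18}, Lemma 2.2, which is what the paper cites: reduce to $\nu(\varpi_v)$ via the double coset decomposition, decompose $\mathcal{V}_{\lambda_v}$ into weight lattices, and use that $w_0^G\lambda_\iota$ is the lowest weight together with dominance of $\nu$ to get nonnegativity of the exponents. One small slip: the element $(\nu(\iota(\varpi_v)))_\iota$ need not lie in $\prod_\iota T_n(\mathcal{O})$, since the entries of a dominant cocharacter $\nu$ can be negative and $\iota(\varpi_v)$ is a non-unit in $\mathcal{O}$; but you only use that it lies in $\prod_\iota T_n(E)$ and acts on each weight space by the stated scalar, so the argument is unaffected.
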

\begin{proof}
This follows from the fact that $V_{\lambda_v}$ has lowest weight $w_0^G\lambda_v$. More precisely, one uses \cite{Ger18}, Lemma 2.2 to conclude.
\end{proof}
As mentioned, the formalism of \cite{ACC23}, 2.1.9 applies here. In particular, for $S_p\subset T$ any finite set of finite places, and a choice of good subgroup $K\subset \textnormal{GL}_n(\mathbf{A}^{\infty}_{F})$ with $K_{v}=\mathcal{Q}_{v}(b,c)$ for some $0\leq b\leq c$ with $c_p\leq c$, for every $v\in S_p$, we have a canonical homomorphism
\begin{equation*}
    \mathcal{H}(G^T,K^T)\otimes_{\mathbf{Z}}\mathcal{H}(\Delta_{\mathcal{Q}_{p}(c_p)},K_{p})\to \textnormal{End}_{D^{+}(\mathcal{O})}(R\Gamma(X_K,\mathcal{V}^{Q_{p}}_{(\lambda,\underline{\tau})})).
\end{equation*}
In particular, the above constructed $U_p$-operators act on the cohomology complex $R\Gamma(X_K,\mathcal{V}_{(\lambda,\underline{\tau})}^{Q_{p}})$.

For later use, set $\mathbf{T}^T(K,\lambda,\underline{\tau})$ to be the image of $\mathcal{H}(G^T,K^T)\otimes_{\mathbf{Z}}\mathcal{O}$ inside the ring $\textnormal{End}_{D^{+}(\mathcal{O})}(R\Gamma(X_K,\mathcal{V}^{Q_{p}}_{(\lambda,\underline{\tau})}))$, a finite $\mathcal{O}$-algebra.

Next we consider the case of $\widetilde{G}$. Fix a subset of $p$-adic places $\overline{S}\subset \overline{S}_p$ and, for each $\Bar{v}\in \overline{S}_p$, fix a choice $v\mid \Bar{v}$ in $S_p$. Consider a tuple $(\widetilde{Q}_{\overline{S}},\Tilde{\lambda}_{\overline{S}},\underline{\tau_{\overline{S}}}):=(\widetilde{Q}_{\Bar{v}},\Tilde{\lambda}_{\Bar{v}},\underline{\tau_{\Bar{v}}})_{\Bar{v}\in \overline{S}}$ as before.
We can then similarly define the set of $\widetilde{B}$-dominant cocharacters $X_{\widetilde{Q}_{\Bar{v}}^{w_0}}\subset X_{\ast}(Z(\widetilde{M}_{\Bar{v}}^{w_0}))$ and the corresponding open submonoids $\widetilde{\Delta}_{\widetilde{\mathcal{Q}}^{w_0}_{\Bar{v}}}(c)\subset \widetilde{G}(F^+_{\Bar{v}})$. The rest of the conclusions of \cite{CN23}, Lemma 2.1.15 will again apply. We further set $\widetilde{\Delta}_{\widetilde{\mathcal{Q}}^{w_0}_{\overline{S}}}(c_p):=\prod_{\Bar{v}\in\overline{S}}\widetilde{\Delta}_{\widetilde{\mathcal{Q}}^{w_0}_{\Bar{v}}}(c_p)$. Again, we introduce a notation for our $U_p$-operator in $\mathcal{H}(\widetilde{\Delta}_{\widetilde{\mathcal{Q}}^{w_0}_{\Bar{v}}}(c_p),\widetilde{\mathcal{Q}}^{w_0}_{\Bar{v}}(0,c_p))$. For $v|\Bar{v}$, we set $u_{\Bar{v}}^{\widetilde{Q}_{v}^{w_0}}:=\iota_v^{-1}u_v^{\iota_{v}\widetilde{Q}^{w_0}_{\Bar{v}}}$ where $\iota_{\Bar{v}}\widetilde{Q}^{w_0}_{v}\subset \textnormal{GL}_{2n}/F_v$ is the standard parabolic subgroup corresponding to $\widetilde{Q}^{w_0}_{\Bar{v}}$ under $\iota_{v}$. Then set $U_{\Bar{v}}^{\widetilde{Q}_{\Bar{v}}^{w_0}}\in \mathcal{H}(\widetilde{\Delta}_{\widetilde{\mathcal{Q}}^{w_0}_{\Bar{v}}}(c_p),\widetilde{\mathcal{Q}}^{w_0}_{\Bar{v}}(0,c_p))$ to be the double coset operator $[\widetilde{\mathcal{Q}}^{w_0}_{\Bar{v}}(0,c)u_{v}^{\widetilde{Q}^{w_0}_{\Bar{v}}}\widetilde{\mathcal{Q}}^{w_0}_{\Bar{v}}(0,c_p)]$. We note that as the notation suggests, $U_{\Bar{v}}^{\widetilde{Q}^{w_0}_{\Bar{v}}}$ is independent of the choice of $v|\Bar{v}$.

Moreover, by the exact same recipe as before, we equip $\mathcal{V}_{(\widetilde{\lambda},\underline{\tau})}^{\widetilde{Q}_{\overline{S}}}$ with an $\widetilde{\Delta}_{\widetilde{\mathcal{Q}}^{w_0}_{\overline{S}}}(c_p)$-module structure extending the natural action of $\widetilde{\mathcal{Q}}^{w_0}_{\overline{S}}(0,c_p)$.
This yields a canonical homomorphism of algebras
\begin{equation*}
    \mathcal{H}(\widetilde{G}^T,\widetilde{K}^T)\otimes_{\mathbf{Z}}\mathcal{H}(\widetilde{\Delta}_{\widetilde{\mathcal{Q}}^{w_0}_{\overline{S}}}(c_p),\widetilde{K}_{\overline{S}})\to \textnormal{End}_{D^+(\mathcal{O})}(R\Gamma(\widetilde{X}_{\widetilde{K}},\mathcal{V}_{(\Tilde{\lambda},\underline{\tau})}^{\widetilde{Q}^{w_0}_{\overline{S}}}))
\end{equation*}
for any finite set of finite places $S_p\subset T$ with $T=T^c$ and good subgroup $\widetilde{K}\subset \widetilde{G}(\mathbf{A}_{F^+}^{\infty})$ with $\widetilde{K}_{\Bar{v}}=\widetilde{\mathcal{Q}}_{\Bar{v}}^{w_0}(b,c')$ for some $0\leq b\leq c'$ with $c_p\leq c'$ for every $\Bar{v}\in \overline{S}$.
Therefore, the constructed $U_p$-operators act on the complex $R\Gamma(\widetilde{X}_{\widetilde{K}},\mathcal{V}_{(\Tilde{\lambda},\underline{\tau})}^{\widetilde{Q}^{w_0}_{\overline{S}}})$. 

We finally set $\mathbf{T}^T(\widetilde{K},\Tilde{\lambda},\underline{\tau})$ to be the corresponding faithful quotient of $\mathcal{H}(\widetilde{G}^T,\widetilde{K}^T)\otimes_{\mathbf{Z}}\mathcal{O}$.

\subsection{Automorphic Galois representations}
In this subsection, we recall the well-known results which we will need later about Galois representations attached to (mod $p$) automorphic forms. We also state the consequence of the vanishing results of Caraiani--Scholze that serves as the key ingredient for proving local-global compatibility in the style of \cite{ACC23}.

Recall that $\pi$ is called a regular algebraic conjugate self-dual cuspidal automorphic representation (RACSDCAR) of $\textnormal{GL}_n(\mathbf{A}_F)$
if it is a regular algebraic cuspidal automorphic representation (RACAR) of $\textnormal{GL}_n(\mathbf{A}_F)$ satisfying $\pi^{c}\cong \pi^{\vee}$ where $(.)^{\vee}$ denotes the contragradient of the representation.

By work of many people, such automorphic representations admit associated Galois representations satisfying local-global compatibility at every place.

\begin{Th}{\textnormal{\cite{Cl91,HT01,TY07,Sh11,CH13,BLGGT12,BLGGT11,Car12,Car14}}}\label{Th2.17}
Let $\pi$ be a RACSDCAR of $\textnormal{GL}_n(\mathbf{A}_F)$ of weight $\lambda\in(\mathbf{Z}_+^{n})^{\Hom(F,\mathbf{C})}$. Then  for any isomorphism $t:\overline{\mathbf{Q}}_p\xrightarrow{\sim}\mathbf{C}$ there is a continuous semisimple Galois representation
\begin{equation*}
    r_t(\pi):G_F\to \textnormal{GL}_n(\overline{\mathbf{Q}}_p)
\end{equation*}
satisfying the following conditions:
\begin{enumerate}
    \item We have an isomorphism $r_t(\pi)^c\cong r_t(\pi)^{\vee}(1-n)$.
    \item For each $p$-adic place $v$ of $F$, $r_t(\pi)|_{G_{F_v}}$ is potentially semistable and for each embedding $\iota:F_{v}\hookrightarrow \overline{\mathbf{Q}}_p$ we have
    \begin{equation*}
        \textnormal{HT}_{\iota}(r_t(\pi)|_{G_{F_v}})=\{\lambda_{t\circ\iota,n},\lambda_{t\circ\iota,n-1}+1,...,\lambda_{t\circ\iota,1}+n-1\}.
    \end{equation*}
    \item For each finite place $v$ of $F$, we have
    \begin{equation*}
        \textnormal{WD}(r_t(\pi)|_{G_{F_v}})^{F-ss}\cong \textnormal{rec}^T(t^{-1}\pi_v).
    \end{equation*}
\end{enumerate}
\end{Th}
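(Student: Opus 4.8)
The plan is not to reinvent this result, which is the now-standard package of local--global compatibility for conjugate self-dual representations, but to assemble it from the cohomology of unitary Shimura varieties. First I would use solvable cyclic base change (Arthur--Clozel) to pass to a CM field $F'\supset F$ containing an imaginary quadratic field, and arrange that a suitable twist of $\pi_{F'}$ descends, via the (twisted) endoscopic transfer between $\textnormal{GL}_n/F'$ and the quasi-split unitary group attached to $F'$ and its maximal totally real subfield, to a cuspidal automorphic representation $\Pi$ of that unitary group (or of an associated unitary similitude group); the conjugate self-duality $\pi^c\cong\pi^\vee$ is precisely the condition that makes this descent possible. All the assertions in the statement --- the existence of $r_t(\pi)$, the properties (1)--(3), and the defining compatibility at unramified places --- are insensitive to solvable base change, by Brauer induction together with the Chebotarev density theorem and the compatibility of $\textnormal{rec}^T$ with base change, so it suffices to treat the descended representation.

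Next, following \cite{HT01}, one realizes $r_t(\Pi)$ inside the $\ell$-adic \'etale cohomology of a Kottwitz--Harris--Taylor-type Shimura variety (or via the cohomology of Igusa varieties). When $\Pi$ is square-integrable at some finite place this produces $r_t(\pi)$ directly; for general $\pi$ one deduces its existence from the square-integrable case by a congruence argument (\cite{CH13}). Property (1), $r_t(\pi)^c\cong r_t(\pi)^\vee(1-n)$, comes from Poincar\'e duality on the Shimura variety, or alternatively from conjugate self-duality at all unramified places together with Chebotarev and Schur's lemma. The part of (3) at finite places $v\nmid p$ is established up to Frobenius semisimplification by Taylor--Yoshida, and the finer assertion that the monodromy operator $N$ is exactly the one predicted --- equivalently, that $r_t(\pi)|_{G_{F_v}}$ is pure --- is the work of Shin, Caraiani and Clozel (\cite{TY07,Sh11,Car12,Car14,Cl13}).

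It then remains to treat $v\mid p$. That $r_t(\pi)|_{G_{F_v}}$ is de Rham, in fact potentially semistable, and the computation of its labelled Hodge--Tate weights in (2) follow from the $p$-adic comparison theorems applied to the cohomology of the Shimura variety, together with the regular algebraicity of $\pi$ (the infinitesimal character of $\pi_\infty$ pins down the weights by the usual $\rho$-shift). The hard part is the full statement (3) at $p$, namely the isomorphism $\textnormal{WD}(r_t(\pi)|_{G_{F_v}})^{F-ss}\cong\textnormal{rec}^T(t^{-1}\pi_v)$; I would appeal here to \cite{BLGGT12} (see also \cite{BLGGT11}), where this is proved by a potential-automorphy and base-change argument that transfers the already-known compatibility at places away from $p$ to the place $p$, after arranging for an auxiliary place to become $p$-adic in a solvable CM extension. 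I expect this last step --- matching the \emph{Frobenius-semisimplified} Weil--Deligne representation at $p$, rather than merely establishing de Rham-ness and the Hodge--Tate weights --- to be the main obstacle, since it is the only ingredient that does not follow from cohomological or $p$-adic Hodge-theoretic generalities and genuinely requires the circle of ideas around potential automorphy.
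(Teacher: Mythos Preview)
The paper does not actually prove this theorem: it is stated as a citation to the literature \cite{HT01,TY07,Sh11,CH13,Cl13,BLGGT12,BLGGT11,Car12,Car14} with no proof block, and is used thereafter as a black box. So there is no ``paper's own proof'' to compare against.

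Your proposal is a reasonable high-level summary of how the argument runs across those references, and the attribution of each property to the correct source is essentially accurate. A couple of minor corrections: since $F$ is already assumed to be an imaginary CM field in this paper, the base-change step is not needed to reach a CM field but rather to place oneself in a situation where the relevant Shimura varieties exist and have good reduction properties (e.g.\ arranging a place of Steinberg type, or passing to a field where the sign obstruction to descent vanishes). Also, the role of \cite{CH13} is slightly mischaracterised: it is not merely a congruence argument from the square-integrable case, but constructs the Galois representation via eigenvarieties and $p$-adic interpolation when no finite place is square-integrable. Finally, the ``hard part'' at $v\mid p$ in \cite{BLGGT12,BLGGT11} is proved not by making an auxiliary prime become $p$-adic under base change, but by reducing to the crystalline case via base change and then using the known description of the Weil--Deligne representation in terms of the crystalline Frobenius. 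None of this affects the overall correctness of your outline as a reader's guide to the cited works.
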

Once combined with Shin's base change result \cite{Shi14} for automorphic representations of $\widetilde{G}$, we obtain the following.
\begin{Th}\label{Th2.19}
Suppose that $F$ contains an imaginary quadratic field. Let $\Tilde{\pi}$ be a $\xi$-cohomological cuspidal automorphic representation of $\widetilde{G}(\mathbf{A}_{F^+})$ for some irreducible algebraic representation $\xi$ of $\widetilde{G}_{\mathbf{C}}\cong \prod_{\Hom(F^+,\mathbf{C})}\textnormal{GL}_{2n,\mathbf{C}}$. For any field isomorphism $t:\overline{\mathbf{Q}}_p\xrightarrow{\sim}\mathbf{C}$, there exists a continuous, semisimple Galois representation
\begin{equation*}
    r_t(\Tilde{\pi}):G_F\to \textnormal{GL}_{2n}(\overline{\mathbf{Q}}_p)
\end{equation*}
satisfying the following conditions:
\begin{enumerate}
    \item For each prime $\ell\neq p$, unramified in $F$, above which $\Tilde{\pi}$ is unramified, and for each place $v$ of $F$ dividing $\ell$, $r_t(\Tilde{\pi})|_{G_{F_v}}$ is unramified and the characteristic polynomial of $r_t(\textnormal{Frob}_v)$ coincides with image of $\widetilde{P}_v(X)$ in $\overline{\mathbf{Q}}_p[X]$ corresponding to the base change of $t^{-1}(\Tilde{\pi}_{\Bar{v}})$.
    \item For each place $v$ of $F$ dividing $p$, $r_t(\Tilde{\pi})$ is potentially semistable, and for each embedding $\iota: F\hookrightarrow \overline{\mathbf{Q}}_p$, the $\iota$-labelled Hodge--Tate weights are given by
    \begin{equation*}
        \Tilde{\lambda}_{\iota,1}+2n-1>\Tilde{\lambda}_{\iota,2}+2n-2>...>\Tilde{\lambda}_{\iota, 2n},
    \end{equation*}
    where $\Tilde{\lambda}\in (\mathbf{Z}^{2n}_+)^{\Hom(F,\overline{\mathbf{Q}}_p)}$ is the highest weight of the representation $t^{-1}(\xi\otimes \xi)^{\vee}$ of $\textnormal{GL}_{2n}$ over $\overline{\mathbf{Q}}_p$.
    \item If $F_0\subset F$ is an imaginary quadratic field and $\ell$ is a prime (possibly $\ell=p)$ that splits in $F_0$, then for each place $v\mid \ell$ of $F$ lying above a place $\Bar{v}$ of $F^{+}$, there is an isomorphism
    \begin{equation*}
        \textnormal{WD}(r_t(\Tilde{\pi})|_{G_{F_v}})^{F-ss}\cong \textnormal{rec}^T(\Tilde{\pi}_{\Bar{v}}\circ \iota_v).
    \end{equation*}
\end{enumerate}
\end{Th}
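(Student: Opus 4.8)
The plan is to deduce the theorem from Theorem~\ref{Th2.17} by base change. Since $F$ contains an imaginary quadratic field $F_0$ (and $F,F^+$ satisfy the running hypotheses), the base change results of \cite{Shi14} attach to the cuspidal cohomological automorphic representation $\widetilde\pi$ of $\widetilde G(\mathbf{A}_{F^+})=U(n,n)(\mathbf{A}_{F^+})$ an isobaric automorphic representation $\Pi=\Pi_1\boxplus\cdots\boxplus\Pi_r$ of $\textnormal{GL}_{2n}(\mathbf{A}_F)$, with each $\Pi_i$ cuspidal, such that $\Pi^c\cong\Pi^\vee$, such that for every place $w$ of $F$ the component $\Pi_w$ is the local base change of $\widetilde\pi_{\bar w}$, and such that $\Pi_\infty$ has the infinitesimal character prescribed by $t^{-1}(\xi\otimes\xi)^\vee$. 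Because the endoscopic groups of $\widetilde G$ are products of quasi-split unitary groups, each $\Pi_i$ is itself conjugate self-dual, $\Pi_i^c\cong\Pi_i^\vee$, and the regularity of the infinitesimal character of $\Pi_\infty$ (equivalently of $\widetilde\lambda$) forces each $\Pi_i$ to be regular algebraic and the constituents to be pairwise distinct; thus each $\Pi_i$ is a RACSDCAR to which Theorem~\ref{Th2.17} applies. We then set
\begin{equation*}
    r_t(\widetilde\pi):=\bigoplus_{i=1}^{r}r_t(\Pi_i),
\end{equation*}
a continuous semisimple representation $G_F\to\textnormal{GL}_{2n}(\overline{\mathbf{Q}}_p)$, which is independent of the choices up to isomorphism since, by Chebotarev, it is determined by its traces at the density-one set of Frobenii at places where $\Pi$ is unramified.

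Next I would check the three properties, using in each case that $r_t(\widetilde\pi)|_{G_{F_v}}$, the operations $\textnormal{WD}(-)^{F-ss}$ and $\textnormal{rec}^T$, and the Hodge--Tate weights are all additive along $\Pi=\boxplus_i\Pi_i$. For (i): if $\ell\neq p$ is unramified in $F$, $\widetilde\pi$ is unramified above $\ell$ and $v\mid\ell$, then $\Pi_v$ is unramified with Satake parameter the image of that of $\widetilde\pi_{\bar v}$ under the unramified endoscopic transfer from $\widetilde G(F^+_{\bar v})$ to $\textnormal{GL}_{2n}(F_v)$; by the definition of $\widetilde P_v(X)$ recalled before Proposition~\ref{Prop2.16} and by Theorem~\ref{Th2.17}(3) at $v$, the characteristic polynomial of $r_t(\textnormal{Frob}_v)$ on $\bigoplus_i r_t(\Pi_i)$ is the image of $\widetilde P_v(X)$ corresponding to the base change of $t^{-1}\widetilde\pi_{\bar v}$, and $r_t(\widetilde\pi)$ is unramified at $v$ because $\Pi_v$ is. For (ii): at $v\mid p$ each $r_t(\Pi_i)|_{G_{F_v}}$ is potentially semistable by Theorem~\ref{Th2.17}(2), hence so is their sum, and the multiset of $\iota$-Hodge--Tate weights of $r_t(\widetilde\pi)|_{G_{F_v}}$ is the union of those of the $r_t(\Pi_i)$, which by Theorem~\ref{Th2.17}(2) together with the relation between $\widetilde\lambda$ and the infinitesimal characters of the $\Pi_{i,\infty}$ is exactly $\{\widetilde\lambda_{\iota,1}+2n-1,\dots,\widetilde\lambda_{\iota,2n}\}$. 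For (iii): if $\ell$ splits in $F_0$, then, writing $F=F^+\cdot F_0$ with $F^+\cap F_0=\mathbf{Q}$, every place $\bar v$ of $F^+$ below a place $v\mid\ell$ of $F$ splits in $F/F^+$ with $F_v=F^+_{\bar v}$, so $\iota_v\colon\widetilde G(F^+_{\bar v})\xrightarrow{\sim}\textnormal{GL}_{2n}(F_v)$ and the local base change at $\bar v$ is simply transport of structure, $\Pi_v\cong\widetilde\pi_{\bar v}\circ\iota_v$; applying Theorem~\ref{Th2.17}(3) to each $\Pi_i$ at $v$, summing, and using that $\textnormal{rec}$ is compatible with isobaric sums, we get $\textnormal{WD}(r_t(\widetilde\pi)|_{G_{F_v}})^{F-ss}\cong\bigoplus_i\textnormal{rec}^T(t^{-1}\Pi_{i,v})\cong\textnormal{rec}^T(t^{-1}\Pi_v)\cong\textnormal{rec}^T(\widetilde\pi_{\bar v}\circ\iota_v)$.

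The main obstacle is the input rather than the bookkeeping: one must invoke the correct \emph{unconditional} base change for $U(n,n)$ — this is precisely why $F$ is assumed to contain an imaginary quadratic field and to satisfy the extra constraints carried over from \cite{Sch15}, \cite{Shi14} — and one must know that the resulting $\Pi$ is an isobaric sum of \emph{conjugate self-dual regular algebraic cuspidal} representations of general linear groups, so that the cited construction of Galois representations genuinely applies piece by piece, rather than a sum whose constituents only pair up under $\Pi_i\mapsto\Pi_i^{\vee,c}$. Beyond that, the only care required is to track the normalisations consistently: the Tate twist built into $\textnormal{rec}^T$, the isomorphisms $\iota_v$ at split places, and the dictionary between $\xi$, the highest weight $\widetilde\lambda$ of $t^{-1}(\xi\otimes\xi)^\vee$, and the Hodge--Tate weights — these match because Theorem~\ref{Th2.17} is already phrased in the compatible normalisation.
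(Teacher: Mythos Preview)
Your proposal is correct and follows essentially the same route as the paper, which simply cites \cite{ACC23}, Theorem 2.3.3: apply Shin's base change \cite{Shi14} to obtain an isobaric $\Pi=\boxplus_i\Pi_i$ on $\textnormal{GL}_{2n}(\mathbf{A}_F)$ with each $\Pi_i$ a RACSDCAR, define $r_t(\widetilde\pi)=\bigoplus_i r_t(\Pi_i)$ via Theorem~\ref{Th2.17}, and read off properties (i)--(iii) from the compatibility of base change with local Langlands at unramified and split places. Your identification of the one genuine input --- that the isobaric constituents are individually conjugate self-dual rather than merely paired --- is exactly the point handled by the endoscopic classification underlying \cite{Shi14}.
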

\begin{proof}
For the proof see \cite{ACC23}, Theorem 2.3.3 and the references therein.
\end{proof}
\begin{Th}\label{Th2.20}
Assume that $F$ contains an imaginary quadratic field. Let $\mathfrak{m}\subset \mathbf{T}^T(K,\lambda,\underline{\tau})$ be a maximal ideal. Suppose that the finite set of places $T$ is so that $T=T^c$ and further satisfies the following condition.
\begin{itemize}
    \item Given a finite place $v$ not lying in $T$, denote its residual characteristic by $\ell$. Then either $\ell$ is unramified in $F$ and $T$ contains no $\ell$-adic places, or $\ell$ splits in some imaginary quadratic subfield $F_0\subset F$.
\end{itemize}
Then there exists a continuous semisimple Galois representation
\begin{equation*}
    \Bar{\rho}_{\mathfrak{m}}:G_{F,T}\to \textnormal{GL}_n(\mathbf{T}^T(K,\lambda,\underline{\tau})/\mathfrak{m})
\end{equation*}
such that for each finite place $v$ of $F$ not lying in $T$, the characteristic polynomial of $\Bar{\rho}_{\mathfrak{m}}(\textnormal{Frob}_v)$ coincides with the image of $P_v(X)$ in $(\mathbf{T}^T(K,\lambda,\underline{\tau})/\mathfrak{m})[X]$.
\end{Th}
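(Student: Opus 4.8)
The statement (Theorem~\ref{Th2.20}) is the standard existence result for Galois representations attached to mod $p$ Hecke eigensystems appearing in the cohomology of locally symmetric spaces for $\textnormal{GL}_{n/F}$, now in the setting of the local systems $\mathcal{V}^{Q_p}_{(\lambda,\underline{\tau})}$ with types at $p$. Since the maximal ideal $\mathfrak{m}$ lives in the \emph{prime-to-$T$} Hecke algebra $\mathbf{T}^T(K,\lambda,\underline{\tau})$, which only sees the unramified Hecke operators $T_{v,i}$ for $v \notin T$, the Galois representation is insensitive to the type data at $p$: the coefficient system only enters through the action of the unramified Hecke algebra on $H^\ast(X_K, \mathcal{V}^{Q_p}_{(\lambda,\underline{\tau})})$. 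The plan is therefore to reduce to the case already treated in \cite{Sch15} (see also \cite{NT16}, Theorem 2.3.7, and \cite{ACC23}, Theorem 2.3.5).

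First I would reduce to a statement about $\mathcal{O}/\varpi$-cohomology with a \emph{finite free} coefficient module. Concretely, $\mathbf{T}^T(K,\lambda,\underline{\tau})$ acts faithfully on $H^\ast(X_K, \mathcal{V}^{Q_p}_{(\lambda,\underline{\tau})})$, hence after localising at $\mathfrak{m}$ and reducing mod $\varpi$, the residue field $\mathbf{T}^T(K,\lambda,\underline{\tau})/\mathfrak{m}$ acts on $H^\ast(X_K, \mathcal{V}^{Q_p}_{(\lambda,\underline{\tau})}/\varpi)_{\mathfrak{m}}$, and this latter module is nonzero by Nakayama (the cohomology groups are finitely generated $\mathcal{O}$-modules by \cite{CN23}, Lemma~2.1.6). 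Now $\mathcal{V}^{Q_p}_{(\lambda,\underline{\tau})}/\varpi$ is a smooth $\mathcal{O}/\varpi[\mathcal{Q}_p(0,c_p)]$-module that is finite free over $\mathcal{O}/\varpi$; choosing any good normal subgroup $K' \trianglelefteq K$ with $K'_p$ acting trivially, it becomes a \emph{constant} local system after passing to $X_{K'}$, so $H^\ast(X_K, \mathcal{V}^{Q_p}_{(\lambda,\underline{\tau})}/\varpi)$ is computed as the $K/K'$-invariants (more precisely, a summand, after inverting the order of $K/K'$ if necessary, or by a spectral sequence argument in general) of $H^\ast(X_{K'}, \mathcal{O}/\varpi) \otimes_{\mathcal{O}/\varpi} (\mathcal{V}^{Q_p}_{(\lambda,\underline{\tau})}/\varpi)$, compatibly with the prime-to-$T$ Hecke action — this is exactly the content of the formalism of \S\ref{sec2.2} (cf. Lemma~\ref{Lemma2.2} and Lemma~\ref{Lem2.4}). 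In particular, any system of Hecke eigenvalues for $\mathcal{H}(G^T,K^T) \otimes \mathcal{O}/\varpi$ occurring in $H^\ast(X_K, \mathcal{V}^{Q_p}_{(\lambda,\underline{\tau})}/\varpi)_{\mathfrak{m}}$ also occurs in $H^\ast(X_{K'}, \mathcal{O}/\varpi)$ for a good subgroup $K'$ of $\textnormal{GL}_n(\mathbf{A}_F^\infty)$ with trivial coefficients — equivalently, in $H^\ast(X_{K'}, \mathcal{O}/\varpi^N)$ for $N$ large, which is where the construction of \cite{Sch15} applies directly.

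Second, I would invoke the construction of \cite{Sch15} (as repackaged in \cite{NT16}, Theorem~2.3.7 or \cite{ACC23}, Theorem~2.3.5): for the maximal ideal $\mathfrak{m}'$ of the spherical Hecke algebra $\mathcal{H}(G^T,K'^T)\otimes \mathcal{O}$ cut out by our eigensystem (pulled back along the natural map, using that the level-raising/level-lowering maps are Hecke equivariant away from $T$), and under the hypothesis on $T$ stated in the theorem — namely that each $v \notin T$ either has residue characteristic unramified in $F$ with $T$ containing no $\ell$-adic places, or splits in an imaginary quadratic subfield $F_0 \subset F$ — there is a continuous semisimple $\overline\rho_{\mathfrak{m}'}: G_{F,T} \to \textnormal{GL}_n(k')$ over the residue field $k'$ with characteristic polynomial of $\overline\rho_{\mathfrak{m}'}(\textnormal{Frob}_v)$ equal to $P_v(X) \bmod \mathfrak{m}'$ for all $v \notin T$. (The hypothesis on $T$ is precisely what allows base change to a form where the result of Scholze, or alternatively the unitary-group construction via Theorem~\ref{Th2.19} combined with patching/congruences à la \cite{ACC23}, \S2.3, is available.) Since the characteristic polynomials $P_v(X) \bmod \mathfrak{m}$ and $P_v(X) \bmod \mathfrak{m}'$ agree in $(\mathbf{T}^T(K,\lambda,\underline{\tau})/\mathfrak{m})[X] = k'[X]$ for all $v \notin T$, and since a continuous semisimple representation of $G_{F,T}$ over a finite field is determined up to isomorphism by the characteristic polynomials of Frobenii at all but finitely many places (Chebotarev plus Brauer--Nesbitt), we may take $\overline\rho_{\mathfrak{m}} := \overline\rho_{\mathfrak{m}'}$.

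The only genuinely delicate point is the bookkeeping in the first step: matching the prime-to-$T$ Hecke action on the cohomology of the \emph{twisted} local system $\mathcal{V}^{Q_p}_{(\lambda,\underline{\tau})}$ (with its parahoric level at $p$ and the Schneider--Zink type twist) with that on trivial-coefficient cohomology at deeper level. This is handled cleanly by the sheaf-theoretic formalism recalled in \S\ref{sec2.2}–\S\ref{sec2.3}: the local system is pulled back from a point, the prime-to-$T$ Hecke operators act through $G^T = G(\mathbf{A}_F^{T\cup\infty})$ which commutes with everything happening at $p$ and at $\infty$, and Lemma~\ref{Lem2.4} identifies the cohomology with derived $K_p$-invariants of completed cohomology $\pi(K^p, \mathcal{O}/\varpi)$ in a Hecke-equivariant way. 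Thus no new input beyond \cite{Sch15} is needed; the theorem is a formal consequence once the coefficient-system twist is stripped away. I would write this up in a few lines, citing \cite{NT16}, Theorem~2.3.7 and \cite{Sch15}, Corollary~5.4.3 (and \cite{Sch15}, Theorem~IV.3.1) for the trivial-coefficient case and \cite{CN23}, Lemma~2.1.6 for finiteness.
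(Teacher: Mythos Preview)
Your proposal is correct and takes essentially the same approach as the paper: the paper's proof is simply ``This is proved the same way as \cite{ACC23}, Theorem 2.3.5,'' and your expansion---stripping away the type $\mathcal{V}^{Q_p}_{(\lambda,\underline{\tau})}$ at $p$ by passing to deeper level and trivial coefficients (using that $\mathbf{T}^T$ only involves spherical Hecke operators away from $T$), then invoking Scholze's construction---is exactly what that citation unpacks to. The only comment is that your reduction step is slightly more elaborate than necessary, since the argument of \cite{ACC23}, Theorem~2.3.5 already handles arbitrary finite-free coefficient modules at $p$ by the same Hochschild--Serre/deep-level trick, so one can cite it directly once one notes that $\mathcal{V}^{Q_p}_{(\lambda,\underline{\tau})}/\varpi$ is such a module.
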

\begin{proof}
This is proved the same way as \cite{ACC23}, Theorem 2.3.5.
\end{proof}
\begin{Def}
We say that a maximal ideal $\mathfrak{m}\subset \mathbf{T}^T(K,\lambda,\underline{\tau})$ is non-Eisenstein if $\Bar{\rho}_{\mathfrak{m}}$ is absolutely irreducible.
\end{Def}
\begin{Th}\label{Th2.22}
Assume that $F$ and $T$ satisfies the conditions of Theorem~\ref{Th2.20}. Let $\mathfrak{m}\subset \mathbf{T}^T(K,\lambda,\underline{\tau})$ be a non-Eisenstein maximal ideal. There exists an integer $N\geq 1$, depending only on $n$ and $[F:\mathbf{Q}]$, an ideal $I\subset \mathbf{T}^T(K,\lambda,\underline{\tau})$ satisfying $I^N=0$, and a continuous group homomorphism
\begin{equation*}
    \rho_{\mathfrak{m}}:G_{F,T}\to \textnormal{GL}_n(\mathbf{T}^T(K,\lambda,\underline{\tau})/I)
\end{equation*}
such that, for each finite place $v$ of $F$ not lying in $T$, the characteristic polynomial of $\rho_{\mathfrak{m}}(\textnormal{Frob}_v)$ coincides with the image of $P_{v}(X)$ in $(\mathbf{T}^T(K,\lambda,\underline{\tau})/I)[X]$.
\end{Th}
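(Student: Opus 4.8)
The plan is to deduce this, exactly as in \cite{NT16} (see also \cite{ACC23}, \S2.3), from Scholze's construction of Galois representations \cite{Sch15}; the parabolic $Q_p$, the weight $\lambda$, the inertial type $\underline{\tau}$ and the parahoric level at $p$ play no role, since $\mathcal{V}^{Q_p}_{(\lambda,\underline{\tau})}/\varpi^m$ is just a smooth $\mathcal{O}/\varpi^m[K_p]$-module, finite free over $\mathcal{O}/\varpi^m$, which descends to a local system of $\mathcal{O}/\varpi^m$-modules on the finite-level manifold $X_K$, and the prime-to-$T$ Hecke action on $R\Gamma(X_K,\mathcal{V}^{Q_p}_{(\lambda,\underline{\tau})}/\varpi^m)$ is the usual one (the hypothesis imposed on $T$ being precisely what lets one apply \cite{Sch15} via base change to the unitary Shimura varieties, as in Theorem~\ref{Th2.20}). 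First I would reduce to finite level: for each $m\geq 1$ set $C_m:=R\Gamma(X_K,\mathcal{V}^{Q_p}_{(\lambda,\underline{\tau})}/\varpi^m)$, a perfect complex concentrated in degrees $[0,d]$ with $d:=\dim_{\mathbf{R}}X_K$, a quantity depending only on $n$ and $[F:\mathbf{Q}]$, and let $\mathbf{T}_m$ be the image of $\mathcal{H}(G^T,K^T)\otimes_{\mathbf{Z}}\mathcal{O}/\varpi^m$ in $\textnormal{End}_{D(\mathcal{O}/\varpi^m)}(C_m)$. Using that the cohomology groups in sight are finite (\cite{CN23}, Lemma 2.1.6) and that $R\Gamma(X_K,\mathcal{V}^{Q_p}_{(\lambda,\underline{\tau})})$ is by definition the homotopy limit $\varprojlim_m C_m$, one identifies $\mathbf{T}^T(K,\lambda,\underline{\tau})$ with $\varprojlim_m \mathbf{T}_m$; it therefore suffices to construct, compatibly in $m$, an ideal $I_m\subset \mathbf{T}_m$ with $I_m^N=0$ for an $N$ depending only on $n$ and $[F:\mathbf{Q}]$, together with a continuous $\rho_m:G_{F,T}\to \textnormal{GL}_n(\mathbf{T}_m/I_m)$ whose characteristic polynomial at $\textnormal{Frob}_v$ equals $P_v(X)$ for each $v\notin T$, and then to take the inverse limit.

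Next I would run the finite-level construction. Let $\mathbf{T}_m^{\textnormal{gr}}$ be the image of $\mathcal{H}(G^T,K^T)\otimes_{\mathbf{Z}}\mathcal{O}/\varpi^m$ in $\prod_{i=0}^{d}\textnormal{End}_{\mathcal{O}/\varpi^m}(H^i(C_m))$; the tautological surjection $\mathbf{T}_m\twoheadrightarrow \mathbf{T}_m^{\textnormal{gr}}$ has kernel $J_m$ with $J_m^{d+1}=0$, because any $f\in\textnormal{End}_{D(\mathcal{O}/\varpi^m)}(C_m)$ inducing zero on all cohomology satisfies $f^{d+1}=0$, as one sees from the Postnikov tower furnished by the canonical truncations of $C_m$. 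Scholze's construction \cite{Sch15}, applied just as in Theorem~\ref{Th2.20}, supplies a continuous $n$-dimensional determinant $D_m:G_{F,T}\to \mathbf{T}_m^{\textnormal{gr}}$ whose characteristic polynomial at $\textnormal{Frob}_v$ is $P_v(X)$ for each $v\notin T$. After replacing $\mathbf{T}^T(K,\lambda,\underline{\tau})$ by its localisation at $\mathfrak{m}$, the ring $\mathbf{T}_{m,\mathfrak{m}}^{\textnormal{gr}}$ is Artinian local, hence henselian, with residue field of characteristic $p$; as $\mathfrak{m}$ is non-Eisenstein, the reduction of $D_m$ is the determinant of the absolutely irreducible $\overline{\rho}_{\mathfrak{m}}$ of Theorem~\ref{Th2.20}, so by the theory of determinants (pseudorepresentations) over henselian local rings $D_m$ is the determinant of a representation $\rho_m^{\textnormal{gr}}:G_{F,T}\to \textnormal{GL}_n(\mathbf{T}_{m,\mathfrak{m}}^{\textnormal{gr}})$, unique up to conjugation.

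It then remains to lift along the nilpotent surjection $\mathbf{T}_{m,\mathfrak{m}}\twoheadrightarrow \mathbf{T}_{m,\mathfrak{m}}^{\textnormal{gr}}$. Lifting $\rho_m^{\textnormal{gr}}$ directly can be obstructed, so instead, following \cite{NT16} and in the spirit of \cite{CG18}, one performs Scholze's argument on the complex $C_{m,\mathfrak{m}}$ itself rather than on its cohomology, obtaining a determinant with values in $\mathbf{T}_{m,\mathfrak{m}}/I_m'$ for a nilpotent ideal $I_m'$ whose nilpotence degree is bounded purely in terms of $d$; this quotient is again henselian local with absolutely irreducible residual determinant, so that determinant comes from a representation. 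Combining with $J_m$ and invoking the sharp bound of \cite{NT16} (which makes the resulting nilpotence degree $N$ depend only on $n$ and $[F:\mathbf{Q}]$, and in fact allows $N=4$), this produces $\rho_m:G_{F,T}\to \textnormal{GL}_n(\mathbf{T}_{m,\mathfrak{m}}/I_m)$ with $I_m^N=0$ and the prescribed characteristic polynomials. These are compatible as $m$ varies: the determinants of $\rho_{m+1}$ modulo $\varpi^m$ and of $\rho_m$ coincide, having equal characteristic polynomials at every $\textnormal{Frob}_v$ and hence being equal by the Chebotarev density theorem and continuity, so by the uniqueness recorded above $\rho_{m+1}$ modulo $\varpi^m$ is conjugate to $\rho_m$, and a standard rigidification upgrades this to an honest compatible system. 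Putting $I:=\varprojlim_m I_m$ and $\rho_{\mathfrak{m}}:=\varprojlim_m \rho_m$ then concludes. The main obstacle is really just a matter of correctly invoking \cite{Sch15} and \cite{NT16}: the single hard input is Scholze's construction, the role of \cite{NT16} is to make the nilpotence degree independent of the level $K$ and to bridge the Hecke algebra acting on the complex $C_m$ with the one acting on its cohomology, and the extra data $(Q_p,\lambda,\underline{\tau})$ and the parahoric level at $p$ are genuinely inert.
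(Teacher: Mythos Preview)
Your proposal is correct and follows the same approach the paper implicitly invokes: the paper's proof is a one-line citation of \cite{Sch15}, Corollary 5.4.4, and you have faithfully unpacked that citation along the lines of \cite{NT16} and \cite{ACC23}, correctly observing that the data $(Q_p,\lambda,\underline{\tau})$ and the parahoric level at $p$ are inert for this prime-to-$T$ statement.

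One small expository wrinkle: you assert that Scholze's construction supplies a determinant $D_m$ valued in $\mathbf{T}_m^{\textnormal{gr}}$ itself, and only later invoke a nilpotent ideal when passing to the derived Hecke algebra. In fact Scholze's argument already introduces a nilpotent ideal at the level of $\mathbf{T}_m^{\textnormal{gr}}$ (coming from the boundary/interior comparison on the unitary Shimura variety), so the determinant is a priori valued in $\mathbf{T}_m^{\textnormal{gr}}/J'$ for some nilpotent $J'$; the contribution $J_m$ from passing between the complex and its cohomology is a second, separate source of nilpotence. Your paragraph beginning ``It then remains to lift\ldots'' effectively acknowledges this, but the preceding paragraph reads as if $D_m$ lands in $\mathbf{T}_m^{\textnormal{gr}}$ on the nose, which would make the later step redundant. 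This is purely a matter of presentation and does not affect the validity of the argument.
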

\begin{proof}
This is proved the same way as \cite{Sch15}, Corollary 5.4.4.
\end{proof}

\begin{Th}\label{Th2.24}
Assume that $F$ and $T$ satisfies the conditions of Theorem~\ref{Th2.20}. Let $\widetilde{\mathfrak{m}}\subset \widetilde{\mathbf{T}}^T(\widetilde{K},\Tilde{\lambda},\underline{\tau})$ be a maximal ideal. Then there is a continuous, semisimple Galois representation
\begin{equation*}
    \Bar{\rho}_{\widetilde{\mathfrak{m}}}:G_{F,T}\to \textnormal{GL}_{2n}(\widetilde{\mathbf{T}}^T(\widetilde{K},\Tilde{\lambda},\underline{\tau})/\widetilde{\mathfrak{m}})
\end{equation*}
such that for each finite place $v\notin T$ of $F$, the characteristic polynomial of $\Bar{\rho}_{\widetilde{\mathfrak{m}}}(\textnormal{Frob}_v)$ is given by the image of $\widetilde{P}_v(X)$ in $(\widetilde{\mathbf{T}}^T(\widetilde{K},\Tilde{\lambda},\underline{\tau})/\widetilde{\mathfrak{m}})[X]$.
\end{Th}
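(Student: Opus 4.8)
The plan is to reduce the statement to the construction of a continuous $2n$-dimensional Chenevier determinant (pseudocharacter) and then to invoke Scholze's construction of Galois representations in the cohomology of the unitary Shimura varieties attached to $\widetilde{G}$, following \cite{ACC23} and ultimately \cite{Sch15}. Write $\mathfrak{T}:=\widetilde{\mathbf{T}}^T(\widetilde{K},\widetilde{\lambda},\underline{\tau})_{\widetilde{\mathfrak{m}}}$, a local $\mathcal{O}$-algebra finite over $\mathcal{O}$ with finite residue field $k$. By the theory of determinants it is enough to produce a continuous $2n$-dimensional determinant $D:G_{F,T}\to\mathfrak{T}/I$, for some nilpotent ideal $I$, whose characteristic polynomial at $\textnormal{Frob}_v$ is the image of $\widetilde{P}_v(X)$ for every $v\notin T$; reducing $D$ modulo $\widetilde{\mathfrak{m}}$ (which kills $I$, since $I$ is contained in the maximal ideal of the local ring $\mathfrak{T}$) gives a determinant over $k$, hence --- uniquely --- a continuous semisimple representation $\overline{\rho}_{\widetilde{\mathfrak{m}}}$ realising it, and Chebotarev makes the whole construction independent of choices.

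First I would reduce to constant coefficients and hyperspecial level at $p$: the prime-to-$T$ Hecke action is insensitive to the smooth factors $\widetilde{\sigma}(\underline{\tau_{\overline{v}}})^{\circ}$ and to the $\widetilde{\lambda}$-twist, which only affect places in $\overline{S}\subseteq\overline{S}_p\subseteq T$, so every prime-to-$T$ Hecke eigensystem occurring on $R\Gamma(\widetilde{X}_{\widetilde{K}},\mathcal{V}^{\widetilde{Q}^{w_0}_{\overline{S}}}_{(\widetilde{\lambda},\underline{\tau})}/\varpi^m)$ already occurs on $R\Gamma(\widetilde{X}_{\widetilde{K}'},\mathcal{V}_{\widetilde{\lambda}}/\varpi^m)$ for a good subgroup $\widetilde{K}'$ with $(\widetilde{K}')^T=\widetilde{K}^T$ and hyperspecial level at $p$. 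The cohomology of $\widetilde{X}_{\widetilde{K}'}$ --- including its $\varpi$-power torsion --- is related, as in \cite{ACC23}, to the étale cohomology of unitary Shimura varieties and so carries a $G_{F,T}$-action, and I would feed this into Scholze's construction of Galois representations attached to Hecke eigenclasses: the $p$-adic interpolation argument via the Hodge--Tate period map on perfectoid Shimura varieties, propagated across the minimal compactification to cover the non-compact case. The characteristic-$0$ input is Theorem~\ref{Th2.19}, applied to the cuspidal cohomological automorphic representations of $\widetilde{G}(\mathbf{A}_{F^+})$ contributing to $R\Gamma(\widetilde{X}_{\widetilde{K}'},\mathcal{V}_{\widetilde{\lambda}}\otimes E)$; the Eisenstein (boundary) contributions are dealt with by induction on $n$, expressing the boundary cohomology through locally symmetric spaces of proper Levi subgroups of $\widetilde{G}$ --- products of $\textnormal{GL}$-factors over $F$ and smaller quasi-split unitary groups --- and using the already-available Galois representations for the $\textnormal{GL}$-factors (Theorem~\ref{Th2.20}) together with the inductive hypothesis on $n$.

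Carrying this out on $R\Gamma(\widetilde{X}_{\widetilde{K}'},\mathcal{V}_{\widetilde{\lambda}}/\varpi^m)_{\widetilde{\mathfrak{m}}}$ yields, for each $m$, a $2n$-dimensional determinant over $\mathfrak{T}/I_m$ with $I_m$ nilpotent of exponent bounded independently of $m$ (cf.\ the nilpotence bounds of \cite{NT16}) and with characteristic polynomial $\widetilde{P}_v(X)$ at $\textnormal{Frob}_v$; since $\mathfrak{T}$ is topologically generated over $\mathcal{O}$ by the operators $\widetilde{T}_{v,j}$ and the coefficients of those characteristic polynomials are the images of the $\widetilde{T}_{v,j}$, the determinant in fact has values in $\mathfrak{T}/I_m$ itself. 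Passing to the limit over $m$ produces $D:G_{F,T}\to\mathfrak{T}/I$ with $I$ nilpotent, and reducing modulo $\widetilde{\mathfrak{m}}$ gives $\overline{\rho}_{\widetilde{\mathfrak{m}}}$ with the asserted property. The main obstacle is the genuine $\varpi$-power torsion in $H^{\ast}(\widetilde{X}_{\widetilde{K}'},\mathcal{V}_{\widetilde{\lambda}})$: it rules out a purely characteristic-$0$ argument and forces one through Scholze's interpolation machinery (which is also the source of the nilpotent ideal), and one must be careful to package the Eisenstein/boundary part correctly in the induction on $n$.
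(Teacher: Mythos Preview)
Your overall strategy matches the paper's: reduce to a situation with simpler coefficients where the Galois representation is already known, then cite the existing construction (the paper cites \cite{CN23}, Theorem~2.1.26, which in turn rests on Scholze's work). However, your reduction step contains an error in the direction of the level change. You claim that every prime-to-$T$ eigensystem on $R\Gamma(\widetilde{X}_{\widetilde{K}},\mathcal{V}^{\widetilde{Q}^{w_0}_{\overline{S}}}_{(\widetilde{\lambda},\underline{\tau})}/\varpi^m)$ already occurs on $R\Gamma(\widetilde{X}_{\widetilde{K}'},\mathcal{V}_{\widetilde{\lambda}}/\varpi^m)$ with $\widetilde{K}'$ \emph{hyperspecial} at $p$. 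This is false in general: enlarging the level subgroup to hyperspecial can kill eigensystems (a class with nontrivial parahoric level need not survive at hyperspecial level). The correct move --- and this is exactly what the paper says --- is the opposite: pass to a \emph{deep enough} open subgroup $\widetilde{K}'_p\subset\widetilde{K}_p$ on which the finite $\mathcal{O}/\varpi$-module $\mathcal{V}^{\widetilde{Q}^{w_0}_{\overline{S}}}_{(\widetilde{\lambda},\underline{\tau})}/\varpi$ is a trivial representation. At that level the local system is, away-from-$T$ Hecke-equivariantly, a direct sum of copies of the constant sheaf $\mathcal{O}/\varpi$, so $\widetilde{\mathfrak{m}}$ occurs in cohomology with trivial coefficients and one invokes the already-established result.

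Beyond this, your argument is more elaborate than needed. The theorem only asks for a representation over the residue field $\widetilde{\mathbf{T}}^T(\widetilde{K},\widetilde{\lambda},\underline{\tau})/\widetilde{\mathfrak{m}}$, so there is no need to construct a determinant over $\mathfrak{T}/I$ with $I$ nilpotent, to pass to limits over $m$, or to sketch the boundary-cohomology induction on $n$; all of that is packaged inside the cited result. Once the coefficient reduction (done correctly, by deepening the level) places $\widetilde{\mathfrak{m}}$ in the support of $H^{\ast}(\widetilde{X}_{\widetilde{K}'},\mathcal{O}/\varpi)$, the mod-$\widetilde{\mathfrak{m}}$ Galois representation comes for free.
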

\begin{proof}
The same proof applies as in \cite{CN23}, Theorem 2.1.26 noting that the first step is to pass to deep enough level where $\mathcal{V}_{(\Tilde{\lambda},\underline{\tau})}^{\widetilde{Q}^{w_0}_{\overline{S}}}/\varpi$ is trivialised. 
\end{proof}
Finally, we discuss the key technical condition we need to have access to the vanishing result of Caraiani--Scholze.
\begin{Def}\label{Def2.25}
A continuous representation $\Bar{\rho}:G_F\to \textnormal{GL}_m(\mathcal{O}/\varpi)$ is called \textit{decomposed generic} if there exists a prime $\ell$ different from $p$ such that:
\begin{enumerate}
    \item $\ell$ splits completely in $F$;
    \item for every place $v$ of $F$ dividing $\ell$, $\Bar{\rho}|_{G_{F_v}}$ is unramified and the eigenvalues $\alpha_1,...,\alpha_m$ of $\Bar{\rho}(\textnormal{Frob}_v)$ satisfy $\alpha_i/\alpha_j\neq \ell$ for $i\neq j$.
\end{enumerate}
\end{Def}
\begin{Rem}
As explained in \cite{ACC23}, Lemma 4.3.2, once we know that $\Bar{\rho}$ is decomposed generic, an argument using Chebotarev's density theorem shows that there are infinitely many choices of $\ell$ as in Definition~\ref{Def2.25}.
\end{Rem}
\begin{Th}\textnormal{\cite{CS19,Kos21}}\label{Th2.27} Let $\widetilde{\mathfrak{m}}\subset \widetilde{\mathbf{T}}^T(\widetilde{K},\Tilde{\lambda},\underline{\tau})$ be a maximal ideal such that the associated Galois representation $\Bar{\rho}_{\widetilde{\mathfrak{m}}}$ is decomposed generic. If we set $d=\textnormal{dim}_{\mathbf{C}}\widetilde{X}_{\widetilde{K}}$, we have a $\widetilde{\mathbf{T}}^T$-equivariant diagram
\begin{equation*}
    H^d(\widetilde{X}_{\widetilde{K}},\mathcal{V}_{(\Tilde{\lambda},\underline{\tau})}^{\widetilde{Q}^{w_0}_{\overline{S}}}[1/p])_{\widetilde{\mathfrak{m}}}\hookleftarrow H^d(\widetilde{X}_{\widetilde{K}},\mathcal{V}_{(\Tilde{\lambda},\underline{\tau})}^{\widetilde{Q}^{w_0}_{\overline{S}}})_{\widetilde{\mathfrak{m}}}\twoheadrightarrow H^d(\partial\widetilde{X}_{\widetilde{K}},\mathcal{V}_{(\Tilde{\lambda},\underline{\tau})}^{\widetilde{Q}_{\overline{S}}^{w_0}})_{\widetilde{\mathfrak{m}}}.
\end{equation*}

\end{Th}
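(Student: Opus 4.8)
The plan is to reduce the statement to a known vanishing theorem of Caraiani--Scholze together with a Hochschild--Serre/comparison argument controlling the behaviour at the boundary of the Borel--Serre compactification. First I would recall the setup: let $d=\dim_{\mathbf{C}}\widetilde{X}_{\widetilde{K}}$, which is the middle degree for the Borel--Serre compactification $\overline{\widetilde{X}}_{\widetilde{K}}$ of the locally symmetric space for $\widetilde{G}=U(n,n)$. The three objects in the diagram are the middle-degree cohomology $H^d(\widetilde{X}_{\widetilde{K}},-)$, its rational counterpart (obtained by inverting $p$), and the middle-degree boundary cohomology $H^d(\partial\widetilde{X}_{\widetilde{K}},-)$, all localised at the non-Eisenstein (in fact decomposed generic) maximal ideal $\widetilde{\mathfrak{m}}$. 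The left-hand arrow is just the natural map to the rationalisation, so the content is (a) that this map is injective after localisation, and (b) that the restriction-to-boundary map $H^d(\overline{\widetilde{X}}_{\widetilde{K}},-)_{\widetilde{\mathfrak{m}}}\to H^d(\partial\widetilde{X}_{\widetilde{K}},-)_{\widetilde{\mathfrak{m}}}$ is surjective, using the identification $H^d(\overline{\widetilde{X}}_{\widetilde{K}},-)\cong H^d(\widetilde{X}_{\widetilde{K}},-)$ coming from homotopy invariance of $j$.

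The key input is the theorem of Caraiani--Scholze \cite{CS19} (in the form sharpened by Koshikawa \cite{Kos21}) on vanishing of torsion in the cohomology of the relevant Shimura varieties (or, more precisely, of the compactly supported / ordinary cohomology of $U(n,n)$ locally symmetric spaces) after localising at a decomposed generic maximal ideal: the localised cohomology $H^i(\widetilde{X}_{\widetilde{K}},\mathcal{V}_{(\tilde\lambda,\underline\tau)}^{\widetilde{Q}_{\overline{S}}^{w_0}})_{\widetilde{\mathfrak{m}}}$ and its compactly supported version vanish outside degree $d$, and likewise the boundary cohomology. I would then run the long exact sequence of the pair $(\overline{\widetilde{X}}_{\widetilde{K}},\partial\widetilde{X}_{\widetilde{K}})$, i.e.
\begin{equation*}
\cdots\to H^d_c(\widetilde{X}_{\widetilde{K}},\mathcal{V})_{\widetilde{\mathfrak{m}}}\to H^d(\widetilde{X}_{\widetilde{K}},\mathcal{V})_{\widetilde{\mathfrak{m}}}\to H^d(\partial\widetilde{X}_{\widetilde{K}},\mathcal{V})_{\widetilde{\mathfrak{m}}}\to H^{d+1}_c(\widetilde{X}_{\widetilde{K}},\mathcal{V})_{\widetilde{\mathfrak{m}}}\to\cdots
\end{equation*}
where $\mathcal{V}=\mathcal{V}_{(\tilde\lambda,\underline\tau)}^{\widetilde{Q}_{\overline{S}}^{w_0}}$. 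Since $H^{d+1}_c(\widetilde{X}_{\widetilde{K}},\mathcal{V})_{\widetilde{\mathfrak{m}}}=0$ by the vanishing theorem, the restriction map $H^d(\widetilde{X}_{\widetilde{K}},\mathcal{V})_{\widetilde{\mathfrak{m}}}\twoheadrightarrow H^d(\partial\widetilde{X}_{\widetilde{K}},\mathcal{V})_{\widetilde{\mathfrak{m}}}$ is surjective, giving the right-hand arrow. For the left-hand arrow, I would use that $H^d(\widetilde{X}_{\widetilde{K}},\mathcal{V})_{\widetilde{\mathfrak{m}}}$ is a finitely generated $\mathcal{O}$-module concentrated in a single degree; concentration in one degree (again the vanishing theorem, applied to the universal coefficient / Bockstein sequences relating $\mathcal{V}$ and $\mathcal{V}/\varpi^m$) forces it to be $\mathcal{O}$-flat, hence torsion-free, hence injects into $H^d(\widetilde{X}_{\widetilde{K}},\mathcal{V})_{\widetilde{\mathfrak{m}}}[1/p]=H^d(\widetilde{X}_{\widetilde{K}},\mathcal{V}[1/p])_{\widetilde{\mathfrak{m}}}$.

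The main obstacle is bookkeeping rather than a new idea: one must verify that the theorem of \cite{CS19,Kos21} applies to precisely the local system $\mathcal{V}_{(\tilde\lambda,\underline\tau)}^{\widetilde{Q}_{\overline{S}}^{w_0}}$ here, which carries a nontrivial smooth (inertial-type) part and is taken at parahoric (indeed $\widetilde{\mathcal{Q}}^{w_0}_{\overline{S}}(b,c)$) level rather than at hyperspecial level. This is handled by first passing to a sufficiently deep level at which $\mathcal{V}/\varpi$ becomes a trivial local system — exactly the device used in the proof of Theorem~\ref{Th2.24} — reducing to the constant-coefficient statement of \cite{CS19,Kos21}, and then descending back along the finite covering using the Hochschild--Serre spectral sequence for $\widetilde{\mathcal{Q}}^{w_0}_{\overline{S}}(b,c)$ acting, noting that localisation at $\widetilde{\mathfrak{m}}$ is exact and kills the higher cohomology (the relevant group cohomology is finite, so its localisation at the non-Eisenstein $\widetilde{\mathfrak{m}}$ vanishes, cf. the argument via \cite{CN23}). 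One also has to check $\widetilde{\mathbf{T}}^T$-equivariance of all three maps, which is immediate since they are induced by morphisms of sheaves and spaces that are equivariant for the prime-to-$T$ Hecke action as recorded in \S\ref{sec2.2}. With these reductions in place the argument is the standard one from \cite{ACC23}, \S4.3, and \cite{CN23}, \S2.1, so I would simply cite those for the remaining details.
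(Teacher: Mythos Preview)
Your approach is correct and coincides with the paper's, which simply defers to \cite{ACC23}, Theorem~4.3.3, noting that \cite{Kos21} removes the extraneous hypotheses $[F^+:\mathbf{Q}]\geq 2$ and that $\overline{\rho}_{\widetilde{\mathfrak{m}}}$ have length at most $2$; your sketch is exactly the standard unpacking of that citation via the long exact sequence of the pair and torsion-freeness from vanishing below the middle degree.

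Two small corrections to your write-up. First, the parenthetical justification for the Hochschild--Serre descent is wrong: it is \emph{not} true that ``group cohomology is finite, so its localisation at the non-Eisenstein $\widetilde{\mathfrak{m}}$ vanishes''. The correct reason the vanishing propagates is simply that localisation at $\widetilde{\mathfrak{m}}$ is exact and commutes with the spectral sequence: if $H^q(X_{\widetilde{K}'},-)_{\widetilde{\mathfrak{m}}}=0$ for $q<d$ at deep level $\widetilde{K}'$, then every term $E_2^{p,q}=H^p(\widetilde{K}/\widetilde{K}',H^q(X_{\widetilde{K}'},-)_{\widetilde{\mathfrak{m}}})$ with $q<d$ already vanishes, forcing $H^i(X_{\widetilde{K}},-)_{\widetilde{\mathfrak{m}}}=0$ for $i<d$ (and dually for $H^{>d}_c$ via Poincar\'e duality). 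Second, you assert in passing that the boundary cohomology vanishes outside degree $d$; this is neither needed nor supplied by \cite{CS19}, and your own argument correctly uses only the vanishing of $H^{d+1}_c$.
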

\begin{proof}
This follows from the main result of \cite{CS19} as explained in \cite{ACC23}, Theorem 4.3.3 except the extra conditions appearing in \cite{CS19} that $[F^+:\mathbf{Q}]\geq 2$ and that the length of $\Bar{\rho}_{\widetilde{\mathfrak{m}}}$ is at most $2$. These conditions were removed in \cite{Kos21}.
\end{proof}

\section{$Q$-ordinary Hida theory}
In the first part of the section, based on \cite{ACC23}, \S5.2 and \cite{CN23}, \S2.2, we spell out $Q$-ordinary Hida theory for the Betti cohomology of the locally symmetric space $X_{\widetilde{K}}$ and $X_K$, where $Q$ will be an arbitrary standard parabolic subgroup. Finally, we close the section with a computation of $Q$-ordinary parts of certain Bruhat strata of parabolic induction. This is completely analogous to \cite{ACC23}, \S5.3 and \cite{CN23}, \S2.3. More precisely, the contents of \S\ref{sec3.1}, \S\ref{sec3.2}, and \S\ref{Sec3.4} are carried out for the Borel subgroup in \cite{ACC23}, \S5.2, and for the Siegel parabolic subgroup in \cite{CN23}, \S2.2. Moreover, the content of \S\ref{sec3.5} has been worked out for the Borel subgroup in \cite{ACC23}, \S5.3, and for the Siegel parabolic subgroup in \cite{CN23}, \S2.3. Finally, Hida theory with dual coefficients for the opposite parabolic is considered in \cite{CN23} that we generalise in \S\ref{sec3.3}. A seemingly new result in \S\ref{sec3.3} is a $Q$-ordinary and Hecke equivariant Verdier duality with $\mathcal{O}/\varpi^m$-coefficients (see Proposition~\ref{Prop3.20}).

\subsection{Ordinary parts of smooth representations}\label{sec3.1}
In this text we use several incarnations of ordinary parts. To track the representation theory throughout our arguments, it is often crucial to take the point of view of Emerton \cite{Eme10}, \cite{Eme10b} on taking ordinary parts. We recollect here his approach. In fact, following \cite{ACC23}, we consider a modified version that is more convenient to do homological algebra with and coincides with the original definition on admissible representations which exhaust all the objects which we will consider here.

The setup for the subsection is as follows. We let $L/\mathbf{Q}_p$ be a finite field extension, $G/L$ a connected reductive group. Set $Q\subset G$ to be a parabolic subgroup with a Levi decomposition $Q=M\ltimes N$ and denote by $\overline{Q}=M\ltimes \overline{N}$ the opposite parabolic subgroup. Denote by $Z_M\subset M$ the centre of the Levi factor. Assume that $\mathcal{Q}\subset G(L)$ is a compact open subgroup which admits an Iwahori decomposition
\begin{equation*}
     \overline{N}^1\times M^0\times N^0 \xrightarrow[]{\sim}\mathcal{Q}\xleftarrow[]{\sim} N^0\times M^0\times \overline{N}^1
\end{equation*}
with respect to $Q$ in the sense of \cite{ACC23}, \S2.1.9. Let
\begin{equation*}
    ...\subset \mathcal{Q}(b,b)\subset ...\subset \mathcal{Q}(1,1)\subset \mathcal{Q}(0,1)=\mathcal{Q}\subset G(L)
\end{equation*}
be a cofinal family of compact open subgroups of $G(L)$ such that each $\mathcal{Q}(b,b)$ is normal in $\mathcal{Q}$ and $\mathcal{Q}(b,b)$ admits an Iwahori decomposition
\begin{equation*}
    \overline{N}^bM^bN^0\xrightarrow[]{\sim}\mathcal{Q}(b,b)
\end{equation*}
with respect to $Q$.
Then, by setting $\mathcal{Q}(b,c)=\overline{N}^c M^bN^0$ for $1\leq b\leq c$, one checks that we get a compact open subgroup of $\mathcal{Q}$ (admitting an Iwahori decomposition).
\begin{Ex}\label{ex3.1}
    For $G=\textnormal{GL}_{n,L}$, and $Q=M\ltimes N\subset G$ a parabolic subgroup standard with respect to the Borel subgroup of upper triangular matrices, we can consider the corresponding parahoric group scheme $\mathcal{Q}^{\textnormal{sch}}\subset \textnormal{GL}_n$. Then $\mathcal{Q}:=\mathcal{Q}^{\textnormal{sch}}(\mathcal{O}_L)$ admits an Iwahori decomposition $\overline{N}^1M^0N^0$, where $M^0=M(\mathcal{O}_L)$, $N^0=N(\mathcal{O}_L)$, and $\overline{N}^1$ is $\textnormal{ker}(\overline{N}(\mathcal{O}_L)\to \overline{N}(\mathcal{O}_L/\varpi_L)$. If $Q$ is the parabolic subgroup corresponding to the partition $(n_1,...,n_t)$ of $n$, $\mathcal{Q}$ will be the subgroup of matrices in $G(\mathcal{O}_L)$ that are upper block triangular of type $(n_1,...,n_t)$ modulo $\varpi_L$. For integers $0\leq b\leq c$ with $c\geq 1$, we can then set $\mathcal{Q}(b,c)$ to be the subgroup of matrices in $G(\mathcal{O}_L)$ that are block upper triangular of type $(n_1,...,n_t)$ modulo $\varpi_L^c$, and block unipotent of type $(n_1,...,n_t)$ modulo $\varpi_L^b$.

However, we consider this more general setup since it allows us to for instance set $M^1$ to be $\textnormal{ker}(M(\mathcal{O}_L)\to M(\mathcal{O}_L/\varpi_L^d)$ for some arbitrary integer $d\geq 0$. Moreover, we have the freedom to choose $N^0$ to be other compact open subgroups in $N(\mathcal{O}_L)$ that are preserved under conjugation by $M^0$. All of this allows the formalism to be more flexible and saves some space in introducing notations and applying it in later sections.

\end{Ex}
\begin{Rem}\label{Rem3.1}
    Note that an easy argument using the Iwahori decomposition shows that $M^0$ normalises $N^0=N(L)\cap \mathcal{Q}$ and each $\overline{N}^c=\overline{N}(L)\cap \mathcal{Q}(0,c)$ for $c\geq 1$.
\end{Rem}
We set
\begin{equation*}
    M^+:= \{m\in M(L)\mid mN^0m^{-1}\subset N^0\textnormal{ and }m^{-1}\overline{N}^1m\subset \overline{N}^1\}\footnote{We note that this is not the definition that appears in \cite{Eme10}, \cite{Eme10b} as there the second condition on the elements $m\in M^+$ is not present. However, this is needed for us in order to compare the two natural Hecke actions of our monoids on finite level (see Propostion~\ref{Prop3.20}). We also note that this extra condition is already present for instance in \S2.1.9 of \cite{ACC23}.}
\end{equation*}
and define $Z^+_M:=Z_M(L)\cap M^+$. By Remark~\ref{Rem3.1}, we see that in fact both $M^+\subset M(L)$ and $Z_M^+\subset Z_M(L)$ are open submonoids. We make the following assumption that will be satisfied in our context.
\begin{hyp}\label{hyp3.2}
    For any $m\in M^+$, and $c\geq 1$, we have $m^{-1}\overline{N}^cm\subset \overline{N}^c$.
\end{hyp}

Denote by $M^+\times_{Z_M^+}Z_M(L)$ the quotient of the monoid $M^+\times Z_M(L)$ by the equivalence relation generated by $(mz^+,z)\sim (m,z^+z)$ for $m\in M^+$, $z^+\in Z_M^+$ and $z\in Z_M(L)$.
\begin{Lemma}\label{Lem3.3}
    The morphism of monoids $M^+\times Z_M(L)\to M(L)$ given by multiplication factors through $M^+\times_{Z_M^+}Z_M(L)$ and descends to an isomorphism.
\end{Lemma}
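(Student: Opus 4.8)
The plan is to exhibit an explicit two-sided inverse to the descended multiplication map. First I would record the formalities: since $Z_M(L)$ is central in $M(L)$, the map $\mu\colon M^+\times Z_M(L)\to M(L)$, $(m,z)\mapsto mz$, is a homomorphism of monoids, and the generating relation $(mz^+,z)\sim(m,z^+z)$ is compatible with multiplication, so $M^+\times_{Z_M^+}Z_M(L)$ really is a monoid. Moreover $\mu(mz^+,z)=mz^+z=\mu(m,z^+z)$, so $\mu$ is constant on equivalence classes and descends to a monoid homomorphism $\bar\mu\colon M^+\times_{Z_M^+}Z_M(L)\to M(L)$, which is the arrow of the statement. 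Since the inverse of a bijective monoid homomorphism is automatically a monoid homomorphism, it suffices to prove that $\bar\mu$ is bijective, which I would do by constructing a set-theoretic inverse $\nu$.

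The key input is the cofinality fact: \emph{for every $g\in M(L)$ there is $z^+\in Z_M^+$ with $gz^+\in M^+$}. I would prove this by recalling that a sufficiently dominant $z^+\in Z_M^+$ contracts $N^0$ and $\overline N^1$ as much as desired — that is, $z^+N^0(z^+)^{-1}$ and $(z^+)^{-1}\overline N^1z^+$ can be made to lie in any prescribed compact open subgroup of $N(L)$ resp.\ $\overline N(L)$; this is the standard assertion that $Z_M$ acts on $\mathrm{Lie}\,N$ through weights pairing strictly positively with dominant cocharacters (cf.\ \cite{Eme10}). Given $g$, I would first pick $j$ with $gN^{j}g^{-1}\subseteq N^0$ (possible since $g$ normalises $N(L)$ and the $gN^jg^{-1}$ are cofinal) and note that $g^{-1}\overline N^1g$ is a fixed compact open subgroup; then choosing $z^+$ dominant enough that $z^+N^0(z^+)^{-1}\subseteq N^j$ and $(z^+)^{-1}(g^{-1}\overline N^1g)z^+\subseteq\overline N^1$, a short conjugation computation using centrality of $z^+$ yields $(gz^+)N^0(gz^+)^{-1}\subseteq gN^jg^{-1}\subseteq N^0$ and $(gz^+)^{-1}\overline N^1(gz^+)\subseteq\overline N^1$, i.e.\ $gz^+\in M^+$. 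The case $g\in Z_M(L)$ of this statement gives in particular that for every $z\in Z_M(L)$ there is $z^+\in Z_M^+$ with $zz^+\in Z_M^+$.

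With this in hand I would define $\nu\colon M(L)\to M^+\times_{Z_M^+}Z_M(L)$ by $\nu(g):=[\,gz^+,\,(z^+)^{-1}\,]$ for any $z^+\in Z_M^+$ with $gz^+\in M^+$. Independence of $z^+$: if $z_1^+,z_2^+$ both work then $gz_1^+z_2^+=(gz_1^+)z_2^+\in M^+$, and applying the generating relation (in each of its two symmetric forms) shows $[gz_i^+,(z_i^+)^{-1}]=[gz_1^+z_2^+,(z_1^+z_2^+)^{-1}]$ for $i=1,2$. That $\nu$ is a monoid homomorphism is then immediate, since multiplication in the quotient is componentwise and $z_1^+,z_2^+$ are central, so $z_1^+z_2^+$ serves as a common ``denominator'' for $g_1g_2$; and $\nu(1)=[1,1]$. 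Finally $\bar\mu(\nu(g))=(gz^+)(z^+)^{-1}=g$, while for a class $[m,z]$ one picks $z^+\in Z_M^+$ with $zz^+\in Z_M^+$, so that $(mz)z^+=m(zz^+)\in M^+$ and $\nu(mz)=[m(zz^+),(z^+)^{-1}]=[m,z]$ by the generating relation. Hence $\bar\mu$ and $\nu$ are mutually inverse and $\bar\mu$ is an isomorphism of monoids. The only non-formal ingredient is the cofinality fact of the second paragraph; everything else is bookkeeping with the defining relation, the main care being to keep the two Iwahori-decomposition conditions in the (modified) definition of $M^+$ straight.
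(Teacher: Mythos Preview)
Your proof is correct and follows essentially the same strategy as the paper: both hinge on the cofinality fact that every $g\in M(L)$ becomes an element of $M^+$ after multiplication by a sufficiently dominant $z^+\in Z_M^+$, established via the standard contraction properties from \cite{Eme10b}. The paper singles out one fixed $z_p$ with this property (which it then keeps for the rest of \S\ref{sec3.1}) and defers the final bijectivity step to \cite{Eme06c}, Proposition~3.3.6, whereas you spell out the inverse $\nu$ explicitly; your version is thus slightly more self-contained, at the cost of not isolating the distinguished element $z_p$ that the paper uses afterwards. One tiny notational point: the symbol $N^j$ you use for a shrinking system inside $N(L)$ is not part of the paper's setup (only $N^0$ appears), so you should phrase that step in terms of an arbitrary compact open subgroup of $N(L)$.
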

\begin{proof}
    This is essentially Proposition 3.3.6 of \cite{Eme06c}. Since our definition of $M^+$ is slightly different, we provide a sketch of proof here.

    We start by picking $z_p\in Z_M^+$ such that $\{z_p^kN^0z_p^{-k}\}_{k\geq 0}$ forms a basis of neighborhoods of $0$. Such a $z_p\in Z_M^+$ exists by (the discussion above \cite{Eme10b}, Lemma 3.1.3, and) \cite{Eme10b} Lemma 3.1.3 (2),(3) and Lemma 3.1.4, (3). Note that $z_p^{-1}$ and $Z_M^+$ generate $Z_M(L)$ as a monoid. Indeed, given $z\in Z_M(L)$, then for $k\geq 0$ large enough, $z_p^kz$ will satisfy the assumption of \textit{loc. cit.} Lemma 3.1.3, (3) and Lemma 3.1.4, (3), and, in particular, will lie in $Z_M^+$. 
    
    We further see that $M^+$ and $z_p^{-1}$ generate $M(L)$ as a monoid. To see this, pick $m\in M(L)$ and assume that we have
    \begin{equation*}
        mN^0m^{-1}\subset z_p^{-k}N^0 z_p^k\textnormal{ and }m^{-1}\overline{N}^1m\subset z_p^k\overline{N}^1z_p^{-k}
    \end{equation*}
    for a large enough integer $k\geq 0$. Such $k$ always exists by \textit{loc. cit.} Lemma 3.1.3, (2) and Lemma 3.1.4, (2). Then $z_p^km=mz_p^k$ clearly lies in $M^+$.

    One can now conclude just as in the proof of \cite{Eme06c}, Proposition 3.3.6.
    \end{proof}
    From now on, we fix a $z_p\in Z_M^+$ as in the proof of Lemma~\ref{Lem3.3}. We set $\Delta_M^+\subset M^+$ to be any open submonoid containing $M^0$ and $z_p$. Moreover, for $c\geq 1$, we set $\mathcal{Q}^+(0,c)=\overline{N}^c M^+ N^0$ and $\Delta_{\mathcal{Q}}(c)=\overline{N}^c\Delta_M^+N^0$. Then \cite{ACC23}, Lemma 2.1.10 says that $\mathcal{Q}^+(0,c)$ is also a monoid, open in $G(L)$. Moreover, $\mathcal{Q}\mathcal{Q}^+(0,c) \mathcal{Q}=\mathcal{Q}^+(0,c)$, and $\mathcal{Q}^+\cap M(L)=M^+$. The same conclusion holds for $\Delta_{\mathcal{Q}}(c)$ too, since the assumption $M^0\subset \Delta_M^+$ ensures that the proof of \textit{loc. cit.} Lemma 2.1.10 applies. Moreover, set $Q^+=\mathcal{Q}^+(0,c)\cap Q(L)=M^+\ltimes N^0$ and $\Delta_Q=\Delta_{\mathcal{Q}}(c)\cap Q(L)= \Delta_M^+\ltimes N^0$. We finally set $\Delta_M$ to be the monoid generated by $\Delta_M^+$ and $z_p^{-1}$.

    Given $\pi \in \textnormal{Mod}_{\textnormal{sm}}(\mathcal{O}/\varpi^m[\Delta_Q])$, we can consider the Hecke action of $\Delta_M^+$ on the space of $N^0$-invariants $\Gamma(N^0,\pi)$. Namely, for $m\in \Delta_M^+$ and $v\in \Gamma(N^0,\pi)$, the action is given by
    \begin{equation*}
        m\cdot v:=\sum_{n\in N^0/mN^0m^{-1}}nmv.
    \end{equation*}
    We then obtain a left exact functor
    \begin{equation*}
        \Gamma(N^0,-):\textnormal{Mod}_{\textnormal{sm}}(\mathcal{O}/\varpi^m[\Delta_Q])\to \textnormal{Mod}_{\textnormal{sm}}(\mathcal{O}/\varpi^m[\Delta_M^+])
    \end{equation*}
    and, in particular, a derived functor\footnote{Here we use \cite{ACC23}, Lemma 5.2.4 to see that both categories considered are abelian with enough injectives.}
    \begin{equation*}
        R\Gamma(N^0,-):D^+_{\textnormal{sm}}(\mathcal{O}/\varpi^m[\Delta_Q])\to D^+_{\textnormal{sm}}(\mathcal{O}/\varpi^m[\Delta_M^+]).
    \end{equation*}
    We also have the exact localisation functor
    \begin{equation*}
        (-)^{Q\textnormal{-ord}}:\textnormal{Mod}_{\textnormal{sm}}(\mathcal{O}/\varpi^m[\Delta_M^+])\to \textnormal{Mod}_{\textnormal{sm}}(\mathcal{O}/\varpi^m[\Delta_M])
    \end{equation*}
    induced by the inclusion $\Delta_M^+\subset \Delta_M$. We then obtain the functor of "taking $Q$-ordinary parts at infinite level"
    \begin{equation*}
        D^+_{\textnormal{sm}}(\mathcal{O}/\varpi^m[\Delta_Q])\to D^+_{\textnormal{sm}}(\mathcal{O}/\varpi^m[\Delta_M]),
    \end{equation*}
    \begin{equation*}
        \pi\mapsto R\Gamma(N^0,\pi)^{Q\textnormal{-ord}}.
    \end{equation*}

    We now consider several versions of taking $Q$-ordinary parts at finite level and will compare them. To work in the generality we need, we will take invariants with respect to general "types" and not only the trivial representation. In particular, set $\sigma$ to be a smooth $\mathcal{O}/\varpi^m[M^0]$-modules, finite free over $\mathcal{O}/\varpi^m$. We will often abuse the notation and confuse $\sigma$ with $\textnormal{Inf}_{M^0}^{M^0\ltimes N^0}\sigma$. We define the Hecke algebras $\mathcal{H}(\sigma)^{\Delta_M^+}\subset \mathcal{H}(\sigma)^+\subset \mathcal{H}(\sigma)$ as the subalgebra generated by functions supported on $\Delta_M^+$, respectively on $M^+$.
    Finally, note that $[z_p,\textnormal{id}]$ lies $\mathcal{H}(\sigma)^{\Delta_M^+}$ and is a central element. Combined with Lemma~\ref{Lem3.3}, it easily implies that $\mathcal{H}(\sigma)^+[[z_p,\textnormal{id}]^{-1}]=\mathcal{H}(\sigma)$. We set $\mathcal{H}(\sigma)^{\Delta_M}=\mathcal{H}(\sigma)^{\Delta_M^+}[[z_p,\textnormal{id}]^{-1}]$.

    The first candidate for ordinary parts at level $\sigma$ is as follows. For $\pi\in D^+_{\textnormal{sm}}(\mathcal{O}/\varpi^m[\Delta_Q])$ we apply the functor
    \begin{equation*}
        R\Hom_{\mathcal{O}/\varpi^m[M^0]}(\sigma^{\vee},-):D^+_{\textnormal{sm}}(\mathcal{O}/\varpi^m[\Delta_M])\to D^+_{\textnormal{sm}}(\mathcal{H}(\sigma)^{\Delta_M})
    \end{equation*}
    to $R\Gamma(N^0,\pi)^{Q\textnormal{-ord}}$. Here the functor $R\Hom_{\mathcal{O}/\varpi^m[M^0]}(\sigma^{\vee},-)$ is constructed by taking the usual left Hecke action of $\mathcal{H}(\sigma)^{\Delta_M}$ on the space of $\sigma^{\vee}$-invariants.

    In order to define the other candidate, we introduce the functor
    \begin{equation*}
        \Hom_{\mathcal{O}/\varpi^m[M^0\ltimes N^0]}(\sigma^{\vee},-):\textnormal{Mod}_{\textnormal{sm}}(\mathcal{O}/\varpi^m[\Delta_Q])\to \textnormal{Mod}(\mathcal{H}(\sigma)^{\Delta_M^+})
    \end{equation*}
    where the target is the category of left $\mathcal{H}(\sigma)^{\Delta_M^+}$-modules. We spell out the definition of $\Hom_{\mathcal{O}/\varpi^m[M^0\ltimes N^0]}(\sigma^{\vee},-)$. Let $\pi\in \textnormal{Mod}_{\textnormal{sm}}(\Delta_Q)$, pick $[m,\psi]\in \mathcal{H}(\sigma)^{\Delta_M^+}$ and $\phi\in \Hom_{\mathcal{O}/\varpi^m[M^0\ltimes N^0]}(\sigma^{\vee},\pi)$. The action is defined by setting
    \begin{equation*}
        [m,\psi]\cdot \phi:v\mapsto \sum_{n\Tilde{m}\in N^0\rtimes M^0/m(N^0\rtimes M^0)m^{-1}\cap (N^0\rtimes M^0)}\pi(n\Tilde{m}m)\phi(\psi^t\circ\sigma^{\vee}((n\Tilde{m})^{-1})v).
    \end{equation*}
    Note that we have identifications of sets
    \begin{equation}\label{eq3.1}
        N^0\rtimes M^0/\left(m(N^0\rtimes M^0)m^{-1}\cap (N^0\rtimes M^0)\right)\cong 
    \end{equation}
    \begin{equation*}
        \left\{(n,\Tilde{m})\mid n\in N^0/\Tilde{m}mN^0(\Tilde{m}m)^{-1},\Tilde{m}\in M^0/mM^0m^{-1}\cap M^0\right\}\cong 
    \end{equation*}
    \begin{equation*}
        (M^0/mM^0m^{-1}\cap M^0)\times N^0/mN^0m^{-1}.
    \end{equation*}
    In particular, we obtain the following lemma.
    \begin{Lemma}\label{Lem3.4}
        We have a natural equivalence of functors
        \begin{equation*}
            \Hom_{\mathcal{O}/\varpi^m[M^0]}(\sigma^{\vee},-)\circ \Gamma(N^0,-)\cong \Hom_{\mathcal{O}/\varpi^m[M^0\ltimes N^0]}(\sigma^{\vee},-):\textnormal{Mod}_{\textnormal{sm}}(\mathcal{O}/\varpi^m[\Delta_Q])\to
        \end{equation*}
        \begin{equation*}
            \to \textnormal{Mod}(\mathcal{H}(\sigma)^{\Delta_M^+}).
        \end{equation*}
        In particular, for $\pi\in D^+_{\textnormal{sm}}(\mathcal{O}/\varpi^m[\Delta_Q])$, we have a natural isomorphism
        \begin{equation*}
            R\Hom_{\mathcal{O}/\varpi^m[M^0]}(\sigma^{\vee}, R\Gamma(N^0,\pi))\cong R\Hom_{\mathcal{O}/\varpi^m[M^0\ltimes N^0]}(\sigma^{\vee},\pi)
        \end{equation*}
        in $D^+(\mathcal{H}(\sigma)^{\Delta_M^+})$.
    \end{Lemma}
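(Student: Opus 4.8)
The plan is to establish the claimed natural equivalence first on the level of abelian module categories, and then upgrade it to the derived statement by composing derived functors. For the underived equivalence, the starting point is that $\sigma^{\vee}$ is inflated from $M^0$, so $N^0$ acts trivially on it. Hence, for $\pi\in\textnormal{Mod}_{\textnormal{sm}}(\mathcal{O}/\varpi^m[\Delta_Q])$, an $M^0\ltimes N^0$-equivariant map $\sigma^{\vee}\to\pi$ automatically has image in $\pi^{N^0}=\Gamma(N^0,\pi)$ and is therefore the same datum as an $M^0$-equivariant map $\sigma^{\vee}\to\Gamma(N^0,\pi)$; this identification is manifestly functorial in $\pi$, and gives the isomorphism of underlying $\mathcal{O}/\varpi^m$-modules. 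The substance of the lemma is that it intertwines the two $\mathcal{H}(\sigma)^{\Delta_M^+}$-actions, where on the right-hand side one first equips $\Gamma(N^0,\pi)$ with its Hecke $\Delta_M^+$-action $m\cdot v=\sum_{n\in N^0/mN^0m^{-1}}nmv$ and then takes the standard $\mathcal{H}(\sigma)^{\Delta_M^+}$-action on $\sigma^{\vee}$-invariants. To check this I would take a generator $[m,\psi]$ with $m\in\Delta_M^+$, write out the defining double sum for its action on $\Hom_{\mathcal{O}/\varpi^m[M^0\ltimes N^0]}(\sigma^{\vee},\pi)$, use $N^0$-triviality of $\sigma^{\vee}$ to replace $\sigma^{\vee}((n\Tilde{m})^{-1})$ by $\sigma^{\vee}(\Tilde{m}^{-1})$, and then split the index set via \eqref{eq3.1}: choosing coset representatives of the form $\Tilde{m}n'$ (legitimate since $M^0$ normalises $N^0$ by Remark~\ref{Rem3.1}), the outer sum over $\Tilde{m}\in M^0/(mM^0m^{-1}\cap M^0)$ reproduces the standard $\mathcal{H}(\sigma)^{\Delta_M^+}$-action on $\Hom_{\mathcal{O}/\varpi^m[M^0]}(\sigma^{\vee},-)$, while the inner sum over $n'\in N^0/mN^0m^{-1}$ reproduces precisely the Hecke action of $m$ on $\Gamma(N^0,\pi)$.

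I expect this bookkeeping — rather than any conceptual difficulty — to be the main obstacle: one must keep careful track of the anti-involution $[g,\psi]\mapsto[g^{-1},\psi^{t}]$ (the source of the $\psi^{t}$ and $\sigma^{\vee}(\Tilde{m}^{-1})$ in the formulas) and verify that the coset representatives can genuinely be chosen compatibly with the product decomposition \eqref{eq3.1}, so that the double sum factors as the composite of the two one-variable Hecke actions. No deep input is needed here; the only ingredients are Remark~\ref{Rem3.1} and the explicit formulas recalled in \S\ref{sec2.3} and \S\ref{sec3.1}.

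Finally, for the derived statement I would note that both $R\Gamma(N^0,-)$ and $\Hom_{\mathcal{O}/\varpi^m[M^0]}(\sigma^{\vee},-)$ are right derived functors between abelian categories with enough injectives by \cite{ACC23}, Lemma 5.2.4, and that $\Gamma(N^0,-)$ sends injective objects to $\Hom_{\mathcal{O}/\varpi^m[M^0]}(\sigma^{\vee},-)$-acyclic objects. The latter follows from the argument of \cite{ACC23}, \S5.2, which goes through unchanged with a general type $\sigma$ in place of the trivial representation and in fact shows that $\Gamma(N^0,-)$ preserves injectivity. Applying the already-established underived equivalence to an injective resolution of $\pi$ then yields the isomorphism $R\Hom_{\mathcal{O}/\varpi^m[M^0]}(\sigma^{\vee},R\Gamma(N^0,\pi))\cong R\Hom_{\mathcal{O}/\varpi^m[M^0\ltimes N^0]}(\sigma^{\vee},\pi)$ in $D^+(\mathcal{H}(\sigma)^{\Delta_M^+})$.
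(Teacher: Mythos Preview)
Your proposal is correct and takes essentially the same approach as the paper: identify the underlying modules via $N^0$-triviality of $\sigma^{\vee}$, check Hecke-equivariance using the coset decomposition \eqref{eq3.1}, and deduce the derived statement from the Grothendieck composition once $\Gamma(N^0,-)$ sends injectives to $\Hom_{\mathcal{O}/\varpi^m[M^0]}(\sigma^{\vee},-)$-acyclics via the argument of \cite{ACC23}, Lemma~5.2.7(1). One small caveat: the assertion that $\Gamma(N^0,-)$ \emph{preserves injectivity} is only literally true after forgetting down to $M^0\ltimes N^0$- and $M^0$-modules (where inflation is an exact left adjoint); the paper makes this reduction explicit via forgetful functors, but this is exactly the ACC23 argument you cite, so your sketch is fine.
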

    \begin{proof}
        As the underlying $\mathcal{O}/\varpi^m$-modules of the two functors evaluated on some $\pi\in \textnormal{Mod}_{\textnormal{sm}}(\mathcal{O}/\varpi^m[\Delta_Q])$ clearly coincide, to see the first part, we only have to check that the Hecke actions match up. This follows from the definitions and the identifications \ref{eq3.1}.

        The second part follows from \cite{We94}, Corollary 10.8.3 as soon as we verify the fact that $\Gamma(N^0,-)$ carries injectives to $\Hom_{\mathcal{O}/\varpi^m[M^0]}(\sigma^{\vee},-)$-acyclics. To see this we argue just as in the proof of \cite{ACC23}, Lemma 5.2.7, (1). Namely, we have the exact forgetful functors
        \begin{equation*}
            \textnormal{Mod}_{\textnormal{sm}}(\mathcal{O}/\varpi^m[\Delta_Q])\xrightarrow[]{\alpha}\textnormal{Mod}_{\textnormal{sm}}(\mathcal{O}/\varpi^m[M^0\ltimes N^0]),
        \end{equation*}
        \begin{equation*}
            \textnormal{Mod}_{\textnormal{sm}}(\mathcal{O}/\varpi^m[\Delta_M^+])\xrightarrow[]{\beta}\textnormal{Mod}_{\textnormal{sm}}(\mathcal{O}/\varpi^m[M^0])\textnormal{ and}
        \end{equation*}
        \begin{equation*}
            \textnormal{Mod}(\mathcal{H}(\sigma)^{\Delta_M^+})\xrightarrow[]{\gamma}\textnormal{Mod}(\mathcal{O}/\varpi^m).
        \end{equation*}
        Moreover, \cite{ACC23}, Lemma 5.2.4 says that $\alpha$ and $\beta$ preserve injectives. Now pick an injective $\mathcal{I}\in \textnormal{Mod}_{\textnormal{sm}}(\mathcal{O}/\varpi^m[\Delta_Q])$. We would like to show that, for $i\geq 1$,
        \begin{equation*}
            R^i\Hom_{\mathcal{O}/\varpi^m[M^0]}(\sigma^{\vee},\Gamma(N^0,\mathcal{I}))=0.
        \end{equation*}
        As the previous discussion shows, we can equivalently verify that
        \begin{equation*}
            \gamma R^i\Hom_{\mathcal{O}/\varpi^m[M^0]}(\sigma^{\vee},\Gamma(N^0,\mathcal{I}))=R^i\Hom_{\mathcal{O}/\varpi^m[M^0]}(\sigma^{\vee},\Gamma(N^0,\alpha\mathcal{I}))=0.
        \end{equation*}
        However, $\Gamma(N^0,-)$, as a functor $\textnormal{Mod}_{\textnormal{sm}}(\mathcal{O}/\varpi^m[M^0\ltimes N^0])\to \textnormal{Mod}_{\textnormal{sm}}(\mathcal{O}/\varpi^m[M^0])$, preserves injectives as it possesses an exact left adjoint given by inflation. Therefore, $\alpha \mathcal{I}$ being injective, $R^i\Hom_{\mathcal{O}/\varpi^m[M^0]}(\sigma^{\vee},\Gamma(N^0,\alpha\mathcal{I}))$ vanishes.
    \end{proof}
    We further consider the functor
    \begin{equation*}
        (-)^{Q\textnormal{-ord}}:\textnormal{Mod}(\mathcal{H}(\sigma)^{\Delta_M^+})\to \textnormal{Mod}(\mathcal{H}(\sigma)^{\Delta_M})
    \end{equation*}
     defined by localising along $\mathcal{H}(\sigma)^{\Delta_M^+}\subset \mathcal{H}(\sigma)^{\Delta_M}$.
\begin{Lemma}\label{Lem3.5}
    The functor
    \begin{equation*}
        (-)^{Q\textnormal{-ord}}:\textnormal{Mod}_{\textnormal{sm}}(\mathcal{O}/\varpi^m[\Delta_M^+])\to \textnormal{Mod}_{\textnormal{sm}}(\mathcal{O}/\varpi^m[\Delta_M])
    \end{equation*}
    sends injectives to $\Hom_{\mathcal{O}/\varpi^m[M^0]}(\sigma^{\vee},-)$-acyclics.
\end{Lemma}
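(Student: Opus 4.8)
The plan is to reduce the statement to two facts about the forgetful functor down to smooth $M^0$-representations, both of the type recorded in \cite{ACC23}, Lemma 5.2.4, together with the observation that $(-)^{Q\textnormal{-ord}}$ is, once we forget the monoid action down to $M^0$, a filtered colimit.

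First I would make the shape of $(-)^{Q\textnormal{-ord}}$ explicit. Since $\Delta_M$ is generated by $\Delta_M^+$ and $z_p^{-1}$ and $[z_p,\textnormal{id}]$ is central in $\mathcal{H}(\sigma)^{\Delta_M^+}$, for $\pi\in\textnormal{Mod}_{\textnormal{sm}}(\mathcal{O}/\varpi^m[\Delta_M^+])$ one has $\pi^{Q\textnormal{-ord}}=\pi[z_p^{-1}]=\varinjlim(\pi\xrightarrow{z_p}\pi\xrightarrow{z_p}\cdots)$. As $z_p$ lies in the centre of $M(L)$, the transition maps $z_p\colon\pi\to\pi$ are $M^0$-equivariant, so this is a filtered colimit already in $\textnormal{Mod}_{\textnormal{sm}}(\mathcal{O}/\varpi^m[M^0])$, and the forgetful functor $\textnormal{Mod}_{\textnormal{sm}}(\mathcal{O}/\varpi^m[\Delta_M])\to\textnormal{Mod}_{\textnormal{sm}}(\mathcal{O}/\varpi^m[M^0])$ sends $\pi^{Q\textnormal{-ord}}$ to this colimit.

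Next I would show that $\mathcal{I}^{Q\textnormal{-ord}}$ is injective in $\textnormal{Mod}_{\textnormal{sm}}(\mathcal{O}/\varpi^m[M^0])$. By \cite{ACC23}, Lemma 5.2.4 the forgetful functor $\textnormal{Mod}_{\textnormal{sm}}(\mathcal{O}/\varpi^m[\Delta_M^+])\to\textnormal{Mod}_{\textnormal{sm}}(\mathcal{O}/\varpi^m[M^0])$ preserves injectives, so the underlying smooth $M^0$-representation of $\mathcal{I}$ is injective, and hence so is every term of the diagram defining $\mathcal{I}^{Q\textnormal{-ord}}$. Since $M^0$ is profinite and $\mathcal{O}/\varpi^m$ is a finite ring, $\textnormal{Mod}_{\textnormal{sm}}(\mathcal{O}/\varpi^m[M^0])$ is a locally Noetherian Grothendieck category, generated by the Noetherian objects $\mathcal{O}/\varpi^m[M^0/U]$ for $U\subset M^0$ open normal; in such a category a filtered colimit of injective objects is again injective. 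Therefore $\mathcal{I}^{Q\textnormal{-ord}}$ is an injective smooth $M^0$-representation.

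Finally I would transfer this back. Again by (the analogue of) \cite{ACC23}, Lemma 5.2.4 the forgetful functor $\textnormal{Mod}_{\textnormal{sm}}(\mathcal{O}/\varpi^m[\Delta_M])\to\textnormal{Mod}_{\textnormal{sm}}(\mathcal{O}/\varpi^m[M^0])$ preserves injectives, so for an injective resolution $\mathcal{I}^{Q\textnormal{-ord}}\to J^\bullet$ in $\textnormal{Mod}_{\textnormal{sm}}(\mathcal{O}/\varpi^m[\Delta_M])$ the forgotten complex $J^\bullet$ is an injective resolution of $\mathcal{I}^{Q\textnormal{-ord}}$ over $M^0$. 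Thus the cohomology of $\Hom_{\mathcal{O}/\varpi^m[M^0]}(\sigma^\vee,J^\bullet)$, which computes $R^i\Hom_{\mathcal{O}/\varpi^m[M^0]}(\sigma^\vee,\mathcal{I}^{Q\textnormal{-ord}})$ in $\textnormal{Mod}(\mathcal{H}(\sigma)^{\Delta_M})$, has underlying $\mathcal{O}/\varpi^m$-module $\textnormal{Ext}^i_{\textnormal{Mod}_{\textnormal{sm}}(\mathcal{O}/\varpi^m[M^0])}(\sigma^\vee,\mathcal{I}^{Q\textnormal{-ord}})$, which vanishes for $i\geq 1$ because $\mathcal{I}^{Q\textnormal{-ord}}$ is $M^0$-injective; since the forgetful functor $\textnormal{Mod}(\mathcal{H}(\sigma)^{\Delta_M})\to\textnormal{Mod}(\mathcal{O}/\varpi^m)$ is faithful and exact, the derived terms themselves vanish, which is the asserted acyclicity. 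The one point needing care — the likely main obstacle — is the passage through the locally Noetherian category: one must ensure the diagram defining $\mathcal{I}^{Q\textnormal{-ord}}$ is genuinely a filtered colimit of injectives \emph{in} $\textnormal{Mod}_{\textnormal{sm}}(\mathcal{O}/\varpi^m[M^0])$, i.e.\ with $M^0$-equivariant structure maps, for which the centrality of $z_p$ is essential. Alternatively one can bypass the locally Noetherian property entirely by verifying directly that $R\Hom_{\mathcal{O}/\varpi^m[M^0]}(\sigma^\vee,-)$ commutes with filtered colimits of smooth representations (using that $\sigma^\vee$ is finitely generated over the finite ring $\mathcal{O}/\varpi^m$ and factors through a finite quotient of $M^0$) and then computing $R^i\Hom_{\mathcal{O}/\varpi^m[M^0]}(\sigma^\vee,\mathcal{I}^{Q\textnormal{-ord}})=\varinjlim_{z_p}R^i\Hom_{\mathcal{O}/\varpi^m[M^0]}(\sigma^\vee,\mathcal{I})=0$ for $i\geq 1$ from $M^0$-injectivity of $\mathcal{I}$.
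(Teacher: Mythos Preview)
Your argument is correct and takes a genuinely different route from the paper's. The paper introduces the intermediate submonoid $\Delta_M'^+$ generated by $M^0$ and $z_p$, observes that the Hecke algebras $\mathcal{H}(\Delta_M'^+,M^b)$ are Noetherian, and invokes \cite{ACC23}, Lemma 5.2.7(2) to conclude that $(-)^{Q\textnormal{-ord}}$ \emph{preserves injectives} as a functor $\textnormal{Mod}_{\textnormal{sm}}(\mathcal{O}/\varpi^m[\Delta_M'^+])\to\textnormal{Mod}_{\textnormal{sm}}(\mathcal{O}/\varpi^m[\Delta_M'])$; it then checks that forgetting from $\Delta_M^+$ to $\Delta_M'^+$ preserves injectives by exhibiting an exact left adjoint (freeness of $\mathcal{O}/\varpi^m[\Delta_M^+]$ over $\mathcal{O}/\varpi^m[\Delta_M'^+]$), and finishes as in Lemma~\ref{Lem3.4}. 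You instead forget all the way down to $M^0$ from the outset, unpack $(-)^{Q\textnormal{-ord}}$ as the sequential colimit along multiplication by the central element $z_p$, and use that $\textnormal{Mod}_{\textnormal{sm}}(\mathcal{O}/\varpi^m[M^0])$ is locally Noetherian Grothendieck to preserve injectivity under that colimit. Your approach is more elementary and avoids both the auxiliary monoid $\Delta_M'^+$ and the freeness computation; the paper's approach, on the other hand, yields the slightly stronger intermediate statement that $(-)^{Q\textnormal{-ord}}$ actually preserves injectives at the $\Delta_M'^+$-level, and packages the Noetherian input as a citation to \cite{ACC23}. Your alternative endgame (commuting $R\Hom_{\mathcal{O}/\varpi^m[M^0]}(\sigma^\vee,-)$ with the filtered colimit directly, using finiteness of $\sigma^\vee$) is also valid and perhaps the cleanest of the three.
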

\begin{proof}
    Set $\Delta_M'^+$ to be the monoid generated by $M^0$ and $z_p$. Then, for any integer $b\geq 0$, $\mathcal{H}(\Delta_M'^+,M^b)=\mathcal{O}/\varpi^m[\Delta_M'^+/M^b]\cong \mathcal{O}/\varpi^m[M^0/M^b][z_p]$ is a polynomial ring over the Noetherian ring $\mathcal{O}/\varpi^m[M^0/M^b]$. In particular, localisation along $\mathcal{H}(\Delta_M'^+,M^b)\hookrightarrow \mathcal{H}(\Delta_M',M^b)\cong \mathcal{O}/\varpi^m[M^0/M^b][z_p^{\pm 1}]$ preserves injectives for every integer $b\geq 0$.\footnote{Indeed, this is true for arbitrary localisation of the form $R[x]\hookrightarrow R[x^{\pm1}]$ for $R$ a (not necessarily commutative) left Noetherian ring.} Therefore, the proof of \cite{ACC23}, Lemma 5.2.7, (2) applies and we get that $(-)^{Q\textnormal{-ord}}$, as a functor
    \begin{equation*}
        \textnormal{Mod}_{\textnormal{sm}}(\mathcal{O}/\varpi^m[\Delta_M'^+])\to \textnormal{Mod}_{\textnormal{sm}}(\mathcal{O}/\varpi^m[\Delta_M']),
    \end{equation*}
    preserves injectives. 
    
    We claim that the forgetful functors 
    \begin{equation*}
        \alpha:\textnormal{Mod}_{\textnormal{sm}}(\mathcal{O}/\varpi^m[\Delta_M^+])\to \textnormal{Mod}_{\textnormal{sm}}(\mathcal{O}/\varpi^m[\Delta_M'^+]),
    \end{equation*}
    \begin{equation*}
        \beta:\textnormal{Mod}_{\textnormal{sm}}(\mathcal{O}/\varpi^m[\Delta_M])\to \textnormal{Mod}_{\textnormal{sm}}(\mathcal{O}/\varpi^m[\Delta_M'])
    \end{equation*} preserve injectives. Note that once this is verified, running the argument of the second part of the proof of Lemma~\ref{Lem3.4} with $\gamma$ being the forgetful functor $\textnormal{Mod}(\mathcal{H}(\sigma)^{\Delta_M})\to \textnormal{Mod}(\mathcal{H}(\sigma)^{\Delta_M'})$ would allow us to conclude. 
    
    To see that $\alpha$, respectively $\beta$ preserves injectives, we note that it admits the functor $\mathcal{O}/\varpi^m[\Delta_M^+]\otimes_{\mathcal{O}/\varpi^m[\Delta_M'^+]}-$, respectively $\mathcal{O}/\varpi^m[\Delta_M]\otimes_{\mathcal{O}/\varpi^m[\Delta_M']}-$ as a left adjoint. Therefore, it's enough to see that the latter functors are exact. This follows from the fact that $\mathcal{O}/\varpi^m[\Delta_M^+]$, respectively $\mathcal{O}/\varpi^m[\Delta_M]$ is free as a right $\mathcal{O}/\varpi^m[\Delta_M'^+]$- , respectively $\mathcal{O}/\varpi^m[\Delta_M']$-module with set of generators given by a set of representatives of $\Delta_M/\Delta_M'$.
\end{proof}
Our other candidate for ordinary parts of some $\pi \in D^+_{\textnormal{sm}}(\mathcal{O}/\varpi^m[\Delta_Q])$ is the composition $R\Hom_{\mathcal{O}/\varpi^m[M^0\ltimes N^0]}(\sigma^{\vee},\pi)^{Q\textnormal{-ord}}$. Using Lemma~\ref{Lem3.4} and Lemma~\ref{Lem3.5} we see that the two candidates in fact coincide.
\begin{Cor}
    Given $\pi\in D^+_{\textnormal{sm}}(\mathcal{O}/\varpi^m[\Delta_Q])$, we have a natural isomorphism
    \begin{equation*}
        R\Hom_{\mathcal{O}/\varpi^m[M^0]}(\sigma^{\vee},R\Gamma(N^0,\pi)^{Q\textnormal{-ord}})\cong R\Hom_{\mathcal{O}/\varpi^m[M^0\ltimes N^0]}(\sigma^{\vee},\pi)^{Q\textnormal{-ord}}
    \end{equation*}
    in $D^+(\mathcal{H}(\sigma)^{\Delta_M})$.
\end{Cor}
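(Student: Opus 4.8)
The plan is to deduce the corollary from Lemma~\ref{Lem3.4} by applying the exact localisation functor $(-)^{Q\textnormal{-ord}}$ and then interchanging this localisation with the functor $R\Hom_{\mathcal{O}/\varpi^m[M^0]}(\sigma^{\vee},-)$, the latter interchange being controlled by Lemma~\ref{Lem3.5}.

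First I would record that $[z_p,\textnormal{id}]$ is a central element of $\mathcal{H}(\sigma)^{\Delta_M^+}$ and that $\mathcal{H}(\sigma)^{\Delta_M}=\mathcal{H}(\sigma)^{\Delta_M^+}[[z_p,\textnormal{id}]^{-1}]$, so that $(-)^{Q\textnormal{-ord}}\colon \textnormal{Mod}(\mathcal{H}(\sigma)^{\Delta_M^+})\to \textnormal{Mod}(\mathcal{H}(\sigma)^{\Delta_M})$ is localisation at a central multiplicative set, hence exact; consequently it passes to a functor $D^+(\mathcal{H}(\sigma)^{\Delta_M^+})\to D^+(\mathcal{H}(\sigma)^{\Delta_M})$ computed termwise, which carries isomorphisms to isomorphisms. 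Applying it to the isomorphism of Lemma~\ref{Lem3.4} gives a natural isomorphism
\begin{equation*}
    R\Hom_{\mathcal{O}/\varpi^m[M^0]}(\sigma^{\vee}, R\Gamma(N^0,\pi))^{Q\textnormal{-ord}}\cong R\Hom_{\mathcal{O}/\varpi^m[M^0\ltimes N^0]}(\sigma^{\vee},\pi)^{Q\textnormal{-ord}}.
\end{equation*}
The task then reduces to producing, for any $X\in D^+_{\textnormal{sm}}(\mathcal{O}/\varpi^m[\Delta_M^+])$, a natural isomorphism $R\Hom_{\mathcal{O}/\varpi^m[M^0]}(\sigma^{\vee},X)^{Q\textnormal{-ord}}\cong R\Hom_{\mathcal{O}/\varpi^m[M^0]}(\sigma^{\vee},X^{Q\textnormal{-ord}})$ and specialising to $X=R\Gamma(N^0,\pi)$.

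To build this isomorphism I would first treat a single smooth module $V$: the $[z_p,\textnormal{id}]$-action on $\Hom_{\mathcal{O}/\varpi^m[M^0]}(\sigma^{\vee},V)$ is the one induced by multiplication by $z_p$ on $V$, and since $\sigma$ is finite free over $\mathcal{O}/\varpi^m$ (so that $\sigma^{\vee}$ is finitely presented over a finite-level group algebra $\mathcal{O}/\varpi^m[M^0/M^b]$ and, by smoothness, invariants under the open subgroup trivialising $\sigma$ also commute with filtered colimits), the functor $\Hom_{\mathcal{O}/\varpi^m[M^0]}(\sigma^{\vee},-)$ commutes with filtered colimits of smooth modules. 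As $(-)^{Q\textnormal{-ord}}$ is precisely the filtered colimit along multiplication by $z_p$, the canonical map $\Hom_{\mathcal{O}/\varpi^m[M^0]}(\sigma^{\vee},V)^{Q\textnormal{-ord}}\to\Hom_{\mathcal{O}/\varpi^m[M^0]}(\sigma^{\vee},V^{Q\textnormal{-ord}})$ is then an isomorphism, and its $\mathcal{H}(\sigma)^{\Delta_M}$-equivariance is a direct unwinding of the explicit Hecke formulas of \S\ref{sec3.1}. Next I would choose an injective resolution $X\simeq \mathcal{I}^{\bullet}$ in $\textnormal{Mod}_{\textnormal{sm}}(\mathcal{O}/\varpi^m[\Delta_M^+])$ and apply the module-level isomorphism degreewise to obtain $\Hom_{\mathcal{O}/\varpi^m[M^0]}(\sigma^{\vee},\mathcal{I}^{\bullet})^{Q\textnormal{-ord}}\cong \Hom_{\mathcal{O}/\varpi^m[M^0]}(\sigma^{\vee},(\mathcal{I}^{\bullet})^{Q\textnormal{-ord}})$; by Lemma~\ref{Lem3.5} the complex $(\mathcal{I}^{\bullet})^{Q\textnormal{-ord}}$ consists of $\Hom_{\mathcal{O}/\varpi^m[M^0]}(\sigma^{\vee},-)$-acyclic objects, and by exactness of $(-)^{Q\textnormal{-ord}}$ it represents $X^{Q\textnormal{-ord}}$ in $D^+_{\textnormal{sm}}(\mathcal{O}/\varpi^m[\Delta_M])$, so the right-hand side computes $R\Hom_{\mathcal{O}/\varpi^m[M^0]}(\sigma^{\vee},X^{Q\textnormal{-ord}})$ by \cite{We94}, Corollary 10.8.3. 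Combining with the displayed isomorphism above then yields the claim.

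The step I expect to be the main obstacle is this interchange of $\Hom_{\mathcal{O}/\varpi^m[M^0]}(\sigma^{\vee},-)$ with the localisation $(-)^{Q\textnormal{-ord}}$ at the level of modules: one must argue both that the $\Hom$ commutes with the relevant filtered colimit — which rests on the finiteness of $\sigma$ together with the fact that taking invariants under the open subgroup trivialising $\sigma$ commutes with filtered colimits of smooth modules — and that the resulting isomorphism is $\mathcal{H}(\sigma)^{\Delta_M}$-linear, which requires matching it against the Hecke action defined via the formulas of \S\ref{sec3.1}. Once this module-level statement is in place, Lemmas~\ref{Lem3.4} and~\ref{Lem3.5} together with exactness of $(-)^{Q\textnormal{-ord}}$ make the rest of the argument formal.
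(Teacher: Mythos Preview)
Your proposal is correct and follows essentially the same approach as the paper: both reduce to interchanging $(-)^{Q\textnormal{-ord}}$ with $R\Hom_{\mathcal{O}/\varpi^m[M^0]}(\sigma^{\vee},-)$ using exactness of the localisation together with Lemma~\ref{Lem3.5}, and then invoke Lemma~\ref{Lem3.4}. The only difference is packaging: the paper records the interchange as a chain of isomorphisms of derived functors $R\Hom\circ(-)^{Q\textnormal{-ord}}\cong R(\Hom\circ(-)^{Q\textnormal{-ord}})\cong R((-)^{Q\textnormal{-ord}}\circ\Hom)\cong(-)^{Q\textnormal{-ord}}\circ R\Hom$ and defers the underived commutativity to the argument of \cite{ACC23}, Lemma~5.2.6, whereas you spell out this underived step directly via the filtered-colimit argument.
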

\begin{proof}
    Exactness of $(-)^{Q\textnormal{-ord}}$, Lemma~\ref{Lem3.5}, and an argument just as in the proof of \cite{ACC23}, Lemma 5.2.6 implies that we have
    \begin{equation*}
        R\Hom_{\mathcal{O}/\varpi^m[M^0]}(\sigma^{\vee},-)\circ (-)^{Q\textnormal{-ord}}\cong
    \end{equation*}
    \begin{equation*}
         R(\Hom_{\mathcal{O}/\varpi^m[M^0]}(\sigma^{\vee},-)\circ (-)^{Q\textnormal{-ord}})\cong
    \end{equation*}
    \begin{equation*}
         R((-)^{Q\textnormal{-ord}}\circ\Hom_{\mathcal{O}/\varpi^m[M^0]}(\sigma^{\vee},-))\cong
    \end{equation*}
    \begin{equation*}
         (-)^{Q\textnormal{-ord}}\circ R\Hom_{\mathcal{O}/\varpi^m[M^0]}(\sigma^{\vee},-).
    \end{equation*}
    In particular, for $\pi\in D^+_{\textnormal{sm}}(\mathcal{O}/\varpi^m[\Delta_Q])$, we get a natural isomorphism
    \begin{equation*}
        R\Hom_{\mathcal{O}/\varpi^m[M^0]}(\sigma^{\vee},R\Gamma(N^0,\pi)^{Q\textnormal{-ord}})\cong R\Hom_{\mathcal{O}/\varpi^m[M^0]}(\sigma^{\vee},R\Gamma(N^0,\pi))^{Q\textnormal{-ord}}.
    \end{equation*}
   As a consequence, Lemma~\ref{Lem3.4} implies that there is a natural isomorphism
   \begin{equation*}
       R\Hom_{\mathcal{O}/\varpi^m[M^0]}(\sigma^{\vee},R\Gamma(N^0,\pi)^{Q\textnormal{-ord}})\cong R\Hom_{\mathcal{O}/\varpi^m[M^0\ltimes N^0]}(\sigma^{\vee},\pi)^{Q\textnormal{-ord}}.
   \end{equation*}
   \end{proof}
We now further assume that the action of $M^1$ on $\sigma$ is trivial. Note that it is not a serious restriction since $\sigma$ is assumed to be finite and free over $\mathcal{O}/\varpi^m$ and, by the second paragraph of Example~\ref{ex3.1}, we are free to change $M^1$ to be a smaller compact open subgroup so that it acts trivially on $\sigma$. For $c\geq 1$, set $\widetilde{\sigma}$ to be the smooth $\mathcal{O}/\varpi^m[\mathcal{Q}(0,c)]$-module defined by the map $\mathcal{Q}(0,c)\to \mathcal{Q}(0,c)/\mathcal{Q}(c,c)\cong M^0/M^c$ i.e., for $\Bar{n}mn\in \mathcal{Q}(0,c)$, we set $\widetilde{\sigma}(\Bar{n}mn)=\sigma(m)$. Consider the subalgebras $\mathcal{H}(\widetilde{\sigma})^{\Delta_{\mathcal{Q}}(c)}\subset \mathcal{H}(\widetilde{\sigma})^+\subset \mathcal{H}(\widetilde{\sigma}) $ generated by functions supported on $\Delta_{\mathcal{Q}}(c)$ and $\mathcal{Q}^+(0,c)$, respectively. We then have the following observation.
\begin{Lemma}\label{Lem3.7}
    For any $c\geq 1$, there is an isomorphism of algebras
    \begin{equation*}
        t_c:\mathcal{H}(\sigma)^{\Delta_M^+}\to \mathcal{H}(\widetilde{\sigma})^{\Delta_{\mathcal{Q}}(c)}
    \end{equation*}
    such that, for any $\pi \in \textnormal{Mod}_{\textnormal{sm}}(\mathcal{O}/\varpi^m[\Delta_{\mathcal{Q}}(c)])$,
    the inclusion
    \begin{equation*}
        \Hom_{\mathcal{O}/\varpi^m[\mathcal{Q}(0,c)]}(\widetilde{\sigma}^{\vee}, \pi)\hookrightarrow \Hom_{\mathcal{O}/\varpi^m[M^0]}(\sigma^{\vee},\Gamma(N^0,\pi))
    \end{equation*}
    intertwines the action of $\mathcal{H}(\sigma)^{\Delta_M^+}$ on the source via $t_c$ with the action of $\mathcal{H}(\sigma)^{\Delta_M^+}$ on the target.
\end{Lemma}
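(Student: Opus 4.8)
The plan is to build $t_c$ directly on the standard double-coset generators of the two Hecke algebras and to deduce both assertions — that $t_c$ is an algebra isomorphism, and that the displayed inclusion is equivariant — from a single coset computation, modelled on the proof of the untwisted statement \cite{BK98}, Corollary 6.12 (the version already invoked in \S\ref{sec2.8}).

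First I would record the elementary observation that $\widetilde{\sigma}$ and $\sigma$ have the \emph{same} underlying $\mathcal{O}/\varpi^m$-module: the map $\mathcal{Q}(0,c)\to \mathcal{Q}(0,c)/\mathcal{Q}(c,c)\cong M^0/M^c$ restricts to the quotient $M^0\to M^0/M^c$ on the Levi factor of the Iwahori decomposition, and $M^c\subset M^1$ acts trivially on $\sigma$ by assumption; hence $\textnormal{End}_{\mathcal{O}/\varpi^m}(\sigma)=\textnormal{End}_{\mathcal{O}/\varpi^m}(\widetilde{\sigma})$ and $\sigma^{\vee}=\widetilde{\sigma}^{\vee}$ compatibly with the $M^0$-actions. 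Recall from \S\ref{sec2.3} and \S\ref{sec3.1} that $\mathcal{H}(\sigma)^{\Delta_M^+}$ is spanned by elements $[m,\psi]$ with $m\in \Delta_M^+$ and $\psi\in \textnormal{End}_{\mathcal{O}/\varpi^m}(\sigma)$ satisfying $\sigma(k)\circ \psi=\psi\circ \sigma(m^{-1}km)$ for all $k\in M^0\cap mM^0m^{-1}$, while $\mathcal{H}(\widetilde{\sigma})^{\Delta_{\mathcal{Q}}(c)}$ is spanned by $[g,\widetilde{\psi}]$ with $g\in \Delta_{\mathcal{Q}}(c)=\overline{N}^c\Delta_M^+ N^0$. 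Since $\overline{N}^c,N^0\subset \mathcal{Q}(0,c)$, every such double coset equals $\mathcal{Q}(0,c)m\mathcal{Q}(0,c)$ for some $m\in \Delta_M^+$, and I would set $t_c([m,\psi])$ to be the element of $\mathcal{H}(\widetilde{\sigma})^{\Delta_{\mathcal{Q}}(c)}$ supported on $\mathcal{Q}(0,c)m\mathcal{Q}(0,c)$ with value $\psi$ at $m$.

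Next comes well-definedness of $t_c$ on generators, i.e. the intertwining relation for $\widetilde{\sigma}$. Given $k\in \mathcal{Q}(0,c)\cap m\mathcal{Q}(0,c)m^{-1}$, write $k=\bar{n}m'n$ in its Iwahori decomposition with $\bar n\in\overline N^c$, $m'\in M^0$, $n\in N^0$. Then $m^{-1}km=(m^{-1}\bar{n}m)(m^{-1}m'm)(m^{-1}nm)$ lies in $\overline{N}(L)\cdot M(L)\cdot N(L)$ (as $M(L)$ normalises $\overline N$ and $N$) and also in $\mathcal{Q}(0,c)\subset \overline{N}^cM^0N^0$; by uniqueness of the big-cell factorisation the three factors agree with the Iwahori components of $m^{-1}km$, so $m^{-1}m'm\in M^0$ and $m^{-1}nm\in N^0$. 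In particular $m'\in M^0\cap mM^0m^{-1}$, $\widetilde{\sigma}(k)=\sigma(m')$ and $\widetilde{\sigma}(m^{-1}km)=\sigma(m^{-1}m'm)$, so the required relation is exactly \eqref{eq2.1} for $[m,\psi]$. Running this in reverse (a double coset $\mathcal{Q}(0,c)m\mathcal{Q}(0,c)$ maps back to $M^0mM^0$ with the same endomorphism, and distinct $M^0$-double cosets in $\Delta_M^+$ stay distinct after inflation, by applying the Levi projection) exhibits an inverse of $t_c$ on generators; hence $t_c$ is at least an $\mathcal{O}/\varpi^m$-module isomorphism.

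Finally, multiplicativity of $t_c$ and equivariance of the inclusion both reduce to the same coset bookkeeping. By the definition of $M^+$ and Hypothesis~\ref{hyp3.2} we have $mN^0m^{-1}\subset N^0$ and $m^{-1}\overline{N}^cm\subset \overline{N}^c$ for $m\in\Delta_M^+$, so $\mathcal{Q}(0,c)\cap m\mathcal{Q}(0,c)m^{-1}=\overline{N}^c\,(M^0\cap mM^0m^{-1})\,mN^0m^{-1}$, and one obtains left-coset representatives
\[\mathcal{Q}(0,c)\,m\,\mathcal{Q}(0,c)=\coprod_{n,\mu}\; n\mu m\,\mathcal{Q}(0,c),\]
with $n$ running over $N^0/mN^0m^{-1}$ and $\mu$ over representatives of $M^0/(M^0\cap mM^0m^{-1})$, i.e. over $M^0mM^0/M^0$ on the Levi (this is where one uses that the elements of $\Delta_M^+$ are $\mathcal{Q}(0,c)$-positive, cf. \cite{CN23}, Lemma 2.1.15). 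Feeding these representatives into the explicit formula of \S\ref{sec2.3} for the action of $[m,\psi]=t_c([m,\psi])$ on $\Hom_{\mathcal{O}/\varpi^m[\mathcal{Q}(0,c)]}(\widetilde{\sigma}^{\vee},\pi)$ and comparing with the formula of Lemma~\ref{Lem3.4} for the action of $[m,\psi]$ on $\Hom_{\mathcal{O}/\varpi^m[M^0]}(\sigma^{\vee},\Gamma(N^0,\pi))$ — in which the sum over $N^0/mN^0m^{-1}$ is precisely the one built into the $\Delta_M^+$-action on $\Gamma(N^0,-)$, and in which the factors $\bar n,n$ are discarded using $\widetilde{\sigma}(\bar n)=\widetilde{\sigma}(n)=\textnormal{id}$ — the two expressions agree term by term. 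This yields the asserted equivariance; multiplicativity of $t_c$ then follows formally by applying equivariance to a $\pi$ on which $\mathcal{H}(\widetilde{\sigma})^{\Delta_{\mathcal{Q}}(c)}$ acts faithfully (e.g. a suitable compact induction), or by rerunning the same computation on a product of two generators. The main obstacle is exactly this last step: matching the representatives of $\mathcal{Q}(0,c)m\mathcal{Q}(0,c)/\mathcal{Q}(0,c)$ with those of $M^0mM^0/M^0$ together with the $N^0$-orbit sum, which requires careful use of the Iwahori decomposition and of positivity to commute the $\overline{N}^c$- and $N^0$-factors past $m$; everything else is bookkeeping, with \cite{BK98}, Corollary 6.12 as the untwisted prototype. $\hfill\blacksquare$
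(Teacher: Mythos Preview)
Your proof is correct and follows essentially the same approach as the paper: you define $t_c$ on the generators $[m,\psi]$ by extending the support from $M^0mM^0$ to $\mathcal{Q}(0,c)m\mathcal{Q}(0,c)$, verify the intertwining relation via the Iwahori decomposition, establish equivariance by the coset computation $\mathcal{Q}(0,c)/(\mathcal{Q}(0,c)\cap m\mathcal{Q}(0,c)m^{-1})\cong (M^0/(M^0\cap mM^0m^{-1}))\times (N^0/mN^0m^{-1})$, and then deduce multiplicativity from the faithful action on the compact induction $\textnormal{c-Ind}_{\mathcal{Q}(0,c)}^{G(L)}\widetilde{\sigma}^{\vee}$. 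The paper's argument is identical in outline, only phrased a bit more tersely at the well-definedness step.
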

\begin{proof}
    Note that $\mathcal{H}(\sigma)^{\Delta_M^+}$ is freely generated as an $\mathcal{O}/\varpi^m$-module by elements of the form $[m,\psi]$ with $m\in \Delta_M^+$ running through any choice of set of representatives for $M^0\setminus \Delta_M^+/M^0$ and $\psi$ is an element of $\textnormal{End}_{\mathcal{O}/\varpi^m}(\sigma)$ intertwining $m$. Similarly,  $\mathcal{H}(\widetilde{\sigma})^{\Delta_{\mathcal{Q}}(c)}$ is freely generated as an $\mathcal{O}/\varpi^m$-module by elements of the form $[q,\widetilde{\psi}]$ with $q\in \Delta_{\mathcal{Q}}(c)$ running through any choice of set of representatives for $\mathcal{Q}(0,c)\setminus \Delta_{\mathcal{Q}}(c) /\mathcal{Q}(0,c)$ and $\widetilde{\psi}$ is an element of $\textnormal{End}_{\mathcal{O}/\varpi^m}(\widetilde{\sigma})$ intertwining $q$.
    We definte $t_c$ by sending a generator $[m,\psi]\in \mathcal{H}(\sigma)^{\Delta_M^+}$ to the function that is supported on $\mathcal{Q}(0,c)m\mathcal{Q}(0,c)$ and sends $m$ to $\psi$ regarded as an element of $\textnormal{End}_{\mathcal{O}/\varpi^m}(\widetilde{\sigma})$. In other words, $t_c([m,\psi])=[m,\psi]\in \mathcal{H}(\widetilde{\sigma})$. 
    
    We check that $t_c([m,\psi])$ indeed lies in $\mathcal{H}(\widetilde{\sigma})^{\Delta_{\mathcal{Q}}(c)}$. To do so, we pick $k\in \mathcal{Q}(0,c)\cap m\mathcal{Q}(0,c) m^{-1}$ and verify that $\widetilde{\sigma}(k)\circ \psi = \psi \circ \widetilde{\sigma}(m^{-1}km)$. Write $k=nh\Bar{n}=mn'h'\Bar{n}'m^{-1}=(mn'm^{-1})(mh'm^{-1})(m\Bar{n}'m^{-1})$ for $n,n'\in N^0$, $h,h'\in M^0$, and $\Bar{n},\Bar{n}'\in \overline{N}^c$. Using the Iwahori decomposition for $\mathcal{Q}(0,c)$, it is easy to see that we must have $n=mn'm^{-1}$, $h=mh'm^{-1}$, and $\Bar{n}=m\Bar{n}'m^{-1}$. In particular, we have
    \begin{equation*}
        \psi\circ\widetilde{\sigma}(m^{-1}km)=\psi\circ \widetilde{\sigma}(m^{-1}nmm^{-1}hmm^{-1}\Bar{n}m)=
    \end{equation*}
    \begin{equation*}
        \psi\circ \widetilde{\sigma}(n'h'\Bar{n}')=\psi\circ\sigma(h')=
    \end{equation*}
    \begin{equation*}
        \sigma(h)\circ \psi=\widetilde{\sigma}(nh\Bar{n})\circ\psi=
    \end{equation*}
    \begin{equation*}
        \widetilde{\sigma}(k)\circ\psi.
    \end{equation*}
    
    We further claim that $t_c$ yields an isomorphism $t_c:\mathcal{H}(\sigma)^{\Delta_M^+}\xrightarrow{\sim}\mathcal{H}(\widetilde{\sigma})^{\Delta_{\mathcal{Q}}(c)}$ of $\mathcal{O}/\varpi^m$-modules. To see this, we have to prove that
    \begin{enumerate}
        \item the map $M^0\setminus \Delta_M^+/M^0\to \mathcal{Q}(0,c)\setminus \Delta_{\mathcal{Q}}(c)/\mathcal{Q}(0,c)$ is bijective, and
        \item for $m\in \Delta_M^+$, $[m,\psi]\in \mathcal{H}(\sigma)$ if and only if $[m,\psi]\in \mathcal{H}(\widetilde{\sigma})$ (where in the latter case we treat $\psi$ as an element of $\textnormal{End}_{\mathcal{O}/\varpi^m}(\widetilde{\sigma})$).
    \end{enumerate}
    The first claim follows from the observation that every $q_1mq_2\in\Delta_{\mathcal{Q}}(c)$ can be written uniquely as $nm'\overline{n}'$ for $n\in N^0$, $m'\in M^0mM^0$, and $\overline{n}\in \overline{N}^c$. This further follows from the existence of an Iwahori decomposition for $\mathcal{Q}(0,c)$ and the fact that $\Delta_M^+\subset M^+$. The "only if" direction of the second claim is exactly the well-definedness of $t_c$ that we already have checked. The other direction follows from the inclusion $M^0\cap mM^0m^{-1}\subset \mathcal{Q}(0,c)\cap m\mathcal{Q}(0,c) m^{-1}$.

    Before proving that $t_c$ also respects the algebra structure, we check the last claim. To do this, we pick an element $\phi\in \Hom_{\mathcal{O}/\varpi^m[\mathcal{Q}(0,c)]}(\widetilde{\sigma}^{\vee},\pi)\subset \Hom_{\mathcal{O}/\varpi^m[M^0]}(\sigma^{\vee},\Gamma(N^0,\pi))$, and compute, first the action of $[m,\psi]$,
    \begin{equation*}
        [m,\psi]\cdot \phi :v\mapsto \sum_{\Tilde{m}\in M^0/mM^0m^{-1}\cap M^0}\pi^{N^0}(\Tilde{m}m)\phi(\psi^t\circ \sigma^{\vee}(\Tilde{m}^{-1})v)=
    \end{equation*}
    \begin{equation*}
        \sum_{\Tilde{m}\in M^0/mM^0m^{-1}\cap M^0}\pi(\Tilde{m})\sum_{n\in N^0/mN^0m^{-1}}\pi(nm)\phi(\psi^t\circ\widetilde{\sigma}^{\vee}(n^{-1}\Tilde{m}^{-1})v)=
    \end{equation*}
    \begin{equation*}
        \sum_{(\Tilde{m},n)\in (M^0/mM^0m^{-1}\cap M^0)\times (N^0/mN^0m^{-1})}\pi(\Tilde{m}nm)\phi(\psi^t\circ \widetilde{\sigma}^{\vee}(n^{-1}\Tilde{m}^{-1})v).
    \end{equation*}
    On the other hand, we have
    \begin{equation*}
        t_c([m,\psi])\cdot \phi: v\mapsto \sum_{q\in \mathcal{Q}(0,c)/\mathcal{Q}(0,c)\cap m\mathcal{Q}(0,c)m^{-1}}\pi(qm)\phi( \psi^{t}\circ \widetilde{\sigma}^{\vee}(q^{-1})v).
    \end{equation*}
Therefore, it suffices to prove that the inclusion $M^0\ltimes N^0\hookrightarrow \mathcal{Q}(0,c)$ descends to a bijection
\begin{equation*}
    (M^0/mM^0m^{-1}\cap M^0)\times (N^0/mN^0m^{-1})\cong \mathcal{Q}(0,c)/\mathcal{Q}(0,c)\cap m\mathcal{Q}(0,c)m^{-1}.
\end{equation*}
This follows from the Iwahori decomposition, the fact that $m\in \Delta_M^+\subset M^+$ and \ref{eq3.1}.

We now easily deduce that $t_c$ respects the algebra structure. Indeed, note that if we set $\pi:=\textnormal{c-Ind}_{\mathcal{Q}(0,c)}^{G(L)}\widetilde{\sigma}^{\vee}$, the action of $\mathcal{H}(\widetilde{\sigma})^{\Delta_{\mathcal{Q}(0,c)}}$ on $\pi$ gives an embedding \begin{equation*}
    \mathcal{H}(\widetilde{\sigma})^{\Delta_{\mathcal{Q}(0,c)}}\hookrightarrow \Hom_{\mathcal{O}/\varpi^m[\mathcal{Q}(0,c)]}(\widetilde{\sigma}^{\vee},\pi).
\end{equation*} In particular, $t_c$ must be an algebra homomorphism as, by the previous paragraph, $t_c$ yields an algebra action of $\mathcal{H}(\sigma)^{\Delta_M^+}$ on $\Hom_{\mathcal{O}/\varpi^m[\mathcal{Q}(0,c)]}(\widetilde{\sigma}^{\vee},\pi)$, that factors through the faithful algebra action of the target of $t_c$.
\end{proof}
As a consequence of Lemma~\ref{Lem3.7}, for every $c\geq 1$, we get a left exact functor
\begin{equation*}
    \Hom_{\mathcal{O}/\varpi^m[\mathcal{Q}(0,c)]}(\widetilde{\sigma}^{\vee},-):\textnormal{Mod}_{\textnormal{sm}}(\mathcal{O}/\varpi^m[\Delta_{\mathcal{Q}}])\to \textnormal{Mod}(\mathcal{H}(\sigma)^{\Delta_M^+}).
\end{equation*}

\begin{Lemma}
    For any $c\geq 1$, there is a natural isomorphism
    \begin{equation*}
        \Hom_{\mathcal{O}/\varpi^m[\mathcal{Q}(0,c)]}(\widetilde{\sigma}^{\vee},-)^{Q\textnormal{-ord}}\cong \Hom_{\mathcal{O}/\varpi^m[M^0\ltimes N^0]}(\sigma^{\vee},-)^{Q\textnormal{-ord}}
    \end{equation*}
    of functors
    \begin{equation*}
        \textnormal{Mod}_{\textnormal{sm}}(\mathcal{O}/\varpi^m[\Delta_{\mathcal{Q}}])\to \textnormal{Mod}(\mathcal{H}(\sigma)^{\Delta_M}).
    \end{equation*}
\end{Lemma}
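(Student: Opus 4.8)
The plan is to deduce this lemma by combining the finite-level comparison of Lemma~\ref{Lem3.7} with the already-established identifications of the previous corollaries, after passing from the $\Delta_M^+$-level to the $\Delta_M$-level via localisation. First I would fix $c\geq 1$ and recall the chain of functors at hand: by Lemma~\ref{Lem3.4} we have a natural isomorphism $\Hom_{\mathcal{O}/\varpi^m[M^0]}(\sigma^{\vee},-)\circ\Gamma(N^0,-)\cong\Hom_{\mathcal{O}/\varpi^m[M^0\ltimes N^0]}(\sigma^{\vee},-)$ of functors $\textnormal{Mod}_{\textnormal{sm}}(\mathcal{O}/\varpi^m[\Delta_Q])\to\textnormal{Mod}(\mathcal{H}(\sigma)^{\Delta_M^+})$, while Lemma~\ref{Lem3.7} provides, via the algebra isomorphism $t_c$, a natural inclusion $\Hom_{\mathcal{O}/\varpi^m[\mathcal{Q}(0,c)]}(\widetilde{\sigma}^{\vee},-)\hookrightarrow\Hom_{\mathcal{O}/\varpi^m[M^0]}(\sigma^{\vee},\Gamma(N^0,-))$ of functors on $\textnormal{Mod}_{\textnormal{sm}}(\mathcal{O}/\varpi^m[\Delta_{\mathcal{Q}}(c)])$, equivariant for the $\mathcal{H}(\sigma)^{\Delta_M^+}$-actions.

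The key step is then to show that this inclusion becomes an \emph{isomorphism} after applying the localisation functor $(-)^{Q\textnormal{-ord}}$, i.e. after inverting the central element $[z_p,\textnormal{id}]$. Concretely, for $\pi\in\textnormal{Mod}_{\textnormal{sm}}(\mathcal{O}/\varpi^m[\Delta_{\mathcal{Q}}])$ and $\phi\in\Hom_{\mathcal{O}/\varpi^m[M^0]}(\sigma^{\vee},\Gamma(N^0,\pi))$, I want to argue that $[z_p,\textnormal{id}]^k\cdot\phi$ lands in the subspace of $\mathcal{Q}(0,c)$-equivariant maps for $k$ sufficiently large. This is the standard "ordinary parts kill the extra invariance" mechanism: since $z_p$ was chosen so that $\{z_p^kN^0z_p^{-k}\}_{k\geq 0}$ is a neighbourhood basis of $0$ in $N$, and dually $z_p^{-k}\overline{N}^1z_p^k$ shrinks, acting by a high power of $[z_p,\textnormal{id}]$ smooths out the difference between being $M^0\ltimes N^0$-equivariant and being $\mathcal{Q}(0,c)=\overline{N}^cM^0N^0$-equivariant, because $\widetilde{\sigma}$ is trivial on $\overline{N}^c$ and $M^c$. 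One checks via the Iwahori decomposition and the explicit Hecke formula for the action of $[z_p,\textnormal{id}]$ (using Hypothesis~\ref{hyp3.2} to control $z_p^{-1}\overline{N}^cz_p$) that the image of $[z_p,\textnormal{id}]\cdot(-)$ already consists of maps factoring through $\widetilde{\sigma}^{\vee}$. Hence the inclusion of Lemma~\ref{Lem3.7}, tensored up to $\mathcal{H}(\sigma)^{\Delta_M}=\mathcal{H}(\sigma)^{\Delta_M^+}[[z_p,\textnormal{id}]^{-1}]$, is surjective, and being injective it is an isomorphism. Combining with Lemma~\ref{Lem3.4} gives the claimed natural isomorphism
\begin{equation*}
    \Hom_{\mathcal{O}/\varpi^m[\mathcal{Q}(0,c)]}(\widetilde{\sigma}^{\vee},-)^{Q\textnormal{-ord}}\cong\Hom_{\mathcal{O}/\varpi^m[M^0\ltimes N^0]}(\sigma^{\vee},-)^{Q\textnormal{-ord}}
\end{equation*}
of functors $\textnormal{Mod}_{\textnormal{sm}}(\mathcal{O}/\varpi^m[\Delta_{\mathcal{Q}}])\to\textnormal{Mod}(\mathcal{H}(\sigma)^{\Delta_M})$, naturality being clear since everything in sight is functorial and the localisation is an exact functor on module categories.

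The main obstacle I anticipate is the bookkeeping in the "smoothing" step: one must verify carefully, using the explicit double-coset description of $[z_p,\textnormal{id}]$ and the compatible Iwahori decompositions $\mathcal{Q}(0,c)=\overline{N}^cM^0N^0$, that a single application of $[z_p,\textnormal{id}]$ (or a bounded power depending only on $c$, not on $\pi$, by smoothness) produces a map invariant under all of $\overline{N}^c$ and not merely under $N^0$; this is where Hypothesis~\ref{hyp3.2} and Remark~\ref{Rem3.1} are essential, and where the analogous arguments of \cite{ACC23}, \S5.2 and \cite{CN23}, \S2.2 for the Borel and Siegel cases should be followed closely. Once that estimate is in place, the rest is a formal consequence of exactness of localisation and the previously established comparisons, so no derived-functor subtleties arise at this (underived, finite-level) stage.
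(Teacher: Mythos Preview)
Your approach is essentially the same as the paper's: localise the $\mathcal{H}(\sigma)^{\Delta_M^+}$-equivariant inclusion of Lemma~\ref{Lem3.7} (combined with Lemma~\ref{Lem3.4}) and prove surjectivity by showing that $[z_p,\textnormal{id}]^k\cdot\phi$ becomes $\mathcal{Q}(0,c)$-equivariant for large $k$; the paper organises this last step by first using smoothness of $\pi$ and finite-freeness of $\sigma^\vee$ to place $\phi$ in $\Hom_{\mathcal{Q}(0,c')}$ for some $c'>c$, then inducting $c'$ down to $c$ citing \cite{Eme10}, Lemma~3.3.1. One small correction to your final paragraph: the power $k$ needed does depend on $\phi$ (through this $c'$), not only on $c$, but this is harmless since surjectivity after inverting $[z_p,\textnormal{id}]$ only requires some $k$ for each given $\phi$.
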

\begin{proof}
    By the exactness of $(-)^{Q\textnormal{-ord}}$, for $\pi \in \textnormal{Mod}_{\textnormal{sm}}(\mathcal{O}/\varpi^m[\Delta_{\mathcal{Q}}])$, we have an inclusion
    \begin{equation}\label{eq3.2}
        \Hom_{\mathcal{O}/\varpi^m[\mathcal{Q}(0,c)]}(\widetilde{\sigma}^{\vee},\pi)^{Q\textnormal{-ord}}\hookrightarrow \Hom_{\mathcal{O}/\varpi^m[M^0\ltimes N^0]}(\sigma^{\vee},\pi)^{Q\textnormal{-ord}}
    \end{equation}
    and, by Lemma~\ref{Lem3.7}, it is $\mathcal{H}(\sigma)^{\Delta_M}$-equivariant. In particular, we are left to check that \ref{eq3.2} is a bijection. Given $\phi\in \Hom_{\mathcal{O}/\varpi^m[M^0\ltimes N^0]}(\sigma^{\vee},\pi)$, by smoothness of $\pi$ and the fact that $\sigma^{\vee}$ is finite free as an $\mathcal{O}/\varpi^m$-module, $\phi$ lies in $\Hom_{\mathcal{O}/\varpi^m[\mathcal{Q}(0,c')]}(\widetilde{\sigma}^{\vee},\pi)$ for some $c'>c$. By induction, it suffices to prove that, for some integer $k\geq 1$,
    \begin{equation*}
        [z_p,\textnormal{id}]^k\cdot \phi\in \Hom_{\mathcal{O}/\varpi^m[\mathcal{Q}(0,c'-1)]}(\widetilde{\sigma}^{\vee},\pi).
    \end{equation*}
    This, for instance, is proved in \cite{Eme10}, Lemma 3.3.1.
\end{proof}
The following Corollary then summarises the observations of the subsection.
\begin{Cor}\label{Cor3.9}
    Given $\pi \in D^+_{\textnormal{sm}}(\mathcal{O}/\varpi^m[\Delta_{\mathcal{Q}}])$, we have natural isomorphisms
    \begin{equation*}
        R\Hom_{\mathcal{O}/\varpi^m[M^0]}(\sigma^{\vee},R\Gamma(N^0,\pi)^{Q\textnormal{-ord}})\cong R\Hom_{\mathcal{O}/\varpi^m[M^0\ltimes N^0]}(\sigma^{\vee},\pi)^{Q\textnormal{-ord}}\cong
    \end{equation*}
    \begin{equation*}
        \cong R\Hom_{\mathcal{O}/\varpi^m[\mathcal{Q}(0,c)]}(\widetilde{\sigma}^{\vee},\pi)^{Q\textnormal{-ord}}
    \end{equation*}
    in $D^+(\mathcal{H}(\sigma)^{\Delta_M})$.
\end{Cor}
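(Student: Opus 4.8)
The plan is to obtain Corollary~\ref{Cor3.9} purely by concatenating results already proved in this subsection; no new ingredient is needed. For the first isomorphism I would simply invoke the Corollary proved after Lemma~\ref{Lem3.5}, applied to the restriction of $\pi$ along the monoid inclusion $\Delta_Q=\Delta_M^+\ltimes N^0\hookrightarrow \Delta_{\mathcal{Q}}$: it already asserts a natural isomorphism $R\Hom_{\mathcal{O}/\varpi^m[M^0]}(\sigma^{\vee},R\Gamma(N^0,\pi)^{Q\textnormal{-ord}})\cong R\Hom_{\mathcal{O}/\varpi^m[M^0\ltimes N^0]}(\sigma^{\vee},\pi)^{Q\textnormal{-ord}}$ in $D^+(\mathcal{H}(\sigma)^{\Delta_M})$.

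For the second isomorphism I would start from the functor-level statement established just before this Corollary: for each $c\geq 1$ there is a natural isomorphism $\Hom_{\mathcal{O}/\varpi^m[\mathcal{Q}(0,c)]}(\widetilde{\sigma}^{\vee},-)^{Q\textnormal{-ord}}\cong \Hom_{\mathcal{O}/\varpi^m[M^0\ltimes N^0]}(\sigma^{\vee},-)^{Q\textnormal{-ord}}$ of functors from $\textnormal{Mod}_{\textnormal{sm}}(\mathcal{O}/\varpi^m[\Delta_{\mathcal{Q}}])$ to $\textnormal{Mod}(\mathcal{H}(\sigma)^{\Delta_M})$. Since $\textnormal{Mod}_{\textnormal{sm}}(\mathcal{O}/\varpi^m[\Delta_{\mathcal{Q}}])$ has enough injectives (\cite{ACC23}, Lemma 5.2.4), both functors admit right derived functors, and a natural isomorphism of additive functors induces one of their right derived functors. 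Finally I would use that $(-)^{Q\textnormal{-ord}}$ is exact to pull it outside the derived functor, i.e.\ $R\bigl((-)^{Q\textnormal{-ord}}\circ F\bigr)\cong (-)^{Q\textnormal{-ord}}\circ RF$ for $F$ equal to either $\Hom_{\mathcal{O}/\varpi^m[\mathcal{Q}(0,c)]}(\widetilde{\sigma}^{\vee},-)$ or $\Hom_{\mathcal{O}/\varpi^m[M^0\ltimes N^0]}(\sigma^{\vee},-)$ (the latter commutation already being used in the proof of the Corollary after Lemma~\ref{Lem3.5}); this identifies the two right derived functors with $R\Hom_{\mathcal{O}/\varpi^m[\mathcal{Q}(0,c)]}(\widetilde{\sigma}^{\vee},\pi)^{Q\textnormal{-ord}}$ and $R\Hom_{\mathcal{O}/\varpi^m[M^0\ltimes N^0]}(\sigma^{\vee},\pi)^{Q\textnormal{-ord}}$, and composing with the first isomorphism gives the statement.

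The only point requiring a little care is that all the isomorphisms must live in $D^+(\mathcal{H}(\sigma)^{\Delta_M})$, not merely be isomorphisms of underlying complexes of $\mathcal{O}/\varpi^m$-modules. This is automatic, because the natural transformations produced by Lemma~\ref{Lem3.4}, Lemma~\ref{Lem3.5} and Lemma~\ref{Lem3.7} are already $\mathcal{H}(\sigma)^{\Delta_M^+}$-linear on the abelian-category level (via the algebra isomorphism $t_c$ and the coset identification \ref{eq3.1}), and after inverting the central element $[z_p,\textnormal{id}]$ they become $\mathcal{H}(\sigma)^{\Delta_M}$-linear, a property preserved by passage to derived functors. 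So I do not expect a genuine obstacle in Corollary~\ref{Cor3.9} itself: the real work of the subsection is the construction of $t_c$ and the Iwahori-decomposition arguments underlying Lemmas~\ref{Lem3.4}, \ref{Lem3.5}, \ref{Lem3.7} together with the two unnumbered lemmas, and Corollary~\ref{Cor3.9} is a formal consequence of these.
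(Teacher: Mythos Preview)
Your proposal is correct and follows the paper's approach: the paper presents Corollary~\ref{Cor3.9} with no separate argument, introducing it simply as a summary of the preceding results, and you have correctly identified the two relevant pieces (the unnumbered Corollary after Lemma~\ref{Lem3.5} for the first isomorphism, and the unnumbered Lemma after Lemma~\ref{Lem3.7} derived via exactness of $(-)^{Q\textnormal{-ord}}$ for the second). The only wrinkle is that the first of these is stated for $\pi\in D^+_{\textnormal{sm}}(\mathcal{O}/\varpi^m[\Delta_Q])$ while Corollary~\ref{Cor3.9} takes $\pi\in D^+_{\textnormal{sm}}(\mathcal{O}/\varpi^m[\Delta_{\mathcal{Q}}])$, so splicing the two requires that the forgetful functor send injectives to acyclics for the relevant $\Hom$-functors; this is the same \cite{ACC23}, Lemma~5.2.4 argument already invoked in Lemmas~\ref{Lem3.4} and~\ref{Lem3.5}, and the paper does not spell it out either.
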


\subsection{$Q$-ordinary Hida theory for $G$}\label{sec3.2} We revisit the setup from \S\ref{sec2.7}. In particular, $F$ will be a CM field and $G=\textnormal{Res}_{\mathcal{O}_F/\mathcal{O}_{F^+}}\textnormal{GL}_n$. Moreover, we fix a tuple $(Q_p,\lambda,\underline{\tau})=(Q_v,\lambda_v,\underline{\tau_v})_{v\in S_p}$ as in \S\ref{sec2.7}. We will work with good subgroups $K\subset G(\mathbf{A}_{F^+}^{\infty})$ with $K^p$ being fixed and $K_p$ of the form 
\begin{equation*}
    \mathcal{Q}_p(b,c)=\prod_{v\in S_p}\mathcal{Q}_v(b,c)\subset \prod_{v\in S_p}\textnormal{GL}_n(\mathcal{O}_{F_v})
\end{equation*} for $c\geq b\geq 0$ with $c\geq c_p$ where $\mathcal{Q}_v(b,c)$ is the parahoric level subgroup corresponding to $Q_v$, just as before. We denote such a good subgroup by $K(b,c)\subset G(\mathbf{A}_{F^{+}}^{\infty})$. Recall that, given a local system $\mathcal{V}_{(\lambda,\underline{\tau})}^{Q_p}$ as in \S\ref{sec2.7}, $\mathcal{H}(\Delta_{\mathcal{Q}_p}(c),\mathcal{Q}_p(b,c))$ acts on $R\Gamma(X_{K(b,c)},\mathcal{V}_{(\lambda,\underline{\tau})}^{Q_p})$ via endomorphisms in $D^+(\mathcal{O})$ (cf. \S\ref{sec2.8}).\footnote{We emphasise that here $\Delta_{\mathcal{Q}_p}(c)$ denotes the monoid introduced in \S2.8.} In particular, for $v\in S_p$, the corresponding $U_p$-operator $U_v^{Q_v}$ acts on the mentioned complex. Following \cite{KT17}, \S2.4, we set
\begin{equation*}
    R\Gamma(X_{K(b,c)},\mathcal{V}_{(\lambda,\underline{\tau})}^{Q_p})^{Q_p\textnormal{-ord}}
\end{equation*}
to be the maximal direct summand of $R\Gamma(X_{K(b,c)},\mathcal{V}_{(\lambda,\underline{\tau})}^{Q_p})$ on which $U_v^{Q_v}$ acts invertibly for each $v\in S_p$. This will then be an object of $D^+(\mathcal{O}[\mathcal{Q}_{p}(0,c)/\mathcal{Q}_{p}(b,c)])$ with an action of the spherical Hecke algebra $\mathbf{T}^T$.

On the other hand, we can apply the formalism of \S\ref{sec3.1} with the parabolic subgroup $Q_p\subset G_p$, compact opens given by $\mathcal{Q}_p(b,c)$, open submonoid of $G_p$ given by $\Delta_{M_p}^+$ (from \S\ref{sec2.8}) and $\sigma$ being the trivial $\Delta_{M_p}^+$-module. Note that in this case $\mathcal{H}(\sigma)^{\Delta_{M_p}^+}=\mathcal{H}(\Delta_{M_p^+},M_p^0)\otimes_{\mathbf{Z}}\mathcal{O}/\varpi^m= \mathcal{O}/\varpi^m[\Delta_{M_p}^+/M_p^0]$, $\mathcal{H}(\sigma)^{\Delta_{M_p}}=\mathcal{O}/\varpi^m[\Delta_{M_p}/M_p^0]$  and $\mathcal{H}(\widetilde{\sigma})^{\Delta_{\mathcal{Q}(c)}}\cong\mathcal{H}(\Delta_{\mathcal{Q}_p},\mathcal{Q}_p(0,c))\otimes_{\mathbf{Z}}\mathcal{O}/\varpi^m$. One notes that, since the cohomology groups of $R\Gamma(X_{K(b,c)},\mathcal{V}_{(\lambda,\underline{\tau})}^{Q_p}/\varpi^m)$ are finite $\mathcal{O}/\varpi^m$-modules, the notions of taking $Q_p$-ordinary parts in the sense of \cite{KT17} and in the sense of \cite{Eme10} coincide. For an argument see the proof of \cite{ACC23}, Proposition 5.2.15. In other words, we have the following.
\begin{Lemma}\label{Lem3.10}    For any $m\geq 1$, $c\geq b\geq 0$ with $c\geq c_p$, there is a natural $\mathbf{T}^T$-equivariant isomorphism
    \begin{equation*}
        R\Gamma(X_{K(b,c)},\mathcal{V}_{(\lambda,\underline{\tau})}^{Q_p}/\varpi^m)^{Q_p\textnormal{-ord}}\xrightarrow{\sim}R\Gamma(\mathcal{Q}_p(b,c),\pi(K^p,\mathcal{V}_{(\lambda,\underline{\tau})}^{Q_p}/\varpi^m))^{Q_p\textnormal{-ord}}
    \end{equation*}
    in $D^+(\mathcal{O}/\varpi^m[M_p^0/M_p^b])$, induced by the natural isomorphism
    \begin{equation*}
        R\Gamma(X_{K(b,c)},\mathcal{V}_{(\lambda,\underline{\tau})}^{Q_p}/\varpi^m)\cong R\Gamma(\mathcal{Q}_p(b,c),\pi(K^p,\mathcal{V}_{(\lambda,\underline{\tau})}^{Q_p}/\varpi^m)).
    \end{equation*}
\end{Lemma}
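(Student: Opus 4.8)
The plan is to obtain the $Q_p$-ordinary isomorphism by applying the \emph{same} ``cut out the part where the $U_p$-operators act invertibly'' procedure to both sides of the natural isomorphism
\[
R\Gamma(X_{K(b,c)},\mathcal{V}_{(\lambda,\underline{\tau})}^{Q_p}/\varpi^m)\cong R\Gamma(\mathcal{Q}_p(b,c),\pi(K^p,\mathcal{V}_{(\lambda,\underline{\tau})}^{Q_p}/\varpi^m)),
\]
after first checking that this isomorphism is equivariant for all the relevant Hecke operators, and then identifying the procedure on the right-hand side with the functor $(-)^{Q_p\textnormal{-ord}}$ of \S\ref{sec3.1}.

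First I would recall the source of the underlying isomorphism and explain its equivariance. Since $K(b,c)=K^p\cdot\mathcal{Q}_p(b,c)$, transitivity of derived invariants together with the identification $\pi(K^p,-)\cong R\Gamma(\overline{X}_{K^p},-)$ from Definition~\ref{Def2.2.1} gives the displayed isomorphism in $D^+(\mathcal{O}/\varpi^m[\mathcal{Q}_p(0,c)/\mathcal{Q}_p(b,c)])$; here Lemma~\ref{Lemma2.2} is what equips $\pi(K^p,\mathcal{V}_{(\lambda,\underline{\tau})}^{Q_p}/\varpi^m)$ with the action of the monoid $\Delta_{\mathcal{Q}_p}(c_p)$ extending the residual $\mathcal{Q}_p(0,c)$-action, hence with the operators $U_v^{Q_v}$. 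Because both the spherical Hecke action of $\mathbf{T}^T$ and the action of each $U_v^{Q_v}$ on the two sides are, by construction (see \S\ref{sec2.2}, \S\ref{sec2.8} and Lemma~\ref{Lem2.4}), induced from the single object $R\Gamma(K^p,R\Gamma(\overline{\mathfrak{X}}_G,\mathcal{V}_{(\lambda,\underline{\tau})}^{Q_p}/\varpi^m))$, the displayed isomorphism is automatically $\mathbf{T}^T$- and $(U_v^{Q_v})_{v\in S_p}$-equivariant, as well as equivariant for the residual $\mathcal{Q}_p(0,c)/\mathcal{Q}_p(b,c)$-action.

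Next comes the key point: for a complex with finite cohomology the two notions of $Q_p$-ordinary part coincide. By \cite{CN23}, Lemma~2.1.6, $R\Gamma(X_{K(b,c)},\mathcal{V}_{(\lambda,\underline{\tau})}^{Q_p}/\varpi^m)$ is a perfect complex of $\mathcal{O}/\varpi^m$-modules, so the commuting operators $U_v^{Q_v}$ are represented by honest endomorphisms of a bounded complex of finite $\mathcal{O}/\varpi^m$-modules, and the ordinary idempotent $e:=\lim_{N}\bigl(\prod_{v\in S_p}U_v^{Q_v}\bigr)^{N!}$ converges in the endomorphism ring of that complex in the homotopy category. Its image is, on the one hand, the maximal direct summand on which every $U_v^{Q_v}$ acts invertibly, i.e. $R\Gamma(X_{K(b,c)},\mathcal{V}_{(\lambda,\underline{\tau})}^{Q_p}/\varpi^m)^{Q_p\textnormal{-ord}}$ in the sense of \cite{KT17}, \S2.4; on the other hand, since localisation is exact and commutes with passing to cohomology of a bounded complex, this image also computes the localisation of $R\Gamma(X_{K(b,c)},\mathcal{V}_{(\lambda,\underline{\tau})}^{Q_p}/\varpi^m)$ at $\prod_v U_v^{Q_v}$, which is the $Q_p$-ordinary part in Emerton's sense. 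This is precisely the argument recalled in the proof of \cite{ACC23}, Proposition~5.2.15, and it applies here verbatim, including to the right-hand complex.

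Finally I would identify the localisation at $\prod_v U_v^{Q_v}$ on the right-hand side with the functor $(-)^{Q_p\textnormal{-ord}}$ of \S\ref{sec3.1}. Taking $\sigma$ to be the trivial $\Delta_{M_p}^+$-module, so that $\mathcal{H}(\sigma)^{\Delta_{M_p}^+}=\mathcal{O}/\varpi^m[\Delta_{M_p}^+/M_p^0]$ and $\mathcal{H}(\widetilde{\sigma})^{\Delta_{\mathcal{Q}_p}(c)}\cong\mathcal{H}(\Delta_{\mathcal{Q}_p},\mathcal{Q}_p(0,c))\otimes_{\mathbf{Z}}\mathcal{O}/\varpi^m$, the isomorphism $t_c$ of Lemma~\ref{Lem3.7} carries the central element $[z_p,\textnormal{id}]$, for the choice $z_p=(u_v^{Q_v})_{v\in S_p}$, to the product $\prod_{v\in S_p}U_v^{Q_v}$ of the $U_p$-operators, and inverting $[z_p,\textnormal{id}]$ is by definition the functor $(-)^{Q_p\textnormal{-ord}}$. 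Hence applying $e$ to $R\Gamma(\mathcal{Q}_p(b,c),\pi(K^p,\mathcal{V}_{(\lambda,\underline{\tau})}^{Q_p}/\varpi^m))$ returns $R\Gamma(\mathcal{Q}_p(b,c),\pi(K^p,\mathcal{V}_{(\lambda,\underline{\tau})}^{Q_p}/\varpi^m))^{Q_p\textnormal{-ord}}$, and combining the three steps yields the asserted $\mathbf{T}^T$-equivariant isomorphism; it lands in $D^+(\mathcal{O}/\varpi^m[M_p^0/M_p^b])$ once one identifies the residual $\mathcal{Q}_p(0,c)/\mathcal{Q}_p(b,c)$-action with an $M_p^0/M_p^b$-action via the Iwahori decomposition (cf. \cite{ACC23}, \S2.1.9 and Corollary~\ref{Cor3.9}). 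I expect the only genuinely delicate point to be this last compatibility at the level of the derived category — that the ordinary idempotent really does lift from cohomology to a well-defined summand of the complex and is intertwined with the functorial localisation of \S\ref{sec3.1} — so I would follow the bookkeeping of \cite{ACC23}, \S5.2 closely rather than reproduce it in full.
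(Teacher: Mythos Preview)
Your proposal is correct and follows essentially the same approach as the paper: the paper's proof is simply the observation (stated in the paragraph immediately preceding the lemma) that the cohomology groups of $R\Gamma(X_{K(b,c)},\mathcal{V}_{(\lambda,\underline{\tau})}^{Q_p}/\varpi^m)$ are finite $\mathcal{O}/\varpi^m$-modules, so the two notions of $Q_p$-ordinary part (in the sense of \cite{KT17} and of \cite{Eme10}) coincide, with a reference to the proof of \cite{ACC23}, Proposition~5.2.15. Your write-up unpacks exactly this argument in more detail, including the Hecke-equivariance bookkeeping and the identification via Lemma~\ref{Lem3.7}, but the substance is the same.
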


We now highlight the two important features of Hida theory for Betti cohomology. The first is usually referred to as the \textit{independence of level} property. Consider completed cohomology
\begin{equation*}
\pi(K^{p},\mathcal{V}_{(\lambda,\underline{\tau})}^{Q_p}/\varpi^m)\in D^+_{\textnormal{sm}}(\mathcal{O}/\varpi^m[\Delta_{\mathcal{Q}_p}(c_p)]).
\end{equation*}
Recall that it is equipped with an action of $\mathbf{T}^T$.

\begin{Def}
We set $Q_p$-ordinary completed cohomology to be
\begin{equation*}
    \pi^{Q_p\textnormal{-ord}}(K^{p},\mathcal{V}_{(\lambda,\underline{\tau})}^{Q_p}/\varpi^m):=R\Gamma(N^0_{p},\pi(K^{p},\mathcal{V}_{(\lambda,\underline{\tau,N})}^{Q_p}/\varpi^m))^{Q_p\textnormal{-ord}}\in D^+_{\textnormal{sm}}(\mathcal{O}/\varpi^m[\Delta_{M_p}]).
\end{equation*}
\end{Def}
By Corollary~\ref{Cor3.9} and Lemma~\ref{Lem3.10}, for integers $m\geq 1$, $c\geq b\geq 0$ with $c\geq c_p$, we have
\begin{equation*}
    R\Gamma(M_p^b,\pi^{Q_p\textnormal{-ord}}(K^p,\mathcal{V}_{(\lambda,\underline{\tau})}^{Q_p}/\varpi^m))\cong R\Gamma(X_{K(b,c)},\mathcal{V}_{(\lambda,\underline{\tau})}^{Q_p}/\varpi^m)^{Q_p\textnormal{-ord}}
\end{equation*}
in $D^+(\mathcal{O}/\varpi^m[M^0_p/M_p^b])$.
As an immediate consequence, we can deduce the independence of level property.
\begin{Cor}\textnormal{(Independence of level)}\label{Cor3.12}
For integers $m\geq 1$, and $c\geq b\geq 0$ with $c\geq c_p$, the natural $\mathbf{T}^T$-equivariant morphism
\begin{equation*}
    R\Gamma(X_{K(b,\max\{c_p,b\})},\mathcal{V}_{(\lambda,\underline{\tau})}^{Q_p}/\varpi^m)^{Q_p\textnormal{-ord}}\to
\end{equation*}
\begin{equation*}
    R\Gamma(X_{K(b,c)},\mathcal{V}_{(\lambda,\underline{\tau})}^{Q_p}/\varpi^m)^{Q_p\textnormal{-ord}}
\end{equation*}
is an isomorphism in $D^+(\mathcal{O}/\varpi^m[M^0_{p}/M_{p}^b])$.
\end{Cor}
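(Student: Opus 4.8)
The plan is to deduce the statement formally from the displayed isomorphism
\begin{equation*}
    R\Gamma(M_p^b,\pi^{Q_p\textnormal{-ord}}(K^p,\mathcal{V}_{(\lambda,\underline{\tau})}^{Q_p}/\varpi^m))\cong R\Gamma(X_{K(b,c)},\mathcal{V}_{(\lambda,\underline{\tau})}^{Q_p}/\varpi^m)^{Q_p\textnormal{-ord}}
\end{equation*}
established just above (a formal consequence of Corollary~\ref{Cor3.9} and Lemma~\ref{Lem3.10}). The key observation is that the left-hand side only involves the $Q_p$-ordinary completed cohomology $\pi^{Q_p\textnormal{-ord}}(K^p,\mathcal{V}_{(\lambda,\underline{\tau})}^{Q_p}/\varpi^m)\in D^+_{\textnormal{sm}}(\mathcal{O}/\varpi^m[\Delta_{M_p}])$, an object defined at infinite level at $p$, and therefore does not depend on the auxiliary integer $c$ (only on $b$, and implicitly on $c_p$), whereas the right-hand side a priori does.

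Granting this, I would organise the argument as follows. First I fix $b$ and put $c':=\max\{c_p,b\}$, so that for every admissible $c$ one has $c\geq c'$ and hence $\mathcal{Q}_p(b,c)=\overline{N}_p^c M_p^b N_p^0\subseteq \overline{N}_p^{c'} M_p^b N_p^0=\mathcal{Q}_p(b,c')$; the resulting covering map $X_{K(b,c)}\to X_{K(b,c')}$ induces on cohomology exactly the natural $\mathbf{T}^T$-equivariant morphism in the statement. Next I show that this morphism sits in a commutative triangle
\begin{equation*}
\begin{tikzcd}
R\Gamma(X_{K(b,c')},\mathcal{V}_{(\lambda,\underline{\tau})}^{Q_p}/\varpi^m)^{Q_p\textnormal{-ord}} \arrow{rr}\arrow{rd}[swap]{\wr} & & R\Gamma(X_{K(b,c)},\mathcal{V}_{(\lambda,\underline{\tau})}^{Q_p}/\varpi^m)^{Q_p\textnormal{-ord}} \\
& R\Gamma(M_p^b,\pi^{Q_p\textnormal{-ord}}(K^p,\mathcal{V}_{(\lambda,\underline{\tau})}^{Q_p}/\varpi^m)) \arrow{ru}[swap]{\wr} &
\end{tikzcd}
\end{equation*}
whose two slanted maps are the instances of the displayed isomorphism for $c'$ and for $c$ respectively; commutativity of the triangle then forces the horizontal arrow to be an isomorphism, which is the claim, and the $\mathbf{T}^T$-equivariance is automatic since every object and arrow in sight is $\mathbf{T}^T$-equivariant. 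To check the commutativity I would unwind the construction of the displayed isomorphism: under the identification $R\Gamma(X_{K(b,\ast)},\mathcal{V}_{(\lambda,\underline{\tau})}^{Q_p}/\varpi^m)\cong R\Gamma(\mathcal{Q}_p(b,\ast),\pi(K^p,\mathcal{V}_{(\lambda,\underline{\tau})}^{Q_p}/\varpi^m))$ of \S\ref{sec2.2}, the pullback map becomes the change-of-invariants map for $\mathcal{Q}_p(b,c)\subseteq\mathcal{Q}_p(b,c')$ acting on the \emph{fixed} object $\pi(K^p,\mathcal{V}_{(\lambda,\underline{\tau})}^{Q_p}/\varpi^m)$; applying $(-)^{Q_p\textnormal{-ord}}$ (Lemma~\ref{Lem3.10}) and then Corollary~\ref{Cor3.9}, whose isomorphisms are natural in the $\mathcal{O}/\varpi^m[\Delta_{\mathcal{Q}_p}]$-module and do not see the size of $c$ in the source, identifies both composites with the identity of $R\Gamma(M_p^b,\pi^{Q_p\textnormal{-ord}}(K^p,\mathcal{V}_{(\lambda,\underline{\tau})}^{Q_p}/\varpi^m))$.

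The only genuine work, and the step I expect to be the main obstacle, is this last compatibility check: one must trace the isomorphisms of Lemma~\ref{Lem3.10} and Corollary~\ref{Cor3.9} back to the level of (completed cohomology of) equivariant sheaves and confirm that they are functorial for the inclusions $\mathcal{Q}_p(b,c)\subseteq\mathcal{Q}_p(b,c')$, so that they intertwine the restriction maps between finite levels at $p$ with the identity at infinite level. Because all the functors involved are explicit — derived invariants under compact opens, $R\Gamma(N_p^0,-)$, localisation at the $U_p$-operators $U_v^{Q_v}$, and $R\Hom$ against the trivial type — this is a routine though slightly tedious bookkeeping exercise rather than a conceptual difficulty.
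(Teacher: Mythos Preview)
Your proposal is correct and is exactly the argument the paper has in mind: the text immediately preceding the Corollary records the isomorphism $R\Gamma(M_p^b,\pi^{Q_p\textnormal{-ord}}(K^p,\mathcal{V}_{(\lambda,\underline{\tau})}^{Q_p}/\varpi^m))\cong R\Gamma(X_{K(b,c)},\mathcal{V}_{(\lambda,\underline{\tau})}^{Q_p}/\varpi^m)^{Q_p\textnormal{-ord}}$ and then states the Corollary ``as an immediate consequence,'' with no further proof given. Your write-up simply unpacks this immediacy via the evident commutative triangle, and the compatibility check you flag is indeed the routine naturality of Corollary~\ref{Cor3.9} and Lemma~\ref{Lem3.10} that the paper takes for granted.
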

Note that the analogous statement also holds for compactly supported and boundary cohomology.

As a consequence of independence of level (or rather the statements behind its proof), we can deduce that the $Q_p$-ordinary part of the cohomology complexes $R\Gamma(X_{K(0,c)},\mathcal{V}_{(\lambda,\underline{\tau})}^{Q_p})$ is given by taking invariants of $Q_p$-ordinary completed cohomology with respect to the corresponding smooth types of the Levi subgroup.

\begin{Cor}\label{Cor3.13}
For every integer $c\geq c_p$, we have a natural $\mathbf{T}^T$-equivariant isomorphism 
\begin{equation*}
R\Gamma(X_{K(0,c)},\mathcal{V}_{(\lambda,\underline{\tau})}^{Q_p}/\varpi^m)^{Q_p\textnormal{-ord}}\cong 
\end{equation*}
\begin{equation*}
    R\Hom_{\mathcal{O}/\varpi^m[M^0_{p}]}(\sigma(\underline{\tau})^{\circ}/\varpi^m,\pi^{Q_p\textnormal{-ord}}(K^{p},\mathcal{V}_{\lambda}/\varpi^m))
\end{equation*}
in $D^+(\mathcal{O}/\varpi^m)$ induced by Corollary~\ref{Cor3.9}, Lemma~\ref{Lem3.10} and Lemma~\ref{Lemma2.2}.
\end{Cor}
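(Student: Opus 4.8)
The plan is to chain together the three cited results, the only genuine content being the bookkeeping of the $U_p$-operators and the duals at $p$; compatibility with the prime-to-$p$ Hecke algebra $\mathbf{T}^{T}$ will be automatic since $\mathbf{T}^{T}$ is untouched by everything below. First I would specialise Lemma~\ref{Lem3.10} to $b=0$, obtaining a natural $\mathbf{T}^{T}$-equivariant isomorphism
\begin{equation*}
R\Gamma(X_{K(0,c)},\mathcal{V}_{(\lambda,\underline{\tau})}^{Q_p}/\varpi^m)^{Q_p\textnormal{-ord}}\;\cong\;R\Gamma\bigl(\mathcal{Q}_p(0,c),\pi(K^p,\mathcal{V}_{(\lambda,\underline{\tau})}^{Q_p}/\varpi^m)\bigr)^{Q_p\textnormal{-ord}}
\end{equation*}
in $D^{+}(\mathcal{O}/\varpi^m)$, so that it suffices to compute the right-hand side.

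Next I would peel off the smooth part of the coefficients. Writing $\widetilde{\sigma(\underline{\tau})}^{\circ}=\bigotimes_{v\in S_p}\widetilde{\sigma(\underline{\tau_v})}^{\circ}$ for the smooth type inflated to $\mathcal{Q}_p(0,c_p)$, we have by construction $\mathcal{V}_{(\lambda,\underline{\tau})}^{Q_p}=\mathcal{V}_{\lambda}\otimes_{\mathcal{O}}(\widetilde{\sigma(\underline{\tau})}^{\circ})^{\vee}$ with the second factor finite free over $\mathcal{O}/\varpi^m$. Applying Lemma~\ref{Lemma2.2} at $S=S_p$ (the submonoid $\Delta_{\mathcal{Q}_p}(c_p)\subset G_p$ contains the compact open $\mathcal{Q}_p(0,c_p)$) and then taking $K^p$-invariants, which commute with tensoring by the $G_p$-module $\mathcal{V}_{(\lambda,\underline{\tau})}^{Q_p}/\varpi^m$ on which $K^p$ acts trivially, I would get a $\Delta_{\mathcal{Q}_p}(c_p)$-equivariant identification
\begin{equation*}
\pi(K^p,\mathcal{V}_{(\lambda,\underline{\tau})}^{Q_p}/\varpi^m)\;\cong\;\pi(K^p,\mathcal{V}_{\lambda}/\varpi^m)\otimes_{\mathcal{O}/\varpi^m}(\widetilde{\sigma(\underline{\tau})}^{\circ})^{\vee},
\end{equation*}
the right-hand side carrying the diagonal action (the $\Delta_{\mathcal{Q}_p}(c_p)$-module structure on $\pi(K^p,\mathcal{V}_{\lambda}/\varpi^m)$ of \S\ref{sec3.2} on the first factor, the inflated $\mathcal{Q}_p(0,c_p)$-action on the second). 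Since $\widetilde{\sigma(\underline{\tau})}^{\circ}$ is finite free over $\mathcal{O}/\varpi^m$, tensoring by $(\widetilde{\sigma(\underline{\tau})}^{\circ})^{\vee}$ is exact, is canonically $\Hom_{\mathcal{O}/\varpi^m}(\widetilde{\sigma(\underline{\tau})}^{\circ},-)$, and preserves injectives (it has an exact left adjoint), so $R\Gamma(\mathcal{Q}_p(0,c),X\otimes_{\mathcal{O}/\varpi^m}(\widetilde{\sigma(\underline{\tau})}^{\circ})^{\vee})\cong R\Hom_{\mathcal{O}/\varpi^m[\mathcal{Q}_p(0,c)]}(\widetilde{\sigma(\underline{\tau})}^{\circ},X)$ for $X\in D^{+}_{\textnormal{sm}}(\mathcal{O}/\varpi^m[\mathcal{Q}_p(0,c)])$. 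With $X=\pi(K^p,\mathcal{V}_{\lambda}/\varpi^m)$ and $(-)^{Q_p\textnormal{-ord}}$ applied, this turns the right-hand side of the first display into $R\Hom_{\mathcal{O}/\varpi^m[\mathcal{Q}_p(0,c)]}(\widetilde{\sigma(\underline{\tau})}^{\circ},\pi(K^p,\mathcal{V}_{\lambda}/\varpi^m))^{Q_p\textnormal{-ord}}$.

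Finally I would invoke Corollary~\ref{Cor3.9} in the instantiation of \S\ref{sec3.1} used in \S\ref{sec3.2} (parabolic $Q_p\subset G_p$, parahoric levels $\mathcal{Q}_p(b,c)$, monoid $\Delta_{M_p}^{+}$), taking the group ``$M^{1}$'' there to be $M_p^{c_p}$ and $\sigma:=\sigma(\underline{\tau})^{\circ,\vee}$: then $M_p^{c_p}$ acts trivially on $\sigma$ by the choice of $c_p$, and $\widetilde{\sigma}^{\vee}=\widetilde{\sigma(\underline{\tau})}^{\circ}$ because inflation commutes with duals and $\sigma(\underline{\tau})^{\circ}$ is finite free. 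Corollary~\ref{Cor3.9} applied to $\pi=\pi(K^p,\mathcal{V}_{\lambda}/\varpi^m)\in D^{+}_{\textnormal{sm}}(\mathcal{O}/\varpi^m[\Delta_{\mathcal{Q}_p}(c_p)])$ then yields
\begin{equation*}
R\Hom_{\mathcal{O}/\varpi^m[\mathcal{Q}_p(0,c)]}(\widetilde{\sigma(\underline{\tau})}^{\circ},\pi(K^p,\mathcal{V}_{\lambda}/\varpi^m))^{Q_p\textnormal{-ord}}\;\cong\;R\Hom_{\mathcal{O}/\varpi^m[M_p^0]}\bigl(\sigma(\underline{\tau})^{\circ},R\Gamma(N_p^0,\pi(K^p,\mathcal{V}_{\lambda}/\varpi^m))^{Q_p\textnormal{-ord}}\bigr),
\end{equation*}
and the right-hand side is $R\Hom_{\mathcal{O}/\varpi^m[M_p^0]}(\sigma(\underline{\tau})^{\circ},\pi^{Q_p\textnormal{-ord}}(K^p,\mathcal{V}_{\lambda}/\varpi^m))$ by the definition of $Q_p$-ordinary completed cohomology in \S\ref{sec3.2}. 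Concatenating the three displays gives the desired isomorphism, with $\mathbf{T}^{T}$-equivariance inherited at each step.

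The step I expect to be the main obstacle is the last assertion of the second paragraph: matching the two descriptions of the $U_p$-action. On $R\Gamma(\mathcal{Q}_p(0,c),\pi(K^p,\mathcal{V}_{(\lambda,\underline{\tau})}^{Q_p}/\varpi^m))$ the $U_p$-operators act through the Hecke algebra $\mathcal{H}(\Delta_{\mathcal{Q}_p}(c_p),\mathcal{Q}_p(0,c))$ acting on completed cohomology, whereas after the internal-hom identification they act through the algebra $\mathcal{H}(\widetilde{\sigma})^{\Delta_{\mathcal{Q}_p}(c_p)}$ of \S\ref{sec3.1}; one must check that the identification intertwines them, so that inverting the central element $[z_p,\textnormal{id}]$ --- i.e. passing to $Q_p$-ordinary parts --- is legitimate on both sides at once. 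This is a matter of unwinding the explicit formulas of \S\ref{sec2.8} and \S\ref{sec3.1} exactly in the style of the proof of Lemma~\ref{Lem3.7}; it is pure bookkeeping but is where the care is needed.
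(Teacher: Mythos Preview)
Your proof is correct and uses the same three inputs as the paper's proof, only in a slightly different order. The paper first applies Corollary~\ref{Cor3.9} and Lemma~\ref{Lem3.10} with the trivial type to get $R\Gamma(M_p^0,\pi^{Q_p\textnormal{-ord}}(K^p,\mathcal{V}_{(\lambda,\underline{\tau})}^{Q_p}/\varpi^m))$, and only then pulls the factor $\widetilde{\sigma(\underline{\tau})}^{\circ,\vee}$ out of $R\Gamma(N_p^0,-)^{Q_p\textnormal{-ord}}$ and through $R\Gamma(M_p^0,-)$ using two applications of \cite{We94}, Corollary 10.8.3; you instead peel off the smooth type at the level of $\pi(K^p,-)$ via Lemma~\ref{Lemma2.2} first and then invoke Corollary~\ref{Cor3.9} once with $\sigma=\sigma(\underline{\tau})^{\circ,\vee}$. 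Your ordering is arguably cleaner since it exploits the full strength of Corollary~\ref{Cor3.9} directly, whereas the paper's ordering keeps the Hecke bookkeeping you flag as the main obstacle entirely inside the (already proved) Corollary~\ref{Cor3.9}, so that the remaining tensor-product manipulations involve only the trivial $\Delta_{M_p}$-action on $\sigma(\underline{\tau})^{\circ,\vee}$ and no further matching of $U_p$-operators is needed.
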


\begin{proof}
By Corollary~\ref{Cor3.9} and Lemma~\ref{Lem3.10}, we have a natural isomorphism
\begin{equation*}
R\Gamma(X_{K(0,c)},\mathcal{V}_{(\lambda,\underline{\tau,N})}^{Q_p}/\varpi^m)^{Q_p\textnormal{-ord}}\cong
    R\Gamma(M^0_p,\pi^{Q_p\textnormal{-ord}}(K^p,\mathcal{V}_{(\lambda,\underline{\tau})}/\varpi^m))
\end{equation*}
in $D^+(\mathcal{O}/\varpi^m)$. Recall that by definition we have
\begin{equation}\label{eq3.3}
    \pi^{Q_p\textnormal{-ord}}(K^p,\mathcal{V}_{(\lambda,\underline{\tau})}^{Q_p}/\varpi^m)= R\Gamma(N^0_p,\pi(K^p,\mathcal{V}_{\lambda}/\varpi^m)\otimes_{\mathcal{O}/\varpi^m}\widetilde{\sigma(\underline{\tau})}^{\circ,\vee}/\varpi^m)^{Q_p\textnormal{-ord}}.
\end{equation}
Since $-\otimes_{\mathcal{O}/\varpi^m}\widetilde{\sigma(\underline{\tau})}^{\circ,\vee}/\varpi^m$ is exact and has $-\otimes_{\mathcal{O}/\varpi^m}\widetilde{\sigma(\underline{\tau})}^{\circ}/\varpi^m$ as an exact left adjoint, \cite{We94}, Corollary 10.8.3 applies proving that \ref{eq3.3} is naturally isomorphic to
\begin{equation*}
    \pi^{Q_p\textnormal{-ord}}(K^p,\mathcal{V}_{\lambda}/\varpi^m)\otimes_{\mathcal{O}/\varpi^m}\sigma(\underline{\tau})^{\circ,\vee}/\varpi^m=
\end{equation*}
\begin{equation*}
    \Hom_{\mathcal{O}/\varpi^m}(\sigma(\underline{\tau})^{\circ}/\varpi^m,\pi^{Q_p\textnormal{-ord}}(K^p,\mathcal{V}_{(\lambda,\underline{\tau})}^{Q_p}/\varpi^m))
\end{equation*}
in $D^+_{\textnormal{sm}}(\mathcal{O}/\varpi^m[\Delta_{M_p}])$ when $\sigma(\underline{\tau})^{\circ}$ is viewed as an $\mathcal{O}/\varpi^m[\Delta_{M_p}]$-module via inflation from $M^0_p$. Another application of \cite{We94}, Corollary 10.8.3 to 
\begin{equation*}
    \Hom_{\mathcal{O}/\varpi^m[M_p^0]}(\sigma(\underline{\tau})^{\circ}/\varpi^m,-)=\Gamma(M^0_p,-)\circ \Hom_{\mathcal{O}/\varpi^m}(\sigma(\underline{\tau})^{\circ}/\varpi^m,-)
\end{equation*} finishes the proof.
\end{proof}

We now turn to discussing the second feature of Hida theory called \textit{independence of weight.}
Recall the representation
\begin{equation*}
    \mathcal{V}_{w_0^{Q_p}\lambda}=\otimes_{v\in S_p}\mathcal{V}_{w_0^{Q_v}\lambda_v}\in \textnormal{Mod}(\mathcal{O}[\prod_{v\in S_p}M_v^0])
\end{equation*}
from Lemma~\ref{Lem2.14}. View it as a $\Delta_{M_p}$-module via inflation.
\begin{Prop}\label{Prop3.14}\textnormal{(\textnormal{Independence of weight})}
For any integers $m\geq 1$, $c\geq c_p$, subset $\overline{S}\subset \overline{S}_p$, and complex $\pi\in D^+_{\textnormal{sm}}(\mathcal{O}/\varpi^m[\Delta_{\mathcal{Q}_{\overline{S}}}(c)])$, the map introduced in Lemma~\ref{Lem2.14} induces an isomorphism
\begin{equation*}
    R\Gamma(N^0_{\overline{S}},\pi\otimes_{\mathcal{O}/\varpi^m}\mathcal{V}_{\lambda_{\overline{S}}}/\varpi^m)^{Q_{\overline{S}}\textnormal{-ord}}\xrightarrow{\sim} R\Gamma(N^0_{\overline{S}},\pi)^{Q_{\overline{S}}\textnormal{-ord}}\otimes_{\mathcal{O}/\varpi^m}\mathcal{V}_{w_0^{Q_{\overline{S}}}\lambda_{\overline{S}}}/\varpi^m
\end{equation*}
in $D^+_{\textnormal{sm}}(\mathcal{O}/\varpi^m[\Delta_{M_{\overline{S}}}])$.
In particular, we have a natural isomorphism
\begin{equation*}
    \pi^{Q_p\textnormal{-ord}}(K^p,\mathcal{V}_{\lambda}/\varpi^m)\xrightarrow{\sim}\pi^{Q_p\textnormal{-ord}}(K^p,\mathcal{O}/\varpi^m)\otimes_{\mathcal{O}/\varpi^m}\mathcal{V}_{w_0^{Q_p}\lambda}/\varpi^m
\end{equation*}
in $D^+_{\textnormal{sm}}(\mathcal{O}/\varpi^m[\Delta_{M_p}])$.
\end{Prop}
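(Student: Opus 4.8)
The plan is to prove this as an \textit{independence of weight} statement by dévissage, in exact parallel with \cite{ACC23}, \S5.2 (Borel case) and \cite{CN23}, \S2.2 (Siegel case). First I would reduce to a single place. Since $\mathcal{V}_{\lambda_{\overline S}}=\bigotimes_{v}\mathcal{V}_{\lambda_v}$, $N^0_{\overline S}=\prod_v N^0_v$, $\Delta_{M_{\overline S}}=\prod_v\Delta_{M_v}$, and $R\Gamma(N^0_{\overline S},-)$ is computed by iterated cohomology while the localisations $(-)^{Q_v\textnormal{-ord}}$ at distinct places commute with each other and with the $R\Gamma(N^0_{v'},-)$'s, one peels off the places of $\overline S$ one at a time, each time invoking the case $\overline S=\{v\}$ with the cohomology of the remaining factors in the role of $\pi$. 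So it suffices to treat a single $v\in S_p$, i.e. $\pi\in D^+_{\textnormal{sm}}(\mathcal{O}/\varpi^m[\Delta_{\mathcal{Q}_v}(c)])$, and to produce a natural isomorphism
\begin{equation*}
R\Gamma(N^0_v,\pi\otimes_{\mathcal{O}/\varpi^m}\mathcal{V}_{\lambda_v}/\varpi^m)^{Q_v\textnormal{-ord}}\xrightarrow{\sim}R\Gamma(N^0_v,\pi)^{Q_v\textnormal{-ord}}\otimes_{\mathcal{O}/\varpi^m}\mathcal{V}_{w_0^{Q_v}\lambda_v}/\varpi^m.
\end{equation*}

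Next, let $\mathcal{W}_v\subset\mathcal{V}_{\lambda_v}$ be the kernel of the surjection of Lemma~\ref{Lem2.14}. Equipping all three terms with the $\lambda$-twisted $\Delta_{\mathcal{Q}_v}(c)$-action of \S\ref{sec2.8} — for which this surjection is equivariant, since the twisting character $\alpha_\lambda^{Q_v}$ is built from the lowest weight $w_0^G\lambda$ that governs the $M_v$-structure of $\mathcal{V}_{w_0^{Q_v}\lambda_v}$ — one obtains, after reduction mod $\varpi^m$, a short exact sequence $0\to\mathcal{W}_v\to\mathcal{V}_{\lambda_v}\to\mathcal{V}_{w_0^{Q_v}\lambda_v}\to0$ of $\mathcal{O}/\varpi^m$-flat smooth $\Delta_{\mathcal{Q}_v}(c)$-modules. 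Tensoring with $\pi$ over $\mathcal{O}/\varpi^m$ (exact), applying $R\Gamma(N^0_v,-)$, and then the exact functor $(-)^{Q_v\textnormal{-ord}}$, gives a distinguished triangle in $D^+_{\textnormal{sm}}(\mathcal{O}/\varpi^m[\Delta_{M_v}])$. The claim is then equivalent to the combination of: (i) a natural isomorphism $R\Gamma(N^0_v,\pi\otimes_{\mathcal{O}/\varpi^m}\mathcal{V}_{w_0^{Q_v}\lambda_v}/\varpi^m)^{Q_v\textnormal{-ord}}\cong R\Gamma(N^0_v,\pi)^{Q_v\textnormal{-ord}}\otimes_{\mathcal{O}/\varpi^m}\mathcal{V}_{w_0^{Q_v}\lambda_v}/\varpi^m$; and (ii) the vanishing $R\Gamma(N^0_v,\pi\otimes_{\mathcal{O}/\varpi^m}\mathcal{W}_v/\varpi^m)^{Q_v\textnormal{-ord}}=0$. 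Statement (i) is routine: $\mathcal{V}_{w_0^{Q_v}\lambda_v}$ is inflated from $M_v$, so $N^0_v$ acts trivially on that factor, which, being $\mathcal{O}/\varpi^m$-finite free, pulls out of $R\Gamma(N^0_v,-)$; moreover $u_v^{Q_v}$, central in $M_v$, acts on $\mathcal{V}_{w_0^{Q_v}\lambda_v}$ by the central character of $\mathcal{V}_{w_0^{Q_v}\lambda_v}$ at $u_v^{Q_v}$ times $\alpha_\lambda^{Q_v}(u_v^{Q_v})^{-1}$, which by the very choice of $\alpha_\lambda^{Q_v}$ is the scalar $1$, so $(-)^{Q_v\textnormal{-ord}}$ commutes with $\otimes\,\mathcal{V}_{w_0^{Q_v}\lambda_v}$.

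For (ii), the crux, I would use the weight-space analysis behind Lemma~\ref{Lem2.17}, i.e. \cite{Ger18}, Lemma~2.2. Concretely, $\mathcal{W}_v/\varpi^m$ admits a finite filtration by $N^0_vM^0_v$-stable $\mathcal{O}/\varpi^m$-submodules whose graded pieces are $\mathcal{O}/\varpi^m$-finite free, $M^0_v$-equivariant with $N^0_v$ acting trivially, and on each of which the $\lambda$-twisted $u_v^{Q_v}$ acts by a scalar lying in $\varpi\mathcal{O}/\varpi^m$. This strict positivity of valuation is exactly the mechanism of Lemma~\ref{Lem2.17}: every $Z(M_v)$-weight of $V_{\lambda_v}$ occurring in $\mathcal{W}_v$ lies strictly above the lowest weight $w_0^G\lambda$, $u_v^{Q_v}=\nu_0(\varpi_v)$ for a cocharacter $\nu_0$ of $Z(M_v)$ strictly contracting $N^0_v$, and the pairing formula then forces positive valuation. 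For such a graded piece $\mathrm{gr}$ one has $R\Gamma(N^0_v,\pi\otimes_{\mathcal{O}/\varpi^m}\mathrm{gr})\cong R\Gamma(N^0_v,\pi)\otimes_{\mathcal{O}/\varpi^m}\mathrm{gr}$ with $U_v^{Q_v}$ acting as (its action on $R\Gamma(N^0_v,\pi)$) tensored with that scalar, so $(U_v^{Q_v})^{m}$ annihilates it; since $u_v^{Q_v}$ serves as an element $z_p$ in the sense of \S\ref{sec3.1} (cf. Lemma~\ref{Lem3.10}), the functor $(-)^{Q_v\textnormal{-ord}}$ is the localisation inverting $U_v^{Q_v}$, and hence kills this piece. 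Dévissing along the finite filtration and using exactness of $(-)^{Q_v\textnormal{-ord}}$ gives (ii). Finally, the displayed ``in particular'' follows by taking $\overline S=S_p$ and $\pi=\pi(K^p,\mathcal{O}/\varpi^m)$, rewriting $\pi(K^p,\mathcal{V}_\lambda/\varpi^m)\cong\pi(K^p,\mathcal{O}/\varpi^m)\otimes_{\mathcal{O}/\varpi^m}\mathcal{V}_\lambda/\varpi^m$ via Lemma~\ref{Lemma2.2}, and applying the first part.

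The only substantive point beyond formal bookkeeping is (ii): checking that the $\lambda$-twisted $u_v^{Q_v}$ preserves the lattice $\mathcal{W}_v$ and acts through scalars of strictly positive $\varpi$-adic valuation on a graded refinement. This is the local, representation-theoretic input already isolated in \cite{Ger18}, Lemma~2.2 and used to prove Lemma~\ref{Lem2.17}; everything else is manipulation of distinguished triangles together with the exactness of the localisation functor $(-)^{Q_v\textnormal{-ord}}$.
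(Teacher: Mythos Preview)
Your proposal is correct and follows essentially the same approach as the paper, which simply defers to \cite{CN23}, Proposition~2.2.15: d\'evissage along the short exact sequence of Lemma~\ref{Lem2.14}, with the vanishing of the kernel's contribution to the ordinary part coming from the strict positivity of the $\varpi$-adic valuation of the twisted $u_v^{Q_v}$-action on the non-lowest weight spaces (the content of \cite{Ger18}, Lemma~2.2). One small correction: the short exact sequence is only naturally $\Delta_{Q_v}=\Delta_{M_v}^+\ltimes N^0_v$-equivariant, not $\Delta_{\mathcal{Q}_v}(c)$-equivariant as you write (there being no natural $\overline{N}_v^c$-action on $\mathcal{V}_{w_0^{Q_v}\lambda_v}$ compatible with the surjection), but this is all that $R\Gamma(N^0_v,-)^{Q_v\textnormal{-ord}}$ requires, so your argument is unaffected.
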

\begin{proof}
The same argument as in the proof of \cite{CN23}, Proposition 2.2.15 applies here.
\end{proof}

We now can deduce that $R\Gamma(X_{K(0,c_p)},\mathcal{V}_{(\lambda,\underline{\tau})}^{Q_p}/\varpi^m)^{Q_p\textnormal{-ord}}$ admits a natural Hecke action at $p$ corresponding to the data $(\lambda,\underline{\tau})$. Namely, we set
\begin{equation*}
    \sigma(\lambda,\underline{\tau})^{\circ}:= \mathcal{V}_{w_0^{Q_p}\lambda}^{\vee}\otimes_{\mathcal{O}}\sigma(\underline{\tau})^{\circ},
\end{equation*}
a locally algebraic $\mathcal{O}$-representation of $M^0_p$.
\begin{Cor}\label{Cor3.15}
    We have a natural isomorphism
    \begin{equation*}
R\Gamma(X_{K(0,c_p)},\mathcal{V}_{(\lambda,\underline{\tau})}^{Q_p}/\varpi^m)^{Q_p\textnormal{-ord}}\cong
    \end{equation*}
    \begin{equation*}
         R\Hom_{\mathcal{O}/\varpi^m[M^0_p]}(\sigma(\lambda,\underline{\tau})^{\circ}/\varpi^m,\pi^{Q_p\textnormal{-ord}}(K^p,\mathcal{O}/\varpi^m))
    \end{equation*}
    in $D^+(\mathcal{O}/\varpi^m)$ induced by Corollary~\ref{Cor3.13}, and Proposition~\ref{Prop3.14}.
    In particular, we have an induced algebra homomorphism
    \begin{equation*}
        \mathcal{H}(\sigma(\lambda,\underline{\tau})^{\circ,\vee})\otimes_{\mathcal{O}}\mathcal{O}/\varpi^m\to \textnormal{End}_{D^+(\mathcal{O}/\varpi^m)}(R\Gamma(X_{K(0,c_p)},\mathcal{V}_{(\lambda,\underline{\tau})}^{Q_p}/\varpi^m)^{Q_p\textnormal{-ord}}).
    \end{equation*}
\end{Cor}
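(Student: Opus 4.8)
The plan is to assemble the isomorphism in three stages, combining the finite-level comparisons already established. First I would apply Corollary~\ref{Cor3.13} at level $c=c_p$, which gives a natural $\mathbf{T}^T$-equivariant isomorphism
\begin{equation*}
    R\Gamma(X_{K(0,c_p)},\mathcal{V}_{(\lambda,\underline{\tau})}^{Q_p}/\varpi^m)^{Q_p\textnormal{-ord}}\cong R\Hom_{\mathcal{O}/\varpi^m[M^0_p]}(\sigma(\underline{\tau})^{\circ}/\varpi^m,\pi^{Q_p\textnormal{-ord}}(K^{p},\mathcal{V}_{\lambda}/\varpi^m)).
\end{equation*}
Next I would feed in the independence of weight isomorphism from Proposition~\ref{Prop3.14}, namely $\pi^{Q_p\textnormal{-ord}}(K^p,\mathcal{V}_{\lambda}/\varpi^m)\cong \pi^{Q_p\textnormal{-ord}}(K^p,\mathcal{O}/\varpi^m)\otimes_{\mathcal{O}/\varpi^m}\mathcal{V}_{w_0^{Q_p}\lambda}/\varpi^m$ in $D^+_{\textnormal{sm}}(\mathcal{O}/\varpi^m[\Delta_{M_p}])$, and substitute this inside the $R\Hom$. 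Since $\mathcal{V}_{w_0^{Q_p}\lambda}/\varpi^m$ is finite free over $\mathcal{O}/\varpi^m$, tensoring against it is exact with an exact adjoint, so (again by \cite{We94}, Corollary 10.8.3) one has the standard adjunction $R\Hom_{\mathcal{O}/\varpi^m[M^0_p]}(\sigma(\underline{\tau})^{\circ}/\varpi^m,-\otimes \mathcal{V}_{w_0^{Q_p}\lambda}/\varpi^m)\cong R\Hom_{\mathcal{O}/\varpi^m[M^0_p]}((\mathcal{V}_{w_0^{Q_p}\lambda}/\varpi^m)^{\vee}\otimes \sigma(\underline{\tau})^{\circ}/\varpi^m,-)$, and the coefficient module on the right is exactly $\sigma(\lambda,\underline{\tau})^{\circ}/\varpi^m$ by definition. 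This yields the displayed isomorphism.

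For the induced algebra homomorphism, I would argue as in \S\ref{sec2.7} and \S\ref{sec3.1}: writing $R\Hom_{\mathcal{O}/\varpi^m[M^0_p]}(\sigma(\lambda,\underline{\tau})^{\circ}/\varpi^m,-)$ as $R\Hom_{\mathcal{O}/\varpi^m[M_p]}(\textnormal{c-Ind}_{M^0_p}^{M_p}\sigma(\lambda,\underline{\tau})^{\circ}/\varpi^m,-)$ exhibits the right-hand side as carrying a natural right action of $\mathcal{H}(\sigma(\lambda,\underline{\tau})^{\circ})=\textnormal{End}_{M_p}(\textnormal{c-Ind}_{M^0_p}^{M_p}\sigma(\lambda,\underline{\tau})^{\circ})$; equivalently, via the anti-involution $\mathcal{H}(\sigma)\xrightarrow{\sim}\mathcal{H}(\sigma^{\vee})$ of \S\ref{sec2.3}, a left action of $\mathcal{H}(\sigma(\lambda,\underline{\tau})^{\circ,\vee})$. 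Transporting this along the isomorphism just constructed gives the desired map
\begin{equation*}
    \mathcal{H}(\sigma(\lambda,\underline{\tau})^{\circ,\vee})\otimes_{\mathcal{O}}\mathcal{O}/\varpi^m\to \textnormal{End}_{D^+(\mathcal{O}/\varpi^m)}(R\Gamma(X_{K(0,c_p)},\mathcal{V}_{(\lambda,\underline{\tau})}^{Q_p}/\varpi^m)^{Q_p\textnormal{-ord}}).
\end{equation*}
One should note that the $\Delta_{M_p}$-action on $\pi^{Q_p\textnormal{-ord}}(K^p,\mathcal{O}/\varpi^m)\otimes \mathcal{V}_{w_0^{Q_p}\lambda}/\varpi^m$ is the (diagonal, weight-twisted) one coming from Proposition~\ref{Prop3.14}, which is precisely what makes the Hecke action land on $\sigma(\lambda,\underline{\tau})^{\circ}$ rather than on an untwisted object; this bookkeeping is what the two references in the statement are doing.

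The routine part here is all the acyclicity and adjunction shuffling, which is by now standard after \S\ref{sec3.1}. The genuinely delicate point — and the one I would present most carefully — is compatibility of the $\Delta_{M_p}$-module (hence Hecke) structures through the weight-independence isomorphism: one must check that the map of Lemma~\ref{Lem2.14}, which is only $\prod_\iota Q_v(\mathcal{O})$-equivariant on the nose, becomes $\Delta_{M_p}$-equivariant after taking $N^0_p$-cohomology and inverting $U_p$, with the twisted action on the source matching the tensor-product action on the target. This is exactly the content invoked from the proof of \cite{CN23}, Proposition 2.2.15, and once it is in hand the rest of the corollary is formal. I expect no further obstacle; the statement is essentially a packaging of Corollary~\ref{Cor3.13} and Proposition~\ref{Prop3.14} into a form that exhibits the Hecke-algebra-at-$p$ action explicitly.
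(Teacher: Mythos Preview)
Your argument for the first displayed isomorphism is fine and matches the paper's: apply Corollary~\ref{Cor3.13}, substitute Proposition~\ref{Prop3.14}, and absorb $\mathcal{V}_{w_0^{Q_p}\lambda}/\varpi^m$ into the first argument of $R\Hom$ by tensor--Hom adjunction.

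For the Hecke action, however, there is a genuine gap. You write $R\Hom_{\mathcal{O}/\varpi^m[M^0_p]}(\sigma(\lambda,\underline{\tau})^{\circ}/\varpi^m,-)$ as $R\Hom_{\mathcal{O}/\varpi^m[M_p]}(\textnormal{c-Ind}_{M^0_p}^{M_p}\sigma(\lambda,\underline{\tau})^{\circ}/\varpi^m,-)$ and then invoke the full Hecke algebra $\mathcal{H}(\sigma(\lambda,\underline{\tau})^{\circ})=\textnormal{End}_{M_p}(\textnormal{c-Ind}_{M^0_p}^{M_p}\sigma(\lambda,\underline{\tau})^{\circ})$. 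But this Frobenius reciprocity step requires the target $\pi^{Q_p\textnormal{-ord}}(K^p,\mathcal{O}/\varpi^m)$ to lie in $D^+_{\textnormal{sm}}(\mathcal{O}/\varpi^m[M_p])$, whereas the formalism of \S\ref{sec3.1}--\S\ref{sec3.2} only places it in $D^+_{\textnormal{sm}}(\mathcal{O}/\varpi^m[\Delta_{M_p}])$. The monoid $\Delta_{M_p}$ there is generated by $M_p^0$ and the central elements $\nu(\varpi_v)$, so it is strictly smaller than $M_p$; with only that action you would obtain $\mathcal{H}(\sigma)^{\Delta_{M_p}}$, not the full $\mathcal{H}(\sigma)$. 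The paper closes this gap by re-running the construction with the choice $\Delta_{M_p}^+=M_p^+$ (so that $\Delta_{M_p}=M_p$ by Lemma~\ref{Lem3.3}), and then checking via \cite{Eme10b}, Proposition~2.1.11 and Proposition~2.1.2 that the relevant forgetful functors preserve the required acyclicity, so that the object computed this way agrees with the one already in hand.

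Relatedly, the point you flag as ``genuinely delicate'' --- $\Delta_{M_p}$-equivariance of the weight-independence map --- is not actually in doubt: Proposition~\ref{Prop3.14} is stated as an isomorphism in $D^+_{\textnormal{sm}}(\mathcal{O}/\varpi^m[\Delta_{M_p}])$, so that compatibility is built in. The subtle step is the upgrade from $\Delta_{M_p}$ to $M_p$ just described.
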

\begin{proof}
The first part of the statement follows immediately from Corollary~\ref{Cor3.13} and Proposition~\ref{Prop3.14}.

For the second part, note that the formalism of \S\ref{sec3.1} with the choice $M^+_p$ for the role of $\Delta_{M_p}^+$ implies that $\pi^{Q_p\textnormal{-ord}}(K^p,\mathcal{O}/\varpi^m)$ can be viewed as an object in $D^+_{\textnormal{sm}}(\mathcal{O}/\varpi^m[M_p])$. We then have
\begin{equation*}
    R\Hom_{\mathcal{O}/\varpi^m[M^0_p]}(\sigma(\lambda,\underline{\tau})^{\circ}/\varpi^m,\pi^{Q_p\textnormal{-ord}}(K^p,\mathcal{O}/\varpi^m))\in D^+(\mathcal{H}(\sigma(\lambda,\underline{\tau})^{\circ,\vee}/\varpi^m)).
\end{equation*}
Moreover, the forgetful functor 
\begin{equation*}
    \textnormal{Mod}_{\textnormal{sm}}(\mathcal{O}/\varpi^m[G_p])\to \textnormal{Mod}_{\textnormal{sm}}(\mathcal{O}/\varpi^m[\Delta_{M_p}^+\ltimes N^0_p])
\end{equation*} sends injectives to $\Gamma(N^0_p,-)$-acyclics by \cite{Eme10b}, Proposition 2.1.11. Consequently, an application of \cite{We94}, Corollary 10.8.3 shows that the ordinary completed cohomology complex $\pi^{Q_p\textnormal{-ord}}(K^p,\mathcal{O}/\varpi)\in D^+(\textnormal{Mod}_{\textnormal{sm}}[\Delta_{M_p}])$ can be computed by applying $R\Gamma(N^0_p,-)^{Q_p\textnormal{-ord}}$ to the completed cohomology complex $\pi(K^p,\mathcal{O}/\varpi^m)\in D^+_{\textnormal{sm}}(\mathcal{O}/\varpi^m[G_p])$ followed by an application of the forgetful functor $\textnormal{Mod}_{\textnormal{sm}}(\mathcal{O}/\varpi^m[M_p])\to \textnormal{Mod}_{\textnormal{sm}}(\mathcal{O}/\varpi^m[\Delta_{M_p}])$. On the other hand, \cite{Eme10b}, Proposition 2.1.2 shows that the forgetful functor
\begin{equation*}
    \textnormal{Mod}_{\textnormal{sm}}(\mathcal{O}/\varpi^m[M_p])\to \textnormal{Mod}_{\textnormal{sm}}(\mathcal{O}/\varpi^m[M^0_p])
\end{equation*} preserves injectives.
In particular, another application of \cite{We94}, Corollary 10.8.3 yields an algebra homomorphism
\begin{equation*}
    \mathcal{H}(\sigma(\lambda,\underline{\tau})^{\circ,\vee}/\varpi^m)\to \textnormal{End}_{D^+(\mathcal{O}/\varpi^m)}(R\Gamma(X_{K(0,c_p)},\mathcal{V}_{(\lambda,\underline{\tau})}^{Q_p}/\varpi^m)^{Q_p\textnormal{-ord}}).
\end{equation*}
Moreover, we have a natural morphism
\begin{equation*}
    \mathcal{H}(\sigma(\lambda,\underline{\tau})^{\circ,\vee})\to\mathcal{H}(\sigma(\lambda,\underline{\tau})^{\circ,\vee}/\varpi^m)
\end{equation*}
induced by the short exact sequence
\begin{equation*}
    0\to \textnormal{c-Ind}_{M^0_p}^{M_p}\sigma(\lambda,\underline{\tau})^{\circ,\vee}\xrightarrow{\varpi^m\cdot}\textnormal{c-Ind}_{M^0_p}^{M_p}\sigma(\lambda,\underline{\tau})^{\circ,\vee}\to \textnormal{c-Ind}_{M^0_p}^{M_p}(\sigma(\lambda,\underline{\tau})^{\circ,\vee}/\varpi^m)\to 0
\end{equation*}
and the proof is finished.
\end{proof}
We note that the action of $\mathcal{H}(\sigma,\underline{\tau})^{\circ,\vee}$ on $R\Gamma(X_{K(0,c_p)},\mathcal{V}_{(\lambda,\underline{\tau})}^{Q_p}/\varpi^m)^{Q_p\textnormal{-ord}}$ admits another description. Namely, by independence of level, we can pass to level $K(0,m)$ and then, by independence of weight, we get an identification
\begin{equation*}
    R\Gamma(X_{K(0,c_p)},\mathcal{V}_{(\lambda,\underline{\tau})}^{Q_p}/\varpi^m)^{Q_p\textnormal{-ord}}\cong R\Gamma(X_{K(0,m)},\widetilde{\sigma(\lambda,\underline{\tau})^{\circ,\vee}/\varpi^m})^{Q_p\textnormal{-ord}}
\end{equation*}
in $D^+(\mathcal{O}/\varpi^m)$. On the other hand, we have
\begin{equation*}
    R\Gamma(X_{K(0,m)},\widetilde{\sigma(\lambda,\underline{\tau})^{\circ,\vee}/\varpi^m})\cong R\Hom_{Q_p(0,m)}(\widetilde{\sigma(\lambda,\underline{\tau})^{\circ}/\varpi^m},\pi(K^p,\mathcal{O}/\varpi^m))
\end{equation*}
in $D^+(\mathcal{O}/\varpi^m)$ and the latter naturally lives in $D^+(\mathcal{H}(\sigma(\lambda,\underline{\tau})^{\circ,\vee}/\varpi^m)^+)$ according to \S\ref{sec3.1}. Therefore, we get a natural action of $\mathcal{H}(\sigma(\lambda,\underline{\tau})^{\circ,\vee}/\varpi^m)^+$ on the complex $R\Gamma(X_{K(0,m)},\widetilde{\sigma(\lambda,\underline{\tau})^{\circ,\vee}/\varpi^m})$ and, using Corollary~\ref{Cor3.9}, one sees that the induced action of $\mathcal{H}(\sigma(\lambda,\underline{\tau})^{\circ,\vee}/\varpi^m)$ on $R\Gamma(X_{K(0,c_p)},\mathcal{V}_{(\lambda,\underline{\tau})}^{Q_p}/\varpi^m)^{Q_p\textnormal{-ord}}$ is the one constructed in Corollary~\ref{Cor3.15}. An upshot of this observation is that this way the Hecke action can be described using the formalism of \S\ref{sec2.4}.

\subsection{Hida theory with dual coefficients}\label{sec3.3}
Given a tuple $(Q_p,\lambda,\underline{\tau})$ as in \S\ref{sec3.2}, and an integer $m\geq 1$, we also discuss $\overline{Q}_p$-ordinary Hida theory for $\mathcal{V}_{\lambda,\underline{\tau}}^{Q_p,\vee}/\varpi^m$ on $X_K$ where $K\subset G(\mathbf{A}_{F^+}^{\infty})$ is a good subgroup such that $K_p=\mathcal{Q}_p(0,\widetilde{c})$ with $\widetilde{c}\geq c_p$. This is developed by applying the formalism of \S\ref{sec3.1} with parabolic subgroup $\overline{Q}_p=M_p\ltimes \overline{N}_p\subset G_p$, $N^0:=\overline{N}_p^{\widetilde{c}}$, $M^b:= M^b_p$, $\overline{N}^c:=N_p^c$, $\Delta_M^+:=(\Delta_{M_p}^{+})^{-1}$, $z_p:=u_p^{Q_p,-1}$, and $\sigma:=\sigma(\lambda,\underline{\tau})^{\circ,\vee}$. We can then introduce $\overline{Q}_p$-ordinary parts of the level $K(0,\widetilde{c})$ cohomology of $\mathcal{V}_{(\lambda,\underline{\tau})}^{Q_p,\vee}/\varpi^m$ by inverting the Hecke operator attached to $u_p^{Q_p,-1}$. We can also introduce $\overline{Q}_p$-ordinary completed cohomology 
\begin{equation*}
    \pi^{\overline{Q}_p\textnormal{-ord}}(K^p,\mathcal{V}_{(\lambda,\tau)}^{Q_p,\vee}/\varpi^m):=R\Gamma(\overline{N}^{\widetilde{c}}_p,\pi(K^p,\mathcal{V}_{(\lambda,\tau)}^{Q_p,\vee}/\varpi^m))^{\overline{Q}_p\textnormal{-ord}}\in D^+_{\textnormal{sm}}(\mathcal{O}/\varpi^m[\Delta_{M_p}]).
\end{equation*}
Given this setup, the formalism of \S\ref{sec3.1} combined with the short exact sequence
\begin{equation*}
    0\to \mathcal{V}_{w_0^{Q_p}\lambda}^{\vee}\to \mathcal{V}_{\lambda}^{\vee}\to \mathcal{K}_{\lambda}^{\vee}\to 0
\end{equation*}
induced by taking duals of the surjection of Lemma~\ref{Lem2.14} yields the independence of weight property for dual coefficients.
\begin{Prop}\textnormal{(Independence of weight)}
    For any integer $m\geq 1$, there is a natural isomorphism
    \begin{equation*}
        \pi^{\overline{Q}_p\textnormal{-ord}}(K^p,\mathcal{V}_{\lambda}^{\vee}/\varpi^m)\xrightarrow{\sim}\pi^{\overline{Q}_p\textnormal{-ord}}(K^p,\mathcal{O}/\varpi^m)\otimes_{\mathcal{O}/\varpi^m}\mathcal{V}_{w_0^{Q_p}\lambda}^{\vee}/\varpi^m
    \end{equation*}
    in $D^+_{\textnormal{sm}}(\mathcal{O}/\varpi^m[\Delta_{M_p}])$.
\end{Prop}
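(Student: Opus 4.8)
The plan is to imitate the proof of Proposition~\ref{Prop3.14} in the opposite-parabolic setting of §\ref{sec3.3}. Dualising the $\prod_{\iota}Q_v(\mathcal{O})$-equivariant surjection of Lemma~\ref{Lem2.14} at the places $v\in S_p$ produces the short exact sequence $0\to \mathcal{V}_{w_0^{Q_p}\lambda}^{\vee}\to \mathcal{V}_{\lambda}^{\vee}\to \mathcal{K}_{\lambda}^{\vee}\to 0$ of smooth $\mathcal{O}/\varpi^m[\mathcal{Q}_p(0,\widetilde{c})]$-modules, compatible with the twisted monoid actions fixed in §\ref{sec3.3}. First I would invoke the weight-independence of completed cohomology (Lemma~\ref{Lemma2.2}) at the places of $S_p$ to identify each of $\pi(K^p,\mathcal{V}_{\lambda}^{\vee}/\varpi^m)$, $\pi(K^p,\mathcal{V}_{w_0^{Q_p}\lambda}^{\vee}/\varpi^m)$ and $\pi(K^p,\mathcal{K}_{\lambda}^{\vee}/\varpi^m)$ with $\pi(K^p,\mathcal{O}/\varpi^m)$ tensored over $\mathcal{O}/\varpi^m$ with the corresponding coefficient module (carrying the diagonal twisted action); the short exact sequence then yields a distinguished triangle relating these three completed cohomology complexes in the appropriate equivariant derived category.

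Next I would apply the $\overline{Q}_p$-ordinary functor $R\Gamma(\overline{N}_p^{\widetilde{c}},-)^{\overline{Q}_p\textnormal{-ord}}$ from §\ref{sec3.1} — with parabolic $\overline{Q}_p$, with $\overline{N}_p^{\widetilde{c}}$ playing the role of ``$N^0$'', and with central element $z_p=u_p^{Q_p,-1}$, as fixed in §\ref{sec3.3} — to obtain a distinguished triangle in $D^+_{\textnormal{sm}}(\mathcal{O}/\varpi^m[\Delta_{M_p}])$ whose first term is $\pi^{\overline{Q}_p\textnormal{-ord}}(K^p,\mathcal{V}_{w_0^{Q_p}\lambda}^{\vee}/\varpi^m)$ (identified in the next paragraph), whose middle term is $\pi^{\overline{Q}_p\textnormal{-ord}}(K^p,\mathcal{V}_{\lambda}^{\vee}/\varpi^m)$, and whose third term is $R\Gamma(\overline{N}_p^{\widetilde{c}},\pi(K^p,\mathcal{O}/\varpi^m)\otimes_{\mathcal{O}/\varpi^m}\mathcal{K}_{\lambda}^{\vee}/\varpi^m)^{\overline{Q}_p\textnormal{-ord}}$. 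The key step — which I expect to be the main obstacle — is the vanishing of this third term. By construction $\mathcal{K}_{\lambda}$ is built from the weight spaces of $\mathcal{V}_{\lambda}$ lying strictly above the blockwise lowest weight of the partition underlying $Q_p$, and on these the $\lambda$-twisted action of the cocharacter attached to $u_p^{Q_p}$ scales by a strictly positive power of $\varpi$; dualising, $z_p=u_p^{Q_p,-1}$ acts nilpotently on $\mathcal{K}_{\lambda}^{\vee}/\varpi^m$ — this is the computation behind Lemma~\ref{Lem2.17}, i.e. \cite{Ger18}, Lemma~2.2. Since $z_p$ is precisely the central Hecke operator inverted in forming $(-)^{\overline{Q}_p\textnormal{-ord}}$, it acts nilpotently on the cohomology of the complex above, so localising at it annihilates it. Carrying out this nilpotence estimate in the dual setting, and checking that all the identifications involved are equivariant for the Levi-side Hecke action, is where one imports the argument of \cite{CN23}, Proposition~2.2.15.

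For the remaining term I would use that $\mathcal{V}_{w_0^{Q_p}\lambda}^{\vee}$ is inflated, through the relevant parabolic quotient, from a representation of $M_p$ that is finite free over $\mathcal{O}/\varpi^m$; granting — as the conventions of §\ref{sec3.3} are arranged to ensure — that the integration subgroup therefore acts trivially on it, the projection-formula argument used in the proof of Corollary~\ref{Cor3.13} (via \cite{We94}, Corollary~10.8.3, the exactness of tensoring by a finite free $\mathcal{O}/\varpi^m$-module, and the exactness of $(-)^{\overline{Q}_p\textnormal{-ord}}$) yields a natural $\Delta_{M_p}$-equivariant isomorphism
\begin{equation*}
R\Gamma(\overline{N}_p^{\widetilde{c}},\pi(K^p,\mathcal{O}/\varpi^m)\otimes_{\mathcal{O}/\varpi^m}\mathcal{V}_{w_0^{Q_p}\lambda}^{\vee}/\varpi^m)^{\overline{Q}_p\textnormal{-ord}}\cong \pi^{\overline{Q}_p\textnormal{-ord}}(K^p,\mathcal{O}/\varpi^m)\otimes_{\mathcal{O}/\varpi^m}\mathcal{V}_{w_0^{Q_p}\lambda}^{\vee}/\varpi^m.
\end{equation*}
Combining this with the vanishing of the third vertex, the distinguished triangle collapses to the asserted isomorphism $\pi^{\overline{Q}_p\textnormal{-ord}}(K^p,\mathcal{V}_{\lambda}^{\vee}/\varpi^m)\xrightarrow{\sim}\pi^{\overline{Q}_p\textnormal{-ord}}(K^p,\mathcal{O}/\varpi^m)\otimes_{\mathcal{O}/\varpi^m}\mathcal{V}_{w_0^{Q_p}\lambda}^{\vee}/\varpi^m$ in $D^+_{\textnormal{sm}}(\mathcal{O}/\varpi^m[\Delta_{M_p}])$, as desired.
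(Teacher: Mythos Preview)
Your proposal is correct and follows exactly the approach the paper indicates: the paper states that ``the formalism of \S\ref{sec3.1} combined with the short exact sequence $0\to \mathcal{V}_{w_0^{Q_p}\lambda}^{\vee}\to \mathcal{V}_{\lambda}^{\vee}\to \mathcal{K}_{\lambda}^{\vee}\to 0$ induced by taking duals of the surjection of Lemma~\ref{Lem2.14} yields the independence of weight property for dual coefficients,'' and you have simply unpacked this sentence in detail, including the vanishing of the $\mathcal{K}_{\lambda}^{\vee}$-term via nilpotence of $z_p=u_p^{Q_p,-1}$ and the projection-formula identification of the $\mathcal{V}_{w_0^{Q_p}\lambda}^{\vee}$-term. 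One cosmetic remark: the natural map furnished by the triangle runs from the $\mathcal{V}_{w_0^{Q_p}\lambda}^{\vee}$-side into the $\mathcal{V}_{\lambda}^{\vee}$-side, so the arrow in the statement is the inverse of that map.
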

\begin{Rem}\label{Rem3.17}
    Note that even though the definition of $\overline{Q}_p$-ordinary completed cohomology seems to depend on the choice of $\Tilde{c}\geq c_p$, it in fact is independent of this choice up to natural isomorphism. To see this, consider two integers $\Tilde{c}_1,\Tilde{c}_2\geq c_p$. Then, by independence of weight for $i=1,2$, we have natural isomorphisms
    \begin{equation*}
        R\Gamma(\overline{N}_p^{\Tilde{c}_i},\pi(K^p,\mathcal{V}_{(\lambda,\tau)}^{Q_p,\vee}/\varpi^m))^{\overline{Q}_p\textnormal{-ord}}\cong R\Gamma(\overline{N}_p^{\Tilde{c}_i},\pi(K^p,\mathcal{O}/\varpi^m))^{\overline{Q}_p\textnormal{-ord}}\otimes_{\mathcal{O}/\varpi^m}\sigma(\lambda,\underline{\tau})^{\circ}/\varpi^m
    \end{equation*}
    in $D^+_{\textnormal{sm}}(\mathcal{O}/\varpi^m[\Delta_{M_p}])$. In particular, we can reduce the question to one with trivial coefficients. In that case, $\pi(K^p,\mathcal{O}/\varpi^m)$ in fact lies in $D^+_{\textnormal{sm}}(\mathcal{O}/\varpi^m[\prod_{v\in S_p}\overline{Q}_v(F_v)])$ and the proof of \cite{Eme10}, Proposition 3.1.12 shows that there is a natural isomorphism
    \begin{equation}
        R\Gamma(\overline{N}_p^{\Tilde{c}_1},\pi(K^p,\mathcal{O}/\varpi^m))^{\overline{Q}_p\textnormal{-ord}}\cong R\Gamma(\overline{N}_p^{\Tilde{c}_2},\pi(K^p,\mathcal{O}/\varpi^m))^{\overline{Q}_p\textnormal{-ord}}
    \end{equation}
    in $D^+_{\textnormal{sm}}(\mathcal{O}/\varpi^m[M_p])$, showing the claim.
We note that the same argument shows that instead of $\overline{N}^{\Tilde{c}}_p$, we could have taken any compact open $\overline{N}_p^{\circ}\subset \overline{N}_p^{c_p}$ that is preserved under conjugation by $M^0_p$.
\end{Rem}
As a consequence of Remark~\ref{Rem3.17}, we obtain an independence of level property for dual coefficients.
\begin{Prop}[Independence of level]\label{Prop3.18}
    For integers $m\geq 1$, and $\Tilde{c}\geq c_p$, the natural $\mathbf{T}^T$-equivariant morphism
    \begin{equation*}
        R\Gamma(X_{K(0,c_p)},\mathcal{V}_{(\lambda,\underline{\tau})}^{Q_p,\vee}/\varpi^m)^{\overline{Q}_p\textnormal{-ord}}\to R\Gamma(X_{K(0,\Tilde{c})},\mathcal{V}_{(\lambda,\underline{\tau})}^{Q_p,\vee}/\varpi^m)^{\overline{Q}_p\textnormal{-ord}}
    \end{equation*}
    is an isomorphism in $D^+(\mathcal{O}/\varpi^m)$.
\end{Prop}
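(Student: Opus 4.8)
The plan is to deduce the proposition from Remark~\ref{Rem3.17}, following the template by which Corollary~\ref{Cor3.12} was obtained from Hida theory in the $Q_p$-ordinary case. The point is that for each $c\in\{c_p,\widetilde{c}\}$ the complex $R\Gamma(X_{K(0,c)},\mathcal{V}_{(\lambda,\underline{\tau})}^{Q_p,\vee}/\varpi^m)^{\overline{Q}_p\textnormal{-ord}}$ is recovered from $\overline{Q}_p$-ordinary completed cohomology, which by Remark~\ref{Rem3.17} does not depend on the auxiliary compact open subgroup $\overline{N}_p^{c}\subset\overline{N}_p$ entering its definition.

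First I would record the analogue of Lemma~\ref{Lem3.10} in the $\overline{Q}_p$-ordinary setting of \S\ref{sec3.3}: since the relevant cohomology groups are finite $\mathcal{O}/\varpi^m$-modules, the direct summand of $R\Gamma(X_{K(0,c)},\mathcal{V}_{(\lambda,\underline{\tau})}^{Q_p,\vee}/\varpi^m)$ on which the Hecke operator attached to $u_p^{Q_p,-1}$ acts invertibly coincides with the image of the Emerton-style $\overline{Q}_p$-ordinary projector on $R\Gamma(\mathcal{Q}_p(0,c),\pi(K^p,\mathcal{V}_{(\lambda,\underline{\tau})}^{Q_p,\vee}/\varpi^m))$, by the argument of \cite{ACC23}, Proposition 5.2.15. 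Combining this with Corollary~\ref{Cor3.9} — applied to the data fixed in \S\ref{sec3.3} (parabolic $\overline{Q}_p$, open submonoid $(\Delta_{M_p}^{+})^{-1}$, central element $z_p=u_p^{Q_p,-1}$, type $\sigma(\lambda,\underline{\tau})^{\circ,\vee}$) — and with independence of weight for dual coefficients, I obtain for each $c\in\{c_p,\widetilde{c}\}$ a natural $\mathbf{T}^T$-equivariant isomorphism
\[
R\Gamma(X_{K(0,c)},\mathcal{V}_{(\lambda,\underline{\tau})}^{Q_p,\vee}/\varpi^m)^{\overline{Q}_p\textnormal{-ord}}\;\cong\;R\Hom_{\mathcal{O}/\varpi^m[M_p^0]}\!\bigl(\sigma(\lambda,\underline{\tau})^{\circ}/\varpi^m,\,R\Gamma(\overline{N}_p^{c},\pi(K^p,\mathcal{O}/\varpi^m))^{\overline{Q}_p\textnormal{-ord}}\bigr),
\]
and these are compatible with the restriction map along $K(0,\widetilde{c})\subset K(0,c_p)$ on the left and with the map induced by $\overline{N}_p^{\widetilde{c}}\subset\overline{N}_p^{c_p}$ on the right. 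By Remark~\ref{Rem3.17} the latter map is an isomorphism in $D^+_{\textnormal{sm}}(\mathcal{O}/\varpi^m[M_p])$, so applying $R\Hom_{\mathcal{O}/\varpi^m[M_p^0]}(\sigma(\lambda,\underline{\tau})^{\circ}/\varpi^m,-)$ shows that the transition morphism of the proposition is an isomorphism; the same argument covers the compactly supported and boundary variants.

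The main obstacle I anticipate is index bookkeeping rather than anything conceptual. One has to check that the geometric side $R\Gamma(X_{K(0,c)},-)^{\overline{Q}_p\textnormal{-ord}}$ is genuinely the output of the \S\ref{sec3.1} machinery for the \emph{opposite} parabolic — in particular that the Iwahori-level group $\mathcal{Q}_p(0,c)=\overline{N}_p^{c}M_p^0N_p^0$ used geometrically may be replaced, after applying the $\overline{Q}_p$-ordinary projector, by the slightly smaller subgroup $N_p^1M_p^0\overline{N}_p^{c}$ that appears in \S\ref{sec3.1} (the residual $N_p^0/N_p^1$-action being trivialised by the projector) — and that the isomorphism of Remark~\ref{Rem3.17}, extracted from the proof of \cite{Eme10}, Proposition 3.1.12, is exactly the one induced by the inclusion of unipotent radicals, so that the composite chain of identifications really computes the natural transition map. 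Once these compatibilities are pinned down the statement is formal.
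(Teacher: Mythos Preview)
Your approach is correct and is essentially the one the paper has in mind: the paper records Proposition~\ref{Prop3.18} simply as ``a consequence of Remark~\ref{Rem3.17}'', and your write-up is a faithful unpacking of that sentence via the $\overline{Q}_p$-analogue of Lemma~\ref{Lem3.10} and Corollary~\ref{Cor3.9}. The index bookkeeping you flag (matching $\mathcal{Q}_p(0,c)$ with the compact opens produced by the \S\ref{sec3.1} formalism for the opposite parabolic) is real but, as you say, harmless once independence of the $N_p$-depth is absorbed into Corollary~\ref{Cor3.9}.
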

Due to the independence of weight property, and Corollary~\ref{Cor3.9}, we can introduce the relevant Hecke action at $p$.
\begin{Cor}\label{Cor3.19}
    We have a natural isomorphism
    \begin{equation*}
R\Gamma(X_{K(0,\widetilde{c})},\mathcal{V}_{(\lambda,\underline{\tau})}^{Q_p,\vee}/\varpi^m)^{\overline{Q}_p\textnormal{-ord}}\cong
    \end{equation*}
    \begin{equation*}
        \cong R\Hom_{\mathcal{O}/\varpi^m[M^0_p]}(\sigma(\lambda,\underline{\tau})^{\circ,\vee}/\varpi^m,\pi^{\overline{Q}_p\textnormal{-ord}}(K^p,\mathcal{O}/\varpi^m))
    \end{equation*}
    in $D^+(\mathcal{O}/\varpi^m)$.
    In particular, we have a natural algebra homomorphism
    \begin{equation*}
        \mathcal{H}(\sigma(\lambda,\underline{\tau})^{\circ})\otimes_{\mathcal{O}}\mathcal{O}/\varpi^m\to \textnormal{End}_{D^+(\mathcal{O}/\varpi^m)}(R\Gamma(X_{K(0,\widetilde{c})},\mathcal{V}_{(\lambda,\underline{\tau})}^{Q_p,\vee}/\varpi^m)^{\overline{Q}_p\textnormal{-ord}}).
    \end{equation*}
\end{Cor}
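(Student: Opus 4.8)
The plan is to transpose the proof of Corollary~\ref{Cor3.15} to the $\overline{Q}_p$-ordinary, dual-coefficient setting, replacing $Q_p$ by $\overline{Q}_p$ throughout and using the dual-coefficient independence of weight proved just above (the Proposition preceding Remark~\ref{Rem3.17}) together with the dual-coefficient independence of level (Proposition~\ref{Prop3.18}) in place of Proposition~\ref{Prop3.14} and Corollary~\ref{Cor3.12}.

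For the first isomorphism I would feed $\pi=\pi(K^p,\mathcal{V}_{(\lambda,\underline{\tau})}^{Q_p,\vee}/\varpi^m)$ into Corollary~\ref{Cor3.9}, applied to the \S\ref{sec3.1}-data fixed at the start of \S\ref{sec3.3} (parabolic $\overline{Q}_p=M_p\ltimes\overline{N}_p$, $N^0=\overline{N}_p^{\widetilde{c}}$, $\Delta_M^+=(\Delta_{M_p}^+)^{-1}$, $z_p=u_p^{Q_p,-1}$), and combine it with Lemma~\ref{Lem3.10} to express $R\Gamma(X_{K(0,\widetilde{c})},\mathcal{V}_{(\lambda,\underline{\tau})}^{Q_p,\vee}/\varpi^m)^{\overline{Q}_p\textnormal{-ord}}$ as $R\Hom_{\mathcal{O}/\varpi^m[M_p^0]}(\sigma(\lambda,\underline{\tau})^{\circ,\vee}/\varpi^m,-)$ applied to the $\overline{Q}_p$-ordinary $\overline{N}_p^{\widetilde{c}}$-cohomology of $\pi(K^p,\mathcal{V}_\lambda^\vee/\varpi^m)$. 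Here I would first peel the smooth factor $\widetilde{\sigma(\underline{\tau})}^{\circ}$ off $\mathcal{V}_{(\lambda,\underline{\tau})}^{Q_p,\vee}$ and move it past every functor in sight, using that it is flat over $\mathcal{O}/\varpi^m$ with exact tensor product and exact left adjoint (\cite{We94}, Corollary~10.8.3), exactly as in the proof of Corollary~\ref{Cor3.13}; the residual $\vee$-bookkeeping relating $\mathcal{V}_\lambda^\vee$, $\mathcal{V}_{w_0^{Q_p}\lambda}^\vee$ and $\sigma(\lambda,\underline{\tau})^{\circ,\vee}=\mathcal{V}_{w_0^{Q_p}\lambda}\otimes_{\mathcal{O}}\sigma(\underline{\tau})^{\circ,\vee}$ I would carry out as in the proof of Corollary~\ref{Cor3.15}, now invoking the $\overline{Q}_p$-ordinary independence of weight; this produces $\pi^{\overline{Q}_p\textnormal{-ord}}(K^p,\mathcal{O}/\varpi^m)$ and the asserted isomorphism.

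For the algebra homomorphism I would repeat the second half of the proof of Corollary~\ref{Cor3.15}. Running the \S\ref{sec3.1}-formalism with the full monoid $M^+$ in place of the smaller $\Delta_M^+=(\Delta_{M_p}^+)^{-1}$ upgrades $\pi^{\overline{Q}_p\textnormal{-ord}}(K^p,\mathcal{O}/\varpi^m)$ to an object of $D^+_{\textnormal{sm}}(\mathcal{O}/\varpi^m[M_p])$, so that the right-hand side of the first isomorphism naturally lies in $D^+(\mathcal{H}(\sigma(\lambda,\underline{\tau})^{\circ}/\varpi^m))$. Using that the forgetful functor $\textnormal{Mod}_{\textnormal{sm}}(\mathcal{O}/\varpi^m[G_p])\to\textnormal{Mod}_{\textnormal{sm}}(\mathcal{O}/\varpi^m[M^+\ltimes\overline{N}_p^{\widetilde{c}}])$ carries injectives to $\Gamma(\overline{N}_p^{\widetilde{c}},-)$-acyclics (\cite{Eme10b}, Proposition~2.1.11) and that the forgetful functor $\textnormal{Mod}_{\textnormal{sm}}(\mathcal{O}/\varpi^m[M_p])\to\textnormal{Mod}_{\textnormal{sm}}(\mathcal{O}/\varpi^m[M_p^0])$ preserves injectives (\cite{Eme10b}, Proposition~2.1.2), together with \cite{We94}, Corollary~10.8.3, I would show that this complex is computed by applying $R\Gamma(\overline{N}_p^{\widetilde{c}},-)^{\overline{Q}_p\textnormal{-ord}}$ followed by $R\Hom_{\mathcal{O}/\varpi^m[M_p^0]}(\sigma(\lambda,\underline{\tau})^{\circ,\vee}/\varpi^m,-)$ to $\pi(K^p,\mathcal{O}/\varpi^m)$, whence it acquires an action of $\mathcal{H}(\sigma(\lambda,\underline{\tau})^{\circ}/\varpi^m)$. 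Precomposing with the reduction map $\mathcal{H}(\sigma(\lambda,\underline{\tau})^{\circ})\to\mathcal{H}(\sigma(\lambda,\underline{\tau})^{\circ}/\varpi^m)$ induced by $0\to\textnormal{c-Ind}_{M_p^0}^{M_p}\sigma(\lambda,\underline{\tau})^{\circ}\xrightarrow{\varpi^m}\textnormal{c-Ind}_{M_p^0}^{M_p}\sigma(\lambda,\underline{\tau})^{\circ}\to\textnormal{c-Ind}_{M_p^0}^{M_p}(\sigma(\lambda,\underline{\tau})^{\circ}/\varpi^m)\to 0$ then gives the desired homomorphism.

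The hard part will be checking that the injective-preservation and acyclicity inputs of \cite{Eme10}, \cite{Eme10b} used above, which are stated for a parabolic $Q$ and its positive monoid $M^+$, go through for the opposite parabolic $\overline{Q}_p$, for the monoid $(M_p^+)^{-1}$, for the non-compact group $\overline{N}_p$ (with its chosen compact open $\overline{N}_p^{\widetilde{c}}$), and over $\mathcal{O}/\varpi^m$ rather than a field. This is exactly what the axioms of \S\ref{sec3.1} (in particular Hypothesis~\ref{hyp3.2}) and the choices recorded at the start of \S\ref{sec3.3} are designed to guarantee, in combination with the independence-of-$\widetilde{c}$ argument of Remark~\ref{Rem3.17}; still, this is the step I would write out most carefully. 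A secondary, purely bookkeeping point — relevant downstream for the Hecke-equivariant Verdier duality of Proposition~\ref{Prop3.20} — is to verify, as in the last paragraph of \S\ref{sec3.2}, that the Hecke action at $p$ obtained here agrees with the one furnished by the formalism of \S\ref{sec2.4} after passing (via independence of level and weight) to level $K(0,m)$.
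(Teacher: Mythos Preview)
Your proposal is correct and follows essentially the same approach as the paper: the paper's proof explicitly says ``The proof is identical to that of Corollary~\ref{Cor3.15}'', reduces via Corollary~\ref{Cor3.9} and the $\overline{Q}_p$-analogue of Lemma~\ref{Lem3.10} to $R\Gamma(M_p^0,\pi^{\overline{Q}_p\textnormal{-ord}}(K^p,\mathcal{V}_{(\lambda,\underline{\tau})}^{Q_p,\vee}/\varpi^m))$, and then passes to trivial coefficients via the dual of the surjection $\mathcal{V}_{(\lambda,\underline{\tau})}^{Q_p}/\varpi^m\to \widetilde{\sigma(\lambda,\underline{\tau})^{\circ,\vee}/\varpi^m}$, which packages in one step your ``peel off the smooth factor'' plus ``dual independence of weight''. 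One small remark: neither Corollary~\ref{Cor3.12} nor Proposition~\ref{Prop3.18} is actually needed here (the proof of Corollary~\ref{Cor3.15} uses only Corollary~\ref{Cor3.13} and Proposition~\ref{Prop3.14}), so you can drop that reference.
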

\begin{proof}
    The proof is identical to that of Corollary~\ref{Cor3.15}. Namely, by applying Corollary~\ref{Cor3.9}, and the obvious analogue of Lemma~\ref{Lem3.10}, we get an isomorphism
    \begin{equation*}
R\Gamma(X_{K(0,\widetilde{c})},\mathcal{V}_{(\lambda,\underline{\tau})}^{Q_p,\vee}/\varpi^m)^{\overline{Q}_p\textnormal{-ord}}\cong R\Gamma(M^0_p,\pi^{\overline{Q}_p\textnormal{-ord}}(K^p,\mathcal{V}_{(\lambda,\underline{\tau})}^{Q_p,\vee}/\varpi^m)).
    \end{equation*}
    Then the isomorphism we seek is induced by the dual of the surjection
    \begin{equation}
\mathcal{V}_{(\lambda,\underline{\tau})}^{Q_p}/\varpi^m\to \widetilde{\sigma(\lambda,\underline{\tau})^{\circ,\vee}/\varpi^m}.\end{equation}
\end{proof}
We mention that the introduced Hecke action once again has a slightly different description just as explained at the end of \S\ref{sec3.2}.

Finally, we deduce Hecke-equivariance of Poincar\'e duality for $Q_p$-ordinary cohomology with $\mathcal{V}_{(\lambda,\underline{\tau})}^{Q_p}/\varpi^m$-coefficients.
\begin{Prop}\label{Prop3.20}
    The Verdier duality isomorphism
    \begin{equation*}
        R\Hom_{\mathcal{O}/\varpi^m}(R\Gamma_c(X_{K(0,\widetilde{c})},\mathcal{V}_{(\lambda,\underline{\tau})}^{Q_p,\vee}/\varpi^m),\mathcal{O}/\varpi^m)\cong R\Gamma(X_{K(0,\widetilde{c})},\mathcal{V}_{(\lambda,\underline{\tau})}^{Q_p}/\varpi^m)[\dim_{\mathbf{R}}X_K]
    \end{equation*}
    induces an isomorphism
    \begin{equation*}
        R\Hom_{\mathcal{O}/\varpi^m}(R\Gamma_c(X_{K(0,\widetilde{c})},\mathcal{V}_{(\lambda,\underline{\tau})}^{Q_p,\vee}/\varpi^m)^{\overline{Q}_p\textnormal{-ord}},\mathcal{O}/\varpi^m)\cong 
    \end{equation*}
    \begin{equation*}
        \cong R\Gamma(X_{K(0,\widetilde{c})},\mathcal{V}_{(\lambda,\underline{\tau})}^{Q_p}/\varpi^m)^{Q_p\textnormal{-ord}}[\dim_{\mathbf{R}}X_K]
    \end{equation*}
    in $D^+(\mathcal{O}/\varpi^m)$. Moreover, the latter isomorphism is equivariant with respect to the natural left action of $\mathcal{H}(\sigma(\lambda,\underline{\tau})^{\circ,\vee})\otimes_{\mathcal{O}}\mathcal{O}/\varpi^m$ on the RHS and the one induced by the anti-isomorphism
    \begin{equation*}
        \mathcal{H}(\sigma(\lambda,\underline{\tau})^{\circ,\vee}/\varpi^m)\xrightarrow{\sim} \mathcal{H}(\sigma(\lambda,\underline{\tau})^{\circ}/\varpi^m),
    \end{equation*}
    \begin{equation*}
        [g,\psi]\mapsto [g^{-1},\psi^t]
    \end{equation*}
    on the LHS.
\end{Prop}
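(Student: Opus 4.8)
The plan is to deduce the statement from the Hecke-equivariance of Verdier duality established in Corollary~\ref{Cor2.8}, applied after passing — by independence of level and of weight — to the cohomology of honest parahoric-level local systems.

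First, all the complexes in sight are perfect over $\mathcal{O}/\varpi^m$ (being finite-level Betti cohomology of a local system of finite free $\mathcal{O}/\varpi^m$-modules on the compact manifold $\overline{X}_{K(0,\widetilde c)}$; cf.\ \cite{CN23}, Lemma 2.1.6), and so are their direct summands; hence $R\Hom_{\mathcal{O}/\varpi^m}(-,\mathcal{O}/\varpi^m)$ is exact on them and commutes with finite direct sum decompositions. Since $\mathcal{V}_{(\lambda,\underline{\tau})}^{Q_p}/\varpi^m$ is a finite free smooth $\mathcal{O}/\varpi^m[\mathcal{Q}_p(0,\widetilde c)]$-module, Corollary~\ref{Cor2.8} applies and asserts that the full-level Verdier duality isomorphism
\begin{equation*}
R\Hom_{\mathcal{O}/\varpi^m}(R\Gamma_c(X_{K(0,\widetilde c)},\mathcal{V}_{(\lambda,\underline{\tau})}^{Q_p,\vee}/\varpi^m),\mathcal{O}/\varpi^m)\cong R\Gamma(X_{K(0,\widetilde c)},\mathcal{V}_{(\lambda,\underline{\tau})}^{Q_p}/\varpi^m)[\dim_{\mathbf{R}}X_K]
\end{equation*}
is equivariant for the Hecke action at $p$ up to the anti-isomorphism $[g,\psi]\mapsto[g^{-1},\psi^t]$. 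Under that anti-isomorphism the $U_p$-operators $U_v^{Q_v}$ on the right-hand side, being the elements attached to $u_v^{Q_v}$ with the $\lambda$-twisted $\psi$, go over to the elements attached to $u_v^{Q_v,-1}$ with the transposed twist — i.e.\ to the operators whose inversion cuts out the $\overline{Q}_p$-ordinary summand of $R\Gamma_c(X_{K(0,\widetilde c)},\mathcal{V}_{(\lambda,\underline{\tau})}^{Q_p,\vee}/\varpi^m)$ in \S\ref{sec3.3}. Consequently $R\Hom_{\mathcal{O}/\varpi^m}(-,\mathcal{O}/\varpi^m)$ carries the $\overline{Q}_p$-ordinary direct summand of the source isomorphically onto the $Q_p$-ordinary direct summand of the target, which is the first assertion of the proposition.

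It remains to match the Hecke actions at $p$ appearing in the statement with the ones governed by Corollary~\ref{Cor2.8}. By independence of level (Corollary~\ref{Cor3.12}, Proposition~\ref{Prop3.18}) and of weight (Proposition~\ref{Prop3.14} and its dual-coefficient analogue), together with Lemma~\ref{Lem3.7}, Corollary~\ref{Cor3.9} and the remarks closing \S\ref{sec3.2} and \S\ref{sec3.3}, there are identifications — equivariant for $\mathbf{T}^T$ and for the $U_p$-operators —
\begin{equation*}
R\Gamma(X_{K(0,\widetilde c)},\mathcal{V}_{(\lambda,\underline{\tau})}^{Q_p}/\varpi^m)^{Q_p\textnormal{-ord}}\cong R\Gamma(X_{K(0,m)},\widetilde{\sigma(\lambda,\underline{\tau})^{\circ,\vee}/\varpi^m})^{Q_p\textnormal{-ord}}
\end{equation*}
and
\begin{equation*}
R\Gamma_c(X_{K(0,\widetilde c)},\mathcal{V}_{(\lambda,\underline{\tau})}^{Q_p,\vee}/\varpi^m)^{\overline{Q}_p\textnormal{-ord}}\cong R\Gamma_c(X_{K(0,m)},\widetilde{\sigma(\lambda,\underline{\tau})^{\circ}/\varpi^m})^{\overline{Q}_p\textnormal{-ord}},
\end{equation*}
where the two parahoric-level coefficient systems are $\mathcal{O}/\varpi^m$-linearly dual to one another. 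Under these identifications the intrinsic actions of Corollaries~\ref{Cor3.15} and~\ref{Cor3.19} become the ($Q_p$-, resp.\ $\overline{Q}_p$-ordinary localisations of the) honest actions of $\mathcal{H}(\widetilde{\sigma(\lambda,\underline{\tau})^{\circ,\vee}/\varpi^m})$ and $\mathcal{H}(\widetilde{\sigma(\lambda,\underline{\tau})^{\circ}/\varpi^m})$ described by the formalism of \S\ref{sec2.4}, and the Levi anti-isomorphism $[g,\psi]\mapsto[g^{-1},\psi^t]$ corresponds to the parahoric-level one of \S\ref{sec2.3}. Applying Corollary~\ref{Cor2.8} at level $K(0,m)$ with $\sigma=\widetilde{\sigma(\lambda,\underline{\tau})^{\circ}/\varpi^m}$, restricting the resulting equivariant isomorphism to the ordinary summands — which is compatible with the identifications above because $R\Hom_{\mathcal{O}/\varpi^m}(-,\mathcal{O}/\varpi^m)$ is additive — and transporting back then yields the asserted equivariance.

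The main obstacle is the type-and-twist bookkeeping in the last paragraph. One must check that the independence-of-weight identification is equivariant at $p$ in exactly the sense needed — that the Levi-Hecke action of Corollary~\ref{Cor3.15}, built from the $M_p$-representation $\pi^{Q_p\textnormal{-ord}}(K^p,\mathcal{O}/\varpi^m)$ together with the $\lambda$-twist, really is the restriction of the cohomological-correspondence action on the untwisted complex $R\Gamma(X_{K(0,m)},\widetilde{\sigma(\lambda,\underline{\tau})^{\circ,\vee}/\varpi^m})$ — and that the choices $z_p=u_p^{Q_p,-1}$ and $\sigma=\sigma(\lambda,\underline{\tau})^{\circ,\vee}$ made in \S\ref{sec3.3} exhibit $\overline{Q}_p$-ordinary Hida theory for $\mathcal{V}_{(\lambda,\underline{\tau})}^{Q_p,\vee}$ as the genuine Verdier dual of $Q_p$-ordinary Hida theory for $\mathcal{V}_{(\lambda,\underline{\tau})}^{Q_p}$, so that the transposed $\lambda$-twist at $u_v^{Q_v,-1}$ matches the action defining the $\overline{Q}_p$-ordinary part. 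Once these compatibilities are in place, the remainder of the argument is purely formal.
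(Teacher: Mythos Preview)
Your proposal is correct and follows essentially the same route as the paper's proof: deduce the first assertion from Corollary~\ref{Cor2.8} applied with $\sigma=\mathcal{V}_{(\lambda,\underline{\tau})}^{Q_p}/\varpi^m$ (noting that the anti-isomorphism exchanges the $U_p$-operator with its $\overline{Q}_p$-counterpart), and for the Hecke-equivariance use independence of level and of weight to pass to the inflated-type coefficient system $\widetilde{\sigma(\lambda,\underline{\tau})^{\circ,\vee}/\varpi^m}$, where Corollary~\ref{Cor2.8} together with Corollary~\ref{Cor3.9} gives the claim directly. The only cosmetic difference is that the paper reduces to $\widetilde{c}\geq m$ and stays at level $K(0,\widetilde{c})$, whereas you pass to level $K(0,m)$; both achieve the same goal of making the inflated type a genuine $\mathcal{Q}_p(0,-)$-representation, and the paper's footnote at the end of its proof records exactly the compatibility you flag in your last paragraph.
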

\begin{proof}
    The first part follows from applying Corollary~\ref{Cor2.8} with $\sigma=\mathcal{V}_{(\lambda,\underline{\tau})}^{Q_p}/\varpi^m$ and noting that $U_p^{Q_p}=[u_p^{Q_p},u_p^{Q_p}\cdot(-)]\in \mathcal{H}(\mathcal{V}_{(\lambda,\underline{\tau})}^{Q_p}/\varpi^m)$.

    To see the second part, we reduce the question to the case when $\Tilde{c}\geq m$ using independence of level (cf. Corollary~\ref{Cor3.12}, Proposition~\ref{Prop3.18}). In particular, $\widetilde{\sigma(\lambda,\underline{\tau})^{\circ,\vee}/\varpi^m}$ makes sense as a representation of $\mathcal{Q}(0,\widetilde{c})$! Then, by independence of weight and naturality of Verdier duality applied to the $\mathcal{Q}(0,\widetilde{c})$-equivariant surjection
    \begin{equation}\label{eq3.4}
\mathcal{V}_{(\lambda,\underline{\tau})}^{Q_p}/\varpi^m\to \widetilde{\sigma(\lambda,\underline{\tau})^{\circ,\vee}/\varpi^m},
    \end{equation}
    the mentioned Verdier duality isomorphism for ordinary parts is also induced by the Verdier duality isomorphism
    \begin{equation}\label{eq3.5}
        R\Hom_{\mathcal{O}/\varpi^m}(R\Gamma_c(X_{K(0,\widetilde{c})},\widetilde{\sigma(\lambda,\underline{\tau})^{\circ}/\varpi^m}),\mathcal{O}/\varpi^m)\cong R\Gamma(X_{K(0,\widetilde{c})},\widetilde{\sigma(\lambda,\underline{\tau})^{\circ,\vee}/\varpi^m}).
    \end{equation}
    However, the Verdier duality isomorphism \ref{eq3.5} satisfies the desired Hecke-equivariance by Corollary~\ref{Cor2.8} and Corollary~\ref{Cor3.9}.\footnote{Note that at this step we used that our Hecke action at $p$ matches up with the Hecke action constructed at the end of \S\ref{sec3.2}.}
\end{proof}

\subsection{$\widetilde{Q}$-ordinary Hida theory for $\widetilde{G}$}\label{Sec3.4}
As everything mentioned in the previous two subsections applies verbatim for the group $\widetilde{G}$ at split $p$-adic places of $F^+$, we will only set up the notations and explain the relevant results. We revisit the setup of the corresponding part of \S\ref{sec2.7}. In  particular, we remind the reader of Assumption~\ref{blanket}. Fix a subset of $p$-adic places $\overline{S}\subset \overline{S}_p$ and a lift $v\mid \Bar{v}$ for each $\Bar{v}\in \overline{S}_p$ in $S_p$. Consider a tuple $(Q_p,\lambda,\underline{\tau})$ as previously and consider a corresponding tuple $(\widetilde{Q}_{\overline{S}},\Tilde{\lambda}_{\overline{S}},\underline{\tau}):=(\widetilde{Q}_{\Bar{v}},\Tilde{\lambda}_{\Bar{v}}, \underline{\tau_{\Bar{v}}})_{\Bar{v}\in\overline{S}}$ as in \S\ref{sec2.7}.\footnote{Note that in particular we are implicitly assuming that $\widetilde{\lambda}_{\overline{S}}$ is dominant.} Further set $\Tilde{\lambda}$ to be some extension of $\Tilde{\lambda}$ to a $(\textnormal{Res}_{F^+/\mathbf{Q}}\widetilde{B})_E$-dominant weight of $(\textnormal{Res}_{F^+/\mathbf{Q}}\widetilde{G})_E$. We then similarly form the parahoric level subgroups
\begin{equation*}
    \widetilde{\mathcal{Q}}_{\overline{S}}(b,c)=\prod_{\Bar{v}\in \overline{S}}\widetilde{\mathcal{Q}}_{\Bar{v}}(b,c)\subset \prod_{\Bar{v}\in\overline{S}}\widetilde{G}(\mathcal{O}_{F^+_{\Bar{v}}})
\end{equation*}
for integers $0\leq b\leq c$ with $c\geq c_p$. For the rest of the subsection, we fix a prime-to-$\overline{S}$ good level subgroup $\widetilde{K}^{\overline{S}}\subset \widetilde{G}(\mathbf{A}_{F^+}^{\overline{S}\cup\{\infty\}})$ and set $\widetilde{K}(b,c)$ to be the good subgroup $\widetilde{K}^{\overline{S}}\widetilde{\mathcal{Q}}_{\overline{S}}(b,c)\subset \widetilde{G}(\mathbf{A}_{F^+}^{\infty})$. We then freely borrow the notation of \S\ref{sec2.8}. In particular, we have an open submonoid $\widetilde{\Delta}_{\widetilde{\mathcal{Q}}^{w_0}_{\overline{S}}}(c_p)\subset \widetilde{G}_{\overline{S}}$, and can set $\widetilde{\Delta}_{\widetilde{M}_{\overline{S}}^{w_0}}^+:=\widetilde{\Delta}_{\widetilde{\mathcal{Q}}^{w_0}_{\overline{S}}}(c_p)\cap \widetilde{M}^{w_0}_{\overline{S}}\subset \widetilde{M}_{\overline{S}}^{w_0,+},$ and $\widetilde{\Delta}_{\widetilde{M}_{\overline{S}}^{w_0}}:=\widetilde{\Delta}_{\widetilde{M}_{\overline{S}}^{w_0}}^+[u_{\Bar{v}}^{\widetilde{Q}^{w_0}_{v},\pm 1}\mid \Bar{v}\in \overline{S}]$.

For $c\geq b\geq 0$ with $c\geq c_p$, the Hecke algebra $\mathcal{H}(\widetilde{\Delta}_{\widetilde{\mathcal{Q}}_{\overline{S}}^{w_0}}(c),\widetilde{\mathcal{Q}}_{\overline{S}^{w_0}}(b,c))$ acts on $R\Gamma(\widetilde{X}_{\widetilde{K}(b,c)},\mathcal{V}_{(\Tilde{\lambda},\underline{\tau})}^{\widetilde{Q}_{\overline{S}}^{w_0}})$ via endomorphisms in $D^+(\mathcal{O})$. In particular, we introduce 
the $\widetilde{Q}^{w_0}_{\overline{S}}$-ordinary parts of the complex $R\Gamma(\widetilde{X}_{\widetilde{K}(b,c)},\mathcal{V}_{(\Tilde{\lambda},\underline{\tau})}^{\widetilde{Q}_{\overline{S}}^{w_0}})^{\widetilde{Q}_{\overline{S}}^{w_0}\textnormal{-ord}}$ as the maximal direct summand on which each $U_{\Bar{v}}^{\widetilde{Q}_{\Bar{v}}^{w_0}}$ acts invertibly.

On the other hand, the formalism of \S\ref{sec3.1} can be applied with the choices $Q=\widetilde{Q}_{\overline{S}}^{w_0}$, $N^0=\widetilde{N}_{\overline{S}}^{w_0,0}$, $M^b=\widetilde{M}^{w_0,b}_{\overline{S}}$, $\overline{N}^c=\overline{\widetilde{N}}_{\overline{S}}^{w_0,c}$, and $\sigma$ being the trivial $\widetilde{\Delta}_{\widetilde{M}_{\overline{S}}^{w_0}}^+$-module.  One can then compare the two constructions and see that the analogue of Lemma~\ref{Lem3.10} holds.

We can further introduce $\widetilde{Q}_{\overline{S}}^{w_0}$-ordinary completed cohomology
\begin{equation*}
    \pi^{\widetilde{Q}_{\overline{S}}^{w_0}\textnormal{-ord}}(\widetilde{K}^{\overline{S}},\mathcal{V}_{(\Tilde{\lambda},\underline{\tau})}^{\widetilde{Q}_{\overline{S}}^{w_0}}/\varpi^m):= 
\end{equation*}
\begin{equation*}
    R\Gamma(\widetilde{N}^{w_0,0}_{\overline{S}},\pi(\widetilde{K}^{\overline{S}},\mathcal{V}_{(\Tilde{\lambda},\underline{\tau,N})}^{\widetilde{Q}_{\overline{S}}^{w_0}}/\varpi^m))^{\widetilde{Q}_{\overline{S}}^{w_0}\textnormal{-ord}}\in D^+_{\textnormal{sm}}(\mathcal{O}/\varpi^m[\widetilde{\Delta}_{\widetilde{M}_{\overline{S}}^{w_0}}]).
\end{equation*}
The analogues of the independence of level and weight property hold once again. To emphasise the normalisations and introduce the necessary notations for later, we spell out the statement of the latter.
\begin{Prop}[\textnormal{Independence of weight}]
    For any integer $m\geq 1$, there is a natural isomorphism
    \begin{equation*}
        \pi^{\widetilde{Q}_{\overline{S}}^{w_0}\textnormal{-ord}}(\widetilde{K}^{\overline{S}},\mathcal{V}_{\Tilde{\lambda}}/\varpi^m)\xrightarrow{\sim}\pi^{\widetilde{Q}_{\overline{S}}^{w_0}\textnormal{-ord}}(\widetilde{K}^{\overline{S}},\mathcal{V}_{\Tilde{\lambda}^{\overline{S}}})\otimes_{\mathcal{O}/\varpi^m}\mathcal{V}_{w_0^{\widetilde{Q}_{\overline{S}}^{w_0}}\widetilde{\lambda}_{\overline{S}}}/\varpi^m
    \end{equation*}
    in $D^+_{\textnormal{sm}}(\mathcal{O}/\varpi^m[\widetilde{\Delta}_{\widetilde{M}_{\overline{S}}^{w_0}}])$.
\end{Prop}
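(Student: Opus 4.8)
The plan is to transport the proof of Proposition~\ref{Prop3.14} — equivalently that of \cite{CN23}, Proposition 2.2.15 — to $\widetilde{G}$ at the split $p$-adic places of $\overline{S}$, the only genuinely new input being the combinatorics of the $\widetilde{\lambda}$-twisted action attached to $\widetilde{Q}^{w_0}_{\overline{S}}$. As in the $G$-case it is cleanest to first prove the following level-wise statement: for any $\pi\in D^+_{\textnormal{sm}}(\mathcal{O}/\varpi^m[\widetilde{\Delta}_{\widetilde{\mathcal{Q}}^{w_0}_{\overline{S}}}(c)])$, the evaluation-at-the-identity map induces an isomorphism
\[
R\Gamma(\widetilde{N}_{\overline{S}}^{w_0,0},\pi\otimes_{\mathcal{O}/\varpi^m}\mathcal{V}_{\widetilde{\lambda}_{\overline{S}}}/\varpi^m)^{\widetilde{Q}^{w_0}_{\overline{S}}\textnormal{-ord}}\xrightarrow{\sim}R\Gamma(\widetilde{N}_{\overline{S}}^{w_0,0},\pi)^{\widetilde{Q}^{w_0}_{\overline{S}}\textnormal{-ord}}\otimes_{\mathcal{O}/\varpi^m}\mathcal{V}_{w_0^{\widetilde{Q}^{w_0}_{\overline{S}}}\widetilde{\lambda}_{\overline{S}}}/\varpi^m
\]
in $D^+_{\textnormal{sm}}(\mathcal{O}/\varpi^m[\widetilde{\Delta}_{\widetilde{M}^{w_0}_{\overline{S}}}])$. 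Specialising to $\pi=\pi(\widetilde{K}^{\overline{S}},\mathcal{V}_{\widetilde{\lambda}^{\overline{S}}}/\varpi^m)$ and using Lemma~\ref{Lemma2.2} to identify $\pi(\widetilde{K}^{\overline{S}},\mathcal{V}_{\widetilde{\lambda}}/\varpi^m)$ with $\pi(\widetilde{K}^{\overline{S}},\mathcal{V}_{\widetilde{\lambda}^{\overline{S}}}/\varpi^m)\otimes_{\mathcal{O}/\varpi^m}\mathcal{V}_{\widetilde{\lambda}_{\overline{S}}}/\varpi^m$ then yields the proposition.

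To prove the level-wise statement I would first record the $\widetilde{G}$-analogue of Lemma~\ref{Lem2.14}: evaluation of functions at the identity gives a $\prod_{\overline{v}\in\overline{S}}\widetilde{Q}^{w_0}_{\overline{v}}(\mathcal{O})$-equivariant surjection $\mathcal{V}_{\widetilde{\lambda}_{\overline{S}}}\twoheadrightarrow\mathcal{V}_{w_0^{\widetilde{Q}^{w_0}_{\overline{S}}}\widetilde{\lambda}_{\overline{S}}}$, and with the $\widetilde{\lambda}$-twisted action of $\widetilde{\Delta}_{\widetilde{\mathcal{Q}}^{w_0}_{\overline{S}}}(c_p)$ — well-defined on the lattice by the analogue of Lemma~\ref{Lem2.17} — this becomes mod $\varpi^m$ a short exact sequence $0\to\mathcal{K}\to\mathcal{V}_{\widetilde{\lambda}_{\overline{S}}}/\varpi^m\to\mathcal{V}_{w_0^{\widetilde{Q}^{w_0}_{\overline{S}}}\widetilde{\lambda}_{\overline{S}}}/\varpi^m\to 0$ of $\mathcal{O}/\varpi^m[\widetilde{\Delta}_{\widetilde{\mathcal{Q}}^{w_0}_{\overline{S}}}(c_p)]$-modules. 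Tensoring with $\pi$, applying $R\Gamma(\widetilde{N}_{\overline{S}}^{w_0,0},-)$ and passing to the $\widetilde{Q}^{w_0}_{\overline{S}}$-ordinary summand gives a distinguished triangle. On the top quotient the distinguished central elements $u^{\widetilde{Q}^{w_0}_{\overline{v}}}_{\overline{v}}$ act, after the twist, through units of $\mathcal{O}$ (being $Z(\widetilde{M}^{w_0}_{\overline{v}})$-central and calibrated against the lowest weight of $\mathcal{V}_{\widetilde{\lambda}_{\overline{v}}}$), and since $\widetilde{N}_{\overline{S}}^{w_0,0}$ acts trivially there, a projection-formula argument as in Lemma~\ref{Lemma2.2} identifies the right-hand vertex of the triangle with $R\Gamma(\widetilde{N}_{\overline{S}}^{w_0,0},\pi)^{\widetilde{Q}^{w_0}_{\overline{S}}\textnormal{-ord}}\otimes_{\mathcal{O}/\varpi^m}\mathcal{V}_{w_0^{\widetilde{Q}^{w_0}_{\overline{S}}}\widetilde{\lambda}_{\overline{S}}}/\varpi^m$. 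It therefore suffices to show $R\Gamma(\widetilde{N}_{\overline{S}}^{w_0,0},\pi\otimes_{\mathcal{O}/\varpi^m}\mathcal{K})^{\widetilde{Q}^{w_0}_{\overline{S}}\textnormal{-ord}}=0$.

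This vanishing is the technical core and is exactly the computation of \cite{CN23}, Proposition 2.2.15 with $\widetilde{Q}^{w_0}_{\overline{S}}$ in place of the Siegel parabolic. One filters $\mathcal{K}$ by $\widetilde{\mathcal{Q}}^{w_0}_{\overline{S}}(0,c_p)$-stable submodules whose graded pieces are inflated from algebraic representations of $\widetilde{M}^{w_0}_{\overline{S}}$ on which, after the twist, each $u^{\widetilde{Q}^{w_0}_{\overline{v}}}_{\overline{v}}$ acts through a scalar of strictly positive $p$-adic valuation — this is forced by $\mathcal{V}_{\widetilde{\lambda}_{\overline{v}}}$ having lowest weight $w_0^{\widetilde{G}}\widetilde{\lambda}_{\overline{v}}$, so that $\mathcal{K}$ is built from strictly higher weight spaces in the $\widetilde{N}^{w_0}_{\overline{v}}$-direction. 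Hence each $U^{\widetilde{Q}^{w_0}_{\overline{v}}}_{\overline{v}}$ is nilpotent modulo $\varpi^m$ on the cohomology of each graded piece, so the maximal summand on which it is invertible vanishes; an induction on the length of the filtration, using exactness of $(-)^{\widetilde{Q}^{w_0}_{\overline{S}}\textnormal{-ord}}$ and the resulting triangles, together with Corollary~\ref{Cor3.9} to check the nilpotence after descending to a finite level where the coefficients form an honest smooth type, finishes the argument. I expect the only real obstacle to be the bookkeeping of normalisations: one must check that $u^{\widetilde{Q}^{w_0}_{\overline{v}}}_{\overline{v}}=\iota_v^{-1}u_v^{\iota_v\widetilde{Q}^{w_0}_{\overline{v}}}$ pairs with the $w_0$-twisted weight $\widetilde{\lambda}_{\overline{v}}$ so that the valuation comes out strictly positive on $\mathcal{K}$ and zero on $\mathcal{V}_{w_0^{\widetilde{Q}^{w_0}_{\overline{S}}}\widetilde{\lambda}_{\overline{S}}}$; once this is settled, naturality and $\widetilde{\Delta}_{\widetilde{M}^{w_0}_{\overline{S}}}$-equivariance are formal, exactly as flagged at the start of \S\ref{Sec3.4}.
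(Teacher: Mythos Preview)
Your proposal is correct and follows essentially the same approach as the paper: the paper does not give a separate proof of this proposition but states at the opening of \S\ref{Sec3.4} that everything from \S\ref{sec3.2} applies verbatim to $\widetilde{G}$ at the split places in $\overline{S}$, so the proof is literally that of Proposition~\ref{Prop3.14} (which in turn refers to \cite{CN23}, Proposition 2.2.15). You have simply spelled out what that transport entails.
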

Here we denoted by $\widetilde{\lambda}^{\overline{S}}$ the dominant weight for $\widetilde{G}$ that is trivial at $\overline{S}$ and coincides with $\widetilde{\lambda}$ outside $\overline{S}$. Similarly, $\widetilde{\lambda}_{\overline{S}}$ is the dominant weight for $\widetilde{G}$ that is trivial outside $\overline{S}$ and coincides with $\widetilde{\lambda}$ at $\overline{S}$.
Also, analogously to the previous subsection, we view $\mathcal{V}_{w_0^{\widetilde{Q}_{\overline{S}}^{w_0}}\widetilde{\lambda}_{\overline{S}}}/\varpi^m$ as an $\mathcal{O}/\varpi^m[\widetilde{\Delta}_{\widetilde{M}_{\overline{S}}^{w_0}}]$-module via inflation from $\widetilde{M}^{w_0,0}_{\overline{S}}$.

We can then also deduce the analogues of Corollary~\ref{Cor3.13} and Corollary~\ref{Cor3.15}. In particular, $R\Gamma(\widetilde{X}_{\widetilde{K}(0,c)},\mathcal{V}_{(\Tilde{\lambda},\underline{\tau})}^{\widetilde{Q}_{\overline{S}}^{w_0}}/\varpi^m)^{\widetilde{Q}_{\overline{S}}^{w_0}\textnormal{-ord}}$ admits a natural Hecke action at $p$. Namely, set
\begin{equation*}
    \Tilde{\sigma}(\Tilde{\lambda}_{\overline{S}},\underline{\tau}_{\overline{S}})^{\circ}:=\mathcal{V}_{w_0^{\widetilde{Q}_{\overline{S}}^{w_0}}\widetilde{\lambda}_{\overline{S}}}^{\vee}\otimes_{\mathcal{O}}\Tilde{\sigma}(\underline{\tau}_{\overline{S}})^{\circ}\in \textnormal{Mod}(\mathcal{O}[\widetilde{M}^{w_0,0}_{\overline{S}}]).
\end{equation*}
Then there is a natural algebra homomorphism
\begin{equation*}
    \mathcal{H}(\Tilde{\sigma}(\Tilde{\lambda}_{\overline{S}},\underline{\tau}_{\overline{S}})^{\circ,\vee})\otimes_{\mathcal{O}}\mathcal{O}/\varpi^m\to \textnormal{End}_{D^+(\mathcal{O}/\varpi^m)}(R\Gamma(\widetilde{X}_{\widetilde{K}(0,c)},\mathcal{V}_{(\Tilde{\lambda},\underline{\tau})}^{\widetilde{Q}_{\overline{S}}^{w_0}}/\varpi^m)^{\widetilde{Q}_{\overline{S}}^{w_0}\textnormal{-ord}}).
\end{equation*}
\begin{Rem}\label{Rem3.22}
    We note that if we set $\widetilde{S}:=\{v\mid \Bar{v}\in \overline{S}\}$, we have
$\mathcal{V}_{w_0^{\widetilde{Q}^{w_0}_{\overline{S}}}\Tilde{\lambda}_{\overline{S}}}=\mathcal{V}_{w_0^{Q_{\widetilde{S}}}\lambda_{\widetilde{S}}}\otimes_{\mathcal{O}}\mathcal{V}_{-w_0^{Q_{\widetilde{S}^c}}\lambda_{\widetilde{S}^c}},$ and, by the proof of \cite{CN23}, Lemma 2.2.14, we get
\begin{equation*}
    \Tilde{\sigma}(\Tilde{\lambda}_{\overline{S}},\underline{\tau}_{\overline{S}})^{\circ}=\sigma(\lambda_{\widetilde{S}},\underline{\tau}_{\widetilde{S}})^{\circ}\otimes_{\mathcal{O}}(\theta_n^{-1})^{\ast}\sigma(\lambda_{\widetilde{S}^c},\underline{\tau}_{\widetilde{S}^c})^{\circ}\in 
\end{equation*}
\begin{equation*}
     \textnormal{Mod}(\mathcal{O}[\left(\prod_{v\in \widetilde{S}}M_{v}(\mathcal{O}_{F_v})\right)\times \left(\prod_{v^c\in\widetilde{S}^c}(\theta_nM_{v^c})(\mathcal{O}_{F_v})\right)].
\end{equation*}
\end{Rem}

Finally, one can develop $\overline{\widetilde{Q}}_{\overline{S}}$-ordinary Hida theory for $\mathcal{V}_{(\Tilde{\lambda},\underline{\tau})}^{\widetilde{Q}_{\overline{S}},w_0^P,\vee}/\varpi^m$ (see the end of \S\ref{sec2.7} for the notation) just as in \S\ref{sec3.3}. In particular, one arrives to Hecke-equivariance of Poincar\'e duality at $p$ for ordinary cohomology of $\widetilde{X}_{\widetilde{K}}$.

\subsection{Ordinary parts of the Bruhat stratification}\label{sec3.5} Here we again closely follow \cite{CN23} on computing the ordinary parts of certain Bruhat strata of parabolic inductions in the derived category. As we often will not need serious changes in the proofs, we sometimes only state the results we need and indicate how to deduce them from the arguments of \cite{ACC23} and \cite{CN23}.

As in \cite{CN23}, 2.3.1, we restrict ourselves to a completely local setup. Let $L/\mathbf{Q}_p$ be a finite field extension with ring of integers $\mathcal{O}_L$ and a choice of uniformiser $\varpi_L$. Let $G/L$ be a split connected reductive group with a split maximal torus $T\subset G$ and Weyl group $W=W(G,T)$. Fix a Borel subgroup $T\subset B$ and two standard parabolic subgroups $B\subset Q_1,Q_2 \subset G$ with Levi decomposition $Q_1=M_1 \ltimes N_1$ and $Q_2=M_2\ltimes N_2$, respectively. We denote by $W_{Q_i}$ the Weyl group of $M_i$ and by $^{Q_1}W^{Q_2}\subset W$ the set of minimal length representatives of $W_{Q_1}\backslash W/W_{Q_2}$. For $w\in W$ we denote its length by $\ell(w)\in\mathbf{Z}_{\geq 0}$. Recall that $G(L)$ admits a stratification (with respect to its $p$-adic topology) called the Bruhat stratification\footnote{For a reference in this generality, see \cite{Hau18}, Lemma 2.1.2. However, note that \textit{loc. cit.} considers the opposite parabolic $\overline{Q}_1$ on the left instead, so the closure relations are reversed there.}
\begin{equation*}
    G(L)=\coprod_{w\in \prescript{Q_1}{}{W}^{Q_2}}Q_1(L)wQ_2(L)=\coprod_{w\in \prescript{Q_1}{}{W}^{Q_2}}S_w
\end{equation*}
with closure relations given by the Bruhat order
\begin{equation*}
    \overline{S_w}=\coprod_{w\geq w'\in \prescript{Q_1}{}{W}^{Q_2}}S_{w'}.
\end{equation*}
In particular, for $i\in \mathbf{Z}_{\geq 0}$, the subset
\begin{equation*}
    G_{\geq i}:=\coprod_{\ell(w)\geq i}S_w\subset G(L)
\end{equation*}
is open.

We first recall how the Bruhat stratification provides a "stratification" of the exact functor
\begin{equation*}
    \textnormal{Ind}_{Q_1(L)}^{G(L)}:D^+_{\textnormal{sm}}(\mathcal{O}/\varpi^m[Q_1(L)])\to D^+_{\textnormal{sm}}(\mathcal{O}/\varpi^m[G(L)])
\end{equation*}
in a suitable sense.  We define the functor
\begin{equation*}
    I_{\geq i}:\textnormal{Mod}_{\textnormal{sm}}(\mathcal{O}/\varpi^m[Q_1(L)]))\to \textnormal{Mod}_{\textnormal{sm}}(\mathcal{O}/\varpi^m[Q_2(L)])
\end{equation*}
which sends $\pi$ to the $Q_2(L)$-stable subspace of functions in $\textnormal{Ind}_{Q_1(L)}^{G(L)}\pi$ which are supported at $G_{\geq i}$.
For $w\in \prescript{Q_1}{}{W}^{Q_2}$, we can further define the functor
\begin{equation*}
    I_w:\textnormal{Mod}_{\textnormal{sm}}(\mathcal{O}/\varpi^m[Q_1(L)]))\to \textnormal{Mod}_{\textnormal{sm}}(\mathcal{O}/\varpi^m[Q_2(L)]))
\end{equation*}
which sends $\pi$ to the set of locally constant functions $f:S_w\to\pi$ which are compactly supported modulo $Q_1(L)$ and left invariant with respect to elements of $Q_1(L)$ equipped with the left $Q_2(L)$-action given by right multiplication on the source.
Finally, for $w\in \prescript{Q_1}{}{W}^{Q_2}$, consider also the open subspace $S_w^{\circ}:=Q_1(L)wM_2(L)N_2(\mathcal{O}_L)\subset S_w$. We then define the functor
$I_w^{\circ}:\textnormal{Mod}_{\textnormal{sm}}(\mathcal{O}/\varpi^m[Q_1(L)])\to \textnormal{Mod}_{\textnormal{sm}}(\mathcal{O}/\varpi^m[M_2(L)^+\ltimes N_2(\mathcal{O}_L)])$
by setting $I_w^{\circ}(\pi)\subset I_w(\pi)$ to be the subset of functions supported on $S^{\circ}_w$.
Each of these functors are exact as the argument of \cite{ACC23}, Proposition 5.3.1 shows.

For every $i\in \mathbf{Z}_{\geq 0}$ and  $\pi \in D^+_{\textnormal{sm}}(\mathcal{O}/\varpi^m[Q_1(L)])$, the natural inclusion of functors $I_{\geq i+1}\subset I_{\geq i}$ induces a distinguished triangle
\begin{equation}\label{eq3.7}
    I_{\geq i+1}(\pi)\to I_{\geq i}(\pi)\to \oplus_{\ell(w)=i}I_w(\pi)\to I_{\geq i+1}(\pi)[1]
\end{equation}
in $D^+_{\textnormal{sm}}(\mathcal{O}/\varpi^m[Q_2(L)])$.
For the proof of this, see \cite{Hau18}, Lemma 2.2.1.

Given a $\sigma\in \textnormal{Mod}_{\textnormal{sm}}(\mathcal{O}/\varpi^m[M_2(\mathcal{O}_L])$, finite free over $\mathcal{O}/\varpi^m$, we consider the functor
\begin{equation*}
    R\Hom_{\mathcal{O}/\varpi^m[M_2(\mathcal{O}_L)]}(\sigma,R\Gamma(N_2(\mathcal{O}_L),-)^{Q_2\textnormal{-ord}}):D^+_{\textnormal{sm}}(\mathcal{O}/\varpi^m[Q_2(L)])\to D^+(\mathcal{H}(\sigma^{\vee})).
\end{equation*}
We can apply this functor
to \ref{eq3.7} to get a distinguished triangle
\begin{equation*}
    R\Hom_{\mathcal{O}/\varpi^m[M_2(\mathcal{O}_L)]}(\sigma,R\Gamma(N_2(\mathcal{O}_L),I_{\geq i+1}(\pi))^{Q_2\textnormal{-ord}})\to
\end{equation*}
\begin{equation*}
 \to R\Hom_{\mathcal{O}/\varpi^m[M_2(\mathcal{O}_L)]}(\sigma,R\Gamma(N_2(\mathcal{O}_L),I_{\geq i}(\pi))^{Q_2\textnormal{-ord}})\to
\end{equation*}
\begin{equation*}
    \to  \oplus_{\ell(w)=i}R\Hom_{\mathcal{O}/\varpi^m[M_2(\mathcal{O}_L)]}(\sigma,R\Gamma(N_2(\mathcal{O}_L),I_{w}(\pi))^{Q_2\textnormal{-ord}})\to  
\end{equation*}
\begin{equation*}
    \to R\Hom_{\mathcal{O}/\varpi^m[M_2(\mathcal{O}_L)]}(\sigma,R\Gamma(N_2(\mathcal{O}_L),I_{\geq i+1}(\pi))^{Q_2\textnormal{-ord}})[1].
\end{equation*}
The argument of \cite{CN23}, Proposition 2.3.4 shows that taking long exact sequence of cohomology of the distinguished triangle above gives a $\mathcal{H}(\sigma^{\vee})$-equivariant short exact sequence
\begin{equation*}
    0\to R^j\Hom_{\mathcal{O}/\varpi^m[M_2(\mathcal{O}_L)]}(\sigma,R\Gamma(N_2(\mathcal{O}_L),I_{\geq i+1}(\pi))^{Q_2\textnormal{-ord}})\to 
\end{equation*}
\begin{equation*}
    \to R^j\Hom_{\mathcal{O}/\varpi^m[M_2(\mathcal{O}_L)]}(\sigma,R\Gamma(N_2(\mathcal{O}_L),I_{\geq i}(\pi))^{Q_2\textnormal{-ord}})\to
\end{equation*}
\begin{equation*}
    \to \oplus_{\ell(w)=i}R^j\Hom_{\mathcal{O}/\varpi^m[M_2(\mathcal{O}_L)]}(\sigma,R\Gamma(N_2(\mathcal{O}_L),I_{w}(\pi))^{Q_2\textnormal{-ord}})\to 0
\end{equation*}
for every $j\in \mathbf{Z}_{\geq 0}$. To be more precise, the proof of Proposition 2.3.4 in \textit{loc. cit.} relies on their Lemma 2.3.5. We state the obvious generalisation we need. However, we omit the proof as it can be proved just as the version in \textit{loc. cit.}
\begin{Lemma}
    For any $i\in \mathbf{Z}_{\geq 0}$, there are decompositions
    \begin{equation*}
        G_{\geq i}=U_1^m\coprod U_2^m
    \end{equation*}
    into open and closed subsets, indexed by $m\in \mathbf{Z}_{\geq 1}$, that are $Q_1(L)$-invariant on the left and $Q_2(\mathcal{O}_L)$-invariant on the right such that
    \begin{equation*}
        G_{\geq i+1}=\bigcup_{m\geq 1}U_1^m.
    \end{equation*}
\end{Lemma}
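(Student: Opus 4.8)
The plan is to prove this as a purely topological statement, by descending to the double coset space $X := Q_1(L)\backslash G(L)/Q_2(\mathcal{O}_L)$ equipped with the quotient topology and quotient map $q\colon G(L)\to X$. First I would record the standard facts: $G(L)$ is a second countable locally profinite group, $Q_1(L)$ is a closed subgroup and $Q_2(\mathcal{O}_L)$ is a compact subgroup, so $q$ — being the composite of the two quotient maps by the continuous actions of $Q_1(L)$ on the left and $Q_2(\mathcal{O}_L)$ on the right — is continuous and open, hence a quotient map, and $X$ is second countable. Next I would check that $X$ is locally profinite: $Q_1(L)\backslash G(L)$ is Hausdorff since $Q_1(L)$ is closed, and then $X=(Q_1(L)\backslash G(L))/Q_2(\mathcal{O}_L)$ is Hausdorff as the quotient of a Hausdorff space by a compact group action; moreover for any compact open $C\subseteq G(L)$ the image $q(C)$ is open (as $q$ is open) and compact, hence clopen in the Hausdorff space $X$, and such $q(C)$ form a neighbourhood basis, so $X$ is locally profinite (in particular locally compact Hausdorff and totally disconnected). (Alternatively one may identify $Q_1(L)\backslash G(L)$ with the compact totally disconnected space of $L$-points of the partial flag variety $Q_1\backslash G$.)

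Now I would observe that $G_{\geq i}$ and $G_{\geq i+1}$ are both open in $G(L)$ (this is recalled in the excerpt) and both stable under left multiplication by $Q_1(L)$ and right multiplication by $Q_2(L)\supseteq Q_2(\mathcal{O}_L)$, since they are unions of the strata $S_w=Q_1(L)wQ_2(L)$. Hence they are $q$-saturated, $q^{-1}(q(G_{\geq j}))=G_{\geq j}$, the images $\overline{G}_{\geq j}:=q(G_{\geq j})$ are open in $X$, and $\overline{G}_{\geq i+1}\subseteq\overline{G}_{\geq i}$. The open subspace $\overline{G}_{\geq i}$ of $X$ is again second countable and locally profinite, so its open subset $\overline{G}_{\geq i+1}$ is a countable union of compact open subsets of $\overline{G}_{\geq i}$; taking finite unions, I would write $\overline{G}_{\geq i+1}=\bigcup_{m\geq 1}W_m$ with $W_1\subseteq W_2\subseteq\cdots$ and each $W_m$ compact open, hence clopen, in $\overline{G}_{\geq i}$. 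Finally I would set $U_1^m:=q^{-1}(W_m)$ and $U_2^m:=G_{\geq i}\setminus U_1^m=q^{-1}(\overline{G}_{\geq i}\setminus W_m)$. Both are open in $G(L)$ (preimages of subsets open in $X$, using that $W_m$ is clopen in the open subspace $\overline{G}_{\geq i}$), they partition $G_{\geq i}$ and so are clopen in $G_{\geq i}$, they are stable under left $Q_1(L)$- and right $Q_2(\mathcal{O}_L)$-multiplication as $q$-preimages, they are monotone in $m$, and
\[
\bigcup_{m\geq 1}U_1^m=q^{-1}\Bigl(\bigcup_{m\geq 1}W_m\Bigr)=q^{-1}(\overline{G}_{\geq i+1})=G_{\geq i+1}.
\]

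I do not expect any genuine obstacle here. The only non-formal ingredient is the topological claim of the first paragraph — that the double coset space of $G(L)$ by a closed subgroup on one side and a compact subgroup on the other is a second countable locally profinite space — together with the elementary fact that an open subset of a second countable locally profinite space is a countable increasing union of clopen subsets; both are routine. The reason to isolate this lemma is purely organisational: it separates this bookkeeping from the sheaf-theoretic induction over the Bruhat strata carried out in the preceding pages, exactly as in \cite{CN23}.
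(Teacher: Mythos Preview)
Your proof is correct. The paper itself omits the proof entirely, stating only that ``it can be proved just as the version in \textit{loc.\ cit.}'' (i.e.\ \cite{CN23}, Lemma~2.3.5), so there is no in-paper argument to compare against; your purely topological route via the second countable locally profinite double coset space $Q_1(L)\backslash G(L)/Q_2(\mathcal{O}_L)$ is a clean and self-contained way to establish the lemma and is in the same spirit as what one expects from the cited reference.
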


To apply this line of argument, we compute
\begin{equation*}
    R\Gamma(N_2(\mathcal{O}_L),I_w(\pi))^{Q_2\textnormal{-ord}}
\end{equation*}
for a class of $w\in {}^{Q_1}W^{Q_2}$. First we note the following lemma that is essentially \cite{ACC23}, Lemma 5.3.4 (see also \cite{CN23}, Lemma 2.3.6).
\begin{Lemma}\label{Lem3.23}
    Let $w\in {}^{Q_1}W^{Q_2}$ such that $wM_2(L)w^{-1}\subset M_1(L)$. Then:
    \begin{enumerate}
        \item $I_w^{\circ}$ takes injectives to $\Gamma(N_2(\mathcal{O}_L),-)$-acyclics.
        \item Let $\pi\in D^+_{\textnormal{sm}}(\mathcal{O}/\varpi^m)[Q_1(L)])$. Then there is a natural isomorphism
        \begin{equation*}
            R\Gamma(N_2(\mathcal{O}_L),I_w^{\circ}(\pi))^{Q_2\textnormal{-ord}}\xrightarrow[]{\sim}R\Gamma(N_2(\mathcal{O}_L),I_w(\pi))^{Q_2\textnormal{-ord}}.
        \end{equation*}
    \end{enumerate}
\end{Lemma}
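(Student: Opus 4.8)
The plan is to adapt, with only bookkeeping changes, the proofs of \cite{ACC23}, Lemma 5.3.4 and \cite{CN23}, Lemma 2.3.6, which handle the cases where $(Q_1,Q_2)$ is $(\text{Borel},\text{Borel})$ and $(\text{anything},\text{Siegel})$ respectively. First I would record an explicit model for $I_w^{\circ}$. Unwinding the hypothesis $wM_2(L)w^{-1}\subset M_1(L)\subset Q_1(L)$ gives $Q_1(L)wM_2(L)=Q_1(L)w$, hence $S_w^{\circ}=Q_1(L)wN_2(\mathcal{O}_L)$. Evaluating $f\in I_w^{\circ}(\pi)$ on the translates $w\cdot\big(M_2(L)N_2(\mathcal{O}_L)\big)$ then identifies
\[
I_w^{\circ}(\pi)\;\cong\;\mathrm{Ind}_{H_w}^{M_2(L)^+\ltimes N_2(\mathcal{O}_L)}\big(\pi^{(w)}\big),
\]
where $H_w:=\big(M_2(L)^+\ltimes N_2(\mathcal{O}_L)\big)\cap w^{-1}Q_1(L)w=M_2(L)^+\ltimes\big(N_2(\mathcal{O}_L)\cap w^{-1}N_1(L)w\big)$, $\pi^{(w)}$ is $\pi$ restricted to $H_w$ along $h\mapsto whw^{-1}\in Q_1(L)$, and $\mathrm{Ind}$ is smooth induction from the closed cocompact subgroup $H_w$. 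This makes the already-recorded exactness of $I_w^{\circ}$ transparent (exactness of restriction and of $\mathrm{Ind}$ from a cocompact subgroup) and will be the basis for everything else.

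For part (1) I would restrict this model to $N_2(\mathcal{O}_L)$ and show $I_w^{\circ}(\mathcal{I})$ is $\Gamma(N_2(\mathcal{O}_L),-)$-acyclic for $\mathcal{I}$ an injective smooth $\mathcal{O}/\varpi^m[Q_1(L)]$-module, exactly as in \cite{ACC23}, Lemma 5.3.4(1). The point is that $I_w^{\circ}$ factors as the forgetful functor $\mathrm{Mod}_{\mathrm{sm}}(\mathcal{O}/\varpi^m[Q_1(L)])\to\mathrm{Mod}_{\mathrm{sm}}(\mathcal{O}/\varpi^m[H_w])$ followed by the exact functor $\mathrm{Ind}_{H_w}^{M_2(L)^+\ltimes N_2(\mathcal{O}_L)}$; one then combines (a) the Emerton-type acyclicity \cite{Eme10b}, Proposition 2.1.11 (the forgetful functor from smooth $Q_1(L)$-modules to smooth $(M^+\ltimes N^0)$-type modules carries injectives to $\Gamma(N^0,-)$-acyclics), applied to the unipotent part $N_2(\mathcal{O}_L)\cap w^{-1}N_1(L)w$ of $H_w$, with (b) a Shapiro/Mackey computation identifying $R\Gamma(N_2(\mathcal{O}_L),\mathrm{Ind}_{H_w}^{\,\cdot}(-))$ with $R\Gamma$ over that unipotent part. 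Chaining these forces $R^i\Gamma(N_2(\mathcal{O}_L),I_w^{\circ}(\mathcal{I}))=0$ for $i>0$.

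For part (2) I would exhaust $S_w=Q_1(L)wN_2(L)$ by the $M_2(L)^+$-translates of $S_w^{\circ}=Q_1(L)wN_2(\mathcal{O}_L)$, equivalently by the translates under $z_p=u^{Q_2}$, so that $I_w(\pi)=\varinjlim_k u^{Q_2,k}\cdot I_w^{\circ}(\pi)$ inside $I_w(\pi)$, compatibly with the $N_2(\mathcal{O}_L)$-action and the Hecke operator $[z_p,\mathrm{id}]$. Applying the exact functor $R\Gamma(N_2(\mathcal{O}_L),-)$ and then the exact localisation $(-)^{Q_2\text{-ord}}$, which inverts $[z_p,\mathrm{id}]$, turns this colimit into the desired isomorphism $R\Gamma(N_2(\mathcal{O}_L),I_w^{\circ}(\pi))^{Q_2\text{-ord}}\xrightarrow{\sim}R\Gamma(N_2(\mathcal{O}_L),I_w(\pi))^{Q_2\text{-ord}}$, just as in \cite{CN23}, Lemma 2.3.6. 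The key input that $[z_p,\mathrm{id}]^k$ pushes the support of a function into deeper $N_2$-level, so that the colimit is already computed at any fixed finite level after inverting the operator, is \cite{Eme10}, Lemma 3.3.1, used precisely as in the preceding lemma of this subsection; naturality of all maps in $\pi$ gives the asserted functoriality.

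The main obstacle is combinatorial rather than homological: in the two-parabolic generality one must verify $wM_2(L)^+w^{-1}\subset Q_1(L)$, identify $H_w$ and $N_2(\mathcal{O}_L)\cap w^{-1}N_1(L)w$ precisely enough to run the Shapiro/acyclicity step, and check that the $z_p$-filtration of $S_w$ interacts with the Bruhat stratification so that $S_w^{\circ}$ is exactly the part surviving in the $Q_2$-ordinary part. I expect the second step (part (1)) to be the subtle one, since that is where restriction to the non-open subgroup $H_w$ must be shown to remain compatible with $\Gamma(N_2(\mathcal{O}_L),-)$; once the model of Step 1 is in place, however, this is a direct transcription of the cited arguments, and I would expect no genuinely new difficulty beyond the indexing.
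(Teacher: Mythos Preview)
Your proposal is correct and matches the paper's approach exactly: the paper's proof consists precisely of the observation that $wM_2(L)w^{-1}\subset M_1(L)$ forces $S_w^{\circ}=Q_1(L)wN_2(\mathcal{O}_L)$, after which it declares that the proof of \cite{ACC23}, Lemma~5.3.4 applies verbatim, and your write-up simply unfolds what that verbatim argument is. One small bookkeeping correction: in your model for $I_w^{\circ}$, the unipotent part of $H_w$ should be $N_2(\mathcal{O}_L)\cap w^{-1}Q_1(L)w$ rather than $N_2(\mathcal{O}_L)\cap w^{-1}N_1(L)w$ (the paper's $w^{-1}N_{2,w}^{\circ}w$), since $wN_2w^{-1}$ can meet $M_1$ nontrivially in the two-parabolic setting---this is exactly the combinatorial point you flag as needing care, and once fixed the rest of your sketch goes through unchanged.
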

\begin{proof}
    Note that the assumption on $w$ implies that
    \begin{equation*}
        S_w^{\circ}=Q_1(L)wN_2(\mathcal{O}_L).
    \end{equation*}
    Knowing this, the proof of \cite{ACC23}, Lemma 5.3.4 applies verbatim.
\end{proof}
If $w\in {}^{Q_1}W^{Q_2}$ such that $wM_2(L)w^{-1}\subset M_1(L)$, we define
\begin{equation*}
    N_{2,w}^{\circ}:=Q_1(L)\cap wN_2(\mathcal{O}_L)w^{-1},
\end{equation*}
a compact subgroup of $Q_1(L)$. We define the functor
\begin{equation*}
    \Gamma(N_{2,w}^{\circ},-):\textnormal{Mod}_{\textnormal{sm}}(\mathcal{O}/\varpi^m[Q_1(L)])\to \textnormal{Mod}_{\textnormal{sm}}(\mathcal{O}/\varpi^m[M_2(L)^+])
\end{equation*}
where an element $m\in M_2(L)^+$ acts on a $v\in \pi^{N_{2,w}^{\circ}}$ by the formula
\begin{equation*}
    m\cdot v:=\sum_{n\in N_{2,w}^{\circ}/wmw^{-1}N_{2,w}^{\circ}wm^{-1}w^{-1}}nwmw^{-1}\cdot v
\end{equation*}
where on the right we consider the natural action of $wmw^{-1}\in Q_1(L)$. One checks easily that our assumption on $w$ ensures that this formula makes sense meaning that we have $wmw^{-1}N_{2,w}^{\circ}wm^{-1}w^{-1}\subset N_{2,w}^{\circ}$.
\begin{Lemma}
    Let $w\in {}^{Q_1}W^{Q_2}$ such that $wM_2(L)w^{-1}\subset M_1(L)$ and consider $\pi\in D^+_{\textnormal{sm}}(\mathcal{O}/\varpi^m[Q_1(L)])$. Then we have a natural isomorphism
    \begin{equation*}
        R\Gamma(N_2(\mathcal{O}_L),I_w^{\circ}(\pi))\cong R\Gamma(N_{2,w}^{\circ},\pi)
    \end{equation*}
    in $D^{+}_{\textnormal{sm}}(\mathcal{O}/\varpi^m[M_2(L)^+])$.
\end{Lemma}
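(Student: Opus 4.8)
The plan is to reduce the assertion to an isomorphism of underived functors on smooth modules, and then evaluate it on an injective resolution. On the left-hand side, Lemma~\ref{Lem3.23}~(1) says that $I_w^{\circ}$ carries injective objects of $\textnormal{Mod}_{\textnormal{sm}}(\mathcal{O}/\varpi^m[Q_1(L)])$ to $\Gamma(N_2(\mathcal{O}_L),-)$-acyclics, so $R\Gamma(N_2(\mathcal{O}_L),I_w^{\circ}(\pi))$ is computed by $\Gamma(N_2(\mathcal{O}_L),I_w^{\circ}(\mathcal{I}^{\bullet}))$ for any injective resolution $\mathcal{I}^{\bullet}$ of $\pi$ (using that $\textnormal{Mod}_{\textnormal{sm}}(\mathcal{O}/\varpi^m[Q_1(L)])$ has enough injectives, as before). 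On the right-hand side, the restriction functor along the open subgroup $N_{2,w}^{\circ}\subset Q_1(L)$ has the exact left adjoint $\textnormal{c-Ind}_{N_{2,w}^{\circ}}^{Q_1(L)}$, hence preserves injectives; therefore injective smooth $\mathcal{O}/\varpi^m[Q_1(L)]$-modules are $\Gamma(N_{2,w}^{\circ},-)$-acyclic and $R\Gamma(N_{2,w}^{\circ},\pi)$ is likewise computed by $\Gamma(N_{2,w}^{\circ},\mathcal{I}^{\bullet})$. Thus it suffices to construct a natural isomorphism of functors $\Gamma(N_2(\mathcal{O}_L),I_w^{\circ}(-))\cong \Gamma(N_{2,w}^{\circ},-)$ from $\textnormal{Mod}_{\textnormal{sm}}(\mathcal{O}/\varpi^m[Q_1(L)])$ to $\textnormal{Mod}_{\textnormal{sm}}(\mathcal{O}/\varpi^m[M_2(L)^+])$.

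To build the map we use the description $S_w^{\circ}=Q_1(L)wN_2(\mathcal{O}_L)$ established in the proof of Lemma~\ref{Lem3.23}. An element $f\in I_w^{\circ}(\pi)$ is determined by the function $n\mapsto f(wn)$ on $N_2(\mathcal{O}_L)$; if $f$ is moreover invariant under the right translation action of $N_2(\mathcal{O}_L)$, this function is constant with value $f(w)$, and left $Q_1(L)$-equivariance of $f$ together with $N_{2,w}^{\circ}=Q_1(L)\cap wN_2(\mathcal{O}_L)w^{-1}$ forces $f(w)\in \pi^{N_{2,w}^{\circ}}$. Conversely, any $v\in \pi^{N_{2,w}^{\circ}}$ extends uniquely to such an $f$ by $f(q_1wn):=q_1v$, which is well defined precisely because two presentations $q_1wn=q_1'wn'$ differ by an element of $N_{2,w}^{\circ}$. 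Hence $f\mapsto f(w)$ is an $\mathcal{O}/\varpi^m$-linear isomorphism $\Gamma(N_2(\mathcal{O}_L),I_w^{\circ}(\pi))\xrightarrow{\sim}\pi^{N_{2,w}^{\circ}}=\Gamma(N_{2,w}^{\circ},\pi)$, evidently natural in $\pi$.

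It remains to check that $f\mapsto f(w)$ intertwines the two Hecke actions of $M_2(L)^+$. Fix $m\in M_2(L)^+$ and write $N_2(\mathcal{O}_L)=\coprod_i n_i\,mN_2(\mathcal{O}_L)m^{-1}$; unwinding the definition of the Hecke action on $\Gamma(N_2(\mathcal{O}_L),I_w^{\circ}(\pi))$ gives $(m\cdot f)(w)=\sum_i f(wn_im)$. Since $f$ is supported on $Q_1(L)wN_2(\mathcal{O}_L)$ and $wmw^{-1}\in M_1(L)\subset Q_1(L)$, a short computation with the Iwahori-type factorisations (analysing, for $n'\in N_2(\mathcal{O}_L)$, when $wn_im\in Q_1(L)wn'$) shows that $f(wn_im)=0$ unless $n_i$ lies in $H\cdot mN_2(\mathcal{O}_L)m^{-1}$, where $H:=w^{-1}N_{2,w}^{\circ}w=w^{-1}Q_1(L)w\cap N_2(\mathcal{O}_L)$, and moreover $H\cap mN_2(\mathcal{O}_L)m^{-1}=mHm^{-1}$. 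So the surviving terms may be indexed by representatives $n_i\in H$ of $H/mHm^{-1}$, and for such $n_i$ one has $wn_im=(wn_iw^{-1})(wmw^{-1})w$ with $wn_iw^{-1}\in N_{2,w}^{\circ}$, whence $f(wn_im)=(wn_iw^{-1})(wmw^{-1})f(w)$. Reindexing by $\nu=wn_iw^{-1}$ and using $(wmw^{-1})N_{2,w}^{\circ}(wmw^{-1})^{-1}\subset N_{2,w}^{\circ}$ (the hypothesis on $w$, as in the preceding lemma) identifies $(m\cdot f)(w)$ with $\sum_{\nu\in N_{2,w}^{\circ}/(wmw^{-1})N_{2,w}^{\circ}(wmw^{-1})^{-1}}\nu(wmw^{-1})f(w)$, which is exactly the Hecke action of $m$ on $\Gamma(N_{2,w}^{\circ},\pi)$. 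Applying the resulting isomorphism of functors termwise to $\mathcal{I}^{\bullet}$ gives the claimed isomorphism in $D^+_{\textnormal{sm}}(\mathcal{O}/\varpi^m[M_2(L)^+])$.

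The main obstacle is the bookkeeping in the last step: verifying that the coset representatives not coming from $N_{2,w}^{\circ}$ contribute zero, and that the surviving ones biject naturally with $N_{2,w}^{\circ}/(wmw^{-1})N_{2,w}^{\circ}(wmw^{-1})^{-1}$. This is exactly where both the hypothesis $wM_2(L)w^{-1}\subset M_1(L)$ and the explicit shape $S_w^{\circ}=Q_1(L)wN_2(\mathcal{O}_L)$ are used, and it runs parallel to \cite{CN23}, Lemma 2.3.7 and \cite{ACC23}, Lemma 5.3.6.
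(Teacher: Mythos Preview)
Your proof is correct and follows essentially the same approach as the paper: reduce to an isomorphism of underived functors via Lemma~\ref{Lem3.23}(1), construct the evaluation map $f\mapsto f(w)$ with its explicit inverse, and verify $M_2(L)^+$-equivariance by showing that only the cosets coming from $N_{2,w}^{\circ}$ contribute. One small inaccuracy: $N_{2,w}^{\circ}=Q_1(L)\cap wN_2(\mathcal{O}_L)w^{-1}$ is a compact subgroup of $Q_1(L)$ but not in general open, so your justification via an exact left adjoint is misphrased; however this sentence is unnecessary anyway, since $R\Gamma(N_{2,w}^{\circ},-)$ is by definition the right derived functor of a left exact functor out of $\textnormal{Mod}_{\textnormal{sm}}(\mathcal{O}/\varpi^m[Q_1(L)])$ and hence is automatically computed on injective resolutions there.
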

\begin{proof}
    Again, it can be proved by running the proof of the analogous statement \cite{ACC23}, Lemma 5.3.5, keeping in mind our assumption on $w$. For the reader's convenience, we recall the argument here.

    By the first part of Lemma~\ref{Lem3.23}, it suffices to give a natural isomorphism of underived functors
    \begin{equation*}
        \Gamma(N_2(\mathcal{O}_L),I_w^{\circ}(-))\cong \Gamma(N_{2,w}^{\circ},-).
    \end{equation*}
    For $\pi\in \textnormal{Mod}_{\textnormal{sm}}(Q_1(L))$, the map will send an $N_2(\mathcal{O}_L)$-invariant function
    \begin{equation*}
        f:Q_1(L)wN_2(\mathcal{O}_L)\to \pi
    \end{equation*}
    to $f(w)\in \pi^{N_{2,w}^{\circ}}$. This visibly gives an isomorphism of underlying $\mathcal{O}/\varpi^m$-modules with inverse sending $v\in \pi^{N_{2,w}^{\circ}}$ to $f_v:pwn\mapsto p\cdot v$.

    We are left with checking that this defines an $M_2(L)^+$-equivariant map. In other words, for $f\in \Gamma(N_2(\mathcal{O}_L),I_w^{\circ}(\pi))$ and $m\in M_2(L)^+$, we need to see that
    \begin{equation}\label{eq3.8}
        \sum_{n\in N_2(\mathcal{O}_L)/mN_2(\mathcal{O}_L)m^{-1}}f(wnm)=\sum_{\Tilde{n}\in N_{2,w}^{\circ}/wmw^{-1}N_{2,w}^{\circ}(wmw^{-1})^{-1}}\Tilde{n}wmw^{-1}f(w)
.    \end{equation}
Note that the association
\begin{equation}\label{eq3.9}
    N_{2,w}^{\circ}/wmw^{-1}N_{2,w}^{\circ}(wmw^{-1})^{-1}\to N_2(\mathcal{O}_L)/mN_2(\mathcal{O}_L)m^{-1},
\end{equation}
\begin{equation*}
    \Tilde{n}\mapsto w^{-1}\Tilde{n}w
\end{equation*}
is injective. Moreover, for $n=w^{-1}\Tilde{n}w$ lying in the image, we have $f(wnm)=\Tilde{n}wmw^{-1}f(w)$. In particular, to see that \ref{eq3.8} holds, it suffices to prove that $f(wnm)\neq 0$ only if $n$ lies in the image of \ref{eq3.9}. So assume that $wnm\in S^{\circ}_w=Q_1(L)wN_2(\mathcal{O}_L)$. Accordingly, we write it in the form $wnm=qwn'$ with $q\in Q_1(L)$ and $n'\in N_2(\mathcal{O}_L)$. On the other hand,
\begin{equation*}
    n=w^{-1}qwn'm^{-1}=(w^{-1}qwm^{-1})(mn'm^{-1})\in N_2(\mathcal{O}_L)
\end{equation*}
and our assumptions imply that $mn'm^{-1}\in N_2(\mathcal{O}_L)$. Therefore, we get
\begin{equation*}
    w^{-1}qwm^{-1}=w^{-1}(qwm^{-1}w^{-1})w\in w^{-1}Q_1(L)w\cap N_2(\mathcal{O}_L)=w^{-1}N_{2,w}^{\circ}w.
\end{equation*}
\end{proof}
\begin{Cor}
    Let $w\in {}^{Q_1}W^{Q_2}$ such that $wM_2(L)w^{-1}\subset M_1(L)$ and $\pi\in D^+_{\textnormal{sm}}(\mathcal{O}/\varpi^m[Q_1(L)])$. There is a natural isomorphism
    \begin{equation*}
        R\Gamma(N_2(\mathcal{O}_L),I_w(\pi))^{Q_2\textnormal{-ord}}\cong R\Gamma(N_{2,w}^{\circ},\pi)^{Q_2\textnormal{-ord}}
    \end{equation*}
    in $D^+_{\textnormal{sm}}(\mathcal{O}/\varpi^m[M_2(L)])$.
\end{Cor}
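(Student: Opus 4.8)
The plan is to obtain this as a formal consequence of the immediately preceding lemma, of Lemma~\ref{Lem3.23}, and of the exactness of the $Q_2$-ordinary localization functor recalled in \S\ref{sec3.1}.

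First I would invoke the preceding lemma: under the hypothesis $wM_2(L)w^{-1}\subset M_1(L)$ it provides a natural isomorphism
\[
R\Gamma(N_2(\mathcal{O}_L),I_w^{\circ}(\pi))\cong R\Gamma(N_{2,w}^{\circ},\pi)
\]
in $D^+_{\textnormal{sm}}(\mathcal{O}/\varpi^m[M_2(L)^+])$. The functor $(-)^{Q_2\textnormal{-ord}}\colon \textnormal{Mod}_{\textnormal{sm}}(\mathcal{O}/\varpi^m[M_2(L)^+])\to \textnormal{Mod}_{\textnormal{sm}}(\mathcal{O}/\varpi^m[M_2(L)])$, localization along $M_2(L)^+\hookrightarrow M_2(L)$ (equivalently inversion of the central Hecke element $[z_p,\textnormal{id}]$), is exact, hence coincides with its own derived functor and commutes with the derived functors $R\Gamma(N_{2,w}^{\circ},-)$ and $R\Gamma(N_2(\mathcal{O}_L),I_w^{\circ}(-))$. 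Applying it to the displayed isomorphism therefore yields a natural isomorphism
\[
R\Gamma(N_2(\mathcal{O}_L),I_w^{\circ}(\pi))^{Q_2\textnormal{-ord}}\cong R\Gamma(N_{2,w}^{\circ},\pi)^{Q_2\textnormal{-ord}}
\]
in $D^+_{\textnormal{sm}}(\mathcal{O}/\varpi^m[M_2(L)])$. I would then compose with the natural isomorphism of Lemma~\ref{Lem3.23}(2), namely $R\Gamma(N_2(\mathcal{O}_L),I_w^{\circ}(\pi))^{Q_2\textnormal{-ord}}\xrightarrow{\sim} R\Gamma(N_2(\mathcal{O}_L),I_w(\pi))^{Q_2\textnormal{-ord}}$, to obtain the asserted isomorphism, which is natural in $\pi$ since both inputs are.

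There is no genuinely hard step here; the only point deserving a line of justification is that an $M_2(L)^+$-equivariant quasi-isomorphism really does upgrade to an $M_2(L)$-equivariant one after applying $(-)^{Q_2\textnormal{-ord}}$. This follows from the exactness of the localization functor together with the identification $M_2(L)=M_2(L)^+[z_p^{-1}]$ of Lemma~\ref{Lem3.3}: the localized $M_2(L)$-action is completely determined by the $M_2(L)^+$-action once $[z_p,\textnormal{id}]$ is made invertible. I would also point out that the hypothesis $wM_2(L)w^{-1}\subset M_1(L)$ enters only through the two cited lemmas (where it forces $S_w^{\circ}=Q_1(L)wN_2(\mathcal{O}_L)$ and makes $N_{2,w}^{\circ}$ stable under the relevant conjugations), so the corollary needs no input beyond what has already been set up.
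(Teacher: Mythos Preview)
Your proposal is correct and matches the paper's approach: the corollary is stated without proof there, as it follows immediately from combining Lemma~\ref{Lem3.23}(ii) with the preceding lemma and the exactness of the $Q_2$-ordinary localization. Your write-up simply spells out these steps explicitly.
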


Finally, we compute $R\Gamma(N_{2,w}^{\circ},\pi)^{Q_2\textnormal{-ord}}$ as in \cite{ACC23}, Lemma 5.3.7. For the rest of the subsection, we assume that $G=\textnormal{GL}_n$ with $T=T_n$, the torus consisting of diagonal matrices and $B=B_n$ the Borel consisting of upper triangular matrices. We also introduce some further notation. Given $w\in {}^{Q_1}W^{Q_2}$ such that $wM_2(L)w^{-1}\subset M_1(L)$, set $Q_w:=wQ_2w^{-1}\cap M_1\subset M_1$ with Levi quotient $M_w=wM_2w^{-1}$ and unipotent radical $N_w=wN_2w^{-1}\cap M_1$.
Consider the character
\begin{equation*}
    \chi_w:M_2(L)\to \mathcal{O}^{\times},
\end{equation*}
\begin{equation*}
    m\mapsto \frac{\textnormal{Norm}_{L/\mathbf{Q}_p}\det_L(\textnormal{Ad}(m^w)\mid_{\textnormal{Lie}N_{w}(L)})^{-1}}{|\textnormal{Norm}_{L/\mathbf{Q}_p}\det_L(\textnormal{Ad}(m^w)\mid_{\textnormal{Lie}N_{w}(L)})|_p}
\end{equation*}
where we set $m^w:=wmw^{-1}$.
Introduce the equivalence of categories
\begin{equation*}
    \tau_w:\textnormal{Mod}_{\textnormal{sm}}(\mathcal{O}/\varpi^m[M_2(L)])\to \textnormal{Mod}_{\textnormal{sm}}(\mathcal{O}/\varpi^m[wM_2(L)w^{-1}])
\end{equation*}
sending $\pi$ to $\tau_w(\pi)$ with underlying $\mathcal{O}/\varpi^m$-module $\pi$ but with the twisted action $\tau_w(\pi)(m)=\pi(w^{-1}mw)$.
Finally, set $N_w^{\circ}=N_w(\mathcal{O}_L)$ and $N_{2,w,N_1}^{\circ}:=N_{2,w}^{\circ}\cap N_1(L)$.
\begin{Lemma}
    Assume that $G=\textnormal{GL}_n$ with $T=T_n$, $B=B_n$ and let $w\in {}^{Q_1}W^{Q_2}$ such that $wM_2(L)w^{-1}\subset M_1(L)$. Then, for any $\pi\in D^+_{\textnormal{sm}}(\mathcal{O}/\varpi^m[M_1(L)])$, there is a natural isomorphism between
    \begin{equation*}
        R\Gamma(N_{2,w}^{\circ},\textnormal{Inf}_{M_1(L)}^{Q_1(L)}\pi)^{Q_2\textnormal{-ord}}
    \end{equation*}
    and
    \begin{equation*}
\mathcal{O}/\varpi^m(\chi_w)\otimes_{\mathcal{O}/\varpi^m}\tau_w^{-1}R\Gamma(N_w^{\circ},\pi)^{Q_w\textnormal{-ord}}[-\textnormal{rk}_{\mathbf{Z}_p}N_{2,w,N_1}^{\circ}]
    \end{equation*}
    in $D^+_{\textnormal{sm}}(\mathcal{O}/\varpi^m[M_2(L)])$.
\end{Lemma}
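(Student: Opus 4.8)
The plan is to follow the proof of \cite{ACC23}, Lemma 5.3.7 (and its Siegel-case analogue in \cite{CN23}, \S2.3), adapting the normalisations to a general standard parabolic.

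First I would record the structure of the compact group $N_{2,w}^\circ=Q_1(L)\cap wN_2(\mathcal{O}_L)w^{-1}$. Using $wM_2(L)w^{-1}\subset M_1(L)$ and the standard description of the relative position of the parabolics $wQ_2w^{-1}$ and $Q_1$, one checks that $N_{2,w}^\circ$ fits into a short exact sequence of compact groups $1\to N_{2,w,N_1}^\circ\to N_{2,w}^\circ\to N_w^\circ\to 1$, in which $N_{2,w,N_1}^\circ=N_{2,w}^\circ\cap N_1(L)$ is the kernel and $N_w^\circ=N_w(\mathcal{O}_L)$ is the image under the projection $Q_1(L)\twoheadrightarrow M_1(L)$; this sequence is equivariant for conjugation by $wM_2(L)^+w^{-1}$, and for $m\in M_2(L)^+$ the element $m^w:=wmw^{-1}$ contracts each of the three groups. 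Since inflation along the quotient map is an exact left adjoint to $\Gamma(N_{2,w,N_1}^\circ,-)$, the latter preserves injectives, so a Grothendieck spectral sequence argument yields a natural, $M_2(L)^+$-equivariant isomorphism $R\Gamma(N_{2,w}^\circ,-)\cong R\Gamma(N_w^\circ,R\Gamma(N_{2,w,N_1}^\circ,-))$, with $M_2(L)^+$ acting throughout via $m\mapsto m^w$.

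Next I would treat the inner term on the module $\textnormal{Inf}_{M_1(L)}^{Q_1(L)}\pi$. As $N_{2,w,N_1}^\circ\subset N_1(L)$ acts trivially there, a finite free resolution of the trivial $\mathcal{O}/\varpi^m[N_{2,w,N_1}^\circ]$-module gives $R\Gamma(N_{2,w,N_1}^\circ,\textnormal{Inf}_{M_1(L)}^{Q_1(L)}\pi)\cong R\Gamma(N_{2,w,N_1}^\circ,\mathcal{O}/\varpi^m)\otimes_{\mathcal{O}/\varpi^m}\pi$, with $M_2(L)^+$ acting diagonally and $N_w^\circ$ acting trivially on the first factor and via $M_1(L)$ on $\pi$. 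The crucial input, established exactly as in \cite{ACC23}, Lemma 5.3.7 (ultimately relying on the computations of \cite{Eme10b}), is that passing to $Q_2$-ordinary parts annihilates $R\Gamma(N_{2,w,N_1}^\circ,\mathcal{O}/\varpi^m)$ outside the top degree $d:=\textnormal{rk}_{\mathbf{Z}_p}N_{2,w,N_1}^\circ$ — there $z_p$, acting via $z_p^w$, is topologically nilpotent on $H^i$ for $i<d$ — while on $H^d(N_{2,w,N_1}^\circ,\mathcal{O}/\varpi^m)\cong\mathcal{O}/\varpi^m$ it acts by a $p$-adic unit, so that $R\Gamma(N_{2,w,N_1}^\circ,\mathcal{O}/\varpi^m)^{Q_2\textnormal{-ord}}\cong\mathcal{O}/\varpi^m(\chi_w)[-d]$ as $\mathcal{O}/\varpi^m[M_2(L)]$-modules, where the character is pinned down by the determinant of the adjoint action of $m^w$ on the relevant Lie algebra together with the modulus factor implicit in using the unnormalised induction $\textnormal{Ind}_{Q_1(L)}^{G(L)}$; one verifies this combination is precisely $\chi_w$, using the decomposition $\textnormal{Lie}\big((wN_2w^{-1})\cap\mathfrak{q}_1\big)=\textnormal{Lie}N_w\oplus\textnormal{Lie}N_{2,w,N_1}$. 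Since $N_w^\circ$ acts trivially on $\mathcal{O}/\varpi^m(\chi_w)$, this identifies $R\Gamma(N_{2,w,N_1}^\circ,\textnormal{Inf}_{M_1(L)}^{Q_1(L)}\pi)^{Q_2\textnormal{-ord}}$ with $\mathcal{O}/\varpi^m(\chi_w)\otimes_{\mathcal{O}/\varpi^m}\pi\,[-d]$, as a complex of smooth representations inflated from $N_w^\circ\rtimes M_2(L)^+$.

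Finally I would assemble everything. Using the exactness of $(-)^{Q_2\textnormal{-ord}}$ and the fact that it commutes with $R\Gamma(N_w^\circ,-)$ (by the same argument as Corollary~\ref{Cor3.9}, with trivial type), the first two steps give $R\Gamma(N_{2,w}^\circ,\textnormal{Inf}_{M_1(L)}^{Q_1(L)}\pi)^{Q_2\textnormal{-ord}}\cong\mathcal{O}/\varpi^m(\chi_w)\otimes_{\mathcal{O}/\varpi^m}R\Gamma(N_w^\circ,\pi)^{Q_2\textnormal{-ord}}[-d]$, where $M_2(L)^+$ acts on $R\Gamma(N_w^\circ,\pi)$ via $m\mapsto m^w$. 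It then remains to note that, since $m^w$ ranges over $wM_2(L)^+w^{-1}=M_w(L)^+$ and $z_p^w\in Z_{M_w}^+$ contracts $N_w^\circ$, this Hecke action is, after the relabelling $\tau_w$, exactly the $M_w(L)^+$-action defining $Q_w$-ordinary parts, with inverting $z_p$ corresponding to inverting $z_p^w$; this identifies $R\Gamma(N_w^\circ,\pi)^{Q_2\textnormal{-ord}}$ with $\tau_w^{-1}R\Gamma(N_w^\circ,\pi)^{Q_w\textnormal{-ord}}$ and proves the Lemma.

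I expect the main obstacle to be the second step, and within it the exact determination of the twisting character: one must check that the adjoint-determinant character produced by the top cohomology of $N_{2,w,N_1}^\circ$, once corrected by the modulus character coming from unnormalised induction, is literally $\chi_w$ as defined (phrased via $\textnormal{Lie}N_w$), and that the entire chain of isomorphisms is equivariant for the $M_2(L)^+$-Hecke action — in particular that $N_w^\circ$ acts trivially on the relevant top cohomology and that the short exact sequence above is $wM_2(L)^+w^{-1}$-equivariant. This is the same bookkeeping carried out in \cite{ACC23}, \S5.3 and \cite{CN23}, \S2.3 for the Borel and Siegel cases, and I expect it to go through with only notational changes.
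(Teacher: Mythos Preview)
Your strategy matches the paper's: split $N_{2,w}^\circ$ via the short exact sequence $1\to N_{2,w,N_1}^\circ\to N_{2,w}^\circ\to N_w^\circ\to 1$, compute the inner cohomology on trivial coefficients, and identify the surviving top degree with $\mathcal{O}/\varpi^m(\chi_w)$. One point needs correcting, however. Your claim that $wM_2(L)^+w^{-1}=M_w(L)^+$ is false in general; only the inclusion $wM_2(L)^+w^{-1}\subset M_w(L)^+$ holds. The paper handles this by introducing explicit localisation functors $\alpha,\beta$ along that inclusion, and then kills the lower degrees using the elements $z_{p,k}\in Z_{M_1}(L)$, which lie in $M_w(L)^+$ (indeed are invertible there, being central in $M_1$) but typically not in $wM_2(L)^+w^{-1}$. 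Your alternative --- inverting $z_p$ itself and checking its Hecke action on $H^i(N_{2,w,N_1}^\circ,\mathcal{O}/\varpi^m)$ is nilpotent for $i<d$ --- also works, since $z_p^w$ strictly contracts every root direction of $N_{2,w,N_1}^\circ$; but to finish you must replace the false equality by the observation that $z_p^w$ is itself a strictly contracting element of $Z_{M_w}^+$ for $N_w^\circ$, so inverting it computes the same localisation as $(-)^{Q_w\textnormal{-ord}}$. With that repair your argument goes through; the remaining bookkeeping (the precise identification of the top-degree character with $\chi_w$, and the compatibility of the $\alpha$/$\beta$ type localisation with $R\Gamma(N_w^\circ,-)$) is exactly what the paper spells out via \cite{Eme10b}, Proposition~3.5.6 and Lemma~3.5.10.
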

\begin{proof}
    Before starting the proof, we note that the argument is just an obvious generalisation of the proof of \cite{ACC23}, Lemma 5.3.7 and so we kept the structure of their argument and sometimes will refer to it as \textit{loc. cit}.

    We set the monoid $M_2(L)^+\ltimes_w N_{2,w}^{\circ}$ to be $M_2(L)^+\times N_{2,w}^{\circ}$ with the action $(m,1)(1,n)=(1, m^wn(m^w)^{-1})(m,1)$.
    Consider the short exact sequence
    \begin{equation*}
        0\to N_{2,w,N_1}^{\circ}\to N_{2,w}^{\circ}\to N_w^{\circ}\to 0
    \end{equation*}
    and note that it is obviously equivariant for the $M_2(L)^+$-action on each groups via $m\mapsto wmw^{-1}$.
    Then just as in \textit{loc. cit.}, we basically write $R\Gamma(N_{2,w}^{\circ},\textnormal{Inf}_{M_1(L)}^{Q_1(L)}-)^{Q_2\textnormal{-ord}}$ as $"R\Gamma(N_{w}^{\circ},-)^{Q_w\textnormal{-ord}}\circ R\Gamma(N_{2,w,N_1}^{\circ},\textnormal{Inf}_{M_1(L)}^{Q_1(L)}-)^{Q_1\textnormal{-ord}}"$.
To make this precise, we need to introduce some functors. Denote by
\begin{equation*}
    \textnormal{Res}^w:\textnormal{Mod}_{\textnormal{sm}}(\mathcal{O}/\varpi^m[M_1(L)])\to \textnormal{Mod}_{\textnormal{sm}}(\mathcal{O}/\varpi^m[M_2(L)^+\ltimes_w N_{2,w}^{\circ}])
\end{equation*}
the composite of $\textnormal{Inf}_{M_1(L)}^{Q_1(L)}$ with the functor that sends $\pi\in \textnormal{Mod}_{\textnormal{sm}}(Q_1(L))$ to itself as an $\mathcal{O}/\varpi^m$-module with the action $\textnormal{Res}^w(\pi)(mn)=\pi(m^{w}n)$. Further set
\begin{equation*}
    \alpha: \textnormal{Mod}_{\textnormal{sm}}(\mathcal{O}/\varpi^m[M_2(L)^+\ltimes_wN_w^{\circ}])\to \textnormal{Mod}_{\textnormal{sm}}(\mathcal{O}/\varpi^m[M_w(L)^+\ltimes N_w^{\circ}])
\end{equation*}
to be the functor defined by composing the equivalence
\begin{equation*}
    \textnormal{Mod}_{\textnormal{sm}}(\mathcal{O}/\varpi^m[M_2(L)^+\ltimes_w N_w^{\circ}])\xrightarrow{\sim} \textnormal{Mod}_{\textnormal{sm}}[wM_2(L)^+w^{-1}\ltimes N_w^{\circ}])
\end{equation*}
with the localisation
\begin{equation*}
    \textnormal{Mod}_{\textnormal{sm}}(\mathcal{O}/\varpi^m[wM_2(L)^+w^{-1}\ltimes N_w^{\circ}])\to \textnormal{Mod}_{\textnormal{sm}}(\mathcal{O}/\varpi^m[M_w(L)^+\ltimes N_w^{\circ}])
\end{equation*}
induced by the inclusion $wM_2(L)^+w^{-1}\subset M_w(L)^+$. Analogous construction defines a functor $\beta$ such that they fit into a commutative diagram (up to natural equivalence)
$$\begin{tikzcd}
 \textnormal{Mod}_{\textnormal{sm}}(\mathcal{O}/\varpi^m[M_2(L)^+\ltimes_wN_w^{\circ}]) \arrow{r}{\alpha} \arrow{d}{\Gamma(N_w^{\circ},-)}
               & \textnormal{Mod}_{\textnormal{sm}}(\mathcal{O}/\varpi^m[M_w(L)^+\ltimes N_w^{\circ}]) \arrow{d}{\Gamma(N_w^{\circ},-)}\\
\textnormal{Mod}_{\textnormal{sm}}(\mathcal{O}/\varpi^m[M_2(L)^+])  \arrow{r}{\beta} &\textnormal{Mod}_{\textnormal{sm}}(\mathcal{O}/\varpi^m[M_w(L)^+])
\end{tikzcd}$$
where the vertical arrows are defined the usual way by considering the corresponding Hecke actions. A reasoning similar to the proof of Lemma~\ref{Lem3.5} shows that $\alpha$ takes injectives to $\Gamma(N_w^{\circ},-)$-acyclics. Therefore, by checking things on underived functors, we have
\begin{equation*}
R\Gamma(N_{2,w}^{\circ},\textnormal{Inf}_{M_1(L)}^{Q_1(L)}\pi)^{Q_2\textnormal{-ord}}\cong
\end{equation*}
\begin{equation*}
    \tau_w^{-1}(\beta R\Gamma(N_w^{\circ},R\Gamma(N_{2,w,N_1}^{\circ},\textnormal{Res}^w(\pi)))^{Q_w\textnormal{-ord}}\cong
\end{equation*}
\begin{equation*}
    \tau_w^{-1}(R\Gamma(N_w^{\circ},\alpha R\Gamma(N_{2,w,N_1}^{\circ},\textnormal{Res}^w\pi)))^{Q_w\textnormal{-ord}}.
\end{equation*}
Since $N_{2,w,N_1}^{\circ}$ acts trivially on $\textnormal{Res}^w\pi$, we get
\begin{equation*}
R\Gamma(N_{2,w,N_1}^{\circ},\textnormal{Res}^w\pi)\cong \textnormal{Res}^w\pi\otimes_{\mathcal{O}/\varpi^m}R\Gamma(N_{2,w,N_1}^{\circ},\mathcal{O}/\varpi^m)
\end{equation*}
in $D^+_{\textnormal{sm}}(\mathcal{O}/\varpi^m[M_2(L)^+\ltimes_w N_w^{\circ}])$.
In particular, to compute $\alpha R\Gamma(N_{2,w,N_1}^{\circ},\textnormal{Res}^w\pi)$, it suffices to compute $\alpha R\Gamma(N_{2,w,N_1}^{\circ},\mathcal{O}/\varpi^m)$.
\begin{claim}
We have natural isomorphisms
    \begin{equation*}
        \alpha R\Gamma(N_{2,w,N_1}^{\circ},\mathcal{O}/\varpi^m)\cong \tau_w\mathcal{O}/\varpi^m(\chi_w)[-\textnormal{rk}_{\mathbf{Z}_p}N_{2,w,N_1}^{\circ}]\cong
    \end{equation*}
    \begin{equation*}
        \alpha \mathcal{O}/\varpi^m(\chi_w)[-\textnormal{rk}_{\mathbf{Z}_p}N_{2,w,N_1}^{\circ}]
    \end{equation*}
    in $D^+_{\textnormal{sm}}(\mathcal{O}/\varpi^m[M_w(L)^+\ltimes N_w^{\circ}])$ where, by abuse of notation we consider $\chi_w$ as an $M_2(L)^+\ltimes_w N_w^{\circ}$-module by inflation.
\end{claim}
\begin{proof}[Proof of claim]
    Assume that $Q_1=P_{(n_1,...,n_t)}$. Then we can write
    \begin{equation*}
        N_1(\mathcal{O}_L)\cong \prod _{1\leq i<j \leq t}N_{i,j}^{\circ}
    \end{equation*}
    where $N_{i,j}^{\circ}\subset N_1(\mathcal{O}_L)$ corresponds to the entries lying in $[n_{i-1}+1,n_i]\times [n_{j-1}+1,n_j]$ with the convention that $n_0=0$. This induces an isomorphism
    \begin{equation*}
        N_{2,w,N_1}^{\circ}\cong \prod_{1\leq i<j\leq t}N_{2,w,ij}^{\circ}.
    \end{equation*}
    Set $r_{i,j}:=\textnormal{rk}_{\mathbf{Z}_p}N_{2,w,ij}^{\circ}$ and, for $1\leq k\leq t$,
    \begin{equation*}
        z_{p,k}:=\textnormal{diag}(p,...,p,1,...,1)
    \end{equation*}
    where the first $n_1+...+n_k$ entries in the diagonal are given by $p$ and the rest by $1$.
    Note that $z_{p,k}$ is in the centre of $M_1(L)$, so it lies in $M_w(L)^+$ and it is invertible there, in particular, its Hecke action on $\alpha R\Gamma(N_{2,w,N_1}^{\circ},\mathcal{O}/\varpi^m)$ is invertible.

    Moreover, K\"unneth formula gives
    \begin{equation*}
        R\Gamma(N_{2,w,N_1}^{\circ},\mathcal{O}/\varpi^m)\cong \bigotimes_{1\leq i<j\leq t}R\Gamma(N_{2,w,ij}^{\circ},\mathcal{O}/\varpi^m)
    \end{equation*}
    and the Hecke action of $z_{p,k}$ is given by multiplying by the scalar
    \begin{equation*}
        [N_{2,w,N_1}^{\circ}:z_{p,k}N_{2,w}^{\circ}z_{p,k}^{-1}]=p^{\sum_{i<k<j}r_{i,j}}
    \end{equation*}
    the tensor product of maps
    \begin{equation*}
        m_{i,j}^k:R\Gamma(N_{2,w,ij}^{\circ},\mathcal{O}/\varpi^m)\to R\Gamma(N_{2,w,ij}^{\circ},\mathcal{O}/\varpi^m)
    \end{equation*}
    induced by multiplication by $p$ on $N_{2,w,ij}^{\circ}$ if $i<k<j$ and by $\textnormal{id}:N_{2,w,ij}^{\circ}\to N_{2,w,ij}^{\circ}$ otherwise. In the first case this means that, for $0\leq d_{i,j}\leq r_{i,j}$, $H^{d_{i,j}}(m_{i,j}^k)$ is multiplication by $p^{-d_{i,j}}$ and it is multiplication by $1$ otherwise. In particular, $\alpha(\bigotimes_{i<j}H^{d_{i,j}}(N_{2,w,ij}^{\circ},\mathcal{O}/\varpi^m))\neq 0$ only when $d_{i,j}=r_{i,j}$ for each $1\leq i<j\leq t$.
    Note that $\sum_{i<j}r_{i,j}=\textnormal{rk}_{\mathbf{Z}_p}N_{2,w,N_1}^{\circ}$.
    Therefore, we have
    \begin{equation*}
        \alpha R\Gamma(N_{2,w,N_1}^{\circ},\mathcal{O}/\varpi^m)\cong \alpha H^{\textnormal{rk}_{\mathbf{Z}_p}N_{2,w,N_1}^{\circ}}(N_{2,w,N_1}^{\circ},\mathcal{O}/\varpi^m)[-\textnormal{rk}_{\mathbf{Z}_p}N_{2,w,N_1}^{\circ}].
    \end{equation*}
    Moreover, just as in the proof of \cite{Hau16}, Proposition 3.1.8, using \cite{Eme10b}, Proposition 3.5.6 and the description of the corestriction map on top degree cohomology (cf. \cite{Eme10b}, Lemma 3.5.10), we see that the latter is, as an $\mathcal{O}/\varpi^m[M_w(L)^+]$-module, given by $\alpha\mathcal{O}/\varpi^m(\chi_w)[-\textnormal{rk}_{\mathbf{Z}_p}N_{2,w,N_1}^{\circ}]$.
\end{proof}
Putting everything together, we get
\begin{equation*}
    R\Gamma(N_{2,w}^{\circ},\textnormal{Inf}_{M_1(L)}^{Q_1(L)}\pi)^{Q_2\textnormal{-ord}}\cong
\end{equation*}
\begin{equation*}
    \tau_w^{-1}R\Gamma(N_w^{\circ},\alpha(\mathcal{O}/\varpi^m(\chi_w)\otimes \textnormal{Res}^w\pi))^{Q_w\textnormal{-ord}}[-\textnormal{rk}_{\mathbf{Z}_p}N_{2,w,N_1}^{\circ}]\cong
\end{equation*}
\begin{equation*}
    \tau_w^{-1}(\tau_w(\mathcal{O}/\varpi^m(\chi_w)))\otimes_{\mathcal{O}/\varpi^m}\tau_w^{-1}R\Gamma(N_w^{\circ},\pi)^{Q_w\textnormal{-ord}}[-\textnormal{rk}_{\mathbf{Z}_p}N_{2,w,N_1}^{\circ}]\cong
\end{equation*}
\begin{equation*}
\mathcal{O}/\varpi^m(\chi_w)\otimes_{\mathcal{O}/\varpi^m}\tau_w^{-1}R\Gamma(N_w^{\circ},\pi)^{Q_w\textnormal{-ord}}[-\textnormal{rk}_{\mathbf{Z}_p}N_{2,w,N_1}^{\circ}].
\end{equation*}
\end{proof}
Combining the results of the subsection, we obtain the following.
\begin{Cor}\label{Cor3.28}
    Assume that $G=\textnormal{GL}_n$, $T=T_n$ and $B=B_n$. Let $w\in {}^{Q_1}W^{Q_2}$ such that $wM_2(L)w^{-1}\subset M_1(L)$. Then, for every $\pi \in D^+_{\textnormal{sm}}(\mathcal{O}/\varpi^m[M_1(L)])$, $\sigma\in \textnormal{Mod}_{\textnormal{sm}}(\mathcal{O}/\varpi^m[M_2(\mathcal{O}_L)])$ finite free as an $\mathcal{O}/\varpi^m$-module, and $j\in \mathbf{Z}_{\geq 0}$, the group
    \begin{equation*}
        R^j\Hom_{\mathcal{O}/\varpi^m[M_2(\mathcal{O}_L)]}(\sigma,R\Gamma(N_2(\mathcal{O}_L),\textnormal{Ind}_{Q_1(L)}^{G(L)}\pi)^{Q_2\textnormal{-ord}})
    \end{equation*}
    admits
    \begin{equation*}
        R^{j-\textnormal{rk}_{\mathbf{Z}_p}N_{2,w,N_1}^{\circ}}\Hom_{\mathcal{O}/\varpi^m[M_2(\mathcal{O}_L)]}(\sigma,\mathcal{O}/\varpi^m(\chi_w)\otimes_{\mathcal{O}/\varpi^m}\tau_w^{-1}R\Gamma(N_w^{\circ},\pi)^{Q_w\textnormal{-ord}})
    \end{equation*}
    as a $\mathcal{H}(\sigma)$-equivariant subquotient.
\end{Cor}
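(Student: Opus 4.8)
Since all the substantive work of \S\ref{sec3.5} has been carried out in the preceding statements, the plan is simply to assemble them. First I would read $\textnormal{Ind}_{Q_1(L)}^{G(L)}\pi$ as $\textnormal{Ind}_{Q_1(L)}^{G(L)}\textnormal{Inf}_{M_1(L)}^{Q_1(L)}\pi$ and observe that it equals $I_{\geq 0}(\textnormal{Inf}_{M_1(L)}^{Q_1(L)}\pi)$, since $G_{\geq 0}=G(L)$, and that $I_{\geq i}$ vanishes for $i>\max\{\ell(w):w\in{}^{Q_1}W^{Q_2}\}$, so the filtration $I_{\geq 0}\supseteq I_{\geq 1}\supseteq\cdots$ is finite. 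The $\mathcal{H}(\sigma^{\vee})$-equivariant short exact sequences recalled in the text (obtained from the triangles \ref{eq3.7} via the generalisation of \cite{CN23}, Lemma~2.3.5) then show that, after applying $R^j\Hom_{\mathcal{O}/\varpi^m[M_2(\mathcal{O}_L)]}(\sigma,R\Gamma(N_2(\mathcal{O}_L),-)^{Q_2\textnormal{-ord}})$, this filtration induces a finite, exhaustive, Hecke-stable filtration on $R^j\Hom_{\mathcal{O}/\varpi^m[M_2(\mathcal{O}_L)]}(\sigma,R\Gamma(N_2(\mathcal{O}_L),\textnormal{Ind}_{Q_1(L)}^{G(L)}\pi)^{Q_2\textnormal{-ord}})$ whose graded pieces are $\bigoplus_{\ell(w)=i}R^j\Hom_{\mathcal{O}/\varpi^m[M_2(\mathcal{O}_L)]}(\sigma,R\Gamma(N_2(\mathcal{O}_L),I_w(\textnormal{Inf}_{M_1(L)}^{Q_1(L)}\pi))^{Q_2\textnormal{-ord}})$. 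For the fixed $w$ of the statement, its contribution is a direct summand of one such graded piece, hence a Hecke-equivariant subquotient of the whole group.

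Second I would identify this $w$-contribution, and this is where the hypothesis $wM_2(L)w^{-1}\subset M_1(L)$ enters. The Corollary immediately preceding this one provides a natural $M_2(L)$-equivariant isomorphism $R\Gamma(N_2(\mathcal{O}_L),I_w(\textnormal{Inf}_{M_1(L)}^{Q_1(L)}\pi))^{Q_2\textnormal{-ord}}\cong R\Gamma(N_{2,w}^{\circ},\textnormal{Inf}_{M_1(L)}^{Q_1(L)}\pi)^{Q_2\textnormal{-ord}}$, and the Lemma immediately preceding this one computes $R\Gamma(N_{2,w}^{\circ},\textnormal{Inf}_{M_1(L)}^{Q_1(L)}\pi)^{Q_2\textnormal{-ord}}\cong\mathcal{O}/\varpi^m(\chi_w)\otimes_{\mathcal{O}/\varpi^m}\tau_w^{-1}R\Gamma(N_w^{\circ},\pi)^{Q_w\textnormal{-ord}}[-\textnormal{rk}_{\mathbf{Z}_p}N_{2,w,N_1}^{\circ}]$, both in $D^+_{\textnormal{sm}}(\mathcal{O}/\varpi^m[M_2(L)])$. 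Applying $R\Hom_{\mathcal{O}/\varpi^m[M_2(\mathcal{O}_L)]}(\sigma,-)$ and pulling the shift $[-\textnormal{rk}_{\mathbf{Z}_p}N_{2,w,N_1}^{\circ}]$ out of the cohomological degree turns the $w$-contribution into exactly $R^{j-\textnormal{rk}_{\mathbf{Z}_p}N_{2,w,N_1}^{\circ}}\Hom_{\mathcal{O}/\varpi^m[M_2(\mathcal{O}_L)]}(\sigma,\mathcal{O}/\varpi^m(\chi_w)\otimes_{\mathcal{O}/\varpi^m}\tau_w^{-1}R\Gamma(N_w^{\circ},\pi)^{Q_w\textnormal{-ord}})$, which is the group asserted in the statement.

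I do not expect a genuine obstacle here: the analytic and representation-theoretic content was already absorbed into Lemma~\ref{Lem3.23}, its two successors, and the triangles \ref{eq3.7}, so this corollary is a formal consequence. The only points needing care are bookkeeping ones. First, one must note that the two isomorphisms quoted above, being functorial in $\pi$ and $M_2(L)$-equivariant, yield Hecke-equivariant isomorphisms of $\Hom$-groups once $R\Hom_{\mathcal{O}/\varpi^m[M_2(\mathcal{O}_L)]}(\sigma,-)$ is applied; this is immediate from the functoriality of the Hecke action set up in \S\ref{sec3.1} (see also Corollary~\ref{Cor3.9}), which guarantees that the composite functor $R\Hom_{\mathcal{O}/\varpi^m[M_2(\mathcal{O}_L)]}(\sigma,R\Gamma(N_2(\mathcal{O}_L),-)^{Q_2\textnormal{-ord}})$ is just $R\Hom_{\mathcal{O}/\varpi^m[M_2(\mathcal{O}_L)]}(\sigma,-)$ precomposed with $R\Gamma(N_2(\mathcal{O}_L),-)^{Q_2\textnormal{-ord}}$. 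Second, one should be consistent about whether the acting algebra is $\mathcal{H}(\sigma)$ or $\mathcal{H}(\sigma^{\vee})$ and about left versus right actions; since the two are interchanged by the anti-isomorphism $[g,\psi]\mapsto[g^{-1},\psi^t]$, the "subquotient" assertion is insensitive to this choice, which is why the statement may harmlessly be phrased with $\mathcal{H}(\sigma)$.
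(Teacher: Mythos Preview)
Your proposal is correct and matches the paper's approach exactly: the paper introduces Corollary~\ref{Cor3.28} with the sentence ``Combining the results of the subsection, we obtain the following'' and gives no further argument, so the assembly you describe---filtration from the triangles~\ref{eq3.7} and their associated short exact sequences, then identification of the $w$-stratum via the preceding corollary and lemma---is precisely what is intended. Your remarks on Hecke equivariance and the $\mathcal{H}(\sigma)$ versus $\mathcal{H}(\sigma^{\vee})$ bookkeeping are apt and fill in the only details the paper leaves implicit.
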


For the reader's convenience, we spell out the case of interest for our application in proving local-global compatibility. For this we introduce some notation that hopefully makes it easier to motivate how Corollary~\ref{Cor3.28} will be applied. In particular, consider $\widetilde{G}=\textnormal{GL}_{2n}$, and set $P=P_{(n,n)}\subset \widetilde{G}$ to be the Siegel parabolic with Levi decomposition $P=G\ltimes U$. Moreover, set $\widetilde{Q}\subset P$ to be any standard\footnote{Standard with respect to the Borel of upper triangular matrices.} parabolic subgroup with Levi decomposition $\widetilde{M}\ltimes \widetilde{N}$. Set $\widetilde{Q}\cap G=Q_c\times Q\subset G=\textnormal{GL}_n\times \textnormal{GL}_n$ and denote their Levi decompositions by $M_c\ltimes N_c$, and $M\ltimes N$, respectively. Note that $\widetilde{M}=M_c\times M$. Denote by $\widetilde{Q}^{w_0}\subset P \subset \widetilde{G}$ the standard parabolic subgroup with Levi decomposition $\widetilde{M}^{w_0}\ltimes \widetilde{N}^{w_0}$ where $\widetilde{M}^{w_0}=M\times M_c\subset \textnormal{GL}_n\times \textnormal{GL}_n$. Pick $\widetilde{\sigma}=\sigma\otimes \sigma_c\in \textnormal{Mod}_{\textnormal{sm}}(\mathcal{O}/\varpi^m[\widetilde{M}^{w_0}(\mathcal{O}_L])=\textnormal{Mod}_{\textnormal{sm}}(\mathcal{O}/\varpi^m[M(\mathcal{O}_L)\times M_c(\mathcal{O}_L)])$, finite free as an $\mathcal{O}/\varpi^m$-module. Finally, note that $w_0^P\in {}^PW^{\widetilde{Q}^{w_0}}$. Then a direct application of Corollary~\ref{Cor3.28} with $Q_1=P$, $Q_2=\widetilde{Q}^{w_0}$, and $w=w_0^P$ gives.
\begin{Cor}\label{Cor3.29}
    For every $\pi\in D^+_{\textnormal{sm}}(\mathcal{O}/\varpi^m[G(L)])$, and $j\in \mathbf{Z}_{\geq 0}$, the group
    \begin{equation*}
        R^j\Hom_{\mathcal{O}/\varpi^m[\widetilde{M}^{w_0}(\mathcal{O}_L)]}(\widetilde{\sigma}, R\Gamma(\widetilde{N}^{w_0}(\mathcal{O}_L),\textnormal{Ind}_{P(L)}^{\widetilde{G}(L)}\pi)^{\widetilde{Q}^{w_0}\textnormal{-ord}})
    \end{equation*}
    admits
    \begin{equation*}
        R^j\Hom_{\mathcal{O}/\varpi^m[M(\mathcal{O}_L)\times M_c(\mathcal{O}_L)]}(\sigma\otimes \sigma_c, R\Gamma(N(\mathcal{O}_L)\times N_c(\mathcal{O}_L),\tau_{w_0^P}^{-1}\pi)^{Q\times Q_c\textnormal{-ord}})
    \end{equation*}
    as a $\mathcal{H}(\sigma)\times \mathcal{H}(\sigma_c)$-equivariant subquotient.
\end{Cor}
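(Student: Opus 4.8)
The statement is the special case of Corollary~\ref{Cor3.28} obtained by taking $\textnormal{GL}_{2n}$ in place of $G$, $Q_1=P$, $Q_2=\widetilde{Q}^{w_0}$, $w=w_0^P$ and $\widetilde{\sigma}=\sigma\otimes\sigma_c$ in place of $\sigma$, so the plan is simply to invoke Corollary~\ref{Cor3.28} and then unwind the combinatorial data attached to $w_0^P$. First I would record the two hypotheses. That $w_0^P\in{}^PW^{\widetilde{Q}^{w_0}}$ has already been noted above. For the containment $w_0^P\widetilde{M}^{w_0}(L)(w_0^P)^{-1}\subset G(L)$: writing $w_0^P=w_0^{\widetilde{G}}w_0^{G}$ as in the notational conventions, one sees that $w_0^P$ acts on $\textnormal{GL}_{2n}$ as the permutation matrix interchanging the two diagonal $n\times n$ blocks; conjugation by it therefore sends the block-diagonal group $\widetilde{M}^{w_0}=M\times M_c$ to $\widetilde{M}=M_c\times M\subset G$ (and it normalises $G$). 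This is exactly the hypothesis $wM_2(L)w^{-1}\subset M_1(L)$ of Corollary~\ref{Cor3.28}.

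The one point that genuinely needs checking is that the degree-shift exponent $\textnormal{rk}_{\mathbf{Z}_p}N_{2,w,N_1}^{\circ}$ vanishes, which is why both sides of the asserted statement sit in cohomological degree $j$. Here $N_1=U$, $N_2=\widetilde{N}^{w_0}$, and $\widetilde{N}^{w_0}=U\rtimes(\widetilde{N}^{w_0}\cap G)$. Since $w_0^P$ sends $U$ to the unipotent radical $\overline{U}$ of the opposite Siegel parabolic and normalises $G$, one gets $w_0^P\widetilde{N}^{w_0}(w_0^P)^{-1}=\overline{U}\rtimes\bigl(w_0^P(\widetilde{N}^{w_0}\cap G)(w_0^P)^{-1}\bigr)$ with the second factor contained in $G(L)$; intersecting with $P(L)$ and using $P\cap\overline{U}=\{1\}$ gives $N_{2,w}^{\circ}=P(L)\cap w_0^P\widetilde{N}^{w_0}(\mathcal{O}_L)(w_0^P)^{-1}\subset G(L)$, whence $N_{2,w,N_1}^{\circ}=N_{2,w}^{\circ}\cap U(L)=\{1\}$. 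I would stress that this is where it is used that $P$ is the Siegel parabolic, so that $w_0^PU(w_0^P)^{-1}=\overline{U}$ meets $P$ trivially; for a non-Siegel $P$ one would pick up a nonzero shift.

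It then remains to translate the remaining data of Corollary~\ref{Cor3.28} into the present notation. Conjugating back by $(w_0^P)^{-1}$ gives $(w_0^P)^{-1}M_w w_0^P=\widetilde{M}^{w_0}$, $(w_0^P)^{-1}Q_w w_0^P=\widetilde{Q}^{w_0}\cap G=Q\times Q_c$ and $(w_0^P)^{-1}N_w w_0^P=\widetilde{N}^{w_0}\cap G=N\times N_c$; since $\tau_{w_0^P}$ is by definition transport of structure along this conjugation and $w_0^P$ normalises $G$, one obtains a natural isomorphism
\begin{equation*}
\tau_{w_0^P}^{-1}R\Gamma(N_w(\mathcal{O}_L),\pi)^{Q_w\textnormal{-ord}}\cong R\Gamma\bigl((N\times N_c)(\mathcal{O}_L),\tau_{w_0^P}^{-1}\pi\bigr)^{Q\times Q_c\textnormal{-ord}}.
\end{equation*}
Plugging this into the conclusion of Corollary~\ref{Cor3.28}, using $M_2(\mathcal{O}_L)=M(\mathcal{O}_L)\times M_c(\mathcal{O}_L)$ together with the factorisation $\mathcal{H}(\widetilde{\sigma})\cong\mathcal{H}(\sigma)\otimes\mathcal{H}(\sigma_c)$ coming from $\widetilde{\sigma}=\sigma\otimes\sigma_c$ (so that $\mathcal{H}(\widetilde{\sigma}^{\vee})$-equivariance is $\mathcal{H}(\sigma)\times\mathcal{H}(\sigma_c)$-equivariance), and identifying the character $\chi_{w_0^P}$ explicitly via the block-swap description, yields exactly the claimed $\mathcal{H}(\sigma)\times\mathcal{H}(\sigma_c)$-equivariant subquotient. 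The main — indeed the only — obstacle is the bookkeeping of $w_0^P$-conjugation in this last step (identifying $Q_w$, $N_w$, $\tau_w$, $\chi_w$); everything else is a verbatim specialisation of Corollary~\ref{Cor3.28}.
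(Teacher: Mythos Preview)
Your proposal is correct and follows exactly the paper's approach: the paper states Corollary~\ref{Cor3.29} as ``a direct application of Corollary~\ref{Cor3.28} with $Q_1=P$, $Q_2=\widetilde{Q}^{w_0}$, and $w=w_0^P$'' without further argument, and you carry out precisely that specialisation, supplying in addition the verifications (that $w_0^P\widetilde{M}^{w_0}(w_0^P)^{-1}\subset G$ and that $N_{2,w_0^P,U}^{\circ}=\{1\}$ so the degree shift vanishes) which the paper leaves implicit. One small point you could make more explicit: the triviality of $N_{2,w_0^P,U}^{\circ}$ is also what forces the twist $\chi_{w_0^P}$ in Corollary~\ref{Cor3.28} to be trivial (since that character arises from the top-degree cohomology of $N_{2,w,N_1}^{\circ}$), which is why no character appears in the displayed subquotient; your phrase ``identifying the character $\chi_{w_0^P}$ explicitly'' leaves this slightly opaque.
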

We also deduce a dual statement computing $\overline{\widetilde{Q}}$-ordinary parts of a Bruhat stratum of $\textnormal{Ind}_{P(L)}^{\widetilde{G}(L)}\pi$. 
\begin{Cor}\label{Cor3.30}
    For every $\pi\in D^+_{\textnormal{sm}}(\mathcal{O}/\varpi^m[G(L)])$, and integer $j\in\mathbf{Z}_{\geq 0}$, the group
    \begin{equation*}
        R^j\Hom_{\mathcal{O}/\varpi^m[\widetilde{M}(\mathcal{O}_L)]}(\tau_{w_0^P}\widetilde{\sigma}, R\Gamma(\overline{\widetilde{N}}^1,\textnormal{Ind}_{P(L)}^{\widetilde{G}(L)}\pi)^{\overline{\widetilde{Q}}\textnormal{-ord}})
    \end{equation*}
    admits
    \begin{equation*}
        R^j\Hom_{\mathcal{O}/\varpi^m[M_c(\mathcal{O}_L)\times M(\mathcal{O}_L)]}(\sigma_c\otimes \sigma, R\Gamma(\overline{N}^1_c\times \overline{N}^1,\pi)^{\overline{Q_c}\times \overline{Q}\textnormal{-ord}})
    \end{equation*}
    as a $\mathcal{H}(\sigma_c)\times \mathcal{H}(\sigma)$-equivariant subquotient.
\end{Cor}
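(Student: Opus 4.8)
The plan is to carry out, for the opposite parabolic $\overline{\widetilde{Q}}$, the exact dual of the argument that deduces Corollary~\ref{Cor3.29} from Corollary~\ref{Cor3.28}, feeding in the dual Hida theory of \S\ref{sec3.3} in place of the $\widetilde{Q}^{w_0}$-ordinary theory. I will only need to re-run the three ingredients of \S\ref{sec3.5}: the support filtration of parabolic induction, the long exact sequence argument, and the computation of the ordinary part of the distinguished Bruhat stratum.

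First I would note that the stratifying functors $I_{\geq i}$, $I_w$ and $I_w^{\circ}$ attached to the Bruhat decomposition of $\textnormal{Ind}_{P(L)}^{\widetilde{G}(L)}$ depend only on the supports of functions, so they, the distinguished triangles \ref{eq3.7}, and the decomposition lemma underlying the long exact sequence are unchanged; only the functor applied afterwards changes. I would then apply $R\Hom_{\mathcal{O}/\varpi^m[\widetilde{M}(\mathcal{O}_L)]}(\tau_{w_0^P}\widetilde{\sigma},R\Gamma(\overline{\widetilde{N}}^1,-)^{\overline{\widetilde{Q}}\textnormal{-ord}})$ to \ref{eq3.7}. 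Since $(-)^{\overline{\widetilde{Q}}\textnormal{-ord}}$ is an exact localisation and, by \S\ref{sec3.1}--\S\ref{sec3.3}, this composite lands in the derived category of a Hecke algebra isomorphic to $\mathcal{H}(\sigma_c)\times\mathcal{H}(\sigma)$, running the long exact sequence argument of \cite{CN23}, Proposition~2.3.4 exhibits, for each $j$ and each $w$ in the relevant double coset set, the group $R^j\Hom_{\mathcal{O}/\varpi^m[\widetilde{M}(\mathcal{O}_L)]}(\tau_{w_0^P}\widetilde{\sigma},R\Gamma(\overline{\widetilde{N}}^1,I_w(\pi))^{\overline{\widetilde{Q}}\textnormal{-ord}})$ as an $\mathcal{H}(\sigma_c)\times\mathcal{H}(\sigma)$-equivariant subquotient of the group in the statement.

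The second and main step would be to compute $R\Gamma(\overline{\widetilde{N}}^1,I_w(\pi))^{\overline{\widetilde{Q}}\textnormal{-ord}}$ for the distinguished Bruhat cell, i.e. to prove the opposite-parabolic analogues of Lemma~\ref{Lem3.23} and of \cite{ACC23}, Lemmas~5.3.5--5.3.7. Concretely, I would check that the relevant cell $S_w^{\circ}$ has the shape making $I_w^{\circ}$ carry injectives to $\Gamma(\overline{\widetilde{N}}^1,-)$-acyclics (using the inflation left adjoint together with the opposite-parabolic version of \cite{Eme10b}, Proposition~2.1.11 already invoked in \S\ref{sec3.3}), identify $R\Gamma(\overline{\widetilde{N}}^1,I_w^{\circ}(\pi))$ with $R\Gamma$ of $\pi$ over the appropriate compact subgroup of $P(L)$, and then, as in the Claim inside the proof of \cite{ACC23}, Lemma~5.3.7 (via the Künneth formula, \cite{Eme10b}, Proposition~3.5.6 and Lemma~3.5.10, and \cite{Hau16}, Proposition~3.1.8 for the corestriction on top-degree cohomology), identify the result with $\tau_{w_0^P}^{-1}R\Gamma(\overline{N}^1_c\times\overline{N}^1,\pi)^{\overline{Q_c}\times\overline{Q}\textnormal{-ord}}$ twisted by a modulus character, the cohomological shift by the $\mathbf{Z}_p$-rank of the relevant unipotent subgroup vanishing for this cell just as $N_{2,w,N_1}^{\circ}$ is trivial in the proof of Corollary~\ref{Cor3.29}. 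Substituting this into the subquotient statement of the previous step gives the corollary; alternatively, one could shortcut the whole thing by combining Corollary~\ref{Cor3.29} with the $\widetilde{G}(L)$-equivariant isomorphism $\textnormal{Ind}_{P(L)}^{\widetilde{G}(L)}\pi\cong\textnormal{Ind}_{\overline{P}(L)}^{\widetilde{G}(L)}(\tau_{w_0^P}\pi)$ given by right translation by $w_0^P$ and the conjugation-by-$w_0^{\widetilde{G}}$ symmetry exchanging $\widetilde{B}$- and $\overline{\widetilde{B}}$-standard parabolics.

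The hard part will be the bookkeeping: ensuring that the several twists --- $\tau_{w_0^P}$, the modulus character, and (if one goes through the opposite Borel) the contribution of $w_0^{\widetilde{G}}$ --- cancel so as to leave precisely $\tau_{w_0^P}\widetilde{\sigma}$ on the left and an untwisted $\pi$ on the right, and that the ordinary-parts localisation inverted on each side is the intended one (the Hecke operator attached to $u_{\bar{v}}^{\widetilde{Q}_{\bar{v}},-1}$, i.e. the same normalisation as in \S\ref{sec3.3}) so that the final Hecke-equivariance is with respect to the stated $\mathcal{H}(\sigma_c)\times\mathcal{H}(\sigma)$-action. All of this is routine in the style of \S\ref{sec3.3} and \S\ref{sec3.5}, but it is the step most prone to sign-of-twist errors.
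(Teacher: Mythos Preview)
Your main approach (rerunning all of \S\ref{sec3.5} for the opposite parabolic $\overline{\widetilde{Q}}$) would work, but the paper takes a much shorter route that avoids redoing any of the stratum computations. The paper reduces Corollary~\ref{Cor3.30} directly to Corollary~\ref{Cor3.28} by a conjugation trick, and the key enabling ingredient you do not mention is Remark~\ref{Rem3.17} (independence of the $\overline{Q}$-ordinary part from the choice of compact open in the unipotent radical).

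Concretely, setting $\tilde z_p=u_p^{\widetilde{Q}}$ and $\tilde w_0=w_0^{\widetilde{Q}}$, the paper observes that $\overline{N}^{\circ}:=\tilde z_p\tilde w_0\,\widetilde{N}^{\tilde w_0}(\mathcal{O}_L)\,(\tilde z_p\tilde w_0)^{-1}\subset \overline{\widetilde{N}}^1$; by Remark~\ref{Rem3.17} one may replace $\overline{\widetilde{N}}^1$ by $\overline{N}^{\circ}$ without changing the $\overline{\widetilde{Q}}$-ordinary cohomology. Right translation by $\tilde z_p\tilde w_0$ then converts $R\Gamma(\overline{N}^{\circ},-)^{\overline{\widetilde{Q}}\textnormal{-ord}}$ into $R\Gamma(\widetilde{N}^{\tilde w_0}(\mathcal{O}_L),-)^{\widetilde{Q}^{\tilde w_0}\textnormal{-ord}}$ for the \emph{standard} parabolic $\widetilde{Q}^{\tilde w_0}$ with Levi $\tilde w_0^{-1}\widetilde{M}\tilde w_0$, and simultaneously turns $\tau_{w_0^P}\widetilde{\sigma}$ into $\tau_{w_0^{Q\times Q_c}}\widetilde{\sigma}$. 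Now Corollary~\ref{Cor3.28} applies verbatim (with $Q_1=P$, $Q_2=\widetilde{Q}^{\tilde w_0}$, $w=w_0^P$), giving the desired subquotient on the $G$-side for the standard parabolic $Q^{\tilde w_0}_c\times Q^{\tilde w_0}$. A second conjugation by $(w_0^{Q_c}u_p^{Q_c},\,w_0^{Q}u_p^{Q})$ together with another appeal to Remark~\ref{Rem3.17} converts this back into $R\Gamma(\overline{N}^1_c\times\overline{N}^1,\pi)^{\overline{Q_c}\times\overline{Q}\textnormal{-ord}}$ and matches the Hecke actions. Throughout, the identity $\tilde w_0=(w_0^{Q_c},w_0^Q)w_0^P=w_0^P(w_0^Q,w_0^{Q_c})$ keeps track of the $\tau$-twists.

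Your ``shortcut'' suggestion is in the right spirit but not quite on target: the relevant Weyl element is $w_0^{\widetilde{Q}}$ (so that the conjugate is again standard and Corollary~\ref{Cor3.28} applies), not $w_0^{\widetilde{G}}$ or $w_0^P$, and you do not invoke Remark~\ref{Rem3.17}, without which the conjugation would not land you back in the framework of \S\ref{sec3.5}. The paper's argument buys you exactly what you flagged as the hard part: the twist bookkeeping becomes a two-line computation with $\tilde w_0$, rather than a rerun of Lemmas~\ref{Lem3.23}--5.3.7 for opposite parabolics.
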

\begin{proof}
    We reduce it to Corollary~\ref{Cor3.28}. Set $\Tilde{z}_p:=u_p^{\widetilde{Q}}\in Z_{\widetilde{M}}^+$ to be the usual element defining the $U_p$-operator with respect to $\widetilde{Q}$ and $\widetilde{w}_0:=w_0^{\widetilde{Q}}$. Set $\widetilde{Q}^{\Tilde{w}_0}=\widetilde{M}^{\Tilde{w}_0}\ltimes \widetilde{N}^{\Tilde{w}_0}$ to be the standard parabolic subgroup with Levi subgroup $\Tilde{w}_0^{-1}\widetilde{M}\Tilde{w}_0$. Note that $\overline{N}^{\circ}:=\Tilde{z}_p\widetilde{w}_0\widetilde{N}^{\Tilde{w}_0}(\mathcal{O}_L)(\Tilde{z}_p\widetilde{w}_0)^{-1}\subset \overline{\widetilde{N}}^1$ and, by Remark~\ref{Rem3.17}, we have a natural ($\mathcal{H}(\tau_{w_0^P}\widetilde{\sigma})$-equivariant) isomorphism
    \begin{equation*}
        R^j\Hom_{\mathcal{O}/\varpi^m[\widetilde{M}(\mathcal{O}_L)]}\left(\tau_{w_0^P}\widetilde{\sigma},R\Gamma(\overline{\widetilde{N}}^1,\textnormal{Ind}_{P(L)}^{\widetilde{G}(L)}\pi)^{\overline{\widetilde{Q}}\textnormal{-ord}}\right)\cong 
    \end{equation*}
        
    \begin{equation*}
         R^j\Hom_{\mathcal{O}/\varpi^m[\widetilde{M}(\mathcal{O}_L)]}\left(\tau_{w_0^P}\widetilde{\sigma},R\Gamma(\overline{N}^{\circ},\textnormal{Ind}_{P(L)}^{\widetilde{G}(L)}\pi)^{\overline{\widetilde{Q}}\textnormal{-ord}}\right).
    \end{equation*}
    Moreover, multiplication by $\Tilde{z}_p\widetilde{w}_0$ sets up a $\mathcal{H}(\tau_{w_0}^P\widetilde{\sigma})$-equivariant isomorphism between the latter and
    \begin{equation}\label{eq3.10}
        R^j\Hom_{\mathcal{O}/\varpi^m[\widetilde{M}^{\Tilde{w}_0}(\mathcal{O}_L)]}\left(\tau_{w_0^{Q\times Q_c}}\widetilde{\sigma},R\Gamma(\widetilde{N}^{\Tilde{w}_0}(\mathcal{O}_L), \textnormal{Ind}_{P(L)}^{\widetilde{G}(L)}\pi)^{\widetilde{Q}^{\Tilde{w}_0}\textnormal{-ord}}\right)
    \end{equation}
    where the Hecke action on \ref{eq3.10} is through the isomorphism $\mathcal{H}(\tau_{w_0^P}\widetilde{\sigma})\cong\mathcal{H}(\tau_{w_0^{Q\times Q_c}}\widetilde{\sigma})$ induced by $\Tilde{w}_0$. Set $N^{\Tilde{w}_0}\times N^{\Tilde{w}_0}_c:= \widetilde{N}^{\Tilde{w}_0}\cap G$ and note that $N^{\Tilde{w}_0}$, respectively $N^{\Tilde{w}_0}_c$ is the unipotent radical corresponding to $M^{\Tilde{w}_0}:=w_0^{Q}Mw_0^{Q,-1}$, respectively $M^{\Tilde{w}_0}_c:=w_0^{Q_c}M_cw_0^{Q_c,-1}$. Then Corollary~\ref{Cor3.28} shows that \ref{eq3.10} admits
    \begin{equation}\label{eq3.11}
        R^j\Hom_{\mathcal{O}/\varpi^m[M^{\Tilde{w}_0,0}_c\times M^{\Tilde{w}_0,0}]}\left(\tau_{w_0^{Q_c}}\sigma_c\times \tau_{w_0^{Q}}\sigma,R\Gamma(N^{\Tilde{w}_0}_c(\mathcal{O}_L)\times N^{\Tilde{w}_0}(\mathcal{O}_L), \pi)^{Q^{\Tilde{w}_0}_c\times Q^{\Tilde{w}_0}\textnormal{-ord}}\right)
    \end{equation}
    as a $\mathcal{H}(\tau_{w_0^{Q\times Q_c}}\widetilde{\sigma})$-equivariant subquotient. Note that $\widetilde{w}_0=(w_0^{Q_c},w_0^Q)w_0^P=w_0^P(w_0^Q,w_0^{Q_c})$. Therefore, multiplication by $(w_0^{Q_c}u_p^{Q_c},w_0^{Q}u_p^{Q})$ and another application of Remark~\ref{Rem3.17} sets up a $\mathcal{H}(\widetilde{\sigma})\cong \mathcal{H}(\tau_{\Tilde{w}_0}\widetilde{\sigma})$-equivariant isomorphism between \ref{eq3.11} and
    \begin{equation*}
    R^j\Hom_{\mathcal{O}/\varpi^m[M_c(\mathcal{O}_L)\times M(\mathcal{O}_L)]}\left(\tau_{w_0^P}\widetilde{\sigma}, R\Gamma(\overline{N}^1_c\times \overline{N}^1,\pi)^{\overline{Q}_c\times \overline{Q}\textnormal{-ord}}\right)\cong
    \end{equation*}
    \begin{equation*}
     R^j\Hom_{\mathcal{O}/\varpi^m[M_c(\mathcal{O}_L)\times M(\mathcal{O}_L)]}(\sigma_c\otimes \sigma, R\Gamma(\overline{N}^1_c\times \overline{N}^1,\pi)^{\overline{Q_c}\times \overline{Q}\textnormal{-ord}}).
    \end{equation*}
\end{proof}

\section{$Q$-ordinary parts in characteristic $0$}
In this section, we discuss the notion of taking ordinary parts of smooth admissible representations of a $p$-adic reductive group that arise as local components of cohomological automorphic representations (see \cite{Ger18}, \S5.1 for instance). In fact, we take a slightly more involved approach and define ordinary parts of locally algebraic representations in terms of Emerton's slope $0$ part. It will allow us to compare it to the corresponding Jacquet module and to our previous notion of ordinary parts. The latter will be useful in the endgame of proving our local-global compatibility results.  We will then prove our main characteristic $0$ result regarding $Q$-ordinary parts of $Q$-ordinary locally algebraic representations of $\textnormal{GL}_n$. Finally, we close the section with deducing $Q$-ordinary local-global compatibility for regular algebraic cuspidal automorphic representations of $\textnormal{GL}_n$ in the conjugate self-dual case.
\subsection{Ordinary parts of locally algebraic representations}\label{sec4.1} We briefly summarise the notion of ordinary parts for locally algebraic representations as introduced in \cite{BD20}, \S4.3 (see also \cite{Eme11}, \S5.6) and how it compares to Emerton's ordinary part functor from \cite{Eme10} when the representation admits an invariant lattice. We revisit the setup of \S\ref{sec3.1} and without further notice will use the introduced notation. However, we further assume that $G$ is split over $L$ and fix a choice of maximal torus $T$ and a Borel subgroup $B$ containing it. Finally, $Q$ is now assumed to be standard with respect to $B$. Recall that our coefficient field is $E/\mathbf{Q}_p$, a finite extension, large enough, so that $[L:\mathbf{Q}_p]=\Hom(L,E)$. Then, given a $(\textnormal{Res}_{L/\mathbf{Q}_p}B)_E$-dominant weight $\lambda\in \Hom((\textnormal{Res}_{L/\mathbf{Q}_p}T)_E,\mathbf{G}_{m,E})\cong \oplus_{\iota:L\hookrightarrow E}\Hom(T_E,\mathbf{G}_{m,E})$, set $V_{\lambda}=\otimes_{\iota}V_{\lambda_{\iota}}$ to be the corresponding absolutely irreducible algebraic $E$-representation of $G(L)$. Note that the dual $V_{\lambda}^{\vee}$ is isomorphic to $V_{\lambda^{\vee}}$ where $\lambda^{\vee}:=-w_0^G\lambda$. 
\begin{Def}
    An $E$-representation $\Pi$ of $G(L)$ is called locally algebraic of weight $\lambda$ if it is locally $V_{\lambda^{\vee}}$-algebraic in the sense of \cite{Eme17}, Definition 4.2.1 such that the smooth vectors $\Hom(V_{\lambda^{\vee}},\Pi)_{\textnormal{sm}}=\varinjlim_{K\to 1}\Hom_K(V_{\lambda^{\vee}},\Pi)$ form an admissible smooth representation of $G(L)$.
\end{Def}
We note that our definition admits a more intrinsic formulation (cf. \cite{Eme17} Definition 6.3.9, Proposition 6.3.10).
\begin{Rem}
    Note that any locally algebraic $E$-representation $\Pi$ of weight $\lambda$ is of the form $\pi\otimes V_{\lambda^{\vee}}$ with $\pi$ a smooth admissible $E$-representation (cf. \cite{Eme17}, Proposition 4.2.4). The functor $\Hom(V_{\lambda^{\vee}},-)_{\textnormal{sm}}$ sets up a natural equivalence between the category of locally algebraic $E$-representations of $G(L)$ of weight $\lambda$ and the category of smooth admissible $E$-representations of $G(L)$. However, the reason one might want to appeal to the notion of locally algebraic representations is that it yields a different notion of $p$-adic integrality. Indeed, this is our motivation to work with locally algebraic representations.
\end{Rem}
\begin{Rem}\label{Rem4.3}
    Note that, $V_{\lambda}$ being an algebraic $E$-representation of $\textnormal{Res}_{L/\mathbf{Q}_p}G$,
\begin{equation*}
    V_{\lambda}^{N^0}\cong V_{\lambda}^{N(L)}
\end{equation*} as subspaces of $V_{\lambda}$. Moreover, the latter is the absolutely irreducible representation of $M(L)$ associated with $\lambda$ viewed as a $(\textnormal{Res}_{L/\mathbf{Q}_p}B\cap M)_E$-dominant weight for $(\textnormal{Res}_{L/\mathbf{Q}_p}M)_E$ (see \cite{Cab84}). In particular, the former is naturally an $M(L)$-representation. As a corollary, one sees that for a locally algebraic $E$-representation $\Pi=\pi\otimes V_{\lambda^{\vee}}$ of weight $\lambda$, we have an induced identification $\Pi^{N^0}\cong \pi^{N^0}\otimes V_{\lambda^{\vee}}^{N(L)}$. Moreover, under this isomorphism, the Hecke action of $M^+$ on $\Pi^{N^0}$ coincides with the $M^+$-action on $\pi^{N^0}\otimes V_{\lambda^{\vee}}^{N(L)}$ given by the usual Hecke action on the first factor and the natural action on the algebraic part (see the proof of \cite{Eme17}, Proposition 4.3.6).
\end{Rem}
We now introduce the notion of finite slope and slope $0$ parts of $\Pi^{N^0}$. For $b\geq 1$, consider the \textit{finite dimensional} $E$-vector space
\begin{equation*}
    \Pi_b:=\pi^{\mathcal{Q}(b,b)}\otimes V_{\lambda^{\vee}}^{N(L)}\subset \Pi^{N^0}.
\end{equation*}
By Hypothesis~\ref{hyp3.2}, it is a $Z_M^+$-invariant subspace. Denote by $B_b$ the $E$-subalgebra of $\textnormal{End}_E(\Pi_b)$ generated by $Z_M^+$. This is an Artinian $E$-algebra and as such, it decomposes into a product of local Artinian $E$-algebras indexed by its maximal ideals. We then say that a maximal ideal $\mathfrak{m}\subset B_b$ is of \textit{finite slope} if the image of $Z_M^+$ in $B_b$ is disjoint from $\mathfrak{m}$. For such an $\mathfrak{m}$, the composition
\begin{equation*}
    Z_M^+\to B_b\twoheadrightarrow B_b/\mathfrak{m}\hookrightarrow \overline{\mathbf{Q}}_p
\end{equation*}
lands in $\overline{\mathbf{Q}}_p^{\times}$. We further say that a finite slope maximal ideal $\mathfrak{m}\subset B_b$ is of \textit{slope zero} if the composition lands in $\overline{\mathbf{Z}}_p^{\times}$. Considering the corresponding factors of $B_b$ induces $Z_M^+$-equivariant decompositions
\begin{equation}\label{eq4.1}
    \Pi_b\cong (\Pi_b)_{\textnormal{fs}}\oplus (\Pi_b)_{\textnormal{null}}\cong (\Pi_b)_0\oplus (\Pi_b)_{>0}
\end{equation}
into finite slope and slope $0$ parts and their complements. Note that $(\Pi_b)_0\subset (\Pi_b)_{\textnormal{fs}}$ and that the $Z_M^{+}$-action uniquely extends to a $Z_M(L)$-action on both by definition. The decompositions in \ref{eq4.1} are easily checked to be compatible when we vary $b$. Therefore, by passing to the colimit over $b\geq 1$, we get $Z_M^+$-equivariant decompositions
\begin{equation*}
    \Pi^{N^0}\cong (\Pi^{N^0})_{\textnormal{fs}}\oplus (\Pi^{N^0})_{\textnormal{null}}\cong (\Pi^{N^0})_0\oplus (\Pi^{N^0})_{>0}.
\end{equation*}
Moreover, in the colimit, the decompositions are preserved by the $M^+$-action. In particular, Lemma~\ref{Lem3.3} and Remark~\ref{Rem4.3} shows that $(\Pi^{N^0})_{\textnormal{fs}}$ and $(\Pi^{N^0})_0$ become locally algebraic $E$-representations of $M(L)$ of weight $w_0^Mw_0^G\lambda=w_0^Q\lambda$.

First, we compare the finite slope part with the classical (unnormalised) Jacquet functor. This is essentially the theory of canonical liftings that goes back to Casselman (cf. \cite{Cas95}).
\begin{Prop}\label{Prop4.4}
    We have a natural $M(L)$-equivariant isomorphism
    \begin{equation*}
        (\Pi^{N^0})_{\textnormal{fs}}\otimes \delta_Q\cong J_Q(\pi)\otimes V_{\lambda^{\vee}}^{N(L)}
    \end{equation*}
    induced by the natural $M^+$-equivariant (surjective) map
    \begin{equation*}
        \Pi^{N^0}\otimes \delta_Q\to J_Q(\pi)\otimes V_{\lambda^{\vee}}^{N(L)}.
    \end{equation*}
\end{Prop}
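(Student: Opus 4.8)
The plan is to reduce the statement to the classical theory of canonical liftings (Casselman's theory) for the smooth admissible representation $\pi$, and then keep track of the algebraic part throughout. Recall from Remark~\ref{Rem4.3} that $\Pi^{N^0}\cong \pi^{N^0}\otimes V_{\lambda^{\vee}}^{N(L)}$ as $M^+$-representations, where $M^+$ acts on the first factor by the Hecke action and on the second factor naturally. Since $V_{\lambda^{\vee}}^{N(L)}$ is a fixed finite-dimensional absolutely irreducible representation of $M(L)$ on which $Z_M^+$ acts through an algebraic character (hence the $Z_M^+$-action already extends to $Z_M(L)$ and has all its ``eigenvalues'' in $\overline{\mathbf{Z}}_p^{\times}$, indeed being algebraic), the finite slope decomposition of $\Pi^{N^0}$ with respect to $Z_M^+$ is induced from the finite slope decomposition of $\pi^{N^0}$ tensored with the fixed space $V_{\lambda^{\vee}}^{N(L)}$. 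Concretely, for each $b$, $B_b$ is the $E$-subalgebra of $\textnormal{End}_E(\pi^{\mathcal{Q}(b,b)}\otimes V_{\lambda^{\vee}}^{N(L)})$ generated by $Z_M^+$, and because $Z_M^+$ acts on the second tensor factor by scalars (times the identity, after possibly enlarging $E$, or in any case by invertible operators with unit eigenvalues), a maximal ideal of $B_b$ is finite slope if and only if the corresponding system of eigenvalues on $\pi^{\mathcal{Q}(b,b)}$ is finite slope. Thus I would first establish $(\Pi^{N^0})_{\textnormal{fs}}\cong (\pi^{N^0})_{\textnormal{fs}}\otimes V_{\lambda^{\vee}}^{N(L)}$ as $M(L)$-representations.

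Next I would invoke the classical result, due to Casselman (see \cite{Cas95}, or the account in \cite{Eme06c}), that the canonical lifting realizes a natural $M(L)$-equivariant isomorphism $(\pi^{N^0})_{\textnormal{fs}}\otimes \delta_Q\xrightarrow{\sim} J_Q(\pi)$, where $J_Q(\pi)$ is the unnormalised Jacquet module, and that this isomorphism is induced by the canonical $M^+$-equivariant surjection $\pi^{N^0}\to \pi_{N(L)}=J_Q(\pi)$ (the composition of inclusion $\pi^{N^0}\hookrightarrow \pi$ with the quotient map to coinvariants), after twisting by $\delta_Q$ so that the $Z_M^+$-action matches. Here one uses that $\pi$ is admissible so that $\pi^{\mathcal{Q}(b,b)}$ is finite-dimensional and the Hecke operator $[z_p,\textnormal{id}]$ acts on each $\pi^{\mathcal{Q}(b,b)}$ with a well-defined finite slope decomposition; admissibility also guarantees that the map $\pi^{N^0}\to J_Q(\pi)$ restricted to the finite slope part is an isomorphism onto $J_Q(\pi)$ (the null part is precisely the kernel). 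Tensoring this isomorphism with $V_{\lambda^{\vee}}^{N(L)}$ and combining with the first step yields
\begin{equation*}
    (\Pi^{N^0})_{\textnormal{fs}}\otimes \delta_Q\cong (\pi^{N^0})_{\textnormal{fs}}\otimes \delta_Q\otimes V_{\lambda^{\vee}}^{N(L)}\cong J_Q(\pi)\otimes V_{\lambda^{\vee}}^{N(L)},
\end{equation*}
and I would check that this composite is exactly the map induced by $\Pi^{N^0}\otimes\delta_Q\to J_Q(\pi)\otimes V_{\lambda^{\vee}}^{N(L)}$ described in the statement, which follows since the canonical surjection $\pi^{N^0}\to J_Q(\pi)$ is $M^+$-equivariant and tensoring is functorial.

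The main obstacle — really the only nontrivial point — is the compatibility of the $Z_M^+$-actions and the precise role of the twist by $\delta_Q$: one must verify that the finite slope condition is preserved (in both directions) under passage between $\pi^{N^0}$ and $J_Q(\pi)$, and that $\delta_Q$ is the correct normalizing factor. This is where one genuinely needs Casselman's theory: the point is that the Hecke action of $z_p\in Z_M^+$ on $\pi^{N^0}$ differs from its natural action on the Jacquet module $J_Q(\pi)$ exactly by the modulus factor coming from the index $[N^0 : z_p N^0 z_p^{-1}]$, which is $\delta_Q(z_p)^{-1}$ (up to the normalization conventions fixed in the Notation section, where $\textnormal{n-Ind}$ and $\delta_Q$ are defined). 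I would handle this by a direct computation on $\pi^{\mathcal{Q}(b,b)}$ for each $b$, using the explicit formula $z_p\cdot v=\sum_{n\in N^0/z_pN^0z_p^{-1}}nz_p v$ for the Hecke action recalled in \S\ref{sec3.1}, and comparing with the action on coinvariants; passing to the colimit over $b$ then gives the claim. The local algebraicity of $(\Pi^{N^0})_{\textnormal{fs}}$ of weight $w_0^Q\lambda$ has already been noted in the discussion preceding the proposition, so no further work is needed there. Everything else is formal functoriality.
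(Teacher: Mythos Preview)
Your proposal is correct and follows essentially the same route as the paper's proof: separate the algebraic factor using Remark~\ref{Rem4.3} to reduce to $(\pi^{N^0})_{\textnormal{fs}}$, then invoke Casselman's canonical lifting (via \cite{Cas95} and \cite{Eme06c}, \S4.3) to identify $(\pi^{N^0})_{\textnormal{fs}}\otimes\delta_Q$ with $J_Q(\pi)$, with the $\delta_Q$ arising from the index factor in the Hecke action. One small slip: the eigenvalues of $Z_M^+$ on $V_{\lambda^{\vee}}^{N(L)}$ need not lie in $\overline{\mathbf{Z}}_p^{\times}$ (only in $\overline{\mathbf{Q}}_p^{\times}$), but this is harmless here since the proposition concerns the finite slope part, where invertibility is all that is required.
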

\begin{proof}
    This can be deduced from \cite{Eme06c}, \S4.3. We sketch an argument here. It is an easy consequence of the definitions that $(\Pi^{N^0})_{\textnormal{fs}}=(\pi^{N^0})_{\textnormal{fs}}\otimes V_{\lambda^{\vee}}^{N(L)}$ and $(\Pi^{N^0})_{\textnormal{null}}=(\pi^{N^0})_{\textnormal{null}}\otimes V_{\lambda^{\vee}}^{N(L)}$. Moreover, one sees that $(\pi^{\mathcal{Q}(b,b)})_{\textnormal{null}}\subset \pi^{\mathcal{Q}(b,b)}$ is the subspace of vectors on which, for some choice of $z_p\in Z_M^+$\footnote{Hence for any choice of such $z_p$.} as in Lemma~\ref{Lem3.3}, the action of $z_p$ is nilpotent. Using this description of $(\pi^{N^0})_{\textnormal{null}}$, one sees that $(\pi^{N^0})_{\textnormal{null}}$ is exactly the kernel of the natural $\pi^{N^0}\to J_Q(\pi)$. Moreover, \cite{Eme06c}, Proposition 4.3.4 i) shows that $\pi^{N^0}\otimes \delta_Q\to J_Q(\pi)$ is an $M^+$-equivariant surjection (see also \cite{Cas95}, Theorem 3.3.3 and Lemma 4.1.1). Note that in \cite{Eme06c} we don't see the appearance of $\delta_Q$. This is due to the fact that they work with a different normalisation of the Hecke action on $\pi^{N^0}$ (see \textit{loc. cit.} Definition 3.4.1). Combining these observations, we get the proposition.
\end{proof}
\begin{Rem}
    To avoid confusion, we point out that what is denoted by $J_Q$ in \cite{Eme06c} is Emerton's locally analytic Jacquet functor. It can be applied to a certain class of locally analytic representations $\Pi$ of $G(L)$ and defined by the analogous formula $(\Pi^{N^0})_{\textnormal{fs}}$ for a suitable notion of finite slope parts in this setup. Without elaborating on it any further, we just note that it is easily extracted from \cite{Eme06c}, \S4.3 that the constructions of \cite{BD20}, \S4.3 are all compatible with the ones of Emerton. In other words, for $\Pi$ a locally algebraic $E$-representation of $G(L)$ we have $J_Q(\Pi)\cong (\Pi^{N^0})_{\textnormal{fs}}\otimes \delta_Q$ where the former is in the sense of Emerton and the latter is in the sense of Breuil--Ding. Again, the character $\delta_Q$ appears because of the different normalisations of the Hecke action. From now on, we freely use the notation $J_Q(\Pi)$ for $\Pi$ a locally algebraic representation to denote its Jacquet module in the sense of Emerton. In light of Proposition~\ref{Prop4.4}, it recovers the classical Jacquet functor.
\end{Rem}
We now turn to discussing $(\Pi^{N^0})_0$. We would like to compare it with Emerton's ordinary parts for smooth admissible $\mathcal{O}/\varpi^m[G(L)]$-modules (cf. \cite{Eme10}). In order to have a chance to compare the two notions, we assume that our $\Pi$ is also a unitary representation of $G(L)$. By this we mean that there is an $\mathcal{O}$-lattice $\Pi^{\circ}\subset \Pi$\footnote{In other words, an $\mathcal{O}$-submodule that spans $\Pi$ over $E$ and contains no $E$-line.} that is invariant under the action of $G(L)$. The given lattice induces $\mathcal{O}$-lattices $\Pi_b^{\circ}:=\Pi^{\circ}\cap \Pi_b\subset \Pi_b$ which are necessarily finite and free as $\mathcal{O}$-modules. Then, for an integer $b\geq 1$, set $A_b$ to be the $\mathcal{O}$-subalgebra of $\textnormal{End}_{\mathcal{O}}(\Pi_b^{\circ})$ generated by $Z_M^+$. This is a finite $\mathcal{O}$-algebra and, in particular, we have $A_b\cong \prod_{\mathfrak{n}}(A_b)_{\mathfrak{n}}$, where the propduct runs over maximal ideals in $A_b$. Call a maximal ideal $\mathfrak{n}\subset A_b$ \textit{ordinary} if the image of $Z_M^+$ in $A_b$ is disjoint from $\mathfrak{n}$. This gives a $Z_M^+$-equivariant decomposition
\begin{equation*}
    \Pi_b^{\circ}\cong (\Pi_b^{\circ})_{\textnormal{ord}}\oplus (\Pi_b^{\circ})_{\textnormal{nonord}}
\end{equation*}
and, just as before, the $Z_M^+$-action on $(\Pi_b^{\circ})_{\textnormal{ord}}$ extends uniquely to a $Z_M(L)$-action. By passing to the colimit over $b\geq 1$, we obtain an $M^+$-equivariant decomposition
\begin{equation*}
    \Pi^{\circ,N^0}\cong \textnormal{Ord}_Q^{\textnormal{lalg}}(\Pi^{\circ})\oplus \textnormal{NOrd}_Q^{\textnormal{lalg}}(\Pi^{\circ}).
\end{equation*}
Therefore, $\textnormal{Ord}_Q^{\textnormal{lalg}}(\Pi^{\circ})$ is naturally a representation of $M(L)$. Then \cite{BD20}, Lemma 4.10 shows that we have a natural isomorphism
\begin{equation*}
    \textnormal{Ord}_Q^{\textnormal{lalg}}(\Pi^{\circ})\otimes_{\mathcal{O}}E\cong (\Pi^{N^0})_0
\end{equation*}
of locally algebraic $E$-representations of $M(L)$. In particular, the former is independent of the choice of lattice $\Pi^{\circ}\subset \Pi$ and $N^0$ and the latter is unitary. Motivated by this, introduce the following notation.
\begin{Def}
    Let $\Pi$ be a locally algebraic $E$-representation of $G(L)$ of weight $\lambda$. We then define its $Q$-ordinary part
    \begin{equation*}
        \textnormal{Ord}_Q^{\textnormal{lalg}}(\Pi):=(\Pi^{N^0})_0,
    \end{equation*}
    a locally algebraic $E$ representation of $M(L)$ of weight $w_0^Q\lambda$.
\end{Def}
In the rest of the subsection, we will justify that when $\Pi$ is assumed to be unitary our locally algebraic $Q$-ordinary parts does behave like an ordinary part. In the next subsection we will see that in fact under some a priori milder assumption it still behaves like an ordinary part functor.

From now on, for the rest of the subsection, we assume that $\Pi$ is unitary and choose an $\mathcal{O}$-lattice $\Pi^{\circ}$. For an integer $m\geq 1$, we can also introduce analogous constructions for $\Pi^{\circ}_b/\varpi^m$ to get a $Z_M^+$-equivariant decomposition $\Pi_b^{\circ}/\varpi^m\cong (\Pi_b^{\circ}/\varpi^m)_{\textnormal{ord}}\oplus(\Pi_b^{\circ}/\varpi^m)_{\textnormal{nonord}} $. The natural map then induces an isomorphism (cf. \cite{BD20}, 4.16)
\begin{equation}\label{eq4.2}
    (\Pi_b^{\circ})_{\textnormal{ord}}/\varpi^m\xrightarrow{\sim}(\Pi_b^{\circ}/\varpi^m)_{\textnormal{ord}}.
\end{equation}
On the other hand, $\Pi^{\circ}/\varpi^m$ is a smooth admissible $\mathcal{O}/\varpi^m[G(L)]$-module so one can take its $Q$-ordinary part in the sense of Emerton. Then \cite{BD20}, Lemma 4.14 shows that we have a natural injection
\begin{equation}\label{eq4.3}
    \varinjlim_{b\geq 1}(\Pi_b^{\circ}/\varpi^m)_{\textnormal{ord}}\hookrightarrow \textnormal{Ord}_Q(\Pi^{\circ}/\varpi^m)
\end{equation}
where the latter is Emerton's $Q$-ordinary part (cf. \cite{Eme10}, Definition 3.1.7).\footnote{We note here that since we are in an admissible situation, Emerton's ordinary part simplifies to localising $(\Pi^{\circ}/\varpi^m)^{N^0}$ along $\mathcal{O}[z_p]\hookrightarrow \mathcal{O}[z_p^{\pm 1}]$ for any $z_p$ as in Lemma~\ref{Lem3.3} (see \cite{Eme10b}, Lemma 3.2.1).}
However, due to the presence of group cohomology, \ref{eq4.3} is not necessarily surjective. Combining \ref{eq4.2} and \ref{eq4.3}, one gets (cf. \cite{BD20}, Lemma 4.16) a natural $M(L)$-equivariant injection
\begin{equation}\label{eq4.4}
    \textnormal{Ord}_Q^{\textnormal{lalg}}(\Pi^{\circ})\hookrightarrow \textnormal{Ord}_Q(\Pi^{\circ}):=\varprojlim_m\textnormal{Ord}_Q(\Pi^{\circ}/\varpi^m).
\end{equation}
\begin{Rem}\label{Rem4.7} The map \ref{eq4.4} is clearly not surjective in general: The source is a locally algebraic representation, the target is a lattice in an $E$-Banach space representation. A more sensible question to ask is whether it has dense image or not.  One notes that it does have dense image as long as \ref{eq4.3} is surjective, see \cite{BD20}, Remark 4.15 and Remark 4.17.
\end{Rem}
Given a unitary admissible $E$-Banach space representation $\Pi$ of $G(L)$, with a unit ball $\Pi^{\circ}$, then $\varprojlim_m\textnormal{Ord}_Q(\Pi^{\circ}/\varpi^m)$ is what Emerton calls $\textnormal{Ord}_Q(\Pi^{\circ})$ in \cite{Eme10}. Moreover, one can then set $\textnormal{Ord}_Q(\Pi):=\textnormal{Ord}_Q(\Pi^{\circ})\otimes_{\mathcal{O}}E$.\footnote{One checks easily that it is independent of the choice of unit ball.} We borrow his notation. We conclude the subsection with discussing a corollary of the adjunction property of \cite{BD20}, \S4.4. Before stating the corollary, we introduce the following notation. Let $\Pi$ be any $E$-representation of $G(L)$ and $W$ be an algebraic $E$-representation of $G(L)$. We then denote by $\Pi^{W\textnormal{-lalg}}\subset \Pi$ the $G(L)$-invariant subspace of locally $W$-algebraic vectors in $\Pi$ in the sense of \cite{Eme17}, Proposition-Definition 4.2.6.
\begin{Prop}\label{Prop4.8}
    Let $\Pi$ be a unitary admissible\footnote{For the notion of admissible $E$-Banach space representations, see \cite{ST06}, and \cite{Eme17}, \S6.} $E$-Banach space representation of $G(L)$. Then, for any choice of $(\textnormal{Res}_{L/\mathbf{Q}_p}B)_E$-dominant weight $\lambda$, we have a natural $M(L)$-equivariant identification
    \begin{equation*}
        \textnormal{Ord}_Q(\Pi)^{V_{\lambda^{\vee}}^{N(L)}\textnormal{-lalg}}\cong \textnormal{Ord}_Q^{\textnormal{lalg}}(\Pi^{V_{\lambda^{\vee}}\textnormal{-lalg}}).
    \end{equation*}
\end{Prop}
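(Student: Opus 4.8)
The plan is to deduce the identification by combining the lattice-level comparison map \eqref{eq4.4} with the adjunction/density results of \cite{BD20}, \S4.4, applied after passing to locally $V_{\lambda^\vee}$-algebraic vectors. First I would fix a $G(L)$-invariant unit ball $\Pi^{\circ}\subset \Pi$ and observe that the locally $V_{\lambda^\vee}$-algebraic vectors $\Pi^{V_{\lambda^\vee}\text{-lalg}}$ are a unitary locally algebraic $E$-representation of $G(L)$ of weight $\lambda$ (this is \cite{Eme17}, Proposition-Definition 4.2.6 together with the admissibility of $\Pi$), so both sides of the claimed isomorphism are \emph{a priori} defined: the right-hand side is $\textnormal{Ord}_Q^{\textnormal{lalg}}$ of a unitary locally algebraic representation as in Definition~\ref{Def2.7}, and the left-hand side is the locally $V_{\lambda^\vee}^{N(L)}$-algebraic part of the unitary admissible Banach space representation $\textnormal{Ord}_Q(\Pi)$ of $M(L)$, using that $V_{\lambda^\vee}^{N(L)}$ is the irreducible algebraic $E$-representation of $(\textnormal{Res}_{L/\mathbf{Q}_p}M)_E$ of highest weight $w_0^Q\lambda^\vee$ by Remark~\ref{Rem4.3}.

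Next I would construct the map in one direction. The injection \eqref{eq4.4}, $\textnormal{Ord}_Q^{\textnormal{lalg}}(\Pi^{V_{\lambda^\vee}\text{-lalg}})\hookrightarrow \textnormal{Ord}_Q(\Pi^{V_{\lambda^\vee}\text{-lalg}})$, combined with the obvious $M(L)$-equivariant map $\textnormal{Ord}_Q(\Pi^{V_{\lambda^\vee}\text{-lalg}})\to \textnormal{Ord}_Q(\Pi)$ induced by functoriality of $\textnormal{Ord}_Q$ applied to $\Pi^{V_{\lambda^\vee}\text{-lalg}}\hookrightarrow \Pi$, produces an $M(L)$-equivariant map whose source is locally $V_{\lambda^\vee}^{N(L)}$-algebraic (since $\textnormal{Ord}_Q^{\textnormal{lalg}}$ of a weight-$\lambda$ locally algebraic representation is locally algebraic of weight $w_0^Q\lambda$ by the discussion following \eqref{eq4.1}). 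Hence it factors through $\textnormal{Ord}_Q(\Pi)^{V_{\lambda^\vee}^{N(L)}\text{-lalg}}$, giving a natural $M(L)$-equivariant map
\begin{equation*}
    \textnormal{Ord}_Q^{\textnormal{lalg}}(\Pi^{V_{\lambda^\vee}\text{-lalg}})\to \textnormal{Ord}_Q(\Pi)^{V_{\lambda^\vee}^{N(L)}\text{-lalg}}.
\end{equation*}
Injectivity is immediate from the injectivity of \eqref{eq4.4} (and exactness of passing to locally algebraic vectors). For surjectivity, I would use the adjunction property from \cite{BD20}, \S4.4: $\textnormal{Ord}_Q$ is right adjoint to $\textnormal{Ind}_{\overline{Q}(L)}^{G(L)}$ (unnormalised) on the relevant categories of unitary Banach representations, and this adjunction is compatible with the analogous adjunction between $\textnormal{Ord}_Q^{\textnormal{lalg}}$ and locally algebraic parabolic induction on locally algebraic representations. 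Concretely, one computes $\textnormal{Hom}_{M(L)}(W, \textnormal{Ord}_Q(\Pi)^{V_{\lambda^\vee}^{N(L)}\text{-lalg}})$ for $W$ an arbitrary locally algebraic $M(L)$-representation of weight $w_0^Q\lambda$: by the adjunction and the fact that $\textnormal{Ind}_{\overline{Q}(L)}^{G(L)}$ of such a $W$ is again unitary locally algebraic of weight $\lambda$, this Hom-space is identified with $\textnormal{Hom}_{G(L)}(\textnormal{Ind}_{\overline{Q}(L)}^{G(L)}W, \Pi^{V_{\lambda^\vee}\text{-lalg}})$, which by the locally algebraic adjunction is $\textnormal{Hom}_{M(L)}(W, \textnormal{Ord}_Q^{\textnormal{lalg}}(\Pi^{V_{\lambda^\vee}\text{-lalg}}))$. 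A Yoneda argument then forces the map above to be an isomorphism.

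The main obstacle I anticipate is making the compatibility of the two adjunctions precise: one must check that restricting the Emerton adjunction unit/counit for $\textnormal{Ord}_Q$ and $\textnormal{Ind}_{\overline{Q}(L)}^{G(L)}$ to locally $V_{\lambda^\vee}$-algebraic subspaces recovers exactly the Breuil--Ding adjunction for $\textnormal{Ord}_Q^{\textnormal{lalg}}$, and that $\textnormal{Ind}_{\overline{Q}(L)}^{G(L)}$ genuinely preserves unitarity and the locally algebraic condition with the correct weight (this uses $\delta_{\overline{Q}}$-bookkeeping as in Proposition~\ref{Prop4.4} and the appearance of $\delta_Q$ there). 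An alternative, perhaps cleaner route that avoids the abstract adjunction is to argue at finite level: reduce to showing, for each $b\geq 1$ and each $m\geq 1$, that $(\Pi_b^{\circ}/\varpi^m)_{\textnormal{ord}}$ exhausts the locally $V_{\lambda^\vee}^{N(L)}$-algebraic vectors of $\textnormal{Ord}_Q(\Pi^{\circ}/\varpi^m)$ that are fixed by a suitable congruence subgroup — i.e. that the failure of surjectivity of \eqref{eq4.3}, which is governed by higher $\overline{N}$-cohomology, disappears after restricting to the locally algebraic part — and then pass to the limit over $m$ and the colimit over $b$. Either way, the references \cite{BD20}, Lemma 4.10, Lemma 4.14, Lemma 4.16 and the adjunction of \cite{BD20}, \S4.4 supply all the inputs; the work is in assembling them and tracking the weight and modulus-character normalisations.
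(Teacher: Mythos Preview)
Your proposal is essentially correct and follows the same approach as the paper: construct an injection via the lattice-level comparison (the paper cites \cite{BD20}, Lemma 4.18 directly rather than routing through \eqref{eq4.4} plus functoriality, which is cleaner), and prove surjectivity via the Breuil--Ding adjunction of \cite{BD20}, \S4.4. The only difference is in execution of the surjectivity step. You frame it as a Yoneda argument with arbitrary test objects $W$; the paper instead applies the adjunction (specifically \cite{BD20}, Proposition 4.21) to the single map $\iota_v:\textnormal{Ord}_Q(\Pi)^{V_{\lambda^\vee}^{N(L)}\textnormal{-lalg}}\hookrightarrow \textnormal{Ord}_Q(\Pi)$ itself, obtains a map $\textnormal{Ind}_{\overline{Q}(L)}^{G(L)}\pi\otimes V_{\lambda^\vee}\to \Pi$, and then observes (via \cite{BD20}, Lemmas 4.18 and 4.19) that $\iota_v$ factors through the previously constructed injection. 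This is the standard ``test against the identity'' shortcut for Yoneda, and it sidesteps the compatibility-of-adjunctions bookkeeping you flagged as the main obstacle: the specific BD20 lemmas already encode that compatibility. Your alternative finite-level route is not what the paper does.
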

\begin{proof} Before proving the proposition, we first remark that the statement indeed makes sense. In other words, we need to check that $\Pi^{V_{\lambda^{\vee}}\textnormal{-lalg}}$ is indeed a locally algebraic representation (of weight $\lambda$) in the sense of this text. Note that the functor of taking locally $V_{\lambda^{\vee}}$-algebraic vectors factors through the functor $\Pi\mapsto \Pi^{\textnormal{la}}$ of taking locally analytic vectors. By \cite{Eme17}, Proposition 6.2.2 and Proposition 6.2.4, we see that $\Pi^{\textnormal{la}}$ is an admissible locally analytic representation of $G(L)$. By \textit{loc. cit.} Proposition 6.3.6, we see that $\Pi^{\textnormal{la},V_{\lambda^{\vee}}\textnormal{-lalg}}\cong \Pi^{V_{\lambda^{\vee}}\textnormal{-lalg}}$ is again an admissible locally analytic representation. Finally, the claim follows from \textit{loc. cit.} Proposition 6.3.10 by noting that $\textnormal{End}_{E[G(L)]}(\textnormal{V}_{\lambda^{\vee}})=E$.

Now the proposition easily follows from \cite{BD20} \S4 as we explain now. We first apply \cite{BD20}, Lemma 4.18 with the choice $V=\Pi^{V_{\lambda^{\vee}}\textnormal{-lalg}}\subset W=\Pi$, $W^{\circ}=\Pi^{\circ}$ and $V^{\circ}=V\cap W^{\circ}$. It gives an injection
    \begin{equation}\label{eq4.5}
        \textnormal{Ord}_Q^{\textnormal{lalg}}(\Pi^{V_{\lambda^{\vee}}\textnormal{-lalg}})\hookrightarrow \textnormal{Ord}_Q(\Pi).
    \end{equation}
    As $\textnormal{Ord}_Q^{\textnormal{lalg}}(\Pi^{V_{\lambda^{\vee}}\textnormal{-lalg}})$ is a locally algebraic representation of $M(L)$ of weight $w_0^Q\lambda$, it necessarily lands in $\textnormal{Ord}_Q(\Pi)^{V_{\lambda^{\vee}}^{N(L)}\textnormal{-lalg}}$. 
    
    For the other inclusion, note that $\textnormal{Ord}_Q(\Pi)$ is again a unitary admissible $E$-Banach space representation by \cite{Eme10}, Theorem 3.4.8 (and \cite{Eme17}, Proposition 6.5.7). Therefore, we see that $\textnormal{Ord}_Q(\Pi)^{V_{\lambda^{\vee}}^{N(L)}\textnormal{-lalg}}\cong \pi\otimes V_{\lambda^{\vee}}^{N(L)}$ is a locally algebraic representation of weight $w_0^Q\lambda$. Consider the natural inclusion
    \begin{equation*}
        \iota_v:\pi\otimes V_{\lambda^{\vee}}^{N(L)}\cong\textnormal{Ord}_Q(\Pi)^{V_{\lambda^{\vee}}^{N(L)}\textnormal{-lalg}}\hookrightarrow\textnormal{Ord}_Q(\Pi).
    \end{equation*}
    Then \cite{BD20}, Proposition 4.21 applied to $f:=\iota_v$ yields a map
    \begin{equation*}
        \textnormal{Ind}_{\overline{Q}(L)}^{G(L)}\pi\otimes V_{\lambda^{\vee}}\to \Pi
    \end{equation*}
    and $\iota_v$ can be reconstructed by precomposing the map
    \begin{equation}\label{eq4.6}
        \textnormal{Ord}_Q^{\textnormal{lalg}}(\textnormal{Ind}_{\overline{Q}(L)}^{G(L)}\pi\otimes V_{\lambda^{\vee}})\to \textnormal{Ord}_Q(\Pi)
    \end{equation}
    induced by \textit{loc. cit.} Lemma 4.18 with the inclusion 
    \begin{equation*}
        \pi\otimes V_{\lambda^{\vee}}^{N(L)}\hookrightarrow \textnormal{Ord}_Q^{\textnormal{lalg}}(\textnormal{Ind}_{\overline{Q}(L)}^{G(L)}\pi\otimes V_{\lambda^{\vee}})
    \end{equation*} of \textit{loc. cit.} Lemma 4.19. But \ref{eq4.6} factors through \ref{eq4.5} and the proof is finished.
\end{proof}

\subsection{Ordinary parts of weakly admissible representations}\label{sec4.2} We introduce the notion of weakly admissible locally algebraic representations, an a priori weaker integrality notion than unitariness. We then prove that the notion of being $Q$-ordinary behaves well for such representations. For the rest of the subsection, we assume that $G=\textnormal{GL}_n$, $T$ is the torus of diagonal matrices and $B$ is the Borel subgroup of upper-triangular matrices. In particular, $Q=P_{(n_1,...,n_t)}$ for a partition $n_1+...+n_t=n$. Fix a choice of uniformiser $\varpi_L \in \mathcal{O}_L$ and recall that $e$ denotes the ramification index of $L$. Moreover, for integers $0\leq b\leq c$ with $c\geq 1$, let $\mathcal{Q}(b,c)\subset G(\mathcal{O}_L)$ be the corresponding parahoric subgroup associated with $Q$. Then, by the Iwahori decomposition, $\mathcal{Q}(b,c)=\overline{N}^cM^bN^0$. For $1\leq i\leq n$, set
\begin{equation*}
    u_p^{(i)}:=\textnormal{diag}(\varpi_L,...,\varpi_L,1,...,1)
\end{equation*}
where the first $i$ elements are given by $\varpi_L$ and the rest by $1$. Then $u_p^{(n_1)}\cdot....\cdot u_p^{(n_1+...+n_{t-1})}\in Z_M^+$ is our choice for the role of $z_p$ from the proof of Lemma~\ref{Lem3.3}. For $x\in L^{\times}$, set $\langle x\rangle:=\textnormal{diag}(x,...,x)\in G(L)$.
\begin{Def}
    Let $\Pi$ be a locally algebraic $E$-representation of $G(L)$ of weight $\lambda$. We say that $\Pi$ is weakly admissible (of weight $\lambda$) if, for any standard parabolic subgroup $Q'=M'\ltimes N'\subset G$ and locally algebraic character $\chi:Z_{M'}(L)\to \overline{\mathbf{Q}}_p^{\times}$, such that $\Hom_{Z_{M'}(L)}(\chi,J_{Q'}(\Pi))\neq 0$, we have
    \begin{equation*}
        \mid \chi(z)\delta_{Q'}^{-1}(z)\mid_p\leq 1,
    \end{equation*}
    for every $z\in Z_{M'}^+$. We denote the category of such representations by $\textnormal{Mod}_{E}^{\textnormal{wa},\lambda}(G(L))$.
\end{Def}
\begin{Rem}
    This is what Hu refers to as Emerton's condition in \cite{Hu09}, where he proves the "easy" direction of the Breuil--Schneider conjecture.\footnote{In fact, he asks for this condition to be satisfied for \textit{any} parabolic subgroup $Q'\subset G$, but as it is explained in \textit{loc. cit.} Remarque 2.11, it suffices to check it for standard parabolic subgroups.} The main result (Th\'eor\`eme 1.2) of \cite{Hu09} shows that, at least when $\Pi$ is of the form $\pi\otimes V_{\lambda^{\vee}}$ with $\pi$ irreducible and generic, $\Pi$ is weakly admissible if and only if $\textnormal{rec}^T(\pi)$ comes from a de Rham Galois representation with Hodge--Tate weights given by the usual $\rho$-shift of $\lambda$. The latter means that the $(\varphi,N,G_L)$-module associated with $\textnormal{rec}^T(\pi)$ admits a filtration with jumps given by the $\rho$-shift of $\lambda$ that makes it a weakly admissible filtered $(\varphi,N,G_L)$-module.\footnote{To avoid confusion, note that the original formulation of the Breuil--Schneider conjecture uses different normalisations. Namely, one instead asks whether the representation $\pi(n-1)\otimes V_{\lambda^{\vee}+n-1}$ admits a unitary completion. As the character $(|\det|\det)^{n-1}$ is unitary, regardless of the mismatch, the two different normalisations yield equivalent conjectures.} The way Hu applies it to the Breuil--Schneider conjecture is that if $\Pi$ is unitary then one deduces easily that it must be weakly admissible (see \cite{Eme06c}, Lemma 4.4.2).
\end{Rem}
Using Emerton's condition to define weak admissibility has the advantage of being easy to keep track of while proving results regarding weakly admissible representations. However, we record here an equivalent condition that has the advantage that in the case of our examples appearing in this text it will clearly be satisfied.

\begin{Lemma}\label{Lem4.11}
    Let $\Pi=\pi\otimes V_{\lambda^{\vee}}$ be a locally algebraic $E$-representation of G(L) of weight $\lambda$ and $P_{(n_1',...,n_h')}=Q'=M'\ltimes N'\subset G$ a standard parabolic subgroup. The following two conditions are equivalent:
    \begin{enumerate}
        \item For each $1\leq i\leq h$, the generalised eigenvalues of the double coset operator $[N'^0u_p^{(n_1+...+n_i)}N'^0]$ acting on $\Pi^{N'^0}$ lie in $\overline{\mathbf{Z}}_p$.
        \item For any locally algebraic character $\chi:Z_M(L)\to \overline{\mathbf{Q}}_p^{\times}$ with
        \begin{equation*}
            \Hom_{Z_M(L)}(\chi,J_{Q'}(\Pi))\neq 0,
        \end{equation*} we have
        \begin{equation*}
            |\chi(z)\delta_{Q'}^{-1}(z)|_p\leq 1
        \end{equation*}
        for every $z\in Z_M^+$.
    \end{enumerate}
    In particular, $\Pi$ is weakly admissible if and only if it satisfies $i)$ for every choice of standard parabolic subgroup $Q'\subset G$. 
\end{Lemma}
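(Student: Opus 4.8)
The plan is to prove the equivalence of (i) and (ii) by carefully translating between the language of double coset operators on $N'^0$-invariants and the language of generalised eigencharacters on the Jacquet module $J_{Q'}(\Pi)$, using the canonical lifting isomorphism of Proposition~\ref{Prop4.4} as the bridge. First I would record the key identification: by Proposition~\ref{Prop4.4} (applied with $Q$ replaced by $Q'$), we have an $M'(L)$-equivariant isomorphism $(\Pi^{N'^0})_{\textnormal{fs}}\otimes \delta_{Q'}\cong J_{Q'}(\pi)\otimes V_{\lambda^{\vee}}^{N'(L)}=J_{Q'}(\Pi)$, induced by the natural surjection $\Pi^{N'^0}\otimes\delta_{Q'}\to J_{Q'}(\Pi)$. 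Here the Hecke action of $M'^+$ on $\Pi^{N'^0}$ is the one given by the double coset operators $[N'^0 m N'^0]$ for $m\in M'^+$, and the finite slope part $(\Pi^{N'^0})_{\textnormal{fs}}$ is the maximal subspace on which the submonoid $Z_{M'}^+$ acts invertibly in the sense of the slope decomposition of \S\ref{sec4.1}. The point is that a locally algebraic character $\chi:Z_{M'}(L)\to\overline{\mathbf{Q}}_p^\times$ appears in $J_{Q'}(\Pi)$ if and only if, via this isomorphism, the character $\chi\delta_{Q'}^{-1}$ appears in $(\Pi^{N'^0})_{\textnormal{fs}}$ as a generalised eigencharacter of the $Z_{M'}^+$-action.

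Next I would make the link between the elements $u_p^{(n_1'+\dots+n_i')}$ and generators of $Z_{M'}^+$ modulo $Z_{M'}(\mathcal{O}_L)$. Concretely, if $Q'=P_{(n_1',\dots,n_h')}$ then the elements $z_i:=u_p^{(n_1'+\dots+n_i')}$ for $1\le i\le h-1$, together with $z_h:=\langle\varpi_L\rangle=u_p^{(n)}$, generate $Z_{M'}^+$ modulo the compact subgroup $Z_{M'}(\mathcal{O}_L)$ (which acts through a finite quotient on the finite-dimensional pieces $\Pi_b$, hence by roots of unity, i.e. by $p$-adic units). Moreover the double coset operator $[N'^0 z_i N'^0]$ acts on $\Pi^{N'^0}$ exactly by the Hecke action of $z_i$; since each $z_i$ lies in $Z_{M'}^+$, its action on the null part $(\Pi^{N'^0})_{\textnormal{null}}$ is topologically nilpotent and in particular all its generalised eigenvalues there are $0$ (or: on $\Pi_b$ the action of $z_i$ on the null part is nilpotent). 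Therefore the generalised eigenvalues of $[N'^0 z_i N'^0]$ on $\Pi^{N'^0}$ are: the value $0$ (from the null part, which is irrelevant for integrality) together with the values $(\chi\delta_{Q'}^{-1})(z_i)$ as $\chi$ ranges over the generalised eigencharacters of $J_{Q'}(\Pi)$. Condition (i) says all these lie in $\overline{\mathbf{Z}}_p$, i.e. $|(\chi\delta_{Q'}^{-1})(z_i)|_p\le 1$ for all such $\chi$ and all $i$ with $1\le i\le h$ (noting also that $[N'^0 z_h N'^0]=[N'^0\langle\varpi_L\rangle N'^0]$ acts on $J_{Q'}(\Pi)\otimes\delta_{Q'}$ by the central character times $\delta_{Q'}^{-1}$, whose inverse is also an eigenvalue because $\langle\varpi_L\rangle$ is central and invertible, forcing the value to be a unit and hence contributing nothing — or alternatively one simply includes it as the case $i=h$). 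Finally I would argue that $|(\chi\delta_{Q'}^{-1})(z)|_p\le 1$ for the finitely many generators $z_1,\dots,z_h$ (modulo units) of $Z_{M'}^+$ forces the same inequality for \emph{every} $z\in Z_{M'}^+$: indeed $Z_{M'}^+$ is the submonoid of $Z_{M'}(L)$ generated by $z_1,\dots,z_{h-1}$, $z_h^{\pm 1}$ and $Z_{M'}(\mathcal{O}_L)$ is not quite right — rather, $Z_{M'}^+$ is generated as a monoid by $Z_{M'}(\mathcal{O}_L)$ together with $z_1,\dots,z_h$, so any $z\in Z_{M'}^+$ is a product of these, and $|\cdot|_p$ of a product is the product of the $|\cdot|_p$'s, each of which is $\le 1$. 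This gives $(i)\Rightarrow(ii)$; the converse is immediate since each $z_i\in Z_{M'}^+$, so (ii) directly gives $|(\chi\delta_{Q'}^{-1})(z_i)|_p\le 1$, which via the correspondence above is exactly the integrality of the generalised eigenvalues in (i).

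The main obstacle I anticipate is a bookkeeping one rather than a conceptual one: making the identification between eigenvalues of double coset operators on $\Pi^{N'^0}$ and eigencharacters on the Jacquet module fully precise, including correctly handling the appearance of $\delta_{Q'}$ (which, as the excerpt notes, is an artefact of the normalisation of the Hecke action used in \cite{Eme06c} versus the one here) and the precise statement that the operators $[N'^0 u_p^{(n_1'+\dots+n_i')}N'^0]$ correspond to the standard generators of $Z_{M'}^+/Z_{M'}(\mathcal{O}_L)$. I would also need to be slightly careful that $u_p^{(n_1+\dots+n_i)}$ — indexed by the \emph{original} partition of $Q$ in the statement — actually agrees with $u_p^{(n_1'+\dots+n_i')}$ for the partition of $Q'$; reading the statement, the indices $n_1,\dots$ there are those of $Q'=P_{(n_1',\dots,n_h')}$ with $n_i$ a typo for $n_i'$, or else one interprets $n_1+\dots+n_i$ as ranging over the jump indices of $Q'$, and this should be reconciled. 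Once these normalisation and indexing issues are pinned down, the argument is a direct application of Proposition~\ref{Prop4.4}, the finite-slope decomposition of \S\ref{sec4.1}, and the multiplicativity of $|\cdot|_p$; the last sentence of the lemma (weak admissibility $\iff$ (i) for all standard $Q'$) is then just the definition of $\textnormal{Mod}_E^{\textnormal{wa},\lambda}(G(L))$ combined with the equivalence just proved.
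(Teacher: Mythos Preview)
Your proposal is correct and follows essentially the same approach as the paper: both reduce condition (ii) to the generators $u_p^{(n_1'+\dots+n_i')}$ of $Z_{M'}^+$ modulo $Z_{M'}(\mathcal{O}_L)$, and both use the canonical lifting (Proposition~\ref{Prop4.4}) to identify the non-zero generalised eigenvalues of these operators on $\Pi^{N'^0}$ with the values $(\chi\delta_{Q'}^{-1})(u_p^{(n_1'+\dots+n_i')})$ for $\chi$ ranging over eigencharacters of $J_{Q'}(\Pi)$. The only cosmetic difference is that the paper first separates the locally algebraic character $\chi$ into its smooth and algebraic parts and invokes \cite{Cas95}, Proposition~2.1.9 on the admissible $J_{Q'}(\pi)$ to justify the generalised eigenspace decomposition, whereas you work directly with the finite-slope decomposition of \S\ref{sec4.1}; these are equivalent. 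Your reading of $n_i$ as a typo for $n_i'$ is confirmed by the paper's own proof.
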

\begin{proof}
    We first make a few observations about condition $ii)$. Note that $\chi$ is of the form $\chi_{\textnormal{sm}}\otimes \chi_{\textnormal{alg}}$. Moreover, $J_{Q'}(\Pi)=J_{Q'}(\pi)\otimes V_{(w_0^Q\lambda)^{\vee}}$ and $V_{(w_0^Q\lambda)^{\vee}}$ has constant central character sending $u_p^{(n'_1+...+n'_i)}$ to \begin{equation*}
    -w_0^G\lambda(u_p^{(n'_1+...+n'_i)})=\prod_{\iota:L\hookrightarrow E}\iota(\varpi_L)^{-(\lambda_{\iota,n}+...+\lambda_{\iota,n+1-(n_1+...+n_i)})}.
    \end{equation*} Therefore, $\chi$ is forced to be a smooth twist of $-w_0^G\lambda$.

    Moreover, after extending $\pi$ to a $\overline{\mathbf{Q}}_p$-representation, we can apply \cite{Cas95}, Proposition 2.1.9 to get a direct sum decomposition
    \begin{equation}\label{eq4.7}
        J_{Q'}(\pi)=\bigoplus_{\chi_{\textnormal{sm}}:Z_{M'}(L)\to \overline{\mathbf{Q}}_p^{\times}}J_{Q'}(\pi)_{\chi_{\textnormal{sm}}}
    \end{equation}
of the $M(L)$-representation into generalised eigenspaces with respect to all the possible central characters. Here we used that both base change to $\overline{\mathbf{Q}}_p$ and $J_{Q'}$ preserves admissibility. In particular, we see that $J_{Q'}(\pi)_{\chi_{\textnormal{sm}}}\neq 0$ if and only if $\Hom_{Z_{M'}(L)}(\chi_{\textnormal{sm}}(-w_0^G\lambda),J_{Q'}(\Pi))\neq 0$.

We also note that the decomposition \ref{eq4.7} is a refinement of the decomposition of $J_{Q'}(\pi)$ into generalised eigenspaces with respect to the actions of the $u_p^{(n'_1+...+n'_i)}$'s. On the other hand, it is easy to check that $\chi|_{Z_{M'}^0}$ and $\delta_{Q'}|_{Z_{M'}^0}$ necessarily land in $\overline{\mathbf{Z}
}_p^{\times}$. So the  inequality $|\chi(z)\delta_{Q'}^{-1}(z)|_p\leq 1$ is satisfied for all $z\in Z_{M'}^+$ if and only if it is satisfied for $z=u_p^{(n'_1+...+n'_i)}$ for all $1\leq i\leq h$.

Finally, we finish the proof by recalling that the natural map $\Pi^{N'^0}\otimes \delta_{Q'}\to J_{Q'}(\Pi)$ is $Z_{M'}^+$-equivariant, and its kernel is the subspace on which the $U_p$-operators act nilpotently.
\end{proof}
\begin{Rem}\label{Rem4.12}
    As was promised, using Lemma~\ref{Lem4.11}, we can find a large source of examples of weakly admissible representations. Namely, given a number field $F$, we can consider any regular algebraic cuspidal automorphic representation $\pi$ of $\textnormal{GL}_n(\mathbf{A}_F)$ of weight $\lambda$. Given a $p$-adic place $v\in S_p(F)$, set $L=F_v$, and look at the component $\pi_v$, a smooth admissible $\mathbf{C}$-representation of $G(L)$. After fixing an identification $t:\overline{\mathbf{Q}}_p\cong \mathbf{C}$, we can find a model of $t^{-1}\pi_v$ over some large enough field extension $E/\mathbf{Q}_p$. Since $t^{-1}\pi_v$ can then be found as a $\textnormal{GL}_n(F_v)$-equivariant direct summand in $(\varinjlim_{K_v\subset \textnormal{GL}_n(\mathcal{O}_{F_v})}H^{\ast}(X_{K^{v}K_v},\mathcal{V}_{\lambda}))\otimes_{\mathcal{O}}E$ and the \textit{normalised} $U_p$-operators already act integrally on the latter by \cite{CN23}, Lemma 2.1.17, we see that Lemma~\ref{Lem4.11}, i) is satisfied for any standard parabolic subgroup, making $t^{-1}\pi_v\otimes_E V_{\lambda^{\vee}}$ into a weakly admissible $E$-representation of $G(L)$.
\end{Rem}

\begin{Def}\label{Def4.13}
    Let $\Pi$ be a weakly admissible $E$-representation of $G(L)$ of weight $\lambda$. We say that $\Pi$ is $Q$-ordinary if $\textnormal{Ord}_Q^{\textnormal{lalg}}(\Pi)\neq 0$. We denote the subcategory of such by $\textnormal{Mod}^{Q\textnormal{-ord},\lambda}_E(G(L))\subset \textnormal{Mod}_{E}^{\textnormal{wa},\lambda}(G(L))$.
\end{Def}
\begin{Rem}
     We also note that both the notion of weakly admissible and the notion of ordinary generalises to any split reductive group over $L$ and we will be speaking of weakly admissible and ordinary representations of Levi subgroups $M(L)$ of $G(L)$ without further explanation.
\end{Rem}

We would also like to talk about parabolic induction for locally algebraic representations.
\begin{Def}
    Let $\Pi_M=\pi_M\otimes V_{(w_0^G\lambda)^{\vee}}$ be a weakly admissible $E$-representation of $M(L)$ of weight $w_0^Q\lambda$. We then set its parabolic induction to be
    \begin{equation*}
        \textnormal{Ind}_Q^G(\Pi_M):=(\textnormal{Ind}_{Q(L)}^{G(L)}\pi_M)\otimes V_{\lambda^{\vee}},
    \end{equation*}
    a locally algebraic $E$-representation of $G(L)$ of weight $\lambda$.
\end{Def}
Note that $\textnormal{Ind}_Q^G$ can only be applied to locally algebraic representations with weight of the form $w_0^Q\lambda$.

The key result of the subsection is the following theorem.
\begin{Th}\label{Th4.16}
    Let $\Pi_M$ be a weakly admissible $E$-representation of $M(L)$ of weight $w_0^Q\lambda$. Then $\textnormal{Ind}_Q^G(\delta_Q\otimes\Pi_M)$ is a weakly admissible $E$-representation of $G(L)$ of weight $\lambda$ and we have a natural isomorphism
    \begin{equation}\label{eq4.8}
        \textnormal{Ord}_Q^{\textnormal{lalg}}(\textnormal{Ind}_Q^G(\delta_Q\otimes\Pi_M))\cong \textnormal{Ord}_M^{\textnormal{lalg}}(\Pi_M).
    \end{equation}
    In particular, if $\Pi_M$ is $M$-ordinary, then $\textnormal{Ind}_Q^G(\delta_Q\otimes\Pi_M)$ is $Q$-ordinary and if $\Pi_M$ is also absolutely irreducible, we have $\textnormal{Ord}_Q^{\textnormal{lalg}}(\textnormal{Ind}_Q^G(\delta_Q\otimes\Pi_M))\cong \Pi_M.$
\end{Th}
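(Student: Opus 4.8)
The strategy is to strip off the algebraic part, so that everything becomes a statement about the smooth Jacquet module of a parabolic induction, and then to isolate the slope-$0$ contribution using the Bernstein--Zelevinsky geometric lemma. Write $\Pi_M=\pi_M\otimes V_{(w_0^Q\lambda)^\vee}$ with $\pi_M$ smooth admissible; then by definition $\textnormal{Ind}_Q^G(\delta_Q\otimes\Pi_M)=\bigl(\textnormal{Ind}_{Q(L)}^{G(L)}(\delta_Q\otimes\pi_M)\bigr)\otimes V_{\lambda^\vee}$, and by Remark~\ref{Rem4.3} its $N^0$-invariants are $\bigl(\textnormal{Ind}_{Q(L)}^{G(L)}(\delta_Q\otimes\pi_M)\bigr)^{N^0}\otimes V_{\lambda^\vee}^{N(L)}$ with $M^+$ acting diagonally. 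By Proposition~\ref{Prop4.4} the finite slope part of this space is $M^+$-equivariantly isomorphic to $\delta_Q^{-1}\otimes J_Q\bigl(\textnormal{Ind}_{Q(L)}^{G(L)}(\delta_Q\otimes\pi_M)\bigr)\otimes V_{\lambda^\vee}^{N(L)}$, and $\textnormal{Ord}_Q^{\textnormal{lalg}}$ is its slope-$0$ summand (slope being measured by the $Z_M$-action on the right-hand side).

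For weak admissibility I would use the criterion of Lemma~\ref{Lem4.11}: for each standard parabolic $Q'=M'\ltimes N'$ one must bound the $p$-adic valuations of the central characters of $M'(L)$ occurring in $J_{Q'}\bigl(\textnormal{Ind}_{Q(L)}^{G(L)}(\delta_Q\otimes\pi_M)\bigr)\otimes V_{\lambda^\vee}^{N'(L)}$ against $\delta_{Q'}$. The geometric lemma expresses this Jacquet module as an iterated extension of representations parabolically induced inside $M'(L)$ from $w$-conjugates of Jacquet modules $J_{M\cap w^{-1}M'w}(\delta_Q\otimes\pi_M)$; since $\pi_M$ is weakly admissible the central characters of the latter already obey the corresponding inequalities, and it remains to check that conjugation by $w$, together with the modulus characters $\delta_Q$, $\delta_{Q'}$ and the modulus character introduced by the induction inside $M'$, combine so as to preserve them. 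This is a computation with modulus characters and the dominance order on cocharacters; it is one of the two places where I would invoke the technical lemmas proved in the Appendix.

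Now apply the geometric lemma to $J_Q\bigl(\textnormal{Ind}_{Q(L)}^{G(L)}(\delta_Q\otimes\pi_M)\bigr)$ itself, with double-coset set $W_Q\backslash W_G/W_Q$. The graded piece attached to the identity coset recovers $\delta_Q\otimes\pi_M$, so after tensoring with $V_{\lambda^\vee}^{N(L)}$ and twisting by $\delta_Q^{-1}$ it becomes $\pi_M\otimes V_{\lambda^\vee}^{N(L)}$, and by the weight bookkeeping of Remark~\ref{Rem4.3} (here one uses $V_{\lambda^\vee}^{N(L)}\cong V_{(w_0^Q\lambda)^\vee}$ as $M(L)$-representations) this is exactly $\Pi_M$, whose slope-$0$ part is $\textnormal{Ord}_M^{\textnormal{lalg}}(\Pi_M)$. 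The main point is that every other graded piece contributes $0$ after passing to slope-$0$ summands: such a piece is parabolically induced inside $M(L)$ from a $w$-conjugate of a Jacquet module of $\delta_Q\otimes\pi_M$ with $w\neq 1$, and evaluating its central characters on the elements $u_p^{(n_1+\dots+n_j)}\in Z_M^+$, the $\delta_Q$-twist forces a strictly positive $p$-adic valuation because $\delta_Q$ takes a strictly negative power of the residue cardinality there, while weak admissibility of $\pi_M$ bounds the remaining factors; hence no central character occurring there has all $Z_M^+$-eigenvalues of valuation $0$. Since taking slope-$0$ summands is an exact functor and is preserved by $M(L)$ ($Z_M$ being central in $M$), the extension collapses and yields the natural isomorphism $\textnormal{Ord}_Q^{\textnormal{lalg}}(\textnormal{Ind}_Q^G(\delta_Q\otimes\Pi_M))\cong\textnormal{Ord}_M^{\textnormal{lalg}}(\Pi_M)$.

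The two consequences are then formal. If $\Pi_M$ is $M$-ordinary, i.e. $\textnormal{Ord}_M^{\textnormal{lalg}}(\Pi_M)\neq0$, the isomorphism shows $\textnormal{Ord}_Q^{\textnormal{lalg}}(\textnormal{Ind}_Q^G(\delta_Q\otimes\Pi_M))\neq0$, so the induction is $Q$-ordinary. If in addition $\Pi_M$ is absolutely irreducible, then its slope decomposition $\Pi_M=(\Pi_M)_0\oplus(\Pi_M)_{>0}$ is $M(L)$-equivariant, hence $(\Pi_M)_0$ is either $0$ or all of $\Pi_M$; $M$-ordinariness excludes the former, so $\textnormal{Ord}_M^{\textnormal{lalg}}(\Pi_M)=\Pi_M$ and the isomorphism gives $\textnormal{Ord}_Q^{\textnormal{lalg}}(\textnormal{Ind}_Q^G(\delta_Q\otimes\Pi_M))\cong\Pi_M$. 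I expect the main obstacle to be the two slopes/modulus-character arguments — the verification of Emerton's condition and the vanishing of the non-identity graded pieces — since both require controlling, uniformly over the double cosets, how parabolic induction and the twist by $\delta_Q$ interact with $p$-adic valuations of central characters; this is precisely the content isolated in the technical lemmas of the Appendix.
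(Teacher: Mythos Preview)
Your approach is exactly the one the paper takes: pass to the Jacquet module, apply the geometric lemma, and show that only the identity stratum survives on slope~$0$. Two corrections are worth flagging.

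First, the technical lemmas you need are Lemma~\ref{Lem4.17} and Lemma~\ref{Lem4.18} in \S\ref{sec4.2}, not the lemmas in the Appendix; the Appendix lemmas (A.3, A.4) concern the Bernstein--Zelevinsky classification and are used later for Corollary~\ref{Cor4.25}, not here.

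Second, your explanation of why the non-identity strata have no slope-$0$ part is too glib. It is not simply that ``$\delta_Q$ takes a strictly negative power of the residue cardinality''; the $\delta_Q^{1/2}$ coming from the induced representation is evaluated at $w^{-1}zw$, not at $z$, and the stratum also carries a factor $\delta_{Q_{w^{-1}}^M}^{-1/2}(w^{-1}zw)$. What one actually needs is a comparison of two modulus characters attached to different parabolics: in the notation of the paper, one must find $z\in Z_M^+$ with $\delta_{Q_w}(z)<\delta_{Q_{w^{-1}}}(w^{-1}zw)$ whenever $w\neq 1$ (this is precisely Lemma~\ref{Lem4.18}, whose proof invokes a root-theoretic fact from \cite{AHV19}). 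One also uses that $\lambda$ is dominant to control $\textnormal{val}_p(w_0^G\lambda(w^{-1}zw))$ against $\textnormal{val}_p(w_0^G\lambda(z))$; you did not mention this ingredient. With these two inputs in place your outline goes through as written.
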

Before starting the proof, we prove some preliminary technical lemmas. The proof of these lemmas and the proof of Theorem~\ref{Th4.16} was intentionally written in a way so that it clearly generalises to split reductive groups other than $\textnormal{GL}_n$. However, we kept the assumption $G=\textnormal{GL}_n$ for the whole subsection as later we will appeal to the specific features of the representation theory of $\textnormal{GL}_n(L)$, such as the Bernstein--Zelevinsky classification.

Introduce the following notation. Denote by $\Sigma_G$ the set of roots of $(G,T)$ and by $\Sigma_G^{+}\subset \Sigma_G$ the subset of $B$-dominant roots. Given a standard parabolic subgroup $M'\ltimes N'=Q'\subset G$,  analogous notations apply to $M'$. Further denote by $\Sigma_{N'}\subset \Sigma_G^+$ the roots corresponding to $N'$. Note that $\Sigma_G^+=\Sigma_{M'}^+\coprod \Sigma_{N'}$ and $\Sigma_G=-\Sigma_N\coprod \Sigma_{M'}\coprod \Sigma_{N'}$.
\begin{Lemma}\label{Lem4.17}
    Let $M\ltimes N=Q,M'\ltimes N'=Q'\subset G$ be two standard parabolic subgroups, $\nu\in X_{\ast}(Z_{M'})$ be a cocharacter that is moreover $B$-dominant. Consider an element $w\in {}^{Q'}W^{Q}$ and define the standard parabolic subgroups $M_{w^{-1}}\ltimes N_{w^{-1}}=Q_{w^{-1}}^M:=M\cap w^{-1}Q'w^{}\subset M$ and $M_{w}'\ltimes N_w'=Q_w^{M'}:=M'\cap w^{}Qw^{-1}\subset M'$. Then $w^{-1}\nu$ is in $X_{\ast}(Z_{M_{w^{-1}}})$ and is a $B\cap M$-dominant cocharacter. In particular, the isomorphism $M_w'(L)\xrightarrow{\sim}M_{w^{-1}}(L)$, $m\mapsto w^{-1}mw$ restricts to a map $Z_{M'}^+\to Z_{M_{w^{-1}}}^+$.\footnote{Note that $M_{w^{-1}}$ is considered as a Levi subgroup in $M$ and $Z_{M_{w^{-1}}}^+$ has its meaning accordingly.}
\end{Lemma}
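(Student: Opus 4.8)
The plan is to reduce everything to a statement about root systems and to carry it out explicitly. First I would translate the claim into the language of cocharacters and roots. By definition, $Q_w^{M'} = M' \cap wQw^{-1}$ is a standard parabolic of $M'$, and dually $Q_{w^{-1}}^M = M \cap w^{-1}Q'w$ is a standard parabolic of $M$; I would first record, using the fact that $w \in {}^{Q'}W^{Q}$ is a minimal length double coset representative, that conjugation by $w$ induces an isomorphism of reductive groups $M_{w^{-1}} \xrightarrow{\sim} M_w'$ (the standard facts about the structure of $W_{Q'}\backslash W_G / W_Q$ and the associated standard Levi subgroups, see e.g.\ the combinatorics recalled in the Notation section), so that $m \mapsto w^{-1}mw$ carries $M_w'(L)$ isomorphically onto $M_{w^{-1}}(L)$ and in particular $Z_{M_w'}$ onto $Z_{M_{w^{-1}}}$. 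Since $\nu \in X_\ast(Z_{M'})$ and $Z_{M'} \subseteq Z_{M_w'}$ (as $M_w' \subseteq M'$), we get $w^{-1}\nu \in X_\ast(Z_{M_w'}\text{-image}) = X_\ast(Z_{M_{w^{-1}}})$; this is the first assertion.

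Next I would verify $B\cap M$-dominance of $w^{-1}\nu$, which is the arithmetic heart of the lemma. The cocharacter $w^{-1}\nu$ is $B\cap M$-dominant precisely when $\langle w^{-1}\nu, \alpha\rangle \geq 0$ for every $\alpha \in \Sigma_M^+$, equivalently $\langle \nu, w\alpha\rangle \geq 0$ for every such $\alpha$. Now for $\alpha \in \Sigma_M^+$, the key point is that $w\alpha$ cannot be a negative root relative to $Q'$: because $w \in {}^{Q'}W^{Q}$, one has $w(\Sigma_M^+) \subseteq \Sigma_G^+ \cup (-\Sigma_{N'})$ is \emph{not} quite right — rather, the defining property of being minimal length in its double coset (combined with $w$ being minimal in $wW_Q$, so $w(\Sigma_M^+) \subseteq \Sigma_G^+$) gives $w\alpha \in \Sigma_G^+$ for all $\alpha \in \Sigma_M^+$. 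Hence $w\alpha \in \Sigma_{M'}^+ \cup \Sigma_{N'}$. Since $\nu$ is $B$-dominant, $\langle \nu, w\alpha\rangle \geq 0$ in either case, and we are done. (If instead one only knew $w$ minimal in $W_{Q'}w$, one would argue $w\alpha$ could be in $-\Sigma_{N'}$, but then pairing against $\nu \in X_\ast(Z_{M'})$ still vanishes since $\langle \nu, \beta\rangle = 0$ for $\beta \in \Sigma_{M'}$ and one would need to rule out $\beta \in -\Sigma_{N'}$; the cleanest route is to use full minimality in the double coset, which forces $w(\Sigma_M^+)\subseteq \Sigma_G^+$, and this is the version I would use.)

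Finally, the statement about the map on positive monoids follows formally: $Z_{M'}^+ = \{z \in Z_{M'}(L) : z N'^0 z^{-1} \subseteq N'^0,\ z^{-1}\overline{N'}^1 z \subseteq \overline{N'}^1\}$, and conjugation by $w^{-1}$ sends $Z_{M'}$ into $Z_{M_{w^{-1}}}$ while, by the dominance just established, it sends the sub-semigroup contracting $N'$ into the sub-semigroup contracting the corresponding unipotent radical $N_{w^{-1}}$ of $Q_{w^{-1}}^M$ inside $M$; here I would spell out that $w^{-1}(\Sigma_{N'} \cap w\Sigma_G^+) = \Sigma_{N_{w^{-1}}}$, so that positivity of $\langle w^{-1}\nu, \gamma\rangle$ for $\gamma$ in the roots of $N_{w^{-1}}$ is exactly what guarantees $w^{-1}\nu(\varpi_L)$ lies in $Z_{M_{w^{-1}}}^+$. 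The main obstacle I anticipate is purely bookkeeping: being careful about which minimality property of $w \in {}^{Q'}W^{Q}$ is invoked at each step and making sure the identifications of the various standard Levi and unipotent pieces under conjugation by $w$ are stated in a form compatible with the Iwahori-decomposition conventions of \S\ref{sec3.1}. There is no deep input beyond the elementary theory of $(W_{Q'}, W_Q)$-reduced elements and the compatibility of $B$-dominance with the root decomposition $\Sigma_G^+ = \Sigma_{M}^+ \sqcup \Sigma_{N}$.
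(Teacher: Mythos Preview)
Your proposal is correct and follows essentially the same route as the paper: identify $M_{w^{-1}}\cong M_w'$ via conjugation to get $w^{-1}\nu\in X_\ast(Z_{M_{w^{-1}}})$, then use that $w\in{}^{Q'}W^Q$ forces $w(\Sigma_M^+)\subset\Sigma_G^+$ so $B$-dominance of $\nu$ gives $B\cap M$-dominance of $w^{-1}\nu$. For the final claim the paper is slightly more direct---it simply observes that every $z\in Z_{M'}^+$ has the form $\nu(\varpi_L)\cdot z'$ with $\nu$ $B$-dominant and $z'\in Z_{M'}^0$, so the conclusion is immediate from the first two parts; your root-by-root verification works too (though your displayed identity $w^{-1}(\Sigma_{N'}\cap w\Sigma_G^+)=\Sigma_{N_{w^{-1}}}$ is not quite the right formula---one has $\Sigma_{N_{w^{-1}}}=\Sigma_M\cap w^{-1}\Sigma_{N'}$---but this is harmless since all you actually use is $\Sigma_{N_{w^{-1}}}\subset\Sigma_M^+$).
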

\begin{proof}
    To see that $w^{-1}\nu$ lies in $X_{\ast}(Z_{M_{w^{-1}}})$, note that, by \cite{Hau18}, Lemma 2.1.1, (ii), we have $M_{w^{-1}}=M\cap w^{-1}M'w^{}$ and $M_{w}'=M'\cap w^{}Mw^{-1}$. In particular, conjugation by $w^{-1}$ restricts to an isomorphism $M_w'\xrightarrow{\sim}M_{w^{-1}}$ from which the claim follows.

    To verify that $w^{-1}\nu$ is $B\cap M$-dominant, we have to check that for any $\alpha\in \Sigma_M^+$, we have $\langle w^{-1}\nu,\alpha\rangle\geq 0$. Equivalently, we prove that $\langle\nu, w\alpha\rangle\geq 0$. Since $w\in {}^{Q'}W^{Q}$, we know that $w\alpha\in \Sigma_G^+$ so the inequality follows from the fact that $\nu$ is $B$-dominant.

    For the last claim of the lemma, note that any $z\in Z_{M'}^+$ must be of the form $\nu(\varpi_L)\cdot z'$ for some $B$-dominant cocharacter $\nu\in X_{\ast}(Z_{M'})$ and $z'\in Z_{M'}^0$.
\end{proof}
\begin{Lemma}\label{Lem4.18}
    Let $M\ltimes N=Q$, $ M'\ltimes N'=Q'$, $w\in {}^{Q'}W^Q$, $Q_{w^{-1}}^M=M_{w^{-1}}\ltimes N_{w^{-1}}$ and $Q_{w}^{M'}=M'_w\ltimes N'_w$ as in Lemma~\ref{Lem4.17}. Further consider the standard parabolic subgroups $Q_{w^{-1}}=M_{w^{-1}}\ltimes (N_{w^{-1}}N)$, $Q'_w=M'_{w}\ltimes (N'_wN')\subset G$. Then, for any $z\in Z_{M'}^+$, we have
    \begin{equation*}
        \textnormal{val}_p(\delta_{Q'_w}(z))\leq \textnormal{val}_p(\delta_{Q_{w^{-1}}}(w^{-1}zw)).
    \end{equation*}
    Moreover, if $w\neq 1$, we can find $z\in Z_{M'}^+$ such that $\textnormal{val}_p(\delta_{Q'_w}(z))<\textnormal{val}_p(\delta_{Q_{w^{-1}}}(w^{-1}zw))$.
\end{Lemma}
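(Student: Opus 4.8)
I plan to reduce the whole statement to additive notation and then to the elementary fact that $\rho_G-w\rho_G$ is a sum of positive roots. Since $Z_{M'}\subset T$ and $w$ normalises $T$, both $z$ and $w^{-1}zw$ lie in $T(L)$, and for a standard parabolic $P=M_P\ltimes N_P$ of $G=\textnormal{GL}_n$ and $t\in T(L)$ one has $\delta_P(t)=\prod_{\gamma\in\Sigma_{N_P}}|\gamma(t)|_L=q_L^{-\Lambda_t(2\rho_{N_P})}$, where $\Sigma_{N_P}$ is the root set of $N_P$, $2\rho_{N_P}:=\sum_{\gamma\in\Sigma_{N_P}}\gamma$, $q_L=|k_L|>1$, and $\Lambda_t\colon X^*(T)\to\mathbf Z$, $\chi\mapsto\textnormal{val}_L(\chi(t))$. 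Writing $z=\nu(\varpi_L)z'$ with $\nu\in X_*(Z_{M'})$ a $B$-dominant cocharacter and $z'\in Z_{M'}^0$ (as in the proof of Lemma~\ref{Lem4.17}), one gets $\Lambda_z=\langle\nu,-\rangle$, which vanishes on $\mathbf Z\Sigma_{M'}$ and is $\ge 0$ on all of $\Sigma_G^+$; and $\Lambda_z\big(\gamma(w^{-1}(-)w)\big)=\Lambda_z\big((w\gamma)(-)\big)$. Hence the desired inequality $\delta_{Q'_w}(z)\le\delta_{Q_{w^{-1}}}(w^{-1}zw)$ is equivalent to $\Lambda_z\big(2\rho_{N'_w}+2\rho_{N'}\big)\ge\Lambda_z\big(w(2\rho_{N_{w^{-1}}}+2\rho_N)\big)$.

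Next I would compute the difference of the two arguments. The four Bruhat decompositions $\Sigma_G^+=\Sigma_{M'}^+\sqcup\Sigma_{N'}$, $\Sigma_{M'}^+=\Sigma_{M'_w}^+\sqcup\Sigma_{N'_w}$, $\Sigma_G^+=\Sigma_M^+\sqcup\Sigma_N$, $\Sigma_M^+=\Sigma_{M_{w^{-1}}}^+\sqcup\Sigma_{N_{w^{-1}}}$ give $2\rho_{N'_w}+2\rho_{N'}=2\rho_G-2\rho_{M'_w}$ and $2\rho_{N_{w^{-1}}}+2\rho_N=2\rho_G-2\rho_{M_{w^{-1}}}$. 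Combined with the identity $w(2\rho_{M_{w^{-1}}})=2\rho_{M'_w}$ — which holds because, by Lemma~\ref{Lem4.17}, conjugation by $w$ carries $M_{w^{-1}}$ isomorphically onto $M'_w$, while $w\in{}^{Q'}W^Q$ maps $\Sigma_M^+$ into $\Sigma_G^+$ and therefore carries $\Sigma_{M_{w^{-1}}}^+$ bijectively onto $\Sigma_{M'_w}^+$ — this yields
\begin{equation*}
    \big(2\rho_{N'_w}+2\rho_{N'}\big)-w\big(2\rho_{N_{w^{-1}}}+2\rho_N\big)=2\rho_G-w(2\rho_G)=2\big(\rho_G-w\rho_G\big)=2\sum_{\alpha\in\textnormal{Inv}(w^{-1})}\alpha ,
\end{equation*}
where $\textnormal{Inv}(w^{-1})=\{\alpha\in\Sigma_G^+:w^{-1}\alpha\in-\Sigma_G^+\}$. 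Applying $\Lambda_z$ and using $\Lambda_z\ge 0$ on $\Sigma_G^+$ proves the inequality.

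For the strict statement when $w\neq 1$, I would choose $z$ explicitly. Since $w\in{}^{Q'}W$, for every simple root $\alpha$ of $M'$ one has $w^{-1}\alpha\in\Sigma_G^+$, hence $\Sigma_{M'}^+\cap\textnormal{Inv}(w^{-1})=\emptyset$ and so $\textnormal{Inv}(w^{-1})\subset\Sigma_{N'}$; moreover $\textnormal{Inv}(w^{-1})\neq\emptyset$ as $|\textnormal{Inv}(w^{-1})|=\ell(w)\ge 1$. Writing $M'=\textnormal{GL}_{n'_1}\times\dots\times\textnormal{GL}_{n'_h}$, let $\nu\in X_*(Z_{M'})$ be the $B$-dominant cocharacter acting by the scalar $h+1-k$ on the $k$-th block. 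Then $\langle\nu,\alpha\rangle\ge 1$ for every $\alpha\in\Sigma_{N'}$, so for $z:=\nu(\varpi_L)\in Z_{M'}^+$ we obtain $\Lambda_z\big(2\sum_{\alpha\in\textnormal{Inv}(w^{-1})}\alpha\big)\ge 2\ell(w)>0$, i.e. $\delta_{Q'_w}(z)<\delta_{Q_{w^{-1}}}(w^{-1}zw)$.

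The only place that requires care is the bookkeeping: correctly matching $2\rho_{N_{w^{-1}}}+2\rho_N$ and $2\rho_{N'_w}+2\rho_{N'}$ with $2\rho_G$ minus the relevant $2\rho$ of the Levi, and verifying $w(2\rho_{M_{w^{-1}}})=2\rho_{M'_w}$ via Lemma~\ref{Lem4.17}. Once this is done, everything else is the standard identity $\rho_G-w\rho_G=\sum_{\alpha\in\textnormal{Inv}(w^{-1})}\alpha$ together with the description of $Z_{M'}^+$ recalled in the proof of Lemma~\ref{Lem4.17}.
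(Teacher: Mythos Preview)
Your argument is correct and takes a genuinely different, somewhat cleaner route than the paper. The paper compares the two root sets directly: it shows $w(\Sigma_{N_{w^{-1}}}\sqcup\Sigma_N)\subset(\Sigma_{N'_w}\sqcup\Sigma_{N'})\sqcup(-\Sigma_{N'})$ and deduces the inequality by inspecting the contributions from $-\Sigma_{N'}$ versus the missing roots in $\Sigma_{N'_w}\sqcup\Sigma_{N'}$; for the strict part it invokes \cite{AHV19}, Lemma~5.13 to find $\alpha\in\Sigma_N$ with $w\alpha\in-\Sigma_{N'}$. You instead observe that the Levi contributions cancel (since $w\Sigma_{M_{w^{-1}}}^+=\Sigma_{M'_w}^+$), reducing the whole comparison to the classical identity $2\rho_G-w(2\rho_G)=2\sum_{\alpha\in\textnormal{Inv}(w^{-1})}\alpha$, which is a non-negative combination of positive roots. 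Your strict part is also self-contained: from $w\in{}^{Q'}W$ you get $\textnormal{Inv}(w^{-1})\subset\Sigma_{N'}$, and $\ell(w)\ge 1$ makes it nonempty, so an explicit strictly dominant $\nu$ on $\Sigma_{N'}$ suffices. In effect your $\textnormal{Inv}(w^{-1})\subset\Sigma_{N'}$ nonempty (together with $w\in W^Q$ forcing $w^{-1}\textnormal{Inv}(w^{-1})\subset -\Sigma_N$) recovers the paper's cited existence statement without the external reference. Both arguments are short; yours has the advantage of being entirely internal to standard root-system identities.
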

\begin{proof}
    Recall that we have
    \begin{equation*}
        \delta_{Q'_w}(z)=\left| \prod_{\alpha\in \Sigma_{N'_w}\coprod \Sigma_{N'}}\alpha(z)\right|_L
    \end{equation*}
    and
    \begin{equation*}
        \delta_{Q_{w^{-1}}}(w^{-1}zw)=\left|\prod_{\alpha\in \Sigma_{N_{w^{-1}}}\coprod \Sigma_N}\alpha(w^{-1}zw)\right|_L= \left|\prod_{\alpha\in \Sigma_{N_{w^{-1}}}\coprod \Sigma_N}w\alpha(z)\right|_L.
    \end{equation*}
We also note that $w\in {}^{Q'}W^Q$ means that we have
\begin{enumerate}
    \item $w^{-1}(-\Sigma_G^{+})\cap \Sigma_G^+\subset \Sigma_N$, and
    \item$(-\Sigma_G^+)\cap w(\Sigma_G^{+})\subset -\Sigma_{N'}$.
\end{enumerate}
Moreover, by \cite{Hau18}, Lemma 2.1.1, (ii), $w$ sets up a bijection between $\Sigma_{M_{w^{-1}}}$ and $\Sigma_{M_w'}$. In particular, we have
\begin{equation*}
    w(\Sigma_{N_{w^{-1}}}\coprod \Sigma_{N})\subset \Sigma_{N'_w}\coprod \Sigma_{N'}\coprod -\Sigma_{N'}.
\end{equation*}
From this the first part of the lemma follows.

For the last part, we assume that $1\neq w$ and would like to find $z\in Z_{M'}^+$ such that $\delta_{Q'_w}(z)<\delta_{Q_{w^{-1}}}(w^{-1}zw)$. Note that if we found an $\alpha\in \Sigma_N$ such that $w\alpha\in-\Sigma_{N'}$, then for any choice of a $B$-dominant cocharacter $\nu\in X_{\ast}(Z_{M'})$ with $\langle \nu, w\alpha\rangle< 0$, $\nu(\varpi)$ would be a suitable choice for $z$. The existence of such an $\alpha$ is the content of \cite{AHV19}, Lemma 5.13.
\end{proof}

\begin{proof}[Proof of Theorem~\ref{Th4.16}]
    We first prove that $\textnormal{Ind}_Q^G(\delta_Q\otimes \Pi_M)$ is weakly admissible. To do so, we fix a standard parabolic subgroup $Q'=M'\ltimes N'\subset G$ and write $\Pi_M=\pi_M\otimes V_{-w_0^G\lambda}$. We would like to understand the $Z_{M'}^+$-action on $J_{Q'}(\textnormal{Ind}_Q^G(\delta_Q\otimes \Pi_M))$. By applying the geometric lemma (cf. \cite{BZ77}, Lemma 2.12) to $J_{Q'}(\textnormal{Ind}_{Q(L)}^{G(L)}(\delta_Q\otimes \pi_M))$, we obtain an $M'(L)$-equivariant filtration of $J_{Q'}(\textnormal{Ind}_Q^G(\delta_Q\otimes \Pi_M))$ with subquotients given by
    \begin{equation*}
        I_w^{Q'}:=\biggl(\delta_{Q'}^{1/2}\textnormal{Ind}_{Q_w ^{M'}}^{M'}\Bigl(\delta_{Q_{w} ^{M'}}^{1/2}w^{\ast}\bigl(\delta_{Q_{w^{-1}}^M}^{-1/2}J_{Q_{w^{-1}}^M}(\delta_Q^{1/2}\pi_M)\bigr)\Bigr)\biggr)\otimes V_{(w_0^Q\lambda)^{\vee}}\footnote{We warn the reader that we consider unnormalised versions of parabolic induction and the Jacquet functor as opposed to \cite{BZ77}, where each of the functors are normalised, hence the appearance of several modulus characters in the definition of $I_w^{Q'}$.}
    \end{equation*}
    for $w\in {}^{Q'}W^Q$ where $M'_w\ltimes N_w'=Q_w^{M'}:=M'\cap wQw^{-1}\subset M'$, $M_{w^{-1}}\ltimes N_{w^{-1}}=Q_{w^{-1}}^M:=M\cap w^{-1}Q'w$, and $w^{\ast}(-)$ denotes the pullback along the isomorphism $M_{w^{-1}}(L) \xrightarrow{\sim} M_{w}'(L)$, $m\mapsto wmw^{-1}$.\footnote{For the fact that it is indeed an isomorphism, see \cite{Hau18}, Lemma 2.1.1, (ii).} To check weak admissibility, it suffices to see that for any $w\in {}^{Q'}W^Q$, any locally algebraic character $\chi=\chi_{\textnormal{sm}}(-w_0^G\lambda):Z_{M'}(L)\to \overline{\mathbf{Q}}_p^{\times}$ with a $Z_{M'}(L)$-equivariant embedding $\chi\hookrightarrow I_w^{Q'}$ and any $z\in Z_{M'}^+$, we have
    \begin{equation}
        \textnormal{val}_p(\chi_{\textnormal{sm}}(z))\geq\textnormal{val}_p(w_0^G\lambda(z))+\textnormal{val}_p(\delta_{Q'}(z)).
    \end{equation}
    Fix such a Weyl group element $w$, character $\chi$, and embedding $\chi\hookrightarrow I_w^{Q'}$.
Set $I_w^{Q',\textnormal{sm}}$ to be the smooth part of $I_{w}^{Q'}$. Fix a $z\in Z_{M'}^+$ and compute
\begin{equation*}
    I_w^{Q',\textnormal{sm}}(z)=\delta_{Q'}^{1/2}(z)\delta_{Q_w^{M'}}^{1/2}(z)\delta_{Q_{w^{-1}}^M}^{-1/2}(w^{-1}zw)J_{Q_{w^{-1}}^M}(\delta_Q^{1/2}\pi_M)(w^{-1}zw)=
\end{equation*}
\begin{equation*}
    =\delta_{Q'}^{1/2}(z)\delta_{Q_{w^{-1}}^M}^{-1/2}(w^{-1}zw)\delta_{Q}^{1/2}(w^{-1}zw)J_{Q_{w^{-1}}^M}(\pi_M)(w^{-1}zw).
\end{equation*}
Here we used that $z$ is a central element in $M'(L)$. Since $w\in {}^{Q'}W^Q$, Lemma~\ref{Lem4.17} shows that $w^{-1}zw\in Z_{M_{w^{-1}}}^+$. In particular, by weak admissibility of $\Pi_M$, we see that for any $Z_{M_{w^{-1}}}(L)$-equivariant embedding $\widetilde{\chi}=\widetilde{\chi}_{\textnormal{sm}}(-w_0^G\lambda)\hookrightarrow J_{Q_{w^{-1}}^M}(\Pi_M)=J_{Q_{w^{-1}}^M}(\pi_M)\otimes V_{-w_0^G\lambda}$, we have
\begin{equation*}
\textnormal{val}_p(\widetilde{\chi}_{\textnormal{sm}}(w^{-1}zw))\geq \textnormal{val}_p(\delta_{Q_{w^{-1}}^M}(w^{-1}z w))+\textnormal{val}_p(w_0^G\lambda(w^{-1}zw)).
\end{equation*}
Our chosen embedding $\chi\hookrightarrow I_w^{Q'}$ then induces a $w^{-1}Z_{M'}(L)w$-equivariant embedding $\widetilde{\chi}'_{\textnormal{sm}}:=(w^{-1})^{\ast}(\chi_{\textnormal{sm}}\delta_{Q'}^{-1/2})\delta_{Q_{w^{-1}}^M}^{1/2}\delta_Q^{-1/2}\hookrightarrow J_{Q_{w^{-1}}^M}(\pi_M)$. As a consequence, there exists a character $\widetilde{\chi}:=\widetilde{\chi}_{\textnormal{sm}}(-w_0^G\lambda):Z_{M_{w^{-1}}}(L)\to \overline{\mathbf{Q}}_p^{\times}$ with an embedding $\widetilde{\chi}\hookrightarrow J_{Q_{w^{-1}}^M}(\Pi_M)$ such that $\widetilde{\chi}_{\textnormal{sm}}|_{w^{-1}Z_{M'}(L)w}=\widetilde{\chi}_{\textnormal{sm}}'$. Pick any such $\widetilde{\chi}$ and compute
\begin{equation}\label{eq4.10}
    \textnormal{val}_p\bigl(\chi_{\textnormal{sm}}(z)\delta_{Q'}^{-1/2}(z)\delta_{Q_{w^{-1}}^M}^{1/2}(w^{-1}zw)\delta_{Q}^{-1/2}(w^{-1}zw)\bigr)=\textnormal{val}_p(\widetilde{\chi}_{\textnormal{sm}}(w^{-1}zw))
\end{equation}
\begin{equation*}
    \geq \textnormal{val}_p(\delta_{Q_{w^{-1}}^M}(w^{-1}zw))+\textnormal{val}_p(w_0^G\lambda(w^{-1}zw))\geq \textnormal{val}_p(\delta_{Q_{w^{-1}}^M}(wzw^{-1}))+\textnormal{val}_p(w_0^G\lambda(z)).
\end{equation*}
Here the last inequality follows from the fact that $\lambda$ is $(\textnormal{Res}_{L/\mathbf{Q}_p}B)_E$-dominant. After rearranging \ref{eq4.10}, we get
\begin{equation*}
    \textnormal{val}_p(\chi_{\textnormal{sm}}(z))\geq \frac{1}{2}\Bigl(\textnormal{val}_p(\delta_{Q_{w^{-1}}^M}(w^{-1}zw))+\textnormal{val}_p(\delta_Q(w^{-1}zw))-\textnormal{val}_p(\delta_{Q'}(z))\Bigr)+
\end{equation*}
\begin{equation*}
    +\textnormal{val}_p(\delta_{Q'}(z))+\textnormal{val}_p(w_0^G\lambda(z)).
\end{equation*}
In particular, it suffices to prove that
\begin{equation*}
    \textnormal{val}_p\Bigl(\delta_{Q_{w^{-1}}^M}(w^{-1}zw)\delta_Q(w^{-1}zw)\delta_{Q'}^{-1}(z)\Bigr)\geq 0.
\end{equation*}
To see this, denote by $Q_{w^{-1}}\subset G$ the standard parabolic subgroup $M_{w^{-1}}\ltimes (N_{w^{-1}}N)$ and by $Q'_w\subset G$ the standard parabolic subgroup $M_{w}'\ltimes(N'_{w}N')$. Then one has $\delta_{Q'}(z)=\delta_{Q'_w}(z)$ and $\delta_{Q_{w^{-1}}^M}(w^{-1}zw)\delta_Q(w^{-1}zw)=\delta_{Q_{w^{-1}}}(w^{-1}zw)$. In particular, we only need to see that $\textnormal{val}_p(\delta_{Q'_w}(z))\leq \textnormal{val}_p(\delta_{Q_{w^{-1}}}(w^{-1}zw))$. This is the content of the first part of Lemma~\ref{Lem4.18}. We conclude that $\textnormal{Ind}_Q^G(\delta_Q\otimes \Pi_M)$ is indeed weakly admissible.

We note that we essentially already proved the rest of the theorem, too. Namely, recall that we have an $M(L)$-equivariant split injection
\begin{equation}\label{eq4.11}
        \textnormal{Ord}_Q^{\textnormal{lalg}}(\textnormal{Ind}_{Q}^{G}(\delta_Q\otimes \Pi_M))\xhookrightarrow{\oplus} J_Q(\textnormal{Ind}_{Q}^{G}(\delta_Q\otimes\Pi_M))\otimes \delta_Q^{-1}
\end{equation}
with image given by the slope $0$ part (cf. \S\ref{sec4.1}). By the geometric lemma applied to the RHS of \ref{eq4.11}, we get a surjection
\begin{equation}\label{eq4.12}
    J_Q(\textnormal{Ind}_{Q}^{G}(\delta_Q\otimes\Pi_M))\otimes \delta_Q^{-1}\twoheadrightarrow  I_1^{Q}\otimes \delta_Q^{-1} =\Pi_M
\end{equation}
with its kernel $K$ admitting a filtration with subquotients given by $I_{w}^Q\otimes \delta_Q^{-1}$ for $1\neq w\in {}^QW^Q$. In particular, if we prove that, for any $1\neq w\in {}^QW^Q$, the slope zero part $(\delta_Q^{-1}\otimes I_w^Q)_0$ is trivial, then we immediately obtain that \ref{eq4.8} holds. Indeed,
we then can conclude by applying slope $0$ part to the short exact sequence
\begin{equation*}
    0\to K\to J_Q(\textnormal{Ind}_{Q}^{G}(\delta_Q\otimes\Pi_M))\otimes \delta_Q^{-1}\to \Pi_M\to 0.
\end{equation*}
 By the proof of Lemma~\ref{Lem4.11}, to prove the claimed description of $K$, it is enough to check that for any $1\neq w\in {}^QW^Q$ and any locally algebraic character $\chi=\chi_{\textnormal{sm}}(-w_0^G\lambda):Z_M(L)\to \overline{\mathbf{Q}}_p^{\times}$ with an embedding $\chi \hookrightarrow I_w\otimes \delta_Q^{-1}$, there is a $z\in Z_M^+$ such that
\begin{equation*}
    \textnormal{val}_p(\chi_{\textnormal{sm}}(z))>\textnormal{val}_p(w_0^G\lambda(z)).
\end{equation*}
Based on the previous paragraph of the proof, to see this, we only need to prove that we can find $z\in Z_M^+$ such that $\textnormal{val}_p(\delta_{Q_{w}}(z))<\textnormal{val}_p(\delta_{Q_{w^{-1}}}(w^{-1}zw))$. This is the content of the second part of Lemma~\ref{Lem4.18}.
\end{proof}
Theorem~\ref{Th4.16} shows that we obtain a functor
\begin{equation*}
    \textnormal{Ind}_Q^G(-\otimes \delta_Q):\textnormal{Mod}_E^{\textnormal{wa},w_0^Q\lambda}(M(L))\to \textnormal{Mod}_E^{\textnormal{wa},\lambda}(G(L))
\end{equation*}
that restricts to a functor
\begin{equation*}
    \textnormal{Mod}_E^{M\textnormal{-ord},w_0^Q\lambda}(M(L))\to \textnormal{Mod}_E^{Q\textnormal{-ord},\lambda}(G(L)).
\end{equation*}
Furthermore, we already constructed a functor
\begin{equation*}
    \textnormal{Ord}_Q^{\textnormal{lalg}}(-):\textnormal{Mod}_E^{\textnormal{wa},\lambda}(G(L))\to\textnormal{Mod}_E^{\textnormal{wa},w_0^Q\lambda}(M(L)).
\end{equation*}
We introduce the notion of \textit{$Z$-integral} weakly admissible representations which, by Theorem~\ref{Th4.16}, form the essential image of $\textnormal{Ord}_{Q}^{\textnormal{lalg}}$.
\begin{Def}
    Call a weakly admissible $E$-representation $\Pi$ of $G(L)$ (of  weight $\lambda$) $Z$-integral if $\textnormal{Ord}_G^{\textnormal{lalg}}(\Pi)=\Pi$. We denote the corresponding subcategory by $\textnormal{Mod}_E^{Z\textnormal{-int},\lambda}(G(L))\subset \textnormal{Mod}_E^{\textnormal{wa},\lambda}(G(L))$.
\end{Def}

\begin{Lemma}\label{Lem4.20}
    The pair of functors $(\textnormal{Ord}_Q^{\textnormal{lalg}},\textnormal{Ind}_Q^G(-\otimes \delta_Q))$, between the categories $\textnormal{Mod}_{E}^{\textnormal{wa},\lambda}(G(L))$ and $\textnormal{Mod}_E^{Z\textnormal{-int}, w_0^Q\lambda}(M(L))$, forms an adjoint pair. The association in one direction sends, for $\Pi\in\textnormal{Mod}_E^{\textnormal{wa},\lambda}(G(L)) $, $\Pi_M\in \textnormal{Mod}_E^{\textnormal{wa},w_0^Q\lambda}(M(L))$, a map $f:\Pi\to \textnormal{Ind}_Q^G(\Pi_M\otimes \delta_Q)$ to the composition $\textnormal{Ord}_Q^{\textnormal{lalg}}(\Pi)\xrightarrow{g_1}\textnormal{Ord}_Q^{\textnormal{lalg}}(\textnormal{Ind}_Q^G(\Pi_M\otimes\delta_Q))\xrightarrow{g_2} \Pi_M$ where $g_1$ is induced by applying $\textnormal{Ord}_Q^{\textnormal{lalg}}$ to $f$ and $g_2$ is the isomorphism of Theorem~\ref{Th4.16}.  
\end{Lemma}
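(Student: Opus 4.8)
The plan is to exhibit the map $\phi\colon f\mapsto g_2\circ\textnormal{Ord}_Q^{\textnormal{lalg}}(f)$ of the statement as a composite of canonical isomorphisms; bijectivity and naturality in $\Pi$ and $\Pi_M$ then come for free. First one checks that the two functors have the asserted targets: $\textnormal{Ord}_Q^{\textnormal{lalg}}$ lands in $Z$-integral representations, which is the identity of essential images recorded just before the statement of the lemma (idempotency of the slope-zero projection, together with inheritance of weak admissibility from $\Pi$ via Proposition~\ref{Prop4.4} and transitivity of Jacquet functors, using $\delta_{\widetilde Q}=\delta_{Q'}\,\delta_Q$ on the relevant centre), while $\textnormal{Ind}_Q^G(-\otimes\delta_Q)$ lands in $\textnormal{Mod}_E^{\textnormal{wa},\lambda}(G(L))$ by the first assertion of Theorem~\ref{Th4.16}.

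Write $\Pi=\pi\otimes V_{\lambda^{\vee}}$ and, using Remark~\ref{Rem4.3}, $\Pi_M=\pi_M\otimes V_{\lambda^{\vee}}^{N(L)}$, so that $\textnormal{Ind}_Q^G(\Pi_M\otimes\delta_Q)=(\textnormal{Ind}_{Q(L)}^{G(L)}(\pi_M\otimes\delta_Q))\otimes V_{\lambda^{\vee}}$, $\textnormal{Ord}_Q^{\textnormal{lalg}}(\Pi)=(\pi^{N^0})_0\otimes V_{\lambda^{\vee}}^{N(L)}$ and $\Pi_M\otimes\delta_Q=(\pi_M\otimes\delta_Q)\otimes V_{\lambda^{\vee}}^{N(L)}$. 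Since $V_{\lambda^{\vee}}$ and $V_{\lambda^{\vee}}^{N(L)}$ are absolutely irreducible algebraic representations, stripping off the algebraic factor gives $\Hom_{G(L)}(\Pi,\textnormal{Ind}_Q^G(\Pi_M\otimes\delta_Q))=\Hom_{G(L)}(\pi,\textnormal{Ind}_{Q(L)}^{G(L)}(\pi_M\otimes\delta_Q))$ and $\Hom_{M(L)}(\textnormal{Ord}_Q^{\textnormal{lalg}}(\Pi),\Pi_M)=\Hom_{M(L)}((\pi^{N^0})_0,\pi_M)$ (cf. \cite{Eme17}, Proposition 4.2.4). On the left, smooth Frobenius reciprocity for the full parabolic induction $\textnormal{Ind}_{Q(L)}^{G(L)}$ (cf. \cite{Cas95}) identifies the first group with $\Hom_{M(L)}(J_Q(\pi),\pi_M\otimes\delta_Q)$; by Proposition~\ref{Prop4.4} we have $J_Q(\pi)\cong(\pi^{N^0})_{\textnormal{fs}}\otimes\delta_Q$, so after cancelling $\delta_Q$ this becomes $\Hom_{M(L)}((\pi^{N^0})_{\textnormal{fs}},\pi_M)$. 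Finally $\Hom_{M(L)}((\pi^{N^0})_{\textnormal{fs}},\pi_M)=\Hom_{M(L)}((\pi^{N^0})_0,\pi_M)$: writing $(\pi^{N^0})_{\textnormal{fs}}=(\pi^{N^0})_0\oplus(\pi^{N^0})_{>0}$ as the decomposition into generalized eigenspaces for the central element $z_p\in Z_M^+$ according to whether the eigenvalue is a $p$-adic unit, any $M(L)$-map preserves generalized $z_p$-eigenvalues, whereas $\pi_M$ has only $p$-adic-unit generalized $z_p$-eigenvalues because $\Pi_M$ is $Z$-integral, so every such map annihilates $(\pi^{N^0})_{>0}$. (Here it is $Z$-integrality of $\Pi_M$ that is used; weak admissibility of $\Pi$, i.e. Lemma~\ref{Lem4.11}, is what guarantees that $\textnormal{Ord}_Q^{\textnormal{lalg}}(\Pi)$ lies in the stated target category.) Composing these natural isomorphisms gives a bijection $\Hom_{G(L)}(\Pi,\textnormal{Ind}_Q^G(\Pi_M\otimes\delta_Q))\cong\Hom_{M(L)}(\textnormal{Ord}_Q^{\textnormal{lalg}}(\Pi),\Pi_M)$, natural in both variables.

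It remains to verify that this composite coincides with $\phi$. Unwinding the chain, the composite sends $f$ to the $M(L)$-map obtained by applying $J_Q$ to $f$, post-composing with the Casselman counit $J_Q(\textnormal{Ind}_{Q(L)}^{G(L)}(\pi_M\otimes\delta_Q))\twoheadrightarrow\pi_M\otimes\delta_Q$, and then restricting to slope-zero parts and untwisting by $\delta_Q$; on the other hand $\textnormal{Ord}_Q^{\textnormal{lalg}}(f)$ is exactly the restriction of $J_Q(f)\otimes\delta_Q^{-1}$ to slope-zero parts, and the isomorphism $g_2$ of Theorem~\ref{Th4.16} is—by its construction via the top subquotient $I_1^Q=\delta_Q\otimes\Pi_M$ in the geometric-lemma filtration of $J_Q(\textnormal{Ind}_Q^G(\delta_Q\otimes\Pi_M))$, see \ref{eq4.12}—precisely the Casselman counit followed by this untwist, restricted to slope-zero parts. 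Hence the two recipes agree. The one point that demands real care is this last identification of $g_2$ with the Casselman counit on slope-zero parts; once it is in place, $\phi$ is a natural isomorphism and $(\textnormal{Ord}_Q^{\textnormal{lalg}},\textnormal{Ind}_Q^G(-\otimes\delta_Q))$ is an adjoint pair.
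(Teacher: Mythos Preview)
Your proof is correct and follows essentially the same approach as the paper: both reduce to Frobenius reciprocity $\Hom_{G(L)}(\Pi,\textnormal{Ind}_Q^G(\Pi_M\otimes\delta_Q))\cong\Hom_{M(L)}(J_Q(\Pi),\Pi_M\otimes\delta_Q)$, then use $Z$-integrality of $\Pi_M$ to see that any such map kills the positive-slope part and hence factors through the split surjection $J_Q(\Pi)\twoheadrightarrow\textnormal{Ord}_Q^{\textnormal{lalg}}(\Pi)\otimes\delta_Q$, followed by untwisting and a diagram chase identifying the resulting bijection with the map $\phi$ of the statement. Your version differs only cosmetically in that you strip off the algebraic factor at the outset and work with smooth $\pi$, $\pi_M$ throughout, and you supply a few more words on the target-category check and on why $g_2$ is the Casselman counit on slope-zero parts; the paper instead works directly with locally algebraic $\Pi$, $\Pi_M$ and dismisses the last verification as an ``easy diagram chase''.
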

\begin{proof}
    By Frobenius reciprocity, maps of the form $f:\Pi\to \textnormal{Ind}_Q^G(\Pi_M\otimes \delta_Q)$ are in natural bijection with maps of the form $g:J_Q(\Pi)\to \Pi_M\otimes \delta_Q$. Since $\Pi_M$ is $Z$-integral, any such $g$ must factor through the split surjection
    \begin{equation}\label{eq4.13}
        J_Q(\Pi)\twoheadrightarrow \textnormal{Ord}_Q^{\textnormal{lalg}}(\Pi)\otimes \delta_Q.
    \end{equation}
    Moreover, from the obtained map $\textnormal{Ord}_Q^{\textnormal{lalg}}(\Pi)\otimes \delta_Q\to \Pi_M\otimes \delta_Q$, $g$ can be recovered by precomposing it with \ref{eq4.13}. Finally, twisting by $\delta_Q^{-1}$ certainly sets up a bijection of Hom sets.

    That this association coincides with the one in the statement follows from an easy diagram chase, the definition of Frobenius reciprocity and the definition of the isomorphism \ref{eq4.8}.
\end{proof}

We record an immediate corollary that was the main motivation to prove Theorem~\ref{Th4.16}.
\begin{Cor}\label{Cor4.21}
    Let $\Pi$ be an absolutely irreducible $Q$-ordinary $E$-representation of $G(L)$ of weight $\lambda$. Then $\textnormal{Ord}_Q^{\textnormal{lalg}}(\Pi)$ is an absolutely irreducible locally algebraic $E$-representation of $M(L)$ of weight $w_0^Q\lambda$.
\end{Cor}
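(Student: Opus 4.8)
The plan is to deduce absolute irreducibility of $\textnormal{Ord}_Q^{\textnormal{lalg}}(\Pi)$ from the adjunction of Lemma~\ref{Lem4.20} together with the computation $\textnormal{Ord}_Q^{\textnormal{lalg}}\circ\textnormal{Ind}_Q^G(-\otimes\delta_Q)\cong\textnormal{Ord}_M^{\textnormal{lalg}}$ of Theorem~\ref{Th4.16}. Write $\Pi=\pi\otimes V_{\lambda^{\vee}}$ and set $W:=\textnormal{Ord}_Q^{\textnormal{lalg}}(\Pi)$. First I would record the basic structure of $W$: it is a locally algebraic $E$-representation of $M(L)$ of weight $w_0^Q\lambda$ by \S\ref{sec4.1}, it is nonzero since $\Pi$ is $Q$-ordinary, and it is weakly admissible and $Z$-integral since $\textnormal{Ord}_Q^{\textnormal{lalg}}$ lands in $\textnormal{Mod}_E^{Z\textnormal{-int},w_0^Q\lambda}(M(L))$ by Lemma~\ref{Lem4.20}; so the only thing left to prove is absolute irreducibility. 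Using Proposition~\ref{Prop4.4} I would also note that $W\otimes\delta_Q$ is the slope-$0$ direct summand of $J_Q(\pi)\otimes V_{\lambda^{\vee}}^{N(L)}$, which shows $W$ has finite length, and — since $J_Q$ is exact and the slope-$0$ summand is cut out functorially (a ``block'' for the action of the $U_p$-operators) — that $\textnormal{Ord}_Q^{\textnormal{lalg}}$ is an exact functor on $\textnormal{Mod}_E^{\textnormal{wa},\lambda}(G(L))$. As all of these constructions are compatible with finite extension of scalars, I may enlarge $E$ so that $W$ admits an \emph{absolutely} irreducible quotient $\Pi_M$.

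The heart of the argument is then the adjunction. As a quotient of the weakly admissible $Z$-integral representation $W$, the representation $\Pi_M$ is itself weakly admissible and $Z$-integral of weight $w_0^Q\lambda$ (the $Z_M^+$-eigenvalues on a quotient form a subset of those on $W$, and $J_{Q'}$ is exact), hence, being nonzero and absolutely irreducible, $M$-ordinary with $\textnormal{Ord}_M^{\textnormal{lalg}}(\Pi_M)=\Pi_M$. Feeding the surjection $s\colon W\twoheadrightarrow\Pi_M$ into the adjunction of Lemma~\ref{Lem4.20} produces a map $\phi\colon\Pi\to\textnormal{Ind}_Q^G(\Pi_M\otimes\delta_Q)$, which is nonzero because $s$ is (the adjunction bijection sends $0$ to $0$), hence injective since $\Pi$ is absolutely irreducible. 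The explicit description of the adjunction in Lemma~\ref{Lem4.20} says precisely that $s$ is recovered by applying $\textnormal{Ord}_Q^{\textnormal{lalg}}$ to $\phi$ and composing with the isomorphism $\textnormal{Ord}_Q^{\textnormal{lalg}}(\textnormal{Ind}_Q^G(\Pi_M\otimes\delta_Q))\cong\Pi_M$ of Theorem~\ref{Th4.16} (valid since $\Pi_M$ is $Z$-integral, and in fact absolutely irreducible $M$-ordinary); in particular $\textnormal{Ord}_Q^{\textnormal{lalg}}(\phi)$ is surjective. On the other hand, applying the exact functor $\textnormal{Ord}_Q^{\textnormal{lalg}}$ to the injection $\phi$ shows that $\textnormal{Ord}_Q^{\textnormal{lalg}}(\phi)$ is injective, so it is an isomorphism; hence $s$ is an isomorphism and $W\cong\Pi_M$ is absolutely irreducible.

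I expect the only delicate points to be the preparatory facts in the first paragraph: exactness of $\textnormal{Ord}_Q^{\textnormal{lalg}}$ via its identification with the slope-$0$ summand of the exact Jacquet functor of Proposition~\ref{Prop4.4}; the stability of ``weakly admissible'' and ``$Z$-integral'' under quotients; and compatibility of $\textnormal{Ord}_Q^{\textnormal{lalg}}$ with finite scalar extension. None of these is hard, but they are exactly what the argument rests on — once in place, Lemma~\ref{Lem4.20} and Theorem~\ref{Th4.16} do the real work. Equivalently, one could package the whole proof as the observation that $\textnormal{Ind}_Q^G(-\otimes\delta_Q)$ restricted to $Z$-integral representations is fully faithful (its counit is an isomorphism by Theorem~\ref{Th4.16}), so its exact left adjoint $\textnormal{Ord}_Q^{\textnormal{lalg}}$, which takes the irreducible $\Pi$ to a nonzero object admitting $\Pi_M$ as a quotient compatibly with this reflection, must in fact take $\Pi$ to $\Pi_M$ itself.
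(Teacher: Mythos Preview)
Your argument is correct and essentially identical to the paper's proof: pick an absolutely irreducible quotient $\Pi_M$ of $\textnormal{Ord}_Q^{\textnormal{lalg}}(\Pi)$, pass through the adjunction of Lemma~\ref{Lem4.20} to get an embedding $\Pi\hookrightarrow\textnormal{Ind}_Q^G(\Pi_M\otimes\delta_Q)$, and then use that $\textnormal{Ord}_Q^{\textnormal{lalg}}$ preserves injections together with Theorem~\ref{Th4.16} to conclude that the original quotient map was already an isomorphism. The only cosmetic difference is that the paper invokes merely \emph{left} exactness of $\textnormal{Ord}_Q^{\textnormal{lalg}}$ (which is all that is needed to preserve the injection $\phi$), whereas you assert full exactness; your justification via the slope-$0$ summand of the exact Jacquet functor is fine, but the extra strength is not used.
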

\begin{proof}
    Since $J_Q(\Pi)$ is of finite finite length, at the very least, so is $\textnormal{Ord}_Q^{\textnormal{lalg}}(\Pi)$. 
    After possibly enlarging $E$, we can pick an absolutely irreducible quotient
    \begin{equation}\label{eq4.14}
        \textnormal{Ord}_Q^{\textnormal{lalg}}(\Pi)\twoheadrightarrow \Pi_M.
    \end{equation}
    In particular, $\Pi_M$ is $Z$-integral.
    Adjunction then gives a non-trivial map $g:\Pi\to \textnormal{Ind}_Q^G(\Pi_M\otimes \delta_Q)$ that must then be an embedding. By Lemma~\ref{Lem4.20}, we recover \ref{eq4.14} by applying $\textnormal{Ord}_Q^{\textnormal{lalg}}(-)$ to $g$ and postcomposing it with \ref{eq4.8}. In particular, due to left exactness of $\textnormal{Ord}_Q^{\textnormal{lalg}}$, \ref{eq4.14} must also be an injection.
\end{proof}

As a consequence, for any absolutely irreducible object $\Pi\in\textnormal{Mod}_E^{Q\textnormal{-ord}, \lambda}(G(L))$, there is a unique absolutely irreducible $\Pi_M\in \textnormal{Mod}_E^{\textnormal{wa}, w_0^Q\lambda}(M(L))$ such that $\Pi$ embeds into $\textnormal{Ind}_Q^G(\Pi_M\otimes \delta_Q)$. We can then make the following definition.
\begin{Def}
    Given an absolutely irreducible $\Pi\in \textnormal{Mod}_E^{Q\textnormal{-ord},\lambda}(G(L))$, we say that $\Pi_M\in \textnormal{Mod}_E^{\textnormal{wa},w_0^Q\lambda}(M(L))$ is its \textit{$Q$-ordinary support} if it is the (necessarily unique) absolutely irreducible representation in $\textnormal{Mod}_E^{\textnormal{wa},w_0^Q\lambda}(M(L))$ with an embedding $\Pi\hookrightarrow \textnormal{Ind}_Q^G(\Pi_M\otimes \delta_Q)$.
\end{Def}
A very useful feature of the notion of the $Q$-ordinary support is that it can be read off from $\Pi$ by applying $\textnormal{Ord}_Q^{\textnormal{lalg}}(-)$.

For the rest of the subsection, we will be occupied with understanding the relation between $\Pi$ and its $Q$-ordinary support in terms of the Bernstein--Zelevinsky and Langlands classifications. In particular, it is this point from which we make use of the assumption that $G=\textnormal{GL}_n$. For a quick review on the Bernstein--Zelevinsky classification and the relevant notations used in the rest of the subsection, we refer to the appendix.

\begin{Lemma}\label{Lemma4.23}
    Let $\Pi=\pi\otimes V_{\lambda^{\vee}}$ be an absolutely irreducible weakly admissible $E$-representation of $G(L)$ of weight $\lambda$ that is $Z$-integral.  Then, for any $\pi_{\textnormal{sc}}\in \textnormal{SC}(\pi)$\footnote{Recall that we denote by $\textnormal{SC}(\pi)$ the set $\{\pi_1,...,\pi_k\}$ where $(\textnormal{GL}_{n_1}(L)\times...\times \textnormal{GL}_{n_k}(L),\pi_1\otimes...\otimes \pi_k)$ is the supercuspidal support of $\pi$.}, we have\footnote{We remind the reader to the notation $\langle\varpi\rangle=\textnormal{diag}(\varpi,...,\varpi)\in G(L)$.}
    \begin{equation}\label{eq4.15}
        \frac{1}{e}\sum_{\iota:L\hookrightarrow E}(\lambda_{\iota, n}+\frac{1-n}{2})\leq \frac{\textnormal{val}_p(\pi_{\textnormal{sc}}(\langle\varpi\rangle))}{\textnormal{deg}(\pi_{\textnormal{sc}})}\leq \frac{1}{e}\sum_{\iota:L\hookrightarrow E}(\lambda_{\iota,1}+\frac{n-1}{2}).
    \end{equation}
\end{Lemma}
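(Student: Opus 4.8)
The plan is to derive (4.15) from two separate inputs: an \emph{exact} value for the $p$-adic valuation of the central character of $\pi$, coming from $Z$-integrality, and a family of \emph{lower} bounds on partial products of the central characters of the supercuspidals in $\textnormal{SC}(\pi)$, coming from weak admissibility. Write $\textnormal{SC}(\pi)=\{\sigma_1,\dots,\sigma_r\}$, with $\sigma_j$ an irreducible supercuspidal representation of $\textnormal{GL}_{m_j}(L)$ and $\sum_j m_j=n$, so that $\deg(\pi_{\textnormal{sc}})$ is one of the $m_j$; set $|\lambda_\iota|:=\sum_{k=1}^n\lambda_{\iota,k}$. The first step is to observe that $\langle\varpi_L\rangle\in Z_G^+$ (for the data $N^0=\overline N^1=1$ relevant to $Q=G$) acts on the absolutely irreducible $\Pi=\pi\otimes V_{\lambda^\vee}$ by the scalar $\omega_\pi(\langle\varpi_L\rangle)\cdot\prod_\iota\iota(\varpi_L)^{-|\lambda_\iota|}$ (Remark~\ref{Rem4.3}); since $\Pi$ is $Z$-integral this scalar is a $p$-adic unit, so, using $\textnormal{val}_p(\iota(\varpi_L))=1/e$ and $\omega_\pi(\langle\varpi_L\rangle)=\prod_j\sigma_j(\langle\varpi\rangle)$ (the central character is read off from the supercuspidal support), one gets $\sum_j\textnormal{val}_p(\sigma_j(\langle\varpi\rangle))=\tfrac1e\sum_\iota|\lambda_\iota|$. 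In particular, when $r=1$ ($\pi$ itself a twist of a supercuspidal, $\deg(\pi_{\textnormal{sc}})=n$) this already gives $\tfrac{\textnormal{val}_p(\pi_{\textnormal{sc}}(\langle\varpi\rangle))}{n}=\tfrac1{en}\sum_\iota|\lambda_\iota|$, which lies in $[\tfrac1e\sum_\iota\lambda_{\iota,n},\tfrac1e\sum_\iota\lambda_{\iota,1}]$ and a fortiori in the interval of (4.15).

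For general $r$, fix $j_0$ and put $\pi_{\textnormal{sc}}=\sigma_{j_0}$. Using the Bernstein--Zelevinsky classification (see the appendix), I would realise $\pi$ as a subrepresentation of $\textnormal{n-Ind}_{P'(L)}^{G(L)}(\sigma_{j_0}\otimes\tau')$ for a suitable irreducible $\tau'$ of $\textnormal{GL}_{n-m_{j_0}}(L)$, where $P'=P_{(m_{j_0},n-m_{j_0})}$; by Frobenius reciprocity the irreducible $M'(L)$-representation $\delta_{P'}^{1/2}(\sigma_{j_0}\otimes\tau')$ is a quotient of the unnormalised Jacquet module $J_{P'}(\pi)$, hence its central character $\chi_{\textnormal{sm}}$ occurs in $J_{P'}(\pi)$, and by Proposition~\ref{Prop4.4} (the identification $J_{P'}(\Pi)\cong J_{P'}(\pi)\otimes V_{\lambda^\vee}^{N'(L)}$) the twisted character $\chi:=\chi_{\textnormal{sm}}\cdot\mu_{\textnormal{alg}}$ occurs in $J_{P'}(\Pi)$, where $\mu_{\textnormal{alg}}$ is the central character of $V_{\lambda^\vee}^{N'(L)}$. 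Now I would apply the weak admissibility condition (Lemma~\ref{Lem4.11}) with the element $z=u_p^{(m_{j_0})}\in Z_{M'}^+$: unwinding $\chi_{\textnormal{sm}}(z)=\delta_{P'}^{1/2}(z)\,\sigma_{j_0}(\langle\varpi\rangle)$, computing $\textnormal{val}_p(\delta_{P'}^{1/2}(z))=-\tfrac f2 m_{j_0}(n-m_{j_0})$ and $\textnormal{val}_p(\mu_{\textnormal{alg}}(z))=-\tfrac1e\sum_\iota(\lambda_{\iota,n}+\cdots+\lambda_{\iota,n+1-m_{j_0}})$, the inequality $|\chi(z)\delta_{P'}^{-1}(z)|_p\le 1$ becomes $\textnormal{val}_p(\sigma_{j_0}(\langle\varpi\rangle))\ge \tfrac1e\sum_\iota(\lambda_{\iota,n}+\cdots+\lambda_{\iota,n+1-m_{j_0}})-\tfrac f2 m_{j_0}(n-m_{j_0})$; dividing by $m_{j_0}=\deg(\sigma_{j_0})$ and using $\lambda_{\iota,n+1-k}\ge\lambda_{\iota,n}$ together with $m_{j_0}\ge 1$ yields the left-hand inequality of (4.15). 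The right-hand inequality I would get by applying the left-hand one to the contragredient $\Pi^\vee\cong\pi^\vee\otimes V_{\lambda^\vee}$ — again weakly admissible and $Z$-integral, now of weight $\lambda^\vee=-w_0^G\lambda$ — using $\sigma_{j_0}^\vee(\langle\varpi\rangle)=\sigma_{j_0}(\langle\varpi\rangle)^{-1}$ and $\lambda^\vee_{\iota,n}=-\lambda_{\iota,1}$; alternatively, one places $\sigma_{j_0}$ in the last block (realising $\pi$ as a quotient and invoking second adjointness) and combines the resulting partial-sum inequalities with the equality of the first paragraph.

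The main obstacle is precisely the representation-theoretic input above: one must guarantee that the central character of $\delta_{P'}^{1/2}(\sigma_{j_0}\otimes\tau')$ genuinely occurs in $J_{P'}(\pi)$ for the irreducible $\pi$, not merely in $J_{P'}$ of the ambient induced representation — equivalently, one must control which Jordan--H\"older constituents of the geometric-lemma filtration of the Jacquet module survive in $\pi$, forcing $\sigma_{j_0}$ into an extremal block. This is handled by realising $\pi$, in the appropriate ordering of $\textnormal{SC}(\pi)$, as a subrepresentation or a quotient of a standard module in the Bernstein--Zelevinsky/Langlands classification, so that the desired constituent is, by an extremality argument on $\langle\varpi\rangle$-valuations, the unique one that can appear; the relevant facts about smooth $\textnormal{GL}_n$-representations used here are the technical lemmas collected in the appendix. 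Once this is in place, the remaining steps are the bookkeeping sketched above together with the verification that $\Pi^\vee$ inherits weak admissibility and $Z$-integrality of weight $\lambda^\vee$ from $\Pi$ (contragredient intertwines $J_{Q'}$ with $J_{\overline{Q'}}$ up to a modulus twist, and inversion preserves being a $p$-adic unit).
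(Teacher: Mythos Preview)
Your proposal has the right overall architecture --- using $Z$-integrality for an exact central-character identity and weak admissibility for partial inequalities --- but the key representation-theoretic step does not go through as written. You need to realise $\pi$ as a subrepresentation of $\textnormal{n-Ind}_{P'}^{G}(\sigma_{j_0}\otimes\tau')$ with a \emph{prescribed} supercuspidal $\sigma_{j_0}$ in the first block; this fails in general, even for the extremal $\sigma_{j_0}$. Take $\pi=Z([\sigma,\sigma(1)])$ with $\sigma$ a supercuspidal character of $G_1$: then $J_B(\pi)$ is one-dimensional with character $\delta_B^{1/2}(\sigma\otimes\sigma(1))$, so the only supercuspidal that can appear first is $\sigma$, whereas the minimum-valuation supercuspidal is $\sigma(1)$. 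The dual variant (placing $\sigma_{j_0}$ last and realising $\pi$ as a quotient) fails for the same example. Your final paragraph correctly flags this as the main obstacle, but the suggested fix (``an extremality argument on $\langle\varpi\rangle$-valuations'') is exactly the missing content, and the appendix lemmas do not supply it. (Minor slip: $\Pi^\vee\cong\pi^\vee\otimes V_\lambda$, not $\pi^\vee\otimes V_{\lambda^\vee}$.)

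The paper circumvents this by working at the level of \emph{segments} rather than individual supercuspidals. Writing $\pi=Z(\Delta_1,\ldots,\Delta_k)$, it orders the $\Delta_i$ so that $v_{\Delta_1}$ (the average valuation-per-degree over the first segment) is minimal, and then --- using Lemma~\ref{LemA.4} and Frobenius reciprocity --- applies weak admissibility with $z=u_p^{(\deg(\Delta_1))}$ at the segment parabolic $Q_{\underline{\Delta}}$. This produces a lower bound on $v_{\Delta_1}$, from which the bound for the extremal supercuspidal of $\Delta_1$ follows via the explicit segment structure. The upper bound is handled symmetrically with $z=u_p^{(n-\deg(\Delta_k))}$ combined with the $Z$-integrality identity from your first paragraph, rather than by contragredient. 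The point you are missing is that inside a segment $[\sigma,\ldots,\sigma(r-1)]$ the supercuspidal of minimum valuation is $\sigma(r-1)$, which sits at the \emph{end} of the Bernstein--Zelevinsky realisation and cannot be peeled off first; one is forced to take the whole segment as a single block.
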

\begin{proof}
    Assume that $\pi = Z(\Delta_1,...\Delta_k)$ for an ordered multiset of segments $\underline{\Delta}:=(\Delta_1,...,\Delta_k)$ with $\Delta_i:=\Delta(\pi_i,r_i)$ for $i=1,...,k$. Denote by $Q_{\underline{\Delta}}\subset G$ the standard parabolic subgroup attached to the corresponding ordering of the \textit{supercuspidal support} of $\pi$. For $\Delta=\Delta(\sigma,r)$, set
    \begin{equation*}
        v_{\Delta}:=\frac{\sum_{i=0}^{r-1}\textnormal{val}_p(\sigma(i)(\langle \varpi\rangle))}{r\deg(\sigma)}=\frac{\textnormal{val}_p(\sigma(\langle\varpi\rangle))}{\deg(\sigma)}+\frac{[L:\mathbf{Q}_p]}{e}\frac{1-r}{2},
    \end{equation*}
    the arithmetic mean value of the numbers $\frac{\textnormal{val}_p(\pi_{\textnormal{sc}}(\langle\varpi\rangle))}{\deg(\pi_{\textnormal{sc}})}$ for $\pi_{\textnormal{sc}}\in \textnormal{SC}(Z(\Delta))$.

Note that we can assume that $\underline{\Delta}$ is ordered so that
\begin{equation}\label{eq4.16}
    (v_{\Delta_1},-r_1)\leq...\leq (v_{\Delta_k},-r_k)
\end{equation}
with respect to the lexicographic ordering. Similarly, we can assume that $\underline{\Delta}$ is ordered so that
\begin{equation}\label{eq4.17}
    (v_{\Delta_1},r_1)\leq...\leq (v_{\Delta_k},r_k).
\end{equation}
Indeed, if $\Delta_i$ and $\Delta_j$ are linked for some $1\leq i\neq j\leq k$, say $\Delta_i$ precedes $\Delta_j$, then $v_{\Delta_j}<v_{\Delta_i}$ and, by well-orderedness of the multiset of segments in the sense of Bernstein--Zelevinsky, we must also have $j<i$. Otherwise, if $\Delta_i$ and $\Delta_j$ are not linked, for instance if $v_{\Delta_i}=v_{\Delta_j}$, we have the freedom of choosing the order.

If we choose $\underline{\Delta}$ so that \ref{eq4.16} is satisfied, we see that
\begin{equation*}
    v_{\Delta_1}+\frac{[L:\mathbf{Q}_p]}{e}\frac{1-r_1}{2}=\frac{\textnormal{val}_p(\pi_1(r_1-1)(\langle \varpi\rangle))}{\deg(\pi_1)}\leq \frac{\textnormal{val}_p(\pi_{\textnormal{sc}}(\langle \varpi\rangle))}{\deg(\pi)}
\end{equation*}
for any $\pi_{\textnormal{sc}}\in\textnormal{SC}(\pi)$.
Similarly, if we choose $\underline{\Delta}$ so that \ref{eq4.17} is satisfied, we see that
\begin{equation*}
    \frac{\textnormal{val}_p(\pi_{\textnormal{sc}}(\langle \varpi\rangle))}{\deg (\pi_{\textnormal{sc}})}\leq v_{\Delta_k}+\frac{r_k-1}{2}=\frac{\textnormal{val}_p(\pi_k)(\langle\varpi\rangle))}{\deg(\pi_k)}
\end{equation*}
for any $\pi_{\textnormal{sc}}\in\textnormal{SC}(\pi)$.
In particular, it suffices to prove that, for any choice of ordering $\underline{\Delta}$, we have
\begin{equation}\label{eq4.18}
    \frac{\textnormal{val}_p((\delta_{Q_{\underline{\Delta}}}^{1/2}w_0^G\lambda)(u_p^{(\deg(\Delta_1))}))}{\deg(\Delta_1)}\leq v_{\Delta_1}+\frac{[L:\mathbf{Q}_p]}{e}\frac{1-r_1}{2}
\end{equation}
and
\begin{equation}\label{eq4.19}
    v_{\Delta_k}+\frac{[L:\mathbf{Q}_p]}{e}\frac{r_k-1}{2}\leq \frac{\textnormal{val}_p((\delta_{Q_{\underline{\Delta}}}^{1/2}w_0^G\lambda)(u_p^{(n)}/u_p^{(n-\deg(\Delta_k))}))}{\deg(\Delta_k)}.
\end{equation}
Indeed, an easy computation, combined with the regularity of $\lambda$ and the equality
\begin{equation*}
    \textnormal{val}_p(\delta_{Q_{\underline{\Delta}}}^{1/2}(u_p^{(m)}))=\frac{[L:\mathbf{Q}_p]}{e}\sum_{i=1}^{m}(\frac{1-n}{2}+i-1)\textnormal{ for every }1\leq m\leq n,
\end{equation*}
shows that the LHS of \ref{eq4.15} is bounded by the LHS of \ref{eq4.18} and the RHS of \ref{eq4.19} is bounded by the RHS of \ref{eq4.15}.

To prove these inequalities, we note that, by Lemma~\ref{LemA.4}, $J_{Q_{\underline{\Delta}}}(\pi)$ admits 
$\delta_{Q_{\underline{\Delta}}}^{1/2}\Delta_1\otimes...\otimes\Delta_k$ as a quotient. In particular, weak admissibility of $\Pi$ gives that
\begin{equation*}
    \textnormal{val}_p((\delta_{Q_{\underline{\Delta}}}w_0^G\lambda))(u_p^{(\deg(\Delta_1))})\leq\textnormal{val}_p(\Delta_1(\langle\varpi\rangle)\delta_{Q_{\underline{\Delta}}}^{1/2}(u_p^{(\deg(\Delta_1))}))
\end{equation*}
and \ref{eq4.18} is proved.

To get \ref{eq4.19}, one similarly applies weak admissibility with the choice $z=u_p^{(n-\deg(\Delta_k))}$ and combines it with the equality
\begin{equation*}
\textnormal{val}_p((\delta_{Q_{\underline{\Delta}}}w_0^G\lambda)(\langle\varpi\rangle))=\textnormal{val}_p(J_{Q_{\underline{\Delta}}}(\pi)(\langle\varpi\rangle))=\textnormal{val}_p((\delta_{Q_{\underline{\Delta}}}^{1/2}\Delta_1\otimes ...\otimes \Delta_k)(\langle\varpi\rangle))
\end{equation*}
coming from the assumption that $\Pi$ is absolutely irreducible and $Z$-integral, so it has a central character that must be integral, and the fact that $\delta_{Q_{\underline{\Delta}}}(\langle\varpi\rangle)=1$.
\end{proof}
\begin{Lemma}\label{Lem4.24}
    Let $\Pi_M=\pi_M\otimes V_{(w_0^Q\lambda)^{\vee}}$ be an absolutely irreducible weakly admissible $E$-representation of $M(L)$ of weight $w_0^Q\lambda$ that is $Z$-integral. If $Q=M\ltimes N=P_{(n_1,...,n_t)}$ and $\delta_Q^{1/2}\otimes \pi_M=\pi_1\otimes...\otimes \pi_t$, then, for $1\leq i< j\leq t$ and $\pi_{\textnormal{sc},i}\in \textnormal{SC}(\pi_i)$, $\pi_{\textnormal{sc},j}\in \textnormal{SC}(\pi_j)$, we have
    \begin{equation*}
        \frac{\textnormal{val}_p(\pi_{\textnormal{sc},i}(\langle \varpi\rangle))}{\deg(\pi_{\textnormal{sc},i})}\leq \frac{\textnormal{val}_p(\pi_{\textnormal{sc},j}(\langle \varpi\rangle))}{\deg(\pi_{\textnormal{sc},j})}-\frac{[L:\mathbf{Q}_p]}{e}=\frac{\textnormal{val}_p(\pi_{\textnormal{sc},j}(\langle \varpi\rangle))}{\deg(\pi_{\textnormal{sc},j})}+\textnormal{val}_p(|\varpi|_L).
    \end{equation*}
\end{Lemma}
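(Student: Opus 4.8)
The plan is to decouple $\Pi_M$ over the factors of $M(L)=\textnormal{GL}_{n_1}(L)\times\dots\times\textnormal{GL}_{n_t}(L)$, apply Lemma~\ref{Lemma4.23} to each factor, and then read off the claimed separation of slopes from the dominance of $\lambda$. First I would write $\Pi_M=\Pi_{M,1}\boxtimes\dots\boxtimes\Pi_{M,t}$, where $\Pi_{M,k}$ is an absolutely irreducible locally algebraic $E$-representation of $\textnormal{GL}_{n_k}(L)$ of weight $\lambda^{n_k}$, with $w_0^Q\lambda=(\lambda^{n_1},\dots,\lambda^{n_t})$ in the notation of \S\ref{sec2.7}; explicitly $\lambda^{n_k}_\iota=(\lambda_{\iota,n+1-P_k},\dots,\lambda_{\iota,n-P_{k-1}})$ with $P_j:=n_1+\dots+n_j$. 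I would then note that parabolic subgroups of $M$, their modulus characters, their Jacquet functors, the relevant positive submonoids and central characters all decompose across the product, so that each $\Pi_{M,k}$ is again weakly admissible and $Z$-integral. Finally, since $\delta_Q^{1/2}$ restricted to $M(L)$ is the character $(m_1,\dots,m_t)\mapsto\prod_k|\det m_k|_L^{c_k/2}$ with $c_k:=(n_{k+1}+\dots+n_t)-(n_1+\dots+n_{k-1})$, the identity $\delta_Q^{1/2}\otimes\pi_M=\pi_1\otimes\dots\otimes\pi_t$ forces $\pi_k=|\det|_L^{c_k/2}\otimes\pi_{M,k}$, so that $\textnormal{SC}(\pi_k)=\{|\det|_L^{c_k/2}\otimes\rho:\rho\in\textnormal{SC}(\pi_{M,k})\}$, and --- using that twisting by a character of $\det$ preserves $\deg$ --- for such $\rho$ one has $\textnormal{val}_p\big((|\det|_L^{c_k/2}\otimes\rho)(\langle\varpi\rangle)\big)/\deg(\rho)=\tfrac{c_k}{2}\textnormal{val}_p(|\varpi|_L)+\textnormal{val}_p(\rho(\langle\varpi\rangle))/\deg(\rho)$.

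Next I would reduce the lemma to the case $j=i+1$: chaining the consecutive estimates $\max_{\textnormal{SC}(\pi_k)}\le\min_{\textnormal{SC}(\pi_{k+1})}-[L:\mathbf{Q}_p]/e$ (and using $\min\le\max$ within each $\textnormal{SC}(\pi_k)$, for the quantity $\textnormal{val}_p(\pi_{\textnormal{sc}}(\langle\varpi\rangle))/\deg(\pi_{\textnormal{sc}})$) yields the general statement. For $j=i+1$ I would invoke Lemma~\ref{Lemma4.23} for $\Pi_{M,i}$ and for $\Pi_{M,i+1}$, which bounds $\textnormal{val}_p(\rho(\langle\varpi\rangle))/\deg(\rho)$ from above by $\tfrac1e\sum_{\iota:L\hookrightarrow E}\lambda_{\iota,n+1-P_i}+\tfrac{[L:\mathbf{Q}_p](n_i-1)}{2e}$ for $\rho\in\textnormal{SC}(\pi_{M,i})$, and from below by $\tfrac1e\sum_{\iota:L\hookrightarrow E}\lambda_{\iota,n-P_i}+\tfrac{[L:\mathbf{Q}_p](1-n_{i+1})}{2e}$ for $\rho\in\textnormal{SC}(\pi_{M,i+1})$. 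Substituting these into the slope identity from the previous paragraph, and using $\textnormal{val}_p(|\varpi|_L)=-[L:\mathbf{Q}_p]/e$ together with $c_i-c_{i+1}=n_i+n_{i+1}$, the difference between the resulting upper bound for $\textnormal{SC}(\pi_i)$ and the lower bound for $\textnormal{SC}(\pi_{i+1})$ diminished by $[L:\mathbf{Q}_p]/e$ should collapse to $-\tfrac1e\sum_{\iota:L\hookrightarrow E}(\lambda_{\iota,n-P_i}-\lambda_{\iota,n+1-P_i})$, which is $\le 0$ because $\lambda$ is $(\textnormal{Res}_{L/\mathbf{Q}_p}B)_E$-dominant and $n-P_i<n+1-P_i$. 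This gives the inequality; the equality on the right-hand side of the statement is the identity $-[L:\mathbf{Q}_p]/e=\textnormal{val}_p(|\varpi|_L)$.

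The one genuinely delicate point is the bookkeeping: identifying which block of coordinates of $\lambda$ each $\lambda^{n_k}$ picks out, computing $\delta_Q^{1/2}$ on $M(L)$, recalling that $\deg$ of a supercuspidal of $\textnormal{GL}_m(L)$ is $m$ (hence $\deg$ is additive over supercuspidal support and unaffected by $\det$-twists), and keeping all normalisations consistent --- $\textnormal{val}_p$ versus $|\cdot|_L$, the residue degree $[L:\mathbf{Q}_p]/e$, and the fact that there are $[L:\mathbf{Q}_p]$ embeddings $\iota:L\hookrightarrow E$ --- so that the final expression genuinely telescopes to a difference of weight coordinates of $\lambda$. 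Once this is arranged, the conceptual content is carried entirely by Lemma~\ref{Lemma4.23} and the dominance of $\lambda$, and no further input is needed.
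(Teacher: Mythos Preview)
Your proposal is correct and follows essentially the same approach as the paper's proof: decompose $\Pi_M$ over the Levi factors, apply Lemma~\ref{Lemma4.23} to each $\Pi_{M,k}$, and combine with the explicit formula for $\delta_Q^{1/2}|_{\textnormal{GL}_{n_k}(L)}$ (your $|\det|_L^{c_k/2}$ agrees with the paper's $|\det|^{\frac{n-n_k}{2}-(n_1+\dots+n_{k-1})}$). The paper's proof is simply a one-line pointer to this computation, while you have written out the bookkeeping in full; your reduction to $j=i+1$ and the explicit cancellation down to $-\tfrac{1}{e}\sum_\iota(\lambda_{\iota,n-P_i}-\lambda_{\iota,n+1-P_i})\le 0$ are accurate.
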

\begin{proof}
    Write $\Pi_M=\Pi_1\otimes...\otimes \Pi_t$. Then the lemma follows easily from applying Lemma~\ref{Lemma4.23} to each of the $\Pi_i$'s for $1\leq i\leq t$ and noting that, for $1\leq i\leq t$,
    \begin{equation*}
        \delta_Q^{1/2}|_{\textnormal{GL}_{n_i}(L)}=|\det|^{\frac{n-n_i}{2}-(n_1+...+n_{i-1})}.
    \end{equation*}
\end{proof}

Combining Lemma~\ref{LemA.3} with the results of the subsection, we are now ready to understand the relation between the Langlands classification of $\Pi$ and its $Q$-ordinary support. This is best stated using the local Langlands correspondence.
\begin{Cor}\label{Cor4.25}
    Let $\Pi$ be an absolutely irreducible $P_{(n_1,...,n_t)}=Q$-ordinary  $E$-representation of $G(L)$ of weight $\lambda$ with $Q$-ordinary support $\Pi_M$. Write $\Pi\otimes_E\overline{\mathbf{Q}}_p=\pi\otimes_{\overline{\mathbf{Q}}_p}V_{\lambda^{\vee}}$ and $\Pi_M\otimes_E\overline{\mathbf{Q}}_p=(\pi_1\otimes...\otimes \pi_t)\otimes_{\overline{\mathbf{Q}}_p} V_{-w_0^G\lambda}$. Fix an identification $t:\overline{\mathbf{Q}}_p\cong \mathbf{C}$ and assume further that $\pi$ is $t$-preunitary (see the appendix). Then $\textnormal{rec}^T(\pi)$ admits a flag $0=F_0\subset F_1\subset ...\subset F_t=\textnormal{rec}^T(\pi)$
of sub-Weil--Deligne representations such that, for $1\leq j\leq t$, we have isomorphisms
\begin{equation*}
    F_j/F_{j-1}\cong \textnormal{rec}^T(\pi_j\otimes|\cdot |^{-\sum_{j-1}})
\end{equation*}
where $\sum_{j}:=n_1+...+n_{j}$.
In other words,  there is an isomorphism of Weil--Deligne representations
\begin{equation*}
    \textnormal{rec}^T(\pi)\sim \begin{pmatrix}
\textnormal{rec}^T(\pi_1) & \ast & ... & \ast\\
0 & \textnormal{rec}^T(\pi_2\otimes|\cdot|^{-n_1}) & ... & \ast\\
. & . & . & .\\
. & . & . & .\\
0 & ... & 0 & \textnormal{rec}^T(\pi_t\otimes|\cdot|^{n_t-n})
\end{pmatrix}.
\end{equation*}
\end{Cor}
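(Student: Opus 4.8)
The plan is to unwind the definition of the $Q$-ordinary support into a single embedding of $\pi$ into a normalised parabolic induction, to reduce the assertion (via a modulus-character twist) to a statement purely about $\textnormal{rec}$ of a subrepresentation of a normalised induction, and then to feed this into the Langlands-classification input of Lemma~\ref{LemA.3}, the ordering hypothesis of which is supplied by Lemma~\ref{Lem4.24} together with $t$-preunitarity.

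\textbf{Step 1: the embedding and the reduction.} By Definition~\ref{Def4.22}, saying that $\Pi_M$ is the $Q$-ordinary support of $\Pi$ means precisely that there is a $G(L)$-equivariant embedding $\Pi\hookrightarrow \textnormal{Ind}_Q^G(\Pi_M\otimes \delta_Q)$ of locally algebraic $E$-representations of weight $\lambda$ (cf.\ Corollary~\ref{Cor4.21}). Applying $-\otimes_E\overline{\mathbf{Q}}_p$ and passing to smooth vectors (i.e.\ applying the equivalence $\Hom(V_{\lambda^\vee},-)_{\textnormal{sm}}$ of \cite{Eme17}, and using that $\delta_Q$ is smooth so that $\Pi_M\otimes\delta_Q$ still has weight $w_0^Q\lambda$), this yields an embedding of smooth $\overline{\mathbf{Q}}_p[G(L)]$-modules
\begin{equation*}
    \pi\hookrightarrow \textnormal{Ind}_{Q(L)}^{G(L)}(\pi_M\otimes \delta_Q)=\textnormal{n-Ind}_{Q(L)}^{G(L)}(\sigma_1\otimes\cdots\otimes\sigma_t),
\end{equation*}
where, writing $\Sigma_{j}:=n_1+\cdots+n_{j}$ and using the computation $\delta_Q^{1/2}|_{\textnormal{GL}_{n_j}(L)}=|\det|^{\frac{n-n_j}{2}-\Sigma_{j-1}}$ from the proof of Lemma~\ref{Lem4.24}, we set $\sigma_j:=\pi_j\otimes|\det|^{\frac{n-n_j}{2}-\Sigma_{j-1}}$. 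Twisting by $|\det|^{\frac{1-n}{2}}$, applying $\textnormal{rec}$, and using the identity $\tfrac{n-n_j}{2}-\Sigma_{j-1}+\tfrac{1-n}{2}=(n_j-\Sigma_j)+\tfrac{1-n_j}{2}$, one gets $\textnormal{rec}(\pi\otimes|\det|^{\frac{1-n}{2}})=\textnormal{rec}^T(\pi)$ and $\textnormal{rec}(\sigma_j\otimes|\det|^{\frac{1-n}{2}})=\textnormal{rec}^T(\pi_j\otimes|\cdot|^{n_j-\Sigma_j})$. Hence, setting $\pi':=\pi\otimes|\det|^{\frac{1-n}{2}}$ and $\sigma_j':=\sigma_j\otimes|\det|^{\frac{1-n}{2}}$, the Corollary is equivalent to the claim that for the embedding $\pi'\hookrightarrow \textnormal{n-Ind}_{Q(L)}^{G(L)}(\sigma_1'\otimes\cdots\otimes\sigma_t')$ the Weil--Deligne representation $\textnormal{rec}(\pi')$ admits a flag $0=F_0\subset F_1\subset\cdots\subset F_t=\textnormal{rec}(\pi')$ of sub-Weil--Deligne representations with $F_j/F_{j-1}\cong\textnormal{rec}(\sigma_j')$.

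\textbf{Step 2: the ordering and Lemma~\ref{LemA.3}.} The existence of such a flag for an irreducible subrepresentation of a normalised parabolic induction is exactly the content of Lemma~\ref{LemA.3}, provided the factors $\sigma_1',\dots,\sigma_t'$ are listed in the order required there, i.e.\ provided no essentially square-integrable segment occurring in $\sigma_i'$ precedes one occurring in $\sigma_j'$ when $i<j$. Here the hypotheses enter: $\Pi_M$, being the $Q$-ordinary support, is absolutely irreducible, weakly admissible and $Z$-integral, so Lemma~\ref{Lem4.24} applies (its blocks of $\delta_Q^{1/2}\otimes\pi_M$ being our $\sigma_1,\dots,\sigma_t$) and gives, for $1\le i<j\le t$ and arbitrary $\pi_{\textnormal{sc},i}\in\textnormal{SC}(\sigma_i)$, $\pi_{\textnormal{sc},j}\in\textnormal{SC}(\sigma_j)$, the strict inequality
\begin{equation*}
    \frac{\textnormal{val}_p(\pi_{\textnormal{sc},i}(\langle\varpi\rangle))}{\deg(\pi_{\textnormal{sc},i})}\;<\;\frac{\textnormal{val}_p(\pi_{\textnormal{sc},j}(\langle\varpi\rangle))}{\deg(\pi_{\textnormal{sc},j})},
\end{equation*}
the strictness coming from $\textnormal{val}_p(|\varpi|_L)<0$, and unchanged under the common twist by $|\det|^{\frac{1-n}{2}}$. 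On the other hand, $t$-preunitarity of $\pi$ forces the central exponents of all segments appearing in the Bernstein--Zelevinsky decompositions of the $\sigma_j'$ to be real, so that these valuations genuinely record the exponents; combined with the strict inequalities above this rules out any backward linkage between a segment of $\sigma_i'$ and one of $\sigma_j'$ for $i<j$, which is the ordering hypothesis of Lemma~\ref{LemA.3}. Applying that lemma to $\pi'\hookrightarrow\textnormal{n-Ind}_{Q(L)}^{G(L)}(\sigma_1'\otimes\cdots\otimes\sigma_t')$ produces the required flag, and translating back through the identifications of Step~1 gives the matrix form in the statement.

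\textbf{Main obstacle.} The delicate point is precisely the passage in Step~2 from the coarse per-block valuation inequality of Lemma~\ref{Lem4.24} to the fine linkage condition needed for the Langlands classification: one must decompose each $\sigma_j'$ into segments, assemble the ordered multiset of all segments coming from $\sigma_1',\dots,\sigma_t'$, and check there are no backward linkages, invoking $t$-preunitarity to guarantee the exponents are real and hence totally preordered, all while keeping careful track of the modulus-character twists so that the graded pieces of the flag appear in the asserted order.
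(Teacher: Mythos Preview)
Your proof is correct and follows essentially the same approach as the paper: obtain the embedding $\pi\hookrightarrow\textnormal{Ind}_{Q(L)}^{G(L)}(\pi_M\otimes\delta_Q)$ from the definition of $Q$-ordinary support, do the modulus-character bookkeeping to identify the inducing blocks with $\pi_j\otimes|\cdot|^{n_j-\Sigma_j}$, then invoke Lemma~\ref{Lem4.24} and $t$-preunitarity to feed the result into Lemma~\ref{LemA.3}. One small expository point: you describe the ordering hypothesis of Lemma~\ref{LemA.3} as a ``no backward linkage'' condition, but in fact its hypothesis is literally the valuation inequality that Lemma~\ref{Lem4.24} outputs (together with block-wise well-orderedness, which is automatic from the Bernstein--Zelevinsky description of each $\sigma_j'$, and $t$-preunitarity of $\widetilde\pi$); your detour through ``real exponents'' is unnecessary, and the strict inequality you extract is weaker than what Lemma~\ref{LemA.3} actually needs and what Lemma~\ref{Lem4.24} already gives. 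Also, both you and the paper leave implicit the step that the concatenated multiset of segments is well-ordered (so that $\pi'$ is its $Z(\cdots)$) and the translation of Lemma~\ref{LemA.3}'s segment conclusion into a flag of Weil--Deligne representations via the explicit description of $\textnormal{rec}^T$ on Langlands quotients; these are routine but worth a sentence.
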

\begin{proof}
Note that adjunction applied to $\textnormal{Ord}_Q^{\textnormal{lalg}}(\Pi)\otimes|\cdot|^{\frac{1-n}{2}} =\Pi_M\otimes|\cdot|^{\frac{1-n}{2}}\xrightarrow{\textnormal{id}}\Pi_M\otimes|\cdot|^{\frac{1-n}{2}}$ gives an embedding
\begin{equation}\label{eq4.20}
    \pi\otimes|\cdot |^{\frac{1-n}{2}}\hookrightarrow 
    \textnormal{n-Ind}_{Q(L)}^{G(L)}(\delta_Q^{1/2}|_{\textnormal{GL}_{n_1}(L)}\otimes|\cdot|^{\frac{1-n}{2}}\otimes\pi_1)\otimes...\otimes (\delta_Q^{1/2}|_{\textnormal{GL}_{n_t}(L)}\otimes|\cdot|^{\frac{1-n}{2}}\otimes\pi_t).
\end{equation}
One computes that, for $1\leq j\leq t$, we have
\begin{equation*}
    \delta_Q^{1/2}|_{\textnormal{GL}_{n_j}(L)}=|\cdot|^{\frac{n-n_j}{2}-\sum_{j-1}}.
\end{equation*}
Therefore, we compute
\begin{equation*}
    \delta_Q^{1/2}|_{\textnormal{GL}_{n_j}(L)}\otimes|\cdot|^{\frac{1-n}{2}}\otimes\pi_j=(\pi_j\otimes|\cdot|^{\frac{1-n_j}{2}})\otimes |\cdot|^{\frac{-(n-n_j)}{2}}\otimes \delta_Q^{1/2}=(\pi_j\otimes|\cdot |^{\frac{1-n_j}{2}})\otimes|\cdot|^{-\sum_{j-1}}.
\end{equation*}
This implies the existence of a filtration
\begin{equation*}
    0=F_0^{\textnormal{ss}}\subset...\subset F_t^{\textnormal{ss}}=\textnormal{rec}^T(\pi)^{\textnormal{ss}}
\end{equation*}
of representations of the Weil group with subquotients given by
\begin{equation*}
    \textnormal{rec}^T(\pi_j\otimes|\cdot|^{-\sum_{j-1}})^{\textnormal{ss}}.
\end{equation*}In fact, this really is a direct sum decomposition as the underlying Weil group representation of $\textnormal{rec}^T(\pi)$ is semisimple.

By combining Lemma~\ref{Lem4.24} and our unitariness assumption, we can apply Lemma~\ref{LemA.3} to \ref{eq4.20}. After unravelling the construction of the reduction of the local Langlands correspondence to supercuspidal representations, this exactly says that the monodromy on the subquotients does not change. In particular, we can upgrade $F^{\textnormal{ss}}_{\bullet}$ to the desired filtration of Weil--Deligne representations of $\textnormal{rec}^T(\pi)$.
\end{proof}
To ease the notation in the upcoming sections, we introduce the following notation.
\begin{Def}\label{Def4.26}
    Denote by $\Pi$ an absolutely irreducible $Q$-ordinary $E$-representation of $G(L)$ of weight $\lambda$ and write $\Pi\otimes_E\overline{\mathbf{Q}}_p=\pi\otimes_E V_{\lambda^{\vee}}$. Then we set $\pi^{Q\textnormal{-ord}}$ to be the smooth $\overline{\mathbf{Q}}_p$-representation of $M(L)$ such that $\textnormal{Ord}^{\textnormal{lalg}}(\Pi)\otimes_E\overline{\mathbf{Q}}_p=\pi^{Q\textnormal{-ord}}\otimes_{\overline{\mathbf{Q}}_p} V_{-w_0^G\lambda}$.
\end{Def}

\subsection{A $Q$-ordinary local-global compatibility result}

Finally, we use our observations about $Q$-ordinary representations to deduce $Q$-ordinary local-global compatibility for regular algebraic conjugate self-dual cuspidal automorphic representations (RACSDCAR). We fix an integer $n\geq 1$ with a partition $n_1+...+n_t$ and denote by $Q=M\ltimes N$ the corresponding parabolic subgroup of $\textnormal{GL}_n$. We further consider the appropriate global setup i.e., $F$ will denote a CM number field and $v|p$ is a fixed $p$-adic place. If $\pi$ is a RACAR of $\textnormal{GL}_n(\mathbf{A}_F)$ of weight $\lambda\in (\mathbf{Z}^n_+)^{\Hom(F,\mathbf{C})}$ and $t:\overline{\mathbf{Q}}_p\cong \mathbf{C}$ is a fixed isomorphism then $t^{-1}\pi_v$ can be realized over a finite extension $E/\mathbf{Q}_p$. Moreover, $t^{-1}\pi_v\otimes_EV_{t^{-1}\lambda^{\vee}}$ becomes a weakly admissible $E$-representation of $\textnormal{GL}_n(F_v)$ of weight $t^{-1}\lambda_v$ (see Remark~\ref{Rem4.12}). Therefore, we are in the situation of \S\ref{sec4.2}. We are then interested in proving the following local-global compatibility result. 
\begin{Th}\label{Th4.27}
Let $\pi$ be a RACSDCAR of $\textnormal{GL}_n(\mathbf{A}_F)$ of weight $\lambda\in (\mathbf{Z}_+^n)^{\Hom(F,\mathbf{C})}$, $t:\overline{\mathbf{Q}}_p\cong \mathbf{C}$ be a fixed isomorphism and $v|p$ be a $p$-adic place of $F$. Assume that $t^{-1}\pi_v\otimes_E V_{\lambda^{\vee}}$ is $Q$-ordinary. Write $\pi^{Q\textnormal{-ord}}=\pi_1\otimes...\otimes \pi_t$ (see Definition~\ref{Def4.26}). Then there is an isomorphism
\begin{equation*}
    r_t(\pi)|_{G_{F_{v}}}\sim \begin{pmatrix}
\rho_1 & \ast & ... & \ast\\
0 & \rho_2 & ... & \ast\\
. & . & . & .\\
. & . & . & .\\
0 & ... & 0 & \rho_t
\end{pmatrix}
\end{equation*}
where, for $1\leq j\leq t$,
\begin{equation*}
    \rho_j:G_{F_v}\to \textnormal{GL}_{n_j}(\overline{\mathbf{Q}}_p)
\end{equation*}
is potentially semistable such that, for every embedding $\iota:F_v\hookrightarrow \overline{\mathbf{Q}}_p$, the labelled $\iota$-Hodge--Tate weights of $\rho_j$ are given by
\begin{equation*}
    \lambda_{t\circ \iota,n+1-(n_1+...+n_j)}+n_1+...+n_{j-1}+n_j-1>...>\lambda_{t\circ \iota,n+1-(n_1+...+n_{j-1}+1)}+n_1+...+n_{j-1}
\end{equation*}
and we have
\begin{equation*}
    \textnormal{WD}(\rho_j)^{F-ss}\cong \textnormal{rec}^T(\pi_j\otimes|\cdot|^{-\sum_{j-1}})
\end{equation*}
where $\sum_j:=n_1+...+n_j$.
\end{Th}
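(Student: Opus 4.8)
The plan is to deduce Theorem~\ref{Th4.27} by combining the purely local input of \S\ref{sec4.2} with the classical local-global compatibility theorem of Barnet-Lamb--Gee--Geraghty--Taylor and the earlier ordinary results of Geraghty (\cite{Ger18}, Corollary 2.7.8) and Thorne (\cite{Tho15}, Theorem 2.4). First I would translate the hypothesis: $t^{-1}\pi_v\otimes_E V_{\lambda^{\vee}}$ being $Q$-ordinary means, by Corollary~\ref{Cor4.21}, that its $Q$-ordinary support $\Pi_M$ is an absolutely irreducible locally algebraic $E$-representation of $M(F_v)$ of weight $w_0^Q\lambda_v$, so $\pi^{Q\textnormal{-ord}}=\pi_1\otimes\cdots\otimes\pi_t$ is a genuine irreducible smooth $\overline{\mathbf{Q}}_p$-representation of $\textnormal{GL}_{n_1}(F_v)\times\cdots\times\textnormal{GL}_{n_t}(F_v)$, and, by Corollary~\ref{Cor4.21} again, each $\pi_j$ is absolutely irreducible. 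The key structural fact I would invoke is Corollary~\ref{Cor4.25}: since $\pi$ is cuspidal and unitary (hence $\pi_v$ is generic and $t$-preunitary), we obtain a flag $0=F_0\subset F_1\subset\cdots\subset F_t=\textnormal{rec}^T(t^{-1}\pi_v)$ of sub-Weil--Deligne representations with $F_j/F_{j-1}\cong\textnormal{rec}^T(\pi_j\otimes|\cdot|^{n_j-\sum_j})$, i.e. a block-upper-triangular shape for $\textnormal{WD}(r_t(\pi)|_{G_{F_v}})^{F-ss}=\textnormal{rec}^T(t^{-1}\pi_v)$ by Theorem~\ref{Th2.17}~(3).

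The next step is to upgrade this Weil--Deligne-theoretic flag to a genuine filtration of the Galois representation $r_t(\pi)|_{G_{F_v}}$ by $G_{F_v}$-stable subspaces with potentially semistable graded pieces of the predicted Hodge--Tate weights. I would argue as follows. The $Q$-ordinarity hypothesis says precisely that the (suitably normalised) $U_p$-operators $U_v^{Q_v}$ act invertibly on the relevant Jacquet-type space; concretely, $\textnormal{Ord}_Q^{\textnormal{lalg}}(t^{-1}\pi_v\otimes_E V_{\lambda^{\vee}})\neq 0$. This is exactly the hypothesis needed to run the ordinary automorphy argument. Following the strategy of \cite{Ger18}, Corollary~2.7.8 and \cite{Tho15}, Theorem~2.4 (whose only input beyond Theorem~\ref{Th2.17} is that the relevant local component is ordinary for the Borel, and which we now generalise to the parabolic $Q$ using Theorem~\ref{Th4.16} and Lemma~\ref{Lem4.20}): the existence of a nonzero map $\pi_v\hookrightarrow \textnormal{n-Ind}_{Q(F_v)}^{\textnormal{GL}_n(F_v)}(\pi_1\otimes\cdots\otimes\pi_t)\otimes(\text{unramified twists})$, together with the fact that $r_t(\pi)$ is potentially semistable at $v$ (Theorem~\ref{Th2.17}~(2)) with the prescribed Hodge--Tate weights and the graded pieces being de Rham, forces $r_t(\pi)|_{G_{F_v}}$ itself to be reducible with the graded pieces being the $\rho_j$. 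More precisely, I would use the conjugate self-duality to place $\pi$ into the cohomology of a unitary group Shimura variety (as in Theorem~\ref{Th2.17}, via \cite{Sh11,CH13,Car12,Car14}), where the $Q$-ordinary eigenvalue condition translates into a filtration of the étale cohomology by sub-local systems, whose associated graded at $v$ gives the desired $G_{F_v}$-stable filtration of $r_t(\pi)|_{G_{F_v}}$. The Hodge--Tate weight computation for $\rho_j$ is then a direct bookkeeping exercise: the $j$-th graded piece corresponds to the block $\lambda_v^{n_j}$ of $w_0^{Q_v}\lambda_v$ shifted by $n_1+\cdots+n_{j-1}$, and one reads off the weights $\lambda_{t\circ\iota,n+1-(n_1+\cdots+n_j)}+n_1+\cdots+n_j-1>\cdots>\lambda_{t\circ\iota,n+1-(n_1+\cdots+n_{j-1}+1)}+n_1+\cdots+n_{j-1}$ exactly because the $\rho$-shift on a $\textnormal{GL}_{n_j}$-block embedded in positions $n_1+\cdots+n_{j-1}+1,\dots,n_1+\cdots+n_j$ of $\textnormal{GL}_n$ adds $n_1+\cdots+n_{j-1}$ to each Hodge--Tate weight.

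Finally, I would match the Weil--Deligne representation of each $\rho_j$ with $\textnormal{rec}^T(\pi_j\otimes|\cdot|^{n_j-\sum_j})$. Here the point is that the $G_{F_v}$-stable filtration constructed above induces on $\textnormal{WD}(r_t(\pi)|_{G_{F_v}})$ a filtration by sub-Weil--Deligne representations, and this filtration must refine (or rather coincide with, after Frobenius semisimplification) the one produced by Corollary~\ref{Cor4.25}, because both are determined by the generalized eigenvalues of $\mathrm{Frob}_v$ (equivalently the $U_p$-eigenvalues) together with the Hodge--Tate weights — and these are forced to line up by the compatibility in Theorem~\ref{Th2.17}~(3). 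Concretely, $\textnormal{WD}(\rho_j)^{F-ss}\cong F_j/F_{j-1}$ as Weil--Deligne representations, which is $\textnormal{rec}^T(\pi_j\otimes|\cdot|^{n_j-\sum_j})$ by Corollary~\ref{Cor4.25}; in particular the monodromy operators match, which is the content of the last sentence of the proof of that corollary (via Lemma~\ref{LemA.3} and the reduction of local Langlands to supercuspidals). The main obstacle, I expect, is the second step: producing an actual $G_{F_v}$-stable \emph{integral} filtration of $r_t(\pi)|_{G_{F_v}}$ whose graded pieces are individually potentially semistable of the right weight, rather than merely exhibiting a filtration of the associated Weil--Deligne representation. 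This requires carefully importing the ordinary-part machinery of \S\ref{sec4.2} (Theorem~\ref{Th4.16}, Corollary~\ref{Cor4.21}, Corollary~\ref{Cor4.25}) into the $p$-adic Hodge theory side, which is where the genericity/cuspidality of $\pi$ (ensuring $\pi_v$ is generic, hence the types of Schneider--Zink behave well and the induction in Theorem~\ref{Th4.16} is irreducible enough) is essential, and where one must be careful that the weak admissibility of $\Pi_M$ — guaranteed by Remark~\ref{Rem4.12} since $\pi_1\otimes\cdots\otimes\pi_t$ arises from a cohomological automorphic form on $M$ after a suitable transfer — gives exactly the right Hodge--Tate weight bounds for each $\rho_j$ to be non-degenerate.
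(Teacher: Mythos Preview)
Your first paragraph is correct and matches the paper exactly: Corollary~\ref{Cor4.25} plus Theorem~\ref{Th2.17}(3) gives a flag on $\textnormal{WD}(r_t(\pi)|_{G_{F_v}})$, and this is indeed where the argument begins.

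The second paragraph, however, contains a genuine gap. You propose to upgrade the Weil--Deligne flag to a $G_{F_v}$-stable filtration by realising $\pi$ in the \'etale cohomology of a unitary Shimura variety and finding ``a filtration of the \'etale cohomology by sub-local systems''. This is not how the paper proceeds, and I do not see how to make such a geometric argument work: there is no evident way to produce sub-local systems from a $Q$-ordinarity condition, and neither \cite{Ger18} nor \cite{Tho15} argues this way. The paper's method is purely $p$-adic Hodge theoretic. One works with the weakly admissible filtered $(\varphi,N,K/F_v,E)$-module $D=D_{\textnormal{st},K}(r_t(\pi)|_{G_{F_v}})$. The Weil--Deligne flag from Corollary~\ref{Cor4.25} already lives on $D$ as a flag $0=F_0\subset\cdots\subset F_t=D$ of sub-$(\varphi,N,K/F_v,E)$-modules (since $\textnormal{WD}(\rho)$ is built from $D$). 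The crucial missing ingredient is Lemma~\ref{Lem4.28}: the $Z$-integrality of $\pi^{Q\textnormal{-ord}}$ forces $\textnormal{val}_p(\det\textnormal{rec}^T(\pi_j\otimes|\cdot|^{n_j-\sum_j})(\varphi_v))$ to equal the sum of the Hodge--Tate weights you want $\rho_j$ to carry. In other words, $t_N(F_j)=t_H(F_j)$ for each $j$, with the Hodge filtration on $F_j$ being the induced one. By Fontaine's theorem (\cite{Fon94}, Theorem 5.6.7), this equality is exactly what guarantees that each $F_j$ is itself weakly admissible and hence comes from a genuine $G_{F_v}$-subrepresentation $\widetilde{\rho}_j\subset r_t(\pi)|_{G_{F_v}}$. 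The subquotients $\rho_j:=\widetilde{\rho}_j/\widetilde{\rho}_{j-1}$ are then automatically potentially semistable with $\textnormal{WD}(\rho_j)^{F\textnormal{-ss}}\cong\textnormal{rec}^T(\pi_j\otimes|\cdot|^{n_j-\sum_j})$.

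Your Hodge--Tate weight computation is also off. It is not a direct bookkeeping exercise with the $\rho$-shift: the equality $t_N(F_j)=t_H(F_j)$ tells you only the \emph{sum} of the Hodge--Tate weights of $\widetilde{\rho}_j$, not their individual values. The paper argues by induction on $j$: knowing the weights of $\rho_1,\dots,\rho_{j-1}$ and the sum of those of $\rho_j$, regularity of $\lambda$ (together with the fact that the weights of $\rho_j$ are among those of $r_t(\pi)|_{G_{F_v}}$, which are known by Theorem~\ref{Th2.17}(2)) pins them down uniquely.
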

We highlight the following simple observation.
\begin{Lemma}\label{Lem4.28}
Let $\pi_v$ and $(\pi_1,...,\pi_t)$ as in Theorem~\ref{Th4.27}. Then, for any lift of (geometric) Frobenius $\varphi_{v}\in W_{F_{v}}$ and $1\leq j\leq t$, we have
\begin{equation*}
    \textnormal{val}_p(\textnormal{det}(\textnormal{rec}^T(\pi_j\otimes |\cdot |^{-\sum_{j-1}})(\varphi_{v})))=\frac{1}{e_{v}}\sum_{\iota:F_{v}\hookrightarrow \overline{\mathbf{Q}}_p}\sum_{i=n_1+...+n_{j-1}+1}^{n_1+...+n_j}(\lambda_{\iota,n+1-i}+i-1).
\end{equation*}
\end{Lemma}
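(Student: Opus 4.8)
\textbf{Proof strategy for Lemma~\ref{Lem4.28}.} The plan is to reduce the computation of the $p$-adic valuation of the determinant of Frobenius to a statement purely about the Hodge--Tate weights of the building blocks $\rho_j$, using the fact that we already know (from Theorem~\ref{Th4.27}, which is being applied here) that $\textnormal{WD}(\rho_j)^{F-ss} \cong \textnormal{rec}^T(\pi_j \otimes |\cdot|^{n_j - \sum_j})$ and that $\rho_j$ is potentially semistable with explicitly described labelled Hodge--Tate weights. First I would recall the general principle: for a potentially semistable representation $\rho : G_{F_v} \to \textnormal{GL}_d(\overline{\mathbf{Q}}_p)$, the determinant $\det \rho$ is a potentially semistable character, and the $p$-adic valuation of $\det(\textnormal{WD}(\rho)(\varphi_v))$ (for $\varphi_v$ a lift of geometric Frobenius) is governed by the Hodge--Tate weights via the theory of the associated filtered $(\varphi, N, G_{F_v})$-module. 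Concretely, for a potentially semistable character $\chi$, the recipe of Fontaine identifies $\textnormal{val}_p$ of the Frobenius eigenvalue with $\frac{1}{e_v}$ times the sum of the Hodge--Tate weights over all embeddings $F_v \hookrightarrow \overline{\mathbf{Q}}_p$ (with the sign convention fixed in the Notation section, where the cyclotomic character has Hodge--Tate weight $-1$).

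The key steps, in order, would be: (i) Observe that $\det \rho_j$ is a potentially semistable character whose $\iota$-Hodge--Tate weight is the sum of the $\iota$-Hodge--Tate weights of $\rho_j$, namely $\sum_{i=n_1+\dots+n_{j-1}+1}^{n_1+\dots+n_j}(\lambda_{t\circ\iota, n+1-i} + i - 1)$, by the explicit description in Theorem~\ref{Th4.27}. (ii) Invoke the compatibility of the Weil--Deligne functor with determinants: $\det(\textnormal{WD}(\rho_j)) = \textnormal{WD}(\det \rho_j)$, so that $\det(\textnormal{WD}(\rho_j)^{F-ss}(\varphi_v)) = \det(\textnormal{WD}(\rho_j)(\varphi_v))$ (Frobenius semisimplification does not affect the determinant). (iii) Apply the standard computation of the Frobenius valuation of a potentially semistable character in terms of its Hodge--Tate weights: this gives $\textnormal{val}_p(\det(\textnormal{WD}(\rho_j)(\varphi_v))) = \frac{1}{e_v}\sum_{\iota:F_v \hookrightarrow \overline{\mathbf{Q}}_p}\sum_{i=n_1+\dots+n_{j-1}+1}^{n_1+\dots+n_j}(\lambda_{\iota, n+1-i} + i - 1)$, where I would need to be careful that the index $t\circ\iota$ versus $\iota$ matches the conventions, but under the identification $t$ this is precisely the right-hand side of the claimed formula. (iv) Conclude via the isomorphism $\textnormal{WD}(\rho_j)^{F-ss} \cong \textnormal{rec}^T(\pi_j \otimes |\cdot|^{n_j - \sum_j})$ that the left-hand side equals $\textnormal{val}_p(\det(\textnormal{rec}^T(\pi_j \otimes |\cdot|^{n_j-\sum_j})(\varphi_v)))$.

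Alternatively — and this may be cleaner — one could compute the right-hand side directly on the automorphic side: the central character of $\pi_j$ is determined by the weight, and $\textnormal{rec}^T$ sends central characters to determinants (via local class field theory, with the Tate normalisation accounting for the $|\cdot|^{\frac{1-n_j}{2}}$ twist); combining this with the contribution of the explicit twist $|\cdot|^{n_j - \sum_j}$ and tracking the valuation through $\textnormal{Art}_{F_v}$ (sending uniformisers to geometric Frobenius) yields the same sum. I would likely present the argument via this route since it avoids invoking $p$-adic Hodge theory for the \emph{a priori} bound and instead uses only the known shape of $r_t(\pi)$ from Theorem~\ref{Th2.17} together with the determinant formula $r_t(\pi)^c \cong r_t(\pi)^\vee(1-n)$, which pins down $\det r_t(\pi)|_{G_{F_v}}$ and hence, via the flag, the product of the $\det \rho_j$.

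\textbf{Main obstacle.} The genuinely delicate point is bookkeeping the normalisations: the Tate normalisation $\textnormal{rec}^T(\pi) = \textnormal{rec}(\pi \otimes |\cdot|^{\frac{1-n}{2}})$, the sign convention on Hodge--Tate weights (cyclotomic $= -1$), the normalisation of $\textnormal{Art}_{F_v}$ (uniformisers $\mapsto$ geometric Frobenius), and the precise relationship between $\iota$ and $t\circ\iota$ in indexing the weight $\lambda$. Getting a single sign or a shift by $\frac{1-n}{2}$ versus $\frac{1-n_j}{2}$ wrong would break the formula, so the bulk of the (short) proof is a careful but routine reconciliation of these conventions; I would handle it by computing the determinant of both sides as potentially semistable characters and matching Hodge--Tate weights and Frobenius eigenvalues embedding-by-embedding. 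There is no deep input needed beyond what is already assembled in the excerpt.
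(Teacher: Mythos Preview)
Your primary approach is circular. Lemma~\ref{Lem4.28} is not a consequence of Theorem~\ref{Th4.27}; it is an input to its proof. In the paper, Lemma~\ref{Lem4.28} is invoked twice inside the proof of Theorem~\ref{Th4.27}: first, combined with Fontaine's theorem on weakly admissible modules, to show that the flag on $D_{\textnormal{st},K}(r_t(\pi)|_{G_{F_v}})$ actually comes from a flag of sub-$G_{F_v}$-representations (this requires knowing $t_N(F_j)=t_H(F_j)$, which needs the Frobenius valuation computed in Lemma~\ref{Lem4.28}); and second, in the inductive step that pins down the Hodge--Tate weights of each $\rho_j$. So you cannot assume the existence of the $\rho_j$ with their stated Hodge--Tate weights to prove the lemma.

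Your alternative approach --- compute on the automorphic side via $\det\circ\textnormal{rec}(\sigma)=\textnormal{rec}(\omega_\sigma)$ --- is exactly what the paper does, but you misidentify the key input. You propose to determine the central character of $\pi_j$ from ``the known shape of $r_t(\pi)$ from Theorem~\ref{Th2.17} together with $r_t(\pi)^c\cong r_t(\pi)^{\vee}(1-n)$ \dots\ via the flag''. This is again circular (the flag is Theorem~\ref{Th4.27}), and in any case would only give the \emph{product} $\prod_j\det\rho_j$, not each factor. The actual, non-circular input is purely local-representation-theoretic: by definition, $\pi^{Q\textnormal{-ord}}\otimes V_{-w_0^G\lambda}=\textnormal{Ord}_Q^{\textnormal{lalg}}(\Pi)$ is the slope zero part of $\Pi^{N^0}$, so the $Z_M(L)$-central character of $(\pi_1\otimes\dots\otimes\pi_t)\otimes V_{-w_0^G\lambda}$ takes values in $\overline{\mathbf{Z}}_p^\times$. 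This immediately gives $\textnormal{val}_p(\pi_j(\langle\varpi_v\rangle))$ for each $j$ individually in terms of $\lambda$, and the rest is the bookkeeping you describe.
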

\begin{proof}
Recall that for $\sigma$ a smooth admissible $\overline{\mathbf{Q}}_p$-representation of $\textnormal{GL}_n(F_v)$, $\textnormal{det}\circ\textnormal{rec}(\sigma)=\textnormal{rec}(\omega_{\sigma})$, where $\omega_{\sigma}$ denotes the central character of $\sigma$.

In particular, we have 
\begin{equation*}
\textnormal{val}_p(\det(\textnormal{rec}^T(\pi_j\otimes|\cdot |^{-\sum_{j-1}})(\varphi_v)))= \textnormal{val}_p((\pi_j\otimes |\cdot|^{\frac{1-n_j}{2}-\sum_{j-1}})(\textnormal{diag}(\varpi_v,...,\varpi_v)))
\end{equation*} for $\varpi_v$ the image of $\varphi_v$ under the local Artin map. To see that the RHS of the equation above is the desired number, we use that $(\pi_1\otimes...\otimes \pi_t)\otimes_{E}V_{-w_0^G\lambda}$ has ($p$-adically) integral central character. Namely, it implies that we have
\begin{equation*}
    \textnormal{val}_p(\pi_j(\textnormal{diag}(\varpi_v,...,\varpi_v)))=\sum_{\iota:F_v\hookrightarrow E}\sum_{i=n_1+...+n_{j-1}+1}^{n_1+...+n_j}\lambda_{\iota,n+1-i}\cdot\textnormal{val}_p(\iota(\varpi_v))=
\end{equation*}
\begin{equation}\label{easyeq1}
    \frac{1}{e_v}\sum_{\iota:F_v\hookrightarrow E}\sum_{i=n_1+...+n_{j-1}+1}^{n_1+...+n_j}\lambda_{\iota,n+1-i}.
\end{equation}

Moreover, we have
\begin{equation*}
    \textnormal{val}_p(\mid\det(\textnormal{diag}(\varpi_v,...,\varpi_v))\mid^{\frac{1-n_j}{2}-\sum_{j-1}})= (\frac{1-n_j}{2}-\sum_{j-1})\cdot n_j\cdot \textnormal{val}_p(|\varpi_v|_v)=
\end{equation*}
\begin{equation}\label{easyeq2}
    f_v\cdot\left(\sum_{i=n_1+...+n_{j-1}+1}^{n_1+...+n_j}(i-1)\right)=  \frac{1}{e_v}|\Hom(F_v,\overline{\mathbf{Q}}_p)|\cdot \left(\sum_{i=n_1+...+n_{j-1}+1}^{n_1+...+n_j}(i-1)\right)
\end{equation}
where in the last equality we used that $f_v=\frac{[F_v:\mathbf{Q}_p]}{e_v}$.

We conclude by combining \ref{easyeq1} and \ref{easyeq2}.

\end{proof}

Before starting the proof of Theorem~\ref{Th4.27}, we recall some constructions from $p$-adic Hodge theory which we will make use of in the proof.
Consider a potentially semistable $p$-adic Galois representation $\rho:G_{F_{v}}\to \textnormal{GL}_d(E)$ for some finite extension $E/\mathbf{Q}_p$ and for simplicity assume that it has distinct Hodge--Tate weights. Let $K/F_{v}$ be a finite Galois extension such that $\rho|_{G_K}$ is semistable and enlarge $E$ so that it contains the images of all embeddings $K\hookrightarrow \overline{\mathbf{Q}}_p$. Set $K_0:=W(\mathcal{O}_K/\mathfrak{m}_K)[1/p]$ and denote by $\sigma$ its arithmetic Frobenius.

We can then apply Fontaine's construction which associates with $\rho$ a filtered $(\varphi,N,K/F_{v}, E)$-module $D:=D_{\textnormal{st},K}(\rho)$. This, by definition, is a free $K_0\otimes_{\mathbf{Q}_p}E$-module with
\begin{enumerate}
    \item a $\sigma\otimes 1$-semilinear automorphism $\varphi$ of $D$;
    \item a $K_0\otimes_{\mathbf{Q}_p}E$-linear endomorphism $N$ of $D$ such that $N\varphi=p\varphi N$;
    \item a $K$-semilinear, $E$-linear action of $\textnormal{Gal}(K/F_{v})$ commuting with $\varphi$ and $N$;
    \item and a filtration $\textnormal{Fil}_{\bullet}D_K$ of $D_K:=D\otimes_{K_0}K$ by $K\otimes_{\mathbf{Q}_p}E$-submodules.
\end{enumerate}
Moreover, we know that $D$ is weakly admissible. In other words, "its Newton polygon lies over its Hodge polygon", i.e. in the notations of \cite{Fon94}, $t_N(D)=t_H(D)$ and for any sub-$(\varphi,N,K/F_{v}, E)$-module $D'\subset D$ equipped with the induced filtration from $D$ we have $t_N(D')\geq t_H(D')$. See \cite{Fon94}, 4.4.1 and Definition 4.4.3.

We have identifications
\begin{equation*}
    D_K\cong \prod_{\iota:K\hookrightarrow E}D_{\iota},\; \textnormal{Fil}_{\bullet}D_K=\prod_{\iota:K\hookrightarrow E}\textnormal{Fil}_{\bullet}D_{\iota}.
\end{equation*}
Then for every embedding $\iota:K\hookrightarrow E$, we have
\begin{equation*}
    \dim_E\textnormal{gr}^i\textnormal{Fil}_{\bullet}D_{\iota}=\begin{cases}
    1,& \text{if } i=\lambda_{\iota|_{F_{v}},j}\text{for }j\in\{1,...,d\}\\
    0,              & \text{otherwise}
\end{cases}
\end{equation*}
where $\lambda_{\iota|_{F_{v}},1}> ...> \lambda_{\iota|_{F_{v}},d}$ are the labelled $\iota|_{F_{v}}$-Hodge--Tate-weights of $\rho$.
In other words, the Hodge--Tate weights are encoded in the Hodge-filtration of $D$.

We can further associate to $D$ a Weil--Deligne representation as follows. Given $g\in W_{F_{v}}$, let $g$ act on $D$ by $(g\mod W_K)\circ \varphi^{-\alpha(g)}$ where $\alpha(g)$ is given by the power of the arithmetic Frobenius given by the action of $g$ on the residue field of $\overline{F}_{v}$. Note that if $f_{K}$ resp. $f_{v}$ denotes the inertia degree of $K/\mathbf{Q}_p$ resp. $F_{v}/\mathbf{Q}_p$, then we have that, for any lift of geometric Frobenius $\varphi_{v}\in W_{F_{v}}$, $\varphi_{v}^{f_{K}/f_{v}}$ acts on $D$ by $\varphi^{f_K}$. This action is then $K_0\otimes_{\mathbf{Q}_p}E$-linear by definition and we can consider the $W_{F_{v}}$- and $N$-invariant decomposition
\begin{equation*}
    D=\prod_{\iota_0:K_0\hookrightarrow E}D_{\iota_0}.
\end{equation*}
We then set $\textnormal{WD}(\rho):=D_{\iota_0}$ to be the associated Weil--Deligne representation over $E$ for a choice of $\iota_0$. As the notation suggests, it is independent of the choice of $\iota_0$ and $K$ up to isomorphism.
\begin{proof}[Proof of Theorem~\ref{Th4.27}]
We argue as in \cite{Tho15}, Theorem 2.4 which was based on \cite{Ger18}, Corollary 2.7.8. In particular, the key ingredients are local-global compatibility at $v$ for $r_t(\pi)$, Fontaine's theory of weakly admissible modules and Corollary~\ref{Cor4.25}.

Let $E\subset \overline{\mathbf{Q}}_p$ be a finite extension of $\mathbf{Q}_p$ such that $r_t(\pi)|_{G_{F_v}}$ lands in $\textnormal{GL}_n(E)$. First note that by Theorem~\ref{Th2.17} we have that $r_t(\pi)$ is potentially semistable at $v$. Let $K/F_{v}$ be a finite Galois extension such that $r_t(\pi)|_{G_K}$ is semistable and enlarge $E$ if necessary to assume that it contains all images of embeddings $K\hookrightarrow \overline{\mathbf{Q}}_p$. Set $D$ to be the weakly admissible filtered $(\varphi,N,K/F_{v},E)$-module associated with $r_t(\pi)|_{G_{F_v}}$. By Theorem~\ref{Th2.17} again, we further have an identification
\begin{equation*}
    \textnormal{WD}(r_t(\pi)|_{G_{F_v}})^{F-ss}\cong \textnormal{rec}^T(t^{-1}\pi_v).
\end{equation*}
Therefore, by Corollary~\ref{Cor4.25}, we have an isomorphism
\begin{equation*}
    \textnormal{WD}(r_t(\pi)|_{G_{F_{v}}})^{F-ss}\sim \begin{pmatrix}
\textnormal{rec}^T(\pi_1) & \ast & ... & \ast\\
0 & \textnormal{rec}^T(\pi_2\otimes |\cdot |^{-n_1}) & ... & \ast\\
. & . & . & .\\
. & . & . & .\\
0 & ... & 0 & \textnormal{rec}^T(\pi_t\otimes|\cdot|^{n_t-n})
\end{pmatrix}.
\end{equation*}
In particular, we obtain an isomorphism
\begin{equation*}
    \textnormal{WD}(r_t(\pi)|_{G_{F_{v}}})\sim \begin{pmatrix}
\textnormal{WD}_1 & \ast & ... & \ast\\
0 & \textnormal{WD}_2 & ... & \ast\\
. & . & . & .\\
. & . & . & .\\
0 & ... & 0 & \textnormal{WD}_t,
\end{pmatrix}
\end{equation*}
such that, for $1\leq j\leq t$, $\textnormal{WD}_j^{F-ss}\cong \textnormal{rec}^T(\pi_j\otimes |\cdot|^{-\sum_{j-1}})$.
Consequently, $D=D_{\textnormal{st},K}(r_t(\pi)|_{G_{F_v}})$ admits a flag
\begin{equation*}
    F_0=0\subset F_1\subset ...\subset F_t=D
\end{equation*}
of sub-$(\varphi,N,K/F_v,E)$-modules with $F_j/F_{j-1}$ corresponding to $\textnormal{rec}^T(\pi_j\otimes |\cdot|^{-\sum_{j-1}})$.

We now can apply Fontaine's theorem about classifying weakly admissible filtered $(\varphi,N)$-modules. Namely, if we combine Lemma~\ref{Lem4.28} with \cite{Fon94}, Theorem 5.6.7 and use that $\varphi_v^{f_K/f_v}$ acts on $D$ by $\varphi^{f_K}$, we get that, for each $1\leq j\leq t$, $F_j\subset D$ comes from a sub-$G_{F_v}$-representation $\widetilde{\rho}_j\subset r_t(\pi)|_{G_{F_v}}$. Moreover, their subquotients $\rho_j:=\widetilde{\rho}_j/\widetilde{\rho}_{j-1}$ are clearly potentially semistable and have the right associated Weil--Deligne representations as in the statement of the theorem.

Finally, to find the Hodge--Tate weights, we argue by induction on $1\leq j\leq t$. Assume that the claim holds for indices smaller than $j$. We can combine the fact that $t_N(F_j)=t_H(F_j)$, $\textnormal{WD}(\rho_j)^{F-ss}\cong \textnormal{rec}^T(\pi_j\otimes |\cdot|^{-\sum_{j-1}})$, Lemma~\ref{Lem4.28} and the induction hypothesis to see that the sum of the Hodge--Tate weights of $\rho_j$ is given by
\begin{equation*}
    \sum_{\iota:F_{v}\hookrightarrow \overline{\mathbf{Q}}_p}\sum_{i=n_1+...+n_{j-1}+1}^{n_1+...+n_j}(\lambda_{t\circ \iota,n+1-i}+i-1).
\end{equation*}
Now the regularity of $\lambda$ combined with Theorem~\ref{Th2.17} forces $\rho_j$ to have the right Hodge--Tate weights.
\end{proof}

\begin{Rem}\label{Rem4.29}
Note that the proof of Theorem~\ref{Th4.27} already works for automorphic representation $\pi$ with $\pi_v$ being pre-unitary (in the sense of the appendix) and admitting an associated Galois representation satisfying local-global compatibility at $v$. Therefore, our results will already hold for automorphic representations $\pi$ which are pre-unitary at $v$ and are isobaric sums of regular algebraic discrete conjugate self-dual automorphic representations. Indeed, this follows from Moeglin--Waldspurger's classification of discrete automorphic representations. This observation combined with Shin's base change result \cite{Shi14} leads to the application below, which is one of the crucial ingredients in proving our main local-global compatibility results.
\end{Rem}

To be able to appeal to \cite{Shi14}, we assume for the rest of the subsection that $F$ contains an imaginary quadratic field in which $p$ splits. Fix a $p$-adic place $\Bar{v}$ of $F^+$ and fix a place $v$ of $F$ above $\Bar{v}$. Let $\widetilde{Q}_{\Bar{v}}\subset P_{F^+_{\Bar{v}}}\subset \widetilde{G}_{F^+_{\Bar{v}}}$ be a parabolic subgroup with Levi decomposition $\widetilde{Q}_{\Bar{v}}=\widetilde{M}_{\Bar{v}}\ltimes \widetilde{N}_{\Bar{v}}$. Then $\widetilde{Q}_{\Bar{v}}(F^+_{\Bar{v}})$ is identified under $\iota_v$ with $P_{(n_1,...,n_t)}(F_v)\subset \textnormal{GL}_{2n}(F_v)$ for some tuple of integers $(n_1,...,n_k,n_{k+1},...,n_t)$, refining $(n,n)$.

\begin{Th}\label{Th4.30}
Let $\widetilde{\pi}$ be a $\xi$-cohomological cuspidal automorphic representation of $\widetilde{G}(\mathbf{A}_{F^+})$ as in Theorem~\ref{Th2.19}. Denote by $\Tilde{\lambda}$ the highest weight of the representation $(\xi\otimes \xi)^{\vee}$. Assume further that, for a fixed $t:\overline{\mathbf{Q}}_p\cong \mathbf{C}$, $t^{-1}(\widetilde{\pi}_{\Bar{v}}\circ \iota_v^{-1})\otimes V_{t^{-1}\Tilde{\lambda}_{\Bar{v}}}$ is $\widetilde{Q}_{\Bar{v}}$-ordinary of weight $t^{-1}\Tilde{\lambda}_{\Bar{v}}$. Write $(t^{-1}(\widetilde{\pi}\circ \iota_v^{-1}))^{\widetilde{Q}_{\Bar{v}}\textnormal{-ord}}=\widetilde{\pi}_1\otimes...\otimes\widetilde{\pi}_t$.
Then there is an isomorphism
\begin{equation*}
    r_t(\widetilde{\pi})|_{G_{F_{v}}}\sim \begin{pmatrix}
\rho_{1} & \ast & . &  . & . & . & \ast &  \ast\\
0 & . &  &   &   & &   & \ast\\
. &  & . &   &   &  &   &  .&\\
. &  & &  \rho_{k}  & &  &   &  .&\\
. &  &  &  & \rho_{k+1} &  &   & . &\\
. &  &  &   &   & .  &  & . &\\

0 &   &  &   &  &  & . & \ast\\
0 & 0 & . &  . & . & . &  0 & \rho_{t}
\end{pmatrix}
\end{equation*}
with $\rho_{j}$ being potentially semistable and, for every $\iota:F_v\hookrightarrow\overline{\mathbf{Q}}_p$, it has labelled $\iota$-Hodge--Tate weights
\begin{equation*}
   \Tilde{\lambda}_{\iota,2n+1-n_1+...+n_j}+n_1+...+n_j-1>...>\Tilde{\lambda}_{\iota,2n+1-n_1+...+n_{j-1}+1}+n_1+...+n_{j-1}.
\end{equation*}
Moreover, we have
\begin{equation*}
    \textnormal{WD}(\rho_j)^{F-ss}\cong \textnormal{rec}^T(\widetilde{\pi}_j\otimes |\cdot |^{-\sum_{j-1}})
\end{equation*}
where $\sum_j:=n_1+...+n_j$.
\end{Th}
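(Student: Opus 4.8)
The plan is to deduce Theorem~\ref{Th4.30} from Theorem~\ref{Th4.27} (more precisely, from the mild extension in Remark~\ref{Rem4.29}) applied to the general linear group $\textnormal{GL}_{2n}$ over $F$, by transferring $\widetilde{\pi}$ to an automorphic representation of $\textnormal{GL}_{2n}(\mathbf{A}_F)$ via base change. First I would invoke Shin's base change result \cite{Shi14}: since $F$ contains an imaginary quadratic field in which $p$ splits and $\widetilde{\pi}$ is a $\xi$-cohomological cuspidal automorphic representation of $\widetilde{G}(\mathbf{A}_{F^+})$, there is an associated (isobaric, conjugate self-dual) automorphic representation $\Pi$ of $\textnormal{GL}_{2n}(\mathbf{A}_F)$ which is regular algebraic, whose local component at $v$ satisfies $\Pi_v \cong \widetilde{\pi}_{\Bar{v}}\circ \iota_v^{-1}$, and which by construction satisfies $r_t(\Pi) \cong r_t(\widetilde{\pi})$ (this is exactly how $r_t(\widetilde{\pi})$ is produced in Theorem~\ref{Th2.19}, via the references in \cite{ACC23}, Theorem 2.3.3). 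By Moeglin--Waldspurger's classification, $\Pi$ is an isobaric sum of (shifts of) regular algebraic discrete conjugate self-dual automorphic representations, so the hypotheses of Remark~\ref{Rem4.29} are met once we know $\Pi_v$ is pre-unitary, which holds since it is a local component of a (base change of a) unitary automorphic representation.

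Next I would translate the weight bookkeeping. The highest weight of $(\xi\otimes\xi)^{\vee}$ is $\Tilde{\lambda}$, and by the recipe in Theorem~\ref{Th2.19}(ii) the labelled $\iota$-Hodge--Tate weights of $r_t(\widetilde{\pi})=r_t(\Pi)$ at $v$ are $\{\Tilde{\lambda}_{\iota,j}+2n-j\}_{j=1,\dots,2n}$, which is precisely the $\rho$-shift of $\Tilde{\lambda}_{\Bar{v}}$ that makes $t^{-1}\Pi_v\otimes V_{t^{-1}\Tilde{\lambda}_{\Bar{v}}^{\vee}}$ a weakly admissible $E$-representation of $\textnormal{GL}_{2n}(F_v)$ of weight $t^{-1}\Tilde{\lambda}_{\Bar{v}}$ (cf. Remark~\ref{Rem4.12}, applied with $\textnormal{GL}_{2n}$ in place of $\textnormal{GL}_n$). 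Under $\iota_v$, the parabolic $\widetilde{Q}_{\Bar{v}}\subset \widetilde{G}_{F^+_{\Bar{v}}}$ is identified with the standard parabolic $P_{(n_1,\dots,n_t)}(F_v)\subset\textnormal{GL}_{2n}(F_v)$ refining $(n,n)$, with Levi $M=\textnormal{GL}_{n_1}\times\cdots\times\textnormal{GL}_{n_t}$, and the $\widetilde{Q}_{\Bar{v}}$-ordinarity hypothesis on $t^{-1}(\widetilde{\pi}_{\Bar{v}}\circ\iota_v^{-1})\otimes V_{t^{-1}\Tilde{\lambda}_{\Bar{v}}}$ becomes exactly the hypothesis ``$t^{-1}\Pi_v\otimes_E V_{\Tilde{\lambda}^{\vee}}$ is $Q$-ordinary'' of Theorem~\ref{Th4.27} (with $Q=P_{(n_1,\dots,n_t)}$, $n$ replaced by $2n$), and the decomposition $\widetilde{\pi}_1\otimes\cdots\otimes\widetilde{\pi}_t$ of $(t^{-1}(\widetilde{\pi}\circ\iota_v^{-1}))^{\widetilde{Q}_{\Bar{v}}\textnormal{-ord}}$ matches $\Pi_v^{Q\textnormal{-ord}}=\pi_1\otimes\cdots\otimes\pi_t$ under the same identification.

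With these identifications in place, Theorem~\ref{Th4.27} (in the form of Remark~\ref{Rem4.29}) applied to $\Pi$, $v$, $Q=P_{(n_1,\dots,n_t)}\subset\textnormal{GL}_{2n}$ directly yields the upper-triangular block decomposition of $r_t(\Pi)|_{G_{F_v}}=r_t(\widetilde{\pi})|_{G_{F_v}}$, with each graded piece $\rho_j:G_{F_v}\to\textnormal{GL}_{n_j}(\overline{\mathbf{Q}}_p)$ potentially semistable, with labelled $\iota$-Hodge--Tate weights $\Tilde{\lambda}_{\iota,2n+1-(n_1+\cdots+n_j)}+n_1+\cdots+n_j-1>\cdots>\Tilde{\lambda}_{\iota,2n+1-(n_1+\cdots+n_{j-1}+1)}+n_1+\cdots+n_{j-1}$, and with $\textnormal{WD}(\rho_j)^{F-ss}\cong\textnormal{rec}^T(\widetilde{\pi}_j\otimes|\cdot|^{n_j-\sum_j})$, which is precisely the statement of Theorem~\ref{Th4.30}. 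The only genuine subtlety — and the step I expect to require the most care — is checking that the base change $\Pi$ has the correct local component at $v$ \emph{on the nose} (not just up to semisimplification or Frobenius semisimplification), so that the $Q$-ordinarity hypothesis and the output monodromy genuinely transfer; this is where one must cite the precise form of \cite{Shi14} and the compatibility in Theorem~\ref{Th2.19}(iii), together with the observation that $\widetilde{\pi}$ being cuspidal and cohomological forces $\Pi_v$ to be (a twist of) an induction of essentially square-integrable representations, so that all the machinery of \S\ref{sec4.2} — in particular Corollary~\ref{Cor4.25} and the pre-unitarity input it needs — applies verbatim with $n$ replaced by $2n$.
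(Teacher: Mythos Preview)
Your proposal is correct and follows exactly the paper's approach: the paper's proof is the one-liner ``This follows from Remark~\ref{Rem4.29}, and Theorem~\ref{Th2.19},'' and you have simply unpacked what that means---Shin's base change produces an isobaric $\Pi$ on $\textnormal{GL}_{2n}(\mathbf{A}_F)$ with $r_t(\Pi)=r_t(\widetilde{\pi})$ and $\Pi_v\cong\widetilde{\pi}_{\Bar{v}}\circ\iota_v^{-1}$ (Theorem~\ref{Th2.19}(iii)), and then the extension of Theorem~\ref{Th4.27} to pre-unitary isobaric sums (Remark~\ref{Rem4.29}) gives the result. The ``subtlety'' you flag about the local component matching on the nose is precisely what Theorem~\ref{Th2.19}(iii) asserts at split places, so no extra work is needed there.
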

\begin{proof}
This follows from Remark~\ref{Rem4.29}, and Theorem~\ref{Th2.19}.
\end{proof}

\section{A torsion local-global compatibility conjecture}\label{sec5}
In this section, we formulate integral local-global compatibility conjectures in great generality and state our progress on proving them. Most of this, although only in more restrictive setups, was already formulated in \cite{CEGGPS16}, and \cite{GN20}.

For the rest of the section, set $G:={\textnormal{GL}_n}_{/F}$ for a fixed number field $F$ and an integer $n\geq 2$. Pick a subfield $E\subset \overline{\mathbf{Q}}_p$, finite over $\mathbf{Q}_p$, with ring of integers $\mathcal{O}\subset E$ and a choice of uniformiser $\varpi\in \mathcal{O}$. We assume that $E$ is large enough so that $[F:\mathbf{Q}]=|\Hom(F,E)|$.
\subsection{Local deformation rings}
To formulate local-global compatibility that also keeps track of torsion, we make use of Kisin's potentially semistable local deformation rings.

Let $\textnormal{CNL}_{\mathcal{O}}$ denote the category of complete local Noetherian $\mathcal{O}$-algebras with residue field $k:=\mathcal{O}/\varpi$. Set $L:=F_v$ for some $v\in S_p(F)$. Denote by $L_0$ its maximal subfield that is unramified over $\mathbf{Q}_p$, and by $\overline{L}_0$ the maximal unramified extension of $L_0$. Similar notations apply to any finite field extension $L'/L$. Given a continuous Galois representation $\overline{\rho}:G_L\to \textnormal{GL}_n(k)$, we can form the framed deformation functor $D_{\overline{\rho}}^{\Box}:\textnormal{CNL}_{\mathcal{O}}\to \textnormal{Sets}$, sending a test object $(A,\mathfrak{m}_A)$ to the set of continuous homomorphisms $\rho:G_L\to \textnormal{GL}_n(A)$ with $\rho\otimes_AA/\mathfrak{m}_A= \overline{\rho}$. It is known to be represented by an object $R^{\Box}_{\overline{\rho}}\in \textnormal{CNL}_{\mathcal{O}}$, that is called its \textit{framed deformation ring}. We denote by $\rho^{\textnormal{univ}}:G_L\to \textnormal{GL}_n(R^{\Box}_{\overline{\rho}})$ the universal lift, and, for $x:R^{\Box}_{\overline{\rho}}\to A$ in $\textnormal{CNL}_{\mathcal{O}}$, by $\rho_x$ its specialisation $\rho^{\textnormal{univ}}\otimes_{R^{\Box}_{\overline{\rho}},x}A$.

Consider an $n$-dimensional Weil--Deligne inertial type $\tau=(\rho_{\tau},N_{\tau})$ for $L$ over $E$ and a highest weight vector $\lambda\in (\mathbf{Z}_+^n)^{\Hom(L,E)}$ for $\textnormal{Res}_{L/\mathbf{Q}_p}\textnormal{GL}_n$. From now on we will make the following technical assumption.
\begin{assumption}
    Assume that $E$ is large enough so that there is a finite Galois extension $L_{\tau}/L$ such that $\rho_{\tau}|_{I_{L_{\tau}}}$ is trivial with $|\Hom(L_{\tau},E)|=|L_{\tau}:\mathbf{Q}_p|$. Fix such an  $L_{\tau}/L$.
\end{assumption}
In \cite{Kis07}, Kisin constructs a reduced $\mathcal{O}$-flat quotient $R^{\lambda,\rho_{\tau}}_{\overline{\rho}}$ of $R^{\Box}_{\overline{\rho}}$ whose $E'$-points, for any finite extension $E'/E$, correspond to lifts $\rho: G_L\to \textnormal{GL}_n(E')$ of $\overline{\rho}$ such that $\rho$ is potentially semistable with labelled Hodge--Tate weights $(\lambda_{\iota,1}+n-1>...>\lambda_{\iota,n})_{\iota\in \Hom(L,E)}$ and $\textnormal{WD}(\rho)^{ss}|_{I_L}\cong \rho_{\tau}\otimes_EE'$. After introducing some terminology, we will further describe its $B$-valued points for a general finite $E$-algebra $B$. 

To a given $\lambda$, we associate a $p$-adic Hodge type $\mathbf{v}_{\lambda}$ in the sense of \cite{Kis07}. Consider an $n$-dimensional $E$-vector space $D_E$ and write 
\begin{equation*}
    D_{E,L}:=D_E\otimes_{\mathbf{Q}_p}L\cong \oplus_{\iota:L\hookrightarrow E}D_{E,\iota}.
\end{equation*}
For each $\iota: L\hookrightarrow E$, we pick an (arbitrary) decreasing filtration $\textnormal{Fil}^{\bullet}D_{E,\iota}$ by sub-$E$-vector spaces so that $\dim_E\textnormal{gr}^iD_{E,\iota}\neq 0$ if and only if $i=\lambda_{\iota,n+j-1}+j-1$ for $1\leq j\leq n$ in which case the $E$-dimension is exactly $1$. By taking the direct sum of the filtrations, we get a decreasing a decreasing filtration $\textnormal{Fil}^{\bullet}D_{E,L}$ on $D_{E,L}$ by $E\otimes_{\mathbf{Q}_p}L$-submodules. We set $\mathbf{v}_{\lambda}=\{D_E,\textnormal{Fil}^{\bullet}D_{E,L}\}$. For a finite $E$-algebra $B$, and a continuous de Rham representation $\rho_B$ of $G_L$ on a finite free rank $n$ $B$-module $V_B$, we say that $\rho_B$ has $p$-adic Hodge type $\mathbf{v}_{\lambda}$ if, for each $i\in \mathbf{Z}$, there is an isomorphism
\begin{equation*}
\textnormal{gr}^i(V_B\otimes_{\mathbf{Q}_p}B_{\textnormal{dR}})^{G_L}\cong B\otimes_E\textnormal{gr}^iD_{E,L}
\end{equation*}
of $B\otimes_{\mathbf{Q}_p}L$-modules. In particular, any de Rham Galois representation $\rho:G_L\to \textnormal{GL}_n(E)$ with labelled Hodge--Tate weights $(\lambda_{\iota,1}+n-1>...>\lambda_{\iota,n})_{\iota\in \Hom(L,E)}$ has $p$-adic Hodge type $\mathbf{v}_{\lambda}$.

Consider a finite $E$-algebra $B$, and a continuous potentially semistable representation $\rho_B$ of $G_L$ on a finite free $B$-module $V_B$ of rank $n$. We explain the notion of $\rho_{B}$ having inertial type $\rho_{\tau}$. Assume for a second that $B$ is local with residue field $E'$. Further assume that $V_B$ becomes semistable as a representation of $G_{L_{\tau}}$. Set
\begin{equation*}
    D^{L_{\tau}}_{\textnormal{st}}(V_B)=(V_B\otimes_{\mathbf{Q}_p}B_{\textnormal{st}})^{G_{L_{\tau}}},
\end{equation*}
a finite free $B\otimes_{\mathbf{Q}_p}L_{\tau,0}$-module, forming a filtered $(\varphi,N,L_{\tau}/L,E)$-module. In particular, it admits a $B\otimes_{\mathbf{Q}_p}L_{\tau,0}$-linear action of $I_{L_{\tau}/L}$ that commutes with $\varphi$ and $N$. Since higher cohomology of finite groups is trivial in characteristic $0$, deformation theory tells us that the $I_{L_{\tau}/L}$-action on $D^{L_{\tau}}(V_B)$ comes as extension of scalars along $E'\otimes_{\mathbf{Q}_p}L_{\tau,0}\hookrightarrow B\otimes_{\mathbf{Q}_p}L_{\tau,0}$ of a representation over a rank $n$ free $E'\otimes_{\mathbf{Q}_p}L_{\tau,0}$-module. Moreover, since the $I_{L_{\tau}/L}$-action commutes with $\varphi$, it further descends to a representation of $I_{L_{\tau}/L}$ on some $n$-dimensional $E'$-vector space $P_{\rho_{B}}$. We say that $\rho_B$ is of inertial type $\rho_{\tau}$ if its restriction to $G_{L_{\tau}}$ is semistable and $P_{\rho_B}$ with its $I_{L_{\tau}/L}$-action is equivalent to $\rho_{\tau}$. One then easily extends the definition to general finite $E$-algebras using that every such algebra is a product of finite \textit{local} $E$-algebras.

Finally, we prepare the upcoming result by making the following definition. Let $A$ be an $E$-algebra. We then define a $(\varphi, N,L_{\tau}/L,A)$-module of rank $n$ to be a projective $A \otimes_{\mathbf{Q}_p} L_{\tau,0}$-module of rank $n$ equipped with
\begin{enumerate}
    \item a Frobenius semilinear automorphism $\varphi$;
    \item an $A \otimes_{\mathbf{Q}_p} L_{\tau,0}$-linear endomorphism $N$ satisfying $N\varphi =p\varphi N$;
    \item and an $L_{\tau,0}$-semilinear, $A$-linear $\textnormal{Gal}(L_{\tau}/L)$-action commuting with $\varphi$ and $N$.
\end{enumerate}
\begin{Th}\label{Th5.1}
    There is a unique $\mathcal{O}$-flat quotient $R^{\Box}_{\overline{\rho}}\to R^{\lambda,\rho_{\tau}}_{\overline{\rho}}$ characterised by the property that an arbitrary map $x:R^{\Box}_{\overline{\rho}}\to B$ to a finite $E$-algebra factors through $R^{\lambda,\rho_{\tau}}_{\overline{\rho}}$ if and only if $\rho_x$ is potentially semistable of $p$-adic Hodge type $\mathbf{v}_{\lambda}$ and inertial type $\rho_{\tau}$. Moreover, $R^{\lambda,\rho_{\tau}}_{\overline{\rho}}$ is reduced.

    Finally, there exists a rank $n$ $(\varphi,N,L_{\tau}/L,R^{\lambda,\rho_{\tau}}_{\overline{\rho}}[1/p])$-module $D^{\lambda,\rho_{\tau}}_{\textnormal{st},L_{\tau},\overline{\rho}}$ such that, for any finite $E$-algebra $B$, and map $x:R^{\lambda,\rho_{\tau}}_{\overline{\rho}}\to B$, we have a canonical isomorphism of $(\varphi,N,L_{\tau}/L,B)$-modules
    \begin{equation*}
        D^{\lambda,\rho_{\tau}}_{\textnormal{st},L_{\tau},\overline{\rho}}\otimes_{R^{\lambda,\rho_{\tau}}_{\overline{\rho}}[1/p],x}B\cong (\rho_x\otimes_{\mathbf{Q}_p}B_{\textnormal{st}})^{G_{L_{\tau}}}=D_{\textnormal{st}}^{L_{\tau}}(\rho_x).
    \end{equation*}
\end{Th}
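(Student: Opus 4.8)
The plan is to deduce Theorem~\ref{Th5.1} from Kisin's construction in \cite{Kis07} essentially by citation, carefully translating his results into the language of $p$-adic Hodge types and Weil--Deligne inertial types used here, and then to extract the universal $(\varphi,N,L'/L)$-module from the interpolation formalism of \emph{loc.\ cit.}, \S1--2. First I would recall that Kisin constructs, for a fixed $p$-adic Hodge type $\mathbf{v}$ and a fixed finite Galois extension $L'/L$ over which potential semistability is tested, a closed subscheme of $\operatorname{Spec} R^{\Box}_{\overline\rho}[1/p]$ whose points are the potentially semistable (resp.\ potentially crystalline) lifts of the given Hodge type, and that this subscheme is the generic fibre of a unique $\mathcal{O}$-flat, $p$-torsion free quotient of $R^{\Box}_{\overline\rho}$ (cf.\ \cite{Kis07}, Theorem 2.7.6 and Theorem 3.3.4 in the formulation used in \cite{Ger18}, \S3.3). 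The point $\mathbf{v}_\lambda$ is exactly the $p$-adic Hodge type attached to $\lambda$ by the usual $\rho$-shift, so applying Kisin's theorem with $\mathbf{v}=\mathbf{v}_\lambda$ gives an $\mathcal{O}$-flat quotient $R^{\lambda}_{\overline\rho}$ parametrising potentially semistable lifts of $p$-adic Hodge type $\mathbf{v}_\lambda$.

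Next I would cut this quotient down by the inertial type condition. The key input is that, over the generic fibre, the assignment $x\mapsto D_{\mathrm{pst}}(\rho_x)$ is interpolated by a $(\varphi,N,G_L)$-module over $R^{\lambda}_{\overline\rho}[1/p]$ (after possibly enlarging $E$ so that the relevant $I_L$-representations are realised over $E$); this is precisely the content of \cite{Kis07}, \S1.2 and the discussion of the "universal filtered module". The $I_L$-action on this module, restricted along $I_L\hookrightarrow W_L$, gives a locally constant family of representations of the finite quotient $I_L/I_{L'}$, and by rigidity of representations of a finite group in characteristic zero the locus where this family has a fixed isomorphism class $\rho_\tau$ is open and closed in $\operatorname{Spec} R^{\lambda}_{\overline\rho}[1/p]$. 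Taking the scheme-theoretic closure (equivalently, the maximal $\mathcal{O}$-flat quotient whose generic fibre is this union of components) defines $R^{\lambda,\rho_\tau}_{\overline\rho}$, and by construction a map $x:R^{\Box}_{\overline\rho}\to B$ to a finite $E$-algebra factors through it iff $\rho_x$ is potentially semistable of $p$-adic Hodge type $\mathbf{v}_\lambda$ and inertial type $\rho_\tau$; uniqueness of an $\mathcal{O}$-flat quotient with a prescribed generic fibre is a standard fact. Reducedness follows because Kisin's $R^{\lambda}_{\overline\rho}[1/p]$ is generically smooth, hence reduced, so its components and $R^{\lambda,\rho_\tau}_{\overline\rho}[1/p]$ are reduced, and $R^{\lambda,\rho_\tau}_{\overline\rho}$ is $\mathcal{O}$-flat, hence embeds into $R^{\lambda,\rho_\tau}_{\overline\rho}[1/p]$.

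For the last assertion, I would take $L'/L$ finite with $\rho_\tau|_{I_{L'}}$ trivial — so that $\rho_x|_{G_{L'}}$ is semistable for every $x$ factoring through $R^{\lambda,\rho_\tau}_{\overline\rho}$ — and set $D^{\lambda,\rho_\tau}_{\mathrm{st},L',\overline\rho}$ to be the restriction to $R^{\lambda,\rho_\tau}_{\overline\rho}[1/p]$ of Kisin's universal $(\varphi,N,G_L)$-module over $R^{\lambda}_{\overline\rho}[1/p]$, viewed via $\operatorname{Gal}(L'/L)=G_L/G_{L'}$ as a $(\varphi,N,L'/L,R^{\lambda,\rho_\tau}_{\overline\rho}[1/p])$-module. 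Its rank is $n$ because it is so after specialising at a single $\overline E$-point, and it is projective of constant rank by flatness and the fact that $R^{\lambda,\rho_\tau}_{\overline\rho}[1/p]$ is reduced (so the fibre rank is locally constant). The required base-change isomorphism $D^{\lambda,\rho_\tau}_{\mathrm{st},L',\overline\rho}\otimes_{R^{\lambda,\rho_\tau}_{\overline\rho}[1/p],x}B\cong D_{\mathrm{st},L'}(\rho_x)$ is exactly Kisin's compatibility of his universal module with specialisation (\cite{Kis07}, Lemma 1.2.2 and Proposition 1.2.1 in that circle of ideas); one only needs that formation of $(\rho\otimes_{\mathbf{Q}_p}B_{\mathrm{st}})^{G_{L'}}$ commutes with the relevant finite base change, which holds once $\rho_x|_{G_{L'}}$ is semistable, i.e.\ exactly under our hypothesis on $L'$.

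The main obstacle I expect is not any single deep theorem — all the hard work is in \cite{Kis07} — but rather the bookkeeping needed to pin down the inertial-type locus as a \emph{union of connected components} of $\operatorname{Spec} R^{\lambda}_{\overline\rho}[1/p]$ and to verify that its closure is $\mathcal{O}$-flat with the asserted functor of points, together with the (minor) subtlety of possibly enlarging $E$ so that $\rho_\tau$ and the universal $I_L$-representation are defined over $E$ rather than $\overline E$; one must check that the resulting quotient is independent of this enlargement and of the auxiliary choice of $L'$. A secondary point requiring care is that the characterisation is stated for all finite $E$-algebras $B$, not just fields, so I would spell out that factoring through $R^{\lambda,\rho_\tau}_{\overline\rho}$ can be tested on the (finitely many) residue fields of $B$ together with flatness, which reduces the general case to Kisin's field-valued statement.
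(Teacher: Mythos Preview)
Your overall approach---reduce everything to citations from \cite{Kis07}---is exactly what the paper does, and your treatment of the second paragraph (the universal $(\varphi,N,L'/L)$-module) is essentially correct in spirit, though the precise references in \cite{Kis07} are Theorem~2.5.5 (for the projective module with $\varphi$ and $N$ and its base-change property) and Proposition~2.7.2 (for the $\mathrm{Gal}(L'/L)$-action), not the results in \S1.2 you cite. Also note that \cite{Kis07}, Theorem~2.7.6 already incorporates the Galois/inertial type condition, so your two-step construction (first Hodge type, then cut by inertial type via rigidity) is re-deriving part of Kisin's argument; this is harmless but unnecessary.

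The genuine gap is your reducedness argument. ``Generically smooth, hence reduced'' is not valid: generic smoothness gives only the condition $R_0$, and one still needs $S_1$ (no embedded primes) to conclude reducedness via Serre's criterion. In the potentially semistable case this is not immediate from \cite{Kis07}; the paper instead cites \cite{BG19}, Theorem~3.3.3, where Bellovin--Gee establish reducedness by showing the generic fibre is a local complete intersection (hence Cohen--Macaulay, hence $S_1$) and combining this with generic regularity. You should replace your one-line deduction with this reference, or reproduce the lci argument.
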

\begin{proof}
    The first part, besides the reducedness, is \cite{Kis07}, Theorem 2.7.6. The reducedness follows from \cite{BG19}, Theorem 3.3.3.

The second part is implicit in \cite{Kis07} and follows from \textit{loc. cit.}, Theorem 2.5.5 and Proposition 2.7.2. Namely, by \textit{loc. cit.} Theorem 2.5.5, there is a projective $R^{\lambda,\rho_{\tau}}_{\overline{\rho}}[1/p] \otimes_{\mathbf{Q}_p}  L_{\tau,0}$-module $D^{\lambda,\rho_{\tau}}_{\textnormal{st},L_{\tau},\overline{\rho}}$ of rank $n$ with a Frobenius semilinear automorphism $\varphi$, and an $R^{\lambda,\rho_{\tau}}_{\overline{\rho}}[1/p] \otimes_{\mathbf{Q}_p}  L_{\tau,0}$-linear endomorphism $N$ such that $N\varphi =p\varphi N$. Moreover, for any $x:R^{\lambda,\rho_{\tau}}_{\overline{\rho}}\to B$ as in the statement, there is a canonical isomorphism
\begin{equation}\label{eq5.1}
D^{\lambda,\rho_{\tau}}_{\textnormal{st},L_{\tau},\overline{\rho}}\otimes_{R^{\lambda,\rho_{\tau}}_{\overline{\rho}}[1/p],x}B\cong (\rho_x\otimes_{\mathbf{Q}_p}B_{\textnormal{st}})^{G_{L_{\tau}}}=D_{\textnormal{st}}^{L_{\tau}}(\rho_x).
    \end{equation}
    respecting $\varphi$ and $N$. Furthermore, the isomorphism above is induced by the $R^{\lambda,\rho_{\tau}}_{\overline{\rho}}[1/p] \otimes_{\mathbf{Q}_p}  L_{\tau,0}$-linear isomorphism
    \begin{equation}\label{eq5.2}
        D_{\textnormal{st},L_{\tau},\overline{\rho}}^{\lambda,\rho_{\tau}}\xrightarrow{\sim}(\rho^{\textnormal{univ}}\otimes_{R^{\lambda,\rho_{\tau}}_{\overline{\rho}}[1/p]}B_{\textnormal{st},R^{\lambda,\rho_{\tau}}_{\overline{\rho}}[1/p]})^{G_{L_{\tau}}}
    \end{equation}
    of \cite{Kis07}, Proposition 2.7.2. The RHS of \ref{eq5.2} admits a natural $L_{\tau,0}$-semilinear and $R^{\lambda,\rho_{\tau}}_{\overline{\rho}}$-linear action of $\textnormal{Gal}(L_{\tau}/L)$, and this action commutes with $\varphi$ and $N$, equipping $D_{\textnormal{st},L_{\tau},\overline{\rho}}^{\lambda,\rho_{\tau}}$ with the structure of a $(\varphi,N,L_{\tau}/L,R^{\lambda,\rho_{\tau}}_{\overline{\rho}}[1/p])$-module. Therefore, by definition, \ref{eq5.1} will also respect the $\textnormal{Gal}(L_{\tau}/L)$-action.
\end{proof}

Finally, we consider a slight refinement of Kisin's deformation rings. Set $R^{\lambda,\preceq \tau}_{\overline{\rho}}$ be the the $\mathcal{O}$-flat reduced quotient of $R^{\lambda,\rho_{\tau}}_{\overline{\rho}}$ corresponding to the Zariski closure of
\begin{equation*}
    S_{\preceq \tau}:=\{x\in \textnormal{m-Spec}(R^{\lambda,\rho_{\tau}}_{\overline{\rho}}[1/p])\mid \textnormal{WD}(\rho_x)|_{I_L}\preceq \tau\}=
\end{equation*}
\begin{equation*}
    \{x:R^{\lambda,\rho_{\tau}}_{\overline{\rho}}[1/p]\to E_x\mid E_x/E\textnormal{ finite extension, } \textnormal{WD}(\rho_x)|_{I_L}\preceq \tau\}
\end{equation*}
in $\textnormal{Spec}(R^{\lambda,\rho_{\tau}}_{\overline{\rho}}[1/p])$. We check the only property of $R^{\lambda,\preceq \tau}_{\overline{\rho}}$ that  we will need.
\begin{Prop}
    An $E$-algebra map $x:R^{\lambda,\rho_{\tau}}_{\overline{\rho}}\to E_x$ for a finite field extension $E_x/E$ factors through $R^{\lambda,\preceq \tau}_{\overline{\rho}}$ if and only if $\textnormal{WD}(\rho_x)|_{I_L}\preceq \tau$.
\end{Prop}
\begin{proof}
    Note that it suffices to check that $S_{\preceq\tau}\subset \textnormal{m-Spec}(R^{\lambda,\rho_{\tau}}_{\overline{\rho}}[1/p])$ is Zariski closed. Indeed, as then, $R^{\lambda,\rho_{\tau}}_{\overline{\rho}}$ being a Jacobson ring, the set of closed points of the Zariski closure of $S_{\preceq\tau}$ in $\textnormal{Spec}(R^{\lambda,\rho_{\tau}}_{\overline{\rho}}[1/p])$ must be $S_{\preceq \tau}$ itself (cf. \cite[\href{https://stacks.math.columbia.edu/tag/005Z}{Lemma 005Z}]{stacks-project}).

    We further note that we are free to enlarge $E$ as one checks that $R_{\overline{\rho},\mathcal{O}}^{\lambda,\rho_{\tau}}\otimes_{\mathcal{O}}\mathcal{O}_{E'}\cong R^{\lambda,\rho_{\tau}}_{\overline{\rho},\mathcal{O}_{E'}}$ for any finite extension $E'/E$. In particular, we may assume that the isotypic decomposition
    \begin{equation*}
        V_{\rho_{\tau}}\cong\oplus_{\theta}V_{\rho_{\tau}}[\theta]
    \end{equation*}
    is defined over $E$ where $V_{\rho_{\tau}}$ is the representation space of $\rho_{\tau}$ and the sum runs over absolutely irreducible $E$-representations of $I_{L_{\tau}/L}$.

    According to Theorem~\ref{Th5.1}, we have a rank $n$ $(\varphi,N,L_{\tau}/L,R^{\lambda,\rho_{\tau}}_{\overline{\rho}}[1/p])$-module $D:=D_{\textnormal{st},L_{\tau},\overline{\rho}}^{\lambda,\rho_{\tau}}$. In particular, for a fixed $\iota_0:L_{\tau,0}\hookrightarrow E$, we can introduce
    \begin{equation*}
        W:=D\otimes_{R^{\lambda,\rho_{\tau}}_{\overline{\rho}}[1/p]\otimes_{\mathbf{Q}_p}L_{\tau,0},\iota_0}R^{\lambda,\rho_{\tau}}_{\overline{\rho}}[1/p],
    \end{equation*}
    a projective rank $n$ $R^{\lambda,\rho_{\tau}}_{\overline{\rho}}[1/p]$-module with a linear endomorphism $N$, and a commuting linear action of $I_{L_{\tau}/L}$. For any ring map $R^{\lambda,\rho_{\tau}}_{\overline{\rho}}[1/p]\to R$, we use the abbreviation $W_R:=W\otimes_{R^{\lambda,\rho_{\tau}}_{\overline{\rho}}[1/p]}R$. For a closed point $x:R^{\lambda,\rho_{\tau}}_{\overline{\rho}}[1/p]\to E_x$, we further denote by $(W_x,N_x)$ the corresponding specialisation and note that it is isomorphic to $\textnormal{WD}(\rho_x)|_{I_L}$.

    We now pick an irreducible component $Z=\textnormal{Spec}(A)\hookrightarrow \textnormal{Spec}(R^{\lambda,\rho_{\tau}}_{\overline{\rho}}[1/p])$ and prove the following lemma from which the proposition follows.

\begin{Lemma}
    There is a Weil--Deligne inertial type of the form $\tau'=(\rho_{\tau},N_{\tau'})$ such that the locus on $\textnormal{m-Spec}(A)$ where $\textnormal{WD}(\rho_x)|_{I_L} \sim  \tau'$ is open and dense. Moreover, the locus on $\textnormal{m-Spec}(A)$ where $\textnormal{WD}(\rho_x)|_{I_L}\preceq\tau$  is Zariski closed and only non-empty if $\tau\preceq \tau'$.
\end{Lemma}
\begin{proof}
    To prove the first statement, set $F=\textnormal{Frac}A$, and consider the base change $W_F$, a finite free $F$-module of rank $n$. Consider the decomposition into isotypic components
    \begin{equation*}
        W_F=\oplus_{\theta} W_F[\theta]
    \end{equation*}
    with respect to the action of the inertia subgroup. Set $d_{\theta}=\dim_FW_F[\theta]=\dim_EV_{\rho}[\theta]$.

    Consider the intersection $W_A[\theta]:=W_A\cap W_F[\theta]$ and note that it is preserved by $N$ and the action of $I_{L_{\tau}/L}$ as it holds for both members of the intersection. Moreover, we have the equality $W_A[\theta]\otimes_A F=W_F[\theta]$. In particular, $W_A[\theta]$ is generically both a free module of rank $d_{\theta}$ and a direct summand of $W_A$. In other words, we can find $f'_{\theta}$ such that the base change $W_{A[1/f'_{\theta}]}[\theta]=W_A[\theta]\otimes_{A}A[1/f'_{\theta}]$ is finite free over $A[1/f'_{\theta}]$ and is a direct summand of $W_{A[1/f'_{\theta}]}$. Again, it is preserved by the monodromy and the action of the inertia subgroup. Denote the restriction of $N$ to the obtained subspace by $N[\theta]$. 
    
    Now we can apply Lemma 7.8.10 of \cite{BC09} to get $f''_{\theta}\in A$ such that, for $f_{\theta}=f'_{\theta}f''_{\theta}$, the equivalence class of $N[\theta]_x$ is independent of $x\in \textnormal{m-Spec}(A[1/f_{\theta}])$. The first part of the lemma follows by noting that $N[\theta]_x=N_x[\theta]$, the latter denoting the restriction of the specialisation of $N_x$ to $W_x[\theta]$ where $W_x\sim \textnormal{WD}(\rho_x)|_{I_L}$. Denote the resulting Weil--Deligne inertial type by $\tau'=(\rho_{\tau},N_{\tau'})$.

    \vspace{2mm}

    Now consider an arbitrary point $x\in \textnormal{m-Spec}(A)$ with corresponding maximal ideal $\mathfrak{m}_x\subset A$, Galois representation $\rho_x$ and residue field $E_x$, a finite extension of $E$. We prove that $\textnormal{WD}(\rho_x)|_{I_L}\preceq \tau'$. Consider the localisation $W_{\mathfrak{m}_x}:=W_{A_{\mathfrak{m}_x}}$, and its base change $W_{\mathfrak{m}_x}^{\wedge}$ along $A_{\mathfrak{m}_x}\to A_{\mathfrak{m}_x}^{\wedge}$. Write $F_x:= \textnormal{Frac}A^{\wedge}_{\mathfrak{m}_x}$ and $W_{\mathfrak{m}_x}^{\wedge}[\theta]:= W_{\mathfrak{m}_x}^{\wedge}\cap W_{F_x} [\theta]$.
    
    Since $I_{L_{\tau}/L}$ is a finite group, it has trivial higher cohomology in characteristic $0$. In particular, the deformation $\rho_{A_{\mathfrak{m}_x}^{\wedge}}:I_{L_{\tau}/L}\to \textnormal{End}_{A_{\mathfrak{m}_x}^{\wedge}}(W_{\mathfrak{m}_x}^{\wedge})$ of $\textnormal{WD}(\rho_x)^{ss}\mid_{I_{L}}$ must be isomorphic to the extension of scalars of the latter along $E_x\to A_{\mathfrak{m}_x}^{\wedge}$. In other words, we have an isomorphism $W_{\mathfrak{m}_x}^{\wedge}\cong W_x\otimes_{E_x}A_{\mathfrak{m}_x}^{\wedge}$ of $I_{L_{\tau}/L}$-representations. In particular, $W_{\mathfrak{m}_x}^{\wedge}[\theta]\subset W_{\mathfrak{m}_x}^{\wedge}$ is a free direct summand for all isotypic component. 

    Set $N_{A_{\mathfrak{m}_x}^{\wedge}}[\theta]=N_{A_{\mathfrak{m}_x}^{\wedge}}|_{W_{\mathfrak{m}_x}^{\wedge}[\theta]}$. Note that $N_{A_{\mathfrak{m}_x}^{\wedge}}[\theta]\otimes_{A_{\mathfrak{m}_x}^{\wedge}}F_x=N_{F_x}[\theta]\sim N_F[\theta]\sim N_{\tau'}[\theta]$. Moreover, $N_{A_{\mathfrak{m}_x}^{\wedge}}[\theta]\otimes_{A_{\mathfrak{m}_x}^{\wedge}}E_x=N_x[\theta]$. Therefore, using Proposition~\ref{Prop2.10} one sees that $N_x[\theta]\preceq N_{\tau'}[\theta]$ (cf. \cite{Dot21}, Lemma 4.2 and Remark 4.4). Running over all isotypic components, we obtain $\textnormal{WD}(\rho_x)|_{I_L}(\sim(W_x,N_x))\preceq \tau'$.

    We have proved so far that $\textnormal{Spec}(A)$ contains a dense open $D(\prod_{\theta}f_{\theta})$ with closed points satisfying $\textnormal{WD}(\rho_x)|_{I_{L}}\sim\tau'$ and that, for every $x\in \textnormal{m-Spec}(A)$ we have $\textnormal{WD}(\rho_x)|_{I_{L}}\preceq\tau'$. Set $\textnormal{Spec}(A^{\preceq \tau})\subset \textnormal{Spec}(A)$ to be an irreducible component of the Zariski closure of
    \begin{equation*}
        S_{\preceq \tau}=\{x\in \textnormal{m-Spec}(A)| \textnormal{WD}(\rho_x)\mid_{I_L}\preceq \tau\}
    \end{equation*}
    in $\textnormal{Spec}(A)$ with its induced reduced subscheme structure. To conclude, it suffices to see that every closed point of $\textnormal{Spec}(A^{\preceq\tau})$ is of type $\preceq \tau$.

    By considering the base change $W_{A_{\preceq\tau}}$ with the induced action of the inertia subgroup and nilpotent operator, and running the previous argument, we see that there is a Weil--Deligne inertial type $\widetilde{\tau}$ and an open dense subspace $\widetilde{U}\subset \textnormal{Spec}(A^{\preceq \tau})$ such that 
    \begin{enumerate}
        \item every closed point in $\widetilde{U}$ is of type $\sim \widetilde{\tau}$, and
        \item every closed point in $\textnormal{Spec}(A^{\preceq\tau})$ is of type $\preceq \widetilde{\tau}$.
    \end{enumerate}
    In particular, we must have $\widetilde{\tau} \preceq \tau$ and the proof is finished.
\end{proof}
\end{proof}

\subsection{Interpolation of local Langlands}
An essential ingredient to formulate torsion local-global compatibility is the existence of a (semisimple) local Langlands correspondence over the generic fiber of our potentially semistable deformation rings. Such a correspondence was already defined in \cite{CEGGPS16} for the potentially crystalline deformation rings of Kisin. However, as it is shown in \cite{Pyv20}, one can define it for the whole potentially semistable deformation ring following the same strategy. Let $\tau$ be a Weil--Deligne inertial type, $\Omega$ be the corresponding Bernstein block, and $\overline{\rho}:G_L\to \textnormal{GL}_n(k)$ be a continuous Galois representation as in the previous subsection.
\begin{Th}
    There is an $E$-algebra homomorphism
    \begin{equation*}
        \eta:\mathfrak{z}_{\Omega}\to R^{\lambda,\rho_{\tau}}_{\overline{\rho}}[1/p]
    \end{equation*}
    such that, for any $x\in \textnormal{m-Spec}(R_{\overline{\rho}}^{\lambda,\rho_{\tau}}[1/p])$ with residue field $E_x$ (necessarily a finite extension of $E$), the character
    \begin{equation*}
        x\circ \eta:\mathfrak{z}_{\Omega}\to E_x
    \end{equation*}
    coincides with the one induced by the natural action of $\mathfrak{z}_{\Omega}$ on $\textnormal{rec}^{-1}(\textnormal{WD}(\rho_x)^{F-ss})\otimes |\det|^{\frac{n-1}{2}}_L$.
    In particular, we also have a map $\eta:\mathfrak{z}_{\Omega}\to R^{\lambda,\preceq\tau}_{\overline{\rho}}[1/p]$ with the same property.
\end{Th}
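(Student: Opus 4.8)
The plan is to construct $\eta$ on the generic fibre by realising the Bernstein centre as an algebra of operators interpolating supercuspidal support, and then matching it with the universal Weil--Deligne representation carried by $D^{\lambda,\rho_\tau}_{\textnormal{st},L',\overline\rho}$. First I would fix a finite Galois extension $L'/L$ with $\rho_\tau|_{I_{L'}}$ trivial, and invoke Theorem~\ref{Th5.1} to obtain the rank $n$ $(\varphi,N,L'/L,R^{\lambda,\rho_\tau}_{\overline\rho}[1/p])$-module $D:=D^{\lambda,\rho_\tau}_{\textnormal{st},L',\overline\rho}$, whose formation commutes with specialisation at any finite $E$-algebra point $x$, giving $D_{\textnormal{st},L'}(\rho_x)$ together with its $\varphi$, $N$ and $\mathrm{Gal}(L'/L)$-action. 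Out of this data one builds, exactly as in \cite{CEGGPS16} (and \cite{Pyv20} for the general potentially semistable case), a family of Weil--Deligne representations over $R^{\lambda,\rho_\tau}_{\overline\rho}[1/p]$: on the Weil group the action of $g\in W_L$ is $(g\bmod W_{L'})\circ\varphi^{-\alpha(g)}$ as recalled in the discussion preceding the proof of Theorem~\ref{Th4.27}, and the monodromy operator is $N$. After possibly enlarging $E$ this descends to a rank $n$ projective $R^{\lambda,\rho_\tau}_{\overline\rho}[1/p]$-module $\mathcal{WD}^{\textnormal{univ}}$ with a Weil--Deligne action specialising to $\textnormal{WD}(\rho_x)$ for each $x$.

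The second step is to extract a character of $\mathfrak{z}_\Omega$ from $\mathcal{WD}^{\textnormal{univ}}$. Over a field, the semisimplified Weil--Deligne representation $\textnormal{WD}(\rho_x)^{ss}$ determines, and is determined by, the supercuspidal support of $\textnormal{rec}^{-1}(\textnormal{WD}(\rho_x)^{F\text{-}ss})\otimes|\det|_L^{(n-1)/2}$, which in turn determines the character $\chi_{\pi_x}:\mathfrak{z}_\Omega\to E_x$; the content of \S\ref{sec2.6} is precisely that $\mathfrak{z}_\Omega$ is generated by functions of supercuspidal support. The key point is to interpolate this: a generator of $\mathfrak{z}_\Omega$ can be written, after the Bushnell--Kutzko equivalence $\textnormal{Mod}_{\textnormal{sm}}(\overline E[G(L)])[\Omega]\xrightarrow{\sim}\textnormal{Mod}(\mathcal{H}(\sigma))$, as a polynomial in traces of Frobenius powers on the inertially-fixed part of $\mathcal{WD}^{\textnormal{univ}}$, i.e. as a regular function on $\textnormal{Spec}(R^{\lambda,\rho_\tau}_{\overline\rho}[1/p])$ built from $\varphi$ acting on a suitable direct summand of $\mathcal{WD}^{\textnormal{univ}}$ (the one where $I_L$ acts through a fixed isomorphism class inside $\rho_\tau$). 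Assigning to each generator the corresponding element of $R^{\lambda,\rho_\tau}_{\overline\rho}[1/p]$ and checking the Bernstein relations are preserved gives the algebra map $\eta$; the defining interpolation property at closed points then holds by construction, since specialisation at $x$ recovers $\textnormal{WD}(\rho_x)$ and hence $\chi_{\pi_x}$. Because closed points are Zariski dense in the (reduced, $\mathcal O$-flat, so in particular Jacobson) ring $R^{\lambda,\rho_\tau}_{\overline\rho}[1/p]$, the map $\eta$ is uniquely pinned down by this property. Finally, composing with the surjection $R^{\lambda,\rho_\tau}_{\overline\rho}[1/p]\twoheadrightarrow R^{\lambda,\preceq\tau}_{\overline\rho}[1/p]$ yields the asserted map $\eta:\mathfrak{z}_\Omega\to R^{\lambda,\preceq\tau}_{\overline\rho}[1/p]$ with the same interpolation property, since the closed points of the target form a subset of those of the source.

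The main obstacle I expect is the interpolation step itself: showing that the assignment on generators of $\mathfrak{z}_\Omega$ genuinely lands in $R^{\lambda,\rho_\tau}_{\overline\rho}[1/p]$ and respects all relations, rather than merely agreeing pointwise. This requires knowing that the summand of $\mathcal{WD}^{\textnormal{univ}}$ on which $I_L$ acts with a prescribed type is a projective $R^{\lambda,\rho_\tau}_{\overline\rho}[1/p]$-module of the expected rank (so that traces of $\varphi$-powers are honest global functions), together with a compatibility between the Bushnell--Kutzko description of $\mathfrak{z}_\Omega$ and these Frobenius traces. This is essentially the argument of \cite{CEGGPS16}, \S3--4 in the crystalline case and of \cite{Pyv20} in general, and since the paper is content to cite those references, in the write-up I would follow \cite{Pyv20}: recall the construction of the family of Weil--Deligne representations from $D^{\lambda,\rho_\tau}_{\textnormal{st},L',\overline\rho}$, invoke \emph{loc. cit.} for the existence of $\eta$ with the stated pointwise property, note uniqueness via density of closed points and reducedness, and then pass to the quotient $R^{\lambda,\preceq\tau}_{\overline\rho}[1/p]$.
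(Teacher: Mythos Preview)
Your proposal is essentially correct and lands on the same architecture as the paper's proof: build the universal Weil--Deligne representation out of $D^{\lambda,\rho_\tau}_{\textnormal{st},L',\overline\rho}$ and then match elements of $\mathfrak{z}_\Omega$ with regular functions on $\operatorname{Spec} R^{\lambda,\rho_\tau}_{\overline\rho}[1/p]$ via Frobenius traces, citing \cite{Pyv20} and \cite{CEGGPS16} for the details. Your closing remarks on uniqueness via density of closed points and passage to the quotient are also exactly right.

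The one place where the paper is sharper, and where your write-up is vaguer than it needs to be, is the mechanism for ``writing generators of $\mathfrak{z}_\Omega$ as polynomials in Frobenius traces''. Rather than going through the Bushnell--Kutzko equivalence and decomposing $\mathcal{WD}^{\textnormal{univ}}$ into $I_L$-isotypic pieces (which works but forces you to verify relations and projectivity of the pieces by hand), the paper invokes Chenevier's pseudo-representation $T:W_L\to\mathfrak{z}_\Omega$ (\cite{Che09}, Proposition~3.11), which already interpolates semisimple local Langlands on the Bernstein side. One then simply \emph{defines} $\eta(T(w))$ to be the trace of $w$ acting on the universal $(\varphi,N,L'/L,E)$-module, and uses \cite{CEGGPS16}, Lemma~4.5 --- which says the image of $T$ spans $\mathfrak{z}_\Omega$ as an $E$-vector space --- to extend $\eta$ linearly. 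This device bypasses your ``main obstacle'' entirely: there is no need to check that an assignment on algebra generators respects the Bernstein relations, because the extension is $E$-linear, not algebraic, and the fact that the result is an algebra homomorphism follows from the defining property of pseudo-representations on both sides. Your approach via isotypic pieces is a valid alternative, but the pseudo-representation route is both shorter and avoids the projectivity subtleties you flagged.
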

\begin{proof}
    The result follows from (the proof of) \cite{Pyv20}, Theorem 3.3. We note that he states the existence of a map with source being a Hecke algebra $\mathcal{H}(\sigma_{\textnormal{min}})$ for a certain type $\sigma_{\textnormal{min}}$ of Schneider--Zink. This Hecke algebra in fact is isomorphic to $\mathfrak{z}_{\Omega}$ and, in the proof of Theorem 3.3 of \textit{loc. cit.}, the map is first constructed from $\mathfrak{z}_{\Omega}$ and it is only at the very end that it is precomposed with the identification $\mathcal{H}(\sigma_{\textnormal{min}})\cong \mathfrak{z}_{\Omega}$.

    The rough sketch is as follows. The two key ingredients in the proof are:
    \begin{enumerate}
        \item The existence of a pseudo-representation $T:W_L\to \mathfrak{z}_{\Omega}$ constructed by Chenevier interpolating local Langlands (cf. \cite{Che09}, Proposition 3.11);
        \item and the existence of a universal $(\varphi,N,L_{\tau}/L,E)$-module over $R_{\overline{\rho}}^{\lambda,\rho_{\tau}}[1/p]\otimes_{\mathbf{Q}_p}L_{\tau,0}$ (cf. Theorem~\ref{Th5.1}).
    \end{enumerate}

    Then, for $w\in W_L$ it is clear that $\eta(T(w))$ should be (some normalisation of) the trace of $w$ acting on the universal $(\varphi,N,L_{\tau}/L,E)$-module. In particular, we have defined $\eta$ for every element in the image of $T$. Moreover, \cite{CEGGPS16}, Lemma 4.5 shows that $\mathfrak{z}_{\Omega}$ is generated (as an $E$-vector space) by the image of $T$. Therefore, we can linearly extend the domain of $\eta$ from the image of $T$ to the whole Bernstein centre.
\end{proof}

\begin{Rem}\label{rem5.3}
    It is interesting to investigate on a possible integral avatar of $\eta$. For instance, one can ask under what circumstance does $\eta$ send $\mathfrak{z}_{\lambda,\tau}^{\circ}:= Z(\mathcal{H}(\sigma(\lambda,\tau)^{\circ}))\cap \mathfrak{z}_{\Omega}$ into $R_{\overline{\rho}}^{\lambda,\preceq \tau}$? Moreover, is there a subring in $R_{\overline{\rho}}^{\lambda,\preceq \tau}[1/p]$, say finite over $R_{\overline{\rho}}^{\lambda,\preceq \tau}$, such that $\mathfrak{z}_{\lambda,\tau}^{\circ}$ always lands in this ring? This turns out to be a rather subtle question, and the answer in most cases depends on some folklore conjectures. More precisely, in \cite{CEGGPS16} they remark that, at least in the potentially crystalline case, $\eta(\mathfrak{z}_{\lambda,\tau}^{\circ})$ should land in the normalisation of $R_{\overline{\rho}}^{\lambda,\preceq \tau}$ inside its generic fiber. This prediction is supported by what they see after patching (cf. \textit{loc. cit.} Lemma 4.18, 3), and Remark 4.21). However, their observation can only be made into a rigorous proof for the components of the local deformation rings for which we know automorphy. 
    
    In some very special cases, it seems plausible that one can prove that $\eta$ sends $\mathfrak{z}_{\lambda,\tau}^{\circ}$ into the normalisation by other means. Namely, when $n=2$, $\rho_{\tau}$ is a tame type, $\tau=(\rho_{\tau},0)$, and we look at the potentialy Barsotti--Tate deformation ring $R^{0,\preceq\tau}_{\Bar{\rho}}$, one can appeal to the work of Caraiani--Emerton--Gee--Savitt to realise the normalisation of the local deformation ring as the direct image of the Kisin variety\footnote{These spaces were first constructed in \cite{Kis09b}, in the Barsotti--Tate case, and have been constructed over any potentially Barsotti--Tate deformation ring associated with a tame type in \cite{CEGS21}, \S5 (where it is denoted by $X_{\overline{r}}$).} (parametrising Breuil--Kisin models of the deformations of $\overline{\rho}$) along its map towards $\textnormal{Spec}(R_{\Bar{\rho}}^{0,\preceq\tau})$. Moreover, using the construction of the map, the fact that the Kisin variety parametrises Breuil--Kisin models of the deformations appearing in $\textnormal{Spec}(R_{\Bar{\rho}}^{0,\preceq\tau})$, and comparisons between Breuil--Kisin modules and Weil--Deligne representations associated to the corresponding Galois representations, it seems possible to show that the image of $\mathfrak{z}_{\lambda,\tau}^{\circ}$ already lies in the direct image of the Kisin variety.
    
    There is some further evidence already present in the literature. Namely, \cite{CEGGPS16b} Lemma 2.15 (under some further assumptions on $\overline{\rho}$) shows that when $n=2$, $\tau=(\mathbf{1},0)$, $L=\mathbf{Q}_p$ and $\lambda$ is in the Fontaine--Laffaille range, $\eta$ sends $\mathfrak{z}_{\lambda,\tau}^{\circ}$ into $R^{\lambda,\preceq\tau}$. Moreover, up to a suitable completion of $\mathfrak{z}_{\lambda,\tau}^{\circ}$, the integral avatar of $\eta$ becomes an isomorphism. Note that in this case $R_{\overline{\rho}}^{\lambda,\preceq\tau}$ is a regular local ring hence normal. In particular, their result is compatible with the prediction of \cite{CEGGPS16}, Remark 4.21.
\end{Rem}
\subsection{The local-global compatibility conjecture}\label{sec5.3}
Fix a dominant weight $\lambda\in (\mathbf{Z}_+^n)^{\Hom(F,E)}$ for $G$, and a collection of $n$-dimensional Weil--Deligne inertial types $\tau=\{\tau_v\}_{v\in S_p(F)}$. Set $\lambda_v:=(\lambda_{\iota})_{\iota:F_v\hookrightarrow E}\in (\mathbf{Z}_+^n)^{\Hom(F_v,E)}$. Recall that we introduced the abstract spherical Hecke algebra $\mathbf{T}^T$ and the corresponding faithful Hecke algebra $\mathbf{T}^{T}(K,\lambda,\tau)$ acting on $R\Gamma(X_K,\mathcal{V}_{(\lambda,\tau)})$. We further introduce a refinement of the abstract Hecke algebra that also consists of Hecke operators at $p$. To do this, note that since $\textnormal{c-Ind}_{\textnormal{GL}_n(\mathcal{O}_{F_v})}^{\textnormal{GL}_n(F_v)}\sigma(\lambda_v,\tau_v)^{\circ}$ is finitely generated as an $\mathcal{O}[\textnormal{GL}_n(F_v)]$-module, we have $\mathcal{H}(\sigma(\lambda_v,\tau_v)^{\circ})[1/p]=\mathcal{H}(\sigma(\lambda_v,\tau_v))$. We have an identification 
\begin{equation*}
    \left(\textnormal{c-Ind}_{\textnormal{GL}_n(\mathcal{O}_{F_v})}^{\textnormal{GL}_n(F_v)}\sigma(\tau_v)\right)\otimes_E V_{\lambda_v^{\vee}} \cong \textnormal{c-Ind}_{\textnormal{GL}_n(\mathcal{O}_{F_v})}^{\textnormal{GL}_n(F_v)}\sigma(\lambda_v,\tau_v),
\end{equation*}
\begin{equation*}
    f\otimes w\mapsto [g\mapsto f(g)V_{\lambda_v^{\vee}}(g)w].
\end{equation*}
We can then define a natural map
\begin{equation*}
    \mathcal{H}(\sigma(\tau_v))\to \mathcal{H}(\sigma(\lambda_v,\tau_v)),
\end{equation*}
\begin{equation*}
    \phi\mapsto \phi\otimes \textnormal{id}_{V_{\lambda^{\vee}}}
\end{equation*}
that, thanks to \cite{ST06}, Lemma 1.4,
is an isomorphism of $E$-algebras. In particular, we have $\mathfrak{z}_{\tau_v}=Z(\mathcal{H}(\sigma(\lambda_v,\tau_v)^{\circ}))[1/p]$. Let $\Omega_v$ denote the Bernstein block corresponding to $\tau_v$ and set $\mathfrak{z}_{\Omega_v}$ to be the corresponding Bernstein centre. We then set
\begin{equation*}
    \mathfrak{z}_{\lambda_v,\tau_v}^{\circ}:=Z(\mathcal{H}(\sigma(\lambda_v,\tau_v)^{\circ}))\cap \mathfrak{z}_{\Omega_v}\subset \mathfrak{z}_{\tau_v},
\end{equation*}
a commutative $\mathcal{O}$-subalgebra and note that $\mathfrak{z}_{\lambda_v,\tau_v}^{\circ}[1/p]=\mathfrak{z}_{\Omega_v}$.
Finally, set
\begin{equation*}
    \mathbf{T}^{T,\lambda,\tau}:=\mathbf{T}^T\otimes_{\mathcal{O}}(\bigotimes_{v\in S_p(F)}\mathfrak{z}_{\lambda_v,\tau_v}^{\circ}),
\end{equation*}
a commutative $\mathcal{O}$-algebra. As a consequence of Lemma~\ref{Lem2.4}, Frobenius reciprocity, and \cite{NT16}, Lemma 3.11, we get a map of $\mathcal{O}$-algebras
\begin{equation*}
    \mathbf{T}^{T,\lambda,\tau}\to \textnormal{End}_{D^+(\mathcal{O})}(R\Gamma(X_K,\mathcal{V}_{(\lambda,\tau)}))
\end{equation*}
for every good subgroup $K\subset G(\mathbf{A}_{F^+}^{\infty})$ with $K_p=\prod_{\Bar{v}\in \overline{S}_p}G(\mathcal{O}_{F^+_{\Bar{v}}})$. Denote by $\mathbf{T}^{T,\lambda,\tau}(K^p)$ the corresponding faithful Hecke algebra $\mathbf{T}^{T,\lambda,\tau}(R\Gamma(X_K,\mathcal{V}_{(\lambda,\tau)}))$. Since $R\Gamma(X_K,\mathcal{V}_{(\lambda,\tau)})$ is a perfect complex in $D^+(\mathcal{O})$, $\mathbf{T}^{T,\lambda,\tau}(K^p)$ is a commutative finite $\mathcal{O}$-algebra. In particular, we obtain a decomposition
\begin{equation*}
    \mathbf{T}^{T,\lambda,\tau}(K^p)=\prod_{\mathfrak{m}}\mathbf{T}^{T,\lambda,\tau}(K^p)_{\mathfrak{m}}
\end{equation*}
where we run over all maximal ideals of $\mathbf{T}^{T,\lambda,\tau}(K^p)$. Therefore, for each $\mathfrak{m}$, we have a corresponding $\mathbf{T}^{T,\lambda,\tau}$-equivariant direct summand $R\Gamma(X_K,\mathcal{V}_{(\lambda,\tau)})_{\mathfrak{m}}$ of $R\Gamma(X_K,\mathcal{V}_{(\lambda,\tau)})$ and the natural map $\mathbf{T}^{T,\lambda,\tau}(K^p)_{\mathfrak{m}}\to \mathbf{T}^{T,\lambda,\tau}(R\Gamma(X_K,\mathcal{V}_{(\lambda,\tau)})_{\mathfrak{m}})$ becomes an isomorphism (cf. \cite{NT16}, \S3.2).

We recall the following conjecture (cf. \cite{CG18}, Conjecture B).
\begin{Conj}[Construction of torsion Galois representations]\label{Conj5.4}Let $K\subset G(\mathbf{A}_{F^+}^{\infty})$ be any good subgroup and $\mathfrak{m}\subset \mathbf{T}^T(K,\lambda,\tau)=\mathbf{T}^T(R\Gamma(X_K,\mathcal{V}_{(\lambda,\tau)})_{\mathfrak{m}})$ be a maximal ideal. Then there exists a continuous semisimple Galois representation
\begin{equation*}
    \overline{\rho}_{\mathfrak{m}}:G_{F,T}\to \textnormal{GL}_n(\mathbf{T}^T(K,\lambda,\tau)/\mathfrak{m})
\end{equation*}
such that, for each finite place $v\notin T$ of $F$, the characteristic polynomial of $\overline{\rho}_{\mathfrak{m}}(\textnormal{Frob}_v)$ is equal to the image of $P_v(X)$ in $(\mathbf{T}^T(K,\lambda,\tau)/\mathfrak{m})[X]$.

Moreover, if $\overline{\rho}_{\mathfrak{m}}$ is absolutely irreducible, then it admits a lift to a continuous homomorphism
\begin{equation*}
    \rho_{\mathfrak{m}}:G_{F,T}\to \textnormal{GL}_n(\mathbf{T}^T(K,\lambda,\tau)_{\mathfrak{m}})
\end{equation*}
such that, for each finite place $v\notin T$ of $F$, the characteristic polynomial of $\rho_{\mathfrak{m}}(\textnormal{Frob}_v)$ is equal to the image of $P_v(X)$ in $\mathbf{T}^T(K,\lambda,\tau)_{\mathfrak{m}}[X]$.
\end{Conj}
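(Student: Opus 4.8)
The first assertion is Theorem~\ref{Th2.20}: granting the condition imposed there on $T$ — which is precisely what is needed to invoke the unconditional base change results of Shin \cite{Shi14} and Labesse — one follows \cite{ACC23}, Theorem~2.3.5, hence ultimately \cite{Sch15}. The plan is to realise the torsion Hecke eigensystems occurring in $H^{\ast}(X_K,\mathcal{V}_{(\lambda,\tau)})$, after a solvable base change that (together with the hypothesis on $T$) places everything in the situation handled in \cite{Sch15}, inside the boundary cohomology of the Borel--Serre compactification of an auxiliary unitary Shimura variety; the perfectoid Shimura variety and the Hodge--Tate period morphism then allow one to $p$-adically interpolate the Galois representations attached to the classical automorphic forms on the unitary group over the (finite, $\mathcal{O}$-flat) Hecke algebra, producing a continuous pseudo-representation, equivalently a determinant, $G_{F,T}\to \textnormal{GL}_n(\mathbf{T}^T(K,\lambda,\tau)/\mathfrak{m})$ with the prescribed values of $\det(X-\textnormal{Frob}_v)=P_v(X)$. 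Since the target is a field, this pseudo-representation is the trace of a unique continuous semisimple $\overline{\rho}_{\mathfrak{m}}$.

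\textbf{The lift.} When $\overline{\rho}_{\mathfrak{m}}$ is absolutely irreducible one runs the same interpolation over $\mathbf{T}^T(K,\lambda,\tau)_{\mathfrak{m}}$ instead of over the residue field, obtaining a determinant valued in $\mathbf{T}^T(K,\lambda,\tau)_{\mathfrak{m}}$, which absolute irreducibility of $\overline{\rho}_{\mathfrak{m}}$ upgrades to a genuine representation. The subtlety is that $R\Gamma(X_K,\mathcal{V}_{(\lambda,\tau)})_{\mathfrak{m}}$ is only a perfect complex of $\mathcal{O}$-modules, not a module concentrated in one degree, so the interpolation controls the Hecke action on its cohomology only up to extensions; accordingly, following \cite{Sch15}, Corollary~5.4.4 (and \cite{NT16} for the numerical bound), what one actually obtains is a lift $\rho_{\mathfrak{m}}:G_{F,T}\to \textnormal{GL}_n(\mathbf{T}^T(K,\lambda,\tau)_{\mathfrak{m}}/I)$ for a nilpotent ideal $I$ with $I^N=0$, $N=N(n,[F:\mathbf{Q}])$. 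This is the content of Theorem~\ref{Th2.22}, and it is the best approximation to the conjecture available by this route.

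\textbf{The main obstacle.} Removing $I$ — i.e.\ proving Conjecture~\ref{Conj5.4} exactly as stated — is the point I do not see how to reach with current tools, and it is where the real difficulty lies. It would follow from a concentration statement: that $R\Gamma(X_K,\mathcal{V}_{(\lambda,\tau)})_{\mathfrak{m}}$ is quasi-isomorphic to a single module placed in one cohomological degree (the integral incarnation of the Calegari--Geraghty numerical coincidence after localising at a non-Eisenstein $\mathfrak{m}$). For $\textnormal{GL}_n$ over a CM field of positive defect this is unknown away from the conjugate self-dual setting; the decomposed-generic hypothesis together with the Caraiani--Scholze vanishing gives concentration only for the \emph{middle-degree} cohomology of the auxiliary unitary Shimura variety (Theorem~\ref{Th2.27}), which is strictly weaker than concentration for $X_K$ itself, since the torsion on the $\textnormal{GL}_n$ side lives in the boundary and is spread over several degrees. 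In the absence of such a result one is forced to carry the nilpotent ideal along, which is exactly why the local-global compatibility statements of this paper — as of \cite{ACC23} and \cite{CN23} — are all phrased "up to possibly enlarging $I$."
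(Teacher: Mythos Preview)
Your assessment is accurate, and indeed there is nothing to compare against: the statement is labelled \textbf{Conjecture}~\ref{Conj5.4} in the paper and is not proved there. The paper uses it only as a hypothesis in the formulation of Conjecture~\ref{Conj5.5}, and its main result (Theorem~\ref{Th5.7}) establishes a version of the combined statement only up to a nilpotent ideal, exactly as you describe.

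Your summary of the state of the art is correct and matches the paper's own discussion: the residual representation $\overline{\rho}_{\mathfrak{m}}$ is Theorem~\ref{Th2.20} (under the stated hypothesis on $T$), and the lift modulo a nilpotent ideal $I$ with $I^N=0$ is Theorem~\ref{Th2.22}. Your identification of the obstruction --- the lack of a concentration result for $R\Gamma(X_K,\mathcal{V}_{(\lambda,\tau)})_{\mathfrak{m}}$ on the $\textnormal{GL}_n$ side, as opposed to the Caraiani--Scholze middle-degree vanishing on the unitary Shimura variety --- is the standard diagnosis and is consistent with how the paper (and \cite{ACC23}, \cite{CN23}) phrases all local-global compatibility results ``up to a nilpotent ideal.''

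One minor sharpening: you write that Theorem~\ref{Th2.20} already gives the first assertion of the conjecture. Strictly speaking, Theorem~\ref{Th2.20} carries the extra hypothesis on $T$, whereas Conjecture~\ref{Conj5.4} as stated imposes no such condition; so even the residual part is, in the full generality of the conjecture, not quite a theorem. You gesture at this (``granting the condition imposed there on $T$''), but it is worth saying plainly that the conjecture is slightly stronger than Theorem~\ref{Th2.20} even in its first clause.
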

Since we have a natural map $\mathbf{T}^T(K,\lambda,\tau)\to \mathbf{T}^{T,\lambda,\tau}(K^p)$, assuming Conjecture~\ref{Conj5.4}, a maximal ideal $\mathfrak{m}\subset \mathbf{T}^{T,\lambda,\tau}(K^p)$ gives rise to a continuous semisimple Galois representation
\begin{equation*}
    \overline{\rho}_{\mathfrak{m}}:G_{F,T}\to \textnormal{GL}_n(\mathbf{T}^{T,\lambda,\tau}(K^p)/\mathfrak{m}).
\end{equation*}
Moreover, if we denote by $\mathfrak{m}^T$ the induced maximal ideal of $\mathbf{T}^T$, we have a natural inclusion
\begin{equation*}
\mathbf{T}^T(K,\lambda,\tau)_{\mathfrak{m}^T}\hookrightarrow \mathbf{T}^{T,\lambda,\tau}(K^p)_{\mathfrak{m}}.
\end{equation*}
In particular, assuming that $\overline{\rho}_{\mathfrak{m}}$ is absolutely irreducible, Conjecture~\ref{Conj5.4} provides a lift 
\begin{equation*}
    \rho_{\mathfrak{m}}:G_{F,T}\to \textnormal{GL}_n(\mathbf{T}^{T,\lambda,\tau}(K^p)_{\mathfrak{m}})
\end{equation*}
of $\overline{\rho}_{\mathfrak{m}}$ satisfying the assertion of Conjecture~\ref{Conj5.4}.

To state the local-global compatibility conjecture, we need to further introduce the subring \begin{equation*}
\mathfrak{z}_{\lambda_v,\tau_v}^{\circ,\textnormal{int}}:=\eta^{-1}(R_{\overline{\rho}_v}^{\lambda_v,\preceq \tau_v})\cap \mathfrak{z}_{\lambda_v,\tau_v}^{\circ}\subset \mathfrak{z}_{\lambda_v,\tau_v}^{\circ}.
\end{equation*}
Note that we still have the property
$\mathfrak{z}_{\lambda_v,\tau_v}^{\circ,\textnormal{int}}[1/p]=\mathfrak{z}_{\Omega_v}$.
\begin{Conj}[\textnormal{Torsion local-global compatibility at} $\ell=p$ ]\label{Conj5.5} Assume Conjecture~\ref{Conj5.4}. Let $\mathfrak{m}\subset \mathbf{T}^{T,\lambda,\tau}(K^p)$ be a non-Eisenstein maximal ideal. For $v\in S_p(F)$, set $\overline{\rho}_v:=\overline{\rho}_{\mathfrak{m}}|_{G_{F_v}}$ and $\rho_v:=\rho_{\mathfrak{m}}|_{G_{F_v}}$. Then, for any $v\in S_p(F)$, there is a (necessarily unique) dotted arrow making the following diagram commutative

$$\begin{tikzcd}
R^{\Box}_{\overline{\rho}_v} \arrow{r}{\rho_v} \arrow[two heads]{d}{}
&\mathbf{T}^{T,\lambda,\tau}(K^p)_{\mathfrak{m}} \\
R^{\lambda_v,\preceq \tau_v}_{\overline{\rho}_v} \arrow[hook]{d}{} \arrow[dotted]{ur}{} &\mathfrak{z}_{\lambda_v,\tau_v}^{\circ,\textnormal{int}}\arrow{u}[swap]{\textnormal{nat}}\arrow{l}{\eta\mid_{\mathfrak{z}_{\lambda_v,\tau_v}^{\circ,\textnormal{int}}}}\arrow[hook]{d}{}\\
R^{\lambda_v,\preceq\tau_v}_{\overline{\rho}_v}[1/p]&\arrow{l}{\eta} \mathfrak{z}_{\Omega_v}.
\end{tikzcd}$$
\end{Conj}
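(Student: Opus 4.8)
The final statement as worded is Conjecture~\ref{Conj5.5}: it is a \emph{conjecture}, not a theorem, and in the generality stated (arbitrary $n$, arbitrary number field $F$, arbitrary non-Eisenstein $\mathfrak{m}$, with no enlargement of a nilpotent ideal and no decomposed generic hypothesis) it is \emph{not} proved in this paper. So a ``proof'' in the strict sense is not available; what I can do, and what I would do here, is record the conditional/partial argument that the paper actually supplies, making explicit exactly which inputs it consumes and where the unconditional version would require ideas beyond the present scope.

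\textbf{The approach.} The plan is to reduce the commutativity of the square to two assertions: (a) a factorisation of the top triangle, i.e.\ that $\rho_v:R^{\Box}_{\overline{\rho}_v}\to \mathbf{T}^{T,\lambda,\tau}(K^p)_{\mathfrak{m}}$ factors through $R^{\lambda_v,\preceq\tau_v}_{\overline{\rho}_v}$; and (b) the compatibility of the two resulting maps out of $\mathfrak{z}^{\circ,\mathrm{int}}_{\lambda_v,\tau_v}$, i.e.\ that $\eta|_{\mathfrak{z}^{\circ,\mathrm{int}}_{\lambda_v,\tau_v}}$ followed by the dotted arrow agrees with the natural map $\mathrm{nat}$ to $\mathbf{T}^{T,\lambda,\tau}(K^p)_{\mathfrak{m}}$. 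Uniqueness of the dotted arrow is automatic from $R^{\Box}_{\overline{\rho}_v}\twoheadrightarrow R^{\lambda_v,\preceq\tau_v}_{\overline{\rho}_v}$ being surjective; the bottom-left triangle and the left column are tautological from the definitions of $R^{\lambda_v,\preceq\tau_v}_{\overline{\rho}_v}[1/p]$ and of $\mathfrak{z}^{\circ,\mathrm{int}}_{\lambda_v,\tau_v}=\eta^{-1}(R^{\lambda_v,\preceq\tau_v}_{\overline{\rho}_v})\cap\mathfrak{z}^{\circ}_{\lambda_v,\tau_v}$. For (a): since $\mathbf{T}^{T,\lambda,\tau}(K^p)_{\mathfrak{m}}$ is a finite flat (hence $\varpi$-torsion-free, after passing to the $\mathcal{O}$-flat quotient) $\mathcal{O}$-algebra, by Theorem~\ref{Th5.1} it suffices to exhibit, for every $m\geq 1$, a characteristic-zero lift of $\rho_v\bmod\varpi^m$ that is potentially semistable of $p$-adic Hodge type $\mathbf{v}_{\lambda_v}$ and with Weil--Deligne inertial type $\preceq\tau_v$; then the factorisation through $R^{\lambda_v,\preceq\tau_v}_{\overline{\rho}_v}$ follows by a limit argument as in \cite{CN23}. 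For (b): having produced such lifts via a degree-shifting map $\overline{\mathcal{S}}^v$ into a faithful Hecke algebra $\widetilde A^{\lambda_v,\tau_v}$ of $P$-ordinary middle-degree cohomology of a $U(n,n)$-Shimura variety, one uses that $\overline{\mathcal{S}}^v$ is a map of $\mathfrak{z}^{\circ}_{\lambda_v,\tau_v}$-algebras together with the characteristic-zero $Q$-ordinary local-global compatibility of Theorem~\ref{Th1.1.7} (which matches crystalline/semistable Frobenius with Frobenius under $\mathrm{rec}^T$, hence pins down the action of $\mathfrak{z}_{\Omega_v}$ via $\eta$); a diagram chase then transports the compatibility from $\widetilde A^{\lambda_v,\tau_v}$ to $\mathbf{T}^{T,\lambda,\tau}(K^p)_{\mathfrak{m}}$, and one passes to the limit over $m$.

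\textbf{Order of steps.} First I would reduce to the case $F$ CM (or more precisely to the situation of Theorem~\ref{Th2.19}) by base change along \cite{Shi14}, and pass to the quasi-split unitary group $\widetilde G$. Second, I would construct the $\mathfrak{z}^{\circ}_{\lambda_v,\tau_v}$-algebra structure on $\widetilde A^{\lambda_v,\tau_v}$, using the $\widetilde Q$-ordinary Hida theory of \S\ref{Sec3.4} and the independence-of-level/weight statements, together with Hecke-equivariance of Poincar\'e duality at $p$ (Proposition~\ref{Prop3.20} and its $\widetilde G$-analogue) to see that the Hecke action at $p$ is carried correctly through the duality. Third, I would build the degree-shifting map $\overline{\mathcal{S}}^v\colon \widetilde A^{\lambda_v,\tau_v}\to A^{\lambda,\tau}$ descending the unnormalised Satake transform $\mathcal{S}=r_G\circ r_P$ (Proposition~\ref{Prop2.16}) to torsion Betti cohomology, checking it respects the $\mathfrak{z}^{\circ}_{\lambda_v,\tau_v}$-structure on the nose; the vanishing input here is Theorem~\ref{Th2.27}, which is exactly where the decomposed generic hypothesis enters. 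Fourth, I would run the determinant argument of \cite{CN23}, \S3.2 to produce the lift $\widetilde\rho_v\colon G_{F_v}\to\mathrm{GL}_n(\widetilde A^{\lambda_v,\tau_v})$ of $\rho_v$ under $\overline{\mathcal{S}}^v$. Fifth, I would invoke Theorem~\ref{Th1.1.7}/Theorem~\ref{Th4.30} to see $\widetilde\rho_v[1/p]$ is potentially semistable of the right Hodge type and inertial type, with the Frobenius on its $(\varphi,N)$-module matched to $\mathrm{rec}^T$, so that the diagram commutes for $\widetilde\rho_v$; sixth, a diagram chase using faithfulness plus the $\mathfrak{z}^{\circ}_{\lambda_v,\tau_v}$-linearity of $\overline{\mathcal{S}}^v$ yields the statement for $\rho_v$ valued in $A^{\lambda,\tau}$; finally I would pass to the limit over $m$ and over the cohomological degrees $q$ to assemble the result at the level of $\mathbf{T}^{T,\lambda,\tau}(K^p)_{\mathfrak{m}}$.

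\textbf{The main obstacle.} The honest main obstacle is that the argument above only delivers Conjecture~\ref{Conj5.5} \emph{conditionally}: it requires Conjecture~\ref{Conj5.4} (which the paper only has unconditionally up to a nilpotent ideal $I$ with $I^4=0$, by Theorem~\ref{Th2.22}, so strictly the target should be $\mathbf{T}^{T,\lambda,\tau}(K^p)_{\mathfrak{m}}/I$), it requires $\overline{\rho}_{\mathfrak{m}}$ to be decomposed generic to access Theorem~\ref{Th2.27}, and it requires $F$ to satisfy hypothesis (i) of Theorem~\ref{Th1.1.4} so that the degree-shifting geometry behaves. Removing any of these is genuinely hard: eliminating $I$ would need torsion Galois representations on the nose (beyond \cite{Sch15},\cite{NT16}); eliminating decomposed genericity would need a replacement for the Caraiani--Scholze vanishing; and eliminating (i) would need a new input, e.g.\ bootstrapping from Theorem~\ref{Th1.1.1} for small $n$ and small $[F:\mathbf{Q}]$. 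Within the technical core of the conditional argument, the subtlest point is step two together with step three: ensuring that the local Hecke algebra $\mathfrak{z}^{\circ}_{\lambda_v,\tau_v}$ at $p$ is transported \emph{exactly} (not merely up to the Bernstein centre after inverting $p$) through Poincar\'e duality and through the descended Satake transform, which is precisely where the careful bookkeeping of \S\ref{sec2.4}, \S\ref{sec3.3} and the $\widetilde G$-analogues in \S\ref{Sec3.4} is indispensable.
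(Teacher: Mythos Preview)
Your assessment is correct: this is a conjecture, not a theorem, and the paper does not prove it in the stated generality. The conditional argument you outline is essentially the one the paper supplies for Theorem~\ref{Th5.7}, with the same ingredients in the same order: reduce to mod $\varpi^m$ and to single cohomological degrees, construct the degree-shifting map $\overline{\mathcal{S}}^{v}$ as a $\mathfrak{z}^{\circ}_{\lambda_v,\tau_v}$-algebra map (Proposition~\ref{Prop6.1} and its dual), apply the determinant argument of \cite{CN23} to produce the lift $\widetilde\rho_v$ (Proposition~\ref{Prop6.6}), invoke the $Q$-ordinary characteristic-zero compatibility (Theorem~\ref{Th4.30}) together with Proposition~\ref{Prop6.5} to get the diagram for $\widetilde\rho_v$, and then chase down via $\overline{\mathcal{S}}^{v}$.

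One small correction: in your step (a) you invoke flatness of $\mathbf{T}^{T,\lambda,\tau}(K^p)_{\mathfrak{m}}$, but this Hecke algebra is typically \emph{not} $\mathcal{O}$-flat (torsion in cohomology being the whole point). The paper's argument instead uses that the \emph{source} $\widetilde A$ is finite flat over $\mathcal{O}$: one first checks that $\widetilde\rho_v[1/p]$ factors through $R^{\lambda_v,\preceq\tau_v}_{\overline\rho_v}[1/p]$ and makes the $\eta$-square commute there, then uses $\mathcal{O}$-flatness of both $\widetilde A$ and $R^{\lambda_v,\preceq\tau_v}_{\overline\rho_v}$ to descend to an integral factorisation for $\widetilde\rho_v$, and only then pushes down along $\overline{\mathcal{S}}^{v}$ to $A_{\Bar v}/I_{\Bar v}$. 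This is exactly the content of diagram~(\ref{eq6.3.6}) and the Claim at the end of the proof of Theorem~\ref{Th5.7}. Otherwise your summary of the obstacles (nilpotent ideal from \cite{Sch15}/\cite{NT16}, decomposed generic for Caraiani--Scholze vanishing, hypothesis~(i) for the Fontaine--Laffaille-style degree shift) matches the paper's own accounting.
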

\begin{Rem}
In the diagram above, we denote by $\textnormal{nat}$ the canonical map coming from the fact that the target of the map is a faithful Hecke algebra.
    We also note that the complication of introducing the ring $\mathfrak{z}_{\lambda_v,\tau_v}^{\circ,\textnormal{int}}$ originates in the fact the $\eta$ does not necessarily send $\mathfrak{z}_{\lambda_v,\tau_v}^{\circ}$ into $R_{\overline{\rho}_v}^{\lambda_v,\preceq \tau_v}$ (see Remark~\ref{rem5.3}). We could also, instead of using the lower part of the diagram, simply just ask that whenever $z\in \mathfrak{z}_{\lambda_v,\tau_v}^{\circ}$ such that $\eta(z)$ lies in $R^{\lambda_v,\preceq\tau_v}_{\overline{\rho}_v}$, then $\textnormal{nat}(z)$ coincides with $\rho_v\circ\eta(z)$.

    Note that since we have $\mathfrak{z}_{\lambda_v,\tau_v}^{\circ,\textnormal{int}}[1/p]=\mathfrak{z}_{\Omega_v}$, whenever we specialise to an $E$-algebra $\mathbf{T}^{T,\lambda,\tau}(K^p)_{\mathfrak{m}}\to B$, we get that the induced natural $\mathfrak{z}_{\Omega_v}$-action coincides with the one induced by $\eta$.
\end{Rem}

We finally state our theorem that settles a large part of Conjecture~\ref{Conj5.5} in the case when $F$ is assumed to be an imaginary CM field.
\begin{Th}\label{Th5.7}
    Let $F$ be an imaginary CM field that contains an imaginary quadratic field $F_0$ in which $p$ splits. Assume that $T$ is stable under complex conjugation and satisfies:
    \begin{itemize}
        \item Let $v\notin T$ be a finite place of $F$, with residue characteristic $\ell$. Then either $T$ contains no $\ell$-adic places and $\ell$ is unramified in $F$, or there exists an imaginary quadratic subfield of $F$ in which $\ell$ splits.
    \end{itemize}
Let $K\subset G(\mathbf{A}_F^{\infty})$ be a good compact open subgroup with $K^T$ hyperspecial. Fix a place $\Bar{v}\in S_p(F^+)$. Let $\mathfrak{m}\subset \mathbf{T}^{T,\lambda,\tau}(K^p)$ be a non-Eisenstein maximal ideal. Assume that:
\begin{enumerate}
    \item There is a place $\Bar{v}'\in S_p(F^+)$ such that $\Bar{v}\neq \Bar{v}'$ and
    \begin{equation*}
        \sum_{\Bar{v}''\neq \Bar{v},\Bar{v}'}[F^+_{\Bar{v}''}:\mathbf{Q}_p]\geq \frac{1}{2}[F^+:\mathbf{Q}]
    \end{equation*}
    where the sum runs over $\Bar{v}''\in S_p(F^+)$;
    \item and $\overline{\rho}_{\mathfrak{m}}$ is decomposed generic.
\end{enumerate}
Then there exists an integer $N\geq 1$, depending only on $n$ and $[F^+:\mathbf{Q}]$, a nilpotent ideal $I\subset \mathbf{T}^{T,\lambda,\tau}(K^p)_{\mathfrak{m}}$ with $I^N=0$ and a continuous homomorphism
\begin{equation*}
    \rho_{\mathfrak{m}}:G_{F,T}\to \textnormal{GL}_n(\mathbf{T}^{T,\lambda,\tau}(K^p)_{\mathfrak{m}}/I)
\end{equation*}
lifting $\overline{\rho}_{\mathfrak{m}}$ such that
\begin{itemize}
    \item for each finite place $v\notin T$ of $F$, the characteristic polynomial of $\rho_{\mathfrak{m}}(\textnormal{Frob}_v)$ is equal to the image of $P_v(X)$ in $\mathbf{T}^{T,\lambda,\tau}(K^p)[X]$. 
\end{itemize}
Moreover, if, for $v|\Bar{v}$, we set $\overline{\rho}_v:=\overline{\rho}_{\mathfrak{m}}|_{G_{F_v}}$ and $\rho_v:=\rho_{\mathfrak{m}}|_{G_{F_v}}$, there is a (necessarily unique) dotted arrow making the following diagram commutative
    $$\begin{tikzcd}
R^{\Box}_{\overline{\rho}_v} \arrow{r}{\rho_v} \arrow[two heads]{d}{}
&\mathbf{T}^{T,\lambda,\tau}(K^p)_{\mathfrak{m}}/I \\
R^{\lambda_v,\preceq \tau_v}_{\overline{\rho}_v} \arrow[hook]{d}{} \arrow[dotted]{ur}{} &\mathfrak{z}_{\lambda_v,\tau_v}^{\circ,\textnormal{int}}\arrow{u}[swap]{\textnormal{nat}}\arrow{l}{\eta\mid_{\mathfrak{z}_{\lambda_v,\tau_v}^{\circ,\textnormal{int}}}}\arrow{u}[swap]{\textnormal{nat}}\arrow[hook]{d}{}\\
R^{\lambda_v,\preceq\tau_v}_{\overline{\rho}_v}[1/p]&\arrow{l}{\eta} \mathfrak{z}_{\Omega_v}.
\end{tikzcd}$$
\end{Th}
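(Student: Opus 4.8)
The plan is to follow the strategy outlined in the introduction, descending the characteristic-$0$ result of Theorem~\ref{Th1.1.7}/Theorem~\ref{Th4.30} through a degree-shifting argument in the style of \cite{CN23}, but carrying along the Hecke action at $p$ via the ring $\mathfrak{z}_{\lambda_v,\tau_v}^{\circ}$. First I would fix $m \geq 1$, choose a place $\overline{v}'$ as in hypothesis (i), and let $\overline{S} := S_p(F^+) \setminus \{\overline{v}, \overline{v}'\}$ be the set of places at which we perform degree shifting with the \emph{Siegel} parabolic (so $\widetilde{Q}_{\overline{v}''} = P$ for $\overline{v}'' \in \overline{S}$), while at $\overline{v}$ we use the parabolic $\widetilde{Q}_{\overline{v}}$ corresponding under $\iota_v$ to the standard parabolic of $\textnormal{GL}_{2n}$ refining $(n,n)$ that matches the partition $n = n_1 + \dots + n_t$ of interest (we are free to take $Q = \textnormal{GL}_n$, i.e. $\widetilde{Q}_{\overline{v}} = P$, since the theorem only asserts factorisation through the potentially semistable deformation ring and compatibility of the local Hecke algebra at $p$). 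Setting $A^{\lambda,\tau} := \mathbf{T}^{T,\lambda,\tau}(H^q(X_K, \mathcal{V}_{(\lambda,\tau)}/\varpi^m)_{\mathfrak m})$ for each middle-degree $q$, I would construct the finite flat $\mathcal{O}$-algebra $\widetilde{A}^{\lambda_v,\tau_v}$ as the faithful Hecke algebra of the $\widetilde{Q}^{w_0}_{\overline{S} \cup \{\overline{v}\}}$-ordinary part of the middle-degree cohomology of $\widetilde{X}_{\widetilde{K}}$, equipped via the $\widetilde{Q}$-ordinary Hida theory of \S\ref{Sec3.4} and Remark~\ref{Rem3.22} with the structure of a $\mathfrak{z}_{\lambda_v,\tau_v}^{\circ}$-algebra.

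The core of the argument is the construction of the degree-shifting map $\overline{\mathcal{S}}^v : \widetilde{A}^{\lambda_v,\tau_v} \to A^{\lambda,\tau}$ as a map of $\mathfrak{z}_{\lambda_v,\tau_v}^{\circ}$-algebras. For this I would: (1) use Theorem~\ref{Th2.27} to obtain the $\widetilde{\mathbf{T}}^T$-equivariant diagram relating interior, full, and boundary cohomology of $\widetilde{X}_{\widetilde{K}}$ in middle degree, which lets us replace interior cohomology with boundary cohomology after localising at the decomposed generic $\widetilde{\mathfrak m}$; (2) analyse the boundary cohomology via the Bruhat stratification of the parabolic induction computations of \S\ref{sec3.5}, in particular Corollary~\ref{Cor3.29} (and its dual Corollary~\ref{Cor3.30} for the dual version of the argument in \S\ref{sec6.1}), which identifies the relevant $\widetilde{Q}^{w_0}$-ordinary stratum with the $Q \times Q_c$-ordinary cohomology of $X_K$ for $\textnormal{GL}_{n/F}$ twisted by $\tau_{w_0^P}$; (3) combine this with the unnormalised Satake transform at unramified places (Proposition~\ref{Prop2.16}) to descend the endoscopic transfer $\widetilde{P}_v(X) \mapsto P_v(X) q_v^{n(2n-1)} P_{v^c}^{\vee}(q_v^{1-2n}X)$ to the level of Hecke algebras, giving the transfer map $\overline{\mathcal{S}}^v$ on the automorphic side; (4) verify that $\overline{\mathcal{S}}^v$ intertwines the two $\mathfrak{z}_{\lambda_v,\tau_v}^{\circ}$-actions — this is where the extension of torsion Hida theory matters, and where Proposition~\ref{Prop3.20} (Hecke-equivariance of Poincaré duality at $p$ for $Q_p$-ordinary cohomology with $\mathcal{V}^{Q_p}_{(\lambda,\underline\tau)}/\varpi^m$-coefficients, together with the analogous statement for $\widetilde{G}$) is essential to match the local Hecke operators at $p$ under the duality pairing implicit in passing between compactly-supported and ordinary cohomology.

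With $\overline{\mathcal{S}}^v$ in hand as a $\mathfrak{z}_{\lambda_v,\tau_v}^{\circ}$-algebra map, the next step follows \cite{CN23}, \S3.2 closely: the determinant argument of \textit{loc. cit.} produces a continuous lift $\widetilde{\rho}_v : G_{F_v} \to \textnormal{GL}_n(\widetilde{A}^{\lambda_v,\tau_v})$ of $\rho_v$ along $\overline{\mathcal{S}}^v$, obtained by splitting off the appropriate $n$-dimensional Galois sub-representation of $r_t(\widetilde{\pi})|_{G_{F_v}}$ determined by the block structure of Theorem~\ref{Th4.30}. Here the characteristic-$0$ input Theorem~\ref{Th4.30}, applied after base change to $\widetilde{G}$ via \cite{Shi14} and using Remark~\ref{Rem4.29}, guarantees that $\widetilde{\rho}_v[1/p]$ is potentially semistable of the correct $p$-adic Hodge type and inertial type, with Weil--Deligne representation matched with local Langlands (including the monodromy), so that over the generic fibre $\widetilde{A}^{\lambda_v,\tau_v}[1/p]$ there is a dotted arrow $R^{\lambda_v, \preceq \tau_v}_{\overline{\rho}_v} \to \widetilde{A}^{\lambda_v,\tau_v}$ making the diagram for $\widetilde{\rho}_v$ commute, \emph{and} the lower triangle involving $\eta$ and $\mathfrak{z}_{\Omega_v}$ commutes because $\widetilde{\rho}_v$ is a $\mathfrak{z}_{\lambda_v,\tau_v}^{\circ}$-algebra-valued lift interpolating local Langlands. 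Finally, since $\widetilde{A}^{\lambda_v,\tau_v}$ is $\mathcal{O}$-flat, the arrow $R^{\lambda_v,\preceq\tau_v}_{\overline{\rho}_v} \to \widetilde{A}^{\lambda_v,\tau_v}$ lands in the ring (not just its generic fibre), and $\mathfrak{z}^{\circ,\textnormal{int}}_{\lambda_v,\tau_v} = \eta^{-1}(R^{\lambda_v,\preceq\tau_v}_{\overline{\rho}_v}) \cap \mathfrak{z}^\circ_{\lambda_v,\tau_v}$ maps into it compatibly; a diagram chase using that $\overline{\mathcal{S}}^v$ is a $\mathfrak{z}^\circ_{\lambda_v,\tau_v}$-algebra map (combined with the factorisation $R^{\Box}_{\overline{\rho}_v} \to R^{\lambda_v,\preceq\tau_v}_{\overline{\rho}_v}$) transports the dotted arrow from $\widetilde{A}^{\lambda_v,\tau_v}$ down to $A^{\lambda,\tau}$, possibly after enlarging the nilpotent ideal $I$ to absorb the discrepancy between $\rho_v$ and its lift (the bound $I^N = 0$ coming from \cite{NT16}). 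Passing to the homotopy limit over $m$ and assembling the middle-degree contributions as in \cite{CN23} then yields the statement over $\mathbf{T}^{T,\lambda,\tau}(K^p)_{\mathfrak m}/I$. The main obstacle I expect is step (4): ensuring that the degree-shifting map is genuinely $\mathfrak{z}_{\lambda_v,\tau_v}^{\circ}$-equivariant requires carefully tracking how the locally-algebraic-type Hecke operators at $p$ behave under Poincaré duality and under the Bruhat-stratum identifications, which is precisely the new technical content of \S\ref{sec2.4}, \S\ref{sec3.3} and Remark~\ref{Rem3.22} and is considerably more delicate than the spherical-Hecke bookkeeping in \cite{CN23}.
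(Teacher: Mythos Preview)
Your proposal is essentially correct and follows the paper's approach (apply Proposition~\ref{Prop6.6} with $Q_v=\textnormal{GL}_n$, then the diagram chase in the Claim at the end of \S6.3). One point to correct: the $\widetilde{Q}^{w_0}$-ordinary part is taken \emph{only} at the single place $\bar v$, not at $\overline{S}\cup\{\bar v\}$---the places in $\overline{S}_2=\overline{S}_p\setminus\{\bar v,\bar v'\}$ instead furnish the degree shift via the hypercohomology spectral sequence for the complex $\mathcal{V}_U(\overline{S}_2)$ (with deep level there and at $\bar v'$), while the range $q<\lfloor d/2\rfloor$ is handled separately by the dual degree shifting of Proposition~\ref{Prop6.3}.
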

\begin{Rem}
    We say a few words about the assumptions appearing in Theorem~\ref{Th5.7}.

    The assumption on $T$ is already present in \cite{Sch15} and it makes sure that, up to nilpotent ideal, Conjecture~\ref{Conj5.4} is known to be true (see Theorem~\ref{Th2.20} and Theorem~\ref{Th2.22}). It is a rather mild condition that can always be fulfilled after enlarging $T$.

    Assumption (i) is a lot more serious and excludes the case of imaginary quadratic fields. Its appearance originates in the use of the Fontaine--Laffaille style degree shifting argument (cf. \cite{CN23}, Lemma 4.2.5).

    Finally, (ii) is again essential for our methods to work. It ensures that we can appeal to the vanishing results of Caraiani--Scholze. To our knowledge this is the only known way of producing the necessary congruences between cusp forms for $U(n,n)_{F^+}$ and Eisenstein series coming from cusp forms for $\textnormal{GL}_{n,F}$.
\end{Rem}

\begin{Rem}
    Although we don't discuss it here, one could possibly formulate a more general conjecture treating the case when our choice of parabolic subgroup $Q_p=\prod_v Q_v\subset \prod_{v}\textnormal{GL}_{n,F_v}$ is not the trivial one, and the complexes appearing are $Q_p$-ordinary. However, to do this, one would need to introduce the appropriate potentially semistable $Q_v$-ordinary deformation rings. This should be possible using ideas of \cite{Ger18}, \S3.3.

    Nevertheless, in the next section we will prove a general result (Proposition~\ref{Prop6.6}) that has the potential to settle the more general conjecture too, once it's formulated.
\end{Rem}
\section{Proof of Theorem~\ref{Th5.7}}
In this section, we provide a proof of Theorem~\ref{Th5.7} refining the argument of \cite{CN23}. In fact, we prove a local-global compatibility result for the more general $Q_p$-ordinary complexes considered in \S\ref{sec3.2}. For the rest of the section, we fix the following setup. Let $n\geq 2$ be an integer, $F$ be an imaginary $CM$ field, and $T\subset S(F)$ as in Theorem~\ref{Th5.7}. In particular, we assume Assumption~\ref{blanket}. Fix a choice of lift $v|\Bar{v}$ in $F$ for each $\Bar{v}\in \overline{S}_p$. We consider the corresponding groups $G$, $P$, and $\widetilde{G}$ as before. Moreover, we fix a tuple $(Q_p,\lambda,\underline{\tau})=(Q_v,\lambda_v,\underline{\tau_v})_{v\in S_p(F)}$ as in \S\ref{sec2.7}.
\subsection{Degree shifting}\label{sec6.1}
One essential step in executing the strategy laid out in \cite{ACC23} is what they call the "degree shifting argument". The most robust version of which appears in \cite{CN23}. In fact, besides some enrichment of the method, their argument is already sufficient for our proof.
We first state the precise statement we need and then indicate the changes to the argument of \cite{CN23} one needs to prove our generalisation of \textit{loc. cit.} Proposition 4.2.6.

Fix a place $\Bar{v}\in \overline{S}_p$, and assume that, under the identification of \S\ref{sec2.7} $(\lambda_v,\lambda_{v^c})$ corresponds to a dominant weight $\Tilde{\lambda}_{\Bar{v}}=(-w_0^{\textnormal{GL}_n}\lambda_{v^c},\lambda_v)\in (\mathbf{Z}_+^{2n})^{\Hom(F^+_{\Bar{v}},E)}$. Extend it to an arbitrary dominant weight $\widetilde{\lambda}$ for $\widetilde{G}$. Set $Q_{\Bar{v}}:=Q_v\times Q_{v^c}\subset G_{F^{+}_{\Bar{v}}}=\textnormal{GL}_{n,F_v}\times \textnormal{GL}_{n,F_{v^c}}$, and consider the standard parabolic subgroup $\widetilde{Q}^{w_0}_{\Bar{v}}=\widetilde{M}^{w_0}_{\Bar{v}}\ltimes \widetilde{N}^{w_0}_{\Bar{v}}\subset \widetilde{G}_{F^+_{\Bar{v}}}$ associated to it (cf. \S\ref{sec2.7}). We then obtain a tuple $(\widetilde{Q}^{w_0}_{\Bar{v}},\Tilde{\lambda}_{\Bar{v}},\underline{\tau_{\Bar{v}}})$. Write $Q_v=P_{(n_1,...,n_k)}$, and $Q_{v^c}=P_{(m_1,....,m_{k^c})}$. Accordingly, write 
\begin{equation*}
    \underline{\tau_v}=(\tau_{v,1},...,\tau_{v,k})\textnormal{, }\underline{\tau_{v^c}}=(\tau_{v^c,1},...,\tau_{v^c,k^c}),
\end{equation*}
\begin{equation*}
    w_0^{Q_{v}}\lambda_{v}=(\lambda_{v,1},...,\lambda_{v,k})\in (\mathbf{Z}_+^{n_k})^{\Hom(F_v,E)}\times ...\times (\mathbf{Z}_+^{n_1})^{\Hom(F_v,E)}\textnormal{, and}
\end{equation*}
\begin{equation*}
    w_0^{Q_{v^c}}\lambda_{v^c}=(\lambda_{v^c,1},...,\lambda_{v^c,k^c})\in (\mathbf{Z}_+^{m_{k^c}})^{\Hom(F_{v^c},E)}\times ...\times (\mathbf{Z}_+^{m_1})^{\Hom(F_{v^c},E)}.
\end{equation*} Set 
\begin{equation*}
\mathfrak{z}_{\lambda_{v},\underline{\tau_{v}}}^{\circ}:= \otimes_{i=1}^k\mathfrak{z}_{\lambda_{v,i},\tau_{v,i}}^{\circ}\textnormal{,   
   }\mathfrak{z}_{\lambda_{v^c},\underline{\tau_{v^c}}}^{\circ}:= \otimes_{i=1}^{k^c}\mathfrak{z}_{\lambda_{v^c,i},\tau_{v^c,i}}^{\circ},
\end{equation*} and introduce the abstract Hecke algebras
\begin{equation*}
    \widetilde{\mathbf{T}}^{T,\lambda_{\Bar{v}},\underline{\tau_{\Bar{v}}}}:=\widetilde{\mathbf{T}}^T\otimes_{\mathcal{O}}\left( \mathfrak{z}_{\lambda_{v},\underline{\tau_{v}}}^{\circ}\otimes \mathfrak{z}_{\lambda_{v^c},\underline{\tau_{v^c}}}^{\circ} \right),
\end{equation*}
\begin{equation*}
    \mathbf{T}^{T,\lambda_{\Bar{v}},\underline{\tau_{\Bar{v}}}}:=\mathbf{T}^T\otimes_{\mathcal{O}}\left( \mathfrak{z}_{\lambda_{v},\underline{\tau_{v}}}^{\circ}\otimes \mathfrak{z}_{\lambda_{v^c},\underline{\tau_{v^c}}}^{\circ} \right).
\end{equation*}
By Corollary~\ref{Cor3.15} and its analogue for $\widetilde{G}$, $\widetilde{\mathbf{T}}^{T,\lambda_{\Bar{v}},\underline{\tau_{\Bar{v}}}}$ respectively, $\mathbf{T}^{T,\lambda_{\Bar{v}},\underline{\tau_{\Bar{v}}}}$ naturally acts on $R\Gamma(\widetilde{X}_{\widetilde{K}},\mathcal{V}_{(\widetilde{\lambda},\underline{\tau_{\Bar{v}}})}^{\widetilde{Q}^{w_0}_{\Bar{v}}})^{\widetilde{Q}^{w_0}_{\Bar{v}}\textnormal{-ord}}$ respectively, $R\Gamma(X_K,\mathcal{V}_{(\lambda,\underline{\tau})}^{Q_p})^{Q_{\Bar{v}}\textnormal{-ord}}$ as long as $\widetilde{K}_{\Bar{v}}=\widetilde{\mathcal{Q}}_{\Bar{v}}^{w_0}(0,c)$, and $K_{\Bar{v}}=\mathcal{Q}_{\Bar{v}}(0,c)$ with $c\geq c_p$. More precisely, in the case of the former, the action of $\mathfrak{z}_{\lambda_{v^c},\underline{\tau_{v^c}}}^{\circ}$ is via the identification $\mathcal{H}(\sigma(\lambda_{v^c},\underline{\tau_{v^c}})^{\circ})\cong\mathcal{H}((\theta_n^{-1})^{\ast}\sigma(\lambda_{v^c},\underline{\tau_{v^c}})^{\circ}) $ induced by $\theta_n$ (see Remark~\ref{Rem3.22}). We introduce an extension of the unnormalised Satake transform
\begin{equation*}
    \mathcal{S}^{\Bar{v}}:=\mathcal{S}\otimes \textnormal{id}:\widetilde{\mathbf{T}}^{T,\lambda_{\Bar{v}},\underline{\tau_{\Bar{v}}}} \to \mathbf{T}^{T,\lambda_{\Bar{v}},\underline{\tau_{\Bar{v}}}}.
\end{equation*}
Let $\mathfrak{m}\subset \mathbf{T}^T$ be a non-Eisenstein maximal ideal, and set $\widetilde{\mathfrak{m}}:=\mathcal{S}^{\ast}(\mathfrak{m})\subset \widetilde{\mathbf{T}}^T$. By Theorem~\ref{Th2.24}, we have an associated Galois representation
\begin{equation*}
    \overline{\rho}_{\widetilde{\mathfrak{m}}}:G_{F,T}\to \textnormal{GL}_{2n}(\widetilde{\mathbf{T}}^T/\widetilde{\mathfrak{m}})
\end{equation*}
and, by Proposition~\ref{Prop2.16}, we have $\overline{\rho}_{\widetilde{\mathfrak{m}}}=\overline{\rho}_{\mathfrak{m}}\oplus\overline{\rho}_{\mathfrak{m}}^{\vee,c}(1-2n)$. Introduce the faithful Hecke algebras
\begin{equation*}
A(K,\lambda,\underline{\tau},q,\Bar{v}):=\mathbf{T}^{T,\lambda_{\Bar{v}},\underline{\tau_{\Bar{v}}}}(H^q(X_K,\mathcal{V}_{(\lambda,\underline{\tau})}^{Q_p})_{\mathfrak{m}}^{Q_{\Bar{v}}\textnormal{-ord}}),
\end{equation*}
\begin{equation*}
A(K,\lambda,\underline{\tau},q,\Bar{v},m):=\mathbf{T}^{T,\lambda_{\Bar{v}},\underline{\tau_{\Bar{v}}}}(H^q(X_K,\mathcal{V}_{(\lambda,\underline{\tau})}^{Q_p}/\varpi^m)_{\mathfrak{m}}^{Q_{\Bar{v}}\textnormal{-ord}})\textnormal{, and}
\end{equation*}
\begin{equation*}
    \widetilde{A}(\widetilde{K},\Tilde{\lambda},\underline{\tau_{\Bar{v}}},\Bar{v}):=\widetilde{\mathbf{T}}^{T,\lambda_{\Bar{v}},\tau_{\Bar{v}}}(H^d(\widetilde{X}_{\widetilde{K}},\mathcal{V}_{(\Tilde{\lambda},\underline{\tau_{\Bar{v}}})}^{\widetilde{Q}_{\Bar{v}}^{w_0}})_{\widetilde{\mathfrak{m}}}^{\widetilde{Q}^{w_0}_{\Bar{v}}\textnormal{-ord}})
\end{equation*}
for $d=\dim_{\mathbf{C}}\widetilde{X}_{\widetilde{K}}$, the middle degree for the (Betti) cohomology of $\widetilde{X}_{\widetilde{K}}$ and integers $0\leq q\leq d-1$.\footnote{Note that the real dimension of $X_K$ is $d-1$ so its Betti cohomology has top degree $d-1$.}

Given a good subgroup $K\subset G(\mathbf{A}_{F^+}^{\infty})$, a subset $\overline{S}\subset \overline{S}_p$, and an integer $m\in \mathbf{Z}_{\geq 1}$, define the subgroup $K(m,\overline{S})\subset K$ by setting
\begin{equation*}
    K(m,\overline{S})_{\Bar{v}}:=K_{\Bar{v}}\cap K_{\Bar{v}}^m\footnote{Recall that $K^m_{\Bar{v}}:=\ker(G(\mathcal{O}_{F^+_{\Bar{v}}})\to G(\mathcal{O}_{F^+_{\Bar{v}}}/\varpi_{\bar{v}}^m))$.}
\end{equation*}
if $\Bar{v}\in\overline{S}$ and $K(m,\overline{S})_{\Bar{v}}=K_{\Bar{v}}$ otherwise. Also, given a good subgroup $\widetilde{K}\subset \widetilde{G}(\mathbf{A}_{F^+}^{\infty})$, define the good subgroup $\widetilde{K}(m,\overline{S})\subset \widetilde{K}$ by setting
\begin{equation*}
    \widetilde{K}(m,\overline{S})_{\Bar{v}}:=\widetilde{K}_{\Bar{v}}\cap \mathcal{P}_{\Bar{v}}(m,m)
\end{equation*}
if $\Bar{v}\in \overline{S}$, and $\widetilde{K}(m,\overline{S})_{\Bar{v}}:=\widetilde{K}_{\Bar{v}}$ otherwise.

\begin{Prop}[\textnormal{Degree shifting}]\label{Prop6.1}
    Let $\Bar{v},\Bar{v}'$ be two distinct places of $\overline{S}_p$. Let $\overline{S}_1:=\{\Bar{v}'\}$, $\overline{S}_3:=\{\Bar{v}\}$, and $\overline{S}_2:=\overline{S}_p\setminus \{\Bar{v},\Bar{v}'\}$ their complement. Let $\widetilde{K}\subset \widetilde{G}(\mathbf{A}_{F^+}^{\infty})$ be a good subgroup and $m\in \mathbf{Z}_{\geq 1}$ be an integer. Assume that the following conditions are satisfied.
    \begin{enumerate}
        \item We have
        \begin{equation*}
            \sum_{\Bar{v}''\in \overline{S}_2}[F^+_{\Bar{v}''}:\mathbf{Q}_p]\geq \frac{1}{2}[F^+:\mathbf{Q}].
        \end{equation*}
        \item For $\Bar{v}''\in \overline{S}_1\cup \overline{S}_2$, we have $U(\mathcal{O}_{F^+_{\Bar{v}''}})\subset \widetilde{K}_{\Bar{v}''}$, and $\widetilde{K}_{\Bar{v}''}=\widetilde{K}(m,\overline{S}_1\cup \overline{S}_2)_{\Bar{v}''}$. Finally, we have $\widetilde{K}_{\Bar{v}}=\widetilde{\mathcal{Q}}^{w_0}_{\Bar{v}}(0,c_p)$.
        \item For each $\iota:F\hookrightarrow E$ inducing $\Bar{v}$ or $\Bar{v}'$, we have $-\lambda_{\iota c,1}-\lambda_{\iota,1}\geq 0$.
        \item $\overline{\rho}_{\widetilde{\mathfrak{m}}}$ is decomposed generic.
    \end{enumerate}
Define a weight $\Tilde{\lambda}\in (\mathbf{Z}^{2n}_+)^{\Hom(F^+,E)}$ as follows: if $\iota:F^+\hookrightarrow E$ does not induce either $\Bar{v}$ or $\Bar{v}'$, set $\Tilde{\lambda}_{\iota}=0$. Otherwise, set $\Tilde{\lambda}_{\iota}=(-w_{0,n}\lambda_{\iota c},\lambda_{\iota})$. Set $K:=(\widetilde{K}^{\Bar{v}}\cap G(\mathbf{A}_{F^+}^{\infty,\Bar{v}})\big)\cdot \big( \widetilde{\mathcal{Q}}_{\Bar{v}}(0,c_p)\cap G(F^+_{\Bar{v}})\big)=(\widetilde{K}^{\Bar{v}}\cap G(\mathbf{A}_{F^+}^{\infty,\Bar{v}})\big)\cdot\big( \mathcal{Q}_v(0,c_p)\times \mathcal{Q}_{v^c}(0,c_p)\big)$.

   Let $q\in [\lfloor \frac{d}{2}\rfloor, d-1]$. Then there exist an integer $m'\geq m$, an integer $N\geq 1$, a nilpotent ideal $I\subset A(K,\lambda,\underline{\tau},q,\Bar{v},m)$ satisfying $I^N=0$, and a commutative diagram
$$\begin{tikzcd}
\widetilde{\mathbf{T}}^{T,\lambda_{\Bar{v}},\underline{\tau_{\Bar{v}}}} \arrow{r}{} \arrow{d}{\mathcal{S}^{\Bar{v}}}
&\widetilde{A}(\widetilde{K}(m',\overline{S}_2),\Tilde{\lambda},\underline{\tau_{\Bar{v}}},\Bar{v}) \arrow{d}{}\\
\mathbf{T}^{T,\lambda_{\Bar{v},\tau_{\Bar{v}}}} \arrow{r}{} & A(K,\lambda,\underline{\tau},q,\Bar{v},m)/I.
\end{tikzcd}$$
Moreover, $N$ can be chosen to only depend on $n$ and $[F^+:\mathbf{Q}]$.
\end{Prop}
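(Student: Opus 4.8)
The plan is to follow the degree shifting strategy of \cite{CN23}, \S4.2, adapting it to keep track of the Hecke action at $p$ given by the algebras $\mathfrak{z}_{\lambda_{v},\underline{\tau_{v}}}^{\circ}$ and $\mathfrak{z}_{\lambda_{v^c},\underline{\tau_{v^c}}}^{\circ}$. The starting point is the Caraiani--Scholze vanishing result (Theorem~\ref{Th2.27}), which, after passing to $\widetilde{Q}^{w_0}_{\Bar{v}}$-ordinary parts, gives a $\widetilde{\mathbf{T}}^{T}$-equivariant diagram relating the middle-degree cohomology of $\widetilde{X}_{\widetilde{K}}$, its compactly supported counterpart, and the boundary cohomology, all localised at $\widetilde{\mathfrak{m}}$. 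First I would carefully set up the $\widetilde{Q}^{w_0}_{\Bar{v}}$-ordinary versions of the completed cohomology complexes from \S\ref{Sec3.4}, and use independence of level and independence of weight (the analogues of Corollary~\ref{Cor3.12}, Corollary~\ref{Cor3.15}) to make sure that the $\mathfrak{z}_{\lambda_{\Bar{v}},\underline{\tau_{\Bar{v}}}}^{\circ}$-action is present and compatible along all the maps. Then, exactly as in \cite{CN23}, one analyses the boundary cohomology via the Borel--Serre stratification, identifying the relevant boundary strata with locally symmetric spaces for Levi subgroups of parabolics of $\widetilde{G}$; the crucial stratum corresponds to $P$ and contributes cohomology of $X_K$ parabolically induced from $G$. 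The geometric lemma computation of \S\ref{sec3.5}, most notably Corollary~\ref{Cor3.29} applied with $Q_1=P$, $Q_2=\widetilde{Q}^{w_0}_{\Bar{v}}$, $w=w_0^P$, is what lets me match up the $\widetilde{Q}^{w_0}_{\Bar{v}}$-ordinary part of the induced representation with the $Q_v\times Q_{v^c}$-ordinary part of cohomology of $X_K$, and — this is the new point — to see that this identification is $\mathcal{H}(\sigma(\lambda_v,\underline{\tau_v})^{\circ})\times \mathcal{H}(\sigma(\lambda_{v^c},\underline{\tau_{v^c}})^{\circ})$-equivariant, using the twist $\tau_{w_0^P}$ to account for the factor coming from $F_{v^c}$ (cf. Remark~\ref{Rem3.22}).

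The combinatorial heart of the argument — which is already in \cite{CN23} and I would import more or less verbatim — is the Fontaine--Laffaille-style degree shift: condition (i), $\sum_{\Bar{v}''\in\overline{S}_2}[F^+_{\Bar{v}''}:\mathbf{Q}_p]\geq \frac12[F^+:\mathbf{Q}]$, ensures that, after deepening the level at the places in $\overline{S}_2$ (hence the appearance of $\widetilde{K}(m',\overline{S}_2)$ and the integer $m'\geq m$), the contribution of cohomology of $X_K$ in degree $q\in[\lfloor d/2\rfloor, d-1]$ can be ``shifted'' into the middle degree $d$ of $\widetilde{X}_{\widetilde{K}}$, up to a nilpotent ideal of bounded nilpotence degree. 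Here the boundedness of $N$ in terms of $n$ and $[F^+:\mathbf{Q}]$ comes, as in \cite{CN23} and ultimately \cite{NT16}, from the bound on the length of the Borel--Serre stratification and the number of pieces involved. The condition (iii), $-\lambda_{\iota c,1}-\lambda_{\iota,1}\geq 0$ for $\iota$ inducing $\Bar{v}$ or $\Bar{v}'$, is precisely the dominance requirement making $\Tilde{\lambda}_{\Bar{v}}=(-w_{0,n}\lambda_{\iota c},\lambda_\iota)$ a dominant weight for $\widetilde{G}$, so that $\mathcal{V}_{(\Tilde{\lambda},\underline{\tau_{\Bar{v}}})}^{\widetilde{Q}^{w_0}_{\Bar{v}}}$ is defined; condition (ii) fixes the level subgroups so that all the Hecke actions at $p$ are available and the places $\overline{S}_1\cup\overline{S}_2$ are the ones we are free to deepen; and condition (iv), decomposed genericity of $\overline{\rho}_{\widetilde{\mathfrak{m}}}$, is exactly what powers Theorem~\ref{Th2.27}.

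Concretely, the steps in order are: (1) produce the Caraiani--Scholze diagram for $\widetilde{Q}^{w_0}_{\Bar{v}}$-ordinary cohomology at level $\widetilde{K}(m',\overline{S}_2)$ and reduce mod $\varpi^{m'}$; (2) analyse the boundary via Borel--Serre, isolate the $P$-stratum, and use Corollary~\ref{Cor3.29} together with $\widetilde{Q}^{w_0}_{\Bar{v}}$-ordinary Hida theory (\S\ref{Sec3.4}) and $Q_{\Bar{v}}$-ordinary Hida theory for $X_K$ (\S\ref{sec3.2}) to relate the boundary contribution to $H^q(X_K,\mathcal{V}^{Q_p}_{(\lambda,\underline{\tau})}/\varpi^{m'})^{Q_{\Bar{v}}\textnormal{-ord}}_{\mathfrak{m}}$ $\mathfrak{z}_{\lambda_{\Bar{v}},\underline{\tau_{\Bar{v}}}}^{\circ}$-equivariantly; (3) use independence of level to descend from level $m'$ back to the statement's level $K$ at $\Bar{v}$ and to identify $A(K,\lambda,\underline{\tau},q,\Bar{v},m)$; (4) assemble the commutative square, with the nilpotent ideal $I$ absorbing the discrepancy between the $\widetilde{\mathbf{T}}$-action on the middle-degree cohomology and the image of $\mathbf{T}$ on $H^q$ of $X_K$, exactly as in the degree-shifting argument of \cite{CN23}; the compatibility of the vertical maps with $\mathcal{S}^{\Bar{v}}$ on the nose follows from Proposition~\ref{Prop2.16} for the prime-to-$p$ part and from the $p$-adic Hecke compatibility established in step (2). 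The main obstacle I anticipate is \emph{not} the combinatorial degree shift itself — that is machinery available from \cite{CN23} — but rather verifying that every map in the Borel--Serre/geometric-lemma chain respects the actions of $\mathfrak{z}_{\lambda_v,\underline{\tau_v}}^{\circ}$ and $\mathfrak{z}_{\lambda_{v^c},\underline{\tau_{v^c}}}^{\circ}$, including the correct bookkeeping of the $\theta_n$-twist and the character $\chi_{w_0^P}$ appearing in Corollary~\ref{Cor3.28}; this requires the finite-level description of the Hecke action at $p$ through the formalism of \S\ref{sec2.4} and \S\ref{sec3.1}, and matching it with the $U_p$-operator normalisations used to define the ordinary projectors, which is where I expect the delicate normalisation computations to concentrate.
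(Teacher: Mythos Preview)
Your outline is essentially the paper's approach: the proof is a direct adaptation of \cite{CN23}, Proposition~4.2.6, with Lemma~\ref{Lem6.2} (your step~(2), built on Corollary~\ref{Cor3.29}) replacing their Proposition~4.2.2, and the remaining work being to check that every map in the argument is equivariant for the enlarged Hecke algebra $\mathbf{T}^{T,\lambda_{\Bar{v}},\underline{\tau_{\Bar{v}}}}$. You have identified the correct concern.

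Two concrete omissions. First, the \cite{CN23} argument invokes Poincar\'e duality \emph{internally} (not only in the dual version for small $q$), so you need Hecke-equivariance of Poincar\'e duality at $\Bar{v}$ for the ordinary complexes; this is Proposition~\ref{Prop3.20}, which you do not mention. Second, the actual degree-shift mechanism is not just ``deepening the level at $\overline{S}_2$'': it runs through the complex $\mathcal{V}_U(\overline{S}_2,m)=R\Gamma(U(\mathcal{O}_{F^+,\overline{S}_2}),\mathcal{O}/\varpi^m)$ and the associated hypercohomology spectral sequences (the $E_n^{i,j}(\mathcal{O}/\varpi^m)$ of \cite{CN23}). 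The paper verifies their $\mathbf{T}^{T,\lambda_{\Bar{v}},\underline{\tau_{\Bar{v}}}}$-equivariance by constructing them from the functor $R\Hom_{\mathcal{O}/\varpi^m[M^0_{\Bar{v}}]}\big(\sigma(\lambda_{\Bar{v}},\underline{\tau_{\Bar{v}}})^{\circ}/\varpi^m,\pi^{Q_{\Bar{v}}\textnormal{-ord}}(K^{\Bar{v}},\mathcal{V}_{\lambda_{\Bar{v}'}}\otimes -)\big)$ applied to $\mathcal{V}_U(\overline{S}_2,m)$; the Hochschild--Serre spectral sequences (used when deepening level at $\overline{S}_2$) are automatically equivariant since the Hecke operators live at $T\cup\{\Bar{v}\}$. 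Your worry about $\chi_{w_0^P}$ is a non-issue: in the specialisation of Corollary~\ref{Cor3.28} to Corollary~\ref{Cor3.29} there is no degree shift and no character twist.
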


As a preliminary step, one proves the following lemma (compare with \cite{CN23}, Proposition 4.2.2). This already consists of (one of) the ideas coming from the ordinary degree shifting argument. Moreover, this lemma is one of the points where the main result of \cite{CS19}, in particular the decomposed generic assumption gets used. Finally, for this lemma to hold it is crucial that $\mathfrak{m}$ is non-Eisenstein as otherwise there is possible contribution to the boundary cohomology from strata corresponding to parabolic subgroups other than $P$ and consequently, \cite{CN23}, Corollary 4.1.9 could fail to hold.

Before stating the lemma, we need to introduce some notation. Given a subset $\overline{S}\subset \overline{S}_p$ and an integer $m\geq 1$,  set $\mathcal{V}_U(\overline{S},m):=R\Gamma(U(\mathcal{O}_{F^+,\overline{S}}),\mathcal{O}/\varpi^m)\in D^b_{\textnormal{sm}}(\mathcal{O}/\varpi^m[K_{\overline{S}}])$. We can view it as an object of $D^b_{\textnormal{sm}}(\mathcal{O}/\varpi^m[G^{\overline{S}}\times K_{\overline{S}}])$ via inflation. In particular, it gives rise to a bounded complex of $G^{\overline{S}}\times K_{\overline{S}}$-equivariant sheaves on $\overline{\mathfrak{X}}_G$ and descends to an object in $D^b(\textnormal{Sh}(X_K,\mathcal{O}/\varpi^m))$ for any good subgroup $K\subset G(\mathbf{A}_{F^+}^{\infty})$. It has locally constant cohomology sheaves $\mathcal{V}^j_U(\overline{S},m)$ that are non-zero if and only if $j\in[0,n^2\sum_{\Bar{v}\in \overline{S}}[F^+_{\Bar{v}}:\mathbf{Q}_p]]$ according to Lemma 2.3.17 of \cite{CN23} (combined with the K\"unneth formula). By \textit{loc. cit.} Lemma 2.1.9, we have
\begin{equation*}
R\Gamma(\overline{\mathfrak{X}}_G,\mathcal{O}/\varpi^m)\otimes^{\mathbf{L}}\mathcal{V}_U(\overline{S},m)\footnote{See Page 13 of \cite{CN23} for how to define this derived tensor product.}\cong R\Gamma(\overline{\mathfrak{X}}_G,\mathcal{V}_U(\overline{S},m))
\end{equation*}
in $D^b_{\textnormal{sm}}(\mathcal{O}/\varpi^m[G^{\overline{S}}\times K_{\overline{S}}])$. In particular, by the proof of Corollary~\ref{Cor3.15}, the complex $R\Gamma(X_K,\mathcal{V}_U(\overline{S},m)\otimes \mathcal{V}_{(\lambda_{\overline{S}_p\setminus \overline{S}},\underline{\tau_{\overline{S}_p\setminus \overline{S}}})}^{Q_{\overline{S}_p\setminus \overline{S}}}/\varpi^m)^{Q_{\Bar{v}}\textnormal{-ord}}$ carries a natural action of $\mathbf{T}^{T,\lambda_{\Bar{v}},\underline{\tau_{\Bar{v}}}}$ for any $\Bar{v}\in \overline{S}_p\setminus \overline{S}$ such that $K_{\Bar{v}}=\mathcal{Q}_{\Bar{v}}(0,c_p)$. We can pass to the homotopy limit over $m$ to get $\mathcal{V}_U(\overline{S})\in D^b(\textnormal{Sh}(X_K,\mathcal{O}))$ which once again has locally constant cohomology sheaves $\mathcal{V}^j_U(\overline{S})$. Assuming that for $\Bar{v}\in \overline{S}_p\setminus \overline{S}$ we have $K_{\Bar{v}}=\mathcal{Q}_{\Bar{v}}(0,c_p)$, we get a natural action of $\mathbf{T}^{T,\lambda_{\Bar{v}},\tau_{\Bar{v}}}$ on $R\Gamma(X_K,\mathcal{V}_U(\overline{S})\otimes\mathcal{V}_{(\lambda_{\overline{S}_p\setminus \overline{S}},\underline{\tau_{\overline{S}_p\setminus \overline{S}}})}^{Q_{\overline{S}_p\setminus \overline{S}}} )^{Q_{\Bar{v}}\textnormal{-ord}}$ by passing to the limit over $m$.

\begin{Lemma}\label{Lem6.2}
    Let $\widetilde{K}\subset \widetilde{G}(\mathbf{A}_{F^+}^{\infty})$ be a good subgroup that is decomposed with respect to $P$ such that, for each $\Bar{v}\in \overline{S}_p$, $\widetilde{K}_{\Bar{v}}\cap U(F^+_{\Bar{v}})=U(\mathcal{O}_{F^+_{\Bar{v}}}).$ Let $\mathfrak{m}\subset \mathbf{T}^{T}$ be a non-Eisenstein maximal ideal, let $\widetilde{\mathfrak{m}}:=\mathcal{S}^{\ast}(\mathfrak{m})\subset \widetilde{\mathbf{T}}^{T}$ and assume that $\overline{\rho}_{\widetilde{\mathfrak{m}}}$ is decomposed generic.

     Fix places $\Bar{v}, \Bar{v}'\in \overline{S}_p$ and introduce $\overline{S}_1,\overline{S}_2$ and $\overline{S}_3$ as before. Let $(Q_p,\lambda,\underline{\tau})$ be a tuple as in \S\ref{sec2.7}, and let $(\widetilde{Q}^{w_0}_{\Bar{v}}, \Tilde{\lambda},\underline{\tau_{\Bar{v}}})$  be the tuple associated to it as in Proposition~\ref{Prop6.1}. Assume that $\widetilde{K}_{\Bar{v}}=\widetilde{\mathcal{Q}}^{w_0}_{\Bar{v}}(0,c_p)$ and define $K$ as in Proposition~\ref{Prop6.1}. Then the Satake transform $\mathcal{S}^{\Bar{v}}:\widetilde{\mathbf{T}}^{T,\Tilde{\lambda}_{\Bar{v}},\underline{\tau_{\Bar{v}}}}\to\mathbf{T}^{T,\Tilde{\lambda}_{\Bar{v}},\underline{\tau_{\Bar{v}}}}$ descends to a homomorphism
    \begin{equation*}
\widetilde{\mathbf{T}}^{T,\lambda_{\Bar{v}},\underline{\tau_{\Bar{v}}}}(H^d(\widetilde{X}_{\widetilde{K}},\mathcal{V}_{(\widetilde{\lambda},\underline{\tau_{\Bar{v}}})}^{\widetilde{Q}^{w_0}_{\Bar{v}}})_{\widetilde{\mathfrak{m}}}^{\widetilde{Q}^{w_0}_{\Bar{v}}\textnormal{-ord}})\to
    \end{equation*}
    \begin{equation*}
        \mathbf{T}^{T,\lambda_{\Bar{v}},\underline{\tau_{\Bar{v}}}}\Big(\mathbf{H}^d\big(X_K,\mathcal{V}_{\lambda_{\Bar{v}'}}\otimes \mathcal{V}_U(\overline{S}_2)\otimes \mathcal{V}_{(\lambda_{\Bar{v}},\tau_{\Bar{v}})}^{Q_{\Bar{v}}}\big)_{\mathfrak{m}}^{Q_{\Bar{v}}\textnormal{-ord}}\Big)
    \end{equation*}
    where $\mathbf{H}^d$ denotes the degree $d$ hypercohomology.
\end{Lemma}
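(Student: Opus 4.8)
The statement is the torsion analogue of the Poincar\'e-series/degree-shifting comparison of \cite{ACC23}, \S4.2 and \cite{CN23}, Proposition 4.2.2, now carried out with $\widetilde{Q}^{w_0}_{\Bar v}$-ordinary (respectively $Q_{\Bar v}$-ordinary) coefficient systems and with the refined Hecke algebras at $p$ that include the Bernstein-centre factors $\mathfrak z^\circ_{\lambda_{\Bar v},\underline{\tau_{\Bar v}}}$. The plan is to run the argument of \cite{CN23}, Proposition 4.2.2 verbatim on the level of complexes, and then check that every step is compatible with (i) taking $\widetilde Q^{w_0}_{\Bar v}$-ordinary parts and (ii) the action of $\widetilde{\mathbf T}^{T,\lambda_{\Bar v},\underline{\tau_{\Bar v}}}$ and its Satake image. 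The compatibility with ordinary parts is exactly what \S\ref{sec3.5}, and in particular Corollary~\ref{Cor3.29}, was designed to deliver: applying $R\Gamma(\widetilde N^{w_0}_{\Bar v}(\mathcal O_{F^+_{\Bar v}}),-)^{\widetilde Q^{w_0}_{\Bar v}\textnormal{-ord}}$ to a Bruhat stratum of $\textnormal{Ind}_{P(F^+_{\Bar v})}^{\widetilde G(F^+_{\Bar v})}(-)$ computes, up to $\mathcal H(\sigma)\times\mathcal H(\sigma_c)$-equivariant subquotients and a cohomological degree shift, the $Q_v\times Q_{v^c}$-ordinary part of $\tau_{w_0^P}^{-1}(-)$.

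\textbf{Key steps, in order.} First I would reduce to the completed-cohomology picture at $\overline S_1\cup\overline S_2\cup\overline S_3$: by independence of weight and level (Proposition~\ref{Prop3.14} and Corollary~\ref{Cor3.12}, and their $\widetilde G$-analogues in \S\ref{Sec3.4}) it suffices to establish the corresponding map between $\widetilde Q^{w_0}_{\Bar v}$-ordinary completed cohomology complexes for the constant sheaf and then twist by $\mathcal V_{w_0^{\widetilde Q^{w_0}_{\Bar v}}\widetilde\lambda_{\Bar v}}$ and invariants against $\widetilde\sigma(\underline{\tau_{\Bar v}})^\circ$, using that these operations are $\widetilde{\mathbf T}^{T,\lambda_{\Bar v},\underline{\tau_{\Bar v}}}$-equivariant. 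Second, I would recall the excision/Borel--Serre boundary input: since $\mathfrak m$ is non-Eisenstein, \cite{CN23}, Corollary 4.1.9 (via the decomposed-generic hypothesis and Theorem~\ref{Th2.27}) localises the boundary $R\Gamma(\partial\widetilde{\mathfrak X}_{\widetilde G},-)_{\widetilde{\mathfrak m}}$ onto the Levi $P=G\ltimes U$ stratum only, identifying it after inverting nothing (integrally, up to a bounded nilpotent ideal later) with a parabolic induction $\textnormal{Ind}_{P}^{\widetilde G}$ of completed cohomology for $G$ tensored with $\mathcal V_U$-cohomology. Third, apply the $\widetilde Q^{w_0}_{\Bar v}$-ordinary functor to this identification and invoke Corollary~\ref{Cor3.29} at the place $\Bar v$ to convert the parabolic-induction side into $Q_v\times Q_{v^c}$-ordinary completed cohomology for $G$, with the twist character $\tau_{w_0^P}$ absorbed into the identification $G(F^+_{\Bar v^c})\cong G(F^+_{\Bar v})$ via $\theta_n$ (this is where Remark~\ref{Rem3.22} pins down the matching $\sigma(\lambda_{v^c},\underline{\tau_{v^c}})^\circ\mapsto(\theta_n^{-1})^*(-)$, hence the Hecke-algebra identification through which $\mathfrak z^\circ_{\lambda_{v^c},\underline{\tau_{v^c}}}$ acts). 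Fourth, track the Hecke actions: the spherical part descends along $\mathcal S$ by Proposition~\ref{Prop2.16} exactly as in \cite{NT16}, Proposition-Definition 5.3, while the $p$-part descends along $\mathcal S^{\Bar v}=\mathcal S\otimes\textnormal{id}$ because the Bernstein-centre factors at $\Bar v$ act through the \emph{same} $\mathcal H(\sigma(\lambda_{v},\underline{\tau_v})^\circ)\times\mathcal H((\theta_n^{-1})^*\sigma(\lambda_{v^c},\underline{\tau_{v^c}})^\circ)$-module structure on both sides of Corollary~\ref{Cor3.29}, and at places $\Bar v''\in\overline S_2$ the factors are untouched. Finally, pass to the faithful quotients: a degree-$d$ (hyper)cohomology group of a $\widetilde Q^{w_0}_{\Bar v}$-ordinary complex surjects onto a subquotient of the corresponding $Q_{\Bar v}$-ordinary cohomology of $X_K$ with the $\mathcal V_{\lambda_{\Bar v'}}\otimes\mathcal V_U(\overline S_2)\otimes\mathcal V_{(\lambda_{\Bar v},\tau_{\Bar v})}^{Q_{\Bar v}}$-coefficients, so the two faithful Hecke algebras fit into the claimed square.

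\textbf{Main obstacle.} The delicate point is not the homological bookkeeping but verifying that the isomorphism of Corollary~\ref{Cor3.29} is genuinely equivariant for the \emph{Bernstein-centre} action at $\Bar v$, not merely for the full Hecke algebra $\mathcal H(\widetilde\sigma)$: one must check that the embedding into the Bruhat-stratum cohomology and the twist $\tau_{w_0^P}$ carry the central subalgebra $\mathfrak z_{\Omega_v}\otimes\mathfrak z_{\Omega_{v^c}}\subset Z(\mathcal H(\sigma)\times\mathcal H(\sigma_c))$ to the intended $\mathfrak z^\circ_{\lambda_v,\underline{\tau_v}}\otimes\mathfrak z^\circ_{\lambda_{v^c},\underline{\tau_{v^c}}}$ acting on $\widetilde{\mathbf T}^{T,\lambda_{\Bar v},\underline{\tau_{\Bar v}}}$-modules, compatibly with the identification $\mathcal H(\sigma(\lambda_{v^c},\underline{\tau_{v^c}})^\circ)\cong\mathcal H((\theta_n^{-1})^*\sigma(\lambda_{v^c},\underline{\tau_{v^c}})^\circ)$ from Remark~\ref{Rem3.22}. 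This amounts to a careful but formal unwinding: the Bernstein centre acts functorially on every smooth representation in a fixed block, so its action commutes with all the exact functors ($I_w$, $I_w^\circ$, $\Gamma(N^0,-)$, $\tau_w$, localisation) used to build the map in \S\ref{sec3.5}, and the pullback by $\theta_n$ identifies the blocks for $F_v$ and $F_{v^c}$ compatibly with their Bernstein centres. Granting this, together with the already-established independence of weight/level and the non-Eisenstein boundary computation of \cite{CN23}, the lemma follows exactly as \cite{CN23}, Proposition 4.2.2 does.
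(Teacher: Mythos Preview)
Your proposal is correct and follows essentially the same route as the paper's proof: use Theorem~\ref{Th2.27} to surject the middle-degree cohomology of $\widetilde{X}_{\widetilde{K}}$ onto that of $\partial\widetilde{X}_{\widetilde{K}}$, identify the $\widetilde{\mathfrak{m}}$-localised boundary completed cohomology with $\textnormal{Ind}_{P}^{\widetilde{G}}$ of $G$-cohomology via \cite{CN23}, Corollary 4.1.9, apply Corollary~\ref{Cor3.29} to extract the $Q_v\times Q_{v^c}$-ordinary subquotient, and pass to the limit over $m$. One small clarification: in your second step you conflate two distinct inputs---Theorem~\ref{Th2.27} (decomposed generic) gives the surjection onto boundary cohomology, while \cite{CN23}, Corollary 4.1.9 (non-Eisenstein) gives the boundary-stratum identification; and no nilpotent ideal appears in this lemma (that enters only in Proposition~\ref{Prop6.1}).
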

\begin{proof}
    By Theorem~\ref{Th2.27}, the genericity of $\widetilde{\mathfrak{m}}$ implies that we have a $\widetilde{\mathbf{T}}^{T,\lambda_{\Bar{v}},\underline{\tau_{\Bar{v}}}}$-equivariant surjection
    \begin{equation*}
H^d(\widetilde{X}_{\widetilde{K}},\mathcal{V}_{(\widetilde{\lambda},\underline{\tau_{\Bar{v}}})}^{\widetilde{Q}^{w_0}_{\Bar{v}}})_{\widetilde{\mathfrak{m}}}^{\widetilde{Q}^{w_0}_{\Bar{v}}\textnormal{-ord}}\twoheadrightarrow H^d(\partial\widetilde{X}_{\widetilde{K}},\mathcal{V}_{(\widetilde{\lambda},\underline{\tau_{\Bar{v}}})}^{\widetilde{Q}^{w_0}_{\Bar{v}}})_{\widetilde{\mathfrak{m}}}^{\widetilde{Q}^{w_0}_{\Bar{v}}\textnormal{-ord}}.
    \end{equation*}
    In particular, it suffices to prove that $\mathcal{S}^{\Bar{v}}$ descends to a homomorphism
    \begin{equation*}
\widetilde{\mathbf{T}}^{T,\lambda_{\Bar{v}},\underline{\tau_{\Bar{v}}}}(H^d(\partial\widetilde{X}_{\widetilde{K}},\mathcal{V}_{(\widetilde{\lambda},\underline{\tau_{\Bar{v}}})}^{\widetilde{Q}^{w_0}_{\Bar{v}}})_{\widetilde{\mathfrak{m}}}^{\widetilde{Q}^{w_0}_{\Bar{v}}\textnormal{-ord}})\to
        \mathbf{T}^{T,\lambda_{\Bar{v}},\underline{\tau_{\Bar{v}}}}\Big(\mathbf{H}^d\big(X_K,\mathcal{V}_{\lambda_{\Bar{v}'}}\otimes \mathcal{V}_U(\overline{S}_2)\otimes \mathcal{V}_{(\lambda_{\Bar{v}},\underline{\tau_{\Bar{v}})}}^{Q_{\Bar{v}}}\big)_{\mathfrak{m}}^{Q_{\Bar{v}}\textnormal{-ord}}\Big).
    \end{equation*}
Consider $\pi_{\partial}(\widetilde{K}^{\Bar{v}},\mathcal{V}_{\widetilde{\lambda}^{\Bar{v}}}/\varpi^m):=R\Gamma(\partial \widetilde{X}_{\widetilde{K}^{\Bar{v}}},\mathcal{V}_{\widetilde{\lambda}^{\Bar{v}}}/\varpi^m)\in D^+_{\textnormal{sm}}(\mathcal{O}/\varpi^m[\widetilde{G}(F^+_{\Bar{v}})])$, the $\Bar{v}$-completed boundary cohomology.
We then have a $\widetilde{\mathbf{T}}^{T,\lambda_{\Bar{v}},\underline{\tau_{\Bar{v}}}}$-equivariant isomorphism
\begin{equation*}
    H^d(\partial\widetilde{X}_{\widetilde{K}},\mathcal{V}_{(\widetilde{\lambda},\underline{\tau_{\Bar{v}}})}^{\widetilde{Q}^{w_0}_{\Bar{v}}}/\varpi^m)_{\widetilde{\mathfrak{m}}}^{\widetilde{Q}^{w_0}_{\Bar{v}}\textnormal{-ord}}\cong
\end{equation*}
\begin{equation*}
\mathbf{R}^d\Hom_{\mathcal{O}/\varpi^m[\widetilde{M}^{w_0,0}_{\Bar{v}}]}\Big(\widetilde{\sigma}(\Tilde{\lambda}_{\Bar{v}},\underline{\tau_{\Bar{v}}})^{\circ}/\varpi^m,R\Gamma\big(\widetilde{N}^{w_0}_{\Bar{v}}(\mathcal{O}_{F^+_{\Bar{v}}}),\pi_{\partial}(\widetilde{K}^{\Bar{v}},\mathcal{V}_{\widetilde{\lambda}^{\Bar{v}}}/\varpi^m)\big)^{\widetilde{Q}^{w_0}_{\Bar{v}}\textnormal{-ord}}\Big).
\end{equation*}
On the other hand, \cite{CN23}, Corollary 4.1.9 shows that $\pi_{\partial}(\widetilde{K}^{\Bar{v}},\mathcal{V}_{\widetilde{\lambda}^{\Bar{v}}}/\varpi^m)_{\widetilde{\mathfrak{m}}}$ admits
\begin{equation*}
\mathcal{S}^{\ast}\textnormal{Ind}_{P(F^+_{\Bar{v}})}^{\widetilde{G}(F^+_{\Bar{v}})}R\Gamma(\overline{X}_{K^{\Bar{v}}},\mathcal{V}_{\lambda_{\Bar{v}'}}/\varpi^m\otimes \mathcal{V}_U(\overline{S}_2,m))_{\mathfrak{m}}
\end{equation*}
as a $\widetilde{\mathbf{T}}^T$-equivariant direct summand in $D^+_{\textnormal{sm}}(\mathcal{O}/\varpi^m[\widetilde{G}(F^+_{\Bar{v}})])$. To simplify notation, set $\pi:=\pi(K^{\Bar{v}},\mathcal{V}_{\lambda_{\Bar{v}'}}/\varpi^m\otimes \mathcal{V}_U(\overline{S}_2,m))_{\mathfrak{m}}$. Then, by the previous discussion, $H^d(\partial\widetilde{X}_{\widetilde{K}},\mathcal{V}_{(\widetilde{\lambda},\underline{\tau_{\Bar{v}}})}^{\widetilde{Q}^{w_0}_{\Bar{v}}}/\varpi^m)_{\widetilde{\mathfrak{m}}}^{\widetilde{Q}^{w_0}_{\Bar{v}}\textnormal{-ord}}$ admits
\begin{equation*}
    \mathbf{R}^d\Hom_{\mathcal{O}/\varpi^m[\widetilde{M}^{w_0,0}_{\Bar{v}}]}\Big(\widetilde{\sigma}(\Tilde{\lambda}_{\Bar{v}},\underline{\tau_{\Bar{v}}})^{\circ}/\varpi^m,R\Gamma\big(\widetilde{N}^{w_0}_{\Bar{v}}(\mathcal{O}_{F^+_{\Bar{v}}}),\mathcal{S}^{\ast}\textnormal{Ind}_{P(F^+_{\Bar{v}})}^{\widetilde{G}(F^+_{\Bar{v}})}\pi\big)^{\widetilde{Q}^{w_0}_{\Bar{v}}\textnormal{-ord}}\Big)
\end{equation*}
as a $\widetilde{\mathbf{T}}^{T,\lambda_{\Bar{v}},\underline{\tau_{\Bar{v}}}}$-equivariant direct summand.

By Corollary~\ref{Cor3.29}, the latter admits
\begin{equation*}
    \mathbf{R}^d\Hom_{\mathcal{O}/\varpi^m[M^0_{\Bar{v}}]}\Big(\sigma(\lambda_v,\underline{\tau_v})^{\circ}\otimes \sigma(\lambda_{v^c},\underline{\tau_{v^c}})^{\circ}/\varpi^m,R\Gamma\big(N_{v}(\mathcal{O}_{F_v})\times N_{v^c}(\mathcal{O}_{F_{v^c}}),\pi\big)^{Q_v\times Q_{v^c}\textnormal{-ord}}\Big)
\end{equation*}
\begin{equation*}
    \cong \mathbf{H}^d(X_K,\mathcal{V}_{\lambda_{\Bar{v}'}}\otimes \mathcal{V}_U(\overline{S}_2)\otimes \mathcal{V}_{(\lambda_{\Bar{v}},\tau_{\Bar{v}})}^{Q_{\Bar{v}}}/\varpi^m)_{\mathfrak{m}}^{Q_{\Bar{v}}\textnormal{-ord}}
\end{equation*}
as a $\widetilde{\mathbf{T}}^{T,\lambda_{\Bar{v}},\underline{\tau_{\Bar{v}}}}$-equivariant subquotient, giving the desired map with mod $\varpi^m$ coefficients. Here we implicitly used the identification $\sigma(\lambda_v,\underline{\tau_v})^{\circ}\otimes \sigma(\lambda_{v^c},\underline{\tau_{v^c}})^{\circ}\cong \tau_{w_0^P}^{-1}\widetilde{\sigma}(\widetilde{\lambda}_{\Bar{v}},\underline{\tau_{\Bar{v}}})^{\circ}$ induced by $\iota_v:\widetilde{G}(F^+_{\Bar{v}})\cong \textnormal{GL}_{2n}(F_v)$.

We finally note that these identifications are compatible when we vary $m$, and, since all the cohomology groups appearing are finitely generated $\mathcal{O}$-modules, we can conclude by passing to the limit over $m\geq 1$.
\end{proof}
\begin{proof}[Proof of Proposition~\ref{Prop6.1}]
    This is a generalisation of \cite{CN23}, Proposition 4.2.6. More precisely, the role of the faithful Hecke algebras $A(K,\lambda,q)$, $A(K,\lambda,q,m)$, and $\widetilde{A}(\widetilde{K},\Tilde{\lambda},\Bar{v})$ of \textit{loc. cit.} are now played by $A(K,\lambda,\underline{\tau},q,\Bar{v})$, $A(K,\lambda,\underline{\tau},q,\Bar{v},m)$, and $\widetilde{A}(\widetilde{K},\Tilde{\lambda},\underline{\tau_{\Bar{v}}},\Bar{v})$. Consequently, in our case $\mathbf{T}$, respectively $\widetilde{\mathbf{T}}$ will denote $\mathbf{T}^{T,\lambda_{\Bar{v}},\underline{\tau_{\Bar{v}}}}$, respectively $\widetilde{\mathbf{T}}^{T,\lambda_{\Bar{v}},\underline{\tau_{\Bar{v}}}}$ and our goal is to show the existence of non-negative integers $m'\geq m$, and $N$ such that
    \begin{equation*}
        \mathcal{S}^{\Bar{v}}(\textnormal{Ann}_{\widetilde{\mathbf{T}}}H^d(\widetilde{X}_{\widetilde{K}(m',\overline{S}_2)},\mathcal{V}_{(\Tilde{\lambda},\underline{\tau_{\Bar{v}}})}^{\widetilde{Q}_{\Bar{v}}^{w_0}})_{\widetilde{\mathfrak{m}}}^{\widetilde{Q}_{\Bar{v}}^{w_0}\textnormal{-ord}})^N\subset \textnormal{Ann}_{\mathbf{T}}H^q(X_K,\mathcal{V}_{(\lambda,\underline{\tau})}^{Q_p}/\varpi^m)_{\mathfrak{m}}^{Q_{\Bar{v}}\textnormal{-ord}}.
    \end{equation*}
    Regardless the change of setup, the proof is identical to that of \textit{loc. cit.} In particular, we only indicate the necessary new inputs for the argument to work in our case and direct the reader to \cite{CN23} for the proof.
    \begin{itemize}
        \item The proof uses Poincar\'e duality at certain points. To be able to appeal to Poincar\'e duality in our case, we need that it is Hecke-equivariant also at $\Bar{v}$. This is the content of Proposition~\ref{Prop3.20}.
        \item To deepen the level at certain steps, their proof uses the Hochschild--Serre spectral sequence, and we need to verify that all of these spectral sequences are $\mathbf{T}^{T,\lambda_{\Bar{v}},\underline{\tau_{\Bar{v}}}}$-equivariant. However, this is clear as we only have to go deeper level at places in $\overline{S}_2$ and our Hecke operators are at places in $T\cup \{\Bar{v}\}$.
        \item We also need to argue that the hypercohomology spectral sequences with respect to $\mathcal{V}_U(\overline{S}_2)$ and $\mathcal{V}_U(\overline{S}_2,m)$ (that are denoted in \cite{CN23} by $E_n^{i,j}(\mathcal{O})$ resp. $E_n^{i,j}(\mathcal{O}/\varpi^m)$) are $\mathbf{T}^{T,\lambda_{\Bar{v}},\underline{\tau_{\Bar{v}}}}$-equivariant. To see this, we note that the $\mathbf{T}^{T,\lambda_{\Bar{v}},\underline{\tau_{\Bar{v}}}}$-action on the target of the map in Lemma~\ref{Lem6.2} is induced by the identification
        \begin{equation*}
            R\Gamma(X_K,\mathcal{V}_{\lambda_{\Bar{v}'}}/\varpi^m\otimes \mathcal{V}_U(\overline{S}_2,m)\otimes \mathcal{V}_{(\lambda_{\Bar{v}},\underline{\tau}_{\Bar{v}})}^{Q_{\Bar{v}}})^{Q_{\Bar{v}}\textnormal{-ord}}\cong 
        \end{equation*}
        \begin{equation*}
            \cong R\textnormal{Hom}_{\mathcal{O}/\varpi^m[M^0_{\overline{v}}]}(\sigma(\lambda_{\overline{v}},\underline{\tau_{\overline{v}}})^{\circ}/\varpi^m,\pi^{Q_{\overline{v}}\textnormal{-ord}}(K^{\overline{v}},\mathcal{V}_{\lambda_{\overline{v}'}}\otimes \mathcal{V}_U(\overline{S}_2,m)))
        \end{equation*}
        in $D^+(\mathcal{O}/\varpi^m)$. We can then construct $E_n^{i,j}(\mathcal{O}/\varpi^m)$ of \textit{loc. cit.} by taking the hypercohomology spectral sequence of
        \begin{equation*}
            R\textnormal{Hom}_{\mathcal{O}/\varpi^m[M^0_{\overline{v}}]}(\sigma(\lambda_{\overline{v}},\underline{\tau_{\overline{v}}})^{\circ}/\varpi^m,\pi^{Q_{\overline{v}}\textnormal{-ord}}(K^{\overline{v}},\mathcal{V}_{\lambda_{\overline{v}'}}\otimes-))\cong
        \end{equation*}
        \begin{equation*}
        R\Biggl(\textnormal{Hom}_{\mathcal{O}/\varpi^m[M^0_{\overline{v}}]}\biggl(\sigma(\lambda_{\overline{v}},\underline{\tau_{\overline{v}}})^{\circ}/\varpi^m,\Gamma\Bigl(N^0_{\Bar{v}},\Gamma\bigl(K^{\Bar{v}},\Gamma(\overline{\mathfrak{X}}_G,\mathcal{V}_{\lambda_{\Bar{v}'}}\otimes-)\bigr)\Bigr)^{Q_{\Bar{v}}\textnormal{-ord}}\biggr)\Biggr)
        \end{equation*}
        applied to $\mathcal{V}_U(\overline{S}_2,m)\in D^b\textnormal{Sh}_{G^{\overline{S}_2}\times K_{\overline{S}_2}}(\overline{\mathfrak{X}}_G,\mathcal{O}/\varpi^m)$. In particular, it will be $\mathbf{T}^{T,\lambda_{\Bar{v}},\underline{\tau_{\Bar{v}}}}$-equivariant by construction. Since all the members of $E_n^{i,j}(\mathcal{O}/\varpi^m)$ are finite $\mathcal{O}/\varpi^m$-modules, the Mittag--Leffler condition is satisfied and, in particular, the limit of the spectral sequences $E_n^{i,j}(\mathcal{O}/\varpi^m)$ over $m\geq 1$ produces $E_n^{i,j}(\mathcal{O})$.
        \item As input, we use Lemma~\ref{Lem6.2} instead of \textit{loc. cit.} Proposition 4.2.2.
    \end{itemize}
\end{proof}

Just as in \cite{CN23}, we will need a dual version of the degree shifting argument. We only explain the setup and the statements here as the proofs are identical to that of Proposition~\ref{Prop6.1}, and Lemma~\ref{Lem6.2}. Set $\widetilde{\mathbf{T}}^{T,\lambda_{\Bar{v}},\underline{\tau_{\Bar{v}}},\Tilde{\iota}}$, respectively $\mathbf{T}^{T,\lambda_{\Bar{v}},\underline{\tau_{\Bar{v}}},\iota}$ to be the image of $\widetilde{\mathbf{T}}^{T,\lambda_{\Bar{v}},\underline{\tau_{\Bar{v}}}}$, respectively $\mathbf{T}^{T,\lambda_{\Bar{v}},\underline{\tau_{\Bar{v}}}}$ under the anti-isomorphism
\begin{equation*}
\Tilde{\iota}:\widetilde{\mathbf{T}}^T\otimes_{\mathcal{O}}\mathcal{H}(\sigma(\lambda_{\Bar{v}},\underline{\tau}_{\Bar{v}})^{\circ})\cong \widetilde{\mathbf{T}}^T\otimes_{\mathcal{O}}\mathcal{H}(\sigma(\lambda_{\Bar{v}},\underline{\tau}_{\Bar{v}})^{\circ,\vee}), 
\end{equation*}
respectively
\begin{equation*}
    \iota:\mathbf{T}^T\otimes_{\mathcal{O}}\mathcal{H}(\sigma(\lambda_{\Bar{v}},\underline{\tau}_{\Bar{v}})^{\circ})\cong \mathbf{T}^T\otimes_{\mathcal{O}}\mathcal{H}(\sigma(\lambda_{\Bar{v}},\underline{\tau}_{\Bar{v}})^{\circ,\vee})
\end{equation*}
given by $[g,\psi]\mapsto [g^{-1},\psi^{t}]$ on double coset operators.\footnote{Recall that $\Tilde{\iota}$ and $\iota$ are the maps intertwining the Hecke actions between the two sides of Poincar\'e duality.} We denote by $\mathcal{S}^{\Bar{v},\vee}:\widetilde{\mathbf{T}}^{T,\lambda_{\Bar{v}},\underline{\tau_{\Bar{v}}},\Tilde{\iota}} \to \mathbf{T}^{T,\lambda_{\Bar{v}},\underline{\tau_{\Bar{v}}},\iota}$ the extension of the unnormalised Satake transform given by $\mathcal{S}\otimes \textnormal{id}$. We consider a tuple $(Q_p,\lambda,\underline{\tau})$ as before, but now, just as in the last paragraph of \S\ref{sec2.7}, we assume that $\widetilde{\lambda}:=(-w_0^{\textnormal{GL}_n}\lambda_{v^c},\lambda_v)$ is dominant for $\widetilde{G}$ (instead of $w_0^P\widetilde{\lambda}_{\Bar{v}}$). We also pick some dominant weight $\Tilde{\lambda}\in (\mathbf{Z}_+^{2n})^{\Hom(F^+,E)}$ for $\widetilde{G}$ extending $\Tilde{\lambda}_{\Bar{v}}$. We consider the dual local systems $\mathcal{V}_{(\lambda,\underline{\tau})}^{Q_p,\vee}$, and $\mathcal{V}_{(\Tilde{\lambda},\underline{\tau_{\Bar{v}}})}^{\widetilde{Q},w_0,\vee}$. Then $\widetilde{\mathbf{T}}^{T,\lambda_{\Bar{v}},\underline{\tau_{\Bar{v}}},\Tilde{\iota}}$, respectively $\mathbf{T}^{T,\lambda_{\Bar{v}},\underline{\tau_{\Bar{v}}},\iota}$ acts on $R\Gamma(\widetilde{X}_{\widetilde{K}},\mathcal{V}_{(\Tilde{\lambda},\underline{\tau_{\Bar{v}}})}^{\widetilde{Q}_{\Bar{v}},w_0,\vee}/\varpi^m)^{\overline{\widetilde{Q}_{\Bar{v}}}\textnormal{-ord}}$, respectively $R\Gamma(X_K,\mathcal{V}_{(\lambda,\underline{\tau})}^{Q_p,\vee}/\varpi^m)^{\overline{Q_{\Bar{v}}}\textnormal{-ord}}$ when $\widetilde{K}\subset \widetilde{G}(\mathbf{A}_{F^+}^{\infty})$ is a good subgroup with $\widetilde{K}_{\Bar{v}}=\widetilde{\mathcal{Q}}_{\Bar{v}}(0,c_p)$, and $K=\widetilde{K}\cap G(\mathbf{A}_{F^+}^{\infty})$. In the case of the latter, it follows from Corollary~\ref{Cor3.19}. The action in the case of the former comes from the identification
\begin{equation*}
R\Gamma(\widetilde{X}_{\widetilde{K}},\mathcal{V}_{(\Tilde{\lambda},\underline{\tau_{\Bar{v}}})}^{\widetilde{Q}_{\Bar{v}},w_0,\vee}/\varpi^m)^{\overline{\widetilde{Q}_{\Bar{v}}}\textnormal{-ord}}\cong 
\end{equation*}
\begin{equation*}
R\Hom_{\mathcal{O}/\varpi^m[\widetilde{M}^0_{\Bar{v}}]}(\theta_n^{-1}\sigma(\lambda_{v^c},\tau_{v^c})^{\circ,\vee}\otimes \sigma(\lambda_v,\tau_v)^{\circ,\vee},\pi^{\overline{\widetilde{Q}_{\Bar{v}}}\textnormal{-ord}}(\widetilde{K}^{\Bar{v}},\mathcal{V}_{\widetilde{\lambda}^{\Bar{v}}}^{\vee}/\varpi^m))
\end{equation*}
in $D^+(\mathcal{O}/\varpi^m)$ and the isomorphism $\mathcal{H}(\sigma(\lambda_v,\underline{\tau_{v}})^{\circ,\vee})\otimes \mathcal{H}(\sigma(\lambda_{v^c},\underline{\tau_{v^c}})^{\circ,\vee})\cong \mathcal{H}(\theta_n^{-1}\sigma(\lambda_{v^c},\underline{\tau_{v^c}})^{\circ,\vee})\otimes \mathcal{H}(\sigma(\lambda_v,\underline{\tau_v})^{\circ,\vee})$ induced by $\iota_v$.

Given a maximal ideal $\mathfrak{m}\subset \mathbf{T}^T$, set $\mathfrak{m}^{\vee}:=\iota|_{\mathbf{T}^T}(\mathfrak{m})\subset \mathbf{T}^T$. Define the faithful Hecke algebras
\begin{equation*}
A^{\vee}(K,\lambda,\underline{\tau},q,\Bar{v}):=\mathbf{T}^{T,\lambda_{\Bar{v}},\underline{\tau_{\Bar{v}}},\iota}(H^q(X_K,\mathcal{V}_{(\lambda,\underline{\tau})}^{Q_p,\vee})_{\mathfrak{m}^{\vee}}^{\overline{Q_{\Bar{v}}}\textnormal{-ord}}),
\end{equation*}
\begin{equation*}
A^{\vee}(K,\lambda,\underline{\tau},q,\Bar{v},m):=\mathbf{T}^{T,\lambda_{\Bar{v}},\underline{\tau_{\Bar{v}}},\iota}(H^q(X_K,\mathcal{V}_{(\lambda,\underline{\tau})}^{Q_p,\vee}/\varpi^m)_{\mathfrak{m}^{\vee}}^{\overline{Q_{\Bar{v}}}\textnormal{-ord}})\textnormal{, and}
\end{equation*}
\begin{equation*}
    \widetilde{A}^{\vee}(\widetilde{K},\Tilde{\lambda},\underline{\tau_{\Bar{v}}},\Bar{v}):=\widetilde{\mathbf{T}}^{T,\lambda_{\Bar{v}},\underline{\tau_{\Bar{v}}},\Tilde{\iota}}(H^d(\widetilde{X}_{\widetilde{K}},\mathcal{V}_{(\Tilde{\lambda},\underline{\tau_{\Bar{v}}})}^{\widetilde{Q}_{\Bar{v}},w_0^P,\vee})_{\mathcal{S}^{\ast}\mathfrak{m}^{\vee}}^{\overline{\widetilde{Q}_{\Bar{v}}}\textnormal{-ord}}).
\end{equation*}
Then the dual version of degree shifting reads as follows.
\begin{Prop}[\textnormal{Dual degree shifting}]\label{Prop6.3}
    Let $\Bar{v},\Bar{v}'$ be two distinct places of $\overline{S}_p$. Let $\overline{S}_1:=\{\Bar{v}'\}$, $\overline{S}_3:=\{\Bar{v}\}$, and $\overline{S}_2:=\overline{S}_p\setminus \{\Bar{v},\Bar{v}'\}$ their complement. Let $\widetilde{K}\subset \widetilde{G}(\mathbf{A}_{F^+}^{\infty})$ be a good subgroup and $m\in \mathbf{Z}_{\geq 1}$ be an integer. Assume that the following conditions are satisfied.
    \begin{enumerate}
        \item We have
        \begin{equation*}
            \sum_{\Bar{v}''\in \overline{S}_2}[F^+_{\Bar{v}''}:\mathbf{Q}_p]\geq \frac{1}{2}[F^+:\mathbf{Q}].
        \end{equation*}
        \item For $\Bar{v}''\in \overline{S}_1\cup \overline{S}_2$, we have $U(\mathcal{O}_{F^+_{\Bar{v}''}})\subset \widetilde{K}_{\Bar{v}''}$, and $\widetilde{K}_{\Bar{v}''}=\widetilde{K}(m,\overline{S}_1\cup \overline{S}_2)_{\Bar{v}''}$. Finally, we have $\widetilde{K}_{\Bar{v}}=\widetilde{\mathcal{Q}}_{\Bar{v}}(0,c_p)$.
        \item For each $\iota:F\hookrightarrow E$ inducing $\Bar{v}$ or $\Bar{v}'$, we have $\lambda_{\iota ,n}+\lambda_{\iota c,n}\geq 0$.
        \item $\overline{\rho}_{\mathcal{S}^{\ast}\mathfrak{m}^{\vee}}$ is decomposed generic.
    \end{enumerate}
Define a weight $\Tilde{\lambda}\in (\mathbf{Z}^{2n}_+)^{\Hom(F^+,E)}$ as follows: if $\iota:F^+\hookrightarrow E$ does not induce either $\Bar{v}$ or $\Bar{v}'$, set $\Tilde{\lambda}_{\iota}=0$. Otherwise, set $\Tilde{\lambda}_{\iota}=(\lambda_{\iota},-w_{0,n}\lambda_{\iota c})$. Set $K:=\widetilde{K}\cap G(\mathbf{A}_{F^+}^{\infty})$.

   Let $q\in [\lfloor \frac{d}{2}\rfloor, d-1]$. Then there exists an integer $m'\geq m$, an integer $N\geq 1$, a nilpotent ideal $I\subset A^{\vee}(K,\lambda,\underline{\tau},q,\Bar{v},m)$ satisfying $I^N=0$, and a commutative diagram
$$\begin{tikzcd}
\widetilde{\mathbf{T}}^{T,\lambda_{\Bar{v}},\underline{\tau_{\Bar{v}}},\widetilde{\iota}} \arrow{r}{} \arrow{d}{\mathcal{S}^{\Bar{v},\vee}}
&\widetilde{A}^{\vee}(\widetilde{K}(m',\overline{S}_2),\Tilde{\lambda},\underline{\tau_{\Bar{v}}},\Bar{v}) \arrow{d}{}\\
\mathbf{T}^{T,\lambda_{\Bar{v}},\tau_{\Bar{v}},\iota} \arrow{r}{} & A^{\vee}(K,\lambda,\underline{\tau},q,\Bar{v},m)/I.
\end{tikzcd}$$
Moreover, $N$ can be chosen to only depend on $n$ and $[F^+:\mathbf{Q}]$.
\end{Prop}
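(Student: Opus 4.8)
The proof runs the degree shifting machine of \cite{CN23}, Proposition 4.2.6 with dual coefficients and the opposite parabolic $\overline{Q}_{\bar{v}}$ in place of $Q_{\bar{v}}$, so it is formally parallel to the proof of Proposition~\ref{Prop6.1}. The first step is to establish the dual analogue of Lemma~\ref{Lem6.2}: the extended Satake transform $\mathcal{S}^{\bar{v},\vee}$ descends to a homomorphism
\[
\widetilde{\mathbf{T}}^{T,\lambda_{\bar{v}},\underline{\tau_{\bar{v}}},\widetilde{\iota}}\bigl(H^d(\widetilde{X}_{\widetilde{K}},\mathcal{V}_{(\widetilde{\lambda},\underline{\tau_{\bar{v}}})}^{\widetilde{Q}_{\bar{v}},w_0^P,\vee})_{\mathcal{S}^{\ast}\mathfrak{m}^{\vee}}^{\overline{\widetilde{Q}_{\bar{v}}}\textnormal{-ord}}\bigr)\longrightarrow \mathbf{T}^{T,\lambda_{\bar{v}},\underline{\tau_{\bar{v}}},\iota}\bigl(\mathbf{H}^d(X_K,\mathcal{V}_{\lambda_{\bar{v}'}}\otimes \mathcal{V}_U(\overline{S}_2)\otimes \mathcal{V}_{(\lambda_{\bar{v}},\tau_{\bar{v}})}^{Q_{\bar{v}},\vee})_{\mathfrak{m}^{\vee}}^{\overline{Q_{\bar{v}}}\textnormal{-ord}}\bigr).
\]
The ingredients are exactly the dual counterparts of those in Lemma~\ref{Lem6.2}: one first uses Theorem~\ref{Th2.27} applied to the dual local system and the maximal ideal $\mathcal{S}^{\ast}\mathfrak{m}^{\vee}$ (which is decomposed generic by hypothesis (iv)) to obtain a $\widetilde{\mathbf{T}}$-equivariant surjection from the $\overline{\widetilde{Q}_{\bar{v}}}$-ordinary part of $H^d(\widetilde{X}_{\widetilde{K}},-)$ onto that of $H^d(\partial\widetilde{X}_{\widetilde{K}},-)$, so it suffices to treat boundary cohomology; then, because $\mathfrak{m}$ is non-Eisenstein, \cite{CN23}, Corollary 4.1.9 identifies the $\bar{v}$-completed boundary cohomology (localised at $\widetilde{\mathfrak{m}}$, now via $\mathcal{S}^{\ast}\mathfrak{m}^{\vee}$) with a summand of $\mathcal{S}^{\ast}\textnormal{Ind}_{P(F^+_{\bar{v}})}^{\widetilde{G}(F^+_{\bar{v}})}$ of $\bar{v}$-completed cohomology of $X_{K^{\bar{v}}}$ with coefficients in $\mathcal{V}_{\lambda_{\bar{v}'}}^{\vee}\otimes\mathcal{V}_U(\overline{S}_2)$; finally Corollary~\ref{Cor3.30} computes the $\overline{\widetilde{Q}_{\bar{v}}}$-ordinary part of this Bruhat stratum in terms of $\overline{Q_{v^c}}\times\overline{Q_v}$-ordinary parts, which (using Remark~\ref{Rem3.22} and Corollary~\ref{Cor3.19} for the Hecke action at $p$ on dual coefficients, together with the $\iota_v$- and $\theta_n$-identifications) is $\mathbf{T}^{T,\lambda_{\bar{v}},\underline{\tau_{\bar{v}}},\iota}$-equivariantly a subquotient of $\mathbf{H}^d(X_K,\mathcal{V}_{\lambda_{\bar{v}'}}\otimes\mathcal{V}_U(\overline{S}_2)\otimes\mathcal{V}_{(\lambda_{\bar{v}},\tau_{\bar{v}})}^{Q_{\bar{v}},\vee})_{\mathfrak{m}^{\vee}}^{\overline{Q_{\bar{v}}}\textnormal{-ord}}$. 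These identifications are compatible in $m$ and, all cohomology groups being finite over $\mathcal{O}$, one passes to the limit.

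Granting this dual Lemma, the proof of \cite{CN23}, Proposition 4.2.6 applies verbatim, with $A(K,\lambda,q)$, $A(K,\lambda,q,m)$, $\widetilde{A}(\widetilde{K},\widetilde{\lambda},\bar{v})$ replaced by $A^{\vee}(K,\lambda,\underline{\tau},q,\bar{v})$, $A^{\vee}(K,\lambda,\underline{\tau},q,\bar{v},m)$, $\widetilde{A}^{\vee}(\widetilde{K},\widetilde{\lambda},\underline{\tau_{\bar{v}}},\bar{v})$, and the following new inputs. Whenever Poincaré duality is invoked one uses Corollary~\ref{Cor2.8} together with Proposition~\ref{Prop3.20}, which makes the Verdier duality isomorphism between compactly-supported $\overline{Q}_p$-ordinary dual cohomology and $Q_p$-ordinary cohomology equivariant for the Hecke action at $p$ along the anti-isomorphism $[g,\psi]\mapsto[g^{-1},\psi^t]$ (this is precisely the passage between $\mathbf{T}^{T,\lambda_{\bar{v}},\underline{\tau_{\bar{v}}}}$ and $\mathbf{T}^{T,\lambda_{\bar{v}},\underline{\tau_{\bar{v}}},\iota}$). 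The Hochschild--Serre spectral sequences used to deepen the level are $\mathbf{T}^{T,\lambda_{\bar{v}},\underline{\tau_{\bar{v}}},\iota}$-equivariant because one only deepens level at places in $\overline{S}_2$, disjoint from $T\cup\{\bar{v}\}$. The hypercohomology spectral sequences for $\mathcal{V}_U(\overline{S}_2)$ and $\mathcal{V}_U(\overline{S}_2,m)$ are $\mathbf{T}^{T,\lambda_{\bar{v}},\underline{\tau_{\bar{v}}},\iota}$-equivariant because, exactly as in the proof of Proposition~\ref{Prop6.1}, they are built by applying the functor $R\textnormal{Hom}_{\mathcal{O}/\varpi^m[M^0_{\bar{v}}]}(\sigma(\lambda_{\bar{v}},\underline{\tau_{\bar{v}}})^{\circ,\vee}/\varpi^m,\pi^{\overline{Q_{\bar{v}}}\textnormal{-ord}}(K^{\bar{v}},\mathcal{V}_{\lambda_{\bar{v}'}}^{\vee}\otimes-))$ to $\mathcal{V}_U(\overline{S}_2,m)$ and then passing to the limit over $m$; and the dual Lemma~\ref{Lem6.2} above is used in place of Lemma~\ref{Lem6.2}. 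The argument then produces, for $q\in[\lfloor d/2\rfloor,d-1]$, integers $m'\geq m$ and $N\geq1$ (with $N$ depending only on $n$ and $[F^+:\mathbf{Q}]$) and a nilpotent ideal $I\subset A^{\vee}(K,\lambda,\underline{\tau},q,\bar{v},m)$ with $I^N=0$ such that
\[
\mathcal{S}^{\bar{v},\vee}\bigl(\textnormal{Ann}_{\widetilde{\mathbf{T}}^{T,\lambda_{\bar{v}},\underline{\tau_{\bar{v}}},\widetilde{\iota}}}H^d(\widetilde{X}_{\widetilde{K}(m',\overline{S}_2)},\mathcal{V}_{(\widetilde{\lambda},\underline{\tau_{\bar{v}}})}^{\widetilde{Q}_{\bar{v}},w_0^P,\vee})_{\mathcal{S}^{\ast}\mathfrak{m}^{\vee}}^{\overline{\widetilde{Q}_{\bar{v}}}\textnormal{-ord}}\bigr)^N\subset \textnormal{Ann}_{\mathbf{T}^{T,\lambda_{\bar{v}},\underline{\tau_{\bar{v}}},\iota}}H^q(X_K,\mathcal{V}_{(\lambda,\underline{\tau})}^{Q_p,\vee}/\varpi^m)_{\mathfrak{m}^{\vee}}^{\overline{Q_{\bar{v}}}\textnormal{-ord}},
\]
which is exactly the stated commutative square of faithful Hecke algebras.

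The main obstacle, as in Proposition~\ref{Prop6.1}, is purely bookkeeping: tracking the Hecke action at $p$ through the chain of identifications — the $\theta_n$-twist relating $\textnormal{GL}_n(F_{v^c})$ and $\textnormal{GL}_n(F_v)$, the isomorphism $\iota_v$ realising $\widetilde{M}_{\bar{v}}$ as a Levi in $\textnormal{GL}_{2n}(F_v)$, the anti-isomorphisms $\iota$ and $\widetilde{\iota}$ coming from Poincaré duality, and the passage between the integral Bernstein centre subalgebras and the full Hecke algebras $\mathcal{H}(\sigma(\lambda_{\bar{v}},\underline{\tau_{\bar{v}}})^{\circ,\vee})$. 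This is precisely the content that Proposition~\ref{Prop3.20}, Corollary~\ref{Cor3.19}, Corollary~\ref{Cor3.30} and Remark~\ref{Rem3.22} were designed to supply, so once these are invoked the rest is a formal, if lengthy, diagram chase identical in structure to the non-dual case; hence, following \cite{CN23}, we omit the details.
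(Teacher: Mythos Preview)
Your proposal is correct and follows essentially the same approach as the paper's own proof: argue in parallel with Proposition~\ref{Prop6.1}, first establishing the dual analogue of Lemma~\ref{Lem6.2} (with Corollary~\ref{Cor3.30} replacing Corollary~\ref{Cor3.29}), and then running the \cite{CN23} degree-shifting machine with the dual inputs. The paper additionally points to \cite{CN23}, Lemma 4.2.3 and Proposition 4.2.4 as the precise dual-side references for the boundary computation, but otherwise your outline matches (and in fact spells out more than) the paper's brief proof.
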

\begin{proof}
    One can argue the same way as in the proof of Proposition~\ref{Prop6.1}. In particular, one first proves the dual analogue of Lemma~\ref{Lem6.2} (see \cite{CN23}, Proposition 4.2.4 for how the dual statement might look like). This can be done analogously using \textit{loc. cit.}, Lemma 4.2.3 (taking into account the discussion above the lemma), and replacing Corollary~\ref{Cor3.29} by Corollary~\ref{Cor3.30}.
\end{proof}

\subsection{Middle degree cohomology}
As before, consider a collection of data
\begin{equation*}
    \Bar{v}\in \overline{S}_p \textnormal{, }(\lambda_{v},\lambda_{v^c})\in (\mathbf{Z}_+^n)^{\Hom(F_v,E)}\times(\mathbf{Z}_+^n)^{\Hom(F_{v^c},E)},
\end{equation*}
\begin{equation*}
    Q_v\times Q_{v^c}=P_{(n_1,...,n_k)}\times P_{(m_1,...,m_{k^c})}\subset G_{F^+_{\Bar{v}}}\textnormal{, }
\end{equation*}
\begin{equation*}
    (\underline{\tau_v},\underline{\tau_{v^c}})=((\tau_{v,1},...,\tau_{v,k}),(\tau_{v^c,1},...,\tau_{v^c,k^c})).
\end{equation*}
Let $((\Omega_{v,1},...,\Omega_{v,k}),(\Omega_{v^c,1},...,\Omega_{v^c,k^c}))$ to be the corresponding collection of Bernstein centres. Consider the tuple $(\widetilde{Q}_{\Bar{v}}^{w_0},\Tilde{\lambda}_{\Bar{v}}=(-w_{0,n}\lambda_{v^c},\lambda_v),\underline{\tau_{\Bar{v}}}=(\underline{\tau_{v}},\underline{\tau_{v^c}}))$. Set $\Tilde{\lambda}\in (\mathbf{Z}^{2n}_+)^{\Hom(F^+,E)}$ to be an extension of $\Tilde{\lambda}_{\Bar{v}}$. Let $\widetilde{K}\subset \widetilde{G}(\mathbf{A}_{F^+}^{\infty})$ be a good subgroup such that $\widetilde{K}_{\Bar{v}}=\widetilde{\mathcal{Q}}_{\Bar{v}}(0,c_p)$. Let $\mathfrak{m}\subset \mathbf{T}^T$ be a non-Eisenstein maximal ideal and set $\widetilde{\mathfrak{m}}:=\mathcal{S}^{\ast}(\mathfrak{m})$. Recall that $d=\dim_{\mathbf{C}}(\widetilde{X}_{\widetilde{K}})$. Then the goal of this short subsection is to decompose the $\widetilde{\mathbf{T}}^{T,\lambda_{\Bar{v}},\underline{\tau_{\Bar{v}}}}[1/p]$-module
\begin{equation}\label{eq6.1}
    H^d(\widetilde{X}_{\widetilde{K}},\mathcal{V}_{(\widetilde{\lambda},\underline{\tau_{\Bar{v}}})}^{\widetilde{Q}_{\Bar{v}}^{w_0}})_{\widetilde{\mathfrak{m}}}^{\widetilde{Q}_{\Bar{v}}^{w_0}\textnormal{-ord}}[1/p]
\end{equation}
in terms of cuspidal automorphic representations for $\widetilde{G}$. As without further assumptions the possibility of some Eisenstein series for $G$ contributing to \ref{eq6.1} cannot be ruled out, following \cite{ACC23}, we put an extra assumption on the weight $\Tilde{\lambda}$ to ensure that only cuspidal automorphic representations for $\widetilde{G}$ contribute. We recall the definition here.
\begin{Def}
    A weight $\widetilde{\lambda}\in (\mathbf{Z}_+^{2n})^{\Hom(F^+,E)}$ is CTG ("cohomologically trivial for $G$") if it satisfies the following condition
    \begin{itemize}
        \item Given $w\in W^P$, define $\lambda_w=w(\Tilde{\lambda}+\rho)-\rho$, viewed as an element of $(\mathbf{Z}^n_+)^{\Hom(F,E)}$ as usual where $\rho$ denotes the half-sum of positive roots. For each $w\in W^P$ and $i_0\in \mathbf{Z}$, there exists $\iota\in \Hom(F,E)$ such that $\lambda_{w,\iota}-\lambda_{w,\iota c}^{\vee}\neq (i_0,...,i_0)$.
    \end{itemize}
\end{Def}
We recall that $\widetilde{\mathbf{T}}^{T,\lambda_{\Bar{v}},\underline{\tau_{\Bar{v}}}}[1/p]$ is naturally identified with \begin{equation*}
\widetilde{\mathbf{T}}^{T}[1/p]\otimes_E\left((\bigotimes_{i=1}^k\mathfrak{z}_{\Omega_{v,i}})\otimes (\bigotimes_{i=1}^{k^c}\mathfrak{z}_{\Omega_{v^c,i}})\right).
\end{equation*}
\begin{Prop}\label{Prop6.5}
    Assume that $\widetilde{\mathfrak{m}}=\mathcal{S}^{\ast}(\mathfrak{m})$ is decomposed generic, and $\Tilde{\lambda}$ is CTG. Then, after possibly enlarging $E$,
    \begin{equation*}
H^d(\widetilde{X}_{\widetilde{K}},\mathcal{V}_{(\widetilde{\lambda},\underline{\tau_{\Bar{v}}})}^{\widetilde{Q}_{\Bar{v}}^{w_0}})_{\widetilde{\mathfrak{m}}}^{\widetilde{Q}_{\Bar{v}}^{w_0}\textnormal{-ord}}[1/p]
    \end{equation*}
    is a semisimple $\widetilde{\mathbf{T}}^{T,\lambda_{\Bar{v}},\underline{\tau_{\Bar{v}}}}[1/p]$-module. Moreover, for any homomorphism
    \begin{equation*}
x:\widetilde{\mathbf{T}}^{T,\lambda_{\Bar{v}},\underline{\tau_{\Bar{v}}}}(H^d(\widetilde{X}_{\widetilde{K}},\mathcal{V}_{(\Tilde{\lambda},\underline{\tau_{\Bar{v}}})}^{\widetilde{Q}_{\Bar{v}}^{w_0}})_{\widetilde{\mathfrak{m}}}^{\widetilde{Q}_{\Bar{v}}^{w_0}\textnormal{-ord}})\to \overline{\mathbf{Q}}_p
    \end{equation*}
    and isomorphism $t:\overline{\mathbf{Q}}_p\xrightarrow[]{\sim}\mathbf{C}$, there is a cuspidal automorphic representation $\widetilde{\pi}$ of $\widetilde{G}(\mathbf{A}_{F^+})$ such that $t^{-1}(\widetilde{\pi}_{\Bar{v}}\circ \iota_v^{-1})\otimes V_{\Tilde{\lambda}_{\Bar{v}}}^{\vee}$ is $\widetilde{Q}_{\Bar{v}}^{w_0}$-ordinary (in the sense of Definition~\ref{Def4.13}) and $x$ is induced by the natural Hecke action of
    \begin{equation*}
\widetilde{\mathbf{T}}^{T,\lambda_{\Bar{v}},\underline{\tau_{\Bar{v}}}}[1/p]=\widetilde{\mathbf{T}}^{T}[1/p]\otimes_E\left((\bigotimes_{i=1}^k\mathfrak{z}_{\Omega_{v,i}})\otimes (\bigotimes_{i=1}^{k^c}\mathfrak{z}_{\Omega_{v^c,i}})\right)
    \end{equation*}
    on
    \begin{equation}\label{eq6.2}
        (t^{-1}\widetilde{\pi}^{\{\infty\}\cup T})^{\widetilde{G}(\widehat{\mathcal{O}}_{F^+}^T)}\otimes_{\overline{\mathbf{Q}}_p}\Hom_{\widetilde{M}^{w_0}_{\Bar{v}}(F_v)}\left(\sigma(\underline{\tau_v})\otimes (\theta_n^{-1})^{\ast}\sigma(\underline{\tau_{v^c}}),(t^{-1}(\widetilde{\pi}_{\Bar{v}}\circ \iota_v^{-1}))^{\widetilde{Q}_{\Bar{v}}^{w_0}\textnormal{-ord}}\right).\footnote{For the definition of $(-)^{\widetilde{Q}^{w_0}_{\Bar{v}}\textnormal{-ord}}$, see Definition~\ref{Def4.26}.}
    \end{equation}
\end{Prop}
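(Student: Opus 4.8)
\textbf{Proof plan for Proposition~\ref{Prop6.5}.}

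The plan is to reduce the statement to classical facts about the cohomology of the locally symmetric spaces $\widetilde{X}_{\widetilde{K}}$ together with the $\widetilde{Q}^{w_0}_{\Bar{v}}$-ordinary Hida theory developed in \S\ref{Sec3.4} and the characterisation of $\widetilde{Q}^{w_0}_{\Bar{v}}$-ordinary parts of locally algebraic representations from \S\ref{sec4.2}. First I would invoke Corollary~\ref{Cor3.15} (in its $\widetilde{G}$-analogue, as recorded in \S\ref{Sec3.4}) and independence of level/weight to identify, after inverting $p$, the module in question with the $\widetilde{Q}^{w_0}_{\Bar{v}}$-ordinary part of the $\widetilde{\mathfrak{m}}$-localised degree-$d$ cohomology of a local system of the form $\mathcal{V}_{\widetilde{\lambda}}$ at infinite level at $\Bar{v}$, with a $\widetilde{\mathbf{T}}^{T,\lambda_{\Bar{v}},\underline{\tau_{\Bar{v}}}}[1/p]$-action in which the Bernstein centre factors act through $\Hom_{\widetilde{M}^{w_0,0}_{\Bar{v}}}(\widetilde{\sigma}(\underline{\tau_{\Bar{v}}})^\circ,-)$. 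The point of passing to infinite level at $\Bar{v}$ is that, by Emerton's classical--completed comparison (Lemma 2.1.7 of \cite{CN23}) together with \cite{CN23}, Corollary 4.1.9 applied in the \emph{non-Eisenstein} case, after $\widetilde{\mathfrak{m}}$-localisation the completed cohomology degenerates onto middle degree and decomposes, over $\mathbf{C}$ via $t$, as a direct sum over cuspidal automorphic representations $\widetilde{\pi}$ of $\widetilde{G}(\mathbf{A}_{F^+})$ of $\widetilde{\pi}^{\{\infty\}\cup T,\widetilde{G}(\widehat{\mathcal{O}}^T_{F^+})}\otimes \widetilde{\pi}_{\Bar{v}}$, tensored with the relevant $(\mathfrak{g},K_\infty)$-cohomology multiplicity space; the CTG hypothesis on $\widetilde{\lambda}$ is exactly what rules out contributions of residual/Eisenstein representations for $G$ (via $P$) to the boundary and hence guarantees the summands are genuinely cuspidal, as in \cite{ACC23}, \S4. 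Semisimplicity as a $\widetilde{\mathbf{T}}^{T,\lambda_{\Bar{v}},\underline{\tau_{\Bar{v}}}}[1/p]$-module then follows since each cuspidal summand is an irreducible module for the prime-to-$T$ spherical Hecke algebra tensored with the Bernstein centre, the latter acting through a character on the irreducible smooth representation $\widetilde{\pi}_{\Bar{v}}$ restricted to its relevant Bernstein block.

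Next I would translate "appears in $\widetilde{Q}^{w_0}_{\Bar{v}}$-ordinary cohomology" into the assertion that $t^{-1}(\widetilde{\pi}_{\Bar{v}}\circ\iota_v^{-1})\otimes V_{\widetilde{\lambda}_{\Bar{v}}}^\vee$ is $\widetilde{Q}^{w_0}_{\Bar{v}}$-ordinary in the sense of Definition~\ref{Def4.13}. Here one uses: that $\widetilde{\pi}_{\Bar{v}}$ is pre-unitary (being a local component of a cuspidal automorphic representation, cf. Remark~\ref{Rem4.12}) so that $t^{-1}(\widetilde{\pi}_{\Bar{v}}\circ\iota_v^{-1})\otimes V_{\widetilde{\lambda}_{\Bar{v}}}^\vee$ is a weakly admissible $E$-representation of $\widetilde{G}(F^+_{\Bar{v}})$; that the ordinary idempotent $e(\prod_{\Bar{v}''}U_{\Bar{v}''})$ cutting out ordinary cohomology is nonzero on the $\widetilde{\pi}$-summand iff the $U_p$-operators act invertibly there; and the comparison between the cohomological/Hecke notion of $\widetilde{Q}^{w_0}_{\Bar{v}}$-ordinarity and Emerton's $\textnormal{Ord}$-functor, which in the admissible locally algebraic setting (via Proposition~\ref{Prop4.8} and the discussion of \S\ref{sec4.1}) detects exactly the slope-zero part $\textnormal{Ord}^{\textnormal{lalg}}_{\widetilde{Q}^{w_0}_{\Bar{v}}}(t^{-1}(\widetilde{\pi}_{\Bar{v}}\circ\iota_v^{-1})\otimes V_{\widetilde{\lambda}_{\Bar{v}}}^\vee)$ being nonzero. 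Finally, applying Corollary~\ref{Cor3.15} once more to express the $\widetilde{Q}^{w_0}_{\Bar{v}}$-ordinary, level-$c_p$, weight-$(\widetilde{\lambda},\underline{\tau_{\Bar{v}}})$ cohomology as $R\Hom_{\widetilde{M}^{w_0,0}_{\Bar{v}}}(\widetilde{\sigma}(\widetilde{\lambda}_{\Bar{v}},\underline{\tau_{\Bar{v}}})^\circ,\pi^{\widetilde{Q}^{w_0}_{\Bar{v}}\textnormal{-ord}}(\widetilde{K}^{\Bar{v}},\mathcal{O}/\varpi^m))$, inverting $p$, and feeding in the decomposition of completed cohomology, the $\widetilde{\pi}$-contribution becomes precisely the space \eqref{eq6.2}, with its $\widetilde{\mathbf{T}}^{T,\lambda_{\Bar{v}},\underline{\tau_{\Bar{v}}}}[1/p]$-action; the identification of the Bernstein-centre action with $\Hom_{\widetilde{M}^{w_0}_{\Bar{v}}(F_v)}(\sigma(\underline{\tau_v})\otimes(\theta_n^{-1})^\ast\sigma(\underline{\tau_{v^c}}),-)$ comes from Remark~\ref{Rem3.22} together with the definition of $\sigma(\underline{\tau_{\Bar{v}}})$ and the equivalence between Bernstein blocks and Hecke-module categories recalled in \S\ref{sec2.6}.

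The main obstacle I expect is organising the passage to infinite level at $\Bar{v}$ compatibly with the $\widetilde{Q}^{w_0}_{\Bar{v}}$-ordinary operators and the Bernstein-centre action: one must check that Emerton's $\textnormal{Ord}$-functor commutes with the relevant localisations and that the completed-cohomology decomposition of \cite{CN23}, Corollary 4.1.9 — which is stated for the $P$-stratum of the boundary — interacts correctly with taking $\widetilde{N}^{w_0}_{\Bar{v}}(\mathcal{O})$-invariants and then the slope-zero part, rather than with naive invariants. Concretely, the subtlety is that a priori the $\widetilde{\pi}$-isotypic part of completed cohomology could contribute to \emph{several} cohomological degrees, and one needs the non-Eisenstein vanishing (Theorem~\ref{Th2.27}) plus the concentration of $(\mathfrak{g},K_\infty)$-cohomology of a cuspidal $\widetilde{\pi}_\infty$ (tempered, by purity, in middle degree) to know everything lives in degree $d$; keeping track that $\widetilde{Q}^{w_0}_{\Bar{v}}$-ordinary Hida theory preserves this concentration after localising at $\widetilde{\mathfrak{m}}$ and inverting $p$ is where the care lies. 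Once this is in place, semisimplicity and the explicit description of each summand are formal, following \cite{ACC23}, \S4 and \cite{CN23}, \S4 mutatis mutandis, with the Bernstein-centre bookkeeping handled exactly as in Remark~\ref{Rem3.22}.
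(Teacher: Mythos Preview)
Your overall plan is essentially the paper's: pass to completed cohomology at $\Bar{v}$, use the Caraiani--Scholze vanishing to concentrate in degree $d$, identify the ordinary part via Emerton's functor and Proposition~\ref{Prop4.8}, and then decompose into cuspidal automorphic pieces. However, you have misattributed the key decomposition step. \cite{CN23}, Corollary~4.1.9 is about the $P$-stratum of \emph{boundary} completed cohomology --- it is the engine of Lemma~\ref{Lem6.2}, not of this proposition. The decomposition you need, namely that the $\widetilde{\mathfrak{m}}$-localised middle-degree cohomology (with level varying at $\Bar{v}$) breaks up over cuspidal $\widetilde{\pi}$, is supplied by \cite{ACC23}, Theorem~2.4.11; this is precisely where the CTG hypothesis is consumed. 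To reach the point where that theorem applies, the paper inserts one more step you have not named: Emerton's spectral sequence \cite{Em06}, Corollary~2.2.18, which (after using the \cite{CS19} vanishing below degree $d$) identifies the locally $V_{\widetilde{\lambda}_{\Bar{v}}}^{\vee}$-algebraic vectors in degree-$d$ completed cohomology with $\bigl(\varinjlim_{\widetilde{K}'_{\Bar{v}}}H^d(\widetilde{X}_{\widetilde{K}^{\Bar{v}}\widetilde{K}'_{\Bar{v}}},V_{\widetilde{\lambda}})_{\widetilde{\mathfrak{m}}}\bigr)\otimes V_{\widetilde{\lambda}_{\Bar{v}}}^{\vee}$. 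Once you feed \cite{ACC23}, Theorem~2.4.11 into this and apply $\textnormal{Ord}^{\textnormal{lalg}}_{\widetilde{Q}^{w_0}_{\Bar{v}}}$ term-by-term, the remaining bookkeeping (matching the Bernstein-centre action via the isomorphism $\mathcal{H}(\widetilde{\sigma}(\underline{\tau_{\Bar{v}}}))\cong \mathcal{H}(\widetilde{\sigma}(\lambda_{\Bar{v}},\underline{\tau_{\Bar{v}}}))$, $\phi\mapsto \phi\otimes \textnormal{id}$) is exactly as you describe.
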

Before starting the proof, recall that $\mathfrak{z}_{\Omega_{v^c,1}}\otimes...\otimes \mathfrak{z}_{\Omega_{v^c,k^c}}$ acts on the second factor of the $\Hom$ in \ref{eq6.2} via $\theta_n$. Moreover, we emphasise that in the statement we are implicitly using that, by Corollary~\ref{Cor4.21}, $(t^{-1}(\widetilde{\pi}_{\Bar{v}}\circ \iota_v^{-1}))^{\widetilde{Q}_{\Bar{v}}^{w_0}\textnormal{-ord}}$ is irreducible, so $\widetilde{\mathbf{T}}^{T,\lambda_{\Bar{v}},\underline{\tau_{\Bar{v}}}}[1/p]$ indeed acts on \ref{eq6.2} through scalars.
\begin{proof}
    To prove the statement, we show that there is a $\widetilde{\mathbf{T}}^{T,\lambda_{\Bar{v}},\underline{\tau}_{\Bar{v}}}[1/p]$-equivariant direct sum decomposition
    \begin{equation*}
        H^d(\widetilde{X}_{\widetilde{K}},\mathcal{V}_{(\Tilde{\lambda},\underline{\tau_{\Bar{v}}})}^{\widetilde{Q}^{w_0}_{\Bar{v}}})_{\widetilde{\mathfrak{m}}}^{\widetilde{Q}_{\Bar{v}}^{w_0}\textnormal{-ord}}\otimes_{\mathcal{O}}\overline{\mathbf{Q}}_p\cong
    \end{equation*}
    \begin{equation*}
        \bigoplus_{\widetilde{\pi}}d(\widetilde{\pi})(t^{-1}\widetilde{\pi}^{\{\infty\}\cup T})^{\widetilde{G}(\widehat{\mathcal{O}}_{F^+}^T)}\otimes_{\overline{\mathbf{Q}}_p}\Hom_{\widetilde{M}^{w_0}_{\Bar{v}}(F_v)}\left(\sigma(\underline{\tau_v})\otimes (\theta_n^{-1})^{\ast}\sigma(\underline{\tau_{v^c}}),(t^{-1}(\widetilde{\pi}_{\Bar{v}}\circ \iota_v^{-1}))^{\widetilde{Q}_{\Bar{v}}^{w_0}\textnormal{-ord}}\right)
    \end{equation*}
    where the sum runs over cohomological cuspidal automorphic representations of $\widetilde{G}(\mathbf{A}_{F^+})$ of weight $\Tilde{\lambda}$ and $d(\widetilde{\pi})\geq 0$ denotes some integer. A $\widetilde{\mathbf{T}}^T$-equivariant decomposition of this kind is given in the proof of \cite{ACC23}, Theorem 2.4.11. To see that this decomposition is also Hecke equivariant at $\Bar{v}$, we unravel the definition of this action.

    To ease the notation, set $Q:=\widetilde{Q}_{\Bar{v}}^{w_0}\subset \textnormal{GL}_{2n,F_v}$, $M:=\widetilde{M}_{\Bar{v}}^{w_0}$ , and $\mathcal{V}:=\mathcal{V}_{(\widetilde{\lambda},\underline{\tau_{\Bar{v}}})}^{\widetilde{Q}_{\Bar{v}}^{w_0}}$. Recall that we have
    \begin{equation*}
        R\Gamma(\widetilde{X}_{\widetilde{K}},\mathcal{V})_{\widetilde{\mathfrak{m}}}^{Q\textnormal{-ord}}\cong \varprojlim_m R\Gamma(\widetilde{X}_{\widetilde{K}},\mathcal{V}/\varpi^m)_{\widetilde{\mathfrak{m}}}^{Q\textnormal{-ord}}.
    \end{equation*}
    Moreover, as explained in \S\ref{Sec3.4}, the $\widetilde{\mathbf{T}}^{T,\lambda_{\Bar{v}},\underline{\tau_{\Bar{v}}}}$-action is induced by the identification
    \begin{equation*}
        R\Gamma(\widetilde{X}_{\widetilde{K}},\mathcal{V}/\varpi^m)_{\widetilde{\mathfrak{m}}}^{Q\textnormal{-ord}}\cong R\Hom_{M^0}\left(\Tilde{\sigma}(\lambda_{\Bar{v}},\underline{\tau_{\Bar{v}}})^{\circ}/\varpi^m,\pi^{Q\textnormal{-ord}}(\widetilde{K}^{\Bar{v}},\mathcal{V}_{\Tilde{\lambda}^{\Bar{v}}}/\varpi^m)_{\widetilde{\mathfrak{m}}}\right).
    \end{equation*}
Since $\widetilde{\mathfrak{m}}$ is decomposed generic, the cohomology of $\pi^{Q\textnormal{-ord}}(\widetilde{K}^{\Bar{v}},\mathcal{V}_{\Tilde{\lambda}^{\Bar{v}}}/\varpi^m)_{\widetilde{\mathfrak{m}}}$ vanishes for degrees below $d$ thanks to \cite{CS19}, Theorem 1.1. Therefore, a standard argument with a hypercohomology spectral sequence (combined with the previous observations) gives an identification
\begin{equation*}
    H^d(\widetilde{X}_{\widetilde{K}},\mathcal{V}/\varpi^m)^{Q\textnormal{-ord}}_{\widetilde{\mathfrak{m}}}\cong \Hom_{M^0}(\Tilde{\sigma}(\lambda_{\Bar{v}},\underline{\tau_{\Bar{v}}})^{\circ}/\varpi^m,H^d(\pi^{Q\textnormal{-ord}}(\widetilde{K}^{\Bar{v}},\mathcal{V}_{\Tilde{\lambda}^{\Bar{v}}}/\varpi^m)_{\widetilde{\mathfrak{m}}})).
\end{equation*}
Another application of the vanishing result of \cite{CS19} combined with a standard argument with a Hocschild--Serre spectral sequence gives a $\widetilde{\mathbf{T}}^T$-equivariant isomorphism of smooth $\mathcal{O}/\varpi^m[M(F_v)]$-modules
\begin{equation*}
    H^d(\pi^{Q\textnormal{-ord}}(\widetilde{K}^{\Bar{v}},\mathcal{V}_{\Tilde{\lambda}^{\Bar{v}}}/\varpi^m)_{\widetilde{\mathfrak{m}}})\cong \textnormal{Ord}_{Q}(\widetilde{H}^d(\widetilde{K}^{\Bar{v}},\mathcal{V}_{\Tilde{\lambda}^{\Bar{v}}}/\varpi^m)_{\widetilde{\mathfrak{m}}}).
\end{equation*}
Here $\widetilde{H}^d(\widetilde{K}^{\Bar{v}},\mathcal{V}_{\Tilde{\lambda}^{\Bar{v}}}/\varpi^m)$ denotes the degree $d$ $\Bar{v}$-completed cohomology of level $\widetilde{K}^{\Bar{v}}$ and weight $\mathcal{V}_{\Tilde{\lambda}^{\Bar{v}}}/\varpi^m$ and $\textnormal{Ord}_Q(-)$ is Emerton's ordinary part functor from \S\ref{sec4.1}. Using that thanks to finiteness of finite level cohomology Mittag--Leffler applies in our situation, we deduce an identification
\begin{equation}\label{eq6.3}
H^d(\widetilde{X}_{\widetilde{K}},\mathcal{V})_{\widetilde{\mathfrak{m}}}^{Q\textnormal{-ord}}[1/p]\cong \Hom_{M^0}\left(\Tilde{\sigma}(\lambda_{\Bar{v}},\underline{\tau}_{\Bar{v}}),\textnormal{Ord}_Q(\widetilde{H}^d(\widetilde{K}^{\Bar{v}},V_{\Tilde{\lambda}^{\Bar{v}}})_{\widetilde{\mathfrak{m}}})\right).\footnote{For the definition of $\textnormal{Ord}_Q$ applied to $E$-Banach space representations, see the discussion below Remark~\ref{Rem4.7}.}
\end{equation}
The $\widetilde{\mathbf{T}}^{T,\lambda_{\Bar{v}},\underline{\tau_{\Bar{v}}}}$-action on the former then is induced from this identification.
As a consequence of Proposition~\ref{Prop4.8}, \ref{eq6.3} is further identified ($\widetilde{\mathbf{T}}^{T,\lambda_{\Bar{v}},\underline{\tau_{\Bar{v}}}}$-equivariantly) with
\begin{equation*}
    \Hom_{M^0}\left(\Tilde{\sigma}(\lambda_{\Bar{v}},\underline{\tau}_{\Bar{v}}),\textnormal{Ord}_Q(\widetilde{H}^d(\widetilde{K}^{\Bar{v}},V_{\Tilde{\lambda}^{\Bar{v}}})_{\widetilde{\mathfrak{m}}})^{V_{-w_0^Q\Tilde{\lambda}_{\Bar{v}}}\textnormal{-lalg}}\right)\cong 
\end{equation*}
\begin{equation*}
    \Hom_{M^0}\left(\Tilde{\sigma}(\lambda_{\Bar{v}},\underline{\tau}_{\Bar{v}}),\textnormal{Ord}_Q^{\textnormal{lalg}}(\widetilde{H}^d(\widetilde{K}^{\Bar{v}},V_{\Tilde{\lambda}^{\Bar{v}}})_{\widetilde{\mathfrak{m}}}^{V_{\Tilde{\lambda}_{\Bar{v}}^{\vee}}\textnormal{-lalg}})\right).
\end{equation*}
Using Emerton's spectral sequence (cf. \cite{Em06}, Corollary 2.2.18), and the fact that $\widetilde{H}^{\ast}(\widetilde{K}^{\Bar{v}},V_{\Tilde{\lambda}^{\Bar{v}}})_{\widetilde{\mathfrak{m}}}$ vanishes below the middle degree, we see that there is a $\widetilde{\mathbf{T}}^T$-equivariant isomorphism
\begin{equation*}
    \widetilde{H}^{d}(\widetilde{K}^{\Bar{v}},V_{\Tilde{\lambda}^{\Bar{v}}})_{\widetilde{\mathfrak{m}}}^{V_{\Tilde{\lambda}_{\Bar{v}}^{\vee}}\textnormal{-lalg}}\cong \left(\varinjlim_{\widetilde{K}'_{\Bar{v}}}H^d(\widetilde{X}_{\widetilde{K}^{\Bar{v}}\widetilde{K}_{\Bar{v}}'},V_{\Tilde{\lambda}^{\Bar{v}}})_{\widetilde{\mathfrak{m}}}  \right)\otimes_E V_{\Tilde{\lambda}^{\vee}_{\Bar{v}}}
\end{equation*}
of locally algebraic $E$-representations of $\textnormal{GL}_{2n}(F_v)$. Moreover, by the proof of \cite{ACC23}, Theorem 2.4.11, we have a $\widetilde{\mathbf{T}}^T$-equivariant direct sum decomposition
\begin{equation*}
    \left(\varinjlim_{\widetilde{K}'_{\Bar{v}}}H^d(\widetilde{X}_{\widetilde{K}^{\Bar{v}}\widetilde{K}_{\Bar{v}}'},V_{\Tilde{\lambda}^{\Bar{v}}})_{\widetilde{\mathfrak{m}}}  \right)\otimes_E \overline{\mathbf{Q}}_p\cong
\end{equation*}
\begin{equation*}
    \bigoplus_{\widetilde{\pi}}d(\widetilde{\pi})(t^{-1}\widetilde{\pi}^{\{\infty\}\cup T})^{\widetilde{G}(\widehat{\mathcal{O}}_{F^+}^T)}\otimes t^{-1}(\widetilde{\pi}_{\Bar{v}}\circ \iota_v^{-1})
\end{equation*}
of smooth admissible $\overline{\mathbf{Q}}_p[\textnormal{GL}_{2n}(F_v)]$-modules.

In particular, to conclude, it suffices to prove that for any $\widetilde{\pi}$ appearing in the direct sum decomposition above, there is an identification
\begin{equation}\label{eq6.4}
    \Hom_{M^0}\left(\Tilde{\sigma}(\underline{\tau_{\Bar{v}}}),t^{-1}(\widetilde{\pi}_{\Bar{v}}\circ \iota_v^{-1}\right)^{Q\textnormal{-ord}})  \cong
\end{equation}
\begin{equation*}
    \Hom_{M^0}\left(\Tilde{\sigma}(\lambda_{\Bar{v}},\underline{\tau_{\Bar{v}}}),\textnormal{Ord}_Q^{\textnormal{lalg}}\left(t^{-1}(\widetilde{\pi}_{\Bar{v}}\circ \iota_v^{-1})\otimes_E V_{\Tilde{\lambda}_{\Bar{v}}^{\vee}}\right)\right)
\end{equation*}
such that the isomorphism
\begin{equation}\label{eq6.5}
    \mathcal{H}(\Tilde{\sigma}(\underline{\tau_{\Bar{v}}}))\xrightarrow[]{\sim}\mathcal{H}(\Tilde{\sigma}(\lambda_{\Bar{v}},\underline{\tau_{\Bar{v}}})),
\end{equation}
\begin{equation*}
      \phi\mapsto \phi \otimes\textnormal{id}_{-w_0^Q\Tilde{\lambda}_{\Bar{v}}}
\end{equation*}
(provided by \cite{ST06}, Lemma 1.4) intertwines the Hecke actions on the two sides. After noting that $\textnormal{Ord}_Q^{\textnormal{lalg}}(t^{-1}(\Tilde{\pi}_{\Bar{v}}\circ \iota_v^{-1})\otimes_E V_{\Tilde{\lambda}_{\Bar{v}}^{\vee}})\cong (t^{-1}(\widetilde{\pi}_{\Bar{v}}\circ \iota_v^{-1}))^{Q\textnormal{-ord}}\otimes_E V_{-w_0^Q\Tilde{\lambda}_{\Bar{v}}}$, and that $\Tilde{\sigma}(\lambda_{\Bar{v}},\underline{\tau}_{\Bar{v}})=\Tilde{\sigma}(\underline{\tau}_{\Bar{v}})\otimes_EV_{-w_0^Q\Tilde{\lambda}_{\Bar{v}}}$, the isomorphism \ref{eq6.4} is given by the natural isomorphism
\begin{equation*}
    \Hom_{M^0}(\Tilde{\sigma}(\underline{\tau}_{\Bar{v}}),-)\cong \Hom_{M^0}(\Tilde{\sigma}(\underline{\tau}_{\Bar{v}})\otimes_EV_{-w_0^Q\Tilde{\lambda}_{\Bar{v}}},-\otimes_E V_{-w_0^Q\Tilde{\lambda}_{\Bar{v}}}).
\end{equation*}
The induced identification is clearly $\mathcal{H}(\Tilde{\sigma}(\underline{\tau_{\Bar{v}}}))$-equivariant when on the RHS of \ref{eq6.4} we act through \ref{eq6.5}.
\end{proof}

\subsection{The end of the proof}
We are now ready to prove our main local-global compatibility result in this level of generality. The reader is invited to compare it with \cite{CN23}, Proposition 4.2.13. The content of this result is, for every $p$-adic place $v\in S_p(F)$, constructing a characteristic zero lift of $\rho_{\mathfrak{m}}|_{G_{F_v}}$ with the right shape according to the tuple $(Q_v,\lambda_v,\underline{\tau}_v)$. We then show that in the case of $Q_v=\textnormal{GL}_n$ the existence of such a lift easily implies Theorem~\ref{Th5.7}.
\begin{Prop}\label{Prop6.6}
    Assume that $p$ splits in an imaginary quadratic subfield of $F$. Let $K\subset G(\mathbf{A}_{F^+}^{\infty})$ be a good subgroup and fix distinct places $\Bar{v},\Bar{v}'\in \overline{S}_p$. Fix a preferred lift $v\mid \Bar{v}$ in $S_p(F)$. Let $(Q_p,\lambda,\underline{\tau})=(Q_v,\lambda_v,\underline{\tau}_v)_{v\in S_p(F)}$ be a tuple as in \S\ref{sec2.7} and assume that $K_p\subset \mathcal{Q}_p(0,c_p)$ and $K_{\Bar{v}}=\mathcal{Q}_v(0,c_p)\times \mathcal{Q}_{v^c}(0,c_p)$.

    Let $m\in \mathbf{Z}_{\geq 1}$ be an integer, and $\mathfrak{m}\subset \mathbf{T}^{T,\lambda_{\Bar{v}},\underline{\tau_{\Bar{v}}}}$ be a maximal ideal in the support of $H^{\ast}(X_K,\mathcal{V}_{(\lambda,\underline{\tau})}^{Q_p}/\varpi^m)^{Q_v\times Q_{v^c}\textnormal{-ord}}$. Write
    \begin{equation*}
        Q_v=P_{(n_1,...,n_k),F_v}\subset \textnormal{GL}_{n,F_v}\textnormal{, and }Q_{v^c}=P_{(m_1,...,m_{k^c}),F_{v^c}}\subset \textnormal{GL}_{n,F_{v^c}}.
    \end{equation*}
    Assume that:
    \begin{enumerate}
        \item We have \begin{equation*}
        \sum_{\Bar{v}''\neq \Bar{v},\Bar{v}'}[F^+_{\Bar{v}''}:\mathbf{Q}_p]\geq \frac{1}{2}[F^+:\mathbf{Q}]
    \end{equation*}
    where the sum runs over $\Bar{v}''\in S_p(F^+)$.
    \item The maximal ideal $\mathfrak{m}$ is non-Eisenstein such that $\overline{\rho}_{\mathfrak{m}}$ is decomposed generic.
    \item Let $v\notin T$ be a finite place of $F$, with residue characteristic $\ell$. Then either $T$ contains no $\ell$-adic places and $\ell$ is unramified in $F$, or there exists an imaginary quadratic subfield of $F$ in which $\ell$ splits.
    \end{enumerate}
    Then, for each $q\in [0,d-1]$, there exists an integer $N\geq 1$, depending only on $n$ and $[F^+:\mathbf{Q}]$, a nilpotent ideal $I\subset \mathbf{T}^{T,\lambda_{\Bar{v}},\underline{\tau_{\Bar{v}}}}\left(H^q(X_K,\mathcal{V}_{(\lambda,\underline{\tau})}^{Q_p}/\varpi^m)_{\mathfrak{m}}^{Q_{\Bar{v}}\textnormal{-ord}}\right)$ with $I^N=0$, and a continuous $n$-dimensional representation
    \begin{equation*}
        \rho_{\mathfrak{m}}: G_{F,T}\to \textnormal{GL}_n\left(\mathbf{T}^{T,\lambda_{\Bar{v}},\underline{\tau_{\Bar{v}}}}\left(H^q(X_K,\mathcal{V}_{(\lambda,\underline{\tau})}^{Q_p}/\varpi^m)_{\mathfrak{m}}^{Q_{\Bar{v}}\textnormal{-ord}}\right)/I\right)
    \end{equation*}
    such that the following conditions hold:
\begin{enumerate}
    \item For each finite place $v\notin T$ of $F$, the characteristic polynomial of $\rho_{\mathfrak{m}}(\textnormal{Frob}_v)$ is equal to the image of $P_v(X)$.
    \item For $\Tilde{v}=v,v^c$, the representation $\rho_{\mathfrak{m}}|_{G_{F_v}}$  has a lift to $\widetilde{\rho}_{\Tilde{v}}:G_{F_{\Tilde{v}}}\to \textnormal{GL}_n(\widetilde{A})$, where $\widetilde{A}$ is a finite flat local $\mathcal{O}$-algebra equipped with a $\mathfrak{z}_{\lambda_{\Tilde{v}},\underline{\tau_{\Tilde{v}}}}^{\circ}$-algebra structure such that $\widetilde{A}[1/p]\cong \prod_{x}E$ is a semisimple $E$-algebra with a morphism
    \begin{equation*}
        \overline{\mathcal{S}}^{\Tilde{v}}:\widetilde{A}\to
        \mathbf{T}^{T,\lambda_{\Bar{v}},\underline{\tau_{\Bar{v}}}}\left(H^q(X_K,\mathcal{V}_{(\lambda,\underline{\tau})}^{Q_p}/\varpi^m)_{\mathfrak{m}}^{Q_{\Bar{v}}\textnormal{-ord}}\right)/I
    \end{equation*}
    of $\mathfrak{z}_{\lambda_{\Tilde{v}},\underline{\tau_{\Tilde{v}}}}^{\circ}$-algebras.
    \item For $\Tilde{v}=v,v^c$, $\Tilde{\rho}_{\Tilde{v}}[1/p]$ is potentially semistable with labelled Hodge--Tate weights $(\lambda_{\iota,1}+n-1>...>\lambda_{\iota,n})_{\iota:F_{\Tilde{v}}\hookrightarrow E}$.
    \item Moreover, these lifts admit isomorphisms
    \begin{equation*}
        \Tilde{\rho}_{v}[1/p]\sim \begin{pmatrix}
\Tilde{\rho}_{v,1} & \ast & ... & \ast\\
0 & \Tilde{\rho}_{v,2} & ... & \ast\\
. & . & . & .\\
. & . & . & .\\
0 & ... & 0 & \Tilde{\rho}_{v,k}
\end{pmatrix}\textnormal{, }\Tilde{\rho}_{v^c}[1/p]\sim \begin{pmatrix}
\Tilde{\rho}_{v^c,1} & \ast & ... & \ast\\
0 & \Tilde{\rho}_{v^c,2} & ... & \ast\\
. & . & . & .\\
. & . & . & .\\
0 & ... & 0 & \Tilde{\rho}_{v^c,k^c}
\end{pmatrix}
    \end{equation*}
    such that, for $1\leq i\leq k$, and $1\leq j\leq k^c$, $\Tilde{\rho}_{v,i}$, respectively $\Tilde{\rho}_{v^c,j}$ has Weil--Deligne inertial type bounded by $\tau_{v,i}$, respectively $\tau_{v^c,j}$. Moreover, the labelled Hodge--Tate weights of $\Tilde{\rho}_{v,i}$, respectively $\Tilde{\rho}_{v^c,j}$ are determined by the property that they are increasing as $i$, respectively $j$ grows.
    \item For $\Tilde{v}=v$ or $v^c$, an integer $1\leq i\leq \Tilde{k}$ for $\Tilde{k}=k$ or $\Tilde{k}=k^c$ (depending on $\Tilde{v}$), and a morphism $x: \widetilde{A}\to E$ the following property is satisfied. The morphism $\mathfrak{z}_{\Omega_{\Tilde{v},i}}\to E$ induced by postcomposing the structure map $\mathfrak{z}_{\Omega_{\Tilde{v},i}}\to \widetilde{A}$ with $x$ coincides with the one induced by the natural action of $\mathfrak{z}_{\Omega_{\Tilde{v},i}}$ on
    \begin{equation*}
        \textnormal{rec}^{T,-1}(\textnormal{WD}(x\circ \Tilde{\rho}_{\Tilde{v},i}))\otimes |\cdot|^{n_1+...+n_{i-1}}.\footnote{Here the convention is that $n_0:=0$.}
    \end{equation*}
\end{enumerate}
\end{Prop}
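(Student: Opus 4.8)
The plan is to follow the architecture of \cite{CN23}, Proposition 4.2.13, inserting our enhancements (the $\mathfrak{z}_{\lambda_v,\tau_v}^{\circ}$-equivariant degree shifting maps of \S\ref{sec6.1}, the middle-degree decomposition of Proposition~\ref{Prop6.5}, and the $Q$-ordinary characteristic zero local-global compatibility of Theorem~\ref{Th1.1.7}/Theorem~\ref{Th4.30}) at the appropriate places. The first step is to set up the auxiliary unitary group side: choose an auxiliary split place $\Bar{v}'$ as permitted by hypothesis (i), form the tuple $(\widetilde{Q}^{w_0}_{\Bar{v}},\Tilde{\lambda}_{\Bar{v}},\underline{\tau_{\Bar{v}}})$ of \S\ref{sec6.1} and a CTG extension $\Tilde{\lambda}$ of $\Tilde{\lambda}_{\Bar{v}}$ (this uses the freedom in choosing the weights away from $\Bar{v},\Bar{v}'$, cf. \cite{ACC23}), and apply the degree shifting Proposition~\ref{Prop6.1} to produce, after enlarging $m$ to some $m'$, a nilpotent ideal and a commutative square relating $\widetilde{A}(\widetilde{K}(m',\overline{S}_2),\Tilde{\lambda},\underline{\tau_{\Bar{v}}},\Bar{v})$ to $A(K,\lambda,\underline{\tau},q,\Bar{v},m)/I$ intertwining the two unnormalised Satake maps $\mathcal{S}^{\Bar{v}}$. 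Crucially the degree shifting map is a map of $\mathfrak{z}_{\lambda_v,\underline{\tau_v}}^{\circ}\otimes\mathfrak{z}_{\lambda_{v^c},\underline{\tau_{v^c}}}^{\circ}$-algebras by construction, which is what lets us track the Hecke action at $p$.

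Next I would analyse the target $\widetilde{A}:=\widetilde{A}(\widetilde{K}(m',\overline{S}_2),\Tilde{\lambda},\underline{\tau_{\Bar{v}}},\Bar{v})$. It is a finite flat local $\mathcal{O}$-algebra (perfectness of the cohomology complex), and by Proposition~\ref{Prop6.5} its generic fibre $\widetilde{A}[1/p]\cong\prod_x E$ is a semisimple $E$-algebra whose factors are indexed by systems of Hecke eigenvalues arising from cuspidal automorphic representations $\widetilde{\pi}$ of $\widetilde{G}(\mathbf{A}_{F^+})$ such that $t^{-1}(\widetilde{\pi}_{\Bar{v}}\circ\iota_v^{-1})\otimes V_{\Tilde{\lambda}_{\Bar{v}}}^{\vee}$ is $\widetilde{Q}^{w_0}_{\Bar{v}}$-ordinary, with the action of $\mathfrak{z}_{\Omega_{v,i}}$ and $\mathfrak{z}_{\Omega_{v^c,j}}$ given explicitly on the $\widetilde{Q}^{w_0}_{\Bar{v}}$-ordinary part. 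For each such $\widetilde{\pi}$, Theorem~\ref{Th2.19} supplies a $2n$-dimensional semisimple Galois representation $r_t(\widetilde{\pi})$, and Theorem~\ref{Th4.30} (applied with the parabolic $\widetilde{Q}^{w_0}_{\Bar{v}}$, using Remark~\ref{Rem4.29} so that cuspidality suffices) gives a block upper triangular filtration of $r_t(\widetilde{\pi})|_{G_{F_v}}$ whose graded pieces $\rho_j$ are potentially semistable with prescribed Hodge--Tate weights and with $\textnormal{WD}(\rho_j)^{F-ss}$ matching $\textnormal{rec}^T$ of the factors of the $\widetilde{Q}^{w_0}_{\Bar{v}}$-ordinary support, twisted appropriately. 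The $2n$ graded pieces naturally split into an $n$-dimensional "lower" piece corresponding to $r_t(\pi)$ and an $n$-dimensional "dual" piece (matching $\overline{\rho}_{\widetilde{\mathfrak{m}}}=\overline{\rho}_{\mathfrak{m}}\oplus\overline{\rho}_{\mathfrak{m}}^{\vee,c}(1-2n)$ via Proposition~\ref{Prop2.16}); taking the appropriate $n$ of them and using the determinant/pseudo-character argument of \cite{CN23}, \S3.2 will let me interpolate these over $\widetilde{A}$ to obtain $\widetilde{\rho}_v:G_{F_v}\to\textnormal{GL}_n(\widetilde{A})$ with the block structure of property (iv), potential semistability of property (iii) after inverting $p$, and the $\mathfrak{z}_{\Omega_{\Tilde{v},i}}$-compatibility of property (v) — the latter being exactly the statement that under each $x$ the action of $\mathfrak{z}_{\Omega_{\Tilde{v},i}}$ coincides with the Bernstein-centre action on $\textnormal{rec}^{T,-1}(\textnormal{WD}(x\circ\widetilde{\rho}_{\Tilde{v},i}))$ twisted by $|\cdot|^{n_1+\dots+n_{i-1}}$, which follows from the explicit description in Proposition~\ref{Prop6.5} combined with the Weil--Deligne matching in Theorem~\ref{Th4.30}. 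For $v^c$ one argues identically but using the dual degree shifting Proposition~\ref{Prop6.3} in place of Proposition~\ref{Prop6.1}, and Theorem~\ref{Th4.30} applied to the dual filtration. Composing $\widetilde{\rho}_{\Tilde{v}}$ with $\overline{\mathcal{S}}^{\Tilde{v}}$ (a $\mathfrak{z}_{\lambda_{\Tilde{v}},\underline{\tau_{\Tilde{v}}}}^{\circ}$-algebra map by the degree shifting construction) and invoking the existence of $\rho_{\mathfrak{m}}$ modulo a nilpotent ideal (Theorem~\ref{Th2.22}, combined with a standard argument identifying the reduction with $\rho_{\mathfrak{m}}|_{G_{F_v}}$ up to enlarging $I$) yields properties (i) and (ii).

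The main obstacle I expect is making the interpolation in property (v) genuinely $\mathfrak{z}_{\lambda_{\Tilde{v}},\underline{\tau_{\Tilde{v}}}}^{\circ}$-equivariant rather than merely true pointwise on $\textnormal{m-Spec}(\widetilde{A}[1/p])$: one needs that the structure map $\mathfrak{z}_{\Omega_{\Tilde{v},i}}\to\widetilde{A}[1/p]$ coming from the Hecke action factors through $\widetilde{A}$ and then that its composition with each $x$ records the Bernstein parameter of $\textnormal{WD}(x\circ\widetilde{\rho}_{\Tilde{v},i})$ uniformly. This is where the careful bookkeeping of \S\ref{sec2.3}, \S\ref{sec3.2}--\S\ref{sec3.3} (in particular the matching of the two descriptions of the Hecke action at $p$ and the Hecke-equivariant Poincar\'e duality Proposition~\ref{Prop3.20}) and the explicit compatibility in Proposition~\ref{Prop6.5} are needed; once those are in hand, the interpolation is formal since $\widetilde{A}$ is reduced and $\mathcal{O}$-flat, so a homomorphism into $\widetilde{A}$ is determined by its compositions with the finitely many $x$. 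A secondary technical point is controlling the nilpotence degree $N$ uniformly in $n$ and $[F^+:\mathbf{Q}]$ through the (possibly iterated) applications of degree shifting, the Hochschild--Serre and hypercohomology spectral sequences, and the passage between $\rho_{\mathfrak{m}}$ over the Hecke algebra and over its nilpotent-ideal quotient; here one imports the bound from \cite{NT16} together with the fact that each spectral sequence contributes only a bounded number of pages, exactly as in \cite{CN23}.

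Finally, to deduce Theorem~\ref{Th5.7} from the $Q_v=\textnormal{GL}_n$ case of Proposition~\ref{Prop6.6} (so $k=k^c=1$, trivial parabolic, and $\widetilde{\rho}_v[1/p]$ itself potentially semistable of the prescribed $p$-adic Hodge type and inertial type bounded by $\tau_v$), one runs the diagram chase of \cite{CN23}: the lift $\widetilde{\rho}_v:G_{F_v}\to\textnormal{GL}_n(\widetilde{A})$ has image factoring through the potentially semistable quotient $R^{\lambda_v,\preceq\tau_v}_{\overline{\rho}_v}$, and the $\mathfrak{z}_{\lambda_v,\tau_v}^{\circ}$-algebra structure on $\widetilde{A}$ together with property (v) means the interpolation map $\eta$ of Theorem 5.2 is respected; pulling back along $\overline{\mathcal{S}}^v$ and using that $\overline{\mathcal{S}}^v$ is a $\mathfrak{z}_{\lambda_v,\tau_v}^{\circ}$-algebra map transports the factorisation to $\rho_v$ valued in $A(K,\lambda,\underline{\tau},q,\Bar{v},m)/I=\mathbf{T}^{T,\lambda,\tau}(K^p)_{\mathfrak{m}}$-mod-$\varpi^m$-mod-$I$, giving the commutative square of the theorem at level $m$; taking the homotopy/inverse limit over $m$ (the transition maps are compatible and the relevant modules are finite, so Mittag--Leffler applies) and over the cohomological degrees assembling $R\Gamma(X_K,\mathcal{V}_{(\lambda,\tau)})_{\mathfrak{m}}$ produces the dotted arrow over $\mathbf{T}^{T,\lambda,\tau}(K^p)_{\mathfrak{m}}/I$ after possibly enlarging $I$, which is precisely Theorem~\ref{Th5.7}; Theorem~\ref{Th1.1.4} and then Theorem~\ref{Th1.1.1} follow by the twisting and base-change arguments of \cite{ACC23}, Corollary 4.4.8, using Lemma~\ref{Lemma2.5}.
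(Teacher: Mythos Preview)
Your outline captures the architecture, but it misses the twisting reductions that are essential to make it run, and it misidentifies the role of the dual degree shifting. First, Proposition~\ref{Prop6.1} requires $\overline{\rho}_{\widetilde{\mathfrak{m}}}$ (not just $\overline{\rho}_{\mathfrak{m}}$) to be decomposed generic, as well as the dominance condition $-\lambda_{\iota c,1}-\lambda_{\iota,1}\geq 0$ at $\Bar{v}$; Proposition~\ref{Prop6.5} further needs $\widetilde{\lambda}$ to be CTG. None of these is given, and the last two cannot be arranged merely by choosing $\widetilde{\lambda}$ away from $\Bar{v},\Bar{v}'$. The paper handles all of this by replacing $\mathfrak{m}$ with $\mathfrak{m}(\chi)$ for a suitable Teichm\"uller-lifted character $\chi$ unramified at $p$, after proving a Claim that the proposition for $\mathfrak{m}(\chi)$ at a deeper level $K'$ implies it for $\mathfrak{m}$ at $K$; the Claim uses Lemma~\ref{Lemma2.5} and requires checking compatibility with condition (v) under the isomorphism $f_{\chi}$. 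Second, the determinant argument \cite{CN23}, Proposition 3.2.4 that you invoke to extract an $n$-dimensional $\widetilde{A}$-valued lift from the $2n$-dimensional one over $\widetilde{A}[1/p]$ has a hypothesis you do not verify: the Jordan--H\"older constituents of $\overline{\rho}_{\mathfrak{m}}|_{G_{F_v}}$ and of $\overline{\rho}_{\mathfrak{m}}^{\vee,c}(1-2n)|_{G_{F_v}}$ must be disjoint, and for every $x$ the constituents of the top $n\times n$ block $\overline{r}_{1,\chi,x}$ must coincide with those of $\overline{\rho}_{\mathfrak{m}(\chi)}|_{G_{F_v}}$. This is arranged by a \emph{further} choice of $\chi$ (Sub-Lemma 1 in the proof of \cite{CN23}, Proposition 4.2.13); without it the interpolation over $\widetilde{A}$ fails.

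Finally, Proposition~\ref{Prop6.3} is not used to treat $v^c$. Both $\widetilde{\rho}_v$ and $\widetilde{\rho}_{v^c}$ arise from the \emph{same} degree shifting at $\Bar{v}$: by Theorem~\ref{Th4.30} the representation $r_t(\widetilde{\pi})|_{G_{F_v}}$ has block shape $(n_1,\dots,n_k,m_{k^c},\dots,m_1)$, and the two $n$-dimensional pieces give the lifts at $v$ and $v^c$ respectively. The dual degree shifting is instead used, via the Hecke-equivariant Poincar\'e duality of Proposition~\ref{Prop3.20}, to handle cohomological degrees $q<\lfloor d/2\rfloor$, a range your outline does not otherwise address.
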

\begin{proof}
    As the argument follows very closely the proof of \cite{CN23}, Proposition 4.2.13, we only provide a sketch. Since the existence of $\rho_{\mathfrak{m}}$ satisfying the first condition is known, we are free to enlarge $T$.\footnote{This will be used in the various upcoming twisting arguments.} By various twisting arguments, and an application of the Hocschild--Serre spectral sequence, we can assume the following:
    \begin{itemize}
        \item $\overline{\rho}_{\widetilde{\mathfrak{m}}}$ is decomposed generic.
        \item $K=K(m,\overline{S}_p\setminus \{\Bar{v}\})$.
        \item For $\Bar{v}''\in \overline{S}_p\setminus\{\Bar{v},\Bar{v}'\}$, $\lambda_{\Bar{v}''}=0$, and $\tau_{\Bar{v}''}$ is trivial.
        \item For each $\iota: F_v\hookrightarrow E$, $-\lambda_{\iota c,1}-\lambda_{\iota,1}\geq 0$, and $\Tilde{\lambda}=(-w_0^{\textnormal{GL}_n}\lambda_{v^c},\lambda_v)_{\Bar{v}\in \overline{S}_p}$ is CTG.\footnote{See Remark~\ref{Rem6.7} for the role of this condition.}
    \end{itemize}
    In particular, by setting $\widetilde{K}\subset \widetilde{G}(\mathbf{A}_{F^+}^{\infty})$ to be a good subgroup satisfying
    \begin{itemize}
        \item $\widetilde{K}^{\Bar{v}}\cap G(\mathbf{A}_{F^+}^{\infty})=K^{\Bar{v}}$;
        \item $\widetilde{K}^T=\widetilde{G}(\widehat{\mathcal{O}}_{F^+}^T)$;
        \item For $\Bar{v}''\in \overline{S}_p\setminus \{\Bar{v}\}$, $U(\mathcal{O}_{F^+_{\Bar{v}}})\subset \widetilde{K}_{\Bar{v}''}$, $\widetilde{K}=\widetilde{K}(m,\overline{S}_p\setminus \{\Bar{v}\})$;
        \item $\widetilde{K}_{\Bar{v}}=\widetilde{\mathcal{Q}}_{\Bar{v}}^{w_0}(0,c_p)$ (associated with $Q_{\Bar{v}}$);
    \end{itemize}
    and assuming that $q\in [\lfloor \frac{d}{2}\rfloor, d-1]$, we are in the situation of Proposition~\ref{Prop6.1}.
    
    We briefly explain what we mean by "twisting argument" as it will be also used in the next step of the proof. Assume that we have a continuous character $\overline{\chi}:G_F\to k^{\times}$. Since for the upcoming step this is the only relevant case, we also assume that $\overline{\chi}$ is unramified at $S_p$. We set $\chi:G_F\to \mathcal{O}^{\times}$ to be its Teichm\"uller lift. Choose a finite set of finite places $T\subset T'$ of $F$ that is closed under complex conjugation, containing all places at which $\chi$ is ramified, and a good normal subgroup $K'\subset K$ satisfying:
\begin{itemize}
    \item $(K')^{T'\setminus T}=K^{T'\setminus T}$.
    \item $K/K'$ is abelian of order prime to $p$.
    \item For each finite place $v$ of $F$, $\chi|_{G_{F_v}}\circ \textnormal{Art}_{F_v}$ is trivial on $\det(K'_v)$.
    \item $T'$ again satisfies assumption iii) from the Proposition.
\end{itemize}
Then set $\mathfrak{m}(\chi)\subset \mathbf{T}^{T,\lambda_{\Bar{v}},\underline{\tau_{\Bar{v}}}}$ to be $f_{\chi}(\mathfrak{m})$, where $f_{\chi}:\mathbf{T}^{T,\lambda_{\Bar{v}},\underline{\tau_{\Bar{v}}}}\xrightarrow{\sim} \mathbf{T}^{T,\lambda_{\Bar{v}},\underline{\tau_{\Bar{v}}}}$ is the map defined in the discussion preceding Lemma~\ref{Lemma2.5}. Note that $\overline{\rho}_{\mathfrak{m}(\chi)}=\overline{\rho}_{\mathfrak{m}}\otimes \chi$. Set
\begin{equation*}
A(K',\lambda,\underline{\tau},q,\Bar{v},m,\chi):=\mathbf{T}^{T,\lambda_{\Bar{v}},\underline{\tau_{\Bar{v}}}}\left(H^q(X_{K'},\mathcal{V}_{(\lambda,\underline{\tau_{\Bar{v}}})}^{Q_p}/\varpi^m)_{\mathfrak{m}(\chi)}^{Q_{\Bar{v}}\textnormal{-ord}}\right).
\end{equation*}
\begin{claim}
    Verifying the Proposition for any of the $\mathfrak{m}(\chi)$'s with corresponding level $K'$ will imply it for $\mathfrak{m}$ with level $K$.
\end{claim}
\begin{proof}[Proof of Claim]
    Note that we have a surjection 
    \begin{equation*}
    A(K',\lambda,\underline{\tau},q,\Bar{v},m)\to A(K,\lambda,\underline{\tau},q,\Bar{v},m)
    \end{equation*} of $\mathbf{T}^{T,\lambda_{\Bar{v}},\underline{\tau_{\Bar{v}}}}$-algebras so we can and do assume that $K=K'$. Use the abbreviation $A(\chi):=A(K,\lambda,\underline{\tau},q,\Bar{v},m,\chi)$. Assume that we found an integer $N_{\chi}$, a nilpotent ideal $I_{\chi}\subset A(\chi)$, a continuous representation $\rho_{\mathfrak{m}(\chi)}:G_{F,T}\to \textnormal{GL}_n(A(\chi)/I_{\chi})$, a surjection $\widetilde{A}(\chi)\twoheadrightarrow A(\chi)/I_{\chi}$ of $\mathfrak{z}_{\lambda_v,\underline{\tau_{v}}}^{\circ}$-algebras, and a continuous representation $\widetilde{\rho}_{v,\chi}:G_{F_v}\to \textnormal{GL}_n(\widetilde{A})$ satisfying the conditions of the Proposition. Set $N:=N_{\chi}$, $I:=f_{\chi^{-1}}(I_{\chi})$, $\rho_{\mathfrak{m}}:=(f_{\chi^{-1}}\circ \rho_{\mathfrak{m}(\chi)})\otimes \chi^{-1}$, and $\widetilde{A}:=f_{\chi^{-1}}^{\ast}\widetilde{A}(\chi)$ as a $\mathfrak{z}_{\lambda_{v},\underline{\tau_{v}}}^{\circ}$-algebra. Define the surjection $\widetilde{A}\to A/I$ to be $\widetilde{A}(\chi)\to A(\chi)/I_{\chi}\cong^{f_{\chi^{-1}}}A/I$ and set $\widetilde{\rho}_v:=(f_{\chi^{-1}}\circ \widetilde{\rho}_{v,\chi})\otimes \chi^{-1}:G_{F_v}\to \textnormal{GL}_n(\widetilde{A})$. Then, as already observed at the end of the proof of \cite{ACC23}, Corollary 4.4.8, $\rho_{\mathfrak{m}}$ satisfies the first condition of the Proposition. An easy diagram chase shows that condition ii) is satisfied. Conditions iii) and iv) are clearly preserved under twisting by $\chi$, so they are also satisfied. To check the last condition, pick $x:\widetilde{A}\to E$, and consider one of the subquotients $\widetilde{\rho}_{v,\chi,i}$ of $\widetilde{\rho}_{v,\chi}[1/p]$ from condition iii) for $\mathfrak{m}(\chi)$. Then the corresponding subquotient of $\widetilde{\rho}_v[1/p]$ is $\widetilde{\rho}_{v,\chi,i}\otimes \chi^{-1}$, and the morphism $\mathfrak{z}_{\Omega_{v,i}}\to E$ is induced by $\mathfrak{z}_{\Omega_{v,i}}\to \widetilde{A}(=\mathfrak{z}_{\Omega_{v,i}}\xrightarrow{f_{\chi}}\mathfrak{z}_{\Omega_{v,i}}\to \widetilde{A}(\chi))$. Then condition v) follows from the computation
    \begin{equation*}
        \textnormal{rec}^{T,-1}(\textnormal{WD}(x\circ \widetilde{\rho}_{v,i}))=\textnormal{rec}^{T,-1}(\textnormal{WD}(x\circ \widetilde{\rho}_{v,\chi,i})\otimes \chi^{-1})=
    \end{equation*}
    \begin{equation*}
        \textnormal{rec}^{T,-1}(\textnormal{WD}(x\circ \widetilde{\rho}_{v,\chi,i}))\otimes (\chi^{-1}\circ \textnormal{Art}_{F_v})
    \end{equation*}
    and the definition of $f_{\chi}:\mathfrak{z}_{\Omega_{v,i}}\to \mathfrak{z}_{\Omega_{v,i}}$.
\end{proof}
For now we assume that $q\in [\lfloor \frac{d}{2}\rfloor, d-1]$. Then, according to Proposition~\ref{Prop6.1}, for every choice of $\overline{\chi}$ such that $\widetilde{\mathfrak{m}(\chi)}$ is decomposed generic, we obtain an integer $N_{\chi}$, a nilpotent ideal $I_{\chi}$, a flat $\mathcal{O}$-algebra $\widetilde{A}(\chi)$, and a map $\widetilde{A}(\chi)\to A(K',\lambda,\underline{\tau},q,\Bar{v},m,\chi)/I_{\chi}$. Write $\widetilde{A}(\chi)=\prod_x E$ using Proposition~\ref{Prop6.5}. Applying Theorem~\ref{Th2.19}, and Theorem~\ref{Th4.30} to each $x:\widetilde{A}\to E$ (keeping in mind Proposition~\ref{Prop6.5}), we obtain a continuous representation
\begin{equation*}
    \widetilde{\rho}_{\mathfrak{m}(\chi)}:=\prod_x\widetilde{\rho}_{\mathfrak{m}(\chi),x}:G_F\to \textnormal{GL}_{2n}(\widetilde{A}(\chi)[1/p]),
\end{equation*}
admitting a block upper-triangular shape with blocks of size $(n_1,...,n_k,m_{k^c},...,m_1)$. In particular, for each $x$, we have an isomorphism
\begin{equation*}
    \widetilde{\rho}_{\mathfrak{m}(\chi),x}|_{G_{F_v}}\sim \begin{pmatrix}
r_{1,\chi,x} & \ast \\
0 & r_{2,\chi,x}
\end{pmatrix}.
\end{equation*}
As shown in Sub-Lemma 1 in the proof of \cite{CN23}, Proposition 4.2.13, we can find $\overline{\chi}$ such that
\begin{itemize}
    \item the set of isomorphism classes of the Jordan-H\"older constituents of $\overline{\rho}_{\mathfrak{m}(\chi)}|_{G_{F_v}}$ and $\overline{\rho}_{\mathfrak{m}(\chi)}^{\vee,c}(1-2n)|_{G_{F_v}}$ are disjoint;
    \item and, for all $x:\widetilde{A}\to E$, the isomorphism classes of the Jordan-H\"older constituents of $\overline{r}_{1,\chi,x}$ coincide with those of $\overline{\rho}_{\mathfrak{m}(\chi)}|_{G_{F_v}}$.
\end{itemize}
For such a $\chi$, we can apply \cite{CN23}, Proposition 3.2.4 to obtain a $\widetilde{A}(\chi)$-valued lift $\widetilde{\rho}_{v,\chi}:G_{F_v}\to \textnormal{GL}_n(\widetilde{A}(\chi))$ of $\rho_{\mathfrak{m}(\chi)}|_{G_{F_v}}$ such that $\widetilde{\rho}_{v,\chi}[1/p]$ becomes isomorphic to $\prod_xr_{1,\chi,x}$. In particular, by Theorem~\ref{Th4.30} and Proposition~\ref{Prop6.5}, conditions iii), iv), and v) are all satisfied for $\widetilde{\rho}_{v,\chi}$. This finishes the proof in the case of $q\in [\lfloor \frac{d}{2}\rfloor, d-1]$.

To treat the case of $q< \lfloor \frac{d}{2}\rfloor$, one uses the Poincar\'e duality isomorphisms
\begin{equation*}
    \iota: A(K,\lambda,\underline{\tau},q,\Bar{v},m)\cong A^{\vee}(K,\lambda,\underline{\tau},d-1-q,\Bar{v},m),
\end{equation*}
\begin{equation*}
\widetilde{\iota}:\widetilde{\mathbf{T}}^{T,\lambda_{\Bar{v}},\underline{\tau_{\Bar{v}}}}\left(H^d(\widetilde{X}_{\widetilde{K}},\mathcal{V}_{(\Tilde{\lambda},\underline{\tau_{\Bar{v}}})}^{\widetilde{Q}_{\Bar{v}},w_0^P}[1/p])_{\Tilde{\iota}^{\ast}\mathcal{S}^{\ast}(\mathfrak{m}^{\vee})}^{\widetilde{Q}_{\Bar{v}}\textnormal{-ord}}\right)\cong \widetilde{A}^{\vee}(\widetilde{K},\Tilde{\lambda},\underline{\tau_{\Bar{v}}},\Bar{v})[1/p]
\end{equation*}
provided by Proposition~\ref{Prop3.20}. Then an analogous argument using the dual degree shifting (Proposition~\ref{Prop6.3}), and a version of Proposition~\ref{Prop6.5} for the cohomology group $H^d(\widetilde{X}_{\widetilde{K}},\mathcal{V}_{(\Tilde{\lambda},\underline{\tau_{\Bar{v}}})}^{\widetilde{Q}_{\Bar{v}},w_0^P}[1/p])_{\Tilde{\iota}^{\ast}\mathcal{S}^{\ast}(\mathfrak{m}^{\vee})}^{\widetilde{Q}_{\Bar{v}}\textnormal{-ord}}$ proves the Proposition also for $q$. For more details, see \cite{CN23}, Proposition 4.2.13.
\end{proof}
\begin{Rem}\label{Rem6.7}
    We emphasise the role of reserving the place $\Bar{v}'$ in the degree shifting argument (cf. Proposition~\ref{Prop6.1}). Namely, combined with the assumption that the level is deep enough at $\Bar{v}'$, it allows us to freely change the weight $\lambda_{\Bar{v}}$ without changing the faithful Hecke algebra $A(K,\lambda,\underline{\tau},q,\Bar{v},m)$. This lets us assume in the proof of Proposition~\ref{Prop6.6} that $\Tilde{\lambda}$ is CTG (cf. \cite{ACC23}, Lemma 4.3.6). This is crucial for us in order to have access to Proposition~\ref{Prop6.5}.

    Nevertheless, as was pointed out to the author by James Newton, once Theorem~\ref{Th5.7}, in particular, Theorem~\ref{Th1.1.1} is proved, one could try and run the proof of Theorem~\ref{Th5.7} with a version of Proposition~\ref{Prop6.5} where we drop the CTG assumption. This way the role of $\Bar{v}'$ would not be relevant anymore, leading to a strengthening of Theorem~\ref{Th5.7} where condition i) is weakened to asking for
    \begin{equation}
        \sum_{\Bar{v}''\in \overline{S}_p,\Bar{v}''\neq \Bar{v}}[F^+_{\Bar{v}''}:\mathbf{Q}_p]\geq \frac{1}{2}[F^+:\mathbf{Q}]
    \end{equation}
    instead. The author is planning to revisit this strategy in future work.
\end{Rem}

\begin{proof}[Proof of Theorem~\ref{Th5.7}]
Pick a collection of data $(F, K,\lambda,\tau,\mathfrak{m}, \Bar{v} )$ as in the statement of the Theorem. Pick a choice of lift $v\mid \Bar{v}$ in $S_p(F)$.
We will apply Proposition~\ref{Prop6.6} with the choice $(Q_v,\lambda_v,\underline{\tau_v})_{v\in S_p(F)}:=(\textnormal{GL}_n,\lambda_v,\tau_v)$.

Note that it suffices to prove the statement for Galois representations with coefficients in $\mathbf{T}^{T,\lambda,\tau}(R\Gamma(X_K,\mathcal{V}_{(\lambda,\tau)}/\varpi^m)_{\mathfrak{m}})$ for integers $m\in \mathbf{Z}_{\geq 1}$. This is because, by \cite{NT16}, Lemma 3.11, we have an isomorphism
\begin{equation*}
    \mathbf{T}^{T,\lambda,\tau}(R\Gamma(X_K,\mathcal{V}_{(\lambda,\tau)})_{\mathfrak{m}})\xrightarrow{\sim}\varprojlim_m \mathbf{T}^{T,\lambda,\tau}(R\Gamma(X_K,\mathcal{V}_{(\lambda,\tau)}/\varpi^m)_{\mathfrak{m}})
\end{equation*}
of $\mathbf{T}^{T,\lambda,\tau}$-algebras. Moreover, by an argument using \cite{KT17}, Lemma 2.5, and Carayol's lemma (cf. the proof of \cite{ACC23} Corollary 4.4.8, and Theorem 4.5.1), it suffices to verify the Theorem one cohomological degree at a time. Therefore, fix $q\in [0,d-1]$, and set $\mathfrak{m}_{\Bar{v}}\subset \mathbf{T}^{T,\lambda_{\Bar{v}},\tau_{\Bar{v}}}$ to be the maximal ideal obtained by pulling back $\mathfrak{m}$ along $\mathbf{T}^{T,\lambda_{\Bar{v}},\tau_{\Bar{v}}}\hookrightarrow \mathbf{T}^{T,\lambda,\tau}$. Set $A_{\Bar{v}}:=\mathbf{T}^{T,\lambda_{\Bar{v}},\tau_{\Bar{v}}}(H^q(X_K,\mathcal{V}_{(\lambda,\tau)}/\varpi^m)_{\mathfrak{m}_{\Bar{v}}})$. It is then clearly sufficient to verify the statement for a Galois representation with coefficients in $A_{\Bar{v}}$.

Applying Proposition~\ref{Prop6.6} gives a nilpotent ideal $I_{\Bar{v}}\subset A_{\Bar{v}}$, a continuous representation $\rho_{\mathfrak{m}}:G_F\to \textnormal{GL}_n(A_{\Bar{v}}/I_{\Bar{v}})$, a finite flat local $\mathcal{O}$-algebra $\widetilde{A}$, a surjective homomorphism $\overline{\mathcal{S}}^{v}:\widetilde{A}\to A_{\Bar{v}}/I_{\Bar{v}}$ of $\mathfrak{z}_{\lambda_{v},\tau_v}^{\circ}$-algebras, and a lift $\widetilde{\rho}_v:G_{F_v}\to \textnormal{GL}_n(\widetilde{A})$ of $\rho_v:=\rho_{\mathfrak{m}}|_{G_{F_v}}$ under $\overline{\mathcal{S}}^{v}$ satisfying the five listed conditions. In particular, by the first condition, it satisfies local-global compatibility outside $T$, and the only thing left to check is the existence of a dotted arrow making the diagram in the statement of the Theorem commutative.

We first note that an easy unravelling of the definitions (using condition iii), iv), and v) from Proposition~\ref{Prop6.6}) shows that we have a (necessarily unique) dotted arrow making the diagram
\begin{equation}\label{eq6.3.6}
    \begin{tikzcd}
	{R^{\Box}_{\overline{\rho}_v}[1/p]} & {\widetilde{A}[1/p]} \\
	{R^{\lambda_v,\preceq\tau_v}_{\overline{\rho}_v}[1/p]} & {\mathfrak{z}_{\Omega_v}}
	\arrow["{\widetilde{\rho}_v[1/p]}", from=1-1, to=1-2]
	\arrow[two heads, from=1-1, to=2-1]
	\arrow["\eta", from=2-2, to=2-1]
	\arrow["{\textnormal{nat}_{\widetilde{A}}}"', from=2-2, to=1-2]
	\arrow[dotted, from=2-1, to=1-2]
\end{tikzcd}
\end{equation}
commutative. Here $\textnormal{nat}_{\widetilde{A}}$ denotes the natural map towards the faithful Hecke algebra and $\eta$ denotes the interpolation of local Langlands.
We then obtain the following diagram
$$\begin{tikzcd}
	{\mathfrak{z}_{\Omega_v}} & {R_{\overline{\rho}_v}^{\lambda_v,\preceq\tau_v}[1/p]} & {\widetilde{A}[1/p]} \\
	& {R_{\overline{\rho}_v}^{\lambda_v,\preceq\tau_v}} & {\widetilde{A}} & A_{\Bar{v}}/I_{\Bar{v}} \\
	& {\mathfrak{z}_{\lambda_v,\tau_v}^{\circ}}
	\arrow["{\widetilde{\rho}_v[1/p]}", from=1-2, to=1-3]
	\arrow["\eta", from=1-1, to=1-2]
	\arrow[hook, from=2-2, to=1-2]
	\arrow["{\widetilde{\rho}_v}", from=2-2, to=2-3]
	\arrow[hook, from=2-3, to=1-3]
	\arrow["{\overline{\mathcal{S}}^{v}}", two heads, from=2-3, to=2-4]
	\arrow[ hook', from=3-2, to=1-1]
	\arrow["{\textnormal{nat}_{\widetilde{A}}}", from=3-2, to=2-3]
	\arrow["{\textnormal{nat}_{A_{\Bar{v}}}}"', from=3-2, to=2-4]
\end{tikzcd}$$
where all the inclusions are the natural ones induced by inverting $p$. Note that, by abuse of notation, we write Kisin's local deformation rings as the source of $\widetilde{\rho}_v[1/p]$ resp. $\widetilde{\rho}_v$. This is justified by \ref{eq6.3.6}. Moreover, by the very definition of $\widetilde{\rho}_v$, we have $\overline{\mathcal{S}}^{v}\circ \widetilde{\rho}_v=\rho_{\mathfrak{m}}|_{G_{F_v}}$. In particular, we see that in the local-global compatibility diagram of Theorem~\ref{Th5.7} we indeed have a dotted arrow making the upper triangle commutative. To see that the obtained arrow also commutes with the lower triangle, we need to prove the following.
\begin{claim}
    If $z\in \mathfrak{z}_{\lambda_{v},\tau_v}^{\circ}$ such that $\eta(z)$ lies in $R^{\lambda_v,\preceq \tau_v}_{\overline{\rho}_v}$, then $\textnormal{nat}_{A_{\Bar{v}}}(z)=\rho_v(z)$.
\end{claim}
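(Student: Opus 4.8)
The plan is to deduce the Claim formally from the commutativity of diagram~\ref{eq6.3.6} for $\widetilde{A}$, which has already been established above, by transporting it along the degree--shifting map $\overline{\mathcal{S}}^{v}$.

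First I would record the maps involved. By~\ref{eq6.3.6} the lift $\widetilde{\rho}_{v}$ factors through a homomorphism $\widetilde{\rho}_{v}^{\mathrm{def}}\colon R^{\lambda_{v},\preceq\tau_{v}}_{\overline{\rho}_{v}}\to\widetilde{A}$, and the dotted arrow realising the top triangle of the local--global compatibility diagram is $\overline{\rho}_{v}:=\overline{\mathcal{S}}^{v}\circ\widetilde{\rho}_{v}^{\mathrm{def}}\colon R^{\lambda_{v},\preceq\tau_{v}}_{\overline{\rho}_{v}}\to A_{\Bar{v}}/I_{\Bar{v}}$, as was noted above (using $\overline{\mathcal{S}}^{v}\circ\widetilde{\rho}_{v}=\rho_{\mathfrak{m}}|_{G_{F_{v}}}$). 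Moreover, after inverting $p$ the arrow $\widetilde{\rho}_{v}^{\mathrm{def}}[1/p]$ is exactly the dotted arrow of~\ref{eq6.3.6}, so that $\widetilde{\rho}_{v}^{\mathrm{def}}[1/p]\circ\eta=\mathrm{nat}_{\widetilde{A}}$ as maps $\mathfrak{z}_{\Omega_{v}}\to\widetilde{A}[1/p]$, where $\mathrm{nat}_{\widetilde{A}}$ denotes the structure map of $\widetilde{A}$ as a $\mathfrak{z}^{\circ}_{\lambda_{v},\tau_{v}}$-algebra, extended over $\mathfrak{z}_{\Omega_{v}}=\mathfrak{z}^{\circ}_{\lambda_{v},\tau_{v}}[1/p]$.

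Next, fix $z\in\mathfrak{z}^{\circ}_{\lambda_{v},\tau_{v}}$ with $\eta(z)\in R^{\lambda_{v},\preceq\tau_{v}}_{\overline{\rho}_{v}}$. Then both $\widetilde{\rho}_{v}^{\mathrm{def}}(\eta(z))$ and $\mathrm{nat}_{\widetilde{A}}(z)$ lie in $\widetilde{A}$, and by the previous paragraph they agree after inverting $p$; since $\widetilde{A}$ is finite flat over the discrete valuation ring $\mathcal{O}$, hence $p$-torsion free, the inclusion $\widetilde{A}\hookrightarrow\widetilde{A}[1/p]$ forces $\widetilde{\rho}_{v}^{\mathrm{def}}(\eta(z))=\mathrm{nat}_{\widetilde{A}}(z)$ already in $\widetilde{A}$. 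Applying $\overline{\mathcal{S}}^{v}$, which by construction (condition (ii) of Proposition~\ref{Prop6.6}) is a homomorphism of $\mathfrak{z}^{\circ}_{\lambda_{v},\tau_{v}}$-algebras and therefore carries $\mathrm{nat}_{\widetilde{A}}(z)$ to the structure image of $z$ in $A_{\Bar{v}}/I_{\Bar{v}}$, i.e.\ to the reduction of $\mathrm{nat}_{A_{\Bar{v}}}(z)$ modulo $I_{\Bar{v}}$, I obtain
\[
\overline{\rho}_{v}(\eta(z))=\overline{\mathcal{S}}^{v}\bigl(\widetilde{\rho}_{v}^{\mathrm{def}}(\eta(z))\bigr)=\overline{\mathcal{S}}^{v}\bigl(\mathrm{nat}_{\widetilde{A}}(z)\bigr)=\mathrm{nat}_{A_{\Bar{v}}}(z)\quad\textnormal{in}\quad A_{\Bar{v}}/I_{\Bar{v}},
\]
which is exactly the assertion of the Claim; consequently the dotted arrow also makes the lower triangle of the local--global compatibility diagram commute.

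The substantive input here is not the formal descent above but the commutativity of~\ref{eq6.3.6} itself, i.e.\ the identity $\widetilde{\rho}_{v}^{\mathrm{def}}[1/p]\circ\eta=\mathrm{nat}_{\widetilde{A}}$. This is where conditions (iii), (iv) and especially (v) of Proposition~\ref{Prop6.6} are combined with the interpolation property of $\eta$ (the theorem preceding Remark~\ref{rem5.3}): for each $x\colon\widetilde{A}\to E$ the character $x\circ\mathrm{nat}_{\widetilde{A}}$ of $\mathfrak{z}_{\Omega_{v}}$ is the one attached to $\mathrm{rec}^{T,-1}(\mathrm{WD}(x\circ\widetilde{\rho}_{v}))$ by (v), while $x\circ\widetilde{\rho}_{v}^{\mathrm{def}}[1/p]$ corresponds to a point of $R^{\lambda_{v},\preceq\tau_{v}}_{\overline{\rho}_{v}}[1/p]$ at which $\eta$ computes the same character; the apparent mismatch between the two prescriptions --- Frobenius semisimplification in the interpolation theorem versus its absence in (v), and the modulus twist $|\cdot|^{n_{0}}$, which is trivial here since $n_{0}=0$ in the case $Q_{v}=\mathrm{GL}_{n}$ --- disappears once one observes that the Bernstein-centre character of a Weil--Deligne representation depends only on its supercuspidal support, equivalently only on $(-)^{ss}$. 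I expect this bookkeeping to be the only real point requiring care, and it is already carried out in the paragraph preceding the Claim, so nothing further is needed.
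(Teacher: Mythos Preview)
Your proof is correct and follows exactly the same route as the paper: use that $\overline{\mathcal{S}}^{v}$ is a $\mathfrak{z}^{\circ}_{\lambda_{v},\tau_{v}}$-algebra map to rewrite $\mathrm{nat}_{A_{\Bar v}}(z)$ as $\overline{\mathcal{S}}^{v}(\mathrm{nat}_{\widetilde{A}}(z))$, invoke diagram~\ref{eq6.3.6} to replace $\mathrm{nat}_{\widetilde{A}}(z)$ by $\widetilde{\rho}_{v}(\eta(z))$ (using $\mathcal{O}$-flatness of $\widetilde{A}$ to descend from $\widetilde{A}[1/p]$), and then use $\overline{\mathcal{S}}^{v}\circ\widetilde{\rho}_{v}=\rho_{v}$. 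Your third paragraph correctly identifies that the genuine content lies in establishing~\ref{eq6.3.6}, which the paper dispatches just before the Claim in the same way you describe.
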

\begin{proof}[Proof of Claim]
    We have
    \begin{equation*}
    \textnormal{nat}_{A_{\Bar{v}}}(z)=\overline{\mathcal{S}}^{v}(\textnormal{nat}_{\widetilde{A}}(z))=\overline{\mathcal{S}}^{v}(\widetilde{\rho}_v[1/p](\eta(z)))=
    \end{equation*}
    \begin{equation*}
        \overline{\mathcal{S}}^{v}(\widetilde{\rho}_v(\eta(z)))=\rho_v(\eta(z)),
    \end{equation*}
    where the first equality is by the definition of the degree shifting map, the second equality is the content of \ref{eq6.3.6}, and the last equality follows from the definition of $\widetilde{\rho}_v$.
\end{proof}

\end{proof}

\begin{proof}[Proof of Theorem~\ref{Th1.1.1}]
    Consider an imaginary CM field $F$, an identification $t:\overline{\mathbf{Q}}_p\cong \mathbf{C}$, and a regular algebraic cuspidal automorphic representation $\pi$ of $\textnormal{GL}_n(\mathbf{A}_F)$ as in the statement of the Theorem. Fix also a $p$-adic place $v$ in $F$ where we wish to prove local-global compatibility. We can then find a cyclic CM field extension $F'/F$ such that $v$ and $v^c$ split completely in $F'$, $F'$ is linearly disjoint from $\overline{F}^{\ker \overline{r_t(\pi)}}$, $\overline{r_t(\pi)}|_{G_{F'}}$ stays decomposed generic, and $F'$ and $v$ satisfy the conditions of Theorem~\ref{Th5.7}. In particular, it suffices to prove local-global compatibility for the cyclic base change $\pi':=\textnormal{BC}_{F'/F}(\pi)$ for any place $v'|v$ in $F'$.

    Let $\tau$ be the Weil--Deligne inertial type of $\pi'$, set $\lambda'$ to be its weight, and note that $\lambda_{v'}=\lambda_v$. Let $T$ be a suitable finite set of places containing $S_p(F')$ such that $(\pi')^{T\cup\{\infty\}}$ is unramified. Pick a good subgroup $K^p\subset \textnormal{GL}_n(\mathbf{A}_{F'}^{\infty,p})$ such that $((\pi')^{p})^{K^p}\neq 0$. Set $K=K^pK_p^0$ where $K^0_p:=\prod_{v\in S_p(F')}\textnormal{GL}_n(\mathcal{O}_{F'_{v}})$. Then, by Theorem~\ref{Th2.12},
    \begin{equation*}
        \Hom_{K^0_p}(\sigma(\tau),(\pi')^{K^p})\neq 0,
    \end{equation*}
    and $\mathbf{T}^{T,\lambda',\tau}[1/p]$ acts via scalars, inducing a map $x:\mathbf{T}^{T,\lambda',\tau}[1/p]\to \overline{\mathbf{Q}}_p$. For a large enough field extension $E/\mathbf{Q}_p$, \cite{Fr98}, and \cite{FS98} show (cf. \cite{ACC23}, Theorem 2.4.10) that $(\pi')^{K^pK_p'}$ can be found in
    \begin{equation*}
        H^{\ast}(X_{K^pK_p'},\mathcal{V}_{(\lambda',\tau)}[1/p])
    \end{equation*}
    as a $\mathbf{T}^T$-equivariant direct summand for any compact open normal subgroup $K'_p\subset K^0_p$ such that $\sigma(\tau)|_{K_p^0}$ is trivial. Since finite group cohomology is torsion, an argument with Hocschild--Serre spectral sequence shows that
    $\Hom_{K_p^0}(\sigma(\tau),(\pi')^{K^p})$ can be found in
    \begin{equation*}
        H^{\ast}(X_K,\mathcal{V}_{(\lambda',\tau)}[1/p])
    \end{equation*}
    as a $\mathbf{T}^T$-equivariant direct summand. To see that this direct summand is also $\mathbf{T}^{T,\lambda',\tau}[1/p]$-equivariant one has to compare the natural action of $\mathbf{T}^{T,\lambda',\tau}[1/p]$ (cf. \S\ref{sec2.6}) on
    \begin{equation*}
        \Hom_{K_p^0}(\sigma(\tau),(\pi')^{K^p})
    \end{equation*}
with the one on
\begin{equation*}
    H^{\ast}(X_K,\mathcal{V}_{(\lambda',\tau)}[1/p])
\end{equation*}
given by Lemma~\ref{Lem2.4}. The two induced actions on the direct summand can be seen to coincide by writing both actions in terms of correspondences as in \S\ref{sec2.4}. 

In the previous paragraph we proved that the map $x$ factors through $\mathbf{T}^{T,\lambda',\tau}(K^p)_{\mathfrak{m}}$ where $\mathfrak{m}\subset \mathbf{T}^{T,\lambda',\tau}$ is the (non-Eisenstein and decomposed generic) maximal ideal so that $\overline{r_t(\pi')}\cong \overline{\rho}_{\mathfrak{m}}$. The Theorem then follows from applying Theorem~\ref{Th5.7} to $\mathfrak{m}$ and specialising the diagram in the statement at $x$.
\end{proof}

\appendix

\section{Bernstein--Zelevinsky and Langlands classifications}
Since the theory of Bernstein--Zelevinsky and its relation to the Langlands classification is used at several points in \S\ref{sec4.2}, we recollect the necessary results of the theory. Throughout this section, we fix a finite field extension $L/\mathbf{Q}_p$ with a choice of uniformiser $\varpi_L$ and, for any integer $m\in \mathbf{Z}_{\geq 1}$, we set $G_m:=\textnormal{GL}_m(L)$. All of our representations will have $\overline{\mathbf{Q}}_p$-coefficients and recall that we fix an identification $t:\overline{\mathbf{Q}}_p\cong \mathbf{C}$. For a smooth representation $\pi$ of $G_m$, we set $\deg(\pi):=m$.

For an integer $n\in\mathbf{Z}_{\geq1}$ with a partition $n=n_1+...+n_s$, consider the corresponding standard parabolic subgroup $Q\subset G_n$ with its Levi decomposition $Q=M\ltimes N$. Recall that for any smooth representations $\pi_i$ of $G_{n_i}$ $(i=1,...,s)$ we denoted by $\textnormal{n-Ind}_Q^{G_n}\pi_1\otimes...\otimes \pi_s$ the normalised parabolic induction. For this appendix, we will abbreviate this by writing
\begin{equation*}
    \pi_1\times...\times \pi_s:=\textnormal{n-Ind}_Q^{G_n}\pi_1\otimes...\otimes \pi_s.
\end{equation*}
Moreover, for any smooth representation $\pi$, we set $J_Q(\pi)$ to be the (unnormalised) Jacquet module of $\pi$ with respect to $Q$. By \cite{Zel80}, Proposition 1.4, both operations carry finite length representations to finite length ones. (See \cite{Zel80}, Proposition 1.1 for some of the properties of these functors.)

For any smooth irreducible representation $\pi$ of $G_n$, there is a unique multi-set $\{\pi_1,...,\pi_s\}$ of supercuspidal representations of auxiliary general linear groups $G_{n_i}$ such that $\pi$ is a subquotient of $\pi_1\times...\times \pi_s$. We will refer to this multi-set as the \textit{supercuspidal} support of $\pi$.\footnote{We warn the reader that this is slightly unconventional as in the literature it is the pair $(\pi_1\otimes...\otimes \pi_s, G_{n_1}\times...\times G_{n_s})$ that is referred to as the supercuspidal support of $\pi$.} Moreover, the supercuspidal support can be ordered in a way so that $\pi$ is actually a subrepresentation of $\pi_1\times...\times \pi_s$ (cf. \cite{BZ77}, Theorem 2.5, Theorem 2.9).

Given a smooth representation $\pi$ of $G_n$, for some $n\geq 1$, and a real number $a\in \mathbf{R}$, set $\pi(a)$ to be $\pi\otimes |\textnormal{det}|_L^a$. Zelevinsky introduced the notion of a segment, a set of isomorphism classes of irreducible supercuspidal representations of the form
\begin{equation*}
    \Delta(\pi,r):=\{\pi,\pi(1),...,\pi(r-1)\}
\end{equation*}
where $r\geq 1$ is some integer. We also use the notation $[\pi,\pi(r-1)]$ for the set $\Delta(\pi,r)$. We further say that two segments $\Delta_1$ and $\Delta_2$ are linked if neither contains the other and $\Delta_1\cup \Delta_2$ is also a segment. Finally, for two linked segments $\Delta_1=[\pi(r_1),\pi(r_2)]$ and $\Delta_2=[\pi(u_1),\pi(u_2)]$, respectively, we say that $\Delta_1$ precedes $\Delta_2$ if $r_1\leq u_1$. Given a segment $\Delta=\Delta(\pi,r)$, we write
\begin{equation*}
    \pi(\Delta):=\pi\times...\times \pi(r-1).
\end{equation*}
Then the Bernstein--Zelevinsky and Langlands classifications read as follows (cf. \cite{Zel80}, Theorem 6.1, \cite{Rod82}, Theorem 3).
\begin{Th}\label{ThA.1}
\begin{enumerate}
    \item Given a segment $\Delta=\Delta(\pi,m)$, $\pi(\Delta)$ has length $2^{m-1}$ and admits a unique irreducible subrepresentation $Z(\Delta)$ and a unique irreducible quotient $L(\Delta)$ (called the Langlands quotient of the segment).
    \item Call an ordered multiset of segments $(\Delta_1,...,\Delta_l)$ \textit{well-ordered}, if it is ordered in a way that, for $i<j$, $\Delta_i$ \textit{doesn't} precede $\Delta_j$. Then, for such a well-ordered multiset of segments, the representation
    \begin{equation*}
        Z(\Delta_1)\times...\times Z(\Delta_l)
    \end{equation*}
    admits a unique irreducible subrepresentation $Z(\Delta_1,...,\Delta_l)$. Similarly, the representation 
    \begin{equation*}
        L(\Delta_1)\times...\times L(\Delta_l)
    \end{equation*}
    admits a unique irreducible quotient $L(\Delta_1,...,\Delta_l)$ (called the Langlands quotient of the multiset of segments $(\Delta_1,...,\Delta_l)$). Moreover, in both cases, the isomorphism class of the obtained representation does not depend on the chosen order.
    \item For any smooth irreducible representation $\pi$ of $G_n$, up to reordering, there is a unique well-ordered multiset of segments
    $(\Delta_1,...,\Delta_l)$ resp. $(\Delta_1',...,\Delta_h')$ such that $\pi\cong Z(\Delta_1,...,\Delta_l)$ resp. $\pi\cong L(\Delta_1',...,\Delta_h')$.
\end{enumerate}
\end{Th}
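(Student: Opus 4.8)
The statement is the classical Bernstein--Zelevinsky classification together with its Langlands-quotient counterpart, and the plan is to follow \cite{Zel80}, \cite{BZ77} and \cite{Rod82}; here I only sketch the shape of the argument. First I would establish part (i). The length computation for $\pi(\Delta)$ with $\Delta=\Delta(\pi,r)$ is carried out via Zelevinsky's theory of derivatives: one shows by induction on $r$, using exactness of the derivative functors and the multiplicativity formula for the derivative of a product, that the Jordan--H\"older factors of $\pi(\Delta)$ are indexed by the $2^{r-1}$ ways of splitting $\Delta$ into a disjoint union of subsegments, each factor being the product of the corresponding $Z(\cdot)$'s. That $\pi(\Delta)$ has a unique irreducible submodule (resp. quotient) then follows because the Jacquet module of $\pi(\Delta)$ along the Borel contains the character attached to the tuple $(\pi,\pi(1),\dots,\pi(r-1))$ with multiplicity one, by the geometric lemma; any irreducible submodule must contribute this character, which forces uniqueness, and dually for the quotient. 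Set $Z(\Delta)$ and $L(\Delta)$ to be these.

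Next, for part (ii), I would compute $J_Q(Z(\Delta_1)\times\cdots\times Z(\Delta_l))$ for $Q$ the standard parabolic with Levi $\prod_i G_{n_i}$, again via the geometric lemma of \cite{BZ77}. The well-orderedness hypothesis guarantees that the untwisted summand $Z(\Delta_1)\otimes\cdots\otimes Z(\Delta_l)$ occurs in this Jacquet module with multiplicity exactly one, and that no other summand coincides with it. Since any irreducible submodule of the induced representation has nonzero image under $J_Q$ which is a submodule of $Z(\Delta_1)\otimes\cdots\otimes Z(\Delta_l)$ --- hence all of it, by irreducibility of each factor --- Frobenius reciprocity together with this multiplicity-one statement forces the submodule to be unique; this defines $Z(\Delta_1,\dots,\Delta_l)$. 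Independence of the order among well-ordered arrangements follows because any two are related by transpositions of adjacent segments that are not linked (or where neither precedes the other), and for unlinked $\Delta,\Delta'$ one has $Z(\Delta)\times Z(\Delta')\cong Z(\Delta')\times Z(\Delta)$ irreducible. The assertions about $L(\Delta_1,\dots,\Delta_l)$ are obtained identically with "submodule/socle" replaced by "quotient/cosocle"; alternatively they follow from the $Z$-statements by applying the Zelevinsky involution, which interchanges $Z(\Delta)$ with $L(\Delta)$ and reverses the relevant ordering.

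Finally, for part (iii): given an irreducible $\pi$ of $G_n$ with supercuspidal support a multiset $\{\rho_1,\dots,\rho_s\}$, one groups the $\rho_i$ into a minimal collection of segments (possible since one can chain any $\rho_i,\rho_j$ with $\rho_j\cong\rho_i(1)$), reorders them to be well-ordered, and uses the submodule characterisation of part (ii) to see that $\pi\cong Z(\Delta_1,\dots,\Delta_l)$; the existence of a well-ordering is an elementary combinatorial fact, and uniqueness of the multiset of segments follows by recovering it from $\pi$ through the Jacquet module along the maximal parabolic isolating the segment that precedes no other. The main obstacle --- and the technical heart --- is the multiplicity-one analysis of Jacquet modules via the geometric lemma, both the bookkeeping of which induced summands can coincide and the derivative induction in part (i); everything else is formal once that is in place. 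Since all of this is carried out in full in \cite{Zel80}, \cite{BZ77} and \cite{Rod82}, in the body of the paper we simply invoke these references.
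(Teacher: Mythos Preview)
The paper gives no proof of this theorem; it is stated with the citation ``(cf.\ \cite{Zel80}, Theorem 6.1, \cite{Rod82}, Theorem 3)'' and nothing more. Since you ultimately defer to the same references, your proposal and the paper's treatment coincide.

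That said, your sketch of part (iii) contains a genuine error worth flagging. The multisegment attached to an irreducible $\pi$ is \emph{not} obtained by ``grouping the $\rho_i$ into a minimal collection of segments'': distinct irreducibles with the same supercuspidal support correspond to different multisegments. For example, with supercuspidal support $\{\rho,\rho(1)\}$ one has both $Z(\{\rho\},\{\rho(1)\})$ and $Z([\rho,\rho(1)])$, whose segment data are $(\{\rho\},\{\rho(1)\})$ and $([\rho,\rho(1)])$ respectively --- neither of which is singled out by any minimality condition on the supercuspidal support alone. Zelevinsky's actual argument in \cite{Zel80}, \S6--7 extracts the multisegment from $\pi$ via the theory of derivatives (equivalently, by analysing for which well-ordered $(\Delta_1,\dots,\Delta_l)$ the representation $\pi$ embeds in $Z(\Delta_1)\times\cdots\times Z(\Delta_l)$), and uniqueness comes from the multiplicity-one Jacquet-module computation you invoke in part (ii), not from the cuspidal support. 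A smaller slip: in part (i), ``along the Borel'' should be ``along the parabolic with Levi $\prod G_{\deg\pi}$'', since $\pi$ need not be a character. None of this affects the bottom line, as both you and the paper simply cite the literature, but the heuristic for (iii) as written would not lead to a proof.
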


The relation between the Bernstein--Zelevinsky and Langlands classifications is slightly subtle. Nevertheless, Zelevinsky introduced an involution which allows one to pass between the two to some extent. Namely, for every $n\geq 1$, one can look at the Grothendieck group $\mathcal{R}_n$ of finite length smooth representations of $G_n$. Then the operation of normalised parabolic induction makes $\mathcal{R}:=\oplus_{n\geq 0}\mathcal{R}_n$ into a graded commutative ring (cf. \cite{Zel80}, 1.9). Moreover, according to \cite{Zel80}, Corollary 7.5, $\mathcal{R}$ is in fact a polynomial algebra over $\mathbf{Z}$ with indeterminates given by $Z(\Delta)$. Therefore, one can define a ring endomorphism
\begin{equation*}
    \mathcal{D}:\mathcal{R}\to\mathcal{R}
\end{equation*}
by sending, for any segment $\Delta$, the element $Z(\Delta)$ to $L(\Delta)$ and linearly extending it. One observes that $\mathcal{D}$ is in fact an involution which sends $Z(\Delta(\pi,r))$ to $Z(\pi,...,\pi(r-1))$ (cf. \cite{Zel80}, 9.15). Moreover, Zelevinsky conjectured that $\mathcal{D}$ sends irreducible representations to irreducible representations (\cite{Zel80}, 9.17). Assuming Zelevinsky's conjecture, Rodier deduced that, for a (well-ordered) multiset of segments $(\Delta_1,...,\Delta_l)$, $\mathcal{D}$ sends $Z(\Delta_1,...,\Delta_l)$ resp. $L(\Delta_1,...,\Delta_l)$ to $L(\Delta_1,...,\Delta_l)$ resp. $Z(\Delta_1,...,\Delta_l)$. Bernstein proposed a proof of the conjecture which stayed unpublished. The first written up proofs can be found in \cite{Aub95} and \cite{Pro98}, respectively. Another property of the involution which will be useful for us is the fact that it commutes with the operation of twisting by the determinant character. This can be easily seen by first verifying it on the generators of $\mathcal{R}$ as a polynomial algebra over $\mathbf{Z}$ corresponding to segments.

Given the Langlands classification, we can explain the reduction of the local Langlands correspondence for $\textnormal{GL}_n$ to a correspondence between the set of supercuspidal representations and irreducible Weil--Deligne representations. Namely, given a supercuspidal representation $\pi$ of $G_n$, \cite{HT01,Hen00} attaches to it an $n$-dimensional irreducible Weil--Deligne representation $\textnormal{rec}^T(\pi)$. Moreover, if $\pi$ is only essentially square integrable i.e., according to Bernstein, it is of the form $L(\Delta(\pi',r))$ (\cite{Zel80}, Theorem 9.3), we set $\textnormal{rec}^T(\pi)=\textnormal{rec}^T(\pi')\otimes \textnormal{Sp}(r)$ where $\textnormal{Sp}(r)$ is the Steinberg representation (see page 213 of \cite{Rod82}). Finally, if $\pi$ is a general smooth irreducible representation of the form $L(\Delta_1,...,\Delta_l)$, we set $\textnormal{rec}^T(\pi)=\oplus_{i=1}^l\textnormal{rec}^T(L(\Delta_i))$.

Since we will mainly be working with smooth irreducible representations of $G_n$ coming from cuspidal automorphic representations, we can always assume that our representations $\pi$ are so that $t \pi\otimes | \textnormal{det} |^{-s}$ is unitary for some $s\in \mathbf{R}$. We will refer to such representations as \textit{$t$-preunitary} and if $s=0$, then we further call it \textit{$t$-unitary}. In particular, it will be useful for us to recall Tadic's classification of unitary irreducible smooth representations of $G_n$ \cite{Tad86} which reveals that, in the case of unitary representations, the relation between the Bernstein--Zelevinsky and Langlands classifications becomes explicit. To state the classification, we first need to introduce some further notation. Given a $t$-unitary supercuspidal representation $\pi$ of $G_m$ for some $m\in\mathbf{Z}_{\geq 1}$ and an integer $d\in\mathbf{Z}_{\geq 1}$, we define the unitary segment
\begin{equation*}
    \Delta^u(d,\pi):=[\pi(\frac{1-d}{2}),...,\pi(\frac{d-1}{2})].
\end{equation*}
Then the building blocks of the classification will be the following two types of representations.
\begin{enumerate}
    \item Given $d,n\in\mathbf{Z}_{\geq 1}$ and $\pi$ a $t$-unitary supercuspidal representation of $G_m$ for some $m\in\mathbf{Z}_{\geq 1}$, we set
    \begin{equation*}
        a(d,n,\pi)=Z\left(\Delta^u(d,\pi)(\frac{n-1}{2}),...,\Delta^u(d,\pi)(\frac{1-n}{2})\right).
    \end{equation*}
    \item Given $d,n\in\mathbf{Z}_{\geq 1}$, $\alpha\in (0,\frac{1}{2})$ and $\pi$ a $t$-unitary supercuspidal representation of $G_m$ for some $m\in\mathbf{Z}_{\geq 1}$, we set
    \begin{equation*}
        a(d,n,\pi,\alpha)=a(d,n,\pi)(\alpha)\times a(d,n,\pi)(-\alpha).
    \end{equation*}
\end{enumerate} The classification is as follows (\cite{Tad86}, Theorem A, Theorem B).

\begin{Th}\label{ThA.2}
Given an integer $m\geq 1$, all smooth representations of $G_m$ obtained as normalised parabolic induction of type \textit{i)} and type \textit{ii)} representations are $t$-unitary and irreducible. 

Moreover, any smooth irreducible $t$-unitary representation of $G_m$ can be obtained this way and the associated multiset of type \textit{i)} and type \textit{ii)} representations is well-defined (i.e., the associated ordered multiset is unique up to permutation).

Finally, under the Zelevinsky involution a type \textit{i)} representation $a(d,n,\pi)$, respectively a type $\textit{ii)}$ representation $a(d,n,\pi,\alpha)$ is sent to $a(n,d,\pi)$, respectively $a(n,d,\pi,\alpha)$. In particular, we have
\begin{equation*}
a(d,n,\pi)=a^L(n,d,\pi):=L\left(\Delta^u(n,\pi)(\frac{d-1}{2}),...,\Delta^u(n,\pi)(\frac{1-d}{2})\right),
\end{equation*}
respectively
\begin{equation*}
a(d,n,\pi,\alpha)=a^L(d,n,\pi)(\alpha)\times a^L(d,n,\pi)(-\alpha).
\end{equation*}
\end{Th}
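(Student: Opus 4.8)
The statement is Tadi\'c's classification of the unitary dual of $G_m=\textnormal{GL}_m(L)$, and the natural route is to reproduce the strategy of \cite{Tad86}, organised into three logically independent pieces: (a) the proposed building blocks $a(d,n,\pi)$ and $a(d,n,\pi,\alpha)$, and arbitrary normalised products of such, are $t$-unitary and irreducible; (b) exhaustion --- no other irreducible $t$-unitary representations exist and the associated multiset is well defined; (c) the Zelevinsky involution $\mathcal D$ interchanges the two descriptions as claimed. Throughout one is allowed to use the Bernstein--Zelevinsky and Langlands classifications (Theorem~\ref{ThA.1}), the polynomial-ring structure of $\mathcal R$ and the basic properties of $\mathcal D$ recalled above, as well as Zelevinsky's linkage criterion for irreducibility of $\pi_1\times\pi_2$.

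\textbf{Building blocks (piece (a)).} First I would treat $a(d,1,\pi)=L(\Delta^u(d,\pi))$, the essentially square-integrable representations, which are $t$-unitary since $t\pi$ is unitary and an essentially square-integrable representation is a unitary twist of a square-integrable one. The Speh representations $a(d,n,\pi)$ are then handled by a deformation argument along the ``$n$-direction'': starting from the unitary point, one deforms the relevant parabolically induced representation, using that irreducibility persists (by the linkage criterion, no linked segments appear before the endpoint) so that the canonical invariant Hermitian form cannot change signature; the form is definite at the start and hence at $a(d,n,\pi)$. The complementary series $a(d,n,\pi,\alpha)=a(d,n,\pi)(\alpha)\times a(d,n,\pi)(-\alpha)$ with $\alpha\in(0,\tfrac12)$ is treated the same way in the $\alpha$-variable: one checks the product stays irreducible on the open interval and the Hermitian form degenerates exactly at $\alpha=\tfrac12$, so it is positive on $(0,\tfrac12)$. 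Finally, an arbitrary product of such building blocks is shown to be irreducible by Zelevinsky's criterion (the segment data across distinct factors are never linked in a way producing reducibility) and $t$-unitary because the tensor product of positive-definite invariant forms is positive definite.

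\textbf{Exhaustion (piece (b)) --- the main obstacle.} Given an irreducible $t$-unitary $\pi$ of $G_m$, I would write it via the Langlands classification as the Langlands quotient of $\delta_1(e_1)\times\cdots\times\delta_s(e_s)$ with $\delta_i$ essentially square-integrable; $t$-unitarity forces the Hermitian-dual of $\pi$ to be $\pi$ itself, which after matching supercuspidal supports pairs up the twists $e_i$ into $\pm$ pairs and reduces to the case where all $\delta_i$ lie on a single cuspidal line $\{\pi_0(x)\}$. One then argues by induction on $m$ together with a continuity argument inside the (closed) subset of the Hermitian dual cut out by unitarity: the complementary-series arcs $a^L(d,n,\pi_0)(\alpha)\times a^L(d,n,\pi_0)(-\alpha)$ are maximal connected families of unitary irreducibles, and the only way to leave such an arc while remaining unitary is at an irreducibility point; classifying these reducibility points (exactly at $\alpha\in\tfrac12\mathbf Z$, where one lands on a product of Speh representations) forces the combinatorics of $\pi$ to match the stated list. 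The delicate points here are the ``$U_0\Rightarrow U$'' reduction --- that unitarizability of a product of building blocks reduces to unitarizability of each factor --- and the precise determination of the reducibility points of $\delta^u(d,\pi_0)(\alpha)\times\delta^u(d,\pi_0)(-\alpha)$; neither is formal, and this is where essentially all the difficulty of the theorem sits.

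\textbf{The Zelevinsky involution (piece (c)).} Since $\mathcal D$ is a ring endomorphism of $\mathcal R$ with $\mathcal D(Z(\Delta))=L(\Delta)$, it commutes with normalised induction and with twisting by $|\det|$, and (by \cite{Aub95},\cite{Pro98}) sends irreducibles to irreducibles. Applying $\mathcal D$ to $a(d,n,\pi)=Z\bigl(\Delta^u(d,\pi)(\tfrac{n-1}2),\dots,\Delta^u(d,\pi)(\tfrac{1-n}2)\bigr)$ and using these properties yields $\mathcal D\bigl(a(d,n,\pi)\bigr)=L\bigl(\Delta^u(n,\pi)(\tfrac{d-1}2),\dots,\Delta^u(n,\pi)(\tfrac{1-d}2)\bigr)=a^L(n,d,\pi)$, and likewise $\mathcal D\bigl(a(d,n,\pi,\alpha)\bigr)=a^L(d,n,\pi)(\alpha)\times a^L(d,n,\pi)(-\alpha)$ because $\mathcal D$ is multiplicative. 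Together with the fact that $\mathcal D$ preserves $t$-unitarity --- which one reads off either from the character-theoretic description of the Aubert involution or, internally, by checking that the two parametrisations produced in pieces (a) and (b) are exchanged --- this identifies $a(d,n,\pi)$ with $a^L(n,d,\pi)$ as irreducible representations with the same supercuspidal support and shape. I would close by noting that the well-definedness of the associated multiset in (b) then also transfers across $\mathcal D$, giving the last assertion.
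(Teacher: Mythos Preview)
The paper does not give its own proof of this theorem: it is stated as Tadi\'c's classification and attributed to \cite{Tad86}, Theorem~A and Theorem~B, with the Zelevinsky-involution assertion implicitly resting on the discussion just before the theorem (Rodier's result that $\mathcal D$ interchanges $Z(\Delta_1,\dots,\Delta_l)$ and $L(\Delta_1,\dots,\Delta_l)$, together with \cite{Aub95,Pro98}). So there is no argument in the paper to compare your sketch against; your outline is essentially a summary of Tadi\'c's original strategy, which is the appropriate reference.

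One genuine slip in your piece (c): Rodier's result says $\mathcal D\bigl(Z(\Delta_1,\dots,\Delta_l)\bigr)=L(\Delta_1,\dots,\Delta_l)$ with the \emph{same} segments. Applying this to $a(d,n,\pi)=Z\bigl(\Delta^u(d,\pi)(\tfrac{n-1}{2}),\dots,\Delta^u(d,\pi)(\tfrac{1-n}{2})\bigr)$ gives $\mathcal D\bigl(a(d,n,\pi)\bigr)=a^L(d,n,\pi)$, not $a^L(n,d,\pi)$ as you wrote. The identity $a(d,n,\pi)=a^L(n,d,\pi)$ is then equivalent to $\mathcal D\bigl(a(d,n,\pi)\bigr)=a(n,d,\pi)$, and establishing either of these requires the separate combinatorial input (from Tadi\'c, or equivalently the M\oe glin--Waldspurger description of Speh representations) that the Langlands data $\bigl(\Delta^u(d,\pi)(\tfrac{n-1}{2}),\dots\bigr)$ and the Zelevinsky data $\bigl(\Delta^u(n,\pi)(\tfrac{d-1}{2}),\dots\bigr)$ define the same irreducible representation. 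Your sketch elides this step; the rest of your outline is sound as a roadmap.
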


We conclude the appendix with two technical lemmas, the first of which relies on Tadic's classification. The role of the first computation is to understand the monodromy under local Langlands of the ordinary support in \S\ref{sec4.2} (see Corollary~\ref{Cor4.25}).
\begin{Lemma}\label{LemA.3}
Let $\{\Delta_1,...,\Delta_l\}$ be a multiset of segments, $r\in\mathbf{Z}_{\geq 2}$ and $0=k_0<k_1<...<k_r=l$ be integers such that
\begin{equation*}
    \frac{\textnormal{val}_p(\pi_1'(\langle\varpi\rangle))}{\deg\pi_1'}\leq \frac{\textnormal{val}_p(\pi_2'(\langle\varpi\rangle))}{\deg\pi_2'}+\textnormal{val}_p(|\varpi|_K)
\end{equation*}
whenever $\pi_1'$ resp. $\pi_2'$ lies in the underlying supercuspidal support of $\{\Delta_{k_{i-1}+1},...,\Delta_{k_i}\}$ resp. $\{\Delta_{k_{j-1}+1},...,\Delta_{k_j}\}$ for $i<j$. In particular, if $\pi_1'$ happens to be of the form $\pi_2'\otimes|\textnormal{det}|^s$, then  $s\geq 1$. Assume that for each $1\leq i\leq r$, $(\Delta_{k_{i-1}+1},...,\Delta_{k_i})$ is well-ordered.\footnote{Note that this ensures that already $(\Delta_1,...,\Delta_l)$ is well-ordered.} Finally, assume that
\begin{equation*}
    \widetilde{\pi}:=Z(\Delta_1,...,\Delta_l)=L(\Delta_1',...,\Delta_h')
\end{equation*}
is $t$-preunitary and, for $i=1,...,r$, set
\begin{equation*}
    \pi_i:=Z(\Delta_{k_{i-1}+1},...,\Delta_{k_i})=L(\Delta_{t_{i-1}+1}'',...,\Delta_{t_i}'')
\end{equation*}
where $0=t_0<t_1<...<t_r$ are the appropriate integers. Then every segment $\Delta_i'$ is of the form
\begin{equation*}
    \bigcup_{j=1}^r\Delta'''_j
\end{equation*}
with $\Delta'''_j\in \{\emptyset,\Delta_{k_{j-1}+1}'',...,\Delta_{k_j}''\}$.
\end{Lemma}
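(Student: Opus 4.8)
The plan is to reduce the statement to an assertion about multisets of segments purely within the Bernstein--Zelevinsky framework, using the Zelevinsky involution $\mathcal{D}$ to pass between the $Z(-)$ and $L(-)$ classifications and the numerical hypothesis to control how segments may interact across the blocks indexed by $1,\dots,r$. First I would apply $\mathcal{D}$ to the two displayed identities, obtaining $\mathcal{D}(\widetilde{\pi}) = Z(\Delta_1',\dots,\Delta_h')$ and $\mathcal{D}(\pi_i) = Z(\Delta_{t_{i-1}+1}'',\dots,\Delta_{t_i}'')$, so that the claim becomes: each segment $\Delta_i'$ occurring in the $Z$-classification of $\mathcal{D}(\widetilde{\pi})$ is a union of segments drawn from the $\Delta''_\bullet$ occurring in the $Z$-classifications of the $\mathcal{D}(\pi_i)$. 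I would then use that $\mathcal{R}$ is a polynomial ring on the $Z(\Delta)$'s (Theorem~\ref{ThA.1} and \cite{Zel80}, Corollary 7.5) together with multiplicativity of $\mathcal{D}$: since $\widetilde{\pi} = Z(\Delta_1,\dots,\Delta_l)$ and each block $(\Delta_{k_{i-1}+1},\dots,\Delta_{k_i})$ is well-ordered, in the Grothendieck ring we have $[\widetilde{\pi}]$ appearing in $\prod_i [\pi_i] = \prod_i \prod_{j} [Z(\Delta_j)]$, and by well-orderedness of the concatenation $(\Delta_1,\dots,\Delta_l)$ the representation $\widetilde\pi$ is genuinely the unique irreducible submodule of $\pi_1\times\cdots\times\pi_r$.

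The heart of the argument is the following combinatorial input about linkage: under the numerical hypothesis, a segment from block $i$ and a segment from block $j$ with $i<j$ can be linked only in the way that lets block-$i$ segments "precede" block-$j$ segments, and any segment $\Delta_i'$ arising in the Langlands data of a subquotient of the product is obtained by merging a well-ordered chain of linked segments. The key step is thus to show that when we compute the Langlands classification of $L(\Delta_1,\dots,\Delta_l)$ (equivalently the $Z$-classification of $\mathcal{D}$ of it) by the standard "merge maximal chains of linked segments" recipe, the merging never mixes a proper piece of a block-$i$ segment with a block-$j$ segment: the inequality
\begin{equation*}
    \frac{\textnormal{val}_p(\pi_1'(\langle\varpi\rangle))}{\deg\pi_1'}\leq \frac{\textnormal{val}_p(\pi_2'(\langle\varpi\rangle))}{\deg\pi_2'}+\textnormal{val}_p(|\varpi|_K)
\end{equation*}
forces that whenever a block-$i$ segment and a block-$j$ segment ($i<j$) are linked, the block-$i$ one precedes the block-$j$ one and they merge to a segment which, when restricted within a single block, still decomposes into the $\Delta''_\bullet$ of that block. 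I would first handle $r=2$, where the statement says precisely that each $\Delta_i'$ is a union of a (possibly empty) segment from the $L$-data of $\pi_1$ and a (possibly empty) segment from the $L$-data of $\pi_2$, then induct on $r$ by grouping blocks $2,\dots,r$ together (their concatenation is still well-ordered and still satisfies the numerical hypothesis with respect to block $1$).

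The main obstacle I anticipate is the bookkeeping around \emph{equality} in the valuation inequality: when $\pi_1'$ is literally of the form $\pi_2'\otimes|\det|^s$ we are given $s\ge 1$, which is exactly what is needed to guarantee that two segments from different blocks that share a supercuspidal "shift" are linked in the precede-direction rather than being equal or nested the wrong way; making this precise requires carefully translating the valuation condition into the exponent conditions defining linkage and precedence of segments (recall $\deg(\pi_{\mathrm{sc}})$ and the normalization $\textnormal{val}_p(|\varpi|_K) = -[K:\mathbf{Q}_p]/e_K$). I would isolate this as a standalone sub-lemma: \emph{if $\Delta = \Delta(\sigma_1,a)$ lies in block $i$ and $\Delta' = \Delta(\sigma_2,b)$ lies in block $j$ with $i<j$, and $\Delta,\Delta'$ are linked, then $\Delta$ precedes $\Delta'$ and $\Delta\cup\Delta'$ meets block $i$ in a sub-segment of $\Delta$ and block $j$ in a sub-segment of $\Delta'$}; granting this, the general statement follows by a finite induction on the total number of segments, peeling off one maximal linked chain at a time and invoking the $t$-preunitarity of $\widetilde{\pi}$ (via Tadic's classification, Theorem~\ref{ThA.2}) only to rule out pathological linkings within a single block that would otherwise obstruct the clean decomposition.
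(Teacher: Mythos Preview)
Your reduction to $r=2$ and the inductive structure matches the paper's approach, and you correctly identify that the numerical hypothesis controls how segments from different blocks can be linked. But the heart of your argument rests on a ``merge maximal chains of linked segments'' recipe for passing from the $Z$-classification to the $L$-classification, and no such recipe exists in general: the Zelevinsky involution $\mathcal{D}$ is computed by the M{\oe}glin--Waldspurger algorithm (or equivalently via Kazhdan--Lusztig combinatorics), and for arbitrary multisegments it is far more intricate than merging linked chains. So the step ``any segment $\Delta_i'$ arising in the Langlands data \dots\ is obtained by merging a well-ordered chain of linked segments'' is unjustified, and without it the rest of the argument does not go through. (Incidentally, the direction in your sub-lemma is reversed: the hypothesis forces block-$i$ supercuspidals to be \emph{higher} twists than block-$j$ ones for $i<j$, so it is the block-$j$ segment that precedes the block-$i$ one, consistent with well-orderedness.)

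The paper's proof uses Tadic's classification far more centrally than you propose. After reducing to the $t$-unitary case and to $r=2$, one decomposes $\widetilde{\pi}$ into its Tadic building blocks $a(d,n,\pi)$ and $a(d,n,\pi,\alpha)$, and uses the numerical hypothesis to show that the only building blocks that can be genuinely split by the partition are those of type~(i) with $d=1$, i.e.\ $Z(\pi(\tfrac{n-1}{2}),\dots,\pi(\tfrac{1-n}{2}))=L([\pi(\tfrac{1-n}{2}),\pi(\tfrac{n-1}{2})])$. For these, the $Z$-data consists of singletons, and the piece landing in block~1 is $Z(\pi(\tfrac{n-1}{2}),\dots,\pi(j))=L([\pi(j),\pi(\tfrac{n-1}{2})])$, a single segment---and here your merging intuition is correct, but only because Tadic has already reduced one to this degenerate shape. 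One then needs a separate irreducibility argument (via \cite{Zel80}, Propositions 8.4, 8.5 and 9.7) to show that the product of the unsplit blocks with these truncated pieces really is $\pi_1$ (resp.\ $\pi_2$), so that the $\Delta''_\bullet$ are exactly what one expects. Without the reduction to $d=1$ building blocks, there is no elementary combinatorial description of the $\Delta_i'$ in terms of the $\Delta_j$, and your proposed sub-lemma, even once corrected, does not supply one.
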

\begin{proof}
Note that we can assume that $\widetilde{\pi}$ is $t$-unitary. This follows from the fact that $\textnormal{rec}^T$ is compatible with character twists and that the involution $\mathcal{D}$ commutes with twisting by the determinant character. Arguing by induction on $r$, we can further restrict ourselves to the case when $r=2$. Therefore, we do assume the above restrictions and set $k:=k_1$. In particular, we have access to Tadic's classification. This means that, since we know the shape of $(\Delta_1',...,\Delta_h')$ by Theorem~\ref{ThA.2}, we need to compute the shape of the segments $(\Delta''_1,...,\Delta_{t_2}'')$. 

First, we pin down the possible unitary representations appearing in Tadic's classification as building blocks which have underlying multiset of segments strictly separated by 
\begin{equation}\label{parteq}
    \{(\Delta_1,...,\Delta_k),(\Delta_{k+1},...,\Delta_l)\}.
\end{equation} By the assumption on the central characters of the supercuspidal support of the partition
\ref{parteq},
the representations of the second type appearing in $\widetilde{\pi}=Z(\Delta_1,...,\Delta_l)$'s description via Tadic's classification have associated multiset of segments lying either in $(\Delta_1,...,\Delta_k)$ or $(\Delta_{k+1},...,\Delta_l)$, respectively. Indeed, this is because all the neighbours of the supercuspidal support of such a representation differ by a power of the determinant character with exponent strictly smaller than $1$. Now assume that a representation of the first type appears in $\widetilde{\pi}$ and its associated multiset of segments is strictly separated by the partition $\{(\Delta_1,...,\Delta_k),(\Delta_{k+1},...,\Delta_l)\}$. Say it is given by $a(d,n,\pi)$ for some integers $d,n\geq 1$ and $t$-unitary supercuspidal representation $\pi$ of $G_m$ for some integer $m\geq 1$. Note that in the case when $n=1$, we have only $1$ segment so $n$ must be at least $2$. On the other hand, notice that $d$ must be equal to $1$. Indeed, if we assume that $d$ is at least $2$, then all the neighbours of
\begin{equation*}
    (\Delta^u(d,\pi)(\frac{n-1}{2}),...,\Delta^u(d,\pi)(\frac{1-n}{2}))
\end{equation*}
have overlapping supercuspidal supports. In particular, by the assumption on the central characters, all the segments must be contained in $(\Delta_1,...,\Delta_k)$ or $(\Delta_{k+1},...,\Delta_{l})$, respectively. This leads to a contradiction. Therefore, $d$ necessarily equals $1$ and
\begin{equation*}
    a(d,n,\pi)=Z(\pi(\frac{n-1}{2}),...,\pi(\frac{1-n}{2})).
\end{equation*}

We now have the following claim.
\begin{claim}
    $Z(\Delta_1,...\Delta_k)$ is obtained by applying normalised parabolic induction to a product of representations of type \textit{i)}, type \textit{ii)} and representations of the form
    \begin{equation*}
        Z(\pi(\frac{n-1}{2}),...,\pi(j))=L([\pi(j),...,\pi(\frac{n-1}{2})])
    \end{equation*}
    for some $t$-unitary supercuspidal representation $\pi$ of $G_m$, $n\in \mathbf{Z}_{\geq 0}$ and $\frac{1-n}{2}\leq j\leq \frac{n-1}{2}$.
\end{claim}
\begin{proof}[Proof of Claim]
For a choice of $t$-unitary supercuspidal representation of $G_m$, set $S_{\pi}\subset (\Delta_1,...,\Delta_k)$ to be the ordered subset of segments with supercuspidal support lying in $\{\pi\otimes|\det|^s\}_{s\in \mathbf{R}}$. By \cite{Zel80}, Proposition 8.5, $Z(\Delta_1,...,\Delta_k)$ is the parabolic induction of the representations $Z(S_{\pi})$ for all $\pi$ such that $S_{\pi}\neq \emptyset$.\footnote{Note that the order does not matter.} Moreover, $S_{\pi}$ decomposes into $S_{\pi}^{\textnormal{int}}$ and $S_{\pi}^{\textnormal{nonint}}$ where the first ordered subset consists of the segments with supercuspidal support lying in $\{\pi\otimes |\det|^{s}\}_{s\in \frac{1}{2}\mathbf{Z}}$ and define the latter to be its complement. Again, $Z(S_{\pi})$ is the parabolic induction of $Z(S_{\pi}^{\textnormal{int}})$ and $Z(S_{\pi}^{\textnormal{nonint}})$.

By looking at Tadic's classification, we see that $S_{\pi}^{\textnormal{nonint}}$ consists of all the segments corresponding to type \textit{ii)} representations and nothing else. In particular, using Tadic's classification and \cite{Zel80}, Proposition 8.4, we see that it must be the parabolic induction of type \textit{ii)} representations. We are left with spelling out $Z(S^{\textnormal{int}}_{\pi})$.

We further divide $S_{\pi}^{\textnormal{int}}$ into the ordered multiset of segments $S^{\textnormal{nonres}}_{\pi}$ given by the collection of segments which correspond to a type \textnormal{i)} representations which are not separated by the partition \ref{parteq} and $S_{\pi}^{\textnormal{res}}$ consisting of segments that are part of the multiset of segments of a type \textit{i)} representation which is strictly separated by the partition \ref{parteq}. Then, by \cite{Zel80}, Proposition 8.4, $Z(S^{\textnormal{int}}_{\pi})$ occurs with multiplicity $1$ in the set of Jordan-H{\"o}lder factors of
\begin{equation}\label{eqB.2}
    Z(S^{\textnormal{nonres}}_{\pi})\times Z(S_{\pi}^{\textnormal{res}}).
\end{equation}
 Therefore, in order to prove the claim, it suffices to prove that \ref{eqB.2} is irreducible and that both $Z(S^{\textnormal{nonres}}_{\pi})$ and $Z(S^{\textnormal{res}}_{\pi})$ have the claimed shape.

Note that by our previous discussion $Z(S^{\textnormal{res}}_{\pi})$ is of the form
\begin{equation*}
    Z(\{\pi(\frac{n_1-1}{2}),...,\pi(j),\pi(\frac{n_2-1}{2}),...,\pi(j),...,\pi(\frac{n_f-1}{2}),...,\pi(j)\}^{\textnormal{ord}})
\end{equation*}
where $\{-\}^{\textnormal{ord}}$ means that we chose any allowable ordering of the underlying multiset of segments, $f\in \mathbf{Z}_{\geq 0}$, $n_1,...,n_f\in \mathbf{Z}_{\geq 1}$ and $j$ is some integer between $\frac{1-n_i}{2}$ and $\frac{n_i-1}{2}$ for any $1\leq i\leq f$. This is simply given by 
\begin{equation}\label{eqB.3}
    Z(\pi(\frac{n_1-1}{2}),...,\pi(j))\times ....\times Z(\pi(\frac{n_f-1}{2}),...,\pi(j)).
\end{equation}
To see this, note that \ref{eqB.3} can be rewritten as
\begin{equation*}
    L([\pi(j),...,\pi(\frac{n_1-1}{2})])\times ....\times L([\pi(j),...,\pi(\frac{n_f-1}{2})]).
\end{equation*}
This is irreducible by \cite{Zel80}, Proposition 9.7 so it must be $Z(S_{\pi}^{\textnormal{res}})$ by \cite{Zel80}, Proposition 8.4.

By combining Tadic's classification with \cite{Zel80}, Proposition 8.4, we also see that $Z(S_{\pi}^{\textnormal{nonres}})$ is of the form
\begin{equation*}
    a(\Tilde{d}_1,\Tilde{n}_1,\pi)\times...\times a(\Tilde{d}_h,\Tilde{n}_h,\pi)=a^L(\Tilde{n}_1,\Tilde{d}_1,\pi)\times...\times a^L(\Tilde{n}_h,\Tilde{d}_h,\pi)
\end{equation*}
for some $h\in \mathbf{Z}_{\geq 0}$, $\Tilde{n}_1,...,\Tilde{n}_h,\Tilde{d}_1,...,\Tilde{d}_h\in \mathbf{Z}_{\geq 1}$.

We are left with proving that \ref{eqB.2} is irreducible. Note that if we apply Zelevinsky duality to \ref{eqB.2} bearing in mind the previous discussion and Theorem~\ref{ThA.2}, we get
\begin{equation*}
    \Bigl(a(\Tilde{n}_1,\Tilde{d}_1,\pi)\times...\times a(\Tilde{n}_h,\Tilde{d}_h,\pi)\bigr)
    \times
\end{equation*}
\begin{equation*}
    \left(Z([\pi(j),...,\pi(\frac{n_1-1}{2})])\times ....\times Z([\pi(j),...,\pi(\frac{n_f-1}{2})])
    \right).
\end{equation*}
By Proposition 8.5 of \cite{Zel80}, it suffices to see that for any $1\leq a\leq h$, $\frac{1-\Tilde{d}_a}{2}\leq b\leq \frac{\Tilde{d}_a-1}{2}$ and $1\leq c\leq f$, the segments
\begin{equation*}
    [\pi(\frac{1-\Tilde{n}_a}{2}+b),...,\pi(\frac{\Tilde{n}_a-1}{2}+b)]\textnormal{ and }[\pi(j),...,\pi(\frac{n_c-1}{2})]
\end{equation*}
are not linked. Note that by assumption $j\in \frac{1}{2}\mathbf{Z}$ is the lowest such that $\pi(j)$ appears as a supercuspidal support of some segment in $S_{\pi}^{\textnormal{int}}$. Therefore, we have
\begin{equation*}
    j\leq \frac{1-\Tilde{n}_a}{2}+\frac{1-\Tilde{d}_a}{2}\leq \frac{1-\Tilde{n}_a}{2}+b.
\end{equation*}
On the other hand, if the two segments were linked, we would need to have
\begin{equation*}
    \frac{n_c-1}{2}<\frac{\Tilde{n}_a-1}{2}+b\leq \frac{\Tilde{n}_a-1}{2}+\frac{\Tilde{d}_a-1}{2}.
\end{equation*}
So, by multiplying by $-1$, we would get
\begin{equation*}
    \frac{1-\Tilde{n}_a}{2}+\frac{1-\Tilde{d}_a}{2}\leq \frac{1-n_c}{2}<j,
\end{equation*}
a contradiction. Therefore, \ref{eqB.2} is indeed irreducible and we proved the claim.
\end{proof}
The same observations apply to $Z(\Delta_{k+1},...,\Delta_{l})$ and the lemma now follows. To be more precise, from the claim we see what can be the shape of a segment $\Delta''_{i_1}$ appearing in $Z(\Delta_1,...,\Delta_k)=L(\Delta_1'',...,\Delta_{t_1}'')$. Moreover, the proof of the claim shows that if any such segment is of the form $[\pi(j),...,\pi(\frac{n-1}{2})]$ with $\frac{1-n}{2}< j$, then there must be a corresponding segment $\Delta_{i_2}$ with $t_1+1\leq i_2\leq t_2$ such that they are disjoint and linked and their union is $[\pi(\frac{1-n}{2}),...,\pi(\frac{n-1}{2})]$. Such pairs then build up to segments in $L(\Delta_1',...,\Delta_h')$ and any other segments of the latter were already a segment of $(\Delta''_1,..,\Delta''_{t_2})$.
\end{proof}

\begin{Lemma}\label{LemA.4}
Let $(\Delta_1,...,\Delta_l)$ be a well-ordered multiset of segments such that $Z(\Delta_1,...,\Delta_l)$ has degree $n$ and set $Q_{\textnormal{sc}}=M_{\textnormal{sc}}\ltimes N_{\textnormal{sc}}\subset G_n$ to be the standard parabolic subgroup corresponding to the underlying supercuspidal support with the induced ordering. Then the Jacquet module $J_{Q_{\textnormal{sc}}}(Z(\Delta_1,...,\Delta_l))$ admits $\delta_{Q_{\textnormal{sc}}}^{1/2}\Delta_1\otimes...\otimes\Delta_l$ as a quotient where, by abuse of notation, for a segment $\Delta=[\pi,...,\pi(r-1)]$ we also denote by $\Delta$ the representation $\pi\otimes...\otimes \pi(r-1)$.
\end{Lemma}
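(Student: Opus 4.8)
The plan is to realise $Z(\Delta_1,\ldots,\Delta_l)$ as a subrepresentation of a single normalised parabolic induction from $Q_{\textnormal{sc}}$ and then apply Frobenius reciprocity. Write $\Delta_i=[\pi_i,\ldots,\pi_i(r_i-1)]$. First I would recall from Theorem~\ref{ThA.1}(i) that $Z(\Delta_i)$ is the unique irreducible subrepresentation of $\pi_i(\Delta_i)=\pi_i\times\cdots\times\pi_i(r_i-1)$, and from Theorem~\ref{ThA.1}(ii) that, since $(\Delta_1,\ldots,\Delta_l)$ is well-ordered, $Z(\Delta_1,\ldots,\Delta_l)$ is the unique irreducible subrepresentation of $Z(\Delta_1)\times\cdots\times Z(\Delta_l)$. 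Since normalised parabolic induction is exact and associative, composing these two embeddings yields an embedding $Z(\Delta_1,\ldots,\Delta_l)\hookrightarrow\textnormal{n-Ind}_{Q_{\textnormal{sc}}}^{G_n}(\Delta_1\otimes\cdots\otimes\Delta_l)$, where the target is the normalised induction from the standard parabolic $Q_{\textnormal{sc}}$ attached to the underlying supercuspidal support with the induced ordering (Levi blocks $\deg\pi_1,\deg\pi_1(1),\ldots,\deg\pi_l(r_l-1)$ listed in that order), and on the right $\Delta_i$ denotes $\pi_i\otimes\cdots\otimes\pi_i(r_i-1)$.

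Next I would unwind the adjunction. Since $\textnormal{n-Ind}_{Q_{\textnormal{sc}}}^{G_n}(-)=\textnormal{Ind}_{Q_{\textnormal{sc}}}^{G_n}(\delta_{Q_{\textnormal{sc}}}^{1/2}\otimes-)$ and $\textnormal{Ind}_{Q_{\textnormal{sc}}}^{G_n}$ is right adjoint to the unnormalised Jacquet functor $J_{Q_{\textnormal{sc}}}$, the embedding above gives
\begin{multline*}
0\neq\textnormal{Hom}_{G_n}\bigl(Z(\Delta_1,\ldots,\Delta_l),\,\textnormal{Ind}_{Q_{\textnormal{sc}}}^{G_n}(\delta_{Q_{\textnormal{sc}}}^{1/2}\Delta_1\otimes\cdots\otimes\Delta_l)\bigr)\\
\cong\textnormal{Hom}_{M_{\textnormal{sc}}}\bigl(J_{Q_{\textnormal{sc}}}(Z(\Delta_1,\ldots,\Delta_l)),\,\delta_{Q_{\textnormal{sc}}}^{1/2}\Delta_1\otimes\cdots\otimes\Delta_l\bigr).
\end{multline*}
As $M_{\textnormal{sc}}$ is a product of general linear groups and $\delta_{Q_{\textnormal{sc}}}^{1/2}\Delta_1\otimes\cdots\otimes\Delta_l$ is the external tensor product of the irreducible twisted supercuspidals $\pi_i(j)$, the target is an irreducible $M_{\textnormal{sc}}$-representation, so the resulting nonzero map $J_{Q_{\textnormal{sc}}}(Z(\Delta_1,\ldots,\Delta_l))\to\delta_{Q_{\textnormal{sc}}}^{1/2}\Delta_1\otimes\cdots\otimes\Delta_l$ is a surjection. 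This is precisely the asserted quotient.

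I do not expect a genuine obstacle here; the argument is bookkeeping within the Bernstein--Zelevinsky formalism. The one point that needs care is the normalisation: performing the induction in stages, first through the parabolic $Q_\Delta$ with Levi blocks $\deg\Delta_i$ and then refining to $Q_{\textnormal{sc}}$, must produce precisely the twist by $\delta_{Q_{\textnormal{sc}}}^{1/2}$ on the inner tensor product, which is transitivity of normalised induction, equivalently the cocycle identity $\delta_{Q_{\textnormal{sc}}}=(\delta_{Q_\Delta}|_{M_{\textnormal{sc}}})\cdot\prod_i\delta_{Q_{\Delta_i,\textnormal{sc}}}$ for the chain $Q_{\textnormal{sc}}\subset Q_\Delta\subset G_n$. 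The use of the \emph{induced} ordering is likewise essential: it is what makes $Q_{\textnormal{sc}}$ a standard parabolic and what lets Theorem~\ref{ThA.1}(i),(ii) be applied with no reordering, so that $Z(\Delta_1,\ldots,\Delta_l)$ really does embed in the induction written in that order.
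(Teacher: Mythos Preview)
Your proof is correct and follows essentially the same strategy as the paper's: both derive the quotient from Frobenius reciprocity applied to the embedding of $Z(\Delta_1,\ldots,\Delta_l)$ into a parabolic induction coming from Theorem~\ref{ThA.1}. The only organisational difference is that the paper applies Frobenius reciprocity at the intermediate parabolic $Q$ with Levi blocks of size $\deg\Delta_i$, obtaining $J_Q(Z(\Delta_1,\ldots,\Delta_l))\twoheadrightarrow\delta_Q^{1/2}Z(\Delta_1)\otimes\cdots\otimes Z(\Delta_l)$, and then takes a further Jacquet module down to $Q_{\textnormal{sc}}$ using transitivity and Zelevinsky's description of $J(Z(\Delta_i))$, whereas you compose the embeddings $Z(\Delta_i)\hookrightarrow\pi(\Delta_i)$ first and apply Frobenius reciprocity once, directly at $Q_{\textnormal{sc}}$; your route is marginally cleaner since irreducibility of the supercuspidal target immediately gives surjectivity.
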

\begin{proof}
Let $Q=M\ltimes N\subset G_n$ be the parabolic subgroup corresponding to $Z(\Delta_1)\otimes...\otimes Z(\Delta_l)$. Then Frobenius reciprocity applied to Theorem~\ref{ThA.1} gives a surjection $J_Q(Z(\Delta_1,...,\Delta_l))\to \delta_{Q}^{1/2}Z(\Delta_1)\otimes...\otimes Z(\Delta_l)$. Then \cite{Zel80} Proposition 1.1 parts a) and c) combined with \textit{loc. cit.} 3.1 implies that taking Jacquet module with respect to $Q_{\textnormal{sc}}$ gives the desired result.
\end{proof}

\medskip
\printbibliography

\end{document}